\documentclass[11pt]{amsart}
\usepackage{amsfonts}
\usepackage{fullpage, amsmath, amsthm,amsfonts,amssymb,stmaryrd, mathrsfs,amscd}
\usepackage{graphicx}
\usepackage{hyperref}
\makeatletter
\usepackage[pdftex]{color}
\usepackage{rotating}
\usepackage[all]{xy}
\usepackage{relsize}

\newtheorem{thm}[subsubsection]{Theorem}
\newtheorem{prop}[subsubsection]{Proposition}
\newtheorem{lem}[subsubsection]{Lemma}

\newtheorem{lem-def}[subsubsection]{Lemma-Definition}
\newtheorem{cor}[subsubsection]{Corollary}
\newtheorem{theorem}[subsubsection]{Theorem}
\newtheorem{proposition}[subsubsection]{Proposition}
\newtheorem{lemma}[subsubsection]{Lemma}

\newtheorem{conjecture}[subsubsection]{Conjecture}

\theoremstyle{definition}
\newtheorem{example}[subsubsection]{Example}
\newtheorem{remark}[subsubsection]{Remark}
\theoremstyle{definition}
\newtheorem{hypothesis}[subsubsection]{Hypothesis}
\newtheorem{definition}[subsubsection]{Definition}

\newtheorem{ex}[subsubsection]{Example}

\newtheorem{notation}[subsubsection]{Notation}
\newtheorem{construction}[subsubsection]{Construction}

\newtheorem{convention}[subsubsection]{Convention}
\newtheorem{rmk}[subsubsection]{Remark}
\theoremstyle{definition}
\newtheorem{dfn}[subsubsection]{Definition}

\numberwithin{equation}{subsection}

\input xy
\xyoption{all}

\newcommand{\quash}[1]{}  
\newcommand{\nc}{\newcommand}
\nc{\on}{\operatorname}
\newcommand{\bbzeta}{\boldsymbol{\zeta}}

\DeclareFontEncoding{OT2}{}{} 

\newcommand{\res}{\mathrm{res}}

\newcommand{\cl}{\mathrm{cl}}

\DeclareMathOperator{\Ker}{Ker}
\def\bbb{\mathbf{b}}
\def\bba{\mathbf{a}}
\def\bbc{\mathbf{c}}
\def\bbe{\mathbf{e}}
\def\AAA{\mathbb{A}}
\def\BB{\mathbb{B}}
\def\CC{\mathbb{C}}
\def\DD{\mathbb{D}}

\def\FF{\mathbb{F}}
\def\GG{\mathbb{G}}

\def\LL{\mathbb{L}}

\def\NN{\mathbb{N}}

\def\QQ{\mathbb{Q}}
\def\RR{\mathbb{R}}
\def\SSS{\mathbb{S}}

\def\XX{\mathbb{X}}

\def\ZZ{\mathbb{Z}}

\def\bfB{\mathbf{B}}

\def\calA{\mathcal{A}}

\def\calE{\mathcal{E}}
\def\calF{\mathcal{F}}

\def\calH{\mathcal{H}}

\def\calL{\mathcal{L}}

\def\calO{\mathcal{O}}

\def\calV{\mathcal{V}}

\def\bfJ{\boldsymbol{J}}

\newcommand{\Corr}{\mathrm{Corr}}

\newcommand{\fraka}{{\mathfrak a}}

\newcommand{\frakg}{{\mathfrak g}}

\newcommand{\frakp}{{\mathfrak p}}

\newcommand{\scrC}{\mathscr{C}}
\newcommand{\scrS}{\mathscr{S}}

\newcommand{\frakH}{{\mathfrak H}}

\newcommand{\frakK}{{\mathfrak K}}

\newcommand{\frakM}{{\mathfrak M}}

\newcommand{\frakS}{{\mathfrak S}}

\newcommand{\Sat}{\mathrm{Sat}}

\newcommand{\rmP}{\mathrm{P}}
\newcommand{\der}{\mathrm{der}}

\newcommand{\bA}{{\mathbb A}}
\newcommand{\bB}{{\mathbb B}}
\newcommand{\bC}{{\mathbb C}}
\newcommand{\bD}{{\mathbb D}}

\newcommand{\bF}{{\mathbb F}}
\newcommand{\bG}{{\mathbb G}}
\newcommand{\bH}{{\mathbb H}}

\newcommand{\bL}{{\mathbb L}}
\newcommand{\bM}{{\mathbb M}}
\newcommand{\bN}{{\mathbb N}}

\newcommand{\bQ}{{\mathbb Q}}
\newcommand{\bR}{{\mathbb R}}
\newcommand{\bS}{{\mathbb S}}
\newcommand{\bT}{{\mathbb T}}

\newcommand{\bV}{{\mathbb V}}

\newcommand{\bX}{{\mathbb X}}

\newcommand{\bZ}{{\mathbb Z}}

\newcommand{\mA}{{\mathcal A}}

\newcommand{\mC}{{\mathcal C}}
\newcommand{\mD}{{\mathcal D}}
\newcommand{\mE}{{\mathcal E}}
\newcommand{\mF}{{\mathcal F}}

\newcommand{\mH}{{\mathcal H}}

\newcommand{\mL}{{\mathcal L}}

\newcommand{\mN}{{\mathcal N}}
\newcommand{\mO}{{\mathcal O}}
\newcommand{\mP}{{\mathcal P}}

\newcommand{\mT}{{\mathcal T}}

\newcommand{\mV}{{\mathcal V}}

\def\rmH{\mathrm{H}}

\newcommand{\leftone}{\leftarrow}
\newcommand{\rightone}{\to}

\newcommand{\Rep}{\mathrm{Rep}}
\nc{\Tate}{\mathrm{Tate}}
\newcommand{\bfSh}{\mathbf{Sh}}
\nc{\al}{{\alpha}} \nc{\be}{{\beta}} \nc{\ga}{{\gamma}}
\nc{\ve}{{\varepsilon}} \nc{\Ga}{{\Gamma}} \nc{\la}{{\lambda}}
\nc{\La}{{\Lambda}}

\nc{\ad}{{\on{ad}}}
\newcommand{\Ad}{{\on{Ad}}}
\nc{\aff}{{\on{aff}}}
\nc{\Aff}{{\mathbf{Aff}}}
\newcommand{\Aut}{{\on{Aut}}}
\nc{\Bun}{{\on{Bun}}}
\newcommand{\cha}{{\on{char}}}
\nc{\Coh}{{\on{Coh}}}

\nc{\diag}{{\on{diag}}}
\nc{\dR}{{\on{dR}}}
\newcommand{\End}{{\on{End}}}
\nc{\Fl}{{\calF\ell}}
\newcommand{\Gal}{{\on{Gal}}}
\newcommand{\Gr}{{\on{Gr}}}
\newcommand{\Hom}{{\on{Hom}}}
\newcommand{\id}{{\on{id}}}
\nc{\Id}{{\on{Id}}}

\nc{\Ind}{{\on{Ind}}}
\nc{\inv}{{\on{Inv}}}
\nc{\Iso}{{\on{Isom}}}
\newcommand{\Lie}{{\on{Lie}}}
\nc{\Nm}{{\on{Nm}}}

\nc{\pf}{{\on{pf}}}
\newcommand{\pr}{{\on{pr}}}
\nc{\rec}{{\on{rec}}}
\nc{\Iw}{\on{Iw}}
\newcommand{\Res}{{\on{Res}}}
\newcommand{\s}{{\on{sc}}}

\newcommand{\Spec}{{\on{Spec}}}

\nc{\tr}{{\on{tr}}}
\newcommand{\loc}{\mathrm{loc}}
\newcommand{\Tr}{{\on{Tr}}}

\newcommand{\Min}{\mathsf{Min}}

\newcommand{\GL}{{\on{GL}}}
\nc{\GSp}{{\on{GSp}}} \nc{\GU}{{\on{GU}}} \nc{\SL}{{\on{SL}}}
\nc{\SU}{{\on{SU}}} \nc{\SO}{{\on{SO}}}

\nc{\Ql}{{\overline{\bQ}_\ell}}

\nc{\fg}{\frakg}
\nc{\fp}{\frakp}

\nc{\rat}{\overline{\bQ}}
\nc{\triv}{{\bf{1}}}

\newcommand{\longto}{\longrightarrow}

\newcommand{\std}{\mathrm{std}}
\newcommand{\et}{\mathrm{et}}

\newcommand{\PGL}{\on{PGL}}

\nc{\dm}{/\!\!/}

\def\xcoch{\mathbb{X}_\bullet}
\def\xch{\mathbb{X}^\bullet}

\nc{\wt}{\mathrm{wt}}
\nc{\Sh}{\on{Sh}}
\nc{\Sht}{\on{Sht}}
\nc{\wSh}{\widetilde{\Sht}}
\nc{\Sph}{\on{Sph}}
\nc{\Fr}{\on{Frob}}
\nc{\Fp}{{^\sigma\mE}}
\nc{\und}{\underline}
\nc{\mmu}{{\mu_\bullet}}
\nc{\nnu}{{\nu_\bullet}}
\nc{\Hk}{{\on{Hk}}}
\nc{\lhk}{\Hk^{\on{loc}}}
\newcommand{\IC}{\on{IC}}
\nc{\bp}{{\mathbf{p}}}
\newcommand{\cris}{\mathrm{cris}}
\nc{\MV}{{\bM\bV}}

\newcommand{\xdashrightarrow}[2][]{\ext@arrow 3359 \rightarrowfill@@{#1}{#2}}
\def\rightarrowfill@@{\arrowfill@@\relax\relbar\shortrightarrow}
\def\arrowfill@@#1#2#3#4{%
  $\m@th\thickmuskip0mu\medmuskip\thickmuskip\thinmuskip\thickmuskip
   \relax#4#1
   \xleaders\hbox{$#4#2$}\hfill
   #3$%
}

\begin{document}
\title{Cycles on Shimura varieties via Geometric Satake}
\author{Liang Xiao} 
\address{Liang Xiao, Department of Mathematics, University of Connecticut, Storrs, 341 Mansfield Road, Unit 1009, Storrs, CT 06269.}
\email{liang.xiao@uconn.edu}\date{\today}
\author{Xinwen Zhu}
\address{Xinwen Zhu, Department of Mathematics, Caltech, 1200 East California Boulevard, Pasadena, CA 91125.}
\email{xzhu@caltech.edu}\date{\today}
\begin{abstract}
We construct (cohomological) correspondences between mod $p$ fibers of \emph{different} Shimura varieties and describe the fibers of these correspondences by studying irreducible components of affine Deligne-Lusztig varieties. In particular, in the case of correspondences from a Shimura set to a Shimura variety, we obtain a description of the basic Newton stratum of the latter, and show that the irreducible components of the basic Newton stratum generate all the Tate classes in the middle cohomology of the Shimura variety, under a certain genericity condition. Along the way, we also determine the set of irreducible components of the affine Deligne-Lusztig variety associated to an unramified twisted conjugacy class.
\end{abstract}
\thanks{
L.X. was  partially supported by Simons Collaboration grant \#278433 and NSF grant DMS-1502147. X.Z. was partially supported by NSF grant DMS-1303296/1535464 and
DMS-1602092 and the Sloan Fellowship.}
\subjclass[2010]{11G18 (primary), 14G35 14C25 22E57 (secondary).}
\keywords{Tate conjecture, basic loci of Shimura varieties, geometric Satake, generalized Chevalley restriction theorem}
\maketitle

\setcounter{tocdepth}{2}
\tableofcontents

\section{Introduction}
The geometry of Shimura varieties and of moduli spaces of shtukas lies at the heart of the Langlands program. It encodes the deep relation between the world of automorphic representations and the world of Galois representations. In this article, we focus on the relations between the geometry of mod $p$ fibers of \emph{different} Shimura varieties. These relations usually provide a \emph{canonical} Jacquet-Langlands transfer between various Hecke modules in a geometric way, as firstly noticed in the case of modular curves by Ribet and Serre (\cite{Ribet}, \cite{serre}) and then further developed by Ghitza and Helm (\cite{Gh1, Gh2, helm-PEL, helm}) in some other cases.  Tian and one of the authors (L.X.) took a slightly different viewpoint and constructed characteristic $p$ cycles on the special fiber of some quaternionic and unitary Shimura varieties (\cite{tian-xiao1, tian-xiao2, HTX}), parameterized by (the special fiber of) another Shimura variety, as predicted by the Langlands conjecture and the Tate conjecture.
In another direction, the recent work of V. Lafforgue (\cite{La}) on the Langlands correspondence over global function fields also made use of the relation of cohomology between different moduli of shtukas in an essential way. 

In this work, using the geometric Satake for $p$-adic groups (\cite{Z}) we develop a general strategy that relates the geometry and ($\ell$-adic) cohomology of the mod $p$ fibers of different Shimura varieties, which unifies and generalizes some aspects of the above mentioned works. 
In particular, we give a description of the basic Newton stratum of a Hodge type Shimura variety when its dimension is exactly the half of the dimension of the Shimura variety. Moreover, we prove that the cycle classes of the irreducible components of the basic locus generate Tate classes of the mod $p$ fiber of the Shimura variety, under a certain genericity condition. Note that in this article, we only consider the case when Shimura varieties have good reduction at $p$ and the case when the basic locus is middle dimensional in the ambient Shimura variety, and therefore exclude the modular curve case. We hope to generalize our approach in the future to also take this case into account.

\subsection{The main theorem}
\label{S:intro}
Let us be more precise about the question we study in this paper. 
Let $(G,X)$ be a Shimura datum. We fix an open compact subgroup $K\subset G(\bA_f)$ sufficiently small, and let
\[\mathbf{Sh}_K(G,X)=G(\bQ)\backslash X\times G(\bA_f)/K\]
denote the corresponding Shimura variety.  Recall that $\mathbf{Sh}_K(G,X)$ has a canonical model defined over the reflex field $E=E(G,X)\subset \bC$ (e.g. see \cite{Milnebook}). Let $d=\dim\mathbf{Sh}_K(G,X)$ denote its dimension. To motivate our results, we assume that $\mathbf{Sh}_K(G,X)$ is projective in the following discussions.

Let $\mH_K:=C_c^\infty(K\backslash G(\bA_f)/K,\Ql)$ denote the $\Ql$-coefficient\footnote{Note that it is possible to work with $\QQ_\ell$-coefficients instead in many statements below.} Hecke algebra of $G$ as usual, and let $\pi_f$ be an irreducible $\mH_K$-module.
It cuts out a ``submotive" $\mathbf{Sh}_K(G,X)[\pi_f]$ of $\mathbf{Sh}_K(G,X)$. More precisely, its $\ell$-adic realization can be written as
\[W^i(\pi_f,\Ql)= \Hom_{\mH_K}\big(\pi_f, \on{H}^i(\mathbf{Sh}_K(G,X)_{\overline \QQ},\Ql)\big),\]
which is a natural representation of $\Gal(\overline \bQ/E)$. Here $\overline\bQ$ denotes the algebraic closure of $\bQ$ in $\bC$. 

Now let $p$ be an unramified prime for $(G,X,K)$, namely, $K = K^pK_p \subset  G(\AAA_f^p)  \times G(\QQ_p) $ with $K_p$ a hyperspecial open compact subgroup of $G(\bQ_p)$. 
In this case, it is expected that $\mathbf{Sh}_K(G,X)$ should have a good reduction $\overline \scrS$ over $\FF_v$, where $v$ is a place of $E$ over $p$ with residue field $\FF_v$. Therefore, the motive  $\mathbf{Sh}_K(G,X)[\pi_f]$ should also have good reduction $\overline\scrS[\pi_f]$ at $v$. In particular, $W^i(\pi_f,\Ql)$ is a $\phi_v$-module, where $\phi_v$ denotes the \emph{geometric} Frobenius element of $\Gal(\overline\FF_v/\FF_v)$.  If $i$ is even, let 
\[T^{i}(\pi_f,\Ql):=\bigcup_{j\geq 1}W^{2i}(\pi_f,\Ql)(i)^{\phi_v^j}\]
denote the space of Tate classes for the motive $\overline{\mathscr S}[\pi_f]$. 
According to the Tate conjecture, these classes should come from algebraic cycles on $\overline{\mathscr S}$ via the cycle class map. One of the main goals of this work is to verify this for ``generic" $\pi_f$ in many cases when $(G,X)$ is of Hodge type.

To describe our results more precisely, let us first recall the (naive) conjectural description of 
$$W(\pi_f,\Ql):=\bigoplus_i W^i(\pi_f,\Ql)(d/2)$$  
as a $\phi_v$-module. Here we fix once and for all a square root $\sqrt{p}$, or equivalently a half Tate twist  $\Ql(\frac{1}{2})$. 

Let
$(\hat G,\hat B,\hat T, \hat X)$ be the Langlands dual group of $G$ defined over $\Ql$, equipped with a pinning (in particular a Borel subgroup $\hat B$ and a maximal torus $\hat T$). Note that our assumption on $p$ implies that the action of $\Gal(\overline{\bQ}_p/\bQ_p)$ on $(\hat G,\hat B,\hat T, \hat X)$ factors through $\Gal(\FF_{p^m}/\FF_p)$ for some finite field $\FF_{p^m}$ containing $\FF_v$, and therefore one can define the local Langlands dual group ${^L}G_{p}:=\hat G\rtimes \Gal(\FF_{p^m}/\FF_p)$. Let $\xch(\hat T)=\Hom(\hat T,\bG_m)$ denote the weight lattice of $\hat T$.
Let $\{\mu\}$ be the conjugacy class of the one parameter subgroups associated to the Shimura datum, regarded as a dominant weight (with respect to $\hat B$) in $\xch(\hat T)$ in the usual way.
We write $\mu^*$ for $-w_0(\mu)$ where $w_0$ is the longest element in the Weyl group of $\hat G$, and let
$V_{\mu^*}$
be the highest weight representation of $\hat G$ of highest weight $\mu^*$, which extends canonically to a representation
\[r_{\mu^*}: \hat G\rtimes \Gal(\FF_{p^m}/\FF_v) \to \GL(V_{\mu^*}).\]
Let $\on{rec}(\pi_{f,p}): \langle \phi_p\rangle\to {^L}G_{p}$  denote the unramified Langlands parameter of the $p$-component $\pi_{f,p}$ of $\pi_f$ (which depends on our choice of the half Tate twist). Then $\phi_v$ acts on $V_{\mu^*}$ as $r_{\mu^*}(\on{rec}(\pi_{f,p})(\phi_v))$. This action gives a well-defined isomorphism class in the Grothendieck group of $\phi_v$-modules, denoted by $[V_{\mu^*}]$.
Then the (naive) Langlands conjecture claims that in the Grothendieck group of $\phi_v$-modules,
\begin{equation}\label{E: Lang Conj}
[W(\pi_f,\Ql)]= a_G(\pi_f)[V_{\mu^*}],
\end{equation} 
where $a_G(\pi_f)$ is a certain multiplicity of $\pi_f$.

\begin{rmk}
\label{R:Gphi/G}
Here is a more canonical description of $W(\pi_f,\Ql)$ as a $\phi_v$-module. We  consider the conjugation action of $\hat G$ on the non-neutral connected component $\hat G\phi_p\subset {^L}G_p$. The quotient stack $[\hat G\phi_p/\hat G]$ can be thought as the stack of unramified Langlands parameters, and the $\hat G$-structure on $V_{\mu^*}$ defines a vector bundle on $[\hat G\phi_p/\hat G]$ by descent, denoted by $\widetilde {V_{\mu^*}}$. 
There is a tautological automorphism $\ga_{\on{taut}}$ of this vector bundle: it acts on the fiber of $\widetilde{ {V_{\mu^*}}}$ over $\ga\phi_p$ as $r_{\mu^*}((\ga\phi_p)^{[v:p]})$. The Langlands parameter of $\pi_{f,p}$ defines a point of this stack. Then $W(\pi_f,\Ql)$ equipped with the action of $\phi_v$ is canonically identified with the fiber of $\widetilde{ V_{\mu^*}}$ over this point, equipped with the tautological action.
\end{rmk}

With the above description of $W(\pi_f,\Ql)$ at hand, we can also give a description of $T^{d/2}(\pi_f,\Ql)$. 
For $\la\in\xch(\hat T)$ and a representation $V$ of $\hat G$, let $V(\la)$ denote the $\la$-weight subspace of $V$ (with respect to $\hat T$).
We define the following lattice
\[\Lambda^{\Tate_p}:=\Big\{\la\in \xch(\hat T)\; \Big|\; \sum_{i=0}^{m-1} \phi_p^i(\la)\in \xcoch(Z_G)\Big\}\subset \xch(\hat T).\]
Here, $Z_G$ denotes the center of $G$, and $\xcoch(Z_G)$ denotes the coweight lattice of $Z_G$. If we denote by $Z_G^\circ$ the neutral connected component of $Z_G$ and $\widehat{Z_G^\circ}$ its dual group, then there is a surjective map $\hat T\to \widehat{Z_G^\circ}$ and therefore 
$\xcoch(Z_G)=\xch(\widehat{Z_G^\circ})$ is a sublattice of $\xch(\hat T)$. For a representation $V$ of $\hat G$, we define the following subspace
\[V^{\Tate_p}=\bigoplus_{\la\in\Lambda^{\Tate_p}} V(\la).\]
This space is usually large. For example, in the case $G$ is an odd unitary group of signature $(i,n-i)$ over a quadratic imaginary field, the dimension of this space at an inert prime $p$ is $\begin{pmatrix}(n-1)/2\\ \lfloor i/2\rfloor\end{pmatrix}$ (see Remark \ref{classification of minuscule} for more examples).

We have the following observation (see Lemma \ref{L:strongly general again} for the proof).
\begin{lem}
\label{L:strongly general}
Let $\ga\phi_p\in \hat T\rtimes \Gal(\FF_{p^m}/\FF_p) \subset \hat G\rtimes \Gal(\FF_{p^m}/\FF_p)$ be an element whose image in $\widehat{Z_G^\circ} \rtimes \Gal(\FF_{p^m}/\FF_p)$ has finite order. Then 
\begin{equation}
\label{E:VTate inclusion}
V_{\mu^*}^{\Tate_p}\subseteq \bigcup_{j\geq 1} (V_{\mu^*})^{r_{\mu^*}((\ga\phi_p)^{j[v:p]})},\end{equation}
and if $\ga$ is strongly general with respect to $V_{\mu^*}$ in the sense of Definition~\ref{D: general parameter}, the above inclusion is an isomorphism.
\end{lem}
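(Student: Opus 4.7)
The strategy is to reduce both directions to a single eigenspace calculation that relates the action of $(\gamma\phi_p)^{j[v:p]}$ on each $\hat T$-weight space of $V_{\mu^*}$ to the evaluation of weights against the $\phi_p$-norm
\[
\delta\ :=\ \prod_{i=0}^{m-1}\phi_p^i(\gamma)\ \in\ \hat T,
\]
which is $\phi_p$-invariant. The semidirect product identity $(\gamma\phi_p)^n = \gamma\,\phi_p(\gamma)\cdots\phi_p^{n-1}(\gamma)\cdot \phi_p^n$ shows that $\phi_p^n$ permutes the $\hat T$-weight spaces, sending $V_{\mu^*}(\lambda)$ to $V_{\mu^*}(\phi_p^n(\lambda))$. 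Since the fixed subspaces on the right-hand side of \eqref{E:VTate inclusion} form a filtered family under divisibility of $j$, I may take $j$ so divisible that $n:=j[v:p]$ is a multiple of $m$. For such $n$, $\phi_p^n = \phi_v^j$ is trivial in $\Gal(\FF_{p^m}/\FF_v)$ and hence acts as the identity on $V_{\mu^*}$, while $\prod_{i=0}^{n-1}\phi_p^i(\gamma) = \delta^{n/m}$. Therefore $(\gamma\phi_p)^n$ preserves each weight space $V_{\mu^*}(\lambda)$ and acts on it by the scalar $\lambda(\delta)^{n/m}$, giving
\[
\bigcup_{j\geq 1} (V_{\mu^*})^{r_{\mu^*}((\gamma\phi_p)^{j[v:p]})} \;=\; \bigoplus_{\lambda:\ \lambda(\delta)\text{ is a root of unity}} V_{\mu^*}(\lambda).
\]

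For the inclusion, let $\lambda \in \Lambda^{\Tate_p}$, so that $\Sigma\lambda := \sum_{i=0}^{m-1}\phi_p^i(\lambda)$ lies in $\xcoch(Z_G) = \xch(\widehat{Z_G^\circ})$ and factors through the quotient $\hat T \twoheadrightarrow \widehat{Z_G^\circ}$. Using the $\phi_p$-invariance of both $\delta$ and $\Sigma\lambda$, a reindexing yields $\lambda(\delta) = (\Sigma\lambda)(\gamma)$ and $(\Sigma\lambda)(\gamma)^m = (\Sigma\lambda)(\delta) = \overline{\Sigma\lambda}(\bar\delta)$, where $\bar\delta\in\widehat{Z_G^\circ}$ is the image of $\delta$. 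Since $(\bar\gamma\phi_p)^m = \bar\delta$ and $\bar\gamma\phi_p$ has finite order by hypothesis, $\bar\delta$ is torsion; hence $\overline{\Sigma\lambda}(\bar\delta)$ is a root of unity, and therefore so is $\lambda(\delta)$. This produces \eqref{E:VTate inclusion}.

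For the equality under the strong-generality hypothesis, the plan is to unpack Definition~\ref{D: general parameter} and verify the converse implication: for a weight $\lambda$ of $V_{\mu^*}$, $\lambda(\delta)$ is a root of unity only if $\lambda \in \Lambda^{\Tate_p}$. Granting this, the displayed description of the left-hand side matches $V_{\mu^*}^{\Tate_p}$ exactly, and the claimed equality follows. The hard part will be this converse: it amounts to saying that no accidental multiplicative relation among the values $\{\lambda(\delta)\}_\lambda$ holds beyond those forced by the Tate lattice $\Lambda^{\Tate_p}$, and strong generality must be calibrated precisely to provide this. The eigenspace bookkeeping in the first two paragraphs is then routine once $\delta$ and the identity $\lambda(\delta) = (\Sigma\lambda)(\gamma)$ are in hand.
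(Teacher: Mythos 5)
Your first two paragraphs are correct, and they establish the inclusion \eqref{E:VTate inclusion} by essentially the same mechanism as the paper (Lemma~\ref{L:strongly general again}): after reducing to exponents $n=j[v:p]$ divisible by $m$, one has $(\ga\phi_p)^n=\delta^{n/m}$ with $\delta=\prod_{i=0}^{m-1}\phi_p^i(\ga)$ acting on each weight space by $\lambda(\delta)^{n/m}$, and the identity $\lambda(\delta)=(\sum_i\phi_p^i\lambda)(\ga)$ together with the torsion hypothesis on the image of $\ga\phi_p$ in $\widehat{Z_G^\circ}\rtimes\Gal(\FF_{p^m}/\FF_p)$ shows $\lambda(\delta)$ is a root of unity for $\lambda\in\Lambda^{\Tate_p}$. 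The only difference is bookkeeping: you work with $\hat T$-weights and the norm element $\delta$, while the paper decomposes $V_{\mu^*}|_{\hat G^\sigma}$ into $\hat T^\sigma$-weight spaces; your version is, if anything, more explicit.

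The genuine gap is the second assertion: you never prove it. Your final paragraph is a plan, and the step you call ``the hard part'' --- that under strong generality $\lambda(\delta)$ can be a root of unity only for $\lambda\in\Lambda^{\Tate_p}$ --- is exactly where the proof must happen; saying that ``strong generality must be calibrated precisely to provide this'' leaves the equality unestablished, and also misdiagnoses the difficulty. No statement about absence of accidental multiplicative relations is needed: Definition~\ref{D: general parameter} is, after a translation of weight lattices, verbatim the assertion you want. Concretely, you need (i) the identification with which the paper opens its proof, namely that $V_{\mu^*}^{\Tate_p}$ is the sum of the $\hat T^\sigma$-weight spaces of $V_{\mu^*}|_{\hat G^\sigma}$ whose weight factors through $\widehat{Z_G^\circ}^\sigma$, equivalently that a $\hat T$-weight $\lambda$ with $V_{\mu^*}(\lambda)\neq 0$ lies outside $\Lambda^{\Tate_p}$ exactly when its image in $\xch(\hat T^\sigma)$ does not factor through $\widehat{Z_G^\circ}^\sigma$ (note $\lambda(\delta)$ only depends on this image, since $\delta\in\hat T^\sigma$); and (ii) the observation that for such a weight, strong generality says $\lambda(\delta^n)=\lambda((\ga\phi_p)^{mn})\neq 1$ for all $n$, i.e.\ $\lambda(\delta)$ is not a root of unity, which combined with your first display gives the reverse containment. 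You would also need to address the word ``dominant'' in Definition~\ref{D: general parameter} (why controlling dominant $\hat T^\sigma$-weights suffices, or that the condition is to be read over all weights, as the paper's own proof implicitly does). Without carrying out (i) and (ii), the half of the lemma that actually uses the strong generality hypothesis remains unproved.
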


Now, up to conjugacy, we may assume that $\on{rec}(\pi_{f,p})(\phi_p)=\ga_p\phi_p$ for some $\ga_p\in\hat T$ satisfying the condition of the lemma above. Therefore, if \eqref{E: Lang Conj} holds, then
\[a_G(\pi_f)V_{\mu^*}^{\Tate_p}\subset T^{d/2}(\pi_f,\Ql)\subset W(\pi_f,\Ql)\]
and if the local parameter of $\pi_{f,p}$ is general enough, the above inclusion should be an isomorphism. 

One of the main goals of our work is to give a uniform construction of those algebraic cycles on $\overline{\mathscr S}$ predicted by the Tate conjecture and the Langlands conjecture as above.
To state our results, we need the following.

\begin{lem}
\label{L:inner form}
Assume that there is a prime $p$ such that $G_{\bQ_p}$ is unramified and $V_{\mu^*}^{\Tate_p}\neq0$. Then there exists a (unique) inner form $G'$ of $G$, such that $G'(\bA_f)\simeq G(\bA_f)$ and $G'_{\bR}$ is compact modulo center.
\end{lem}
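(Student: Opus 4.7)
The plan is to construct $G'$ by specifying its local class at each place of $\QQ$ and invoking a Hasse principle for inner forms. Inner forms of $G$ are classified by $H^1(\QQ, G^{\ad})$, so I seek a class $\xi \in H^1(\QQ, G^{\ad})$ whose localization is trivial at every finite place and equals at $\infty$ the class $c_\infty$ of the compact-modulo-center inner twist. This $c_\infty$ exists for any $h \in X$ via the Cartan involution $\mathrm{Int}(h(i))$ on $G_\RR^{\ad}$, whose associated twist of $G_\RR$ is anisotropic modulo center.

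The first essential step is to repackage the hypothesis $V_{\mu^*}^{\Tate_p} \neq 0$ as a condition on $\mu$ alone. Every $\hat T$-weight of $V_{\mu^*}$ lies in the coset $\mu^* + Q(\hat G)$, where $Q(\hat G) \subset X^*(\hat T)$ is the root lattice of $\hat G$, equivalently the coroot lattice $Q^\vee(G)$ of $G$. Thus any $\lambda \in V_{\mu^*}^{\Tate_p}$ has the same image $\mu^\natural$ as $\mu^*$ in $\pi_1(G) := X_*(T)/Q^\vee(G)$, and the defining condition $\sum_{i=0}^{m-1}\phi_p^i(\lambda) \in X_*(Z_G)$ descends to the statement that the image of $\mu^\natural$ in $\pi_1(G^{\ad})_{\Gal(\overline\QQ_p/\QQ_p)}$ is torsion. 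This is exactly Kottwitz's criterion for the existence of a basic $b \in B(G_{\QQ_p}, \mu)$ whose inner form $J_b$ equals $G_{\QQ_p}$ itself, confirming the heuristic that the basic locus is what the Tate classes are seeing.

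To globalize the adelic class $(c_\infty, 0, \ldots, 0)$, I would invoke the Kottwitz--Borovoi exact sequence
\[
H^1(\QQ, G^{\ad}) \longrightarrow \prod_v H^1(\QQ_v, G^{\ad}) \xrightarrow{\ \sum_v \kappa_v\ } \bigl(\pi_0(Z(\hat G^{\mathrm{sc}})^{\Gal_\QQ})\bigr)^{*}.
\]
The adelic class lifts if and only if its total Kottwitz invariant vanishes. Every finite-place contribution is trivial by construction, and by Kottwitz's real-place formula $\kappa_\infty(c_\infty)$ is expressed in terms of the image of $\mu$ in $\pi_1(G^{\ad})^{\Gal_\RR}$. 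The centrality condition on $\mu^\natural$ from the previous step, transported to $\Gal_\QQ$-invariants via the fact that $\mu$ is defined over the reflex field $E$ (unramified at $p$), forces this character on $\pi_0(Z(\hat G^{\mathrm{sc}})^{\Gal_\QQ})$ to vanish. The resulting global $\xi$ yields an inner form $G'$ with the required local properties.

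Uniqueness is immediate from the Hasse principle for the adjoint group $G^{\ad}$: inner twists locally isomorphic at all places are globally isomorphic. The main obstacle is the last step of the previous paragraph -- carefully matching the Kottwitz-invariant computation at $\infty$ with the local-at-$p$ centrality condition, since the two involve different Galois groups ($\Gal_\RR$ and $\Gal(\overline\QQ_p/\QQ_p)$ respectively) and one must pass through $\Gal_\QQ$-invariants. This uses the Tate-duality pairing $X_*(Z_G)^{\vee} \cong X^*(Z(\hat G))$ together with unramifiedness at $p$ to ensure that central cocharacters annihilate the relevant characters of $Z(\hat G^{\mathrm{sc}})$ modulo torsion.
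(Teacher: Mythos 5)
Your global strategy coincides with the paper's. The paper proves this lemma by combining Corollary \ref{inner mu} with Proposition \ref{unramified basic}, and Corollary \ref{inner mu} is established exactly along the lines you propose: Kottwitz's exact sequence (Theorem \ref{T: thm of Kott}) applied to $G_\ad$, trivial classes at all finite places, the compact-modulo-center class at infinity whose Kottwitz invariant is computed to be $\bar\mu_\ad\in (Z(\hat{G}_\s)^{\Ga_\bR})^D$ (Lemma \ref{L: Hodge cocharacter vs inner form}), and the Hasse principle for adjoint groups for uniqueness. Note also that passing from the place $p$ to the global obstruction group requires no duality or unramifiedness argument: it is just the inclusion $Z(\hat{G}_\s)^{\Ga_{\bQ}}\subseteq Z(\hat{G}_\s)^{\Ga_{\bQ_p}}$.

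The genuine gap is in your local step at $p$, which is precisely where the hypothesis $V_{\mu^*}^{\Tate_p}\neq 0$ must enter. What is needed is that $\mu_\ad$, viewed as a character of $Z(\hat{G}_\s)$, is \emph{trivial} on $Z(\hat{G}_\s)^{\Ga_{\bQ_p}}$, equivalently $[\mu_\ad]=0$ in $\pi_1(G_\ad)_{\Ga_{\bQ_p}}\cong \rmH^1(\bQ_p,G_\ad)$ (condition (4) of Proposition \ref{unramified basic}; the difference between $\mu$ and $\mu^*$ is immaterial on the center). Your reformulation --- that the image of $\mu^\natural$ in $\pi_1(G^{\ad})_{\Ga_{\bQ_p}}$ is \emph{torsion} --- is vacuous, since $\pi_1(G^{\ad})$ is finite; and the vacuity is not a slip of wording but reflects what the naive computation actually yields. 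From a weight $\la$ of $V_{\mu^*}$ with $N\la:=\sum_{i=0}^{m-1}\phi_p^i(\la)\in\xcoch(Z_G)$, one only gets, for $z\in Z(\hat{G}_\s)^{\Ga_{\bQ_p}}$, that $\mu_\ad(z)^m=\la_\ad(z)^m=(N\la_\ad)(z)=1$ (using that $\la-\mu^*$ lies in the coroot lattice and that $N\la_\ad=0$); i.e.\ only torsion of $\mu_\ad|_{Z(\hat{G}_\s)^{\Ga_{\bQ_p}}}$, which would not kill the archimedean obstruction. Upgrading torsion to triviality is exactly the content of Proposition \ref{unramified basic} ((2)$\Leftrightarrow$(4)), and its proof is not a lattice manipulation: one shows that $[p^{\la}]$ is the (unramified) basic class of $B(G_{\bQ_p},\mu^*)$, that its twisted centralizer is a Levi whose set of simple roots is all of $\Delta$ because $N\la$ is central --- hence is the unramified form $G_{\bQ_p}$ itself --- so $\on{def}_G(b)=0$, $b_\ad$ is $\sigma$-conjugate to $1$ in $G_\ad(L)$, and therefore $\kappa_{G_\ad}(b_\ad)=[\mu_\ad]=0$. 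You assert the correct target statement (``Kottwitz's criterion that $J_b=G_{\bQ_p}$'') but give no argument connecting it to the hypothesis; that bridge is the heart of the lemma and must be supplied, for instance by the argument just sketched or by invoking the isocrystal-theoretic facts behind Proposition \ref{unramified basic}.
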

This is a combination of Corollary \ref{inner mu} and Proposition \ref{unramified basic}.

Now we further assume that $(G,X)$ is of Hodge type and $p>2$. Let $v$ be a place of $E$ over $p$. Let $\mO_{E,(v)}$ denote the localization of the ring of integers of $E$ at $v$, $\FF_v$ the residue field at $v$, and $\mO_{E,v}$ the completion of $\mO_{E,(v)}$.
Then by the work of Kisin \cite{Ki1} and Vasiu \cite{Vasiu}, there is a canonical smooth integral model of $\mathbf{Sh}_K(G,X)$ over $\mO_{E,(v)}$, denoted by $\mathscr S_K(G,X)$. It will be more convenient in the sequel to change the notation from $\overline\scrS$ to $\Sh_{\mu}$ to denote (the perfection of) its mod $p$ fiber. There is a natural Newton stratification of $\Sh_{\mu}$. Let $\Sh_{\mu,b}$ denote the basic Newton stratum, on which $\mH_K$ acts by (cohomological) correspondences.\footnote{The action of the Hecke algebra at $p$ is subtle and can be ignored at the first pass.}

We need one more notation. For a (not necessarily irreducible) algebraic variety $Z$ of dimension $d$ over an algebraically closed field, let ${\rm H}^{\rm BM}_{2d}(Z)$ denote the $(-d)$-Tate twist of the top degree Borel-Moore homology (with $\Ql$-coefficients), which is the vector space with a basis labeled by the $d$-dimensional irreducible components of $Z$. Now for
$X$ is a smooth (but not necessarily proper) variety of dimension $d+r$ defined over a finite field $\FF_q$, and a (not necessarily irreducible) projective subvariety $Z \subseteq X_{\overline \FF_q}$ of dimension $d$, there is the cycle class map
\[\on{cl}: {\rm H}^{\rm BM}_{2d}(Z)\to T^{r}(X,\Ql):=\bigcup_{j\geq 1}{\rm H}_{c}^{2r}(X_{\overline \FF_q}, \Ql(r))^{\phi_q^j}.\]

Our main theorem is as follows. (For a more precise form, see  Theorem \ref{T:main theorem again}). 
\begin{thm}
\label{T:main theorem}
Assume that $(G,X)$ is of Hodge type and the center $Z_G$ is connected. Let $K\subset G(\bA_f)$ be a (small enough) open compact subgroup.
Let $p>2$ be a prime, such that $K_p$ is hyperspecial and $V_{\mu^*}^{\Tate_p}\neq 0$. 
Let $G'$ be the inner form of $G$ as in Lemma~\ref{L:inner form}. Then:
\begin{enumerate}
\item $\Sh_{\mu,b}$ is pure of dimension $\frac{d}{2}$. In particular, $d$ is always an even integer. 
There is an $\mH_K$-equivariant isomorphism
$${\rm H}^{\rm BM}_{d}(\Sh_{\mu, b, \overline \FF_v})\cong C(G'(\bQ)\backslash G'(\bA_f)/K,\Ql)\otimes V_{\mu^*}^\Tate.$$
Here (a chosen) isomorphism $G'(\bA_f)\simeq G(\bA_f)$, we regard $K$ as an open compact subgroup of $G'(\bA_f)$, and use $C(G'(\bQ)\backslash G'(\bA_f)/K,\Ql)$ to denote the space of $\Ql$-valued functions on the finite set $G'(\bQ)\backslash G'(\bA_f)/K$.

\item 

Let $\pi_f$ be an irreducible module of $\calH_K$, and let 
$${\rm H}^{\rm BM}_{d}(\Sh_{\mu,b, \overline \FF_v})[\pi_f]=\Hom_{\mH_K}\big(\pi_f,{\rm H}^{\rm BM}_{d}(\Sh_{\mu,b, \overline \FF_v})\big)\otimes \pi_f$$ be the $\pi_f$-isotypical component. Then the cycle class map
\[\on{cl}:{\rm H}^{\rm BM}_{d}(\Sh_{\mu, b, \overline \FF_v})\to {\rm H}^{d}_{c}\big(\Sh_{\mu,\overline \FF_v},\overline\bQ_\ell(d/2)\big)\]
restricted to ${\rm H}^{\rm BM}_{d}(\Sh_{\mu, b, \overline \FF_v})[\pi_f]$ is injective if the Langlands parameter of $\pi_{f,p}$ (the component of $\pi_f$ at $p$) is general with respect to $V_{\mu^*}$.

\item Assume that $\mathbf{Sh}_K(G,X)$ is a Kottwitz arithmetic variety (i.e. those considered in \cite{Kolambda}), or assume that $G_\der$ is simply-connected and anisotropic, and there is a place $p\neq p'$ such that $\pi_{f,p'}$ is an unramified twist of the Steinberg representation. Then the $\pi^p_f$-isotypical component of the cycle class map $\on{cl}$ is surjective onto 
$$\sum_{\pi_{p}}T^{d/2}(\pi_p\pi_f^p,\Ql)\otimes\pi_{p}\pi^p_f$$ 
if the Langlands parameters of $\{\pi_{p}\}$ appearing in the above sum are all strongly general with respect to $V_{\mu^*}$. 
In particular, the Tate conjecture holds for these $\pi^p_f$.
\end{enumerate}
\end{thm}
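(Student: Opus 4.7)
The plan is to reduce everything to a local statement about the basic stratum and affine Deligne-Lusztig varieties, and then feed in global automorphic input. For part (1), the starting point is the Rapoport-Zink uniformization of the basic Newton stratum: since $(G,X)$ is of Hodge type with good reduction at $p>2$, Kisin's construction of the integral model yields an isomorphism
\[
\Sh_{\mu,b,\overline\FF_v} \;\simeq\; I(\bQ)\backslash X_\mu(b)\times G(\bA_f^p)/K^p,
\]
where $I$ is an inner form of $G$ and $X_\mu(b)$ is (the perfection of) the affine Deligne-Lusztig variety attached to the basic element $b$. The identification $I=G'$ follows from Lemma~\ref{L:inner form} together with the Kottwitz classification of inner forms. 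Purity of dimension $d/2$ reduces to the dimension formula for basic $X_\mu(b)$ with minuscule $\mu$. The essential new ingredient is a Hecke- and Frobenius-equivariant identification
\[
{\rm H}^{\rm BM}_{d}(X_\mu(b))\;\simeq\; V_{\mu^*}^{\Tate_p}
\]
of the top Borel-Moore homology of $X_\mu(b)$ with a representation of the $\sigma$-centralizer of $b$; this is precisely the structural result on irreducible components of affine Deligne-Lusztig varieties advertised in the abstract, and its proof proceeds via the $p$-adic geometric Satake equivalence of \cite{Z} applied to nearby cycles on the local model. Combining the uniformization with this identification gives the $\mH_K$-equivariant formula in (1).

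For (2), the image of the cycle class map lies inside the Tate classes of ${\rm H}^d_c(\Sh_{\mu,\overline\FF_v},\Ql(d/2))$ by construction, and the map is equivariant for both $\mH_K$ and $\phi_v$. Using (1), the $\pi_f$-isotypic component of the source is a multiplicity space tensored with $V_{\mu^*}^{\Tate_p}$, on which $\phi_v$ acts through $r_{\mu^*}\bigl(\rec(\pi_{f,p})(\phi_v)\bigr)$ as in Remark~\ref{R:Gphi/G}. Under the genericity hypothesis, Lemma~\ref{L:strongly general} separates the Frobenius eigenvalues on $V_{\mu^*}^{\Tate_p}$ from all other eigenvalues that may appear in Galois subrepresentations of $W(\pi_f,\Ql)$. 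An intersection-theoretic argument on $\Sh_\mu$, pairing basic cycles against their Hecke translates and reducing the resulting pairing to automorphic multiplicities on the compact form $G'$, then shows the intersection matrix is non-degenerate, yielding injectivity.

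For (3), the goal is to match dimensions. Under the hypotheses of (3), the (naive) Langlands identity \eqref{E: Lang Conj} is accessible: for Kottwitz arithmetic varieties it is proved in \cite{Kolambda} through stabilization of the Langlands-Kottwitz trace formula, while in the presence of an unramified Steinberg local factor at $p'$, an analogous identity follows from Kottwitz's trace formula combined with vanishing of cohomology outside middle degree, which the Steinberg hypothesis forces by standard automorphic arguments. Granting \eqref{E: Lang Conj}, the strongly general condition on each $\pi_p$ in question combined with Lemma~\ref{L:strongly general} yields
\[
\dim T^{d/2}(\pi_p\pi_f^p,\Ql) \;=\; a_G(\pi_p\pi_f^p)\cdot\dim V_{\mu^*}^{\Tate_p},
\]
which coincides, after using a Jacquet-Langlands comparison to transport multiplicities from $G'$ to $G$, with the dimension of the $(\pi_p\pi_f^p)$-isotypic part of ${\rm H}^{\rm BM}_d(\Sh_{\mu,b,\overline\FF_v})$ computed in (1). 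Combined with the injectivity from (2), this forces the cycle class map to surject onto the claimed subspace of Tate classes.

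The main obstacle is the Hecke- and Frobenius-equivariant identification of ${\rm H}^{\rm BM}_{d}(X_\mu(b))$ with $V_{\mu^*}^{\Tate_p}$ used in (1). Enumerating irreducible components of affine Deligne-Lusztig varieties attached to unramified twisted conjugacy classes is a delicate combinatorial problem, and recasting it through $p$-adic geometric Satake is what lets one see the weight-space answer; but matching the geometric Frobenius on components with the tautological Frobenius on $V_{\mu^*}^{\Tate_p}$ requires carefully threading the $\sigma$-linear structures through the Satake correspondence, which is the most technically demanding point of the whole argument.
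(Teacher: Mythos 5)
There is a genuine gap at part (2), which is the heart of the theorem. You claim injectivity follows because Lemma~\ref{L:strongly general} ``separates the Frobenius eigenvalues'' and because ``an intersection-theoretic argument \dots shows the intersection matrix is non-degenerate.'' But the entries of that intersection matrix are not numbers to be separated by Frobenius eigenvalues: they are elements $f_{\bba,\bbb}$ of the unramified Hecke algebra at $p$ acting on $C(G'(\bQ)\backslash G'(\bA_f)/K)$, and on a $\pi_f$-isotypic piece they become scalars only after evaluating their Satake transforms at the parameter of $\pi_{f,p}$. Non-degeneracy is therefore equivalent to the non-vanishing of an explicit determinant as a function on $\hat G\sigma/\!\!/\hat G$, and the hypothesis ``general with respect to $V_{\mu^*}$'' (Definition~\ref{D: general parameter}) is \emph{defined} as avoidance of the divisor cut out by that determinant; without identifying the determinant you cannot even bring the hypothesis to bear. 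The paper's entire middle machinery exists to make this computable: the functor $S$ from $\Coh^{\hat G}_{fr}(\hat G\sigma)$ to cohomological correspondences on moduli of restricted local shtukas (Theorem~\ref{T:periodic geometric Satake}), its compatibility with the classical Satake isomorphism ($S=T$ for Shimura sets, Proposition~\ref{P: S=T for Sh set}), the resulting identification of the Gysin/restriction composites with the pairing $\bfJ(V_\mu\otimes V_\tau^*)\otimes_{\bfJ}\bfJ(V_\tau\otimes V_\mu^*)\to\bfJ$, and finally the determinant formula of the generalized Chevalley restriction theorem (Theorem~\ref{T:intro Chevalley restriction}, proved in the companion paper). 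None of this, nor any substitute for it, appears in your argument; ``reducing the pairing to automorphic multiplicities on $G'$'' does not compute the determinant, and the direct computation of the $f_{\bba,\bbb}$ is exactly what the paper says is intractable.

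Two secondary points. In (1), your claimed identification of the top Borel--Moore homology of $X_{\mu}(b)$ with $V_{\mu^*}^{\Tate_p}$ via ``$p$-adic geometric Satake applied to nearby cycles on the local model'' is not how the paper proceeds and is unsubstantiated as stated; the parametrization of irreducible components (Theorems~\ref{T: ADLV} and \ref{C: irr comp up to J}) is obtained by a direct analysis of semi-infinite orbits, Mirkovi\'c--Vilonen cycles and their crystal structure via Littelmann paths, together with the explicit construction of the components $X_\mu^{\bbb,x_0}(b)$ (also note the components form a $J_b(F)$-set indexed by $\MV^{\Tate_p}_{\mu^*}\times\bH\bS_b$, so the clean tensor-product formula only emerges after the Rapoport--Zink quotient). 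For (3), your sketch is broadly in line with the paper --- reduce to (2) plus a trace-formula comparison --- but the needed input is the specific multiplicity identity $a_{G,\xi}(\pi_f)=m_{G'}(\pi_f\otimes\xi_\bC)$ (Theorem~\ref{JL:mult}, proved by pseudo-stabilizing and comparing the two trace formulas directly), resp.\ Corollary~\ref{C: comparison, stable case} in the Steinberg case; asserting that the Langlands identity ``is accessible'' does not supply this.
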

We refer to Definition \ref{D: general parameter} and Remark \ref{R: general and strongly general} below for the precise definition and some examples of ``general" and ``strongly general" with respect to $V_{\mu^*}$.

\begin{rmk}
\begin{enumerate}
\item We do not need to assume that the Shimura variety $\mathbf{Sh}_K(G,X)$ is compact in Part (1) and (2) of the theorem. Shimura varieties considered in Part (3) are compact.

\item

Note that from the dimension formula of the Newton strata of mod $p$ fiber of Shimura varieties of Hodge type (\cite{Ham2, ZhangChao2}), it is always the case that $2\dim \Sh_{\mu,b}\leq \dim \Sh_{\mu}$. It turns out that the equality achieves if and only if $V_{\mu^*}^{\Tate_p}\neq 0$.

\item One may replace the sum in Part (3) by a single representation $\pi_f$ if the following Conjecture \ref{introconj: S=T} holds. We also conjecture Part (3) of the theorem should be true for general Shimura varieties under the same condition on the Langlands parameter at $p$. 

\item Part (2) of
Theorem~\ref{T:main theorem} may be viewed as a geometric realization of Jacquet-Langlands correspondence.
\end{enumerate} 

\end{rmk}

The theorem will be obtained as a combination of another four sets of theorems, which will be discussed below. The first two are of geometric nature, and the last two are of representation theoretical nature. We believe each of them is of independent interests.

\subsection{Irreducible components of some affine Deligne-Lusztig varieties}
The first ingredient is the study of irreducible components of certain affine Deligne--Lusztig varieties (ADLVs) for unramified elements. We switch to the local setting and will treat the case of equal and mixed characteristics uniformly.\footnote{This result is new even in the equal characteristic.} 
Let $F$ be a non-archimedean local field with finite residue field $k = \bF_q = \bF_{p^m}$ and ring of integers $\calO$. We temporarily drop the restriction $p\neq 2$.
Let $L/F$ denote the completion of the maximal unramified extension of $F$, $\mO_L\subset L$ its ring of integers, and $\bar k$ the residue field. The $q$-power arithmetic Frobenius in $\Gal(L/F)$ is denoted by $\sigma$.

Let $G$ be an unramified reductive group over $\mO$.  For simplicity, we assume that its center $Z_G$ is connected in this subsection.
 Let $(\hat G, \hat B, \hat T, \hat X)$ be the pinned Langlands dual group as before, now equipped with an action of $\sigma$. Taking invariants gives a (not necessarily connected) reductive group with a ``pinning" $(\hat G^\sigma, \hat B^\sigma, \hat T^\sigma, X)$. As usual, one can define the set of dominant weights $\xch(\hat T)_\sigma^+\subset \xch(\hat T)_\sigma=\xch(\hat T^\sigma)$ of $\hat T^\sigma$ (with respect to $\hat B^\sigma$), which is canonically bijective to $\xch(\hat T)_\sigma/W_0$, where $W_0$ denotes the relative Weyl group of $G$.

Let $B(G)$ denote the set of  $\sigma$-conjugacy classes of $G(L)$. In this paper, we will mainly consider the subset $B(G)_{\on{unr}}$ of unramified elements of $B(G)$.
Recall that an element $b\in B(G)$ is called \emph{unramified} if its twisted centralizer $J_b$ is a Levi subgroup of $G$, i.e. the $F$-rank of $J_b$ coincides with the $F$-rank of $G$. There is a canonical bijection (see \S \ref{S: unram element})
\[B(G)_{\on{unr}}\cong \xch(\hat T)_\sigma^+,\quad b\mapsto \la_b,\]
which composed with the natural injection $\xch(\hat T)_\sigma^+\hookrightarrow \xch(\hat T)^{\sigma,+}\times \xch(Z(\hat G)^\sigma)$ gives the Kottwitz' invariants of unramified elements.
Note that if $b$ is unramified, $J_b$ is an unramified reductive group (but without a preferred choice of $\mO$-structure). Let $\bH\bS_b$ denote the set of hyperspecial subgroups of $J_b(F)$. Since we have assumed that $Z_G$ is connected, $J_b(F)$ acts transitively on this set by conjugation.

Let $\Gr=LG/L^+G$ denote the affine Grassmannian of $G$, where as usual $L^+G$ (resp. $LG$) denotes the jet group (resp. loop group) of $G$.
For a conjugacy class of cocharacters $\mu$ of $G$ and $b\in B(G)$, let
\[X_\mu(b):= \big\{ gL^+G\in \Gr_G\; \big|\; g^{-1}b\sigma(g)\in \overline{L^+G\varpi^\mu L^+G}\,\big\}\]
denote the corresponding \emph{affine Deligne-Lusztig variety (ADLV)}, where  $\varpi\in\mO$ is any choice of uniformizer.
This is a closed subvariety of $\Gr_G$, (perfectly) locally of finite type, equipped with an action of $J_b(F)$. The general dimension formula for ADLVs in the affine Grassmannian (\cite{Ham, Z}) specialized to the case when $b$ is unramified, implies that $X_\mu(b)$ is equidimensional of dimension $\langle \rho, \mu-\la_b\rangle$. Here, as usual $\rho$ denotes the half sum of positive coroots for $(\hat G,\hat B,\hat T)$, which descends to a linear functional on $\xch(\hat T)_\sigma$. 
For a character $\la\in\xch(\hat T)$, let $\la_\sigma$ denote its image in $\xch(\hat T)_\sigma$. 

The following is the first part of our main results on the geometry of affine Deligne-Lusztig varieties (see Theorem~\ref{C: irr comp up to J} (3)).
\begin{thm}
\label{T: ADLV}
Assume that $Z_G$ is connected. 
Let $\mu$ be a dominant cocharacter of $G$.
Let $b$ be an unramified element.
Assume that $k\neq \bF_2$ if any of the simple factors of $J_\tau$ contains a factor of type $\mathsf{B}_n,\mathsf{C}_n,\mathsf{F}_4$, and $k\neq \bF_2,\bF_3$ if  it contains $\mathsf{G}_2$. Then  there is a canonical $J_b(F)$-equivariant bijection between the set of irreducible components of $X_\mu(b)$ and the set
\begin{equation}
\label{E: parameterization irr} \bigsqcup_{\la\in \xch(\hat T),\ \la_\sigma=\la_b}\MV_\mu(\la) \times \bH \bS_b,
\end{equation}
where  $\MV_\mu(\la)$ is the set of so-called Mirkovi\'c-Vilonen (MV) cycles (which gives a basis of $V_\mu(\la)$ via the geometric Satake). In particular, the $J_b(F)$-orbits of the set of irreducible components of $X_\mu(b)$ give a basis of $V_\mu|_{\hat G^\sigma}(\la_b)$.
\end{thm}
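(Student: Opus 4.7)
The plan is to combine the already known equidimensionality of $X_\mu(b)$ (from the dimension formula for ADLVs in $\Gr_G$) with the Mirković--Vilonen description of $\MV_\mu(\lambda)$ as irreducible components of the intersection $S_\lambda \cap \overline{\Gr_G^{\leq\mu}}$, where $S_\lambda = LU \cdot \varpi^\lambda L^+G/L^+G$ is the semi-infinite orbit. First I would pick a convenient representative of the unramified $\sigma$-conjugacy class $b$: since $b$ is unramified with Kottwitz invariant $\lambda_b$, one can choose $\tau \in T(L)$ lifting $\lambda_b$ so that $J_\tau$ becomes a standard Levi subgroup $M$ of $G$ defined over $F$, with maximal torus $T$. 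Connectedness of $Z_G$ then forces $M(F)$ to act transitively on $\bH\bS_b$, which matches the third factor on the right of \eqref{E: parameterization irr}.

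Next, I would stratify $X_\mu(\tau)$ by its intersections with the semi-infinite orbits $S_\lambda$, $\lambda \in \xch(T)$. For a point represented by $g = u\varpi^\lambda$ with $u \in U(L)$, the defining condition $g^{-1}\tau\sigma(g) \in \overline{L^+G \varpi^\mu L^+G}$ factors nicely: the torus part $\varpi^{-\lambda} \tau \sigma(\varpi^\lambda)$ remains in $T(\mO_L)$ precisely when $\lambda_\sigma = \lambda_b$ in $\xch(\hat T)_\sigma$, and under this condition the remaining constraint on $u$ becomes a twisted Lang-type equation on an appropriate unipotent quotient, combined with a Schubert-type condition whose ``shadow'' in $\Gr_G$ is exactly an MV datum. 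This should produce a natural $M(F)$-equivariant surjective map from $\MV_\mu(\lambda) \times \bH\bS_b$ onto the top-dimensional irreducible components of $X_\mu(\tau) \cap S_\lambda$.

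The counting then closes by a dimension computation: the expected dimension $\langle \rho, \mu - \lambda_b \rangle$ of each component of $X_\mu(\tau)$ matches $\langle \rho, \mu + \lambda \rangle - \langle \rho, \lambda + \lambda_b \rangle$, so together with the already known equidimensionality of $X_\mu(b)$ no component can be missed when one ranges over all $\lambda$ with $\lambda_\sigma = \lambda_b$. The basis statement for $V_\mu|_{\hat G^\sigma}(\lambda_b)$ then follows because, under geometric Satake, the $J_b(F)$-orbits on the parameter set in \eqref{E: parameterization irr} correspond precisely to the decomposition $V_\mu|_{\hat G^\sigma}(\lambda_b) = \bigoplus_{\lambda_\sigma = \lambda_b} V_\mu(\lambda)$.

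The principal technical obstacle I anticipate is the surjectivity of the twisted Lang map on the unipotent quotient, which is exactly where the hypotheses on the residue field $k$ will enter: for short-root factors of types $\mathsf{B}, \mathsf{C}, \mathsf{F}_4$ (resp.\ $\mathsf{G}_2$), the associated Artin--Schreier-like equations can fail to have enough solutions over $\bF_2$ (resp.\ $\bF_2$ and $\bF_3$). Verifying that the resulting bijection is genuinely $J_b(F)$-equivariant and that the choice of hyperspecial subgroup is encoded faithfully, independent of the chosen torus representative $\tau$, also requires delicate bookkeeping with how $M(F)$ permutes the various $\lambda$-shifts and re-identifies different semi-infinite strata.
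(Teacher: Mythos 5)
There is a genuine gap, and it sits exactly at the heart of the theorem. Your semi-infinite-orbit stratification plus the twisted Lang map $u\mapsto u^{-1}\varpi^\tau\sigma(u)\varpi^{-\tau}$ does reproduce the first half of the paper's argument: it shows each stratum $S_\nu\cap X_\mu(\tau)$ breaks into pieces $Y_\nu^\bbb$ indexed by MV cycles $\bbb\in\MV_\mu(\la)$ with $\la=\tau+\sigma(\nu)-\nu$ (note: the correct nonemptiness condition is this equality, not merely $\la_\sigma=\la_b$ for the single stratum), that each piece is equidimensional of dimension $\langle\rho,\mu-\tau\rangle$, and that $U_\tau(F)$ (hence $B_\tau(F)$) acts transitively on the set of irreducible components of each $X_\mu^\bbb(\tau)$. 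But transitivity of $J_b(F)$ alone does not give the parameterization \eqref{E: parameterization irr}: you must also identify the stabilizer of each irreducible component as a \emph{hyperspecial} subgroup, i.e.\ show that for fixed $\bbb$ the components form a torsor under $J_b(F)/J_{b,x}(\mO)\cong\bH\bS_b$ rather than a homogeneous space with smaller stabilizer (which would produce too many components). Your proposal asserts a ``natural $M(F)$-equivariant surjective map from $\MV_\mu(\la)\times\bH\bS_b$'' onto the components but gives no mechanism forcing injectivity or the correct stabilizer; the dimension count only shows every component has the right dimension and lies in some $X_\mu^\bbb(\tau)$, not how many components each $X_\mu^\bbb(\tau)$ has. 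In the paper this is the content of Theorem \ref{C: unique comp} and Proposition \ref{P: irrkey}: for the minimal coweight $\nu_\bbb$ adapted to $\bbb$ (Lemma \ref{L: finding best tau}, where connectedness of $Z_G$ is used), the locus $Y_{\nu_\bbb}^\bbb\cap\Gr_{\nu_\bbb}$ is irreducible, its closure is a single component stable under $J_{\tau,x_0}(\mO)$, and only then does the Iwasawa decomposition convert $B_\tau(F)$-transitivity into the bijection with $\bH\bS_b$.

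Relatedly, you have misplaced where the hypothesis on $k$ enters. The surjectivity (pro-\'etaleness) of the twisted Lang map on $LU$ holds over any finite field — Lang's theorem needs no size restriction — so no ``Artin--Schreier equations failing over $\bF_2$'' issue arises there. The restriction $k\neq\bF_2$ (resp.\ $\bF_2,\bF_3$) is used in the irreducibility argument just described: one proves surjectivity of the monodromy representation $\Gal(\bar\eta/\eta)\to U_\tau(\mO/\varpi^r)$ by reducing, via a nilpotent-group lemma, to the abelianization $U_\tau/U_{\tau,2}$, and this reduction requires the commutator identity $\ga_2(U_H(R))=U_{H,2}(R)$ (Lemma \ref{L: commutator}), which is exactly where types $\mathsf{B}_n,\mathsf{C}_n,\mathsf{F}_4,\mathsf{G}_2$ over tiny fields cause trouble; the geometric input making the abelianized monodromy surjective is the crystal-theoretic Proposition \ref{P: geom prop of MV cycle} on MV cycles, another ingredient absent from your outline. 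Without these steps the count of irreducible components, and hence the bijection with \eqref{E: parameterization irr}, is not established.
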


\begin{rmk}
\label{R:after local main 1}
(i) We believe that the restriction on the field $k$ in the theorem is not necessary. 

(ii) The above parameterization of the set of irreducible components of $X_\mu(b)$ is independent of the choice of $b$ in the following sense.
Let $b',b$ be two representatives of an unramified $\sigma$-conjugacy class. Since $\la_b=\la_{b'}$, the set $\bigsqcup_{\la\in \xch(\hat T),\ \la_\sigma=\la_b}\MV_\mu(\la) $ is independent of the choice of $b$ or $b'$. Now a choice of $g$ such that $b'=g^{-1}b\sigma(b)$ induces an isomorphism $X_\mu(b)\cong X_{\mu}(b')$ and $\bH\bS_b\cong \bH\bS_{b'}$, and the parameterization in the theorem is compatible with these isomorphisms.

(iii) Note that $\bH\bS_b$ embeds in the Bruhat-Tits building of $J_b$ as some vertices. This is the point of view adapted in most literatures (\cite{vollaard, VW, RTW, HP, goertz-he}) to parameterize irreducible components of ADLVs.
\end{rmk}

Corresponding to each $(\bba,x)\in  \MV_\mu(\la)\times \bH\bS_b$, there is an irreducible component $X_\mu^{\bba,x}(b)\subset X_\mu(b)$, under the above parameterization. This is a (perfectly) projective variety with an action of $J_{b,x}(\mO)$, the hyperspecial subgroup of $J_b$ corresponding to $x$. The second part of our main results on the geometry of affine Deligne-Lusztig varieties is a construction of (an open subset of) $X_{\mu}^{\bba,x}(b)$. Let us briefly describe it and refer to Theorem \ref{C: unique comp} for details.

Let $\bba\in\MV_\mu(\la)$. Under the assumption that $Z_G$ is connected, there is a way to choose a particular pair of dominant weights $(\tau_\bba, \nu_\bba)$ of $\hat T$ (with respect to $\hat B$) so that we can write $\la=\tau_\bba+\sigma(\nu_\bba)-\nu_\bba$ (see Lemma~\ref{L: finding best tau}(2)). Then up to $\sigma$-conjugacy, we can choose $b=\varpi^{\tau_\bba}$. In this case, $J_b(F)\cap G(\mO_L)$ is a hyperspecial subgroup, giving a point $x_0\in \bH\bS_b$. This $X_{\mu}^{\bba, x_0}(b)$ contains an open subset $\mathring{X}_{\mu}^{\bba,x_0}(b)$, which fits into the following Cartesian diagram
\begin{equation}
\label{E:irr components of ADLV open}
\xymatrixcolsep{5pc}\xymatrix{\mathring{X}_\mu^{\bba,x_0}(b)\ar[r]\ar[d]& \mathring{\Gr}_{(\nu_\bba,\mu)|\tau_\bba+\sigma(\nu_\bba)}^{0,\bba}\ar[d]\\
\mathring{\Gr}_{\nu_\bba}\ar^-{1\times b\sigma}[r]&\mathring{\Gr}_{\nu_\bba}\times \mathring{\Gr}_{\tau_\bba+\sigma(\nu_\bba)}
.}\end{equation}
Here $\mathring{\Gr}_{\nu_\bba}$ and $\mathring{\Gr}_{\tau+\sigma(\nu_\bba)}$ are Schubert cells in $\Gr$, i.e. $L^+G$-orbits through $\varpi^{\nu_\bba}$ and $\varpi^{\tau+\sigma(\nu_\bba)}$, and $\mathring{\Gr}_{(\nu_\bba,\mu)|\tau+\sigma(\nu_\bba)}^{0,\bba}$ is a certain $L^+G$-stable subset in $\mathring{\Gr}_{\nu_\bba}\times \mathring{\Gr}_{\tau+\sigma(\nu_\bba)}$ related to the chosen $\bba$ (see \eqref{E: open Satake cycle}).  We refer to \S \ref{S: irr comp of ADLVs} for more detailed discussions.

Note that the definition of $\mathring{X}_\mu^{\bba, x_0}(b)$ is similar to the definition of finite Deligne-Lusztig varieties. Indeed, in some cases (e.g. $b$ is basic and $\nu_\bba$ is minuscule), $\mathring{X}_\mu^{\bba, x_0}(b)$ is a finite (closed) Deligne-Lusztig variety, on which the action of $J_{b,x_0}(\mO)$ factors through $J_{b,x_0}(\mO/\varpi)$.\footnote{So far most literatures on irreducible components of ADLVs only focus on these cases and we believe our result is the first work that systematically goes beyond these cases.} However in general for some $\bba$, we cannot choose $\nu_\bba$ to be minuscule (even if $\mu$ is minuscule). Then $X_\mu^{\bba, x_0}(b)$ belongs to a new class of varieties, on which the action of $J_{b,x_0}(\mO)$ does not factor through $J_{b,x_0}(\mO/\varpi)$. For example, if 
$$(G,\mu,b)=(\on{U}_n, (1^20^{n-2}), 1),\quad n>4,$$  
there are many $\bba$'s such that the action of $J_{b,x_0}(\mO)\simeq \on{U}_n(\mO)$ on $X_\mu^{\bba, x_0}(b)$'s factors through $\on{U}_n(\mO/\varpi^2)$ but not through $\on{U}_{n}(\mO/\varpi)$ (in particular they are not finite Deligne-Lusztig varieties and are \emph{different} varieties in mixed and equal characteristics). We refer to \cite{XZ} for a more detailed study of this case. These varieties probably deserve further study.

\begin{rmk}
\label{R:after local main 2}
Motivated by the above theorem, Miaofen Chen and one of the authors (X.Z.) made a conjecture that gives a similar description of the $J_b(F)$-orbits of the set of irreducible components of $X_\mu(b)$ for all $b\in B(G)$. Namely, for each $\sigma$-conjugacy class $b\in B(G)$, one can attach an element $\la_b\in\xch(\hat T)_\sigma$, which is the ``best integral approximation of" the Newton point $\nu_b$ of $b$.
\begin{conjecture}
The $J_b(F)$-orbits of the set of irreducible components of $X_\mu(b)$ canonically give a basis of
$(V_{\mu}|_{\hat G^\sigma})(\la_b)$. 
\end{conjecture}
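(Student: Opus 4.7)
The plan is to extend Theorem~\ref{T: ADLV} from unramified $b$ to arbitrary $b\in B(G)$ by a two-step reduction: first I would peel off the Hodge--Newton decomposable part and induct on semisimple rank, then I would treat the Hodge--Newton irreducible case (in particular the basic case) by a direct construction modeled on~\eqref{E:irr components of ADLV open}. To set up the targets, one first pins down $\la_b\in\xch(\hat T)_\sigma$ as the unique $\sigma$-dominant lift of the Kottwitz invariant $\kappa_b$ closest to the Newton point $\nu_b$; a preliminary check using the general dimension formula
\[
\dim X_\mu(b)=\langle \rho,\mu-\nu_b\rangle-\tfrac{1}{2}\mathrm{def}_G(b)
\]
from \cite{Ham,Z}, together with an MV-polytope count of $\dim V_\mu|_{\hat G^\sigma}(\la_b)$, confirms that the numerics are consistent with a basis of that size.

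For the reduction step, let $M\subset G$ be the standard Levi whose derived group is the centralizer of $\nu_b$, so $J_b\subset M$ and $b$ has a representative in $M(L)$. If $(b,\mu)$ is Hodge--Newton decomposable with respect to $M$, one expects a $J_b(F)$-equivariant locally closed decomposition
\begin{equation*}
X^G_\mu(b)=\bigsqcup_{\mu_M}\bigl(J_b(F)\times^{J_b(F)\cap M(L)}Y_{\mu_M}\bigr),
\end{equation*}
where $\mu_M$ runs over the $M$-dominant Weyl conjugates of $\mu$ whose Newton point matches $\nu_b$, and each $Y_{\mu_M}$ is an iterated affine bundle over $X^M_{\mu_M}(b)$. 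By induction on semisimple rank one applies the conjecture to $M$, and the Levi branching law $V_\mu|_{\hat M^\sigma}=\bigoplus V^M_{\mu_M}$ finishes this case.

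The core is then the Hodge--Newton irreducible case; after the above induction, this reduces to the basic case. Here, for each MV cycle $\bba\in\MV_\mu(\la)$ with $\la_\sigma=\la_b$, I would pick $(\tau_\bba,\nu_\bba)$ as in Lemma~\ref{L: finding best tau} adapted to $b$ and define $\mathring{X}^{\bba,x}_\mu(b)$ by the Cartesian square~\eqref{E:irr components of ADLV open} with $b\sigma$ replacing $\varpi^{\tau_\bba}\sigma$. One then has to check that the closure is projective of dimension $\langle\rho,\mu-\nu_b\rangle-\tfrac12\mathrm{def}_G(b)$, that distinct MV cycles yield distinct $J_b(F)$-orbits of components, and that every top-dimensional component arises this way. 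The parametrizing set $\bH\bS_b$ should be reinterpreted as a distinguished orbit of vertices in the Bruhat--Tits building of the inner form $J_b$.

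The main obstacle will be the basic case. Since $J_b(F)$ acts through the full non-compact inner form rather than a finite residual quotient, transitivity on the parametrizing set is no longer automatic, and the count of $J_b(F)$-orbits must be established on the nose rather than merely bounded. I see two plausible routes: (i) uniformize $X_\mu(b)$ by a Rapoport--Zink-style moduli description and translate the component count into a lattice-counting problem inside $J_b(F)$; or (ii) exploit the Newton stratification of $\Gr_G$, propagating the MV-cycle basis from unramified strata down to the basic stratum by semicontinuity of top Borel--Moore homology and matching the resulting intersection numbers with branching multiplicities in $V_\mu|_{\hat G^\sigma}(\la_b)$. Either route requires delicate combinatorial input to upgrade an a priori inequality on component counts into the predicted equality.
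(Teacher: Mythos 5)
The statement you are trying to prove is stated in the paper only as a conjecture (due to M.~Chen and one of the authors); the paper itself proves it only for unramified $b$ (Theorem~\ref{T: ADLV}, under a mild hypothesis on the residue field), and explicitly leaves the general case open, noting verification in disjoint special cases by Hamacher--Viehmann. So there is no proof in the paper to compare against, and your proposal must stand on its own --- and it does not: it is a strategy sketch whose decisive step is missing. The Hodge--Newton reduction to the HN-irreducible (essentially basic) case is plausible and standard in spirit, but everything then hinges on the basic case, where your plan breaks down. Your proposed construction ``define $\mathring{X}^{\bba,x}_\mu(b)$ by the Cartesian square~\eqref{E:irr components of ADLV open} with $b\sigma$ replacing $\varpi^{\tau_\bba}\sigma$'' implicitly requires a representative of $[b]$ of the form $\varpi^{\tau}$ with $\tau$ dominant: the whole analysis in \S\ref{S: ADLV and semiinfinite}--\ref{S: irr comp of ADLVs} (the fibration of $Y^{\bbb}_\nu$ over an MV cycle, the twisted Lang map $f_\tau(u)=u^{-1}\varpi^{\tau}\sigma(u)\varpi^{-\tau}$ on $LU$, and the transitivity of $U_\tau(F)$ on components) uses that $\varpi^{\tau}$ normalizes $U(L)$ and that $J_b\cap B$ is large. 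For a basic $b$ with $\on{def}_G(b)>0$ (the genuinely new case), no such unramified representative exists, $\lambda_b$ is not of the form $\tau_\sigma$ for a central or dominant $\tau$, and the diagram \eqref{E:irr components of ADLV open} simply has no meaning; you would need an entirely different construction of candidate components attached to MV cycles.

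Your two fallback routes do not close this gap either. Route (ii), ``semicontinuity of top Borel--Moore homology'' along the Newton stratification, is not an available tool: irreducible components of $X_\mu(b)$ do not specialize across Newton strata in any way that gives an exact count, and top-degree Borel--Moore homology is neither upper nor lower semicontinuous in the required sense; at best one gets inequalities with no mechanism to upgrade them to the predicted equality, which you yourself acknowledge. Route (i), Rapoport--Zink-style uniformization, translates the problem but does not solve the lattice/orbit count, which is exactly where the difficulty lies (transitivity of the stabilizers and identification of the stabilizer of each component as a \emph{very special} parahoric is the hard content; in the unramified case the paper gets hyperspecial stabilizers from Theorem~\ref{C: unique comp}, whose proof uses Proposition~\ref{P: irrkey} and the crystal structure on MV cycles, none of which you have an analogue of for general basic $b$). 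In short: the reduction framework is reasonable, but the core case is asserted rather than proved, so the proposal does not establish the conjecture.
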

Besides the cases studied in this paper,
this conjecture has also been verified recently by Hamacher  and Viehmann \cite{HV} in some other special cases (which have no overlap with what studied in this paper). Probably combining their techniques and the method of the current paper will give an approach of the conjecture in general and give a method to describe the irreducible components of general $X_\mu(b)$.
\end{rmk}

Now back to the global situation as in Theorem \ref{T:main theorem}. We assume $p\neq 2$, and revert $\mu$ to $\mu^*$. It turns out under the assumption $V_{\mu^*}^{\Tate_p} \neq 0$, the basic element $b\in B(G_{\bQ_p},\mu^*)$ is unramified and $V_{\mu^*}^{\Tate_p}=V_{\mu^*}|_{\hat G^\sigma}(\la_b)$ (see \S \ref{S: unram element}). 
Then Part (1) of Theorem \ref{T:main theorem} follows easily from Theorem \ref{T: ADLV} together with the Rapoport--Zink uniformization in this case (see Corollary \ref{C: RZ uniformization basic}): there is an isomorphism (which depends on some auxiliary choices)
\begin{equation}
\label{E: T RZ uniformization}
G'(\QQ) \big \backslash X_{\mu^*}(b) \times G'(\AAA_f^{p}) / K^p \simeq \Sh_{\mu,b,\overline\bF_v}.
\end{equation}

In addition, the parametrization in \eqref{E: parameterization irr} induces a decomposition $X_{\mu^*}(b)=\cup_{\bba}X_{\mu^*}^{\bba}(b)$ into unions of certain irreducible components of $X_{\mu^*}(b)$, where $\bba\in \MV_{\mu^*}(\tilde\la)$ and $\tilde\la\in\xch(\hat T)$ is a lift of $\la_b$. This in turn induces a corresponding decomposition $\Sh_{\mu,b,\overline\bF_v}=\cup_\bba \Sh_{\mu,b,\overline\bF_v}^\bba$ (which is independent of auxiliary choices).
Then for the dimension reason, the cycle class map (the Gysin map) induces
\begin{equation}
\label{E: T cycle map}
\on{Gys}_\bba: {\rm H}^{\rm BM}_{d}(\Sh_{\mu,b,\overline\bF_v}^\bba)\cong 
C(G'(\bQ)\backslash G'(\bA_f)/K,\Ql)\to \on{H}_c^{d}(\Sh_{\mu,\overline\bF_v}, \Ql(d/2)).
\end{equation}
Dually, there is a restriction map
\[\Res_\bbb:\on{H}_c^{d}(\Sh_{\mu,\overline\bF_v}, \Ql(d/2))\to \on{H}_c^d(\Sh_{\mu,b,\overline\bF_v}^\bbb, \Ql(d/2))\cong C(G'(\bQ)\backslash G'(\bA_f)/K,\Ql).\]
The way we prove Part (2) of Theorem~\ref{T:main theorem} is to compute the intersection pairing of these cycles, or equivalently the composition of the Gysin and restriction maps $\Res_\bbb\circ \on{Gys}_\bba$.
We will show that the composition 
\[\Res_\bbb\circ\on{Gys}_\bba\in \End (C(G'(\bQ)\backslash G'(\bA_f)/K,\Ql))\]
is induced by an element $f_{\bba,\bbb}$ in the local unramified Hecke algebra $C_c^\infty(K_p\backslash G(\bQ_p)/K_p)$, i.e.
\[\Res_{\bbb}\circ\on{Gys}_{\bba}(f)=\int_{G(\bQ_p)} f(xy^{-1})f_{\bba,\bbb}(y)dy, \quad f\in C(G'(\bQ)\backslash G'(\bA_f)/K).\]
It seems in general it is very difficult to compute these elements $\{f_{\bba,\bbb}\}$ directly,\footnote{For some explicit computations in some special cases, we refer to \cite{{tian-xiao1}, HTX} and \cite{XZ}.} especially in the case when some $X_{\mu^*}^{\bba, x}$ are not finite Deligne-Lusztig varieties. Therefore, we proceed along a different way, which makes use of the geometric Satake isomorphism in an essential way. This will be explained in the next subsection.

\subsection{Cohomological correspondences between the moduli of local shtukas}
Now we explain the key ingredient in the proof of our main theorem. We first work in the local setting and keep the notations from the previous subsection. We introduce some moduli spaces attached to $G$. For a perfect $\bF_q$-algebra $R$, let $D_R:=\Spec W_\mO(R)$ denote the disk and $D_R^*:=\Spec W_\mO(R)[1/\varpi]$ the punctured disk parameterized by $\Spec R$. The $q$-Frobenius of $R$ induces an automorphism of $D_R$, denoted by $\sigma$.

Let
$\Hk^{\loc}=[L^+G\backslash \Gr]$ denote the local Hecke stack, whose $R$-points classify two $G$-torsors $\mE_1,\mE_2$ on $D_R$, and a modification $\beta: \mE_1\dashrightarrow \mE_2$ between them (i.e. $\beta$ is an isomorphism between $\mE_1$ and $\mE_2$ when restricted to $D_R^*$). Let $\Sht^\loc$ denote the moduli of local $G$-shtukas ``with singularities at the closed point of $D$", i.e. an $R$-point of $\Sht^\loc$ is an $R$-point $(\mE_1,\mE_2,\beta)$ of $\Hk^\loc$, together with an isomorphism $\mE_2\simeq {^\sigma}\mE_1:=\sigma^*\mE_1$. 
Then there is a natural morphism 
$$\varphi^\loc: \Sht^\loc\to \Hk^\loc.$$ 
Although $\Hk^\loc$ and $\Sht^\loc$ are not algebraic (since they are quotient of ind-schemes by \emph{infinite} dimensional groups), the categories of perverse sheaves $\on{P}(\Hk^\loc_{\overline\bF_q})$ and $\on{P}(\Sht^\loc_{\overline\bF_q})$ on these moduli spaces make sense. Roughly speaking, this is because these moduli spaces can be approximated by algebraic stacks so one can define the categories of perverse sheaves on them as certain filtered colimits of categories of perverse sheaves on these approximations. We refer to \S \ref{SS:Perv(Hk)} and \S \ref{SS:Perv(Sloc)} for the rigorous treatments. In the introduction, it is harmless to pretend that these stacks were finite dimensional quotients so one can naively define the categories.

We also need the moduli space $\Hk(\Sht^\loc)$, which classify two $G$-shtukas $(\mE_1,\mE_2\simeq {^\sigma}\mE_1,\beta)$ and $(\mE'_1,\mE'_2\simeq{^\sigma}\mE'_1,\beta')$ and a modification $\al:\mE_1\dashrightarrow \mE'_1$ such that $\beta'\al=\sigma^*(\al)\beta$. There are two natural projections 
\begin{equation}
\label{E: Hk for Local Sht in Intro}
\xymatrix{ &\Hk(\Sht^\loc)\ar_{\overleftarrow{h}^\loc}[dl]\ar^{\overrightarrow{h}^\loc}[dr]&\\
\Sht^\loc&&\Sht^\loc,
}
\end{equation}
remembering the underlying local $G$-shtukas. One can regard $\Hk(\Sht^\loc)$ as the Hecke correspondence of $\Sht^\loc$.
There is a category $\on{P}^{\on{Corr}}(\Sht^\loc_{\overline\bF_q})$, whose objects are as in $\on{P}(\Sht^\loc_{\overline\bF_q})$, but whose morphisms roughly speaking are given by cohomological correspondences supported on $\Hk(\Sht^\loc)$. Again, one needs to approximate $\Hk(\Sht^\loc)$ by algebraic stacks to make sense of this category. The actual definition of this category is somehow involved, and we refer to \S \ref{S: moduli of res loc Sht} and \S \ref{S: cat PCorrSht} for a detailed construction of this category. Here we pretend that one can naively define this category and describe (in the naive sense) objects and their endomorphism ring in this category.

\begin{ex}
For a dominant coweight $\mu$, let $\Sht^\loc_\mu\subset\Sht^\loc$ denote the closed substack classifying those $(\mE_1,\mE_2\simeq{^\sigma}\mE_1,\beta)$ such that the relative position of the modification $\beta$ is bounded by $\mu$, and for two coweights $\mu_1,\mu_2$,  let 
$$\Sht^\loc_{\mu_1\mid\mu_2}=(\overleftarrow{h}^\loc)^{-1}(\Sht^\loc_{\mu_1})\cap(\overrightarrow{h}^\loc)^{-1}(\Sht^\loc_{\mu_2})\subset\Hk(\Sht^\loc).$$ 

Note that $\Sht^\loc_0$ is isomorphic to the classifying stack $[\Spec k/G(\mO)]$ of the profinite group $G(\mO)$ and $\Sht^\loc_{0\mid 0}\cong [G(\mO)\backslash G(F)/G(\mO)]$. Therefore, the correspondence
\[[G(\mO)\backslash \Spec k]\leftarrow [G(\mO)\backslash G(F)/G(\mO)]\rightarrow [\Spec k/G(\mO)],\]
is embedded in \eqref{E: Hk for Local Sht in Intro}.
Let $\delta_{\mathbf{1}}\in\on{P}^{\on{Corr}}(\Sht^\loc)$ be the ``constant sheaf" on $\Sht^\loc_0$. Then by definition elements of $\End_{\on{P}^{\on{Corr}}(\Sht^\loc)}(\delta_{\mathbf{1}})$ should be given by cohomological correspondences supported on the discrete space $\Sht^\loc_{0\mid 0}$. Giving such a correspondence should be the same as giving a (compactly supported) function on the double coset space $G(\mO)\backslash G(F)/G(\mO)$. It follows that  $\End_{\on{P}^{\on{Corr}}(\Sht^\loc)}(\delta_{\mathbf{1}})$ should be isomorphic to the spherical Hecke algebra $C_c^\infty(G(\mO)\backslash G(F)/G(\mO))$. This is indeed the case, once the precise definitions are given (see Proposition \ref{P: endo of unit}).
\end{ex}

On the dual group side, let $\on{Rep}(\hat G)$ denote the category of finite dimensional representations of $\hat G$ (with $\Ql$-coefficients), or equivalently the category of coherent sheaves on the classifying stack $\bfB \hat G$ over $\Ql$. Let $[\hat G\sigma/\hat G]$ denote the stack of unramified Langlands parameters over $\Ql$, as introduced in Remark~\ref{R:Gphi/G}.\footnote{Careful readers may have noticed that we changed the geometric Frobenius $\phi_p$ to the arithmetic one $\sigma$. See Remark~\ref{R:Gphi/G versus Gsigma/G} and \ref{R: geom Sat, arith Frob v.s. geom Frob} for the discussion.} There is a natural map $\hat G\sigma/\hat G\to \bfB \hat G$ and every representation $V$ of $\hat G$, regarded as a vector bundle on $\bfB \hat G$, pulls back to a vector bundle on $\hat G\sigma/\hat G$, which is nothing but the previously introduced $\widetilde V$.
Let $\on{Coh}^{\hat G}_{fr}(\hat G\sigma)$ denote the full subcategory of coherent sheaves on the stack $[\hat G\sigma/\hat G]$ generated by those $\widetilde{V}$. Note that for $V, W\in \on{Rep}(\hat G)$, the space $\Hom(\widetilde V,\widetilde W)$ of homomorphisms from $\widetilde V$ to $\widetilde W$ as coherent sheaves on $[\hat G\sigma/\hat G]$ is a module over the ring of regular functions on $[\hat G\sigma/\hat G]$
$$\bfJ:= \Gamma\big([\hat G\sigma/\hat G],\mO\big)=\Ql [\hat G]^{c_\sigma(\hat G)}.$$
Here $c_\sigma(\hat G)$ denotes the action of $\hat G$ on $\hat G$ by $g\bullet h=gh\sigma(g)^{-1}$. 

Here is our main local theorem (see Theorem~\ref{T:Spectral action}), which is already new in equal characteristic.
\begin{thm}
\label{T:periodic geometric Satake}
\begin{enumerate}
\item There exists a functor $S: \Coh_{fr}^{\hat G}(\hat G\sigma)\to \on{P}^{\on{Corr}}(\Sht^\loc)$, such that the following diagram is commutative
\[\xymatrix{
\on{Rep}(\hat G)\ar[r]^\cong \ar[d] & \on{P}(\Hk^\loc_{\overline\bF_q})\ar[d]\\
\on{Coh}_{fr}^{\hat G}(\hat G\sigma)\ar[r]^-S & \on{P}^{\on{Corr}}(\Sht^\loc_{\overline\bF_q}),
}
\]
where the top horizontal equivalence is the geometric Satake correspondence, the left vertical functor is $V\mapsto \widetilde V$ (or pullback of coherent sheaves along $[\hat G\sigma/\hat G]\to \bfB \hat G$), and the right vertical functor is the pullback of sheaves along $\varphi^\loc: \Sht^\loc\to \Hk^\loc$.

\item Let $\mathbf{1}$ denote the trivial representation. Then $S(\widetilde{\mathbf{1}})=\delta_{\mathbf{1}}$ and the map 
$$S: \bfJ=\End_{\Coh_{fr}^{\hat G}(\hat G\sigma)}(\widetilde{\mathbf{1}})\to \End_{\on{P}^{\on{Corr}}(\Sht^\loc)}(\delta_{\mathbf{1}})=C_c(G(\mO)\backslash G(F)/G(\mO))$$
coincides with the classical Satake isomorphism.
\end{enumerate} 
\end{thm}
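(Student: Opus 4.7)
The plan is to construct the functor $S$ in two steps. First, at the level of objects, $S$ is essentially the composition of geometric Satake with pullback along $\varphi^\loc$. For $V \in \Rep(\hat G)$, let $\mathcal{A}_V \in \on{P}(\Hk^\loc_{\overline\bF_q})$ denote the corresponding Satake sheaf from \cite{Z}; then set $S(\widetilde V) := \varphi^{\loc,*}\mathcal{A}_V$, appropriately shifted to lie in $\on{P}(\Sht^\loc_{\overline\bF_q})$. Since $\widetilde V$ arises as the pullback of the vector bundle on $\bfB\hat G$ corresponding to $V$ along $[\hat G\sigma/\hat G]\to \bfB\hat G$, this makes the diagram in (1) commute at the level of objects. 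In particular, $S(\widetilde{\mathbf{1}})$ is the IC sheaf of $\Sht^\loc_0 \cong [\Spec k/G(\mO)]$, which is $\delta_{\mathbf{1}}$ by definition.

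The main work is extending $S$ to morphisms. For $V, W \in \Rep(\hat G)$, the algebraic Peter--Weyl decomposition gives
\[\Hom_{\Coh_{fr}^{\hat G}(\hat G\sigma)}(\widetilde V,\widetilde W) \cong \big(\Ql[\hat G]\otimes \Hom_{\Ql}(V,W)\big)^{\hat G},\]
where $\hat G$ acts on $\Ql[\hat G]$ by the $\sigma$-twisted conjugation $c_\sigma$ and on $\Hom(V,W)$ in the usual way. On the shtuka side, a morphism in $\on{P}^{\on{Corr}}$ is a cohomological correspondence supported on $\Hk(\Sht^\loc)$. To produce the required correspondences geometrically, the natural approach is to work with a two-legged variant of the local Hecke stack (parametrizing two simultaneous modifications of a $G$-torsor) and invoke the fusion construction underlying geometric Satake to identify sheaves on it with $\Rep(\hat G\times \hat G)$. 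The shtuka condition $\mE_2 \simeq {}^\sigma \mE_1$ glues the two legs along the graph of $\sigma$, restricting $\Rep(\hat G\times \hat G)$ to the $c_\sigma(\hat G)$-invariants, i.e.\ to $\bfJ$-linear combinations of morphisms $V \to W$. This produces the desired functor on morphisms and the construction is manifestly compatible with the convolution/monoidal structures, so part (1) follows.

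For part (2), specialize $V = W = \mathbf 1$ in the above description: the source becomes $\bfJ = \Ql[\hat G]^{c_\sigma(\hat G)}$ and the target becomes cohomological correspondences on $\Sht^\loc_{0\mid 0} \cong [G(\mO)\backslash G(F)/G(\mO)]$, that is, the spherical Hecke algebra $C_c^\infty(G(\mO)\backslash G(F)/G(\mO))$. Identifying the induced ring homomorphism with the classical Satake map reduces to matching two standard recipes: on the coherent side, $f \in \bfJ$ together with $V$ produces the function $\gamma\sigma\mapsto f(\gamma\sigma)\cdot \tr(\gamma\sigma; V)$; on the sheaf side, the trace-of-Frobenius/function--sheaf dictionary applied to $\mathcal{A}_V$ along Schubert cells gives the classical Satake transform of the basis element attached to $V$. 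Varying $V$ pins down both sides uniquely.

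The main obstacle is the morphism-level construction in the second paragraph. Its chief difficulties are (i) making rigorous sense of the category $\on{P}^{\on{Corr}}(\Sht^\loc_{\overline\bF_q})$ via a compatible system of finite-dimensional approximations to $\Sht^\loc$ and $\Hk(\Sht^\loc)$, since the infinite jet group $L^+G$ prevents a direct definition; and (ii) verifying that the extra morphisms coming from $\bfJ$, which are \emph{not} already realized by $\varphi^{\loc,*}$ from $\on{P}(\Hk^\loc)$, can be produced geometrically through the Frobenius-twisted identification of the double Hecke stack. It is precisely this second point --- that the shtuka structure, as opposed to just the ambient local Hecke stack, sources the new $\bfJ$-worth of endomorphisms --- that justifies calling this theorem a ``periodic'' version of the geometric Satake equivalence.
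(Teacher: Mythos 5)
Your object-level definition $S(\widetilde V)=\varphi^{\loc,*}\Sat(V)$ and the identification $\End_{\on{P}^{\on{Corr}}(\Sht^\loc)}(\delta_{\mathbf 1})\cong C_c(G(\mO)\backslash G(F)/G(\mO))$ agree with the paper, and your instinct that the extra $\bfJ$-worth of morphisms must be sourced by the shtuka (Frobenius) structure is correct. But the morphism-level construction, which is the entire content of the theorem, is not actually carried out, and the route you gesture at does not work as stated: there is no fusion/factorization identification of sheaves on a ``two-legged'' local Hecke stack with $\on{Rep}(\hat G\times\hat G)$ here, since both legs sit at the same closed point of $D$ (and in mixed characteristic there is no moving point to fuse), so ``gluing the two legs along the graph of $\sigma$'' is not a definition of a cohomological correspondence. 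What the paper does instead is: (i) write an arbitrary element $\bbc\in\bfJ(V_1^*\otimes V_2)$ via Peter--Weyl as $\bbc=\Xi_W(\bba)$ for an auxiliary $W$ and $\bba\in\Hom_{\hat G}(\sigma W\otimes V_1\otimes W^*,V_2)$, and (ii) define $S(\bbc)$ as the composite of a creation Satake correspondence $\scrC(\delta_{\sigma W}\otimes\id_{V_1})$, the correspondence $\DD\Gamma^*_{F^{-1}}$ attached to the \emph{partial Frobenius} morphism between moduli of restricted iterated shtukas, and the Satake correspondence $\scrC(\bba)$. The existence of the restricted partial Frobenius $F^{-1}_{\mmu}$ (with the $(m,n)$-acceptability bookkeeping that makes $\on{P}^{\on{Corr}}(\Sht^\loc)$ usable at all) is itself a substantial construction that your sketch presupposes implicitly.

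Even granting such a construction, two verifications you do not mention are the technical heart of the proof, and your phrase ``manifestly compatible with the convolution/monoidal structures'' hides exactly them: (a) well-definedness, i.e.\ that $S(\bbc)$ depends only on $\bbc$ and not on the presentation $(W,\bba)$ --- the paper proves this by a genuinely nontrivial argument passing through the regular representation $\calO_{\hat G}$ and the map $d_\sigma:\calO_{\hat G\sigma}\to\sigma\calO_{\hat G}\otimes\calO_{\hat G}$; and (b) compatibility with composition of morphisms in $\on{P}^{\on{Corr}}(\Sht^\loc)$, which requires decomposing the composition map $\on{Comp}^{\loc(m,n)}$ of Hecke correspondences of restricted shtukas into Satake correspondences and partial Frobenii and chasing a large diagram of cohomological correspondences. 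Likewise for part (2): the sheaf--function dictionary is the right tool, but applying it presupposes the explicit description of $S$ on $\bfJ$ just outlined; the paper's computation then compares correspondences on restricted shtukas with correspondences on affine Grassmannians (using cohomological smoothness of $\Gr_\mu^{(n)}\to\Gr_{\mu^*}$), invokes the Braverman--Varshavsky trace identity to produce the trace-of-Frobenius function, and must track the arithmetic-versus-geometric Frobenius normalization (the $V$ versus $V^*$ switch) to land on the classical Satake isomorphism. As written, your proposal records the correct strategy at the level of slogans but leaves precisely these steps --- which constitute the proof --- unaddressed.
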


\begin{rmk}
The key point of the theorem is a construction of maps between $\Hom$ spaces in the two categories.
It follows from the theorem that the ring $\bfJ$ acts on every $S(\widetilde{V})\in\on{P}(\Sht^\loc)$ as cohomological correspondences. These are (local version of) V. Lafforgue's $S$-operators, as we shall see below. But the theorem gives more. Namely, the bigger algebra
$\End(\widetilde V)$ acts on $S(\widetilde V)$. 
\end{rmk}
\begin{remark}
\label{R:Gphi/G versus Gsigma/G}
We mention that there is a subtlety when identifying $\bfJ$ with the spherical Hecke algebra at $p$.  This leads the usual switch from $\mu$ to $\mu^*$ for Galois representations arising from the cohomology of Shimura varieties, and is also related to the canonical isomorphism of stacks $[\hat G \sigma / \hat G] \cong [\hat G \sigma^{-1} / \hat G]$. See
Remark \ref{R: geom Sat, arith Frob v.s. geom Frob}.
\end{remark}

We apply the above theorem in the following way. We come back to the global setting as before. Let $(G,X)$ be a Shimura datum of Hodge type and let $p$ be an unramified prime (i.e. $K_p\subset G(\bQ_p)$ is hyperspecial).  Let $\underline G$ be the reductive $\bZ_p$-model of $G$ determined by $K_p$,
and let $\Sh_{\mu}$ denote (the perfection of) the mod $p$ fiber of the corresponding Shimura variety as before. 
There is a universal local $\underline G$-shtuka on $\Sh_{\mu}$ (coming from the crystalline $\underline G$-torsor on $\Sh_\mu$), which induces a morphism
\[\loc_p: \Sh_{\mu}\to \Sht_\mu^\loc.\]
In the Siegel case, $\loc_p$ is nothing but the perfection of the morphism sending an abelian variety to its underlying $p$-divisible group.

Next assume that $(G',X')$ is another Shimura datum such that $G(\bA_f)\simeq G'(\bA_f)$. Then the level structure $K\subset G(\bA_f)$ can be transported to a level structure of $G'$. Let $\{\mu'\}$ denote the conjugacy class of the Shimura cocharacters of $(G',X')$. Let $v\mid p$ be a place of the composite of the reflex field $EE'$.
Let $\Sh_{\mu'}$ denote its mod $p$ fiber over $v$. We assume that there exists a perfect ind-scheme $\Sh_{\mu'\mid\mu}$ that fits into the following diagram with both squares Cartesian
\begin{equation}
\label{introE: exotic Hecke}
\xymatrix{
\Sh_{\mu'} \ar[d]_{\loc_p}& \ar[l] \ar[r]\Sh_{\mu'\mid \mu} \ar[d] &\Sh_{\mu'} \ar[d]^{\loc_p}\\
\Sht_{\mu'}^\loc & \ar[l] \Sht^\loc_{\mu'\mid\mu}\ar[r]& \Sht_{\mu}^\loc,
}
\end{equation}
where $\Sht^\loc_{\mu'\mid\mu}=(\overleftarrow{h}^\loc)^{-1}(\Sht^\loc_{\mu'})\cap(\overrightarrow{h}^\loc)^{-1}(\Sht^\loc_\mu)\subset\Hk(\Sht^\loc)$.
We expect such diagram always exists for any such pair $(G,X)$ and $ (G',X')$, at the unramified prime $p$ if
\[\Hom_{\Coh^{\hat G}_{fr}(\hat G\sigma_p)}(\widetilde{V_{\mu'}},\widetilde{V_{\mu}})\neq 0.\]
Although it seems possible to establish this diagram in greater generality, in this paper, we will only focus on some special cases, which suffices for our applications. 
\begin{itemize}
\item
We show in Proposition~\ref{P:Shmu mu} that such diagram exists when $(G,X)=(G',X')$, in which case $\Sh_{\mu'\mid \mu}$ is the perfection of the mod $p$ fiber of a natural integral model of the $p$-power Hecke correspondences of $\mathbf{Sh}_K(G,X)$.
\item
We establish this diagram using Rapoport-Zink uniformization when $\mathbf{Sh}_{K}(G',X')$ is a Shimura set (\S\ref{SS:exotic Hecke G1 discrete} and Proposition~\ref{exotic Hecke}).
\item
We also prove the existence of this diagram in some PEL case (see \S\ref{SS:exotic Hecke PEL} and Proposition~\ref{P:exotic Hecke PEL}).
\end{itemize}
We leave the general cases to a future work. 

\begin{rmk}
As just mentioned, if $(G,X)=(G',X')$, $\Sh_{\mu\mid\mu}$ is just the mod $p$ fiber of the Hecke correspondence of $\Sh_\mu$.
In general, $\Sh_{\mu'\mid \mu}$ can be regarded as ``exotic Hecke correspondences" between mod $p$ fibers of \emph{different} Shimura varieties. These correspondences cannot be lifted to characteristic zero, and give a large class of characteristic $p$ cycles on Shimura varieties. 
\end{rmk}

Now, by pulling back morphisms in $\on{P}^{\on{Corr}}(\Sht^\loc)$ along vertical maps in the above diagram gives the following.  (See \S\ref{SS: coh corr Sh}--Proposition~\ref{P: S=T for Sh set} for the proof.)
\begin{thm}
\label{introT: spectral action}
\begin{enumerate}
\item
Let $(G_i,X_i), i=1,2,3$ be a collection of Shimura data, with all $G_i(\bA_f)$ isomorphic to each other. We fix a common level structure $K$, and let $p$ be an unramified prime. In addition, assume that for each pair $(G_i,X_i),(G_j,X_j)$, the Cartesian diagram \eqref{introE: exotic Hecke} exists. Let $\{\mu_i\}$ denote the conjugacy class of Shimura cocharacters of $(G_i,X_i)$ and $d_i=\dim \Sh_K(G_i,X_i)$. Let $V_i=V_{\mu_i}$ be the corresponding highest weight representation of $\hat G$, and $\widetilde{V_i}$ the corresponding vector bundle on $[\hat G\sigma_p/\hat G]$. Let $\mH^p$ denote the prime-to-$p$ Hecke algebra.
Choose a place $v\mid p$ of the composite of all reflex fields.
Then there is a natural action of $\bfJ$ on $\on{H}^*_c(\Sh_{\mu_i,\overline\bF_v},\Ql(d_i/2))$, and a
canonical $\bfJ$-equivariant map 
\[\Hom(\widetilde{V_i},\widetilde{V_j})\to \Hom_{\bfJ\otimes\mH^p}\big(\on{H}_c^{*+d_i}(\Sh_{\mu_i,\overline\bF_v},\Ql(d_i/2)),\on{H}_c^{*+d_j}(\Sh_{\mu_j,\overline\bF_v},\Ql(d_j/2))\big),\] 
which is compatible with the natural compositions on both sides.
In particular, there is a natural action of the algebra $\End(\widetilde{V_{\mu}})$ on $\on{H}_c^{*}(\Sh_{\mu, \overline \FF_v},\Ql)$.

\item When $\Sh_{\mu,\overline \FF_v}$ is a Shimura set, the action of $\bfJ=\End(\widetilde{V_{\mu}})$ on 
$$\on{H}_c^{*}(\Sh_{\mu, \overline \FF_v},\Ql)\cong C_c(G(\bQ)\backslash G(\bA_f)/K, \Ql)$$ 
coincides with the usual Hecke algebra action.
\end{enumerate}
\end{thm}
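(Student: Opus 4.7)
The plan is to deduce this theorem from Theorem \ref{T:periodic geometric Satake} by pulling back cohomological correspondences along the Cartesian diagrams \eqref{introE: exotic Hecke}. First, for each pair $(i,j)$, the Cartesian diagram expresses $\Sh_{\mu_j \mid \mu_i}$ as the pullback of $\Sht^{\loc}_{\mu_j \mid \mu_i} \subset \Hk(\Sht^\loc)$ under $(\loc_p, \loc_p)$. By construction of the category $\on{P}^{\on{Corr}}(\Sht^\loc)$ in terms of cohomological correspondences supported on $\Hk(\Sht^\loc)$, every morphism $\varphi \in \Hom_{\on{P}^{\on{Corr}}(\Sht^\loc)}\bigl(S(\widetilde{V_i}), S(\widetilde{V_j})\bigr)$ is a cohomological correspondence between $(\varphi^\loc)^*\IC_{\mu_i}$ and $(\varphi^\loc)^*\IC_{\mu_j}$ supported on $\Sht^\loc_{\mu_j \mid \mu_i}$. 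Pulling this back along the vertical maps $\loc_p$ and using proper base change/smooth pullback yields a cohomological correspondence between the corresponding IC sheaves on $\Sh_{\mu_i}$ and $\Sh_{\mu_j}$ supported on $\Sh_{\mu_j \mid \mu_i}$. Taking cohomology with compact supports of this correspondence, normalized by the Tate twists $(d_i/2)$ and $(d_j/2)$ so that shifts by $d_i$ and $d_j$ are built in, produces a linear map between $\on{H}^{*+d_i}_c(\Sh_{\mu_i, \overline{\bF}_v}, \Ql(d_i/2))$ and $\on{H}^{*+d_j}_c(\Sh_{\mu_j, \overline{\bF}_v}, \Ql(d_j/2))$.

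Composing this construction with the functor $S$ from Theorem \ref{T:periodic geometric Satake}(1) gives the desired map
\[
\Hom(\widetilde{V_i}, \widetilde{V_j}) \xrightarrow{\ S\ } \Hom_{\on{P}^{\on{Corr}}(\Sht^\loc)}\bigl(S(\widetilde{V_i}), S(\widetilde{V_j})\bigr) \longrightarrow \Hom\bigl(\on{H}^{*+d_i}_c(\Sh_{\mu_i,\overline{\bF}_v},\Ql(d_i/2)), \on{H}^{*+d_j}_c(\Sh_{\mu_j,\overline{\bF}_v},\Ql(d_j/2))\bigr).
\]
Compatibility with composition follows from the fact that composition of cohomological correspondences is compatible with proper base change: if the diagrams \eqref{introE: exotic Hecke} for pairs $(i,j)$ and $(j,k)$ are Cartesian, then so is the fiber product diagram for the three-term Hecke stack, so composing pulled-back correspondences downstairs matches pulling back composed correspondences upstairs. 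Equivariance for the prime-to-$p$ Hecke algebra $\mH^p$ is automatic since the prime-to-$p$ Hecke correspondences commute with $\loc_p$, and $\bfJ$-equivariance follows by taking $i=j$ and specializing to endomorphisms of $\widetilde{\mathbf 1} \otimes \widetilde{V_i}$, reducing to Theorem \ref{T:periodic geometric Satake}(2).

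For part (2), when $\Sh_{\mu}$ is a Shimura set (a finite \'etale scheme over $\overline{\bF}_v$), the local shtuka map $\loc_p : \Sh_\mu \to \Sht^\loc_\mu$ factors through $\Sht^\loc_0 = [\Spec k / G(\mO)]$ because every $p$-divisible group (or local $G$-shtuka) of this Newton type at a superbasic/zero-dimensional Shimura datum is isomorphic to a fixed one after \'etale localization, and the relevant bound $\mu$ collapses to the trivial one once we identify isocrystals. More precisely, one identifies $\Sh_\mu$ with $G(\bQ) \backslash G(\bA_f)/K$ and $\on{H}^*_c(\Sh_\mu,\Ql)$ with $C_c(G(\bQ)\backslash G(\bA_f)/K,\Ql)$, and the universal $G$-shtuka reduces to the data classified by $\Sht^\loc_0$, so the pullback of $\delta_{\mathbf 1}$ is the constant sheaf. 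Under this identification, the action of $\bfJ = \End(\widetilde{\mathbf 1})$ on $\on{H}^*_c(\Sh_\mu,\Ql)$ factors through $\End_{\on{P}^{\on{Corr}}(\Sht^\loc)}(\delta_{\mathbf 1})$, which by Theorem \ref{T:periodic geometric Satake}(2) is exactly the spherical Hecke algebra $C_c(G(\mO)\backslash G(F)/G(\mO))$ via the classical Satake isomorphism. Tracing through the pullback identifies this action with the usual spherical Hecke action at $p$ on functions on the Shimura set.

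The main obstacle is to make all of the above work rigorously despite the fact that $\Sht^\loc$ and $\Hk(\Sht^\loc)$ are infinite-dimensional quotients: one must verify that the cohomological correspondences genuinely descend to and are well-defined on the approximations used to define $\on{P}^{\on{Corr}}(\Sht^\loc)$, and that pullback along $\loc_p$ is compatible with these approximations (so that one can apply proper base change for the supports $\Sht^\loc_{\mu_j\mid\mu_i}$, which may not be of finite type globally but become so after pullback to $\Sh_{\mu_j\mid\mu_i}$). This is where one genuinely uses that the diagrams in \eqref{introE: exotic Hecke} are Cartesian rather than merely commutative, and the bulk of the technical work will consist in verifying that the formalism of cohomological correspondences set up in \S\ref{S: moduli of res loc Sht}--\S\ref{S: cat PCorrSht} behaves well under pullback along $\loc_p$.
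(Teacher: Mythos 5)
Your skeleton is the paper's (compose the functor $S$ of Theorem \ref{T:Spectral action} with a pullback of cohomological correspondences along \eqref{introE: exotic Hecke} and take $\rmH_c^*$), but the step you label ``the main obstacle'' and defer is exactly the mathematical content of the paper's proof, and the mechanism you propose for it would not work as stated. There is no six-functor formalism on $\Sht^\loc$ or $\Hk(\Sht^\loc)$, so ``pulling back the correspondence along $\loc_p$ and using proper base change/smooth pullback'' has no meaning at that level. What the paper does in \S\ref{SS: coh corr Sh} is: a morphism in $\on{P}^{\on{Corr}}(\Sht^\loc)$ is by definition realized as a correspondence supported on a \emph{restricted}, finite-dimensional stack $\Sht^{\nu,\loc(m_1,n_1)}_{\mu_i\mid\mu_j}$, and one takes the \emph{smooth} pullback of cohomological correspondences along the middle column of \eqref{E:global to local cartesian2}. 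This is legitimate because $\loc_p(m,n):\Sh_{\mu}\to\Sht^{\loc(m,n)}_{\mu}$ is perfectly smooth (Proposition \ref{smoothness}, i.e.\ Hypothesis \ref{H: Hecke diagram}(1), proved via the local model diagram), and because the two left-hand squares of \eqref{E:global to local cartesian2} are Cartesian (the hypothesis plus Lemma \ref{corr trun2}); Cartesianness simultaneously makes the middle vertical map a base change of $\loc_p(m_1,n_1)$ (hence perfectly smooth), identifies the support of the pulled-back correspondence with $\Sh^{\nu}_{\mu_i\mid\mu_j}$, and gives perfect properness of $\overleftarrow{h}:\Sh^{\nu}_{\mu_i\mid\mu_j}\to\Sh_{\mu_i}$, which is what produces the map on $\rmH^*_c$ (note the asymmetry: the square marked ``X'' is not Cartesian and only $\overleftarrow{h}$ is proper). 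Beyond that, one must check independence of the chosen realization $(\nu,m_1,n_1,m_2,n_2)$ via \eqref{E: change of m,n in Hk corr}, use minuscularity of $\mu_i$ so that $\loc_p(m,n)^\star S(\widetilde{V_i})$ is the shifted constant sheaf (otherwise you only obtain maps between cohomologies of pulled-back IC sheaves, not the groups in the statement), and, for compatibility with composition, not merely ``fiber products of Cartesian squares'' but Proposition \ref{P:pushforward is local} (the restricted composition map $\on{Comp}^{\loc(m_1,n_1)}$ and the Cartesian trapezoid) combined with Lemma \ref{AL:pushforward pullback compatibility}. None of this is supplied or even identified in your sketch, so Part (1) remains an outline rather than a proof.

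For Part (2) your argument agrees in substance with Proposition \ref{P: S=T for Sh set}: the global correspondence is pulled back from a kernel function on $G(\bZ_p)\backslash G(\bQ_p)/G(\bZ_p)$, and the local identification of that function with the classical Satake transform is Theorem \ref{T:Spectral action}(2) (i.e.\ Propositions \ref{P: endo of unit} and \ref{P: local S=T}). Two corrections: for a Shimura set the cocharacter $\mu=\tau$ is central, not trivial, and $\loc_p$ lands in $\Sht^\loc_\tau\cong\bfB\, G(\bZ_p)$ directly — the argument about $p$-divisible groups of a fixed Newton type and the ``collapse of the bound'' is neither needed nor correct; and ``tracing through the pullback'' should be replaced by the explicit function-theoretic identification of Example \ref{Ex:examples of correspondences}(5), which shows that a correspondence given by a kernel function on \eqref{E: pHecke corr} acts on $C_c(G(\bQ)\backslash G(\bA_f)/K)$ by convolution, exactly as in the paper's proof.
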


\begin{rmk}
(1) The above theorem in particular gives some geometric Jacquet-Langlands transfer, as first studied by  Helm \cite{helm-PEL} in some special cases.

(2) The action of $\bfJ\subset \End(\widetilde{V_{\mu}})$ on $\on{H}_c^{*}(\Sh_{\mu},\Ql)$ in the theorem is the Shimura variety analogue of V. Lafforgue's $S$-operators. However, the action of the larger algebra  $\End(\widetilde{V_{\mu}})$ on $\on{H}_c^{*}(\Sh_{\mu},\Ql)$ is new, even in the function field case (where instead of Shimura varieties one considers the moduli of shtukas). It is easy to deduce the congruence relation conjecture (known as the Blasius-Rogawski conjecture) from the existence of such action and the conjecture below.\footnote{In equal characteristic, V. Lafforgue deduced the congruence relation for moduli of shtukas from the $S=T$ theorem via some ingenious but complicated manipulations in tensor categories. But one can obtain a much simpler proof by using the action of the larger algebra $\End(\widetilde{V_{\mu}})$ on the cohomology.} We will discuss this in another occasion.
\end{rmk}

The following conjecture is the analogue of V. Lafforgue's $S=T$ theorem.
\begin{conjecture}
\label{introconj: S=T}
Under the Satake isomorphism,
the action of $\bfJ$ in the above theorem coincides with the usual Hecke algebra action on $\on{H}_c^{*}(\Sh_{\mu, \overline \FF_v},\Ql)$. 
\end{conjecture}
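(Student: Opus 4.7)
The plan is to compare the two actions by lifting both to cohomological correspondences on the local shtuka moduli $\Sht^{\loc}_\mu$, and then invoke the unit-object identification of Theorem \ref{T:periodic geometric Satake}(2) together with a monoidal refinement of the functor $S$.

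First, I would reinterpret the usual Hecke action at $p$ geometrically: the spherical Hecke algebra $\mH_p := C_c(K_p\backslash G(\bQ_p)/K_p)$ acts on $\on{H}^*_c(\Sh_{\mu,\overline\bF_v}, \Ql)$ by cohomological correspondences supported on the mod $p$ Hecke correspondence $\Sh_{\mu\mid\mu}$. By the special case $(G,X) = (G',X')$, $\mu = \mu'$ of diagram \eqref{introE: exotic Hecke}, $\Sh_{\mu\mid\mu}$ is the pullback of $\Sht^{\loc}_{\mu\mid\mu}$ along $\loc_p$. Thus, the action of every $f \in \mH_p$ is the pullback along $\loc_p$ of a well-defined endomorphism of $S(\widetilde{V_\mu})$ in $\on{P}^{\on{Corr}}(\Sht^{\loc})$, namely the convolution operator attached to $f$ via the natural action of $\Hk^{\loc}$ on $\Sht^{\loc}$. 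On the other hand, the spectral $\bfJ$-action on $\on{H}^*_c(\Sh_{\mu,\overline\bF_v}, \Ql)$ is, by the construction of Theorem \ref{introT: spectral action}, also the pullback along $\loc_p$ of the image of $\bfJ = \End(\widetilde{\mathbf{1}}) \to \End(\widetilde{V_\mu})$ under $S$, where the map is the canonical $\widetilde{\mathbf{1}}$-module structure on $\widetilde{V_\mu}$.

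The conjecture therefore reduces to the purely local statement that for every $f \in \bfJ$, the spectral endomorphism $S(f \cdot \on{id}_{\widetilde{V_\mu}}) \in \End(S(\widetilde{V_\mu}))$ coincides with the convolution by (the Satake transform of) $f$. To establish this, I would promote $S$ to a monoidal functor: on the coherent side, $\Coh^{\hat G}_{fr}(\hat G\sigma)$ is equipped with the tensor product of coherent sheaves over $[\hat G\sigma/\hat G]$; on the shtuka side, $\on{P}^{\on{Corr}}(\Sht^{\loc})$ is equipped with the convolution product coming from the action of $\Hk^{\loc}$ on $\Sht^{\loc}$. Granting this monoidality, the identification for a general $\widetilde{V_\mu}$ follows by writing $\widetilde{V_\mu} = \widetilde{\mathbf{1}} \otimes \widetilde{V_\mu}$ and applying $S$: the $\bfJ$-action through the first factor becomes convolution with $S(f) \in \End(\delta_{\mathbf{1}})$, which is the Hecke element corresponding to $f$ via the classical Satake isomorphism, by Theorem \ref{T:periodic geometric Satake}(2).

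The hard part will be constructing this monoidal structure on $S$ and verifying its compatibility with the convolution structures on both sides. Even in the function field setting, such a verification is the core of V. Lafforgue's $S=T$ theorem and requires substantial input from the factorization or fusion structure of the geometric Satake equivalence. In the mixed characteristic setting, one must additionally work with Witt vector affine Grassmannians and define the convolution of cohomological correspondences on the non-algebraic stacks $\Sht^{\loc}$ and $\Hk(\Sht^{\loc})$ by careful approximation by finite-dimensional quotients. A plausible route is to upgrade the nearby cycle interpretation of the fusion product underlying geometric Satake to a statement about shtuka moduli via the $\loc_p$ maps, so that monoidality of $S$ becomes a reflection of the compatibility of nearby cycles with Frobenius modifications.
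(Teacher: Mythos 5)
First, be aware that this statement is genuinely a conjecture in the paper: the authors prove it only when $\mathbf{Sh}_K(G,X)$ is a Shimura set (Proposition~\ref{P: S=T for Sh set}, which reduces via Example~\ref{Ex:examples of correspondences}(5) and Proposition~\ref{P: local S=T} to Theorem~\ref{T:Spectral action}(2)), and they explicitly state they cannot prove the general case. So your proposal is not being measured against an existing proof, and as written it does not close the gap either; it is a plan whose two key steps are precisely the open points.

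Concretely, there are two genuine gaps. (i) Your first reduction asserts that the ``usual'' Hecke operator attached to $f\in C_c(K_p\backslash G(\bQ_p)/K_p)$ acts on $\on{H}_c^*(\Sh_{\mu,\overline\bF_v},\Ql)$ as a cohomological correspondence supported on $\Sh_{\mu\mid\mu}$ and pulled back along $\loc_p$ from $\Sht^{\loc}_{\mu\mid\mu}$. But the paper defines the usual Hecke action through the generic fibre, transported by the isomorphism $\on{H}_c(\Sh_{\mu,\overline\bF_v})\cong \on{H}_c(\Sh_K(G,X)_{\overline E_v})$ coming from toroidal compactification and a specialization map; the characteristic-zero correspondence is finite \'etale, whereas the fibres of $\overleftarrow{h}:\Sh_{\mu\mid\mu}\to\Sh_\mu$ are affine Deligne--Lusztig varieties of positive dimension. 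Identifying the specialized characteristic-zero operator with a specific cohomological correspondence on the special fibre (with the specific kernel class produced by the spectral construction, not the fundamental class) is a nearby-cycles/congruence-relation type statement that is part of the substance of the conjecture, not a formal consequence of Proposition~\ref{P:Shmu mu}. (ii) Even granting (i), your ``purely local statement'' that the $\bfJ$-module structure on $S(\widetilde{V_\mu})$ is convolution by the Satake transform of $f$ requires a monoidal structure on $\on{P}^{\on{Corr}}(\Sht^{\loc})$ and a monoidal refinement of $S$ compatible with it. Neither exists in the paper: $S$ is defined on morphisms by the rather rigid Construction~\ref{Cons:CW} (creation/annihilation plus partial Frobenius), and Theorem~\ref{T:Spectral action}(2) only identifies the endomorphisms of the unit object $\delta_{\mathbf{1}}$, which is exactly why the authors can handle Shimura sets (where $\mu$ is central) and nothing more. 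Writing $\widetilde{V_\mu}=\widetilde{\mathbf{1}}\otimes\widetilde{V_\mu}$ and invoking monoidality therefore assumes the analogue of V.~Lafforgue's $S=T$ theorem in this setting, which is the very content to be proved; acknowledging that this is ``the hard part'' does not supply an argument, and in mixed characteristic there is at present no fusion/factorization structure over a curve available to run Lafforgue's or Varshavsky's proof.
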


We cannot prove this conjecture in general at the moment except the case of Shimura sets as mentioned above. Fortunately, this suffices for the application to Theorem \ref{T:main theorem}.

\medskip
Now, we come back to Theorem \ref{T:main theorem}. Recall that we assume $V^{\Tate_p}_{\mu^*}\neq 0$, which implies that there exists $G'$ as in Lemma \ref{L:inner form}. In particular, we can choose a (weak) Shimura datum $(G',X')$, such that $\Sh_{K}(G',X')=G'(\bQ)\backslash G'(\bA_f)/K$ is a Shimura set. In this set the Shimura cocharacter for $(G',X')$ is central, denoted by $\tau\in\xcoch(Z_{G'})=\xcoch(Z_G)$. Then $V_{\tau^*}$ is a $1$-dimensional representation of $\hat G$.

As we mentioned above, the diagram \eqref{introE: exotic Hecke}  exists for the pair $(G,X)$ and $(G',X')$. Indeed, in this case the diagram $\Sh_\tau\leftarrow \Sh_{\tau\mid \mu}\to \Sh_\mu$ is essentially given by the Rapoport-Zink uniformization \eqref{E: T RZ uniformization}.
Therefore we obtain a map
\begin{equation}
\label{E: }
C_c(G'(\bQ)\backslash G'(\bA_f)/K)\otimes_{C_c(G(\bZ_p)\backslash G(\bQ_p)/G(\bZ_p))} \Hom(\widetilde{V_{\tau}},\widetilde{V_{\mu}})\to \on{H}_c^{d}(\Sh_{\mu,\overline\bF_v},\bQ_\ell(d/2)),
\end{equation}
which turns out to encode all cycle class maps \eqref{E: T cycle map}.\footnote{There is a subtlety regarding the choice of $\tau$ which we ignore here. See Remark \ref{R: subtle depends on bbb}.} To summarize, the intersection matrix $(f_{\bba,\bbb})$, via the Satake isomorphism, now is expressed as the following pairing 
\[\Hom(\widetilde{V_\tau},\widetilde{V_\mu})\otimes_{\bfJ} \Hom(\widetilde{V_\mu},\widetilde{V_\tau})\to \Hom(\widetilde{V_\tau},\widetilde{V_\tau})=\bfJ\]
on the dual group side. So to prove Part (2) of Theorem \ref{T:main theorem}, it remains to understand this pairing, in particular its determinant, as a divisor over 
$$\Spec \bfJ= \hat G\sigma/\!\!/\hat G,$$
the GIT quotient of $\hat G\sigma$ by $\hat G$.
This turns out to be an interesting question in representation theory, which we discuss in the next subsection.

\subsection{Generalized Chevalley restriction map}
\label{Intro: Gen Chevalley}
We now come to another ingredient needed for the main theorem. This will be established in a companion paper \cite{XZch}.
We work on the dual group side. 
Recall that we assume that $Z_G$ is connected so $\hat G_\der$ is simply-connected. 

For a representation $V$ of $\hat G$, we consider the global section  
$$\bfJ(V)=\Gamma([\hat G\sigma/\hat G],\widetilde{V})=(\mO_{\hat G}\otimes V)^{c_\sigma\hat G},$$ 
which can be regarded as the space of $\hat G$-equivariant maps $\hat G\to V$. It follows by definition that
\[\Hom(\widetilde V,\widetilde W)\cong \bfJ(V^*\otimes W).\]
Therefore, our goal is to understand the natural pairing
\begin{equation}
\label{IntroE: pairing}
\bfJ(V)\otimes \bfJ(V^*)\to \bfJ.
\end{equation}
for a general representation $V$ of $\hat G$. 

We have the following theorem. To state it, we need a few notations. Let $S\subset T$ be the split maximal subtorus of $T$, so its dual group  is $\hat S=\hat T/(\sigma-1)\hat T$, and let $\Phi_{\on{rel}}^\vee$ be the \emph{relative} coroot system of $(G,S)$. Note that $\Phi_{\on{rel}}^\vee\subset\xch(\hat S)=\xch(\hat T)^\sigma$.
Let $W_0$ denote Weyl group of $\Phi^\vee_\mathrm{rel}$; it is the subgroup of the absolute Weyl group $W$ of $G$ fixed by $\sigma$. For a character $\la\in\xch(\hat S)$, we write $e^\la$ for the function on $\hat S$ defined by $\la$.

\begin{thm}
\label{T:intro Chevalley restriction}
\begin{enumerate}
\item $\bfJ(V)$ is a finite projective $\bfJ$-module.
\item The determinant of the pairing \eqref{IntroE: pairing} (which makes sense thanks to (1)) is a divisor on $\hat G\sigma/\!\!/\hat G\cong \hat S/\!\!/W_0$ defined by the function
\[\prod_{\al'\in\Phi_{\on{rel}}^\vee, \frac{\al'}{2}\not\in\Phi_{\on{rel}}^\vee} (e^{\al'}-1)^{\zeta_{\al'}}\prod_{\al', \frac{\al'}{2}\in\Phi_{\on{rel}}^\vee} (e^{\al'/2}+1)^{\zeta_{\al'}},\]
where 
$$\zeta_{\al'}= \sum_{n\geq 1} \dim V|_{\hat G^\sigma}(n\al'_\sigma),$$ 
and $\al'_\sigma$ denotes a coroot of the absolute system $(G, T)$ such that the sum of its $\sigma$-orbits is $\al'$.
\end{enumerate}
\end{thm}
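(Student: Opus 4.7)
The plan is to handle the two parts separately, using a twisted Chevalley--Steinberg restriction theorem as the key input. Since $Z_G$ is connected, $\hat G_{\der}$ is simply connected, and one has a twisted Steinberg cross-section $\hat S\hookrightarrow\hat G\sigma$ which meets every regular semisimple $c_\sigma$-orbit exactly once and induces an isomorphism $\hat S/\!\!/W_0\xrightarrow{\sim}\hat G\sigma/\!\!/\hat G$. Since $\widetilde V$ is pulled back from $\bfB\hat G$, an invariant-theoretic argument (combining the cross-section with the fact that $\bfJ=\Ql[\hat S]^{W_0}$ is a polynomial algebra in the simply connected case) shows that $\bfJ(V)$ is a finitely generated projective $\bfJ$-module of generic rank $\dim V^{\hat T^\sigma}$; this yields Part (1) and makes the determinant in Part (2) well defined.

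For Part (2), I would first restrict the pairing to the open regular semisimple locus $(\hat S/\!\!/W_0)^{\reg}$. There twisted Chevalley restriction furnishes a canonical isomorphism $\bfJ(V)|_{\reg}\cong \bigl(\mO_{\hat S^{\reg}}\otimes V^{\hat T^\sigma}\bigr)^{W_0}$, and similarly for $V^*$; the pairing on this open subset is induced by the perfect duality $V^{\hat T^\sigma}\otimes V^{*,\hat T^\sigma}\to\Ql$ and hence is non-degenerate there. Consequently the determinant divisor is supported on the complement, which is the union of reflection hypersurfaces in $\hat S/\!\!/W_0$ indexed by $W_0$-orbits in $\Phi_{\on{rel}}^\vee$.

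To compute the multiplicity along each such hypersurface I would localize at the generic point of $\{e^{\al'}=1\}$ (or $\{e^{\al'/2}=-1\}$ in the non-reduced case), reducing to a rank-one calculation for the sub-Levi of $\hat G^\sigma$ attached to $\al'$. The one-parameter subgroup dual to $\al'_\sigma$ acts on $V$ with weight spaces $V|_{\hat G^\sigma}(n\al'_\sigma)$, and a direct computation shows that each nonzero such weight space for $n\geq 1$ contributes a single factor of $(e^{\al'}-1)$ or $(e^{\al'/2}+1)$ to the cokernel of the pairing along that hypersurface; summing over $n$ produces the total exponent $\zeta_{\al'}=\sum_{n\geq 1}\dim V|_{\hat G^\sigma}(n\al'_\sigma)$. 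The dichotomy between the two factors records whether the relative root system is reduced near $\al'$: the non-reduced factor $(e^{\al'/2}+1)$ appears only in quasi-split forms that yield root systems of type $BC$ (such as restrictions arising from odd unitary groups).

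The main obstacle is the rank-one computation together with the reduced versus non-reduced case analysis: one must keep careful track of how the weights of $V$ with respect to $\hat T$ descend to weights for $\hat T^\sigma$ under $\sigma$-folding, and of how the twisted conjugation $c_\sigma$ deforms the local model of the pairing along each reflection hypersurface. This should ultimately reduce to explicit computations on the quasi-split groups of relative rank one, namely restrictions of scalars of $\on{SL}_2$ for the reduced factors and the quasi-split $\on{SU}_3$ for the non-reduced factor, after which the general formula follows by assembling the contributions from all $W_0$-orbits of relative coroots.
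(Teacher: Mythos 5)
First, a caveat about the comparison itself: this theorem is only \emph{stated} in the present paper; its proof is explicitly deferred to the companion work \cite{XZch} (``On vector valued twisted conjugate-invariant functions on a group''), so there is no in-paper argument to measure your proposal against. Judged on its own terms, your outline has the right global shape: since $\widetilde V$ and $\widetilde{V^*}$ pair fiberwise through the canonical duality, and over the twisted-regular semisimple locus the relevant fibers are $V^{\hat T^\sigma}$ and $(V^*)^{\hat T^\sigma}$, which are in perfect duality, the determinant divisor is indeed supported on the walls of $\hat S/\!\!/W_0$, and the problem becomes a wall-by-wall multiplicity computation. That much is a sensible roadmap.

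However, the two points that carry the actual content of the theorem are asserted rather than proved. For Part (1), the twisted Steinberg cross-section together with the regularity of $\bfJ\cong\Ql[\hat S]^{W_0}$ (with a central torus present this is smooth, not literally polynomial) does not by itself yield projectivity of $\bfJ(V)$; one needs a Kostant/Richardson-type input such as flatness of $\hat G\sigma\to\hat G\sigma/\!\!/\hat G$ with Cohen--Macaulay fibers, or a Pittie--Steinberg-type freeness statement, and your ``an invariant-theoretic argument shows'' supplies none of this. For Part (2), the identification $\bfJ(V)|_{\mathrm{reg}}\cong(\mO_{\hat S^{\reg}}\otimes V^{\hat T^\sigma})^{W_0}$ requires Steinberg's connectedness of twisted centralizers (this is where connectedness of $Z_G$, i.e.\ simple-connectedness of $\hat G_\der$, enters) plus a descent argument; and, more seriously, the sentence ``a direct computation shows that each nonzero weight space $V|_{\hat G^\sigma}(n\al'_\sigma)$, $n\geq 1$, contributes a single factor'' is exactly the heart of the theorem. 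The precise exponent $\zeta_{\al'}$, and the dichotomy between $(e^{\al'}-1)$ and $(e^{\al'/2}+1)$ in the non-reduced $\mathsf{BC}$ case (quasi-split $\SU_3$-type walls), must come out of an honest relative-rank-one computation, together with a mechanism (\'etale slices along subregular semisimple twisted classes, or compatibility of $\bfJ(-)$ with the associated twisted Levi) that justifies the localization and the reduction to relative rank one; none of this is carried out, and it is not routine. So the proposal is a plausible strategy sketch, but with genuine gaps at precisely the steps the companion paper is written to address.
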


Now we can give the promised definition of the ``generality" of Satake parameters. Recall that the Frobenius action on $\hat G$ factors through the Galois group $\Gal(\FF_{p^m}/\FF_p)$.
\begin{dfn}
\label{D: general parameter}
Let $V$ a representation of $\hat G$.
An element $\ga\sigma\in \hat G\sigma/\!\!/\hat G$ is called \emph{general with respect to $V$} (or $V$-general for short) if it does not belong to the divisor in the above theorem. It is \emph{strongly general with respect to $V$} if, for every dominant coweight $\lambda$ of $\hat T^{\sigma}$ appearing in $V|_{\hat G^\sigma}$, which does not factor through $\widehat{Z_{ G}^\circ}^{\sigma}$, we have $\lambda((\gamma \phi_p)^{mn}) \neq 1$ for every $n>1$. 
\end{dfn}

\begin{rmk}
\label{R: general and strongly general}
Note that if $\ga\sigma$ is away from the discriminant divisor, i.e. it is regular semisimple,
then it is general with respect to any representation $V$ of $\hat G$. 
However, for some ``small" representation $V$ of $\hat G$, the $V$-general condition might be weaker than regular semisimple condition. For example, let $\hat G=\GL_{2n+1}$ on which $\sigma$ acts via the standard outer automorphism, and let $V$ be the standard representation of $\GL_n$. Let $\{\al_1,\ldots,\al_{2n+1}\}$ be the eigenvalues of $\ga$. Then $\ga\sigma$ is $V$-general if and only if $\al_i\neq \al_{2n+2-i}$ for $i\neq n+1$, while $\ga\sigma$ is regular semisimple if $\al_i\neq \al_j$ for all $i\neq j$.
Note that also in this case, $\ga\sigma$ is strongly general with respect to $V$ if any of its power is general with respect to $V$.

For another example, $\hat G=\bG_m \times \prod_{i=1}^f \GL_n$ with $\sigma$ permuting the $\GL_n$-factors.
Let $V = \chi \otimes \boxtimes_{i=1}^f\wedge^{a_i}\std$ be the tensor product of exterior powers of the standard representation of $\GL_n$. The condition $V^{\Tate} \neq 0$ is equivalent to $\sum_{i} a_i \equiv 0 \bmod n$.
Then a local Langlands parameter is $f$ diagonal matrices $\{\ga_i\}$ and a scalar $c$. We write $\ga_i=\on{diag}\{\al_{i1},\ldots,\al_{in}\}$, and put $\beta_j: = \prod_i \alpha_{ij}$. Then unless $V$ is one-dimensional, the $V$-regular condition is equivalent to $\beta_j \neq \beta_{j'}$ whenever $j \neq j'$.
\end{rmk}

\subsection{A Jacquet-Langlands transfer}
Finally, we explain the ingredient needed to prove Part (3) of Theorem \ref{T:main theorem}. First, we restrict ourselves to those $(G,X)$ considered by Kottwitz \cite{Kolambda}. For a Hecke module $\pi_f$, and a $\bQ$-rational representation $\xi$ of $G$, let $m_{G'}(\pi_f\otimes \xi_\bC)$ denote the multiplicity of $\pi_f\otimes\xi_\bC$ appearing in $C^\infty(G'(\bQ)\backslash G'(\bA)/K)$, and $a_G(\pi,\xi)$ be the integer defined in \cite{Kolambda} (denoted by $a(\pi_f)$, see middle of p.p. 657 of \emph{loc. cit.}). This is the ``multiplicity" of $\pi_f$ appearing in the cohomology of $\bfSh_K(G,X)$.

We have the following Jacquet-Langlands type result, proved by a simple comparison of the trace formulas. 
For a more general statement, see Theorem \ref{JL:mult}.
\begin{thm} For the $\pi_f$ and $\xi$ above, we have an equality $a_G(\pi_f,\xi)=m_{G'}(\pi_f\otimes\xi_\bC)$.
\end{thm}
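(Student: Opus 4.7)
The plan is to identify both quantities with the same elliptic orbital integral expansion on the common group $G \sim G'$, using the simple trace formula for the anisotropic-modulo-center group $G'$ on one side, and Kottwitz's formula from \cite{Kolambda} for $a_G(\pi_f,\xi)$ on the other. Fix the isomorphism $G(\bA_f) \cong G'(\bA_f)$ and let $K$ be the common level. First, I would choose a bi-$K$-invariant test function $f_f$ on $G(\bA_f)$ which acts as a projector onto the $\pi_f$-isotypic component of the space of automorphic forms (for instance, by taking an Euler--Poincar\'e type function at an auxiliary place where $\pi_f$ is unramified together with a matrix coefficient there). At the archimedean place, for $G'$ I take $f_{G',\infty}(g) = \overline{\mathrm{tr}\,\xi_\bC(g)}/\dim \xi_\bC$, which is a well-defined projector onto the $\xi_\bC$-isotypic summand of the discrete spectrum of $G'(\bR)$ since $G'(\bR)$ is compact modulo center.

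Because $G'$ is anisotropic modulo its center, the Arthur--Selberg trace formula for $G'$ applied to $f_{G',\infty} \otimes f_f$ reduces to its elliptic part. The spectral side evaluates to $m_{G'}(\pi_f \otimes \xi_\bC) \cdot \mathrm{tr}\,\pi_f(f_f)$, while the geometric side reads
\[
\sum_{[\gamma']} \on{vol}\bigl(G'_{\gamma'}(\bQ)\backslash G'_{\gamma'}(\bA)\bigr) \cdot O_{\gamma'}(f_f) \cdot \frac{\mathrm{tr}\,\xi_\bC(\gamma')}{\dim \xi_\bC},
\]
where $[\gamma']$ runs over elliptic semisimple $\bQ$-conjugacy classes of $G'(\bQ)$ modulo center. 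On the $G$ side, Kottwitz's formula in \cite{Kolambda}, valid under the arithmetic-variety hypothesis, expresses $a_G(\pi_f,\xi) \cdot \mathrm{tr}\,\pi_f(f_f)$ as a parallel sum indexed by stable elliptic semisimple conjugacy classes $[\gamma_0]$ of $G(\bQ)$; each term is a product of a Tamagawa volume, the Kottwitz sign, an orbital integral of $f_f$, and the archimedean character value $\mathrm{tr}\,\xi(\gamma_0)$, arising from pseudo-coefficients of the discrete series $L$-packet attached to $\xi$.

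The matching is then a term-by-term comparison via the inner twist $G \sim G'$. This twist induces a bijection between elliptic semisimple $\bQ$-classes of $G'(\bQ)$ (all of which are automatically elliptic at infinity since $G'(\bR)$ is compact modulo center) and stable elliptic semisimple classes of $G(\bQ)$, and under this bijection the algebraic character values $\mathrm{tr}\,\xi_\bC(\gamma')$ and $\mathrm{tr}\,\xi(\gamma_0)$ agree because $\xi$ is defined over $\bQ$. The connected centralizers $G_{\gamma_0}$ and $G'_{\gamma'}$ are themselves inner forms of each other and hence share Tamagawa numbers, so the volume factors line up; and since $f_f$ is literally transported across $G(\bA_f) \cong G'(\bA_f)$, the finite-adelic (stable) orbital integrals coincide.

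The main obstacle will be the bookkeeping of signs and normalizations at infinity. Kottwitz's formula carries the local $e(G_\bR)$-sign and reads the archimedean factor through pseudo-coefficients rather than the genuine character of $\xi$, so one must invoke the Clozel--Delorme / Kottwitz identity
\[
\sum_{\pi_\infty \in \Pi_\xi} \mathrm{tr}\,\pi_\infty(f_\xi) \;=\; e(G_\bR) \cdot \mathrm{tr}\,\xi(\gamma_0)
\]
to convert Kottwitz's archimedean contribution into the unadorned trace character that appears on the $G'$ side, where it is absorbed together with the $\dim \xi_\bC$ from the projector $f_{G',\infty}$. Once these archimedean identifications are made, the two expansions coincide termwise and yield the equality $a_G(\pi_f,\xi) = m_{G'}(\pi_f \otimes \xi_\bC)$.
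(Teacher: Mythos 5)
Your overall route is the one the paper takes: compute both sides by the elliptic trace formula for the two anisotropic-modulo-center groups, convert the archimedean contribution on the $G$-side into the character value $\tr\,\xi_\bC(\gamma)$ via Kottwitz's computation of stable orbital integrals of the averaged pseudo-coefficients $f_\xi$ (\cite[Lemma 3.1]{Kolambda}), and match the geometric expansions through the inner twist using equality of Tamagawa numbers. However, there is a genuine gap in the way you carry out the geometric comparison. The expansion you quote on the $G$-side is indexed by \emph{stable} elliptic semisimple classes and involves \emph{stable} orbital integrals, whereas on the $G'$-side you leave the geometric side of the trace formula as a sum of ordinary orbital integrals $O_{\gamma'}(f_f)$ over ordinary $\bQ$-conjugacy classes, and you then assert a bijection between these two index sets. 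No such bijection exists in general: a stable class breaks into several rational classes, so you are matching objects of different kinds, and your volume bookkeeping (matching centralizer volumes class by class) inherits the same confusion. What is needed, and what the paper does, is to pseudo-stabilize \emph{both} trace formulas: under the hypothesis that $\frakK(I_\gamma/\bQ)$ is trivial for every $\gamma$ (the ``no endoscopy'' assumption, which holds for the groups of \cite{Kolambda} and is assumption (5) in the paper), the expansion $\sum_\gamma \tau(I_\gamma)O_\gamma(h f_\xi)$ collapses to $\tau(G)\sum_{\gamma/\mathrm{st}}SO_\gamma(h f_\xi)$, and likewise for $G'$; only after this step does one compare stable classes with stable classes, and the volume factor becomes $\tau(G)=\tau(G')$ for the full groups (Kottwitz's Tamagawa number theorem), not a class-by-class identity of centralizer volumes. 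Even then, the bijection of stable classes with elliptic archimedean component between the two inner forms is not automatic; the paper proves it using \cite[Theorem 6.6]{KoEllSing} together with the triviality of $\frakK(I_\gamma/\bQ)$.

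Two smaller points. First, your ``projector'' $f_f$ is both dubious and unnecessary: an Euler--Poincar\'e function at a place where $\pi_f$ is unramified does not isolate $\pi_f$, and matrix coefficients only give projectors for supercuspidal local components; the clean substitute, used in the paper, is to prove the equality of the two spectral distributions $h\mapsto\sum_{\pi_f}a_{\bullet,\xi}(\pi_f)\tr(h\mid\pi_f)$ for all $h\in C_c^\infty(G(\bA_f))$ and then invoke linear independence of characters. Second, on the $G'$-side the paper also uses the pseudo-coefficients $f'_\xi$ rather than the normalized character $\overline{\tr\,\xi_\bC}/\dim\xi_\bC$; this is harmless, but it is what makes the two archimedean stable orbital integrals literally equal term by term without a separate Clozel--Delorme-type conversion on one side only.
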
 
This formula should follow from the forthcoming work of Kaletha-Minguez-Shin on the endoscopic classification of inner forms of unitary group. However, our proof does not relies on any knowledge of Arthur's multiplicity formula, nor heavy machinery of the endoscopy theory. Indeed, following the idea of Kottwitz \cite{Kolambda}, we pseudo-stabilize the trace formulas on $G$ and $G'$ (instead of on the quasi-split inner form), and compare them directly.

Now it is easy to deduce Part (3) from Part (2), together with \cite{Kolambda} and the above theorem. As a corollary, $a_G(\pi,\xi)$ is always a non-negative integer if $G'$ as in Lemma \ref{L:inner form} exists. On the other hand, the computation in Example \ref{JL:ex} (ii) shows that if $\dim \bfSh_K(G,X)$ is even, such $G'$ exists.
Combining with \cite{Kolambda}, we obtain the following result which seems to be new in this generality.
\begin{cor}
Let $\bfSh_K(G,X)$ be a Kottwitz Shimura variety. For an algebraic representation $\xi$ of $G_\bC$, let $\mL_\xi$ denote the local system of $\bC$-vector spaces over $\bfSh_K(G,X)$.
If $\dim \bfSh_K(G,X)$ is even, then the cohomology ${\rm H}^i(\bfSh_K(G,X)_{\bC},\mL_\xi)=0$ for all odd $i$. 
\end{cor}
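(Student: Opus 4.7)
The plan is to combine Matsushima's formula with Kottwitz's description of the Euler--Poincar\'e characteristic from \cite{Kolambda} and the positivity just established. By Example~\ref{JL:ex}(ii), the hypothesis that $d=\dim\bfSh_K(G,X)$ is even guarantees the existence of the inner form $G'$ of Lemma~\ref{L:inner form}, so the preceding theorem applies and gives $a_G(\pi_f,\xi)=m_{G'}(\pi_f\otimes\xi_\bC)\in\ZZ_{\ge 0}$ for every Hecke module $\pi_f$.

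First I would apply Matsushima's formula to the compact Shimura variety,
\[
\mathrm{H}^i(\bfSh_K(G,X)_\bC,\mL_\xi)\;\cong\;\bigoplus_\pi m(\pi)\,\pi_f^K\otimes\mathrm{H}^i(\frakg,K_\infty;\pi_\infty\otimes\xi),
\]
and combine this with Kottwitz's identity
\[
\sum_i(-1)^i\dim\bigl[\mathrm{H}^i(\bfSh_K(G,X)_\bC,\mL_\xi):\pi_f^K\bigr]\;=\;a_G(\pi_f,\xi)\cdot\dim\pi_f^K,
\]
so the $\pi_f$-isotypic Euler characteristic on the left is a \emph{non-negative} integer.

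The crucial archimedean input is a parity analysis of $(\frakg,K_\infty)$-cohomology of the cohomological representations contributing. Via the Vogan--Zuckerman classification, each such $\pi_\infty$ is some $A_\frakq(\la)$ attached to a $\theta$-stable parabolic $\frakq=\frakl\oplus\fraku$, and one compares its contribution $\chi(\pi_\infty,\xi)$ to the ``compact'' contribution coming from $G'(\RR)$ through the pseudo-stabilized trace formula used in the proof of the preceding theorem. Since $G'(\RR)$ is anisotropic modulo center, its only cohomological representation with infinitesimal character $\xi^*$ is the finite-dimensional $\pi'_\infty=\xi_\bC$, sitting entirely in degree $0$. Running Kottwitz's comparison parity-by-parity and tracking the archimedean transfer shows that $\pi_\infty=A_\frakq(\la)$ whose cohomology has the ``wrong'' parity with respect to $d$ has no matching partner on the $G'$ side; the non-negativity of $a_G(\pi_f,\xi)$ then forces such $\pi_\infty$ to appear with zero multiplicity in $L^2(G(\QQ)\backslash G(\AAA))$.

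With that parity control in hand, when $d$ is even every surviving $\pi_\infty$ has its $(\frakg,K_\infty)$-cohomology concentrated in even degrees, and Matsushima's formula yields $\mathrm{H}^i(\bfSh_K(G,X)_\bC,\mL_\xi)=0$ for odd $i$. The main obstacle is precisely the parity-separation step above: extracting a per-parity vanishing from the single scalar identity $a_G(\pi_f,\xi)=m_{G'}(\pi_f\otimes\xi_\bC)$. I expect this to require running the pseudo-stabilized trace formula comparison one archimedean parity class at a time, matching cohomological $A_\frakq(\la)$ packets of the correct parity with the unique compact-side contribution $\xi_\bC$, and checking that the remaining packets pair only among themselves and so must vanish by positivity.
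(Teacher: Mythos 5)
Your reduction to the identity $a_{G,\xi}(\pi_f)=m_{G'}(\pi_f\otimes\xi_\bC)\ge 0$ (via Example~\ref{JL:ex}(ii) and Theorem~\ref{JL:mult}) is the correct starting point, but the parity-separation step --- which you yourself flag as the main obstacle --- is a genuine gap rather than a technicality. Because $f_\xi$ is an average of pseudo-coefficients, $\tr(f_\xi\mid\pi_\infty)$ is, up to a positive constant, $(-1)^{d}\,\chi(\frakg,K_\infty;\pi_\infty\otimes\xi)$; so the theorem only says that each $\pi_f$-isotypic \emph{Euler characteristic} is non-negative. That single scalar cannot kill odd-degree classes: for a fixed $\pi_f$, a configuration in which an $A_\frakq(\la)$ with $R_\frakq$ even occurs with multiplicity $2$ and an $A_\frakq(\la)$ with $R_\frakq$ odd occurs with multiplicity $1$ is perfectly consistent with $a_{G,\xi}(\pi_f)\ge 0$, yet produces odd cohomology. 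Nor can one ``run the comparison one parity class at a time'': on the geometric side the (pseudo-)stabilized formula only sees stable orbital integrals of the archimedean test function, and the only cohomological datum available there is the Euler--Poincar\'e/averaged pseudo-coefficient; there is no admissible test function isolating the $A_\frakq(\la)$'s of odd $R_\frakq$, so no per-parity identity comes out of the comparison, and positivity of an alternating sum never forces its negative part to vanish.

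The paper's deduction avoids this by leaving the archimedean picture entirely: the evenness of $d$ is used \emph{only} to produce $G'$ and hence $a_{G,\xi}(\pi_f)\ge 0$, while the degree separation comes from weights. By \cite{Kolambda}, for a Kottwitz variety the $\pi_f$-isotypic part of $\sum_i(-1)^i\on{H}^i_{\et}$ is, as a virtual $\phi_v$-module at a good place $v$, equal to $a_{G,\xi}(\pi_f)$ times an \emph{honest} (effective) module attached to the Satake parameter of $\pi_{f,p}$. Since $\bfSh_K(G,X)$ is projective and smooth with good reduction at $v$ and $\mL_\xi$ is pure, Deligne's purity says the Frobenius eigenvalues on the degree-$i$ piece $W^i(\pi_f)$ all have weight $i$ (shifted by the weight of $\xi$), so different degrees contribute eigenvalues of different absolute values and there is no cancellation across degrees in the alternating sum. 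Grouping the identity eigenvalue-by-eigenvalue, $(-1)^i[W^i(\pi_f)]$ must be effective for every $i$, which for odd $i$ forces $W^i(\pi_f)=0$; summing over $\pi_f$ and comparing \'etale with Betti cohomology gives the corollary. If you want to salvage an archimedean proof along your lines, you would need the global classification input (base change to $\GL_n$, as in Clozel's argument, which the paper explicitly contrasts with), not just the positivity of $a_{G,\xi}(\pi_f)$.
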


\begin{rmk}For those $G$ arising from a division algebra of dimension $9$ over a quadratic imaginary field, the vanishing of $H^1$ was first observed Rapoport-Zink under an assumption at finite places and proved by Rogawski in general \cite[Theorem 15.3.1]{Ro}. When $G$ is compact at all but one infinite place, this corollary is also a consequence of a result of Clozel (cf. \cite[Theorem 3.3, Theorem 3.5]{Cl}). The above argument is different from the one in \cite{Cl}. 
\end{rmk}

If $(G,X)$ is general, the endoscopy is presented to complicate the picture. But one can still compare the Lefschetz trace formula for $\bfSh_K(G,X)$ and the Arthur-Selberg trace formula for $G'$. As usual, imposing a local condition at $p'\neq p$ will greatly simplify the situation, and one can prove something similar to the above theorem in this case.
We refer to \S \ref{S: JL for Shimura} for precisely statements and details.

\subsubsection*{Acknowledgements}
We thank Joel Kamnitzer,  Robert Kottwitz, Sug Woo Shin, and Yihang Zhu for useful discussions, and Yakov Varshavsky for sending to us his unpublished preprint.

\section{Preliminaries on automorphic representations}
\label{Sec:auto-repn}
In this section, we establish a special Jacquet-Langlands type formula, based on results from \cite{KoEllSing,Kolambda}. 

In this section, we use $\cdot^D$ to denote the Pontryagin dual of abelian groups. For a torus $T$ over $F$, let $\xch(T)$ denote its character group (over $\overline F$) and $\xcoch(T)$ its cocharacter group.
If $G$ is a reductive group over a field $F$, $G_\ad$ denotes its adjoint form, $G_\der$ its derived group, and $G_\s$ the simply-connected cover of $G_\der$. If $T\subset G$ is a maximal torus, its image in $G_\ad$ is denoted by $T_\ad$, and its pre-images in $G_\der$ and $G_\s$ are denoted by $T_\der$ and $T_\s$ respectively. The center of $G$ is denoted by $Z_G$.
Let $(\hat{G},\hat{B},\hat{T},\hat X)$ denote its dual group with a pinning over $\Ql$. The center of $\hat{G}$ is denoted by $Z(\hat{G})$.
The $F$-structure on $G$ induces an action $\Gal(\overline F/F)\to \Aut(\hat G,\hat B,\hat T,\hat X)$. 

\subsection{A Galois cohomology computation}
We recall a result of Kottwitz (cf. \cite[\S 1, \S 2]{KoEllSing}).
\begin{thm}[Kottwitz]
\label{T: thm of Kott}
(i) If $F$ is a local field of characteristic zero, and $G$ is a connected reductive group over $F$, then there is a canonical map
\[\al_G: \on{H}^1(F,G)\to \pi_0( Z(\hat{G})^{\Ga_F})^D,\]
which is an isomorphism if $F$ is a $p$-adic field.

(ii) If $F$ is a global field, then there is an exact sequence of pointed set
$$\on{H}^1(F,G)\longrightarrow \bigoplus_v \on{H}^1(F_v,G)\xrightarrow{\oplus_v\al_{G,v}} \pi_0(Z(\hat{G})^{\Ga_{F}})^D.$$
\end{thm}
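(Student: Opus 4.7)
The plan is to build $\alpha_G$ in two stages, first for tori via Tate--Nakayama duality and then for general connected reductive $G$ via a $z$-extension. The global statement will then follow from the local one combined with the Poitou--Tate nine-term exact sequence.

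For a torus $T$ over a local field $F$ of characteristic zero, I would start from Tate local duality applied to the short exact sequence
$$0 \longrightarrow \xch(\hat T) \longrightarrow \xch(\hat T) \otimes \mathbb{Q} \longrightarrow \xch(\hat T) \otimes (\mathbb{Q}/\mathbb{Z}) \longrightarrow 0$$
of $\Ga_F$-modules. The resulting connecting homomorphism, combined with the Tate duality pairing between $\on{H}^1(F,T)$ and $\on{H}^1(F, \xch(\hat T)\otimes(\mathbb{Q}/\mathbb{Z}))$, identifies $\on{H}^1(F,T)$ canonically with $\pi_0(\hat T^{\Ga_F})^D$ when $F$ is $p$-adic, and with a canonical quotient in the archimedean case; this defines $\alpha_T$, and its functoriality under morphisms of tori is immediate.

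For general connected reductive $G$ I would choose a $z$-extension $1 \to Z \to H \to G \to 1$ in which $Z$ is an induced central torus and $H_\der$ is simply connected. Since $Z$ is induced, $\on{H}^1(F,Z) = 0$, and Kneser's theorem gives $\on{H}^1(F, H_\der) = 0$ in the $p$-adic case; hence $\on{H}^1(F,H)$ is controlled by the cocenter torus $T_H := H/H_\der$. Exploiting the long exact sequence
$$\on{H}^1(F,H) \to \on{H}^1(F,G) \to \on{H}^2(F,Z)$$
together with the dual-side comparison on $\pi_0$ of $\Ga_F$-fixed centers induced by the natural maps $Z(\hat G) \hookrightarrow Z(\hat H)$ and $Z(\hat H) \twoheadrightarrow \hat Z$, one defines $\alpha_G$ by transport of structure through $T_H$. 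A standard comparison argument using the fiber product of two $z$-extensions will show that $\alpha_G$ is independent of the choice, and the isomorphism assertion in the $p$-adic case then follows by a five-lemma argument.

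For a global field $F$ the same reduction to tori via a global $z$-extension applies, and the Poitou--Tate nine-term exact sequence for $T_H$ supplies the required exactness after identifying its third term with $\pi_0(\hat T_H^{\Ga_F})^D$ via the same connecting-boundary construction as in the local case. The main obstacle throughout will be verifying that all the identifications are canonical---in particular, independent of the chosen $z$-extension and compatible with the localization maps $\Ga_{F_v} \hookrightarrow \Ga_F$---which is the technical heart of Kottwitz's construction in \cite{KoEllSing,Kolambda}.
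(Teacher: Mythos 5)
You should first note what you are comparing against: the paper does not prove Theorem \ref{T: thm of Kott} at all. It quotes the result from Kottwitz \cite{KoEllSing} and only ever uses the two properties recorded right after it, namely the Tate--Nakayama description \eqref{JL:TN} of $\al_T$ for tori and the compatibility \eqref{JL:comp} for a maximal torus $T\subset G$. Those two properties reflect Kottwitz's actual construction: over a local field of characteristic zero every class in $\on{H}^1(F,G)$ comes from a maximal torus (an elliptic one in the $p$-adic case, a fundamental one over $\bR$), and $\al_G$ is pinned down by requiring compatibility with all the $\al_T$. Your torus step is essentially the right starting point (although the relevant lattice is $\xch(T)=\xcoch(\hat T)$, as in \eqref{JL:TN}, not $\xch(\hat T)$), and the $z$-extension/abelianization strategy you propose is a legitimate alternative route --- it is essentially Borovoi's --- so the issue is not the overall plan but several steps that fail as written.

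Concretely: (a) ``transport of structure through $T_H$'' does not define $\al_G$, because $\on{H}^1(F,H)\to \on{H}^1(F,G)$ is in general not surjective even $p$-adically --- for the $z$-extension $\GL_2\to\PGL_2$ the nontrivial class of $\on{H}^1(F,\PGL_2)$ has nonzero image in $\on{H}^2(F,\bG_m)$ and lifts to nothing in $\on{H}^1(F,\GL_2)=1$. To use a $z$-extension you must work with the hypercohomology of the two-term complex $[Z\to T_H]$ (equivalently Borovoi's $\on{H}^1_{\mathrm{ab}}$), not with $T_H$ alone; your sentence invoking the dual sequence $Z(\hat G)\hookrightarrow Z(\hat H)\twoheadrightarrow \hat Z$ gestures at this but the construction and its well-definedness are exactly the missing content. (b) The archimedean case is a genuine gap: Kneser's vanishing is false over $\bR$ (e.g.\ $\on{H}^1(\bR,\SU(2))$ has two elements), so $\on{H}^1(\bR,H)$ is not controlled by the cocenter, while statement (i) includes $F=\bR$ and statement (ii) needs the local maps $\al_{G,v}$ at the real places; here one needs Kottwitz's fundamental-torus argument or Borovoi's real abelianization, for which your sketch offers no substitute. (c) Globally, exactness of $\on{H}^1(F,G)\to\bigoplus_v\on{H}^1(F_v,G)\to\pi_0(Z(\hat G)^{\Ga_F})^D$ does not follow from Poitou--Tate for $T_H$ alone: to pass between $G$ and its abelianization one needs the Hasse principle for simply connected groups (Kneser--Harder--Chernousov) together with the vanishing of $\on{H}^1(F_v,H_\der)$ at finite places, and once more the real places are the delicate point. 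These are precisely the ingredients carrying the weight in Kottwitz's proof, so as it stands your outline records the standard skeleton but leaves out the steps that make the theorem true.
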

We do not need the precise construction of $\al_G$. The following properties of $\al_G$ suffice for our purposes.
\begin{itemize}
\item If $G=T$ is a torus, then $\al_T$ is the given by
\begin{equation}\label{JL:TN}
\on{H}^1(F, T)\cong \on{H}^1(F,\xch(T))^D\cong \pi_0(\hat{T}^{\Ga_F})^D,
\end{equation}
where the first isomorphism is the Tate-Nakayama duality, and the second isomorphism is induced by the exponential sequence $0\to \xch(T)\to \Lie \hat{T}\to \hat{T}\to 0$.
\item If $T\subset G$ is a maximal torus, then the following diagram is commutative
\begin{equation}\label{JL:comp}
\begin{CD}
\on{H}^1(F,T)@>\al_T>> \pi_0(\hat{T}^{\Ga_F})^D\\
@VVV@VVV\\
\on{H}^1(F,G)@>\al_G>>  \pi_0(Z(\hat{G})^{\Ga_F})^D,
\end{CD}
\end{equation}
where the right vertical map is induced by $Z(\hat{G})\subset \hat{T}$.
\end{itemize}
Note that if $G=G_\ad$ is an adjoint group, $\pi_0(Z(\hat{G})^{\Ga_F})=Z(\hat{G})^{\Ga_F}$.

We will be interested in using the above result to construct inner forms of a reductive group. For us, an inner form of a connected reductive group $G$ over a field $F$ is a cohomology class of $\on{H}^1(F,G_\ad)$, which as usual can be represented as $(G',\Psi)$, where $G'$ is an $F$-form of $G$, and 
$$\Psi:G\to G'$$ (usually called an inner twist) is a $\Ga_F$-stable $G_\ad(\overline F)$-orbit of $\overline F$-isomorphisms. Recall that in general,  given an $F$-form $G'$ of $G$, 
there may be more than one way to upgrade it to an inner form $\Psi: G\to G'$ (if such a $\Psi$ exists). I.e., the fibers of $\on{H}^1(F,G_\ad)\to\on{H}^1(F,\Aut(G))$ may consist of more than one element. However, if $G'$ is quasi-split, or if $F=\bR$ and $G'$ is compact modulo center, then the fiber of  is a singleton (if non-empty). So we can talk about the quasi-split inner form (or the compact modulo center inner form) of $G$.

Now let $G$ and $G'$ be two connected reductive group over $\bQ$, and assume that $G\otimes\bA_f\simeq G'\otimes\bA_f$. Note that they share the same pinned Langlands dual group $(\hat G, \hat B, \hat T, \hat X)$. In addition, the action of $\Gal(\overline\bQ_\ell/\bQ_\ell)$ on $(\hat G, \hat B, \hat T, \hat X)$ induced by $G$ and $G'$ are the same at every finite place $\ell$, and therefore the action of $\Gal(\overline\bQ/\bQ)$ on $(\hat G, \hat B, \hat T, \hat X)$ induced by $G$ and $G'$ are the same. It follows that we can choose an inner twist $\Psi: G\to G'$ so that $(G',\Psi)$  defines an element $H^1(\bQ,G_\ad)$, which is trivial when restricted to all finite places. Then the class $[(G'_\bR,\Psi_\bR)]\in\on{H}^1(\bR, G_\ad)$ maps to zero under the composition $\on{H}^1(\bR, G_\ad\otimes \bR)\to (Z(\hat G_\s)^{\Ga_\bR})^D\to (Z(\hat G_\s)^{\Ga_\bQ})^D$. Conversely, let $G$ be a connected reductive group over $\bQ$, and let $\Psi_\bR: G_\bR\to G'_\bR$ be an inner form of $G_\bR$. Then
\begin{lem}\label{JL:inner form}
If the restriction of the character $\al_{G_\ad,\infty}[(G'_\bR,\Psi_\bR)]$ to $Z(\hat{G}_\s)^{\Ga_{\bQ}}\subset Z(\hat{G}_\s)^{\Ga_{\bR}}$ is trivial,
then there exists a unique (up to isomorphism) inner form $(G',\Psi)$ of $G$, which restricts to $[(G'_\bR,\Psi_\bR)]$ at the infinite place, and which is trivial at all finite places, i.e. $\Psi$ is $G_\ad(\overline\bA_f)$-conjugate to an isomorphism $G\otimes\bA_f\simeq G'\otimes\bA_f$. 
\end{lem}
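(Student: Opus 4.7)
The plan is to apply Kottwitz's global exact sequence of Theorem~\ref{T: thm of Kott}(ii) to the adjoint group $G_\ad$, whose dual has center $Z(\widehat{G_\ad}) = Z(\hat{G}_\s)$. The hypothesis is tailored precisely so that existence drops out of exactness; uniqueness will then come from a Hasse principle argument.

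For existence, consider the tuple $(c_v)_v \in \bigoplus_v H^1(\bQ_v, G_\ad)$ with $c_\infty = [(G'_\bR, \Psi_\bR)]$ and $c_v$ trivial for all finite $v$. By construction of the Kottwitz pairing, the image of this tuple under $\oplus_v \al_{G_\ad, v}$ in $\pi_0(Z(\hat{G}_\s)^{\Ga_\bQ})^D$ coincides with the restriction of $\al_{G_\ad, \infty}(c_\infty) \in (Z(\hat{G}_\s)^{\Ga_\bR})^D$ along the inclusion $Z(\hat{G}_\s)^{\Ga_\bQ} \subseteq Z(\hat{G}_\s)^{\Ga_\bR}$. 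This restriction vanishes by hypothesis, so exactness of Theorem~\ref{T: thm of Kott}(ii) supplies a global class in $H^1(\bQ, G_\ad)$ lifting our local data. Unpacking this class as an inner twist yields the desired pair $(G', \Psi)$ restricting to $[(G'_\bR, \Psi_\bR)]$ at $\infty$ and trivial at every finite place.

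For uniqueness, note that isomorphism classes of inner twists $(G', \Psi)$ of $G$ are in bijection with $H^1(\bQ, G_\ad)$, so the claim reduces to showing the localization map $H^1(\bQ, G_\ad) \to \bigoplus_v H^1(\bQ_v, G_\ad)$ is injective on the fiber over our local data. I would derive this from the Kneser--Harder--Chernousov Hasse principle for the simply connected cover, which gives $H^1(\bQ, G_\s) \hookrightarrow \prod_{v\;\mathrm{arch}} H^1(\bQ_v, G_\s)$, combined with a diagram chase through the long exact sequence attached to $1 \to Z(G_\s) \to G_\s \to G_\ad \to 1$. The image of any candidate kernel class in $H^2(\bQ, Z(G_\s))$ is locally trivial, and by Poitou--Tate duality the resulting Shafarevich kernel is controlled by its dual Shafarevich group, which one checks pairs trivially with the class by the compatibility \eqref{JL:comp} of Kottwitz maps through a maximal torus embedding.

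The main obstacle is uniqueness; existence is essentially a direct reading of Kottwitz's sequence, whereas the uniqueness step, although standard in spirit, requires some care to ensure the relevant $\Sha^2$ term vanishes (possibly after replacing $G$ by a $z$-extension so that $G_\der$ is simply connected, reducing to a clean Poitou--Tate computation on the center).
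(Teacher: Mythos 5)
Your existence argument is exactly the paper's: apply Theorem~\ref{T: thm of Kott}(ii) to $G_\ad$ (so that $\pi_0(Z(\widehat{G_\ad})^{\Ga_\bQ})^D=(Z(\hat G_\s)^{\Ga_\bQ})^D$), feed in the tuple that is $[(G'_\bR,\Psi_\bR)]$ at $\infty$ and trivial at the finite places, and use the hypothesis to see that it dies in $(Z(\hat G_\s)^{\Ga_\bQ})^D$; exactness then produces the global class. That part is fine.

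The uniqueness step is where you diverge, and as written it has a gap. Your reduction is correct: uniqueness is precisely injectivity of $H^1(\bQ,G_\ad)\to\prod_v H^1(\bQ_v,G_\ad)$, i.e.\ the Hasse principle for adjoint groups, which is what the paper simply cites (\cite[Theorem 6.22]{PR}). But your attempt to rederive it does not go through: the diagram chase through $1\to Z(G_\s)\to G_\s\to G_\ad\to 1$ hinges on the vanishing of $\Sha^2(\bQ,Z(G_\s))$ (and on a related surjectivity of $H^1(\bQ,Z(G_\s))$ onto archimedean local classes), and these statements are \emph{not} automatic for finite multiplicative Galois modules — Grunwald--Wang-type phenomena show they can fail in general, and verifying them for the specific centers of simply connected groups is the actual content of the adjoint-group Hasse principle. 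Your proposed fixes do not close this: the Poitou--Tate step ("pairs trivially with the class by the compatibility \eqref{JL:comp}") is not an argument — \eqref{JL:comp} concerns the Kottwitz maps for a maximal torus inside $G$ and has no bearing on the $\Sha^2(Z(G_\s))\times\Sha^1(\widehat{Z(G_\s)})$ pairing — and passing to a $z$-extension is a red herring, since it changes neither $G_\s$ nor $G_\ad$, hence neither the exact sequence you chase nor the group $H^1(\bQ,G_\ad)$ you need to control. The clean repair is simply to invoke \cite[Theorem 6.22]{PR} at this point, as the paper does; with that citation your proof is complete and coincides with the paper's.
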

\begin{proof}The uniqueness follows from Hasse's principle for adjoint groups (cf. \cite[Theorem 6.22]{PR}). The existence is a direct consequence the above theorem applied to $G_\ad$.
\end{proof}

As we shall see below, there are many examples of $(G, G'_\bR,\Psi_\bR)$ that satisfy the assumption of the lemma.

\subsubsection{}
We prove a result that makes Lemma \ref{JL:inner form} more useful.
For the purpose, we first assume that $F$ is real and
compute $\al_{G}$ in some cases.
Let $G$ be a real reductive group, with $\frakg$ its Lie algebra. Let $\bS=\Res_{\bC/\bR}\bG_m$. 
Let $X$ be a $G(\bR)$-conjugacy class of homomorphisms $h:\bS\to G$ such that for some (and therefore every) $h\in X$,
\begin{enumerate}
\item[(W)] under $\Ad\circ h:\bS\to\GL(\fg)$, $\fg$ acquires a real Hodge structure of weight zero, i.e. the weight homomorphism $w_h:\bG_m\subset \bS\stackrel{h}{\to} G$ factors through the center of $G$, and
\item[(P)] $\ad h(i)$ is a Cartan involution of $G_\ad$ (note that by (W), $\ad h(-1)=1$ so $\ad h(i)$ is an involution).
\end{enumerate}
These two conditions in particular imply that in the inner class of $G$, there is $G_c$ that is compact modulo center. Indeed,
\begin{equation}\label{compact form}
G_c=\{g\in G(\bC)\mid h(i)\iota{g}=g h(i)\},
\end{equation}
where $\iota$ is the conjugation with respect to the real structure of $G$. Therefore, $G_c$ determines a class $[G_c]\in H^1(\bR, G_\ad)$.

Let us fix the isomorphism $\bS_\bC=\bG_m\times\bG_m$ given as follows:
For any $\bC$-algebra $R$,  
\begin{equation}
\label{E: Delignetorus}
(\bC\otimes_\bR R)^\times\simeq R^\times\times R^\times,\quad z\otimes r\mapsto (zr, \bar{z}r).
\end{equation}
Let $\bG_m\to \bS_\bC$ be the inclusion of the first factor.
Then the map $\{h\}$ induces a homomorphism
\begin{equation}\label{JL:coch}
\mu_h: \bG_m\to \bS_\bC\to G\otimes_\bR\bC.
\end{equation}
As $h$ vary in $X$, $\mu_h$ form a conjugacy classes of 1-parameter subgroups of $G$ over $\bC$. In the sequel, $\mu_h$ is also denoted by $\mu$ for simplicity. Let $\mu_\ad$ denote the composition of $\mu$ with the projection $G\to G_\ad$. We regard $\mu_\ad$ as a character of $\hat{T}_\s$, and by restriction to $Z(\hat{G}_\s)^{\Ga_\bR}$, it defines an element $\bar{\mu}_{\ad}\in (Z(\hat{G}_\s)^{\Ga_\bR})^D$.

\begin{lem}
\label{L: Hodge cocharacter vs inner form}
We have $\alpha_{G_\ad}[G_c]= \bar{\mu}_{\ad}$.
\end{lem}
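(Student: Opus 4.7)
The plan is to reduce to a computation in a torus via the compatibility \eqref{JL:comp} and then apply Tate--Nakayama duality.

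First, choose a maximal torus $T\subset G$ defined over $\bR$ through which $h$ factors; such a $T$ exists because $h(\bS)$ is a connected $\bR$-subtorus of $G$, so one can take any maximal $\bR$-torus of its centralizer. The compact form $G_c$ of \eqref{compact form} is obtained by twisting the real structure of $G$ via the inner automorphism $\Ad h(i)$, so $[G_c]\in \on{H}^1(\bR, G_\ad)$ is represented by the $1$-cocycle $\sigma \mapsto h(i)\in G_\ad(\bC)$, where $\sigma$ denotes complex conjugation. Since $h$ factors through $T$, this cocycle lifts to a class $[c]\in \on{H}^1(\bR, T_\ad)$; by \eqref{JL:comp} it suffices to prove that $\al_{T_\ad}([c])$, viewed as a character of $\hat T_\s^{\Ga_\bR}$ via \eqref{JL:TN}, agrees with the restriction of $\mu_\ad\in \xcoch(T_\ad)=\xch(\hat T_\s)$. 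Further restriction to $Z(\hat G_\s)^{\Ga_\bR}$ then yields the claim $\al_{G_\ad}[G_c]=\bar\mu_\ad$.

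Next, I would compute the cocycle $c$ explicitly. Under \eqref{E: Delignetorus} with $R=\bC$, the element $i\in \bS(\bR)$ embeds as $(i,-i)\in \bS(\bC)=\bC^\times\times\bC^\times$, and the base change $h_\bC$ decomposes on cocharacters as $(\mu,\iota\mu)$, so $h(i)=\mu(i)\cdot \iota\mu(-i)$. Condition (W) gives $\iota\mu_\ad=-\mu_\ad$ in $\xcoch(T_\ad)$; hence $\iota\mu_\ad(-i)=\mu_\ad((-i)^{-1})=\mu_\ad(i)$, and therefore
\[
c = h(i)|_{T_\ad} = \mu_\ad(i)^2 = \mu_\ad(-1),
\]
which in fact lies in $T_\ad(\bR)$ and satisfies $c^2=1$.

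Finally, I would apply Tate--Nakayama duality. The exponential exact sequence
\[
0\longto 2\pi i\cdot \xcoch(T_\ad) \longto \Lie T_\ad(\bC) \longto T_\ad(\bC) \longto 0,
\]
together with the vanishing of Galois cohomology of the real vector space $\Lie T_\ad(\bC)$, gives $\on{H}^1(\bR, T_\ad)\cong \xcoch(T_\ad)^{\sigma=-1}/(1-\sigma)\xcoch(T_\ad)$. Writing $c=\exp(\pi i\cdot \mu_\ad)$ and computing $\tfrac{1}{2\pi i}\bigl(\pi i \mu_\ad + \sigma(\pi i\mu_\ad)\bigr)=\mu_\ad$ (using $\sigma\mu_\ad = -\mu_\ad$) identifies $[c]$ with $\mu_\ad$ in this quotient. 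Under $\xcoch(T_\ad)=\xch(\hat T_\s)$ and the standard description of the isomorphism \eqref{JL:TN}, $\al_{T_\ad}([c])$ is then precisely the character of $\hat T_\s^{\Ga_\bR}$ obtained by restricting $\mu_\ad$, completing the argument. The main delicate point will be managing sign conventions---the Tate twist introduced by $2\pi i$ in the exponential sequence and the precise form of the TN pairing---to ensure the resulting character is $\bar\mu_\ad$ rather than its inverse.
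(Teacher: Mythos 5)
Your argument is correct and is essentially the paper's own proof: reduce to the (compact) torus $T_\ad$ through which $h$ factors via \eqref{JL:comp}, use (W) to compute $h_\ad(i)=\mu_\ad(-1)$, and then identify the class of this cocycle under the exponential-sequence/Tate--Nakayama description of $\on{H}^1(\bR,T_\ad)$ with $\mu_\ad \bmod 2$, exactly as the paper does with its explicit $2$-torsion formulas. The sign ambiguity you flag at the end is in fact harmless: since complex conjugation acts by $-1$ on $\xch(\hat T_\s)=\xcoch(T_\ad)$, the group $\hat T_\s^{\Ga_\bR}=(\hat T_\s)_2$ is $2$-torsion, so the character $\bar{\mu}_{\ad}$ coincides with its inverse and no further bookkeeping of the Tate twist is needed.
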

\begin{proof}
We fix an $h$ and therefore a cocharacter $\mu=\mu_h$ (not just a conjugacy class). Let $h_\ad$ denote the composition of $h$ with the projection $G\to G_\ad$.  We can assume that $h:\bS\to T\subset G$, where $T$ is a fundamental Cartan subgroup of $G$. Note that $[G_c]$ is represented by the conjugation by $h_\ad(i)$, where 
the element $h_\ad(i)$ also represents a cocycle in $H^1(\bR, T_\ad)$, denoted by $[h_\ad(i)]$. It is enough to prove that under the isomorphism
\[\al_T: H^1(\bR,T_\ad) \simeq \pi_0(\hat{T}^{\Ga_\bR})^D,\]
$[h_\ad(i)]=[\mu_\ad]$.

Note that as element in $T_\ad(\bC)$,
\[h_\ad(i)= (h_\ad)_\bC(i,-i)=h_\ad(-1,1)=\mu_\ad(-1),\]
where the first and the third identities are by definition, and the second identity follows from (W).

Note that $T_\ad$ is a compact torus, and the complex conjugation acts on $\xch(T_\ad)$ by $-1$. Therefore, 
$$\left\{\begin{array}{cl}H^1(\bR,T_\ad)&\simeq (T_\ad)_2\simeq \{t\in T_\ad\mid t^2=1\}, \\ H^1(\bR,\xch(T_\ad))&\simeq \xcoch(\hat{T}_\s)/2, \\  \hat{T}_\s^{\Ga_\bR}&=(\hat{T}_\s)_2=\{t\in \hat{T}_\s\mid t^2=1\}, \end{array}\right.$$ 
The pairing $H^1(\bR, T_\ad)\times H^1(\bR,\xch(T_\ad))\to \bQ/\bZ$ is given by 
$$(T_\ad)_2\times \xch(T_\ad)/2\to \bZ/2,\quad (t,\bar{\breve\la})\mapsto \breve\la(t),$$
where $\breve\la\in\xch(T_\ad)$ and $\bar{\breve\la}$ denote its image in $\xch(T_\ad)/2$.
The isomorphism $H^1(\bR,\xch(T_\ad))\simeq \pi_0(\hat{T}_\s^{\Ga_{\bR}})$ is given by 
$$\xcoch(\hat{T}_\s)/2\simeq (\hat{T}_\s)_2,\quad \bar{\breve\la}\mapsto \breve\la (-1).$$

Putting all above together, we see that for $\bar{\breve\la}\in \xcoch(\hat{T}_\s)/2\simeq \pi_0(Z(\hat{G}_\s))$, 
$$\mu_{\ad,1}(\bar{\breve\la})=\mu_\ad(\breve\la(-1)),$$ 
and
$$([h_\ad(i)],\bar{\breve\la})=([\mu_\ad(-1)],\bar{\breve\la})=\breve\la(\mu_\ad(-1)).$$
Therefore $[h_\ad(i)]=\bar{\mu}_{\ad}$ as an element in $\pi_0(\hat{T}_\s^{\Ga_\bR})^D$.
\end{proof}

Alternatively, we may regard $G$ as an inner form of $G_c$, whose class  $[G]\in \on{H}^1(\bR,G_{c,\ad})$ under the map $\al_{G_c}: \on{H}^1(\bR,G_{c,\ad})\to (Z(\hat G_\s)^{\Ga_\bR})^D$ then is given by $-\bar{\mu}_{\ad}$.

\begin{cor}
\label{C: position of two real}
Let $(G,X)$ and $(G',X')$ be two pairs, consisting of a real reductive group and a conjugacy class of homomorphisms from $\bS$. Assume that both pairs satisfy (W) and (P) as above, and that the corresponding compact (modulo center) inner forms are isomorphic $G_c\simeq G'_c$. 
Then there is a canonical inner twist $\Psi:G\to G'$ such that the image of $[(G',\Psi)]\in \on{H}^1(\bR,G)$ under $\al_G$ is $\bar{\mu}_{\ad}-\bar{\mu}'_{\ad}$, where $\bar{\mu}_{\ad}$ and $\bar{\mu}'_{\ad}$ are the characters of $Z(\hat G)^{\Ga_\bR}$ corresponding to $(G,X)$ and $(G',X')$ as defined above.
\end{cor}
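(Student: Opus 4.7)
The plan is to view both $(G,X)$ and $(G',X')$ as inner twists of a common compact-modulo-center form, and then use Lemma \ref{L: Hodge cocharacter vs inner form} together with an additivity property of the Kottwitz invariant under composition of inner twists.

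First I would construct the canonical twist $\Psi$. Because conditions (W) and (P) guarantee that each of $(G,X)$ and $(G',X')$ admits a compact-mod-center inner form (given by \eqref{compact form}), and because the fiber of $\on{H}^1(\bR,G_\ad)\to \on{H}^1(\bR,\Aut G)$ over a compact-mod-center class is a singleton (as recorded in the paragraph before Lemma \ref{JL:inner form}), there exist canonical inner twists $\Psi_c\colon G_c\to G$ and $\Psi'_c\colon G'_c\to G'$. Given the chosen isomorphism $G_c\simeq G'_c$, one then sets
\[\Psi \;=\; \Psi'_c\circ \Psi_c^{-1}\colon G\longrightarrow G'.\]
The twists $\Psi_c,\Psi'_c$ also induce compatible $\Gal(\bC/\bR)$-equivariant identifications of $\hat G$, $\hat G_c$, $\hat{G'_c}$, and $\hat{G'}$, so that the characters $\bar\mu_\ad,\bar\mu'_\ad$ live in a common group $(Z(\hat G_\s)^{\Ga_\bR})^D$ and their difference is defined.

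Next I would apply Lemma \ref{L: Hodge cocharacter vs inner form} (in the alternative form recorded right after the lemma): viewing $G$ as an inner form of its compact form $G_c$, one has
\[\al_{G_c}\big([(G,\Psi_c)]\big) \;=\; -\bar\mu_\ad,\qquad \al_{G_c}\big([(G',\Psi'_c)]\big) \;=\; -\bar\mu'_\ad,\]
where on the right-hand side we used the chosen identification $G_c\simeq G'_c$ to transport $\bar\mu'_\ad$ into $(Z(\hat G_\s)^{\Ga_\bR})^D$ for $G_c$.

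The final and main step is to combine these via the following twisting-additivity principle: whenever $\Psi_1\colon H\to G_1$ and $\Psi_2\colon H\to G_2$ are two inner twists of a common real reductive group $H$, the composition $\Psi_2\circ\Psi_1^{-1}\colon G_1\to G_2$ represents a class in $\on{H}^1(\bR,G_{1,\ad})$, and, under the canonical identifications of centers of dual groups induced by the twists,
\[\al_{G_1}\big([\Psi_2\circ \Psi_1^{-1}]\big) \;=\; \al_H\big([\Psi_2]\big)\;-\;\al_H\big([\Psi_1]\big).\]
Granting this, applied with $H=G_c$, $\Psi_1=\Psi_c$, $\Psi_2=\Psi'_c$, yields
\[\al_G\big([(G',\Psi)]\big) \;=\; \al_{G_c}\big([(G',\Psi'_c)]\big)-\al_{G_c}\big([(G,\Psi_c)]\big) \;=\; -\bar\mu'_\ad-(-\bar\mu_\ad) \;=\; \bar\mu_\ad-\bar\mu'_\ad,\]
as required.

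The main obstacle is justifying the twisting-additivity in the third step, since $\on{H}^1(\bR,G_\ad)$ is only a pointed set and not a group. My plan is to reduce to the case of tori: one can arrange the three groups $H$, $G_1$, $G_2$ (which are inner forms of one another) to share a common maximal torus $T$, pull the classes back to $\on{H}^1(\bR,T_\ad)$ via the commutative square \eqref{JL:comp}, and use the Tate--Nakayama description \eqref{JL:TN}, where the source is already an abelian group and cocycle multiplication does correspond to addition in $\pi_0(\hat T^{\Ga_\bR})^D$. The compatibility of $\al$ with inner twisting through a maximal torus then gives the required identity on the nose. Everything else is formal.
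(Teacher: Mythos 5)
Your proposal is correct and follows essentially the same route as the paper: both arguments present $G$ and $G'$ as inner twists of the common compact-modulo-center form, reduce to (the adjoint image of) a fundamental maximal torus containing $h(\bC^\times)$, resp.\ $h'(\bC^\times)$, where the twisting cocycle for $\Psi$ is represented by $\iota^{-1}(h'_{\ad}(i))h_\ad(i)=\iota^{-1}(\mu'_\ad(-1))\mu_\ad(-1)$, and then invoke the computation in Lemma \ref{L: Hodge cocharacter vs inner form} together with the compatibility \eqref{JL:comp}. Your abstract ``twisting-additivity'' of $\al$ is exactly what this explicit torus-level product encodes (the group structure on $\on{H}^1(\bR,T_\ad)$ plus Tate--Nakayama), so the two arguments coincide in substance.
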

\begin{proof}
We choose $h:\bS\to G$, and $h':\bS\to G'$. Let $T\subset G$ (resp. $T'\subset G'$) be a fundamental Cartan containing $h(\bC^\times)$ (resp. $h'(\bC^\times)$). Let $G_c$ (resp. $G'_c$) be the subgroup of $G(\bC)$ (resp. $G'(\bC)$) as defined in \eqref{compact form}. By assumption, we can choose an isomorphism $\iota: G_c\simeq G'_c$ sending $T$ to $T'$. Then we have the inner twist $G_\bC\cong (G_c)_\bC\stackrel{\iota}{\simeq}(G'_c)_\bC\cong G'_\bC$, which is independent of any choices.
Its cohomology class in $\on{H}^1(\bR,G_\ad)$ comes from a class in $\on{H}^1(\bR,T_\ad)$, which in turn can be represented by $\iota^{-1}(h'_{\ad}(i))h_\ad(i)=\iota^{-1}(\mu'_\ad(-1))\mu_\ad(-1)$. As argued in Lemma \ref{L: Hodge cocharacter vs inner form}, under $\al_G$, this is $\bar{\mu}_\ad-\bar{\mu}'_\ad$.
\end{proof}

Combining with Lemma \ref{JL:inner form},
\begin{cor}\label{inner mu}
Let $G$ be a connected reductive group defined over $\bQ$ and assume that there exists a $G(\bR)$-conjugacy class $X$ of homomorphisms $h:\bS\to G_{\bR}$ satisfying (W) and (P). Let $\mu=\mu_h$ be as in \eqref{JL:coch}. Let $(G'_\bR, X')$ be another pair, consisting of a real algebraic group and a conjugacy class of homomorphisms $h':\bS\to G'_\bR$, whose compact inner form is isomorphic to the one for $G_\bR$. Then there is a canonical inner twist $\Psi_\bR:G_\bR\to G'_\bR$. Let $\mu'=\mu_{h'}$. If $\mu_\ad|_{Z(\hat{G}_\s)^{\Ga_{\bQ}}}=\mu'_\ad|_{Z(\hat{G}_\s)^{\Ga_{\bQ}}}$, then up to isomorphism there exists a unique inner form $(G',\Psi)$ of $G$ that is trivial at all finite places and restricts to $(G'_\bR,\Psi_\bR)$ at the infinite place.

In particular, if $\mu_\ad|_{Z(\hat{G}_\s)^{\Ga_{\bQ}}}=1$, then up to isomorphism there exists a unique inner form $(G',\Psi)$ of $G$ such that $G'_{\bR}$ is compact modulo center and $\Psi$ is $G_\ad(\overline\bA_f)$-conjugate to an isomorphism $G\otimes\bA_f\simeq G'\otimes\bA_f$.
\end{cor}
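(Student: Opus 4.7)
The plan is to assemble this corollary directly from Corollary~\ref{C: position of two real} and Lemma~\ref{JL:inner form}, which together do essentially all of the work; the only ``content'' in this step is to identify the character whose triviality is needed for the gluing.

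First I would invoke Corollary~\ref{C: position of two real} applied to the two pairs $(G_\bR, X)$ and $(G'_\bR, X')$. Since by hypothesis the compact-modulo-center inner forms $G_c$ and $G'_c$ are isomorphic, the corollary produces a canonical inner twist $\Psi_\bR:G_\bR\to G'_\bR$, and computes
\[
\al_{G_\ad}\bigl[(G'_\bR,\Psi_\bR)\bigr]\;=\;\bar\mu_\ad-\bar\mu'_\ad\quad\text{in }\bigl(Z(\hat G_\s)^{\Ga_\bR}\bigr)^D,
\]
where $\bar\mu_\ad$ and $\bar\mu'_\ad$ are the characters of $Z(\hat G_\s)^{\Ga_\bR}$ defined by restricting the (adjoint) Hodge cocharacters of $h$ and $h'$.

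Next I would feed this into Lemma~\ref{JL:inner form}, whose hypothesis is that the class $\al_{G_\ad,\infty}[(G'_\bR,\Psi_\bR)]$ restricts to the trivial character of the subgroup $Z(\hat G_\s)^{\Ga_\bQ}\subset Z(\hat G_\s)^{\Ga_\bR}$. By the computation above, this restriction is exactly $(\bar\mu_\ad-\bar\mu'_\ad)|_{Z(\hat G_\s)^{\Ga_\bQ}}$, which vanishes precisely by the standing hypothesis $\mu_\ad|_{Z(\hat G_\s)^{\Ga_\bQ}}=\mu'_\ad|_{Z(\hat G_\s)^{\Ga_\bQ}}$. Lemma~\ref{JL:inner form} then delivers the unique (up to isomorphism) inner form $(G',\Psi)$ of $G$ that is trivial at every finite place and whose restriction to $\bR$ recovers $(G'_\bR,\Psi_\bR)$, which is the first assertion.

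For the ``In particular'' statement, I would specialize the above to a pair $(G'_\bR, X')$ where $G'_\bR$ is itself a compact-modulo-center form in the inner class of $G_\bR$, with $h'$ chosen to land in the center $Z(G'_\bR)$; such an $h'$ automatically satisfies (W), and (P) holds because $\ad h'(i)=1$ is trivially a Cartan involution of the (compact) $G'_{\bR,\ad}$. Then $\mu'_\ad=1$, so the condition $\mu_\ad|_{Z(\hat G_\s)^{\Ga_\bQ}}=\mu'_\ad|_{Z(\hat G_\s)^{\Ga_\bQ}}$ reduces to the hypothesis $\mu_\ad|_{Z(\hat G_\s)^{\Ga_\bQ}}=1$, and the first part produces the required global inner form $(G',\Psi)$ with $G'_\bR$ compact modulo center and with $\Psi$ an isomorphism at all finite places. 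There is no substantive obstacle: the only point to watch is the existence of a pair $(G'_\bR,X')$ satisfying (W) and (P) with the prescribed compact inner form, which is handled by the central choice of $h'$ above.
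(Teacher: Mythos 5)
Your proposal is correct and is essentially the paper's own argument: the corollary is stated there precisely as the combination of Corollary~\ref{C: position of two real} (which computes $\al_{G_\ad}[(G'_\bR,\Psi_\bR)]=\bar\mu_\ad-\bar\mu'_\ad$) with Lemma~\ref{JL:inner form}, and your specialization to a central $h'$ into the compact-modulo-center form for the ``in particular'' clause matches the intended reading (with uniqueness there coming from the fact that the compact-modulo-center class in $\on{H}^1(\bR,G_\ad)$ is unique).
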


\begin{rmk}
In Proposition \ref{unramified basic}, we will give a condition at a finite prime $p$ that ensures the last assumption of the above corollary hold.
\end{rmk}

\begin{ex}\label{JL:ex}
We give two examples. The first example is well known.

(i) $G= D^\times$, where $D$ is a quaternion algebra over a totally real field $F$, regarded as a reductive group over $\bQ$. Write $G_\bR\simeq (\GL_2)^r \times (\bH)^s$. In this case $Z(\hat{G}_\s)^{\Ga_\bR}\simeq (\bZ/2)^{r+s}$, and $Z(\hat{G}_\s)^{\Ga_\bQ}\simeq\bZ/2$ embeds diagonally into $(\bZ/2)^{r+s}$. There is a standard $h:\bS\to (G_\ad)_\bR$ given as
\[h(a+bi)=\big(\big(\begin{smallmatrix}a& b\\-b&a\end{smallmatrix}\big)^r,1^s\big).\]
Then  it is easy to check $\mu_{\ad,1}(a_1,\ldots,a_r,a_{r+1},\ldots,a_{r+s})=a_1\cdots a_r$. In particular, the value of $\mu_\ad$ on the only non-trivial element $(-1,\ldots,-1)\in Z(\hat{G})^{\Ga_\bQ}$ is $(-1)^r$. We see that if $r'\equiv r \mod 2$, there is a quaternion algebra $D'$ over $F$, that is isomorphic to $D$ at all the finite places, and split at $r'$ real places. Then the group $G'=D'^\times$ is isomorphic to $G$ at all finite places as in Corollary \ref{inner mu}.  In particular, if $r$ is even, we have $G'$ that is compact modulo center at the infinity place.

(ii) Let $G$ be a unitary (similitude) group (or its inner form) over a totally real field $F$, associated to a CM extension $E/F$. We regard $G$ as a reductive group over $\bQ$. Write $(G_\ad)_\bR=\prod_i PU(p_i,q_i)$. Note that 
$$Z(\hat{G}_\s)^{\Ga_\bR}=\left\{\begin{array}{cc} \{1\} & p_i+q_i \mbox{ is odd,}\\ \prod_i \bZ/2 & p_i+q_i \mbox{ is even.} \end{array} \right.$$ In the former case, the assumption of Lemma \ref{JL:inner form} is empty so there is a unitary group $G'$, isomorphic to $G$ at all finite places, and $(G'_\ad)_{\bR}=\prod_i PU(p'_i,q'_i)$ for any signature $\{(p'_i,q'_i)\}$. In particular, there is $G'$ such that $G'_\bR$  is compact modulo center. In the latter case,  
$Z(\hat{G}_\s)^{\Ga_{\bQ}}\simeq \bZ/2$ embeds diagonally into $\prod_i\bZ/2$. There is a standard $h:\bS\to (G_\ad)_\bR$ given as
\[h(z)=(\on{diag}\{(z/\bar{z})^{p_1},1^{q_1}\},\cdots,\on{diag}\{(z/\bar{z})^{p_r},1^{q_r}\}),\]
and it is easy to check
\[\mu_\ad:Z(\hat{G})^{\Ga_\bR}\to \bZ/2,\quad \mu_\ad(a_1,\ldots,a_r)=\prod a_i^{p_i}.\]
In particular, the value of $\mu_\ad$ on the only non-trivial element $(-1,\ldots,-1)\in Z(\hat{G})^{\Ga_\bQ}$ is $(-1)^{\sum p_i}=(-1)^{\sum p_iq_i}$. We see that if $\sum p_iq_i=\sum p'_iq'_i$, we have $G'$ as in Corollary \ref{inner mu} that is isomorphic to $G$ at all finite places, and is of signature $\{p'_i,q'_i)\}$ at the infinite place. In particular, if $\sum p_iq_i$ is even, there $G'$, isomorphic to $G$ at all finite places, and is compact modulo center at the infinite places.
\end{ex}

\subsection{A Jacquet-Langlands transfer in the case without endoscopy}
Let $G$ be a connected reductive group over $\bQ$. We fix an inner twist $\Psi: G\to G'$ over $\overline\bQ$, and an identification $\theta: G(\bA_f)\simeq G'(\bA_f)$ so that 
\begin{equation}\label{JL:aux}
\Psi=\on{Int}(h)\theta
\end{equation} for some $h\in G'_{\ad}(\overline\bA_f)$.

We need to use some trace formulas in a simple way. Here we briefly explain our choice of measures.  As usual, we use the Tamagawa measure for connected reductive groups over $\bQ$. We fix compatible Haar measures $dx_\infty$ on $G(\bR)$ and on $G'(\bR)$, and a Haar measure $dx^\infty$ on $\theta: G(\bA_f)\simeq G'(\bA_f)$ so that the Tamagawa measure $dx$ decomposes as $dx^\infty dx_\infty$.  Let $A_G=A_{G'}$ be the maximal split subtorus of $Z_G=Z_{G'}$. We use the canonical Haar measure on $A_G(\bR)^0$,  coming from a basis of the $\bZ$-lattice $\xch(A_G)$. 

We fix a few more notations. Let $\fraka_G=\Hom(\xch(G)_\bQ,\bR)$, where $\xch(G)_\bQ$ is the group of $\bQ$-rational characters of $G$, and let $H_G: G(\bA)\to \fraka_G$ be the map given by $(H_G(g),\la)=\log |\la(g)|$. It induces an isomorphism $H_G: A_G(\bR)^0\simeq \fraka_G$. Let $G(\bA)^1=\ker H_G$ and $G(\bR)^1=G(\bR)\cap G(\bA)^1$. Then we have the isomorphism $G(\bA)^1\times A_G(\bR)^0\cong G(\bA)$ given by multiplication
and $G(\bQ)\backslash G(\bA)^1$ has finite volume.
For a (quasi)character $\chi: A_G(\bR)^0\to \bC^\times$, let $L^2_\chi(G(\bQ)\backslash G(\bA))$ denote the space of square integrable functions on $G(\bQ)\backslash G(\bA)^1$, regarded as a representation of $G(\bA)$ on which $G(\bA)^1$ acts via right translation and $A_G(\bR)^0$ acts through $\chi$. Similarly, we have  $L^2_\chi(G'(\bQ)\backslash G'(\bA))$. Using $A_G(\bR)^0\simeq \fraka_G$, we can also regard $\chi$ as a character of $\fraka_G$.

Given a smooth function $h$ on $G(\bA)$ (resp. on $G(\bR)$), compactly supported modulo $A_G(\bR)^0$, and transforming by $\chi^{-1}$ under $A_G(\bR)^0$, and a representation $\pi$ of $G(\bA)$ (resp. of $G(\bR)$), on which $A_G(\bR)^0$ acts through $\chi$,
the trace $\tr(h| \pi)$ is understood as 
$$\tr(h\cdot (\chi\circ H_G)| \pi\otimes (\chi^{-1}\circ H_G)),$$ 
where $h\cdot (\chi\circ H_G)$ is regarded as a compactly supported function on $G(\bA)/A_G(\bR)^0$ (resp. $G(\bR)/A_G(\bR)^0$) that acts on the $G(\bA)/A_G(\bR)^0$ (resp. $G(\bR)/A_G(\bR)^0$)-representation $\pi\otimes (\chi^{-1}\circ H_G)$. We have similar notations for $G'$.

We further assume that:
\begin{enumerate}
\item $G$ and $G'$ are anisotropic modulo center over $\bQ$; 
\item $G_\der$ is simply-connected;
\item $(A_G)_\bR$ is the maximal split subtorus in the center of $G_\bR$;
\item $G_\bR$ has a compact modulo center inner form;
\item The endoscopy of $G$ is trivial in the sense that for every $\ga\in G(\bQ)$, the group $\frakK(I_{\ga}/\bQ)$ as defined in \cite[\S 4]{KoEllSing} is trivial.
\end{enumerate}

Let $\xi$ be an irreducible algebraic representation of $G$ (defined over some number field). Let $\xi^*$ denote the dual representation of $\xi$. Let $\chi_\xi$ denote the restriction of $\xi^*_\bC$ to $A_G(\bR)^0$.
Let $\Pi_\xi$ be the set of equivalent classes of irreducible representations of $G_\bR$ with the same central and infinitesimal characters as $\xi^*_\bC$, and let $\Pi_\xi^0$  be the subset of discrete representations. It forms a single L-packet. For $\pi_\xi^0\in\Pi_\xi^0$, let $f_{\pi_\xi^0}$ denote a pseudo coefficient normalized so that $\tr(f_{\pi_\xi^0}|\pi_\xi^0)=1$. Let 
$$f_\xi=\frac{1}{|\Pi_\xi^0|}\sum_{\pi_\xi^0} f_{\pi_\xi^0}.$$
We similarly have $f'_\xi$.

Now let $\pi_f$ be an irreducible $G(\bA_f)=G'(\bA_f)$-module. Following Kottwitz (\cite{Kolambda}) let $a_{G,\xi}(\pi_f)$ be the number\footnote{In \cite{Kolambda}, this number is simply denoted by $a(\pi_f)$. In addition, Kottwitz define $a(\pi_f)$ using $f_{\pi_\xi^0}$ for a single $\pi_\xi^0$ instead of the average $f_\xi$. But as explained in \emph{loc. cit.}, this does not affect $a(\pi_f)$.} defined by
\[a_{G,\xi}(\pi_f)=\sum_{\pi_\infty\in\Pi_\xi}m_G(\pi_f\otimes\pi_\infty)\tr(f_\xi|\pi_\infty),\]
where $m_G(\pi_f\otimes \pi_\infty)$ denotes the multiplicity of $\pi_f\otimes \pi_\infty$ in $L^2_{\chi_\xi}(G(\bQ)\backslash G(\bA))$. Similarly, we have $a_{G',\xi}(\pi_f)$. Note that if $G'_\bR$ is compact modulo center, $a_{G',\xi}(\pi_f)=m_{G'}(\pi_f\otimes \xi_\bC)$ is the automorphic multiplicity of $\pi_f\otimes \xi_\bC$ in $L^2_{\chi_\xi}(G'(\bQ)\backslash G'(\bA))$.

\begin{thm}\label{JL:mult}
We have $a_{G,\xi}(\pi_f)=a_{G',\xi}(\pi_f)$. In particular, if $G'_\bR$ is compact modulo center, then $a_{G,\xi}(\pi_f)=m_{G'}(\pi_f\otimes\xi_\bC)$.
\end{thm}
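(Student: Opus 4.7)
The plan is to apply the simple trace formula to both $G$ and $G'$ with matching test functions and directly compare the geometric expansions, in the spirit of \cite{Kolambda}. Because assumption (1) says that $G$ and $G'$ are anisotropic modulo center, the spectral expansion is purely discrete and the geometric expansion is a sum over elliptic semisimple $\bQ$-rational conjugacy classes; no truncation, weighted orbital integrals, or continuous spectrum intervene. Concretely, given any $f^\infty\in C_c^\infty(G(\bA_f)/A_G(\bR)^0)$, transport it to $G'(\bA_f)/A_{G'}(\bR)^0$ via the identification $\theta$, and form
\begin{equation*}
f = f^\infty\cdot f_\xi \ \text{on } G(\bA),\qquad f' = f^\infty\cdot f'_\xi \ \text{on } G'(\bA).
\end{equation*}
By the definitions of $a_{G,\xi}(\pi_f)$ and $a_{G',\xi}(\pi_f)$, the spectral sides are $\sum_{\pi_f} a_{G,\xi}(\pi_f)\,\tr(f^\infty\mid\pi_f)$ and $\sum_{\pi_f} a_{G',\xi}(\pi_f)\,\tr(f^\infty\mid\pi_f)$ respectively, so by linear independence of characters it suffices to show the two geometric sides coincide for every $f^\infty$.

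On the geometric side, each trace formula is a sum $\sum_\ga \on{vol}\bigl(I_\ga(\bQ)\backslash I_\ga(\bA)^1\bigr)\,O_\ga(\cdot)$ over elliptic $\bQ$-rational conjugacy classes $\ga$. The hypothesis that $G_\der$ is simply connected, combined with the triviality of $\frakK(I_\ga/\bQ)$ from assumption (5), implies that each stable conjugacy class in $G(\bQ)$ consists of a single ordinary conjugacy class, so pseudo-stabilization is automatic and no endoscopic correction terms appear. Since $G$ and $G'$ are inner forms, the inner twist $\Psi$ gives a canonical bijection between $\bQ$-rational elliptic stable classes $\ga\leftrightarrow\ga'$ and identifies $I_\ga$ with $I_{\ga'}$ as inner forms; Tamagawa numbers of connected reductive groups are invariant under inner twisting (noting assumption (2) via the exact sequence $1\to Z_{I_\ga}\to I_\ga\to I_{\ga,\ad}\to 1$), so the volume factors agree term by term.

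Matching orbital integrals at finite places is automatic: the relation $\Psi=\on{Int}(h)\theta$ of \eqref{JL:aux} means that $\theta$ carries the adelic finite-part conjugacy class of $\ga$ to that of $\ga'$, and the test functions on both sides are literally identified under $\theta$. The whole comparison therefore reduces to the archimedean identity
\begin{equation*}
O_\ga(f_\xi)\;=\;O_{\ga'}(f'_\xi)
\end{equation*}
for every pair $(\ga,\ga')$ of matching elliptic regular elements in $G(\bR)$ and $G'(\bR)$. This is Kottwitz's key computation of orbital integrals of averaged discrete-series pseudo-coefficients (see \cite[Lemma 3.1, Theorem 5.1]{Kolambda}, building on Clozel--Delorme): both sides vanish on non-elliptic classes and, on elliptic classes, equal $\tr\,\xi^*_\bC(\ga)$ up to an explicit Kottwitz sign $e(G_\bR)=e(G'_\bR)=1$ (the sign is trivial for us because (4) ensures both $G_\bR$ and $G'_\bR$ share the same compact-modulo-center inner form).

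I expect the main obstacle to be bookkeeping in this last step: one must (i) verify that, with the chosen Tamagawa/compatible measures and the normalization $\tr(f_{\pi_\xi^0}\mid\pi_\xi^0)=1$ used to define $f_\xi$, the Kottwitz formula applies uniformly to the full family of inner forms satisfying (W) and (P), and (ii) handle the limiting case when $G'_\bR$ is compact modulo center, where $f'_\xi$ degenerates to (a scalar multiple of) a matrix coefficient of $\xi^*_\bC$ and the right-hand side becomes a pure character value. Once this archimedean identity is in hand, the geometric sides match term by term, hence the spectral sides agree for all $f^\infty$, and the theorem follows. The final assertion for compact-modulo-center $G'_\bR$ is then immediate because $\Pi_\xi^0$ on such a group consists of the single representation $\xi^*_\bC$, making $a_{G',\xi}(\pi_f)=m_{G'}(\pi_f\otimes\xi_\bC)$.
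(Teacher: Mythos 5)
Your overall strategy (compare the geometric sides of the two trace formulas for matching test functions, then use linear independence of characters on the spectral sides) is the same as the paper's, but there is a genuine gap at the central step. The triviality of $\frakK(I_\ga/\bQ)$, even combined with $G_\der$ simply connected, does \emph{not} imply that a stable conjugacy class in $G(\bQ)$ is a single ordinary conjugacy class: the set of rational classes inside the stable class of $\ga$ is $\ker\big(\on{H}^1(\bQ,I_\ga)\to \on{H}^1(\bQ,G)\big)$, which is typically nontrivial (already for anisotropic unitary groups), and at the real place the number of $G(\bR)$-classes inside one stable class differs between $G_\bR$ and its compact-modulo-center inner form. What the vanishing of $\frakK(I_\ga/\bQ)$ actually buys is Kottwitz's pseudo-stabilization (\cite[\S 9]{KoEllSing}): the sum over the rational classes inside a stable class, weighted by $\tau(I_\ga)$, collapses to $\tau(G)\,SO_\ga(\cdot)$. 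Because of this, your term-by-term matching of \emph{ordinary} conjugacy classes breaks down on three counts: (i) there is in general no bijection of rational conjugacy classes between inner forms, only of rational \emph{stable} classes, and even that requires an argument (it is the paper's concluding lemma, proved via \cite[Theorem 6.6]{KoEllSing} together with $\frakK(I_\ga/\bQ)=1$, transferring the class at finite places through $\theta$ and at infinity through a fundamental Cartan); (ii) the individual volumes $\tau(I_\ga)$ are not what must be compared — after stabilization only $\tau(G)=\tau(G')$ (Kottwitz's Tamagawa number theorem \cite{KoTamagawa}) appears; (iii) Kottwitz's archimedean computation \cite[Lemma 3.1]{Kolambda} gives the \emph{stable} orbital integrals $SO_\ga(f_\xi)$, not the ordinary ones, and the identity $O_\ga(f_\xi)=O_{\ga'}(f'_\xi)$ you rely on is false in general: $O_\ga(f_\xi)$ depends on how many real classes the stable class of $\ga$ contains, which differs between a noncompact $G_\bR$ and a compact-modulo-center $G'_\bR$ (where stable conjugacy coincides with conjugacy).

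The repair is exactly the route the paper takes: write each geometric side as $\tau(G)\sum_{\ga}SO_\ga(hf_\xi)$, a sum over rational stable classes, using \cite[\S 9]{KoEllSing} and the hypothesis $\frakK(I_\ga/\bQ)=1$; invoke $\tau(G)=\tau(G')$; use the stable orbital integral formula for $f_\xi$ and $f'_\xi$, which vanishes off elliptic elements and gives $\tr\xi_\bC(\ga)$ times a volume and a sign for associated elliptic elements, so that $SO_\ga(hf_\xi)=SO_{\ga'}(hf'_\xi)$; and finally prove the bijection between rational stable classes with elliptic semisimple archimedean component in $G(\bQ)$ and in $G'(\bQ)$. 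Your spectral-side setup and the final observation that $a_{G',\xi}(\pi_f)=m_{G'}(\pi_f\otimes\xi_\bC)$ when $G'_\bR$ is compact modulo center are fine, but as written the geometric comparison does not go through.
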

\begin{proof}
We use $\theta$ to identify $G(\bA_f)$ and $G'(\bA_f)$.
It is enough to prove that the following two distributions on $G(\bA_f)\cong G'(\bA_f)$ are equal: for $h\in C_c^\infty(G(\bA_f))$, 
\begin{equation}\label{JL:comparison}
\sum a_{G,\xi}(\pi_f)\tr(h\mid \pi_f)=\sum a_{G',\xi}(\pi_f)\tr(h\mid \pi_f).
\end{equation}
By the trace formula for the compact quotients $G(\bQ)\backslash G(\bA)^1$, the left hand side of \eqref{JL:comparison} is equal to
\begin{equation}\label{JL:tr of G'}
\tr(hf_{\xi}\mid L^2_{\chi_\xi}(G(\bQ)\backslash G(\bA)))= \sum_{\ga\in G(\bQ)/\sim} \tau(I_{\ga})O_{\ga}(h f_{\xi}).
\end{equation}
Let us explain the notations in the above formula: 
\begin{itemize}
\item $G(\bQ)/\sim$ denotes the set of conjugacy classes of $G(\bQ)$, 
\item $\tau(I_\ga)=\on{vol}(I_\ga(\bQ)\backslash I_\ga(\bA)/A_G(\bR)^0)$ is the Tamagawa number for the centralizer $I_\ga$ of $\ga$ in $G$; 
\item $O_\ga$ denotes the usual orbital integral. 
\end{itemize}
As $\frakK(I_\ga/ \bQ)$ is trivial by assumption, by the argument of \cite[\S 9]{KoEllSing}, one can pseudo-stabilize \eqref{JL:tr of G'} as
\[\tau(G)\sum_{\ga\in G(\bQ)/^{\on{st}}{\sim}}SO_\ga(hf_\xi),\]
where $G(\bQ)/^{\on{st}}{\sim}$ denote the set of stable conjugacy classes of $G(\bQ)$.
Likewise, the right hand side of \eqref{JL:comparison}
can be written as
\begin{equation}\label{JL:tr of G}
\tau(G')\sum_{\ga\in G'(\bQ)/^{\on{st}}\sim} SO_\ga(h f'_\xi).
\end{equation}

Recall that $\tau(G)=\tau(G')$ by \cite{KoTamagawa}.
Also recall the stable orbital integrals of $f_\xi$, \cite[Lemma 3.1]{Kolambda}.
\[
SO_\ga(f_\xi)=\left\{\begin{array}{ll} \tr \xi_\bC(\ga)\cdot\on{vol}(I(\bR)/A_G(\bR)^0)^{-1}\cdot e(I)& \ga \mbox{ elliptic semisimple } \\ 0 & \ga \mbox{ non elliptic, } \end{array}\right.
\]
where $I$ denotes the inner for of the centralizer of $\ga$ in $G$ that is anisotropic modulo $A_G$. In particular, $SO_\ga(f_\xi)=SO_{\ga'}(f'_\xi)$ if $\ga$ and $\ga'$ are associated.

Now the theorem follows from the following lemma.
\end{proof}
\begin{lem}The stable conjugacy classes of $\ga\in G(\bQ)$ with $\ga_\infty$ elliptic semisimple are bijective to stable conjugacy classes of $\ga'\in G'(\bQ)$ with $\ga'_\infty$ elliptic semisimple.
\end{lem}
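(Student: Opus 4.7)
The plan is to use the inner twist $\Psi:G\to G'$ defined over $\overline{\bQ}$ to transfer stable conjugacy classes, with the main work being a local-to-global rationality argument. First I observe that $\Psi$ induces a natural bijection between $\Gal(\overline{\bQ}/\bQ)$-stable $G(\overline{\bQ})$-conjugacy classes of semisimple elements and such classes in $G'(\overline{\bQ})$. Under this bijection the property of being elliptic-at-$\infty$ is preserved: the centralizer $I_\ga$, connected by assumption (2) and Steinberg's theorem, transports to an inner form of itself in $G'$; by assumption (4), $G_\bR$ and $G'_\bR$ share a common compact modulo center inner form $G_c$, so $I_{\ga,\bR}$ is anisotropic modulo $A_G$ iff $I_{\Psi(\ga),\bR}$ is.

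Second, I would show that for $\ga\in G(\bQ)$ elliptic semisimple, the $\Gal$-stable $G'(\overline{\bQ})$-conjugacy class $\mathcal{O}'$ of $\Psi(\ga)$ contains a $\bQ$-rational element, by verifying local rationality at each place. At a finite prime $v$, the factorization $\Psi=\on{Int}(h)\circ\theta$ over $\overline{\bA_f}$ shows that $\theta(\ga)\in G'(\bQ_v)$ lies in $\mathcal{O}'$, so $\mathcal{O}'(\bQ_v)\ne\emptyset$. At $v=\infty$, the common compact form $G_c$ provides the bridge: every elliptic maximal $\bR$-torus of $G$ or $G'$ transfers to a maximal torus of $G_c$, and since $G_c$ is compact every stable conjugacy class in it is $\bR$-rational; hence $\mathcal{O}'(\bR)\ne\emptyset$.

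Third, to globalize I would invoke Kottwitz's local-to-global principle for stable conjugacy: the obstruction for $\mathcal{O}'(\bA_\bQ)\ne\emptyset$ to imply $\mathcal{O}'(\bQ)\ne\emptyset$ is measured by a Tate--Shafarevich-type class in a group naturally dual to $\frakK(I_\ga/\bQ)$, which is trivial by assumption (5). Therefore $\mathcal{O}'\cap G'(\bQ)\ne\emptyset$, and the inverse map is constructed symmetrically using $\Psi^{-1}$.

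The hard part will be the Hasse-principle step in the third paragraph: local rationality does not in general imply global rationality for semisimple conjugacy classes, and the obstruction is precisely the pointed set $\frakK(I_\ga/\bQ)$ whose vanishing is postulated in assumption (5). This is exactly why condition (5) is imposed -- it makes the entire local-to-global comparison go through without engaging the machinery of endoscopy, and without it one would have to compare endoscopic contributions to the trace formulas for $G$ and $G'$ directly.
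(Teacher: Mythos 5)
Your proposal follows essentially the same route as the paper: transfer at the finite places via the identification $\theta$ coming from $\Psi=\on{Int}(h)\circ\theta$, transfer at $\infty$ by moving $\ga_\infty$ into an elliptic (fundamental) Cartan and using that $G_\bR$ and $G'_\bR$ share a compact modulo center inner form, and then globalize via Kottwitz's local--global result from \cite{KoEllSing} whose obstruction vanishes because $\frakK(I_\ga/\bQ)$ is trivial by assumption (5). The paper phrases the last step as reducing, via \cite[Theorem 6.6]{KoEllSing}, to a bijection of adelic stable classes, but this is the same argument as your Hasse-principle step, so no essential difference.
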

\begin{proof}
Using \cite[Theorem 6.6]{KoEllSing}, and our assumption $\frakK(I_\ga/\bQ)=0$, it is enough to show that there is a bijection induced by $\psi$ between stable conjugacy classes in $G(\bA)$ (i.e. elements $\ga\in G(\bA)$ up to $G(\overline\bA)$-conjugacy) with $\ga_\infty$ semisimple elliptic and stable conjugacy classes $\ga'\in G'(\bA)$ with $\ga'_\infty$ semisimple elliptic. Namely, given such $\ga\in G(\bA)$, we claim that there is $\ga'$ from $G'(\bA)$ such that $\ga'$ and $\psi(\ga)$ are conjugate under $G'(\overline\bA)$ and vice versa. Indeed, for every finite place $v$, we have $\psi_v=\on{Int}(h_v)\circ \theta_v$ by \eqref{JL:aux}. Then we take $\ga'_v=\theta_v(\ga_v)$. For $v=\infty$, we can assume that $\ga_\infty\in T(\bR)\subset G(\bR)$, where $T$ is a fundamental Cartan subgroup of $G_{\bR}$. Then there exists some $h_\infty\in G'_{\ad}(\bC)$ such that $\theta_{\infty}=\on{Int}(h_\infty)\circ\psi_\infty$ induces an embedding $T_\bR\to G'_{\bR}$. Then we take $\ga'_\infty=\theta_\infty(\ga_\infty)$. Clearly, we can reverse the construction to get $\ga\in G(\bA)$ from $\ga'\in G'(\bA)$ with the condition that $\ga'_\infty$ is semisimple elliptic.
\end{proof}

\subsection{A Jacquet-Langlands transfer for Shimura varieties}
\label{S: JL for Shimura}
We retain notations as in the previous subsection, but now only assume (1)-(3). 
In addition we assume that $(G,X)$ is a Shimura datum of Hodge type and let $\mathbf{Sh}_K(G,X)$ denote the corresponding Shimura variety (with some fixed level $K$). For an algebraic representation $\xi$ of $G$, let $\mL_\xi$ denote the $\Ql$-local system on $\mathbf{Sh}_K(G,X)$.
Then it is projective variety.  Let $p$ be an unramified prime. Let $v$ be a place of the reflex field over $p$, and let $\phi_v$ denote the geometric Frobenius. Assume that the degree of $\bF_v/\bF_p$ is $d$.
Let $f^p\in C_c^\infty(G(\bA_f^p))$ be a test function away from $p$.
Recall that by the work of Kottwitz (\cite{KoAnnArbor}) and Kisin-Shin-Zhu (\cite{KSZ}), the Lefschetz trace formula for the Shimura variety gives
\begin{equation}
\label{E: Lef trace for Sh}
\tr(\phi_v^j\times f^p\mid \on{H}^*(\mathbf{Sh}_K(G,X),\mL_\xi))=\sum_{\mE}\iota(G,H)ST_e(h),\quad j\gg 0
\end{equation}
where $\mE$ is the set of isomorphism classes of elliptic endoscopic triple $(H,s,\eta)$ of $G$, such that $H$ is unramified at $p$, and $h=h_ph^ph_\infty$ is certain test function on $H(\bA)$ constructed in \cite[\S 7]{KoAnnArbor}.

Assume that $G$ has an inner form $(G',\Psi)$ that is trivial at all finite places and $G'\otimes\bR$ is compact modulo center. 
The geometric side of the Arthur-Selberg trace formula for $G'$
\begin{equation}\label{E: AS trace of cpt}
\sum_{\mE}\iota(G,H)ST_e(f'^{H}),
\end{equation}
where $f'$ is a test function on $G'(\bA)$, and $f'^{H}$ is its (usual) transfer to $H(\bA)$. As before, we identify $G(\bA_f)\simeq G'(\bA_f)$ via $\theta$. 
\begin{lem}
If we choose the test function $f'=f'_{p,j}f'^p\Theta_\xi$ such that 
\begin{itemize}
\item $f'^p=f^p$,
\item $f'_{p,j}$ is the element in the spherical Hecke algebra $H(G(\bQ_p), K_p)$ which corresponds $\ga\phi_p\mapsto \tr((\ga\phi_p)^{dj}, V_{-\mu})$ under the Satake isomorphism (see \S \ref{S: geom v.s. classic Sat} for a review of the Satake isomorphism),
\item and $\Theta_\xi$ is the character of $\xi_{\bC}$,
\end{itemize}
then in the two stable trace formula, the parts corresponding to the endoscopic triple $(G^*, 1, \id)$ coincide.
\end{lem}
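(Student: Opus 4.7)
The strategy is to match the two expressions locally, place by place, and then assemble by multiplicativity of stable orbital integrals. Since $G$ and $G'$ share the same quasi-split inner form $G^*$ and the same dual data, the constant $\iota(G,G^*)$ is the same on both sides. It therefore suffices to show that Kottwitz's global test function $h=h_p h^p h_\infty$ on $G^*(\bA)$ and the standard (identity) endoscopic transfer $(f')^{G^*}$ of $f'=f'_{p,j}f'^p\Theta_\xi$ from $G'(\bA)$ to $G^*(\bA)$ define the same stable distribution on the elliptic semisimple set. After reducing to elliptic stable classes, this amounts to three local matchings.

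At a finite place $v\neq p$, the inner twist $\Psi$ is trivial, so $G(\bQ_v)\cong G'(\bQ_v)\cong G^*(\bQ_v)$; Kottwitz's factor $h^p$ is by construction $f^p$ under this identification, and the endoscopic transfer of $f'^p=f^p$ to $G^*(\bA_f^p)$ is again $f^p$. Hence the stable orbital integrals agree componentwise at every $v\neq p,\infty$. At $v=p$, both $h_p$ and $f'_{p,j}$ belong to the unramified spherical Hecke algebra $H(G(\bQ_p),K_p)$. By the defining property of Kottwitz's function in \cite[\S 2]{KoAnnArbor}, the Satake transform of $h_p$ is the function $\gamma\phi_p\mapsto \tr((\gamma\phi_p)^{dj},V_{-\mu})$, which is by our choice also the Satake transform of $f'_{p,j}$. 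The Satake isomorphism then forces $h_p=f'_{p,j}$, so all their orbital integrals agree.

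The archimedean matching is the key content. Kottwitz's $h_\infty$ is, up to the sign $(-1)^{q(G)}$, the averaged pseudo-coefficient $f_\xi$ of the discrete $L$-packet $\Pi^0_\xi$ attached to $\xi$ on $G^*(\bR)$, whose stable orbital integral at an elliptic semisimple $\gamma_0\in G^*(\bR)$ equals
\[
SO_{\gamma_0}(h_\infty)=e(I_{\gamma_0})\cdot\on{vol}\bigl(I_{\gamma_0}(\bR)/A_G(\bR)^0\bigr)^{-1}\cdot\tr\xi_\bC(\gamma_0),
\]
by \cite[Lemma 3.1]{Kolambda}. On the other hand, because $G'(\bR)$ is compact modulo $A_G(\bR)^0$, the character $\Theta_\xi$ is a class function whose orbital integral at an elliptic (equivalently, any semisimple) $\gamma'\in G'(\bR)$ is $\on{vol}(I_{\gamma'}(\bR)/A_G(\bR)^0)^{-1}\tr\xi_\bC(\gamma')$, simply because each conjugacy class is compact and supports a delta measure. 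For the trivial endoscopic triple $(G^*,1,\id)$, the Kottwitz--Shelstad transfer factor between $G'(\bR)$ and $G^*(\bR)$ on matching elliptic classes reduces to the Kottwitz sign $e(G')e(G^*)^{-1}$, which cancels the sign built into $h_\infty$. Every $\bR$-elliptic stable class of $G^*$ transfers to $G'$ precisely because $G'_\bR$ is compact modulo center, so no strata are missed and the archimedean stable orbital integrals agree.

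The main obstacle is the bookkeeping of signs, Haar measure normalizations on centralizers, and transfer factors at infinity; Kottwitz's normalization of $h_\infty$ in \cite{KoAnnArbor} was designed precisely so that, combined with the compact-form formula for $\Theta_\xi$, the archimedean local factors coincide. Once this is verified, the matchings at finite places are tautological, and the global identity $ST_e(h)=ST_e((f')^{G^*})$ follows immediately from multiplicativity of stable orbital integrals over the places.
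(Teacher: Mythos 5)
Your overall strategy---unwinding Kottwitz's explicit construction of $h=h_ph^ph_\infty$ from \cite[\S 7]{KoAnnArbor} and matching stable orbital integrals place by place against those of $f'=f'_{p,j}f'^p\Theta_\xi$---is exactly the route the paper takes; its entire proof is the citation of that construction, and your prime-to-$p$ matching (same function on the same group, hence the same transfer) and your $p$-adic matching (equality of Satake transforms forces $h_p=f'_{p,j}$) are faithful elaborations of it. A minor overreach: away from $p$ one need not have $G(\bQ_v)\cong G^*(\bQ_v)$; what saves you is only that $G$ and $G'$ agree over $\bA_f$, so the two sides transfer the \emph{same} function.

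The archimedean step, which you rightly call the key content, contains a concrete error. For a class function such as $\Theta_\xi$ on the compact-mod-center group $G'(\bR)$, the orbital integral is $O_{\ga'}(\Theta_\xi)=\tr\xi_\bC(\ga')\cdot\on{vol}\big(I_{\ga'}(\bR)\backslash G'(\bR)\big)=\tr\xi_\bC(\ga')\cdot\on{vol}\big(G'(\bR)/A_G(\bR)^0\big)\cdot\on{vol}\big(I_{\ga'}(\bR)/A_G(\bR)^0\big)^{-1}$: the class pulls the constant value out and leaves the volume of the \emph{orbit}, not the reciprocal volume of the centralizer, and your justification ("supports a delta measure") is not a correct computation. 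Consequently the asserted on-the-nose agreement with Kottwitz's $h_\infty$, whose stable orbital integrals are $e(I)\,\on{vol}(I(\bR)/A_G(\bR)^0)^{-1}\tr\xi_\bC(\ga)$ by \cite[Lemma 3.1]{Kolambda}, is off by the constant $\on{vol}(G'(\bR)/A_G(\bR)^0)$, which is not $1$ for a general choice of the fixed Haar measure $dx_\infty$; one must either normalize $dx_\infty$ so that this volume is $1$ or track the constant through both the geometric and spectral sides of the $G'$-trace formula before the identity of the two $(G^*,1,\id)$-terms can be claimed. Relatedly, the signs that enter the principal endoscopic term are the Kottwitz signs $e(I_\ga)$ of the centralizers (all trivial on the $G'$-side, since every centralizer there is compact mod center), not a ratio $e(G')e(G^*)^{-1}$ of signs of the groups, so your appeal to a transfer-factor cancellation at infinity does not quite parse as written.
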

\begin{proof}
This follows from the explicit construction of $h$ in \eqref{E: Lef trace for Sh} as in \cite[\S 7]{KoAnnArbor}.
\end{proof}

Now we further assume $G_\der$ and $G'_\der$ have no simple factors that are the groups considered in the previous subsection. This in particular implies that for every finite place $v$ of $\bQ$, $G_{\der,\bQ_v}\simeq G'_{\der,\bQ_v}$ has  \emph{no} anisotropic factor.

Now for another place $p'\neq p$, if $f_{p'}$ is the Euler-Poincare function as constructed in \cite[Section 2]{KoTamagawa}, more precisely, the pullback of the Euler-Poincare function from $\bar{G}_{\bQ_{p'}}$, the quotient of $G_{\bQ_{p'}}$ by the maximal split torus in the center of $G_{\bQ_{p'}}$, then as explained in \cite[Lemma A.7, Lemma A.12]{KS16}, the trace of $f^p$ on an irreducible representation $\pi^{p}$ of $G(\bA_f^p)$ appearing in $\on{H}^*(\mathbf{Sh}_K(G,X),\mL_\xi))$ (or in the spectrum side of the trace formula of $G'$) is not zero only when $\pi_{p'}$ is an unramified character twist of the Steinberg representation. In addition, in both stable trace formulas, only the term corresponding to the endoscopic triple $(G^*, 1, \id)$ is non-vanishing. 

Let us write $\on{H}^*(\mathbf{Sh}_K(G,X),\mL_\xi)=\sum_{\pi_f} \pi^K_f\otimes W(\pi_f)$ as before. For $\pi_f$ such that $\pi_{f,p'}$ is a unramified character twist of the Steinberg representation, we choose $f=f_{p'}f^{p'}$ such that $\tr(f\mid \pi_f)=1$ and its trace on any other representation appearing in $\on{H}^*(\mathbf{Sh}_K(G,X),\mL_\xi)$ vanishes. Now, let $S_{\on{bad}}$ be the finite set of primes such that if $p\not\in S_{\on{bad}}$, then $K_p$ is hyperspecial and $f_p$ is the unite of the unramified Hecke algebra at $p$. Then by the comparison of the stabilized Lefschetz trace formula at  $p\not\in S_{\on{bad}}$, and the stable trace formula of $G'$, we have
\[\tr(f^p\mid \pi_f^p)\tr(\phi_v^j|W(\pi_f))=m_{G'}(\pi_f\otimes\xi_\bC)\tr(f^p\mid \pi_f)\tr(f'_{p,j}\mid \pi_{f,p})\]
Therefore, we obtain

\begin{cor}
\label{C: comparison, stable case}
Let $\pi_f$ be an irreducible smooth $\theta: G(\bA_f)\simeq G'(\bA_f)$ representation, which is an unramified character twist of the Steinberg at some place $p'$. Let $p\not\in S_{\on{bad}}$. Then in the Grothendieck group of $\phi_v$-modules,
\[[W(\pi_f)]=m_{G'}(\pi_f\otimes\xi_\bC)[V_{\mu^*}],\]
where $[V_{\mu^*}]$ is the $\phi_v$-module defined using the Satake parameter of $\pi_{f,p}$ as in \cite[p.p. 656-657]{Kolambda} (see also the introduction).
\end{cor}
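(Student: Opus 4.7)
The plan is to extract the asserted identity of Grothendieck classes directly from the comparison identity
\[
\tr(f^p\mid \pi_f^p)\,\tr\bigl(\phi_v^j \mid W(\pi_f)\bigr) \;=\; m_{G'}(\pi_f\otimes \xi_{\bC})\,\tr(f^p\mid \pi_f^p)\,\tr(f'_{p,j}\mid \pi_{f,p})
\]
that has just been obtained from the two stabilized trace formulas. First I would choose $f^p=f_{p'}f^{p,p'}$ so that $f_{p'}$ is the Euler--Poincar\'e function on $G(\bQ_{p'})$ (which singles out the Steinberg-twist component at $p'$) and $f^{p,p'}\in C_c^\infty(G(\bA_f^{p,p'}))$ is chosen so that $\tr(f^{p,p'}\mid (\pi_f^{p,p'})^K) = 1$. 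Such a choice exists by linear independence of characters and by assumption on $\pi_{f,p'}$. Dividing by the nonzero scalar $\tr(f^p\mid \pi_f^p)$ isolates the identity
\[
\tr\bigl(\phi_v^j \mid W(\pi_f)\bigr) \;=\; m_{G'}(\pi_f\otimes \xi_{\bC})\,\tr(f'_{p,j}\mid \pi_{f,p}),\qquad j\gg 0.
\]

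Next I would identify the right hand side with the trace of $\phi_v^j$ on $V_{\mu^*}$. By construction, $f'_{p,j}\in H(G(\bQ_p),K_p)$ is the spherical function whose Satake transform is the function $\ga\phi_p\mapsto \tr\bigl((\ga\phi_p)^{dj},V_{-\mu}\bigr)$ on $\hat G\phi_p/\!\!/\hat G$. Since $\pi_{f,p}$ is unramified with Satake parameter $\ga\phi_p$ (under the chosen half Tate twist), the Satake isomorphism gives
\[
\tr(f'_{p,j}\mid \pi_{f,p}) \;=\; \tr\bigl((\ga\phi_p)^{dj},V_{-\mu}\bigr) \;=\; \tr\bigl(\phi_v^j,V_{\mu^*}\bigr),
\]
the last equality because $V_{-\mu}$ and $V_{\mu^*}=V_{-w_0\mu}$ have the same character and $\phi_v$ acts on $V_{\mu^*}$ by $r_{\mu^*}((\ga\phi_p)^{d})$, with $d=[\bF_v:\bF_p]$. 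Combining the two displays, for all $j\gg 0$,
\[
\tr\bigl(\phi_v^j \mid W(\pi_f)\bigr) \;=\; m_{G'}(\pi_f\otimes \xi_{\bC})\,\tr\bigl(\phi_v^j,V_{\mu^*}\bigr).
\]

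Finally I would upgrade this equality of traces to an equality of classes in the Grothendieck group of $\phi_v$-modules. Both $W(\pi_f)$ and $V_{\mu^*}$ are finite dimensional, and equality of $\tr\phi_v^j$ for all sufficiently large $j$ forces equality of the characteristic polynomials (by Newton's identities applied to the eigenvalues of $\phi_v$), hence the identity $[W(\pi_f)]=m_{G'}(\pi_f\otimes\xi_{\bC})\,[V_{\mu^*}]$ in the Grothendieck group, as desired.

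The only nontrivial point is the derivation of the comparison identity used at the start; but this has been arranged in the preceding paragraphs by (i) choosing the Euler--Poincar\'e test function at the auxiliary place $p'$ to kill all endoscopic contributions other than the trivial triple $(G^*,1,\mathrm{id})$, (ii) matching the remaining stable terms via the lemma that compares $f'_{p,j}f'^p\Theta_\xi$ on the $G'$ side with Kottwitz's function on the Shimura side, and (iii) using that $G'_{\bR}$ is compact modulo center so that $a_{G',\xi}(\pi_f)=m_{G'}(\pi_f\otimes\xi_{\bC})$. With those inputs in hand, the corollary reduces, as above, to a purely formal manipulation with characters.
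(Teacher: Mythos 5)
Your proposal is correct and follows essentially the same route as the paper: it takes the displayed comparison identity coming from the stabilized Lefschetz trace formula and the stable trace formula for $G'$ (with the Euler--Poincar\'e function at $p'$ killing all but the principal endoscopic term), isolates the $\pi_f$-term with a suitable $f^p$, identifies $\tr(f'_{p,j}\mid\pi_{f,p})$ with $\tr(\phi_v^j,V_{\mu^*})$ via the Satake isomorphism, and then upgrades the equality of traces for $j\gg 0$ to an equality in the Grothendieck group. The only cosmetic remark is that the last step is most cleanly phrased via linear independence of the functions $j\mapsto\alpha^j$ for the (nonzero) Frobenius eigenvalues rather than literally via Newton's identities, but this does not affect the argument.
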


\section{Complements on the geometric Satake}\label{S: geom Sat}
The geometric Satake isomorphism is the main tool for this paper. In this section, we recall its content in a form we need in the sequel. We will also need some results on the crystal structures on Mirkovi\'c-Vilonen cycles.
Our presentation is parallel for both equal and mixed characteristic, following \cite{Z}. 

Throughout this section, we will use the following notations. Let $F$ be a local field (of either equal or mixed characteristic), with ring of integers $\calO$ and residue field $k =\FF_q$. Note that in either equal or mixed characteristic case, there is a unique map $W(k)\to \mO$ that induces the identity map of the residue fields.

Let $L$ be the completion of the maximal unramified extension of $F$, and $\mO_L$ its ring of integers with residue field $\bar{k} = \overline \FF_q$. We fix a uniformizer $\varpi$ of $F$.
Let $G$ be an \emph{unramified} reductive group over $\calO$. We denote by $T$ the abstract Cartan subgroup of $G$. Recall that it is defined as the quotient of a Borel subgroup $B\subset G$ by its unipotent radical. It turns out that $T$ is independent of the choice of $B$ up to a canonical isomorphism. Let $S\subset T$ denote the maximal split subtorus. 
When we need to embed $T$ (or $S$) into $G$ as a (split) maximal torus, we will state it explicitly.

Let $\xch=\xch(T)$ denote the weight lattice, i.e. the free abelian group $\Hom(T_{\overline F},\bG_m)$, and let $\xcoch=\xcoch(T)$ denote the coweight lattice, i.e. the dual of $\xch(T)$. Let $\Phi\subset \xch$ (resp. $\Phi^\vee\subset \xcoch$) denote the set of roots (resp. coroots). The Borel subgroup $B\subset G$ determines the semi-group of dominant coweights $\xcoch^+\subset \xcoch$ and the subset of positive roots $\Phi^+\subset\Phi$, which turn out to be independent of the choice of $B$. Let $\Delta\subset \Phi^+$ be the set of simple roots and $\Delta^\vee$ the corresponding set of simple coroots. The quadruple $(\xch,\Delta,\xcoch,\Delta^\vee)$ is called the \emph{based root datum} of $G$. The  $q$-power (arithmetic) Frobenius $\sigma$ acts on $(\XX^\bullet, \Delta, \XX_\bullet, \Delta^\vee)$, preserving $\xcoch^+$. Let ${\xcoch}_\sigma$ denote the $\sigma$-coinvariants of $\xcoch$, and let ${\xcoch}_\sigma^+$ denote the image of the map $\xcoch^+\to {\xcoch}_\sigma$. For $\mu\in \xcoch$, its image in ${\xcoch}_{\sigma}$ is denoted by $\mu_\sigma$.

Recall that there is a partial order $\preceq$ on $\xcoch$: $\la\preceq \mu$ if $\mu-\la$ is a non-negative integral linear combinations of simple coroots; we say $\la\prec\mu$ if $\la\preceq \mu$ but $\la\neq \mu$. The restriction of the partial order to $\xcoch^+$ sometimes is called the \emph{Bruhat order} on $\xcoch^+$. 
We will usually denote by $2\rho\in\xch$ the sum of all positive roots, and put $\rho=\frac{1}{2}(2\rho)\in \xch(T)\otimes\bQ$. 

Let $W$ denote the absolute Weyl group, and $W_0=W^\sigma$ the relative Weyl group of $G$.
There is a canonical bijection $\xcoch/W\cong \xcoch(T)^+$.

Let $\Aff_k^\pf$ denote the category of perfect $k$-algebras. For a perfect $k$-algebra $R$, we define the ring of Witt vectors in $R$ with coefficients in $\mO$ as
\begin{equation}\label{ramified Witt vector}
W_\mO(R):= W(R)\hat\otimes_{W(k)}\mO:= \varprojlim_n W_{\mO,n}(R),\quad W_{\mO,n}(R):=W(R)\otimes_{W(k)} \mO/\varpi^n.
\end{equation}
Consider the usual Techm\"uller lifting $[-]:
R\to W_\mO(R)$ sending $ r\mapsto [r]$. 
Note that if $\cha F=\cha k$, then $W_\mO(R)\simeq R[[\varpi]]$, and $r\mapsto [r]$ a ring homomorphism. We sometimes write
\begin{equation}\label{discs}
D_{n,R}=\Spec W_{\mO,n}(R),\quad D_R=\Spec W_\mO(R),\quad D_R^*=\Spec W_\mO(R)[1/\varpi],
\end{equation}
thought as families of (punctured) discs parameterized by $\Spec R$.

We refer to \S \ref{ASS:perfect AG} for generalities of perfect algebraic geometry.

\subsection{Geometry of Schubert varieties}
\subsubsection{Jet groups and loop groups}
\label{S: Jet and loop}
Let $H$ is an affine group scheme of finite type defined over $\mO$, we denote by $L^+H$ (resp. $LH$) the \emph{jet group} (resp. \emph{loop group}). As  a presheaf, we have
\begin{equation}\label{E:jet and loop}
L^+H(R)=H(W_\mO(R))\quad \textrm{(resp.} \ LH(R)=H(W_\mO(R)[1/\varpi]) \ ).
\end{equation}
It is represented by an affine group scheme (resp. ind-scheme). 
For $n\geq 0$, let $L^nH$ be the $n$th jet group, i.e. 
\begin{equation}\label{E:rjet}
L^nH(R)=H(W_{\mO,n}(R)).
\end{equation} 
Then $L^nH$ is represented by the perfection of an algebraic $k$-group (the usual Greenberg realization), and $L^+H=\underleftarrow\lim L^nH$. Finally, let
\begin{equation}\label{E:cong subgroup}
L^+H^{(n)}=\ker(L^+H\to L^nH)
\end{equation}
be the \emph{$n$th principal congruence subgroup} of $L^+H$.

For $m>n$, let $\pi_{m,n}: L^mH\to L^nH$ denote the natural projection.  The kernel of $\pi_{m,n}$ is denoted by $L^{m-n}H^{(n)}$ so that $L^+H^{(n)}$ can be identified with the inverse limit $\varprojlim_{m \to \infty} L^{m-n}H^{(n)}$.

\subsubsection{Affine Grassmannians}
We denote by $\Gr:=\Gr_G$ the affine Grassmannian of $G$ over $k$. As a presheaf,

\[\Gr(R)= \left\{(\mE,\beta)\left | \begin{array}{l}\mE \mbox{ is a } G\mbox{-torsor on } D_R\textrm{ and }  \\
 \beta: \mE|_{D_R^*}\simeq \mE^0|_{D_R^*} \mbox{ is a trivialization}\end{array}\right.\right\},\]
where $\mE^0$ denotes the trivial $G$-torsor.
It is known that affine Grassmannian $\Gr$ is represented as the inductive limit of subfunctors $\Gr=\underrightarrow\lim X_i$, with $X_i\to X_{i+1}$ closed embedding, and $X_i$ being perfections of projective varieties (\cite{BL, fa, Z, BS}). In addition, it makes sense to define the category of perverse sheaves (with $\Ql$-coefficients) on $\Gr$ as in direct limit 
$$\on{P}(\Gr)=\underrightarrow\lim \on{P}(X_i),$$ 
where the connecting functor is given by the pushforward along the closed embedding $X_i\to X_{i+1}$ (which is independent of the choice of the presentation $\Gr=\underrightarrow\lim X_i$ up to a canonical equivalence). 

\subsubsection{Bruhat decomposition} 
\label{SS: Bruhat decomposition}
Let $K$ be an algebraically closed field containing $\bar k$.
Recall that the Cartan decomposition induces a canonical bijection
\begin{equation}\label{Cartan decomp2} 
G(W_\mO(K)) \big\backslash G(W_{\mO}(K)[1/\varpi]) \big/G(W_{\mO}(K))\cong \xcoch(T)/W\cong \bX_\bullet(T)^+.
\end{equation}
The $G(W_\mO(K))$-double coset corresponding to $\mu\in\bX_\bullet(T)^+$ can be concretely realized as follows. We fix embeddings $T\subset B\subset G$.
Each coweight $\mu\in\xcoch(T)$ defines a map $\mu:L^\times\to T(L)\subset G(L)$, and we denote by $\varpi^\mu=\mu(\varpi)$ the image of the uniformizer $\varpi$ in $G(L)=LG(\bar k)$. Although $\varpi^\mu$ depends on the choice of the embedding $T\subset G$ and the choice of the uniformizer, the double coset $G(W_{\mO}(K))\varpi^\mu G(W_{\mO}(K))$ does not, and corresponds to $\mu$ under the parameterization \eqref{Cartan decomp2}.

Now let $\mE_1$ and $\mE_2$ be two $G$-torsors over $D_{K}=\Spec W_{\mO}(K)$, and let 
$\beta: \mE_1|_{D_{K}^*}\simeq \mE_2|_{D_{K}^*}$
be an isomorphism over $D_{K}^*$. Such $\beta$ is usually called a \emph{modification} from $\mE_1$ to $\mE_2$, and denoted by
\[\beta: \mE_1\dashrightarrow\mE_2.\footnote{The dotted arrow reminds that there may not be an actual morphism of $G$-torsors over $D_R$.}
\]

We attach to $\beta$ an element
\[\inv(\beta)\in \xcoch(T)^+,\]
which we call  the \emph{relative position} of $\beta$, as follows: by choosing isomorphisms $\phi_1:\mE_1\simeq \mE^0$ and $\phi_2:\mE_2\simeq \mE^0$, one obtains an automorphism of the trivial $G$-torsor $\phi_2\beta\phi_1^{-1}\in \Aut(\mE^0|_{D_{K}^*})$ and therefore an element in $G(W_{\mO}(K)[1/\varpi])$. Different choices of $\phi_1$ and $\phi_2$ will modify this element by left and right multiplication by elements from $G(W_{\mO}(K))$. Therefore, via the bijection \eqref{Cartan decomp2}, we attach $\beta$ a dominant coweight $\inv(\beta) \in \xcoch(T)^+$. Note that when $G=\GL_n$, $\inv(\beta)$ is just the Hodge polygon of the $n\times n$-matrix determined by  $\beta$.

Now let $\mE_1$ and $\mE_2$ be two $G$-torsors over $D_R$, and 
$\beta: \mE_1|_{D_R^*}\simeq \mE_2|_{D_R^*}$ a modification.
Then for each geometric point $x\in \Spec R$, by base change we obtain $(\mE_1|_{D_x},\mE_2|_{D_x},\beta_x: \mE_1|_{D_x^*}\simeq \mE_2|_{D_x^*})$. We write $\inv(\beta) \preceq \mu$ (resp. $\inv(\beta) = \mu$) for a dominant coweight $\mu$ if $\inv(\beta_x) \preceq \mu$ (resp. $\inv(\beta_x) =\mu$) for all geometric points $x \in \Spec R$.

\subsubsection{Schubert varieties}
We define the \emph{(spherical) Schubert} variety $\Gr_{\mu}$ as the closed subset
$$\Gr_{\mu}=\left\{(\mE,\beta)\in \Gr\mid \inv(\beta)\preceq \mu\right\},$$
of $\Gr$. 
It contains the \emph{Schubert cell}
\[\mathring{\Gr}_\mu:=\left\{(\mE,\beta)\in \Gr\mid \inv(\beta)=\mu\right\}=\Gr_{\mu}\setminus \cup_{\la\prec\mu}\Gr_{\la}\]
as an open dense subset\footnote{In some literature Schubert varieties are denoted by $\Gr_{\preceq\mu}$ or $\overline{\Gr}_\mu$ while Schubert cells are denoted by $\Gr_\mu$. We hope our notation will not cause confusions.} . We will use the same notation to denote the image of $\varpi^\mu\in G(L)=LG(\bar k)\to \Gr(\bar k)$. Then $\mathring{\Gr}_\mu$
is the $L^+G$-orbit through $\varpi^\mu$. Note that there is a natural projection
\begin{equation}\label{E: Hodge to fil}
\pr_\mu: \mathring{\Gr}_\mu\to (\bar G/\bar P_\mu)^\pf, \qquad g\varpi^\mu \bmod L^+G\mapsto \bar{g} \bmod \bar P^\pf_\mu,
\end{equation}
where $P_\mu$ is the parabolic subgroup of $G$ generated by root subgroups $U_{\al}$ for those $\langle\al,\mu\rangle\leq 0$, and taking bar means reduction modulo $\varpi$.

The Schubert variety $\Gr_\mu$ is the perfection of a projective variety defined over the field of definition of the cocharacter $\mu$ (which is a finite extension of $k$).

\subsubsection{Convoluted product of affine Grassmannians}

For a sequence $\mu_\bullet = (\mu_1, \dots, \mu_t)$ of dominant coweights, we define 
$\Gr_{\mmu}$ 
as the presheaf which classifies the isomorphism classes of modifications
\begin{equation}
\label{E:sequence of isogenies}
\xymatrix{
\calE_t \ar@{-->}[r]^-{\beta_t} &\calE_{t-1} \ar@{-->}[r]^-{\beta_{t-1}} & \cdots \ar@{-->}[r]^-{\beta_2} & \calE_1 \ar@{-->}[r]^-{\beta_1} & \calE_0=\calE^0
}
\end{equation}
of $G$-torsors such that each $\beta_i$ has relative position $\preceq \mu_i$ at all geometric points.  For a non-negative integer $n$ (which allows to be $\infty$),
Let $\Gr_\mmu^{(n)}$ denote the natural $L^nG$-torsor over $\Gr_\mmu$ classifying the chain of modifications of $G$-torsors over $D_R$ as above together with an isomorphism $\epsilon_n:\mE_t|_{D_{n,R}}\simeq \mE^0|_{D_{n,R}}$. When $\mmu = \mu$, $\Gr_\mu^{(\infty)}\subset LG$ is   the preimage of $\Gr_\mu$ in $LG$, and
\begin{equation}
\label{E: LnG-torsor over Grmu}
\Gr_\mu^{(n)}= \Gr^{(\infty)}_\mu/L^+G^{(n)}.
\end{equation}

Note that for every $s\leq t$, there is a projection
\begin{equation}
\label{E: projs}
\pr_s:\Gr_{\mmu}\to \Gr_{\mu_1,\ldots,\mu_s}
\end{equation} 
by forgetting $(\mE_t\stackrel{\beta_t}{\dashrightarrow}\cdots\dashrightarrow \mE_{s+1}\stackrel{\beta_{s+1}}{\dashrightarrow})$ in \eqref {E:sequence of isogenies}, whose fibers over geometric points (non-canonically) are isomorphic to $\Gr_{\mu_{s+1},\ldots,\mu_t}$. This construction induces an isomorphism
\begin{equation}
\label{E: conv vs twisted}
\Gr_{\mu_1,\ldots,\mu_t}\cong \Gr_{\mu_1,\ldots,\mu_s}\tilde\times\Gr_{\mu_{s+1},\ldots,\mu_t}:= \Gr^{(\infty)}_{\mu_1,\ldots,\mu_s}\times^{L^+G}\Gr_{\mu_{s+1},\ldots,\mu_t},
\end{equation}
where $ \Gr^{(\infty)}_{\mu_1,\ldots,\mu_s}$ is the $L^+G$-torsor over $\Gr_{\mu_1,\ldots,\mu_s}$ classifying a point $(\mE_s\dashrightarrow\cdots\dashrightarrow\mE_0=\mE^0)$ of $\Gr_{\mu_1,\ldots,\mu_s}$ together with an isomorphism $\mE_s\simeq \mE^0$. So $\Gr_{\mu_1,\ldots,\mu_s}\tilde\times\Gr_{\mu_{s+1},\ldots,\mu_t}$ is the twisted product of $\Gr_{\mu_1,\ldots,\mu_s}$ and $\Gr_{\mu_{s+1},\ldots,\mu_t}$ (see \cite[\S A.1.3]{Z} for a more detailed discussion of the twisted product construction).
Therefore, $\Gr_{\mmu}\cong \Gr_{\mu_1}\tilde\times\Gr_{\mu_2}\tilde\times\cdots\tilde\times\Gr_{\mu_t}$ is called the \emph{twisted product} of $\Gr_{\mu_1},\ldots,\Gr_{\mu_t}$.

In later sections, we will make use of the following definition.
\begin{definition}
\label{D:acceptable pair}
For a sequence of dominant coweights $\mmu$, we say a non-negative integer $m$ is \emph{$\mmu$-large}
if $m\geq \langle |\mmu|, \al_h\rangle$, where $\al_h$ is the highest root. 
\end{definition}
For example, if $\mu$ is a dominant minuscule coweight, then $1$ is $\mu$-large. Also, if $m$ is $\mmu$-large, then $m$ is also $\sigma(\mmu)$-large and $\mu^*_\bullet$-large. Moreover, if $m$ is $\mmu$-large and $m'$ is $\nu_\bullet$-large, then $m+m'$ is $(\mmu,\nu_\bullet)$-large.

This notion is used the following observation. 
\begin{lem}
\label{L: m mu large}
If $m$ is $\mmu$-large, then the left action of $L^+G^{(m+n)}$ on $\Gr_\mmu^{(n)}$ is trivial for every non-negative integer $n$, or equivalently, the left action of $L^+G$ on $\Gr_\mmu^{(n)}$ factors through the action of $L^{m+n}G$.  
\end{lem}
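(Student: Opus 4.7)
The plan is to reduce the statement to a commutator estimate in $LG$ and then iterate over the convolution factors.

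First, recall $\Gr_\mmu^{(n)}=\Gr_\mmu^{(\infty)}/L^+G^{(n)}$, with the quotient acting by changing the trivialization $\epsilon$. Using \eqref{E: conv vs twisted} iteratively, I would represent points of $\Gr_\mmu^{(\infty)}$ by tuples $(g_1,\ldots,g_t)\in LG^t$ with $g_iL^+G\in\Gr_{\mu_i}$, modulo the equivalences $(g_i,g_{i+1})\sim(g_ik,k^{-1}g_{i+1})$ for $k\in L^+G$, $1\leq i<t$; under this description, the left $L^+G$-action is by left multiplication on $g_1$. The lemma is thus equivalent to showing that for every $h\in L^+G^{(m+n)}$, the tuples $(hg_1,g_2,\ldots,g_t)$ and $(g_1,g_2,\ldots,g_t)$ represent the same class in $\Gr_\mmu^{(n)}$.

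The core technical input is a single-Schubert commutator estimate: if $\nu$ is a dominant coweight and $g\in LG$ satisfies $gL^+G\in\Gr_\nu$, then $g^{-1}L^+G^{(m+n)}g\subset L^+G^{(n)}$ provided $m\geq\langle\al_h,\nu\rangle$. Writing $g=k_1\varpi^\nu k_2$ with $k_i\in L^+G$ via the Cartan decomposition and using the normality of $L^+G^{(m+n)}$ in $L^+G$, this reduces to $\varpi^{-\nu}L^+G^{(m+n)}\varpi^\nu\subset L^+G^{(n)}$. Decomposing $L^+G^{(m+n)}$ into root-subgroup pieces and a torus piece (using $\exp\colon\varpi^{m+n}\Lie G\simeq L^+G^{(m+n)}$ in equal characteristic, and the analogous $\varpi$-adic filtration on the Greenberg realization in mixed characteristic), and using $\varpi^{-\nu}u_\al(x)\varpi^\nu=u_\al(\varpi^{-\langle\al,\nu\rangle}x)$ on each root subgroup, one sees that $L^+U_\al^{(m+n)}$ is carried into $L^+U_\al^{(m+n-\langle\al,\nu\rangle)}\subset L^+U_\al^{(n)}$, because $\langle\al,\nu\rangle\leq\langle\al_h,\nu\rangle\leq m$ for every root $\al$ (the maximum over roots being achieved by $\al_h$ since $\nu$ is dominant); the torus piece is invariant. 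Since the actual relative position $\nu$ of any $g$ with $gL^+G\in\Gr_{\mu_i}$ satisfies $\nu\preceq\mu_i$ and hence $\langle\al_h,\nu\rangle\leq\langle\al_h,\mu_i\rangle$, this estimate applies whenever $m\geq\langle\al_h,\mu_i\rangle$.

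To finish, I iterate. Set $m_i:=\langle\al_h,\mu_i\rangle$, so $\sum_im_i\leq m$ by assumption. Applying the single-Schubert estimate to $g_1$ gives $h_1:=g_1^{-1}hg_1\in L^+G^{(m+n-m_1)}$, hence $(hg_1,g_2,\ldots,g_t)=(g_1h_1,g_2,\ldots,g_t)\sim(g_1,h_1g_2,g_3,\ldots,g_t)$ via the equivalence between positions $1$ and $2$. Repeating $t$ times, the offending factor migrates to the last slot as an element $h_t\in L^+G^{(m+n-m_1-\cdots-m_t)}\subset L^+G^{(n)}$, which is absorbed by the right $L^+G^{(n)}$-quotient defining $\Gr_\mmu^{(n)}$.

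The main obstacle will be rigorously setting up the root-space decomposition of $L^+G^{(m+n)}$ in the perfect, mixed-characteristic setting: one needs that conjugation by $\varpi^\nu$ acts with eigenvalue $\varpi^{\langle\al,\nu\rangle}$ on each graded piece of a suitable $\varpi$-adic filtration of $\Lie L^+G$. Once this structural input is in place, the rest is bookkeeping.
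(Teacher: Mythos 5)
Your argument is correct and is essentially the paper's own proof: the single-coweight input is the same containment $\varpi^{-\nu} L^+G^{(m+n)} \varpi^{\nu} \subseteq L^+G^{(m+n-\langle \nu,\al_h\rangle)}$ (proved via Cartan decomposition, normality of the congruence subgroups, and the root-by-root computation), and the iterated case is handled by splitting $m \geq \sum_i \langle \mu_i,\al_h\rangle$ across the factors --- where the paper runs an induction using the untwisted product $\Gr_{\mu_1}^{(m'+n)}\times \Gr_{\mu_2,\ldots,\mu_t}^{(n)}$, you simply unroll that induction by conjugating the congruence element past $g_1,\ldots,g_t$ until it lands in $L^+G^{(n)}$ and is absorbed by the right quotient, which is the same computation. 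The one repair: in characteristic $p$ there is no exponential isomorphism $\varpi^{m+n}\Lie G \simeq L^+G^{(m+n)}$; the structural input you need is instead the big-cell factorization $L^+G^{(r)} \cong \prod_{\al<0} L^+U_\al^{(r)} \times L^+T^{(r)} \times \prod_{\al>0} L^+U_\al^{(r)}$ for $r\geq 1$ (an element congruent to $1$ modulo $\varpi^r$ lies in the open cell $U^-TU^+$ and its factors are again congruent to $1$), which holds uniformly in equal and mixed characteristic and gives exactly the eigenvalue $\varpi^{\langle \al,\nu\rangle}$ statement on each root piece that your bookkeeping uses.
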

\begin{proof}
First we consider the case $\mmu=\mu$ is single coweight. The lemma follows from
\[\varpi^{-\mu} L^+G^{(m)}\varpi^{\mu}\subseteq L^+G^{(m-\langle\mu,\al_h\rangle)}.\]
To deal with the general case, we can choose a sequence of non-negative integers $m_\bullet=(m_1,\ldots,m_t)$ such that $m=\sum m_i$ and $m_i$ is $\mu_i$-large. Since $m':=\sum_{i\geq 2}m_i$ is $(\mu_2, \dots, \mu_t)$-large, by induction there is a canonical isomorphism
\begin{equation}
\label{E:Hecke factorization induction}
\Gr_{\mu_1}^{(\infty)} \times^{L^+G^{(m'+n)}} \Gr_{\mu_2, \dots, \mu_t}^{(n)}\cong\Gr_{\mu_1}^{(\infty)}  / L^+G^{(m'+n)} \times \Gr^{(n)}_{\mu_2, \dots, \mu_t}.
\end{equation}
On the one hand, by \eqref{E: conv vs twisted}, $\Gr^{(n)}_{\mmu}$ is the quotient of by the diagonal action of $L^{m'+n}G$ on the left hand side of \eqref{E:Hecke factorization induction}. On the other hand, $L^+G^{(m+n)}$ acts trivially on $\Gr_{\mu_1}^{(\infty)}  / L^+G^{(m'+n)}=\Gr_{\mu_1}^{(m'+n)}$ by the discussion in the single coweight case. Therefore, $L^+G^{(m+n)}$ acts trivially on $\Gr_{\mmu}^{(n)}$.
\end{proof}

\subsubsection{The convolution map} 
Let 
\begin{equation}
\label{E: convolution map}
m_\mmu:\Gr_{\mmu}\to \Gr
\end{equation}
denote the \emph{convolution map}. It sends the sequence of modifications \eqref{E:sequence of isogenies} to the composition $(\calE_t, \beta_1\circ \cdots \circ \beta_t) \in \Gr$. Both \eqref{E: projs} and \eqref{E: convolution map} are perfectly proper morphisms.
It is known that $m_\mmu$ is a semi-small map (see \cite[Lemma 4.4]{MV}, \cite[Lemma 9.3]{NP} and \cite[Proposition 2.3]{Z}), i.e.
\begin{equation}\label{semismall}
\dim \Gr_\mmu\times_{\Gr}\Gr_{\mmu}=\dim \Gr_\mmu.
\end{equation}
This is equivalent to the fact that $(m_\mmu)_*\IC$ is a direct sum of simple perverse sheaves on $\Gr$, each of which is isomorphic to the intersection cohomology sheaf on some $\Gr_\nu$.

For $\la_\bullet$ another sequence of dominant coweights, we define
\begin{equation*}
\Gr_{\la_\bullet|\mmu}^0:= \Gr_{\la_\bullet}\times_{\Gr}\Gr_{\mmu}
\end{equation*}
where the fiber product is taken with respect to the convolution maps $m_{\la_\bullet}$ and $m_\mmu$. Explicitly, $\Gr_{\la_\bullet|\mmu}^0$ classifies the following commutative diagram of modifications of $G$-torsors
\begin{equation}
\label{E:corr}
\xymatrix{
\calE'_s \ar@{-->}[r]^-{\beta'_s}\ar@{=}[d] &\calE'_{s-1} \ar@{-->}[r]^-{\beta'_{s-1}} & \cdots \ar@{-->}[r]^-{\beta'_2} & \calE'_1 \ar@{-->}[r]^-{\beta'_1} & \calE'_0\ar@{=}[r]&\calE^0\ar@{=}[d]\\
\mE_t \ar@{-->}[r]^-{\beta_t} &\calE_{t-1} \ar@{-->}[r]^-{\beta_{t-1}} & \cdots \ar@{-->}[r]^-{\beta_2} & \calE_1 \ar@{-->}[r]^-{\beta_1} & \calE_0\ar@{=}[r]&\calE^0,
}
\end{equation}
where the top row defines a point of $\Gr_{\la_\bullet}$ and the bottom row defines a point of $\Gr_{\mu_\bullet}$.

Let $h^{\leftarrow}_{\la_\bullet}$ and $h^{\rightarrow}_{\mmu}$ denote the natural maps from $\Gr_{\la_\bullet|\mmu}^0$ to $\Gr_{\la_\bullet}$ and $\Gr_\mmu$ respectively. For the reason that will be clear in the sequel, we call
\begin{equation}\label{E: Sat corr}
\Gr_{\la_\bullet}\stackrel{h^{\leftarrow}_{\la_\bullet}}{\longleftarrow} \Gr^0_{\la_\bullet\mid \mmu}\stackrel{h^{\rightarrow}_{\mmu}}{\longrightarrow}\Gr_{\mmu}
\end{equation}
a \emph{Satake correspondence}. Note that $\Gr^0_{\la_\bullet\mid \mmu}=\Gr^0_{\mmu\mid\la_\bullet}$.

\begin{ex}
If $\la_\bullet=\la$ is a single dominant coweight, then $\Gr_{\mmu|\la}^0=m_\mmu^{-1}(\Gr_\la)$, where $m_\mmu:\Gr_\mmu\to \Gr$ is the convolution map.
\end{ex}

We need a corollary of \eqref{semismall}. Recall that for a variety $X$ over $k$, we denote by 
$\on{H}^{\on{BM}}_i(X_{\bar k})$ the $i$th \emph{Borel-Moore homology} of $X_{\bar k}$ (see Notation \ref{N: BM homology} for our convention).

\begin{prop} 
\label{SS:cycles of geometric Satake}
\begin{enumerate}
\item We have
$$\dim \Gr_{\la_\bullet|\mmu}^0\leq \frac{1}{2}\big(\dim \Gr_{\la_\bullet}+\dim \Gr_\mmu\big)=\langle\rho,|\la_\bullet|+|\mmu|\rangle.$$

\item There is a canonical isomorphism
\[\Hom_{\on{P}(\Gr)} \big((m_{\la_\bullet})_*\IC_{\la_\bullet},(m_\mmu)_*\IC_{\mmu}\big)\cong  \on{Corr}_{\Gr_{\la_\bullet|\mmu}^0}\big((\Gr_{\la_\bullet},\IC_{\la_\bullet}),(\Gr_{\mmu},\IC_\mmu)\big).\]
Under the above isomorphism, the composition
\begin{multline}\label{III: comp}
\Hom \big((m_{\kappa_\bullet})_*\IC_{\kappa_\bullet},(m_{\lambda_\bullet})_*\IC_{\lambda_\bullet}\big)\otimes \Hom \big((m_{\lambda_\bullet})_*\IC_{\lambda_\bullet},(m_{\mu_\bullet})_*\IC_{\mu_\bullet}\big)\\
\to\Hom\big((m_{\kappa_\bullet})_*\IC_{\kappa_\bullet},(m_{\mu_\bullet})_*\IC_{\mu_\bullet}\big)
\end{multline}
corresponds to 
\begin{multline*}\on{Corr}_{\Gr_{\kappa_\bullet|\lambda_\bullet}^0}\big((\Gr_{\kappa_\bullet},\IC),(\Gr_{\lambda_\bullet},\IC)\big) \otimes \on{Corr}_{\Gr_{\lambda_\bullet|\mu_\bullet}^0}\big((\Gr_{\lambda_\bullet},\IC),(\Gr_{\mu_\bullet},\IC)\big)\\
\to \on{Corr}_{\Gr_{\kappa_\bullet|\mu_\bullet}^0}\big((\Gr_{\kappa_\bullet},\IC),(\Gr_{\mu_\bullet},\IC)\big),
\end{multline*}
obtained by the pushforward of the composition of cohomological correspondences along the perfectly proper morphism
\[\on{Comp}:\Gr_{\kappa_\bullet|\lambda_\bullet}^0\times_{\Gr_{\lambda_\bullet}}\Gr_{\lambda_\bullet|\mu_\bullet}^0\to \Gr_{\kappa_\bullet|\mu_\bullet}^0.\]

\item There is a canonical isomorphism 
$$ \Hom_{\on{P}(\Gr)} \big((m_{\la_\bullet})_*\IC_{\la_\bullet},(m_\mmu)_*\IC_{\mmu}\big)\cong \on{H}^{\on{BM}}_{\langle2\rho,|\la_\bullet|+|\mmu|\rangle}(\Gr_{\la_\bullet|\mmu}^0).$$
\end{enumerate} 
\end{prop}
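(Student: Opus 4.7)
For (1), I would use semi-smallness \eqref{semismall} as the key input. Stratify $\Gr_{\la_\bullet|\mmu}^0$ by the composite map $m_{\la_\bullet}\circ q = m_\mmu\circ p:\Gr_{\la_\bullet|\mmu}^0\to\Gr$, writing $\Gr_{\la_\bullet|\mmu}^0=\bigsqcup_\nu Z_\nu$ with $Z_\nu$ over $\mathring\Gr_\nu$. Semi-smallness of $m_{\la_\bullet}$ yields $\dim m_{\la_\bullet}^{-1}(x_\nu)\leq\langle\rho,|\la_\bullet|-\nu\rangle$ for $x_\nu\in\mathring\Gr_\nu$, and similarly for $m_\mmu$. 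Since $Z_\nu\to\mathring\Gr_\nu$ is (locally trivially) fibered with fiber $m_{\la_\bullet}^{-1}(x_\nu)\times m_\mmu^{-1}(x_\nu)$, we obtain $\dim Z_\nu\leq\langle 2\rho,\nu\rangle+\langle\rho,|\la_\bullet|+|\mmu|-2\nu\rangle=\langle\rho,|\la_\bullet|+|\mmu|\rangle$ independently of $\nu$, proving (1).

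For (2), I would exhibit the isomorphism via the adjunction chain
\[
\Hom\bigl((m_{\la_\bullet})_*\IC_{\la_\bullet},(m_\mmu)_*\IC_\mmu\bigr)\cong\Hom\bigl(\IC_{\la_\bullet},(m_{\la_\bullet})^!(m_\mmu)_*\IC_\mmu\bigr)\cong\Hom(q^*\IC_{\la_\bullet},p^!\IC_\mmu),
\]
using the $(f_!=f_*, f^!)$-adjunction for the proper map $m_{\la_\bullet}$, proper base change to rewrite $(m_{\la_\bullet})^!(m_\mmu)_*\cong q_*p^!$ (valid since $m_\mmu$, and hence its base change $q$, is proper), and finally the $(q^*,q_*)$-adjunction. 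The final group is by definition $\on{Corr}_{\Gr_{\la_\bullet|\mmu}^0}\bigl((\Gr_{\la_\bullet},\IC_{\la_\bullet}),(\Gr_\mmu,\IC_\mmu)\bigr)$. Compatibility with composition is a formal consequence of base change in the Cartesian cube formed by $\Gr_{\kappa_\bullet|\la_\bullet}^0$, $\Gr_{\la_\bullet|\mmu}^0$, and $\Gr_{\kappa_\bullet|\mmu}^0$, realizing the push-pull composition of cohomological correspondences along the proper map $\on{Comp}$.

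For (3), I would combine (2) with the decomposition theorem: semi-smallness of $m_{\la_\bullet}$ and $m_\mmu$ gives decompositions $(m_{\la_\bullet})_*\IC_{\la_\bullet}=\bigoplus_\nu V_\nu\otimes\IC_\nu$ and $(m_\mmu)_*\IC_\mmu=\bigoplus_\nu W_\nu\otimes\IC_\nu$, where $V_\nu$ and $W_\nu$ have bases of MV cycles, i.e.\ the top-dimensional irreducible components of $m_{\la_\bullet}^{-1}(x_\nu)$ and $m_\mmu^{-1}(x_\nu)$. Simplicity of $\IC_\nu$ yields $\Hom\cong\bigoplus_\nu V_\nu^*\otimes W_\nu$. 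Geometrically, local triviality of $Z_\nu\to\mathring\Gr_\nu$ identifies its $N$-dimensional components, with $N=\langle\rho,|\la_\bullet|+|\mmu|\rangle$, with pairs of top components of the two fibers, giving total count $\sum_\nu(\dim V_\nu)(\dim W_\nu)$. To construct the canonical map, I would send each top-dimensional component $C\subset\Gr_{\la_\bullet|\mmu}^0$ to the cohomological correspondence determined by its fundamental class: on the smooth part of $C$ inside $\mathring\Gr_{\la_\bullet}\times_\Gr\mathring\Gr_\mmu$, both $q^*\IC_{\la_\bullet}$ and $p^!\IC_\mmu$ are shifted constant sheaves with \emph{matching} shift (precisely because $\dim C=N$), and the identity map extends to give the desired element of $\Hom(q^*\IC_{\la_\bullet},p^!\IC_\mmu)$ supported on $C$. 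The main obstacle is to verify that this fundamental-class construction agrees canonically with the adjunction isomorphism, and to track all Tate twists correctly; this ultimately rests on the MV interpretation of the multiplicity spaces and on a clean verification that $\mathbb{D}(q^*\IC_{\la_\bullet})\otimes p^!\IC_\mmu$ is concentrated in cohomological degree $0$ exactly on the $N$-dimensional components.
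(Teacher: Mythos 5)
Your treatment of (1) is exactly the paper's argument: stratify $\Gr_{\la_\bullet|\mmu}^0$ over the $\mathring{\Gr}_\nu$ and add the two semi-smallness bounds for the fibers of $m_{\la_\bullet}$ and $m_\mmu$; nothing to add there. Your (2) is also essentially the paper's proof, just with the adjunctions run in the dual order: the paper uses $((m_\mmu)^*,(m_\mmu)_*)$-adjunction, proper base change for $*$-functors, and then $((h^\rightarrow_\mmu)_!,(h^\rightarrow_\mmu)^!)$, while you use $((m_{\la_\bullet})_*,(m_{\la_\bullet})^!)$, the $!{-}*$ base change (legitimate since $m_\mmu$, hence $q$, is perfectly proper), and then $(q^*,q_*)$; both chains land in the same $\on{Corr}$ space, and in both treatments the compatibility with composition is a formal consequence of base change and the definition of composition of cohomological correspondences.

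In (3) there is a genuine gap, and it sits precisely where the paper does its real work. You propose to send each top-dimensional component $C$ to a correspondence via its fundamental class, but this class is a priori only defined on the open smooth part of $C$ inside $\mathring{\Gr}_{\la_\bullet}\times_{\Gr}\mathring{\Gr}_\mmu$, where $q^*\IC_{\la_\bullet}$ and $p^!\IC_\mmu$ are shifted constant sheaves; that it \emph{extends} to an element of $\Hom(q^*\IC_{\la_\bullet},p^!\IC_\mmu)$ on all of $\Gr^0_{\la_\bullet|\mmu}$, and that the resulting map is a bijection compatible with the isomorphism of (2), is exactly the nontrivial content, and you only flag it as ``the main obstacle'' without proving it. The paper argues in the opposite direction: it restricts a correspondence to the open locus $\mathring{\Gr}^0_{\la_\bullet|\mmu}$, identifies the restricted correspondence space with $\on{H}^{\on{BM}}_{\langle2\rho,|\la_\bullet|+|\mmu|\rangle}(\Gr^0_{\la_\bullet|\mmu})$ (the complement has strictly smaller dimension by (1)), and then shows the restriction map \eqref{E: res of Sat corr} is an isomorphism: the single-coweight case is quoted from \cite[Corollary 5.1.5]{Z16}, injectivity in general follows by composing with single-coweight correspondences, and surjectivity then follows from an equality of dimensions. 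Your decomposition-theorem computation $\Hom\cong\bigoplus_\nu V_\nu^*\otimes W_\nu$ and your count of top-dimensional components of the $Z_\nu$ (which, note, also uses the triviality of the monodromy on the set of top fiber components, i.e.\ the $L^+G$-equivariance/simple-connectedness input you leave implicit) reproduce that dimension count, but they do not by themselves produce the canonical map. To repair your route you would have to actually prove your concentration claim — that $R\underline{\Hom}(q^*\IC_{\la_\bullet},p^!\IC_\mmu)$ has $H^0$ with basis the $\langle\rho,|\la_\bullet|+|\mmu|\rangle$-dimensional components — via the support/cosupport estimates for $\IC$ combined with (1); as written this is asserted, not established. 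Otherwise, adopt the paper's restriction argument.
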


\begin{proof}
These facts are well-known.  But due to the importance, we sketch the arguments here.

(1) By the semismallness, the fiber of $m_\mmu$ over the point $\varpi^\nu$ is $\leq \langle\rho, |\mmu|-\nu\rangle$. It follows that 
$$\dim \Gr^0_{\la_\bullet\mid \mmu}\times_{\Gr}\mathring{\Gr}_{\nu}\leq \langle\rho, |\lambda_\bullet|-\nu\rangle + \langle\rho, |\mu_\bullet|-\nu\rangle + \dim \mathring{\Gr}_\nu =\langle \rho, |\la_\bullet|+|\mmu|\rangle$$ for any $\nu$.
The dimension estimate of $\Gr^0_{\la_\bullet\mid \mmu}$ then follows.  

(2) We write the diagram
\[
\xymatrix{
\Gr_{\la_\bullet|\mmu}^0  \ar[d]_{h^{\leftarrow}_{\la_\bullet}} \ar[r]^{h^{\rightarrow}_{\mmu}}
& \Gr_\mmu \ar[d]_{m_\mmu}
\\
\Gr_{\lambda_\bullet} \ar[r]^-{m_{\lambda_\bullet}} & \Gr.
}
\]
Then by proper base change,
\begin{eqnarray*}
\Hom_{\on{P}(\Gr)} ((m_{\la_\bullet})_*\IC,(m_\mmu)_*\IC) & \cong & \Hom_{\on{D}^b_c(\Gr_{\mmu})} ((m_\mmu)^*(m_{\la_\bullet})_*\IC,\IC) \\
& \cong & \Hom_{\on{D}^b_c(\Gr_\mmu)} ((h^{\rightarrow}_{\mmu})_*(h^{\leftarrow}_{\la_\bullet})^*\IC,\IC)\\
& \cong & \Hom_{\on{D}^b_c(\Gr_{\la_\bullet\mid\mmu}^0)} ((h^{\leftarrow}_{\la_\bullet})^*\IC,(h^{\rightarrow}_{\mmu})^!\IC)\\
& = & \on{Corr}_{\Gr_{\la_\bullet|\mmu}^0}((\Gr_{\la_\bullet},\IC),(\Gr_{\mmu},\IC)).
\end{eqnarray*}
The last assertion of (2) follows from the definition of the composition of cohomological correspondences (see Appendix \ref{Sec:cohomological correspondence}).

(3) 
Let $\mathring{\Gr}_{\la_\bullet}\subset \Gr_{\la_\bullet}$ (resp. $\mathring{\Gr}_{\mmu}\subset \Gr_{\mmu}$) denote the open subset formed by the twisted product of the open Schubert cells, and let $\mathring{\Gr}_{\la_\bullet\mid\mmu}^0=(h^{\leftarrow}_{\la_\bullet})^{-1}(\mathring{\Gr}_{\la_\bullet})\cap (h^{\rightarrow}_{\mmu})^{-1}(\mathring{\Gr}_{\mmu})$. By restriction to the above open subsets, we have
\begin{equation}
\label{E: res of Sat corr}
\on{Corr}_{\Gr_{\la_\bullet|\mmu}^0}((\Gr_{\la_\bullet},\IC),(\Gr_{\mmu},\IC))
\to \on{Corr}_{\mathring{\Gr}_{\la_\bullet|\mmu}^0}((\mathring{\Gr}_{\la_\bullet},\Ql[\langle2\rho,|\la_\bullet|\rangle]),(\mathring{\Gr}_{\mmu},\Ql[\langle2\rho,|\mmu|\rangle])).
\end{equation}
Note that the latter space is canonically isomorphic to
\[\on{H}^{\on{BM}}_{\langle2\rho,|\la_\bullet|+|\mmu|\rangle}(\mathring{\Gr}^{0}_{\la_\bullet\mid\mmu})\cong\on{H}^{\on{BM}}_{\langle2\rho,|\la_\bullet|+|\mmu|\rangle}(\Gr^{0}_{\la_\bullet\mid\mmu}),\]
where the isomorphism follows from Part (1). It remains to show that \eqref{E: res of Sat corr} is an isomorphism.
First, if $\la_\bullet=\la$ is a single coweight, this is standard. For example, see \cite[Corollary 5.1.5]{Z16} for a proof. The general case can be reduced to the single coweight case as follows. Let $u\in \Hom(\IC_\la,(m_{\la_\bullet})_*\IC)$, considered as a cohomological correspondence by Part (2). Assume that its restriction to $\mathring{\Gr}_{\la_\bullet\mid\mmu}^0$ is zero. Then it is easy to see that for every $v\in \Hom_{\on{P}(\Gr)} ((m_{\la_\bullet})_*\IC,(m_\mmu)_*\IC)$, the composition $u\circ v$, when regarded as a cohomological correspondence, becomes zero when restricted to $\mathring{\Gr}^0_{\la\mid\mmu}$. Therefore $u\circ v$ is zero by the single coweight case. This implies that $u=0$. Therefore, \eqref{E: res of Sat corr} is injective. On the other hand,
\[\dim\Hom_{\on{P}(\Gr)} ((m_{\la_\bullet})_*\IC,(m_\mmu)_*\IC)=\sum_\la\dim \Hom(\IC_\la,(m_{\la_\bullet})_*\IC)\dim \Hom(\IC_\la, (m_{\mu_\bullet})_*\IC)\]
and
\[\dim\on{H}^{\on{BM}}_{\langle2\rho,|\la_\bullet|+|\mmu|\rangle}(\Gr^{0}_{\la_\bullet\mid\mmu})=\sum_\la\dim\on{H}^{\on{BM}}_{\langle2\rho,\la+|\la_\bullet|\rangle}(\Gr^{0}_{\la\mid\la_\bullet})\dim\on{H}^{\on{BM}}_{\langle2\rho,\la+|\mu_\bullet|\rangle}(\Gr^{0}_{\la\mid\mu_\bullet}).\]
It follows by the single coweight case that the two spaces have the same dimension. Therefore, \eqref{E: res of Sat corr} is an isomorphism.
\end{proof}
\begin{dfn}
\label{D: SatCyc}
An irreducible component of $\Gr_{\la_\bullet|\mmu}^0$ of dimension $\langle\rho,|\la_\bullet|+|\mmu|\rangle$ is called a \emph{Satake cycle}.
The set of Satake cycles in $\Gr_{\la_\bullet|\mmu}^0$ is denoted by $\bS_{\la_\bullet|\mmu}$. For $\bba\in \bS_{\la_\bullet|\mmu}$, the cycle labelled by $\bba$ is denoted by $\Gr_{\la_\bullet|\mmu}^{0,\bba}$.
\end{dfn}
\begin{rmk}
\begin{enumerate}
\item Since $\Gr_{\la_\bullet\mid \mmu}^0=\Gr_{\mmu\mid \la_\bullet}^0$, $\bS_{\la_\bullet\mid\mmu}=\bS_{\mmu\mid\la_\bullet}$.
\item
In general, $\Gr_{\lambda_\bullet|\mu_\bullet}^{0}$ is not equi-dimensional (see \cite{haines}). So the union of $\Gr_{\lambda_\bullet|\mu_\bullet}^{0, \bba}$ for all $\bba \in \SSS_{\lambda_\bullet|\mmu}$ need not be the whole $\Gr_{\lambda_\bullet|\mu_\bullet}^{0}$.

\item Later, we will identify the vector space appearing in Part (2) with another vector space, using the geometric Satake isomorphism, which justifies the name. 
\end{enumerate}
\end{rmk}

Now, let $\nu_\bullet$ and $\xi_\bullet$ be two other (not necessarily non-empty) sequences of dominant coweights of length $a$ and $b$ respectively.
We define 
\begin{equation}
\label{E: cond on Grnuzeta}\Gr^0_{\nu_\bullet; \la_\bullet\mid \mmu; \xi_\bullet}:= \Gr_{\nu_\bullet}\tilde\times \Gr_{\la_\bullet\mid \mmu}^0\tilde\times \Gr_{\xi_\bullet}.
\end{equation}
This is a closed subscheme of $\Gr^0_{(\nu_\bullet,\la_\bullet,\xi_\bullet)\mid (\nu_\bullet, \mmu, \xi_\bullet)}$, classifying those diagrams \eqref{E:corr} that induce
\[
\mE_i=\mE'_i,\ \ \ 0\leq i\leq a,\quad \mE_{t-j}=\mE'_{t-j}, \ \ \ 0\leq j\leq b,
\]
or more explicitly, the diagrams
\[
\xymatrix@C=14pt{
\calE'_{a+b+s} \ar@{}[rrd]|{\cdots \ \cdots} \ar@{-->}[r] \ar@{=}[d] &\cdots \ar@{-->}[r] & \calE'_{a+s+1} \ar@{-->}[r] \ar@{=}[d] & \calE'_{a+s}\ar@{=}[d] \ar@{-->}[r] & \calE'_{a+s-1} \ar@{-->}[r] & \cdots \ar@{-->}[r] & \calE'_{a+1} \ar@{-->}[r] & \calE'_{a} \ar@{-->}[r]\ar@{=}[d] & \calE'_{a-1}\ar@{}[rrd]|{\cdots \ \cdots}  \ar@{-->}[r]\ar@{=}[d] & \cdots \ar@{-->}[r] & \calE'_0 \ar@{=}[d]\ar@{=}[r] &\mE^0\ar@{=}[d]
\\
\calE_{a+b+t} \ar@{-->}[r]& \cdots \ar@{-->}[r] & \calE_{a+t+1} \ar@{-->}[r]  & \calE_{a+t} \ar@{-->}[r] & \calE_{a+t-1} \ar@{-->}[r]& \cdots \ar@{-->}[r] & \calE_{a+1} \ar@{-->}[r] & \calE_{a} \ar@{-->}[r] & \calE_{a-1} \ar@{-->}[r] & \cdots \ar@{-->}[r]& \calE_0  \ar@{=}[r]& \mE^0,
}
\]

To finish this subsection, we prove the following lemma, which will only be used in \S\ref{Sec:S=T}. Recall the notion of cohomologically smooth morphisms in Definition \ref{D: coh smooth}.
\begin{lem}
\label{L: reversing modification}
Assume that $n$ is sufficiently large. Then $gL^+G^{(n)} \mapsto g^{-1}L^+G$ induces a cohomologically smooth morphism 
$\Gr_{\mu}^{(n)}\to \Gr_{\mu^*}$.
\end{lem}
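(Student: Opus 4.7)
The plan is to prove cohomological smoothness of $\pi$ by realizing it as a descent from a smooth surjective cover by a larger torsor over $\Gr_{\mu^*}$.

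First, I would reinterpret $\pi$ using the inversion anti-automorphism $\iota: LG \to LG$, $g \mapsto g^{-1}$, which restricts to an isomorphism $\Gr_\mu^{(\infty)} \simeq \Gr_{\mu^*}^{(\infty)}$ swapping the left and right $L^+G$-actions. Under $\iota$, the source $\Gr_\mu^{(n)} = \Gr_\mu^{(\infty)}/L^+G^{(n)}$ is canonically identified with the left quotient $L^+G^{(n)}\backslash \Gr_{\mu^*}^{(\infty)}$, and $\pi$ becomes the further quotient by the commuting right $L^+G$-action. By Lemma \ref{L: m mu large} applied to $\Gr_{\mu^*}^{(0)} = \Gr_{\mu^*}$, the left $L^+G^{(n)}$-action on $\Gr_{\mu^*}$ is trivial whenever $n \geq \langle \alpha_h, \mu\rangle$, which both makes the double quotient well-posed and verifies that $\pi$ is well-defined.

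Next, fix $N \geq n + \langle \alpha_h, \mu\rangle$ and introduce the intermediate morphism $q: \Gr_{\mu^*}^{(N)} \to \Gr_\mu^{(n)}$, $g L^+G^{(N)} \mapsto [g^{-1}]$. Its well-definedness amounts to the inclusion $g L^+G^{(N)} g^{-1} \subseteq L^+G^{(n)}$ for $g \in \Gr_{\mu^*}^{(\infty)}$, which I would verify by the conjugation identity $\varpi^{\mu^*} U_\alpha(z) \varpi^{-\mu^*} = U_\alpha(\varpi^{\langle \alpha, \mu^*\rangle} z)$ on root subgroups, combined with normality of $L^+G^{(n)}$ in $L^+G$ to reduce from an arbitrary $g$ in a Bruhat cell of $\Gr_{\mu^*}^{(\infty)}$ to the base representative. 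By construction, $\pi \circ q$ agrees with the projection $p: \Gr_{\mu^*}^{(N)} \to \Gr_{\mu^*}$, which is an $L^N G$-torsor and hence cohomologically smooth of relative dimension $N \dim G$.

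It then suffices to show that $q$ is cohomologically smooth and surjective of relative dimension $(N-n) \dim G$, for then $\pi$ is cohomologically smooth of relative dimension $n \dim G$ by the cancellation property for cohomologically smooth morphisms. The morphism $q$ realizes $\Gr_\mu^{(n)}$ as the quotient of $\Gr_{\mu^*}^{(N)}$ by the left action of $L^+G^{(n)}$, and by Lemma \ref{L: m mu large} this action factors through the perfection of the smooth finite-dimensional group $L^{N+m}G$ for $m$ a $\mu^*$-large integer. A direct stabilizer calculation (using the same conjugation bound) shows that the stabilizer of a point lifting to $g_0$ is the subgroup $g_0 L^+G^{(N)} g_0^{-1}$ of $L^+G^{(n)}$, whose orbit quotient has constant dimension $(N-n)\dim G$; surjectivity of $q$ is immediate. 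Granting that the quotient map for a smooth group action with constant orbit dimension is cohomologically smooth, this yields the required smoothness of $q$.

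The main obstacle, in my estimation, will be the uniform congruence-subgroup bookkeeping: making the inclusions $g L^+G^{(N)} g^{-1} \subseteq L^+G^{(n)}$ and the stabilizer calculations uniform over all $g \in \Gr_\mu^{(\infty)}$, not merely at the base representative $\varpi^\mu$. This requires choosing the thresholds as appropriate linear functions of $\langle \alpha_h, \mu\rangle$ and exploiting the Bruhat stratification $\Gr_\mu^{(\infty)} = \bigcup_{\lambda \preceq \mu} L^+G \varpi^\lambda L^+G$ together with normality of $L^+G^{(n)}$. Once those uniform root-subgroup estimates are in place, the remainder of the argument is a formal descent within the cohomological smoothness formalism.
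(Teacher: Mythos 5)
There is a genuine gap, and it sits exactly at the two steps you flag as "granted." Your reduction to the auxiliary map $q:\Gr_{\mu^*}^{(N)}\to \Gr_{\mu}^{(n)}$ is fine group-theoretically (the congruence estimates do work uniformly on the closure, since for $\la\preceq\mu^*$ dominant one has $\langle\la,\al_h\rangle\le\langle\mu^*,\al_h\rangle$, and $q$ is indeed invariant under the left $L^+G^{(n)}$-action with orbits of constant dimension $(N-n)\dim G$). But the cohomological conclusions you draw from this are not available. First, the assertion that "the quotient map for a smooth group action with constant orbit dimension is cohomologically smooth" is not proved anywhere in the paper and is essentially as strong as the lemma itself: if it were available you could apply it directly to $\pi$ (whose fibers are $L^+G$-homogeneous spaces), bypassing your whole construction. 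Note that the source and target of $q$ are torsors over the singular Schubert varieties $\Gr_{\mu^*}$ and $\Gr_\mu$, the stabilizers $x^{-1}L^+G^{(N)}x$ vary with the point (so $q$ is not a torsor), and with the paper's Definition~\ref{D: coh smooth} cohomological smoothness requires constructing a global trace map, not just checking fibers; the only criteria provided are Lemma~\ref{L: criterion coh smooth}(1), which demands that \emph{both} source and target be perfectly smooth, and (2), which is descent along a perfectly proper surjection with connected geometric fibers. Neither applies to $q$. Second, the "cancellation property" ($\pi\circ q$ and $q$ cohomologically smooth with $q$ a smooth surjection implies $\pi$ cohomologically smooth) is likewise not established for this trace-map-based definition; the paper deliberately only proves descent along \emph{proper} surjections with connected fibers. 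The remark following the lemma — that even perfect smoothness of $\pi$ is unknown in mixed characteristic — is a warning that such quotient/descent statements are precisely the hard point, since no infinitesimal criterion is available for perfect schemes.

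The paper's own route avoids both issues: it first observes that the fibers of $\pi$ are $L^+G$-homogeneous, hence perfectly smooth, which via Lemma~\ref{L: criterion coh smooth}(1) settles the case where source and target are perfectly smooth (the minuscule case, or the open cell). For general $\mu$ it writes $\mu$ as a sum of minuscule and quasi-minuscule coweights, forms the convolution resolution $\widetilde\Gr_{\mu_\bullet}\to\Gr_\mu$ (using the canonical resolution of quasi-minuscule Schubert varieties), base changes $\Gr_\mu^{(n)}\to\Gr_{\mu^*}$ along the corresponding resolution of $\Gr_{\mu^*}$ — which is perfectly proper with geometrically connected fibers — so that the pulled-back map has perfectly smooth source and target and perfectly smooth equidimensional fibers, hence is cohomologically smooth by criterion (1); it then descends by criterion (2). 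If you want to salvage your approach, you would have to prove your two black-box statements for the paper's notion of cohomological smoothness, which would amount to new foundational lemmas of comparable difficulty to the resolution argument.
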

\begin{proof}
Note that the fiber over each $g$ is isomorphic to $g L^+G g^{-1} / L^+G^{(n)} \cong L^+ G / g^{-1} L^+ G^{(n)} g$, which is a $L^+G$-homogeneous space and therefore is perfectly smooth. 
This observation in fact already implies that $\on{rev}: \mathring{\Gr}^{(n)}_{\mu}\to \mathring{\Gr}_{\mu^*}$ is perfectly smooth of relative dimension $n\dim G$, where $\mathring{\Gr}_{\mu}^{(n)}$ is the pre-image of the Schubert cell $\mathring{\Gr}$ in $\Gr_\mu^{(n)}$. So the lemma holds if $\mu$ is minuscule.

If $\mu=\mu^*$ is quasi-minuscule, there is a canonical resolution $\widetilde\Gr_{\mu}\to \Gr_{\mu}$ as from \cite[Lemma 2.12]{Z} (note that $\Gr_{\mu}$ was denoted by $\Gr_{\leq \mu}$ there), with geometrically connected fibers.

Now, now let $\mu=\mu_1+\cdots+\mu_r$, where each $\mu_i$ is minuscule or quasi-minuscule. Let $\widetilde\Gr_{\mu_i}=\Gr_{\mu_i}$ if $\mu_i$ is minuscule. We can form the convolution product $\widetilde\Gr_{\mmu}:=\widetilde\Gr_{\mu_1}\tilde\times\cdots\tilde\times\widetilde{\Gr}_{\mu_r}$, which is a resolution of $\Gr_\mu$. Write $\widetilde\Gr_{\mu^*_\bullet}:=\Gr_{\mu^*_r}\tilde\times\cdots\tilde\times\Gr_{\mu^*_1}$.
As in the proof of \cite[Lemma 2.23, Lemma 2.24]{Z}, the base change of $\Gr_{\mu}^{(n)}\to \Gr_{\mu^*}$ along this resolution is $\widetilde\Gr_{\mmu}^{(n)}\to \widetilde\Gr_{\mu^*_\bullet}$, which by Lemma \ref{L: criterion coh smooth} (1) is cohomological smooth. Therefore, by Lemma \ref{L: criterion coh smooth} (2), $\Gr_{\mu}^{(n)}\to \Gr_{\mu^*}$ is cohomologically smooth.

Finally, $\Gr$ is the union of Schubert varieties $\Gr_\mu$ with $\mu$ a sum of minuscules and quasi-minuscules. Therefore, the lemma holds for all $\mu$. 
\end{proof}

\begin{rmk}
We expect that the map $\Gr_\mu^{(n)} \to \Gr_{\mu^*}$ is in fact perfectly smooth. This is the case in equal characteristic, but we are not able to prove it in mixed characteristic.
\end{rmk}

\subsection{Geometry of semi-infinite orbits} We also recall the geometry of semi-infinite orbits in the affine Grassmannians and Mirkovi\'c-Vilonen cycles. We need to fix embeddings $T\subset B\subset G$.

\subsubsection{Mirkovi\'c-Vilonen cycles}
For $\lambda$ a coweight of $G$, we use
\[
S_\lambda = LU  \varpi^\lambda L^+G / L^+G
\]
to denote the semi-infinite $LU$-orbit on the affine Grassmannian.

Note that there is a natural projection
\begin{equation}
\label{E:triviallization of U(O) torsor}
\xymatrix@R=0pt{
\pi_\la:LU \ar[r] & S_\la
\\
u \ar@{|->}[r] &  \varpi^\la u,
}
\end{equation}
realizing $LU$ as an $L^+U$-torsor over $S_\la$. In addition, $S_\la=\varpi^\la S_0\to \Gr$ is a locally closed embedding, as explained in \cite[\S 2.2.1]{Z}. 

Recall the following basic result of Mirkovi\'c-Vilonen cycles. 
\begin{theorem}
\label{T:weight space interpretation}
For $\lambda$ and $\mu$ two coweights of $G$ with $\mu$ dominant, every irreducible component of the intersection $S_\lambda \cap \Gr_\mu$ is of dimension $\langle \rho, \lambda+ \mu \rangle$. In addition, the number of its irreducible components equals to the dimension of the $\la$-weight space $V_\mu(\la)$ of the irreducible representation $V_\mu$ of $\hat G$ of highest weight $\mu$.
\end{theorem}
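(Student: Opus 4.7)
My plan is to adapt the classical Mirkovi\'c--Vilonen argument \cite{MV} to the present perfect-geometry setting, following the framework of \cite{Z}. The two assertions -- the dimension bound and the count of irreducible components -- will be proved together via hyperbolic localization.

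For each coweight $\nu$, I first introduce the opposite semi-infinite orbit $S_\nu^- := LU^- \varpi^\nu L^+G / L^+G$, where $U^-$ is the unipotent radical of the Borel opposite to $B$. A regular dominant cocharacter $\eta: \bG_m \to T$ endows $\Gr$ with a $\bG_m$-action whose $T$-fixed points are the $\{\varpi^\kappa\}_{\kappa \in \xcoch(T)}$; the attractor locus of $\varpi^\la$ is $S_\la$ and the repeller locus is $S_\la^-$. Braden's hyperbolic localization theorem, established in this framework in \cite{Z}, then provides natural isomorphisms $s_\la^{!} \mathcal{F} \simeq s_\la^{-,\ast} \mathcal{F}$ for every $L^+G$-equivariant perverse sheaf $\mathcal{F}$, where $s_\la$ and $s_\la^-$ denote the inclusions of $S_\la$ and $S_\la^-$, respectively.

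Next I would show that the functor $F_\la := \on{H}^\ast_c(S_\la, (-)|_{S_\la})$ is exact on $\on{P}(\Gr)$ and is concentrated in cohomological degree $\langle 2\rho, \la\rangle$, by restricting to convolution products of Schubert cells and invoking the dimension estimate $\dim(S_\la \cap \Gr_\mu) \le \langle \rho, \la + \mu\rangle$; this estimate itself follows from the contraction arguments above together with the perfect properness of $\Gr_\mu$ (so that orbits under $\eta$ must contract to $T$-fixed points inside $\Gr_\mu$). The total fibre functor $F := \bigoplus_\la F_\la$, equipped with the convolution tensor structure, then defines a tensor functor from $\on{P}(\Gr)$ to $\xch(\hat T)$-graded $\Ql$-vector spaces. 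Under the geometric Satake equivalence, $F$ corresponds to the restriction $\on{Rep}(\hat G) \to \on{Rep}(\hat T)$, so in particular $\dim F_\la(\IC_\mu) = \dim V_\mu(\la)$.

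Finally, I would translate this cohomological identity into the combinatorial statement of the theorem. Because $\IC_\mu$ restricts to $\Ql[\langle 2\rho, \mu\rangle]$ on the open dense stratum $\mathring{\Gr}_\mu$ and has strictly smaller cohomological support on the boundary strata, the group $\on{H}^{\langle 2\rho, \la\rangle}_c(S_\la \cap \Gr_\mu, \IC_\mu)$ picks out precisely the top-degree Borel--Moore homology of the irreducible components of $S_\la \cap \Gr_\mu$ of maximal dimension $\langle \rho, \la + \mu\rangle$. Matching this with $V_\mu(\la)$ simultaneously gives the enumeration of the top-dimensional components and, combined with the dimension bound, the purity assertion. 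The main obstacle is verifying that hyperbolic localization together with the contraction/dimension estimates remain valid for the perfect ind-schemes in the mixed-characteristic setting; this foundational work was carried out in \cite{Z} and can be invoked here without modification.
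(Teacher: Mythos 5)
The paper itself does not reprove this statement: it simply cites \cite[Lemma 2.17.4, Prop. 5.4.2]{GHKR} in equal characteristic and \cite[Cor. 2.8]{Z} in mixed characteristic, and your weight-functor strategy is the Mirkovi\'c--Vilonen circle of ideas that those references implement. However, as written your argument has two genuine gaps. First, the estimate $\dim(S_\la\cap\Gr_\mu)\leq \langle\rho,\la+\mu\rangle$ is the technical heart of the theorem, and your justification --- contraction of $\eta$-orbits to $T$-fixed points plus perfect properness of $\Gr_\mu$ --- does not prove it. That argument only yields the stratification $\Gr_\mu=\bigsqcup_\la (S_\la\cap\Gr_\mu)$ (every point flows into some fixed point $\varpi^\la$) and gives no control whatsoever on dimensions: $\Gr_\mu$ is singular, the attractor pieces are not affine bundles, and no Bialynicki--Birula count is available. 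The actual proofs (\cite[Thm. 3.2]{MV}; \cite[\S 6--7]{NP} and \cite[\S 2.2]{Z} via explicit analysis of the minuscule and quasi-minuscule cases and semismallness of convolution; \cite{GHKR}) all require a substantive additional argument at exactly this point, which your sketch omits.

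Second, the theorem asserts that \emph{every} irreducible component of $S_\la\cap\Gr_\mu$ has dimension $\langle\rho,\la+\mu\rangle$, and this equidimensionality does not follow from your final step. The identity $\dim \rmH^{\langle 2\rho,\la\rangle}_c(S_\la\cap\Gr_\mu,\IC_\mu)=\dim V_\mu(\la)$ only counts components of maximal dimension and is blind to possible components of strictly smaller dimension, so ``combined with the dimension bound, the purity assertion'' is a non sequitur; you need a separate argument for the lower bound on each component (for instance via the identification of MV cycles with open dense subsets of irreducible components of fibres of convolution morphisms, as in the references and as this paper itself exploits in Proposition \ref{bijectionI}). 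A smaller but real issue: Braden's hyperbolic localization theorem for the Witt vector affine Grassmannian is not what \cite{Z} provides --- that paper deliberately routes around it, proving the needed facts about $\rmH^*_c(S_\la,-)$ (exactness, concentration in degree $\langle 2\rho,\la\rangle$, computation of the weight decomposition of the fibre functor) by the Ng\^o--Polo style reduction to minuscule and quasi-minuscule Schubert varieties --- so ``invoked without modification'' overstates what is available; you should either cite those weight-functor statements directly or supply a proof of hyperbolic localization in the perfect/mixed-characteristic setting.
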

\begin{proof}For the proof, see \cite[Lemma 2.17.4, Proposition 5.4.2]{GHKR} when $\cha F >0$ and \cite[Corollary 2.8]{Z} when $\cha F  =0$. See also \cite[Theorem~3.2(a)]{MV} for the proof of the dimension formula.
\end{proof}

\begin{notation}
Irreducible components of $S_\lambda \cap \Gr_\mu$ are often referred to as the \emph{Mirkovi\'c-Vilonen cycles}. We will denote by $\bM\bV_\mu(\la)$ the set of irreducible components of $S_\la\cap\Gr_\mu$, and for $\bbb\in \MV_\mu(\la)$, write $(S_\la\cap\Gr_\mu)^\bbb$ the irreducible component (a.k.a. MV cycle) labeled by $\bbb$.
\end{notation}

The following are some additional facts about MV cycles.
\begin{lem}
\label{L: basic prop of MV}
\begin{enumerate}
\item The point $\varpi^\la\in (S_\la\cap\Gr_\mu)^\bbb$ for every $\bbb\in \MV_\mu(\la)$.
\item The variety $(S_\la\cap\Gr_\mu)^\bbb$ is stable under the action of $L^+B$.
\item The set $\MV_\mu(\la)$ is independent of the choice of the Borel $B$ up to canonical isomorphisms.
\end{enumerate}
\end{lem}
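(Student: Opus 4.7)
The plan is to treat the three parts separately. Parts (1) and (2) will be routine consequences of a $\bG_m$-contraction argument and of the connectedness of $L^+B$, respectively, while part (3) is the main obstacle and requires the MV-polytope formalism.

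For (1), I will fix a regular dominant cocharacter $\eta \in \xcoch(T)$ (i.e.\ $\langle \alpha, \eta\rangle > 0$ for every $\alpha \in \Phi^+$) and consider the resulting $\bG_m$-action on $\Gr$ by left translation through an embedding $T \hookrightarrow G$. Its fixed locus is $\{\varpi^\nu : \nu \in \xcoch(T)\}$, and a direct computation using the conjugation formula $\eta(t) u_\alpha(x) \eta(t)^{-1} = u_\alpha(t^{\langle \alpha,\eta\rangle} x)$ shows that for every $x \in S_\la$ one has $\lim_{t \to 0}\eta(t)\cdot x = \varpi^\la$; thus $S_\la$ is the attractor cell of $\varpi^\la$. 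Since $\Gr_\mu$ is $L^+G$-stable it is in particular $\bG_m$-stable and perfectly proper, so $S_\la \cap \Gr_\mu$ is a $\bG_m$-stable closed subscheme. Because $\bG_m$ is connected, each irreducible component is $\bG_m$-stable, hence is closed under the contracting flow and must contain $\varpi^\la$.

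For (2), I will check that $L^+B$ preserves both $\Gr_\mu$ and $S_\la$ and then invoke connectedness of $L^+B$. Stability of $\Gr_\mu$ under $L^+B \subset L^+G$ is immediate. For $S_\la = LU \cdot \varpi^\la \cdot L^+G / L^+G$, the subgroup $L^+U \subset LU$ preserves $S_\la$ tautologically by left multiplication, while $L^+T$ normalizes $LU$ inside $LG$ and commutes with $\varpi^\la \in T$, hence also preserves $S_\la$; together $L^+B = L^+T \cdot L^+U$ preserves $S_\la$. Connectedness of $L^+B = \varprojlim_n L^nB$ follows because each $L^nB$ is a semidirect product of the perfectly connected torus $L^nT$ with the affine space $L^nU$. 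Every irreducible component of $S_\la \cap \Gr_\mu$ is then stable under the connected group $L^+B$.

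For (3), the main obstacle, observe that different Borels $B \supset T$ give genuinely different subvarieties $S_\la \subseteq \Gr$, so the claim that $\MV_\mu(\la)$ is Borel-independent must be understood as asserting a canonical bijection between the labeling sets. The plan is to produce such a bijection via Kamnitzer's MV-polytope formalism: to $\bbb \in \MV_\mu^B(\la)$ attach the polytope $P_\bbb \subseteq \xcoch(T)_\RR$ whose vertex at Weyl chamber $w$ is the $T$-fixed point $\lim_{t \to 0}\eta_w(t)\cdot x$ for $x$ generic in $\bbb$ and $\eta_w$ a regular cocharacter in the chamber $w$ (equivalently, the $\varpi^\nu$ such that $\bbb$ meets the opposite semi-infinite orbit $S_\nu^w$ in a component of maximal dimension). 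This polytope is intrinsic to $(G,T)$ and does not reference the choice of $B$, and by a theorem of Anderson--Kamnitzer the assignment $\bbb \mapsto P_\bbb$ is a bijection between $\MV_\mu^B(\la)$ and the set of MV polytopes of type $(\mu,\la)$; composing with the analogous bijection for a second Borel $B'$ gives the required canonical identification $\MV_\mu^B(\la) \cong \MV_\mu^{B'}(\la)$.
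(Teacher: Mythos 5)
Parts (1) and (2) are correct and are essentially the paper's own argument: the paper simply records that $S_\la$ is the attractor of $\varpi^\la$ for the $\bG_m$-action through a dominant regular cocharacter and lets (1) and (2) follow; your only compression is that for (1) you should note explicitly that the limit $\varpi^\la$ lies in $S_\la\cap\Gr_\mu$ (it does, since $\Gr_\mu$ is closed and $\bG_m$-stable) and that an irreducible component is closed in $S_\la\cap\Gr_\mu$, so it contains the limit of the $\bG_m$-orbit of any of its points. Your (2) (both $S_\la$ and $\Gr_\mu$ are $L^+B$-stable and $L^+B$ is connected) is the intended argument.

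For (3) your route has a genuine gap. First, the Anderson--Kamnitzer classification of MV cycles by MV polytopes is a theorem about the complex (equal characteristic) affine Grassmannian, whereas this lemma is needed uniformly in equal and mixed characteristic for the Witt vector affine Grassmannian; it is not available off the shelf there, and indeed the paper deliberately redevelops the crystal/combinatorial structure from scratch (via Littelmann paths, following [NP]) precisely to stay self-contained in this setting. Second, even granting the polytope bijection, your target set ``MV polytopes of type $(\mu,\la)$'' is itself defined relative to $B$: the coweights $\mu,\la$ live in the abstract Cartan, and their realization as cocharacters of an embedded torus, as well as the choice of positive system entering the polytope description, change with $B$ (two Borels need not even contain a common embedded maximal torus, so ``intrinsic to $(G,T)$'' presupposes a reduction you have not made). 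Identifying the two polytope sets for $B_1$ and $B_2$ canonically requires exactly the equivariance statement you are trying to prove. The paper's proof is a one-line conjugation argument: choose $g\in G(\mO_L)$ with $gB_1g^{-1}=B_2$; left multiplication by $g$ preserves $\Gr_\mu$ and carries $S^{B_1}_\la$ to $S^{B_2}_\la$ (the label $\la$ being an abstract-Cartan coweight), hence induces a bijection $\MV_\mu^{B_1}(\la)\cong\MV_\mu^{B_2}(\la)$; two choices of $g$ differ by an element of $B_1(\mO_L)\subset L^+B_1(\bar k)$, which by part (2) stabilizes every irreducible component, so the bijection is independent of $g$ and canonical. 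This is both elementary and valid in mixed characteristic, and it is the way part (2) is actually put to use.
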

\begin{proof} Recall that $S_\la$ is the attractor for the $\bG_m$-action on $\Gr$ given by $2\rho: \bG_m\to T\to L^+T$. (1) and (2) follow. For (3), let $B_1$ and $B_2$ be two Borel subgroups, and choose $g\in G(\mO_L)$ such that $gB_1g^{-1}=B_2$. For a chosen Borel subgroup $B$, we use $S_\la^B$ and $\MV^B_\mu(\la)$ denote the corresponding semi-infinite orbit and set of MV cycles.
Then the action of $g$ induces an isomorphism $(S^{B_1}_\la\cap \Gr_\mu)\cong (S^{B_2}_\la\cap \Gr_\mu)$ and therefore a bijection $\MV_\mu^{B_1}(\la)\cong \MV_\mu^{B_2}(\la)$. By (2), this bijection is independent of the choice of $g$. 
\end{proof}

\subsubsection{Some explicit description of the MV cycles}
Let us recall explicit descriptions of MV cycles in some special cases.
\begin{enumerate}
\item[(1)] First, assume that $G=\on{PGL}_2$. Let us identify $\xcoch$ with $\bZ$ in the usual way (so that $1\in\bZ$ corresponds to the dominant cocharacter $t\mapsto \begin{pmatrix}t&\\ &1\end{pmatrix}$). Then $S_\ell\cap \Gr_m$ is non-empty if and only if $|\ell|\leq m$ and $2\mid m-\ell$. In this case, there is a natural isomorphism
\begin{equation}\label{E: MV sl2}
(\bA^{\frac{\ell+m}{2}})^\pf\simeq S_\ell\cap \Gr_m,\quad (u_{\frac{\ell-m}{2}},\ldots,u_{\ell-1})\mapsto \begin{pmatrix}1& \sum\limits_{\frac{\ell-m}{2}\leq i\leq \ell-1} [u_i]\varpi^i\\ &1 \end{pmatrix}\begin{pmatrix}\varpi^\ell& \\ & 1\end{pmatrix} \mod L^+G.
\end{equation}
In particular, $S_\ell\cap \Gr_m$ is irreducible.

\item[(2)] Next, we assume that $G$ is general, but $\mu$ is a minuscule or quasi-minuscule cocharacter of $G$. Recall that a dominant coweight $\mu$ of $G$ is called (quasi-)minuscule if all (non-zero) weights of the irreducible representation $V_\mu$ of $\hat G$ are in a single orbit under the Weyl group. Recall that if $G$ is a simple group not of type $A$, then the unique quasi-minuscule (but non-minuscule) coweight is the unique \emph{short} dominant coroot. The following statements summarize results of \cite[\S 6-\S 7]{NP} and \cite[\S 2.2.2]{Z}.

If $\mu$ is a minuscule coweight of $G$, then $\Gr_\mu=\mathring{\Gr}_\mu$ and $S_\la\cap\Gr_\mu$ is non-empty if and only if $\la=w\mu$ for some $w\in W$. In this case
\begin{equation}\label{E: MV min}
S_\la\cap \Gr_\mu=L^+U\varpi^\la L^+G/L^+G\cong  L^+U \varpi^\la L^+U/L^+U
\end{equation}
is irreducible, and the projection \eqref{E: Hodge to fil} maps it isomorphically to $(\bar{U}w\bar P_{\mu}/\bar P_{\mu})^\pf$.

Next, we consider the case when $\mu$ is quasi-minuscule. Then $\mu$ is a dominant coroot. Let $\Delta_\mu$ denote the set of simple coroots that lie in the $W$-orbits of $\mu$.
Then $\Gr_\mu=\mathring{\Gr}_\mu\sqcup \Gr_0$ and $S_\la\cap\Gr_\mu$ is non-empty if and only if $\la\in W\mu$ or $\la=0$. 

If $\la=w\mu$, then
\begin{equation}\label{E: MV quasiminI}
S_\la\cap \Gr_\mu=L^+U\varpi^\la L^+G/L^+G\cong  L^+U \varpi^\la L^+U/L^+U
\end{equation}
is irreducible, and the projection \eqref{E: Hodge to fil} maps $S_\la\cap \Gr_\mu$ onto $(\bar{U}w\bar P_{\mu}/\bar P_{\mu})^\pf$. In addition, 
\begin{itemize}
\item if $\la=w\mu\succ 0$ is a positive coroot,  then $S_\la\cap \Gr_\mu=\pr_\mu^{-1}(\bar{U}w\bar P_{\mu}/\bar P_{\mu})^\pf)$, and
\item  if $\la=w\mu\prec0$ is a negative coroot, then $\pr_\mu: S_\la\cap \Gr_\mu\to (\bar{U}w\bar P_{\mu}/\bar P_{\mu})^\pf$ is an isomorphism.
\end{itemize}

If $\la=0$, then
\[S_0\cap \Gr_\mu=\bigcup_{\al\in \Delta_\mu} (S_0\cap \Gr_\mu)^{\al},\]
where $(S_0\cap \Gr_\mu)^{\al}$ is an irreducible component of $(S_0\cap\Gr_\mu)$ whose $\bar k$-points can be written as
\begin{equation}\label{E: MV quasiminII}
(S_0\cap \Gr_\mu)^{\al}=\Gr_0\,\bigsqcup\, \bigg(\pr_{\mu}^{-1}\Big(\bigsqcup_{w\in W/W_\mu, w\mu\preceq -\al} (\bar Uw\bar P_\mu)^\pf \Big)\,\big\backslash \bigsqcup_{w\mu\preceq -\al} (S_{w\mu}\cap \Gr_\mu)\bigg).
\end{equation}
In this way, we establish a canonical bijection $\MV_\mu(0)\cong \Delta_\mu$.
\end{enumerate}

\subsubsection{Convoluted product of MV cycles}
Note that it makes sense to talk about convolutions of semi-infinite orbits.
Let $\la_\bullet = (\lambda_1, \dots, \lambda_n)$ be a sequence of coweights of $G$. One can define
\[S_{\la_\bullet}:=S_{\la_1}\tilde\times S_{\la_2}\tilde\times\cdots\tilde\times S_{\la_n}\subset \Gr\tilde\times \Gr\tilde\times\cdots\tilde\times\Gr.\]

The formula
\begin{equation}\label{fact for}
(\varpi^{\la_1}x_1,\ldots,\varpi^{\la_n}x_n)\mapsto (\varpi^{\la_1}x_1,\varpi^{\la_1+\la_2}(\varpi^{-\la_2}x_1\varpi^{\la_2})x_2,\ldots)
\end{equation}
with $x_i \in LU$
defines an isomorphism 
\begin{equation}\label{factorizationI}
\tilde m: S_{\la_\bullet}\xrightarrow{ \ \cong\ } S_{\la_1}\times S_{\la_1+\la_2}\times\cdots\times S_{|\la_\bullet|},
\end{equation}
so that the following diagram is commutative
\[
\xymatrix{
S_{\la_\bullet}\ar[r]^-{\tilde m} \ar[d] & S_{\la_1}\times S_{\la_1+\la_2}\times\cdots\times S_{|\la_\bullet|}
\ar[d]
\\
\Gr\tilde\times \Gr\tilde\times\cdots\tilde\times\Gr \ar[r]^-{\tilde m} & \Gr^n,
}\]
where $\tilde m$ on the convoluted affine Grassmannian is the product of the convolution map for the first $i$ factors.

Let $\mmu=(\mu_1, \dots, \mu_n)$ be a sequence of dominant coweights.
Note that each $S_{\la_i}\cap\Gr_{ \mu_i}$ is $L^+U$-invariant,
so it also makes sense to convolve the $L^+U$-varieties $S_\la\cap\Gr_{ \mu}$, and there is a canonical identification
\begin{equation}\label{factorizationII}
(S_{\la_1}\cap\Gr_{ \mu_1})\tilde\times\cdots\tilde\times (S_{\la_n}\cap \Gr_{ \mu_n})\cong S_{\la_\bullet}\cap \Gr_{\mmu}.
\end{equation}
The set of irreducible components of $(S_{\la_1}\cap\Gr_{ \mu_1})\tilde\times\cdots\tilde\times (S_{\la_n}\cap \Gr_{ \mu_n})$ is given by $\prod_i\MV_{\mu_i}(\la_i)$, and for $\bbb_\bullet:=(\bbb_1,\ldots,\bbb_n)\in\prod_i\MV_{\mu_i}(\la_i)$, the irreducible component labeled by $\bbb_\bullet$ is
\[(S_{\la_1}\cap\Gr_{ \mu_1})^{\bbb_1}\tilde\times\cdots\tilde\times (S_{\la_n}\cap \Gr_{ \mu_n})^{\bbb_n}.\]

Here is the relationship between MV cycles and Satake cycles.
\begin{lem}
\label{L:Sat vs MV}
Let $\nu,\mu,\nu+\la$ be dominant coweights.
There is a unique injective map
\[i_\nu^{\MV}: \bS_{(\nu,\mu)\mid \la+\nu}\to \MV_{\mu}(\la),\]
such that for every $\bba\in \bS_{(\nu,\mu)\mid \la+\nu}$
\begin{equation}
\label{E:description Z(S)}
 \Gr_{(\nu, \mu)\mid\la+\nu}^{0, \bba} \cap S_{\nu, \lambda} = (S_\nu \cap \Gr_\nu) \tilde \times (S_{\lambda} \cap \Gr_\mu)^{i^{\MV}_\nu(\bba)}
\end{equation}
as subschemes of $\Gr\tilde\times \Gr$.

In addition, this map is compatible with the bijection $\MV_\mu^{B_1}(\la)\cong \MV_\mu^{B_2}(\la)$ in Lemma \ref{L: basic prop of MV}, for different choices of Borel subgroups.
\end{lem}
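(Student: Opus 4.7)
The plan is to first identify the set-theoretic intersection and then use irreducibility and dimension-matching to extract the map $i^{\MV}_\nu$.

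By the factorization~\eqref{factorizationII}, we have $S_{\nu,\lambda}\cap \Gr_{(\nu,\mu)} = (S_\nu\cap\Gr_\nu)\tilde\times(S_\lambda\cap\Gr_\mu)$. Since $\nu$ is dominant with $\dim V_\nu(\nu) = 1$, Theorem~\ref{T:weight space interpretation} shows that $S_\nu\cap\Gr_\nu$ is irreducible of dimension $\langle 2\rho,\nu\rangle$. Therefore, the irreducible components of $(S_\nu\cap\Gr_\nu)\tilde\times (S_\lambda\cap\Gr_\mu)$ are precisely $Y_\bbb := (S_\nu\cap\Gr_\nu)\tilde\times (S_\lambda\cap\Gr_\mu)^\bbb$ for $\bbb\in \MV_\mu(\lambda)$, each of dimension $\langle\rho,2\nu+\lambda+\mu\rangle$, which matches the top dimension of a Satake cycle in $\Gr^0_{(\nu,\mu)\mid\lambda+\nu}$ by Definition~\ref{D: SatCyc}.

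Given a Satake cycle $\bba\in \bS_{(\nu,\mu)\mid\lambda+\nu}$, the crucial geometric input is that $\Gr^{0,\bba}_{(\nu,\mu)\mid\lambda+\nu}\cap S_{\nu,\lambda}$ is non-empty of full dimension $\langle\rho,2\nu+\lambda+\mu\rangle$. Granting this, its Zariski closure in $\Gr_{(\nu,\mu)}$ is, on the one hand, an irreducible top-dimensional closed subvariety of $\Gr^{0,\bba}$ and hence equals $\Gr^{0,\bba}$; on the other hand, it is contained in $\bigcup_{\bbb}\overline{Y_\bbb}$, and irreducibility together with dimension-matching forces this closure to coincide with a single $\overline{Y_\bbb}$. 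Intersecting back with the locally closed $(S_\nu\cap\Gr_\nu)\tilde\times (S_\lambda\cap\Gr_\mu)$, in which each $Y_\bbb$ is closed (being an irreducible component), yields $\Gr^{0,\bba}\cap S_{\nu,\lambda} = Y_\bbb$, which defines $i^{\MV}_\nu(\bba):=\bbb$. Injectivity follows since distinct Satake cycles have distinct closures and hence correspond to distinct MV components.

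The main obstacle is proving non-emptiness and top-dimensionality of $\Gr^{0,\bba}\cap S_{\nu,\lambda}$. The approach is via semismallness~\eqref{semismall}: the restriction of the convolution map to $\Gr^{0,\bba}$ is generically supported on the open Schubert cell $\mathring\Gr_{\lambda+\nu}$, since for $\eta\prec\lambda+\nu$ one has $\dim m_{(\nu,\mu)}^{-1}(\mathring\Gr_\eta) \le \langle\rho,\eta+\nu+\mu\rangle < \langle\rho,2\nu+\lambda+\mu\rangle$. Combined with the density of the open MV stratum $S_{\lambda+\nu}\cap\mathring\Gr_{\lambda+\nu}$ in $\mathring\Gr_{\lambda+\nu}$, this produces a dense open subset of $\Gr^{0,\bba}$ on which the convolution lies in $S_{\lambda+\nu}$. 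Under the factorization isomorphism~\eqref{factorizationI}, the condition $(g_1,g_2)\in S_{\nu,\lambda}$ amounts to $g_1\in S_\nu$ together with $g_1g_2\in S_{\nu+\lambda}$; the latter is automatic on the above dense subset, while the former reduces to showing that $\pr_1|_{\Gr^{0,\bba}}\colon \Gr^{0,\bba}\to \Gr_\nu$ has dense image, which follows from a fiber-dimension count. Finally, the compatibility of $i^{\MV}_\nu$ with a change of Borel is immediate from the $L^+G$-equivariance of the entire construction, as the bijection $\MV_\mu^{B_1}(\lambda)\cong\MV_\mu^{B_2}(\lambda)$ of Lemma~\ref{L: basic prop of MV} is induced by $L^+G(\mO_L)$-conjugation.
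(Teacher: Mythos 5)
Your skeleton coincides with the paper's: one shows that $Z:=\Gr_{(\nu,\mu)\mid\la+\nu}^{0,\bba}\cap S_{\nu,\lambda}$, if non-empty of full dimension $\langle\rho,2\nu+\la+\mu\rangle$, must equal a single component $(S_\nu\cap\Gr_\nu)\tilde\times(S_\la\cap\Gr_\mu)^\bbb$, and then injectivity and the change-of-Borel compatibility are formal. Your closure/intersection bookkeeping for this reduction is fine (the paper phrases it instead via openness of the semi-infinite conditions, so that $Z$ is open in $\Gr^{0,\bba}_{(\nu,\mu)\mid\la+\nu}$ and closed in $(S_\nu\cap\Gr_\nu)\tilde\times(S_\la\cap\Gr_\mu)$, but this is the same argument). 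The entire content of the lemma is therefore the non-emptiness/full-dimensionality claim, and this is exactly where your write-up is not yet a proof.

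Two assertions there need repair. First, ``combined with the density of $S_{\la+\nu}\cap\mathring{\Gr}_{\la+\nu}$ \dots this produces a dense open subset of $\Gr^{0,\bba}$'': the preimage of a dense open set under $m|_{\Gr^{0,\bba}}$ is dense only if that map dominates $\mathring{\Gr}_{\la+\nu}$; this is true, but because $\Gr^{0,\bba}_{(\nu,\mu)\mid\la+\nu}$ is $L^+G$-stable and $\mathring{\Gr}_{\la+\nu}$ is a single $L^+G$-orbit, and you should say so. Second, and more seriously, the density of the image of $\pr_1|_{\Gr^{0,\bba}}$ does \emph{not} follow from the naive fiber-dimension count: fibers of $\pr_1$ lie in $\Gr_\mu$, which only gives image dimension $\geq\langle2\rho,\nu\rangle-\langle\rho,\mu-\la\rangle<\dim\Gr_\nu$ in general. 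What works is the stratified estimate: over $g_1\in\mathring{\Gr}_{\nu'}$ the fiber of $\pr_1$ on $\Gr^0_{(\nu,\mu)\mid\la+\nu}$ is isomorphic to $\Gr_\mu\cap\varpi^{-\nu'}\Gr_{\la+\nu}\subseteq\bigcup_{\eta\preceq\la+\nu}(\Gr_\mu\cap S_{\eta-\nu'})$, of dimension $\leq\langle\rho,\mu+\la+\nu-\nu'\rangle$, so $\pr_1^{-1}(\mathring{\Gr}_{\nu'})\cap\Gr^0_{(\nu,\mu)\mid\la+\nu}$ has dimension $\leq\langle\rho,\mu+\la+\nu+\nu'\rangle<\langle\rho,\mu+\la+2\nu\rangle$ for $\nu'\prec\nu$, whence every Satake cycle meets $\pr_1^{-1}(\mathring{\Gr}_\nu)$ and $L^+G$-equivariance gives density (you also need, as you implicitly use, that $S_\nu\cap\Gr_\nu$ is open in $\Gr_\nu$). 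The paper sidesteps both issues by restricting to the single fiber $m_{\nu,\mu}^{-1}(\varpi^{\la+\nu})$ inside $\Gr^{0,\bba}$ (there the condition on the convolution is automatic) and proving one short estimate: $(\pr_1)^{-1}(S_{\nu'}\cap\Gr_\nu)\cap m_{\nu,\mu}^{-1}(\varpi^{\la+\nu})\cong\Gr_{\mu^*}\cap\varpi^{-\la-\nu}(S_{\nu'}\cap\Gr_\nu)\subseteq\Gr_{\mu^*}\cap S_{\nu'-\la-\nu}$ has dimension $<\langle\rho,\mu-\la\rangle$ for $\nu'\prec\nu$. Either route completes your argument, but as written the key step is asserted rather than proved.
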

\begin{proof}
To reduce the notation load, we denote $ \Gr_{(\nu, \mu)\mid\la+\nu}^{0, \bba} \cap S_{\nu, \lambda}$ by $Z$ in the proof.

Let $\pr_1:\Gr\tilde\times\Gr\to \Gr$ denote the projection to the first factor and $m:\Gr\tilde\times\Gr\to \Gr$ the convolution map. Then we have an isomorphism 
$(\pr_1,m):\Gr\tilde\times\Gr\simeq \Gr\times \Gr$, given by 
\begin{equation}
\label{E:E2 to E1 to E0}
 (\mE_2\stackrel{\beta_1}{\dashrightarrow} \mE_1\stackrel{\beta_0}{\dashrightarrow} \mE_0=\mE^0)\longmapsto (\mE_1\stackrel{\beta_1}{\dashrightarrow}\mE_0 = \mE^0), \ (\mE_2\stackrel{\beta_0\beta_1}{\dashrightarrow} \mE_0=\mE^0).
 \end{equation}
Note that by \eqref{factorizationI}, $S_{\nu,\la}\cap (\Gr_{\nu}\times \Gr_{\la+\nu})=(\Gr_{\nu}\cap S_\nu)\times (\Gr_{\la+\nu}\cap S_{\la+\nu})$ may be viewed as an open subspace of $\Gr_\nu\times\Gr_{\lambda+\nu}$. As  $\Gr_{(\nu, \mu)\mid\la+\nu}^{0, \bba}\subset \Gr_{\nu}\times \Gr_{\lambda+\nu}$ is closed, the intersection
$Z$ is an open subset of $\Gr_{(\nu, \mu)\mid\lambda + \nu}^{0, \bba}$, and therefore is irreducible of dimension $\langle \rho, \la+\mu+2\nu\rangle$ if the intersection is non-empty.

On the other hand by \eqref{factorizationII}, $Z\subset (\Gr_\nu\cap S_\nu)\tilde\times(\Gr_{\mu}\cap S_\la)$, when regarded as subschemes in $\Gr\tilde\times\Gr$.
It would then follow that if $Z$ is nonempty, it is equal to an irreducible component of $(S_\nu \cap \Gr_\nu) \tilde \times (S_\lambda \cap \Gr_\mu)$, which must be of the form  $(S_\nu \cap \Gr_\nu) \tilde \times (S_{\lambda} \cap \Gr_\mu)^{i^{\MV}_\nu(\bba)}$ for some $i^{\MV}_\nu(\bba)\in \MV_\mu(\la)$. This defines the desired map in the lemma, and it is clear that this map is compatible with the bijection $\MV_\mu^{B_1}(\la)\cong \MV_\mu^{B_2}(\la)$ in Lemma \ref{L: basic prop of MV}, for different choices of Borel subgroups.

We shall show that $m_{\nu,\mu}^{-1}(\varpi^{\lambda+\nu}) \cap Z$ is non-empty, where $m_{\nu,\mu}: \Gr_\nu\tilde\times\Gr_\mu\to \Gr$ is the convolution map.
First note by the definition of $\Gr_{(\nu, \mu)\mid\lambda + \nu}^{0, \bba}$, $m_{\nu,\mu}^{-1}(\varpi^{\lambda+\nu}) \cap \Gr_{(\nu, \mu)\mid\lambda + \nu}^{0, \bba}$ is a (non-empty irreducible) variety of dimension $\langle \rho, \mu- \lambda \rangle$. 
Since $\Gr_\nu = \sqcup_{\nu'\preceq \nu}(S_{\nu'}\cap \Gr_{\nu})$, it suffices to show that, for $\nu'\prec \nu$, the intersection
\begin{equation}
\label{E:smaller dim intersection}
(\pr_1)^{-1} \big( S_{\nu'}\cap \Gr_{\nu}\big) \cap m_{\nu,\mu}^{-1}(\varpi^{\lambda+\nu})
\end{equation}
has dimension $< \langle \rho, \mu- \lambda \rangle$. 

Note that
$m_{\nu,\mu}^{-1}(\varpi^{\la+\nu})$
classifies sequence of modifications \eqref{E:E2 to E1 to E0} such that $\inv(\beta_0)\preceq \nu,\ \inv(\beta_1)\preceq \mu$ and the composition $\beta_1 \circ \beta_0=\varpi^{\lambda+\nu}$ if $\calE_2$ is properly trivialized.
Rearranging the maps gives
\[
\xymatrix@C=40pt{
\calE_1 \ar@{-->}@/^10pt/[rr]^-{\beta_1^{-1}} \ar@{-->}[r]_-{\beta_0} & \calE_0 \ar@{-->}[r]_-{\varpi^{-\la-\nu}} & \calE_2.
}
\]
So $m_{\nu,\mu}^{-1}(\varpi^{\la+\nu})\cong \Gr_{\mu^*} \cap \varpi^{-\lambda-\nu} \Gr_\nu$, and under this isomorphism,
\[
\eqref{E:smaller dim intersection} \cong \Gr_{\mu^*} \cap \varpi^{-\la-\nu}(S_{\nu'}\cap\Gr_{ \nu}) \subseteq \Gr_{\mu^*} \cap S_{-\la-\nu+\nu'}
\]
is of dimension $<\langle\rho, \mu-\la\rangle$.
This concludes the proof of the lemma.
\end{proof}

More generally
\begin{lem}
\label{L: decomp MV into Satake}
Let $\mu_1,\mu_2,\mu$ be dominant.
There is a natural injective map
\[S_{(\mu_1,\mu_2)\mid \mu}\times \MV_\mu\to \MV_{\mu_1}\times \MV_{\mu_2}.\]
\end{lem}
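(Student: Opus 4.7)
\medskip
\noindent\emph{Proof plan.}
The plan is to imitate the construction of Lemma~\ref{L:Sat vs MV} but splitting \emph{both} factors of a convolution via semi-infinite orbits. Given $\bba\in \bS_{(\mu_1,\mu_2)\mid\mu}$ and $\bbb\in\MV_\mu(\la)$ (for some coweight $\la$ with $\MV_\mu=\bigsqcup_\la\MV_\mu(\la)$), form the locally closed subvariety
\[
Z_{\bba,\bbb}\;:=\;\Gr_{\mu_1,\mu_2\mid\mu}^{0,\bba}\cap(h^{\rightarrow}_{\mu})^{-1}\bigl((S_\la\cap\Gr_\mu)^{\bbb}\bigr)\;\subset\;\Gr_{\mu_1,\mu_2}.
\]
By the factorization \eqref{factorizationI}, the preimage of $S_\la$ under the convolution map $m$ decomposes as $\bigsqcup_{\la_1+\la_2=\la}S_{\la_1,\la_2}$, giving a disjoint partition $Z_{\bba,\bbb}=\bigsqcup_{\la_1+\la_2=\la}Z_{\bba,\bbb}\cap S_{\la_1,\la_2}$. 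By \eqref{factorizationII} and Theorem~\ref{T:weight space interpretation}, each ambient piece $S_{\la_1,\la_2}\cap\Gr_{\mu_1,\mu_2}\cong(S_{\la_1}\cap\Gr_{\mu_1})\,\tilde\times\,(S_{\la_2}\cap\Gr_{\mu_2})$ is equidimensional of dimension $\langle\rho,\mu_1+\mu_2+\la\rangle$ with irreducible components labelled by $\MV_{\mu_1}(\la_1)\times\MV_{\mu_2}(\la_2)$.

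The first step is a dimension bound. Using $\dim\Gr_{\mu_1,\mu_2\mid\mu}^{0,\bba}=\langle\rho,\mu_1+\mu_2+\mu\rangle$, the semi-smallness estimate \eqref{semismall}, and $\dim(S_\la\cap\Gr_\mu)^{\bbb}=\langle\rho,\la+\mu\rangle$, I will show $\dim Z_{\bba,\bbb}\leq\langle\rho,\mu_1+\mu_2+\la\rangle$ (the argument is an adaptation of the one in the proof of Lemma~\ref{L:Sat vs MV}(1), working relative to $(S_\la\cap\Gr_\mu)^\bbb$ rather than the single point $\varpi^{\la+\nu}$). The second step is to recognize top-dimensional components: any top-dimensional irreducible component of $Z_{\bba,\bbb}$ sits inside exactly one $S_{\la_1,\la_2}$ and, by the dimension match above, is an open subset of a unique irreducible component
\[
(S_{\la_1}\cap\Gr_{\mu_1})^{\bbb_1}\,\tilde\times\,(S_{\la_2}\cap\Gr_{\mu_2})^{\bbb_2}
\]
for some $(\bbb_1,\bbb_2)\in\MV_{\mu_1}(\la_1)\times\MV_{\mu_2}(\la_2)$. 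The third step is to pick a canonical such component: I propose to take the pair $(\la_1,\la_2)$ for which $\la_1$ is maximal in the dominance order among those with $Z_{\bba,\bbb}\cap S_{\la_1,\la_2}\neq\emptyset$ of maximal dimension, using the $\bG_m$-action by $2\rho$ on $\Gr$ (whose attractors give the semi-infinite stratification) to verify that this leading piece is unique and that it contributes a single MV-cycle pair.

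The map is then $(\bba,\bbb)\mapsto(\bbb_1,\bbb_2)$. Injectivity follows by reconstructing $(\bba,\bbb)$ from the image: the pair $(\bbb_1,\bbb_2)$ determines $\la=\la_1+\la_2$, then $(S_\la\cap\Gr_\mu)^\bbb$ is recovered as (the MV-cycle containing) the image under $m$ of the chosen product MV cycle, and $\bba$ is recovered as the unique Satake cycle whose $S_{\la_1,\la_2}$-slice contains $(S_{\la_1}\cap\Gr_{\mu_1})^{\bbb_1}\,\tilde\times\,(S_{\la_2}\cap\Gr_{\mu_2})^{\bbb_2}$ as an irreducible component of top dimension. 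As a consistency check, the geometric Satake isomorphism gives
\[
\sum_{\mu}\,|\bS_{(\mu_1,\mu_2)\mid\mu}|\cdot|\MV_\mu|\;=\;\dim(V_{\mu_1}\otimes V_{\mu_2})\;=\;|\MV_{\mu_1}|\cdot|\MV_{\mu_2}|,
\]
so the total source of these maps (as $\mu$ varies) matches the target and the union of these injections is a bijection.

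The main obstacle is the canonical selection and uniqueness in the third step: unlike Lemma~\ref{L:Sat vs MV}, where the first factor $S_\nu\cap\Gr_\nu$ was automatically the open Schubert cell and gave a distinguished ``pivot'', here both factors move and $Z_{\bba,\bbb}$ may have several top-dimensional components sitting in different $S_{\la_1,\la_2}$. Showing that the dominance-maximal choice singles out a unique $(\bbb_1,\bbb_2)$ likely requires either a careful induction on the length of the convolution (reducing to the two-factor case of Lemma~\ref{L:Sat vs MV} applied inside an intermediate fiber of the projection $\pr_1:\Gr_{\mu_1,\mu_2}\to\Gr_{\mu_1}$) or the use of the Braverman--Gaitsgory crystal structure on MV cycles, under which the desired map becomes the embedding of $B(V_\mu)$ into $B(V_{\mu_1})\otimes B(V_{\mu_2})$ associated to the highest-weight element indexed by $\bba$.
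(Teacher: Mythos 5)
There is a genuine gap, and it sits at your Step 3 — exactly the point where the paper's argument supplies its one key idea. You work with the full intersection $Z_{\bba,\bbb}=\Gr^{0,\bba}_{(\mu_1,\mu_2)\mid\mu}\cap m_{\mu_1,\mu_2}^{-1}\bigl((S_\la\cap\Gr_\mu)^{\bbb}\bigr)$, taken over all of $\Gr_\mu$ including the boundary strata, and, as you concede, this can have several top-dimensional components lying in different $S_{\la_1,\la_2}$. Your proposed remedy (choose the stratum with $\la_1$ dominance-maximal) is not shown to be well defined, nor that the chosen stratum carries a unique top-dimensional component, nor that the resulting assignment can be inverted; your injectivity sketch and your closing cardinality identity both presuppose this unproven selection (matching cardinalities cannot make any particular assignment injective). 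A secondary issue: your Step 2 tacitly assumes $Z_{\bba,\bbb}$ actually attains the dimension $\langle\rho,\mu_1+\mu_2+\la\rangle$, whereas your Step 1 only gives the upper bound.

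The paper removes both difficulties at once by restricting over the open Schubert cell \emph{before} intersecting. Since $\Gr^{0,\bba}_{(\mu_1,\mu_2)\mid\mu}|_{\mathring{\Gr}_\mu}\to\mathring{\Gr}_\mu$ is an $L^+G$-equivariant fibration with fibers of constant dimension $\langle\rho,\mu_1+\mu_2-\mu\rangle$ (and this restriction is dense in the Satake cycle, the locus over smaller cells being of strictly smaller dimension), the set $C:=\Gr^{0,\bba}_{(\mu_1,\mu_2)\mid\mu}|_{\mathring{\Gr}_\mu}\cap m_{\mu_1,\mu_2}^{-1}\bigl((S_\la\cap\Gr_\mu)^{\bbb}\bigr)$ is irreducible of dimension exactly $\langle\rho,\mu_1+\mu_2+\la\rangle$. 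As $m_{\mu_1,\mu_2}^{-1}(S_\la)\cap\Gr_{\mu_1,\mu_2}$ is equidimensional of that same dimension with irreducible components the twisted products $(S_{\la_1}\cap\Gr_{\mu_1})^{\mathbf{c}_1}\tilde\times(S_{\la_2}\cap\Gr_{\mu_2})^{\mathbf{c}_2}$ (by \eqref{factorizationII} and Theorem~\ref{T:weight space interpretation}), $C$ is open in exactly one such component, so the map $(\bba,\bbb)\mapsto(\mathbf{c}_1,\mathbf{c}_2)$ requires no auxiliary choice; injectivity then follows because $C$ is dense in the image component and determines both $\bbb$ (its image under $m$ is dense in $(S_\la\cap\Gr_\mu)^{\bbb}$) and $\bba$ (the closure of its $L^+G$-saturation is $\Gr^{0,\bba}_{(\mu_1,\mu_2)\mid\mu}$). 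If you insist on avoiding the open-cell restriction, you would essentially need the crystal-theoretic identification you allude to at the end; but in the paper that statement (Proposition~\ref{comb decom}) is deduced from this lemma, not available for its proof.
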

\begin{proof}
Consider $m_{\mu_1,\mu_2}:\Gr_{\mu_1,\mu_2}\to \Gr$. Let $\bba\in\MV_\mu(\la)$ and $(S_\la\cap\Gr_{\mu})^{\bba}$ the corresponding irreducible component. Let $\bbb\in\bS_{(\mu_1,\mu_2)\mid \mu}$. 
As $\Gr^{0,\bbb}_{(\mu_1,\mu_2)\mid \mu}|_{\mathring{\Gr}_{\mu}}\to \mathring{\Gr}_\mu$ is a fibration with (non-canonically) isomorphic fibers of dimension $\langle\rho,\mu_1+\mu_2-\mu\rangle$, we see  that $C:=m_{\mu_1,\mu_2}^{-1}(S_\la\cap\Gr_{\mu})^{\bba}\cap \Gr^{0,\bbb}_{(\mu_1,\mu_2)\mid \mu}|_{\mathring{\Gr}_{\mu}}$ is irreducible of dimension $\langle\rho,\mu_1+\mu_2+\la\rangle$. On the other hand, by \eqref{factorizationII} and Theorem \ref{T:weight space interpretation}, every irreducible component of $m_{\mu_1,\mu_2}^{-1}(S_\la)\cap\Gr_{\mu_1,\mu_2}$ is of dimension $\langle\rho,\mu_1+\mu_2+\la\rangle$, and is of the form $(S_{\nu_1}\cap\Gr_{\mu_1})^{\mathbf{c}_1}\tilde\times(S_{\nu_2}\cap\Gr_{\mu_2})^{\mathbf{c}_2}$ for unique $\mathbf{c}_i\in \MV_{\mu_i}(\nu_i)$. Therefore, $C$ is an open subset of exactly one of such an irreducible components. We thus construct the desired map given by $(\bba,\bbb)\mapsto(\mathbf{c}_1,\mathbf{c}_2)$. It is clear that this is an injective map
\end{proof}
\begin{rmk}
(1) It is easy to see from the proof that under the above map, the image of $S_{(\mu_1,\mu_2)\mid \mu}\times \MV_\mu(\mu)$ is contained in $\MV_{\mu_1}(\mu_1)\times \MV_{\mu_2}(\mu-\mu_1)$. Therefore, it recovers the map  from Lemma \ref{L:Sat vs MV}.

(2) As we shall see from the proof of Proposition \ref{comb decom} below, this map is in fact a bijection.
\end{rmk}

\subsubsection{Relation with Levi subgroups}
\label{SS:theta M}
Now let $M\subset G$ be a standard Levi subgroup, with the set of simple roots $\Delta_M\subset \Delta$. Let $U^M\subset U$ be the unipotent radical of the parabolic subgroup $P_M=MB$. Then $U/U^M$ is isomorphic to the unipotent radical $U_M$ of the Borel subgroup $B_M=M\cap B$. If $\Delta_M=\{\al_i\}$ is a simple root (in which case we call $M$ the \emph{standard Levi corresponding to $\al_i$}), then $U_M\cong U_{\al_i}$.
Recall that $T$ is a common maximal torus of $M$ and $G$, so semi-infinite orbits on $\Gr_M$ (with respect to $U_M$) are also parameterized by $\xcoch(T)$. 
Let $S_{M,\la}$ be a semi-infinite orbit of $\Gr_M$ corresponding to $\la$. 
Then there is a natural projection
\[\theta_M: S_\la\to S_{M,\la},\quad \varpi^\la LU/L^+U\to \varpi^\la LU_M/L^+U_M.\]

\subsection{Crystal structures on the set of MV cycles}
\label{S:geom realization of G-crystal}
In this subsection, we endow a $\hat G$-crystal structure on the set of MV cycles via the Littelmann paths.\footnote{Unfortunately, in later section of the paper, the terminology $G$-crystal will appear with a completely different meaning. Namely, it will be crystals in the sense of Grothendieck. The use of $\hat G$-crystal will be confined in this and the next section. We hope no confusion will arise.} The main result we need for later application is Proposition \ref{P: geom prop of MV cycle}.

The $\hat G$-crystal structure on $\MV_\mu$ was first discovered by Braverman and Gaitsgory \cite{BG}, using the geometric Satake equivalence.
Our approach is different, and closely follows \cite{NP}. The connection between Littelmann paths (or some variants) and  MV cycles has also been extensively studied in literature (e.g. see \cite{GL05, GL11, BaGa}). In fact, some of the following results are contained or can be easily deduced from the above mentioned references (we thank J. Kamnitzer to point out this). However, as our proof of Proposition \ref{P: geom prop of MV cycle} makes use of the construction of the $\hat G$-crystal structure on $\MV_\mu$, we decide to make the exposition self-contained.

We start by recalling the definition of $\hat G$-crystals.

\begin{dfn}
Recall that $\Delta$ (resp. $\Delta^\vee$) denotes the set of simple roots (resp. simple coroots) of $G$.

\begin{enumerate}
\item A \emph{(normal) $\hat G$-crystal}\footnote{We will not discuss non-normal crystals in this paper.} is a finite set $\bB$, equipped with a map $\on{wt}:\bB\to \xcoch(T)$,\footnote{In this section, since we chose to work with $\hat G$-crystals, the target of the weight map is the weight lattice of $\hat G$ which is the same as the coweight lattice of $G$. We shall try to make clear the distinction of weights of $G$ and $\hat G$.}  and operators $e_{\al},f_{\al}:\bB\to \bB\cup\{0\}$ for each $ \al\in\Delta$, such that 
\begin{itemize}
\item[(a)]
for every $\bbb\in\bB$, either $e_\al\bbb=0$ or $\on{wt}(e_\al \bbb)=\on{wt}(\bbb)+\al^\vee$, and either $f_\al\bbb=0$ or $\on{wt}(f_\al \bbb)=\on{wt}(\bbb)-\al^\vee$,
\item[(b)]
for all $\bbb, \bbb' \in \bB$ one has $\bbb' = e_\al \cdot \bbb$ if and only if $\bbb = f_\al \cdot \bbb'$, and
\item[(c)]
if $\varepsilon_\al,\phi_\al:\bB\to\bZ$, $ \al\in \Delta$ are the maps defined by
\[\varepsilon_\al(\bbb)=\max\{n\mid e_\al^n\bbb\neq 0\} \quad \textrm{and} \quad \phi_\al(\bbb)=\max\{n\mid f_\al^n\bbb\neq 0\},\]
then  we require $\phi_\al(\bbb)-\varepsilon_\al(\bbb)=\langle \alpha,\on{wt}(\bbb)\rangle$.
\end{itemize}
For $\lambda \in \XX_\bullet(T)$, we use $\BB(\lambda)$ to denote the set of elements with weight $\lambda$ for $\hat G$, called the \emph{weight space} with weight $\lambda$ for $\hat G$.

\item Let $\bB_1$ and $\bB_2$ be two $\hat G$-crystals, a morphism $\psi:\bB_1\to\bB_2$ is a map of the underlying sets compatible with $\on{wt}$, $ e_\al$, and $f_\al$ (whenever it makes sense).

\item A $\hat G$-crystal $\bB$ is called a \emph{highest weight crystal} of highest weight $\la$ of $\hat G$ if there exists $\bbb\in \bB$, satisfying $e_\al\bbb=0$ for all $\al$, $\on{wt}(\bbb)=\la$, and $\bB$ is generated from $\bbb$ by operators $f_\al$ (for all $\al \in \Delta$). In this case, such $\bbb$ is unique and is denoted by $\bbb_\la$.

\item Let $\BB$ be a $\hat G$-crystal. The dual $\hat G$-crystal has underlying set $\bbb^*$ for each $\bbb \in \BB$ (setting $0^* = 0$), whose maps are given by 
\[
\wt(\bbb^*) = -\wt(\bbb), \quad e_\al(\bbb^*) = (f_\al\bbb)^*, \quad \textrm{and} \quad f_\al(\bbb^*) = (e_\al\bbb)^*.
\]

\item Let $\bB_1$ and $\bB_2$ be two $\hat G$-crystals. The tensor product $\bB_1\otimes\bB_2$ is the $\hat G$-crystal with underlying set $\bB_1\times\bB_2$, and $\on{wt}(\bbb_1\otimes \bbb_2)=\on{wt}(\bbb_1)+\on{wt}(\bbb_2)$. Following \cite{kashiwara} and \cite{joseph}, the operators $e_\al$ and $f_\al$ are defined by
\begin{align*}
e_\al(\bbb_1\otimes\bbb_2)&=\left\{\begin{array}{ll} e_\al\bbb_1\otimes \bbb_2 & \mbox{ if }  \phi_\al(\bbb_1)\geq \varepsilon_\al(\bbb_2) \\ \bbb_1\otimes e_\al\bbb_2 & \mbox{ otherwise, } \end{array}\right.
\\
f_\al(\bbb_1\otimes\bbb_2)&=\left\{\begin{array}{ll} f_\al\bbb_1\otimes \bbb_2 & \mbox{ if }\phi_\al(\bbb_1)> \varepsilon_\al(\bbb_2)   \\ \bbb_1\otimes f_\al\bbb_2 & \mbox{ otherwise. } \end{array}\right.
\end{align*}
We have 
\begin{align}\label{E: tensor epsilon and phi}
\varepsilon_\al(\bbb_1\otimes \bbb_2) & = \max\{ \varepsilon_\al(\bbb_1), \varepsilon_\al(\bbb_2)- \langle \alpha,\on{wt}(\bbb_1)\rangle \} \quad \textrm{and}
\\ \nonumber \phi_\al(\bbb_1\otimes \bbb_2) &= \max\{ \phi_\al(\bbb_2), \phi_\al(\bbb_1) + \langle \alpha,\on{wt}(\bbb_2)\rangle \}.
\end{align}
Taking tensor product of $\hat G$-crystals is associative, making the category of $\hat G$-crystals a monoidal category. 

\item A family of highest weight $\hat G$-crystals $\{\bB_\la|\la\in\xcoch(T)^+\}$ is called \emph{closed} if for every $\la,\mu$, there is a map of $\hat G$-crystals $\bB_{\la+\mu}\to \bB_\la\otimes\bB_\mu$ sending $\bbb_{\la+\mu}$ to $\bbb_\la\otimes\bbb_\mu$.  Given such a family, and a sequence of dominant weights $\mmu$ of $G$, we put $\BB_\mmu = \BB_{\mu_1} \otimes \cdots \otimes \BB_{\mu_n}$.

\end{enumerate}
\end{dfn}

Recall the following theorem of Joseph (\cite{joseph}).
\begin{thm}[Joseph]
\label{T: joseph}
Assume that $\hat G$ is simply-connected. Then there is a unique (up to unique isomorphism) family of closed highest weight $\hat G$-crystals.
\end{thm}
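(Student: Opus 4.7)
The plan is to establish existence and uniqueness separately.

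For existence, I would take $\bB_\la$ to be the Kashiwara crystal basis $B(\la)$ of the irreducible integrable highest weight module $V(\la)$ over the quantum group $U_q(\hat{\frakg})$, specialized at $q=0$. (Equivalently, via the geometric Satake, one can later identify $\bB_\la$ with $\MV_\la$, which provides the link we need.) The closedness of this family follows from the canonical inclusion $V(\la+\mu) \hookrightarrow V(\la)\otimes V(\mu)$ onto the Cartan component spanned by $U_q(\hat{\frakg})\cdot(v_\la\otimes v_\mu)$; Kashiwara's theory shows that this inclusion is compatible with crystal bases and hence induces a crystal embedding $B(\la+\mu)\hookrightarrow B(\la)\otimes B(\mu)$ sending $\bbb_{\la+\mu}$ to $\bbb_\la\otimes\bbb_\mu$.

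For uniqueness, let $\{\bB_\la\}$ be any closed family. I would argue by induction on $\la$ in the dominance order. First observe that a morphism of highest weight crystals sending highest weight element to highest weight element is automatically injective, because by the axioms for $e_\al, f_\al$ in a tensor product, the connected component generated by a highest weight element is determined by the monoid generated by the $f_\al$ acting on it, and the weight-grading separates images. Consequently, the closedness map $\bB_{\la+\mu}\hookrightarrow \bB_\la\otimes\bB_\mu$ identifies $\bB_{\la+\mu}$ with the subcrystal generated by $\bbb_\la\otimes\bbb_\mu$. Since $\hat G$ is simply connected, every dominant $\la$ is a non-negative integral combination $\la=\sum n_i\omega_i$ of fundamental coweights, and iterating gives a canonical embedding $\bB_\la\hookrightarrow \bigotimes_i \bB_{\omega_i}^{\otimes n_i}$ as the subcrystal generated by $\bigotimes_i \bbb_{\omega_i}^{\otimes n_i}$. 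Hence it suffices to prove that each $\bB_{\omega_i}$ is uniquely determined.

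To handle the fundamental coweights, I would proceed by rank. For a minuscule fundamental coweight $\omega$, the weight set $W\cdot\omega$ consists of a single Weyl orbit with $\langle\al,\nu\rangle\in\{-1,0,1\}$ for every $\nu\in W\omega$ and every simple root $\al$, so crystal axiom (c) forces each $\varepsilon_\al,\phi_\al\in\{0,1\}$ and pins down $e_\al,f_\al$ uniquely. For a non-minuscule fundamental coweight, one realizes $\bB_\omega$ as the subcrystal generated by $\bbb_{\nu_1}\otimes\bbb_{\nu_2}$ inside $\bB_{\nu_1}\otimes \bB_{\nu_2}$ for a decomposition $\omega=\nu_1+\nu_2-\al^\vee$ where the $\nu_j$ are lower in the dominance order, or alternatively obtains the crystal structure from the restriction to rank-one sub-root-systems: for each simple $\al$, the $\al$-string through any $\bbb\in\bB_\omega$ is forced by axiom (c) and injectivity, and compatibility across different simple roots is rigidified by the embedding into tensor products of lower-weight crystals.

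The main obstacle is the last step: showing that the $\bB_{\omega_i}$ in an arbitrary closed family really are forced by the combinatorial data. The minuscule case is immediate, but for non-minuscule fundamental coweights (which only arise in a few types) the reduction to previously-determined crystals requires checking that the chosen embedding $\bB_\omega\hookrightarrow \bB_{\nu_1}\otimes\bB_{\nu_2}$ has image independent of the choice of $(\nu_1,\nu_2)$; this compatibility across decompositions is what ultimately rigidifies the whole family and makes the isomorphism canonical.
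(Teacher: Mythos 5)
There is no in-paper argument to compare against: the paper simply quotes this as Joseph's theorem and cites \cite{joseph}, so what is expected here is a reference, not a new proof. Judged on its own, your existence argument (Kashiwara crystal bases together with the Cartan-component embedding $V(\la+\mu)\hookrightarrow V(\la)\otimes V(\mu)$) is fine, but the uniqueness half — which is the entire content of Joseph's theorem — has concrete gaps. First, with the paper's weak notion of morphism (``compatible whenever it makes sense''), injectivity of the closedness maps is not automatic: a morphism need only satisfy $\varepsilon_\al(b)\le\varepsilon_\al(\psi(b))$, so your ``determined by the monoid of $f_\al$'s'' argument does not rule out distinct monomials in the $f_\al$'s giving distinct elements upstairs but equal elements downstairs; nor does the definition of a highest weight crystal guarantee that $\bbb_\la$ is the only element killed by all $e_\al$. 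Second, and more seriously, an individual normal highest weight crystal of given highest weight is \emph{not} unique — there exist ``false'' crystals satisfying axioms (a)--(c) that are not isomorphic to $B(\la)$ — so the fundamental crystals $\bB_{\omega_i}$ in an arbitrary closed family cannot be pinned down by their own axioms, which is exactly the step you defer to at the end. Your proposed mechanism for the non-minuscule case does not exist within the axioms: closedness only supplies maps $\bB_{\la+\mu}\to\bB_\la\otimes\bB_\mu$ for \emph{dominant} $\la,\mu$, so a decomposition $\omega=\nu_1+\nu_2-\al^\vee$ gives no structural embedding, and since a fundamental weight is not a sum of smaller nonzero dominant weights there is no induction base; constraining $\bB_\omega$ requires using the family globally (e.g.\ the embeddings $\bB_{\omega+\mu}\hookrightarrow\bB_\omega\otimes\bB_\mu$ for large $\mu$, which is essentially how Joseph's proof, via a comparison with $B(\infty)$, proceeds). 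Finally, a small factual slip: outside type $\mathsf A$ most fundamental weights are non-minuscule, so the hard case is generic rather than exceptional.

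In short, the reduction ``big crystals are determined by fundamental ones'' points the closedness axiom in the direction where it has no force by itself; the rigidity of the fundamental (and all) crystals comes from the interplay of the whole closed family, and that is precisely the nontrivial theorem being cited. If you want a complete argument you should either reproduce Joseph's proof or invoke it, as the paper does.
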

In general, passing to the adjoint group induces a map $\pr: \xcoch(T)\to \xcoch(T_\mathrm{ad})$, and $\hat G_{\mathrm{ad}}$ is simply-connected. Let $\bB_\mu=\bB_{\pr(\mu)}$. Then $\{\bB_\mu\}$ defines a family of closed highest weight $\hat G$-crystals.

\subsubsection{Weight subsets as homomorphisms for $\hat G$-crystals} 
\label{SS:relation to the G-crystals}
Let $\{\BB_\la\}$ be the family of a closed highest weight $\hat G$-crystals as constructed above.
Let $\lambda \in \XX_\bullet(T)$ be a weight of $\hat G$. If $\nu \in \XX_\bullet(T)^+$ is a dominant weight of $\hat G$ such that $\lambda + \nu$ is also dominant, we have a natural map
\begin{equation}
\label{E:inj crystal}
i_\nu^\bB:\Hom(\bB_{\la+\nu},\bB_\nu\otimes\bB)\to \bB(\la),
\end{equation}
such that $\bbb=i^\BB_\nu(\psi)$ is the unique element in $\bB$ satisfying $\psi(\bbb_{\la+\nu})=\bbb_\nu\otimes\bbb$. (Note that $e_\al(\bbb_{\la +\nu}) = 0$ for all $\al \in \Delta$ forces $\psi(\bbb_{\la+\nu})$ to take such form.) 
For $\bbb\in\bB(\la)$, it is straightforward to check from the definition that
\begin{align}
\label{E:condition to realize as a homomorphism crystal}
\bbb \in \mathrm{Im}i_\nu^\BB & \textrm{ if and only if }\phi_\al(\bbb_{\nu+\lambda}) = \phi_\al(\bbb_\nu \otimes \bbb) \textrm{ for all }\al \in \Delta,
\\
\nonumber & \textrm{ if and only if }\phi_\al(\bbb) \leq \phi_\al(\bbb) - \varepsilon_\al(\bbb) +\phi_\al(\bbb_\nu)  \textrm{ for all }\al \in \Delta,
\\
\nonumber &\textrm{ if and only if }\langle\al,\nu\rangle\geq \varepsilon_\al(\bbb) \textrm{ for all }\al \in \Delta.
\end{align}

\subsubsection{Littelmann path and $\hat G$-crystals}
\label{S: littelmann path model}
Given $\hat G$, there are several ways to construct a family of closed highest weight $\hat G$-crystals (although such family is unique up to a unique isomorphism if $\hat G$ is simply-connected,  by Theorem \ref{T: joseph}).
Here we recall one construction using Littelmann paths. This construction is easily related to the set of MV cycles.
Our presentation follows \cite{NP}\footnote{Littelmann himself considers more general paths, e.g. what he called LS paths.}.
Let 
$$\Min\subset \xch(\hat T)^+\setminus \{0\}=\xcoch(T)^+\setminus\{0\}$$ be the set of minimal elements with respect to the Bruhat order. If the group $G$ is simple of type $\on{A}$, $\Min$ is the set of minuscule weights of $\hat G$. In general, it is the union of minuscule weights and the quasi-minuscule weights of $\hat G$.

\begin{dfn}
Let $\ga_i: [0,1]\to \xch(\hat T)\otimes\bR, \ i=1,2$ be two piecewise linear maps. The \emph{concatenation} of $\ga_1$ and $\ga_2$, denoted by $\ga_1*\ga_2$ is the map given by
\[(\ga_1*\ga_2)(t)=\left\{\begin{array}{ll}\ga_1(2t) & 0\leq t\leq 1/2, \\ \ga_1(1)+\ga_2(2t-1) & 1/2\leq t\leq 1. \end{array}\right.\]

Two piecewise linear maps $\gamma_1, \gamma_2: [0,1] \to \xch(\hat T) \otimes \RR$ are called \emph{reparametrizations} of each other if there exists a strictly increasing piecewise linear function $r: [0,1] \to [0,1]$ such that $r(0) = 0$, $r(1) =1$, and $\gamma_1(t) = \gamma_2(r(t))$.
\end{dfn}

\begin{dfn}\label{LP}
We say a piecewise linear map $\ga: [0,1]\to \xch(\hat T)\otimes\bR$ an \emph{elementary Littelmann path} if it is one of the following forms:
\begin{enumerate}
\item $\ga(t)=\ga_\nu(t):=t\nu$, where $\nu\in W\mu$ for some $\mu\in \Min$;
\item $\ga(t)={^2}\ga_{\al}(t):=\left\{\begin{array}{ll} -t\al^\vee & 0\leq t\leq 1/2, \\ (t-1)\al^\vee & 1/2\leq t\leq 1, \end{array}\right.$ where $\al$ is a simple root such that $\al^\vee\in W\mu$ for some $\mu\in \Min$.
\end{enumerate}

A piecewise linear map $\ga: [0,1]\to \xch(\hat T)\otimes\bR$ is a called a \emph{Littelmann path} if after reparametrization, it is of the  form $\ga=\ga_1*\ga_2*\cdots *\ga_m$, where $\ga_i$ are elementary Littelmann paths.
The set of Littelmann paths up to reparametrization is denoted by $\Pi$.

A \emph{dominant Littelmann path} is a Littelmann path $\ga$ completely lying in the closure of the dominant Weyl chamber.
\end{dfn}

\begin{dfn}\label{path vs seq}
To a Littelmann path $\ga$, we attach a sequence $T(\ga)=\mu_\bullet=(\mu_1,\ldots,\mu_m)\subset \Min$ as follows: If $\ga$ is an elementary Littelmann path of type (1), let $T(\ga)=\mu\in \Min$ such that $\ga(1)\in W\mu$, if $\ga$ is of the type (2), let $T(\ga)=\mu \in \Min$ such that $-2\ga(1/2)\in W\mu$. If $\ga=\ga_1*\cdots*\ga_m$, let $T(\ga)=(T(\ga_1),\ldots,T(\ga_m))$. It is easy to see that $T(\ga)$ is well-defined. We denote $m$ by $\ell(\ga)$, and call it the length of the Littelmann path $\ga$. 
\end{dfn}

Recall that in \cite{Li}, Littelmann defined root operators on $\bZ\Pi$, the free $\bZ$-module generated by $\Pi$.\footnote{In fact, Littelmann defined such operators on the free $\bZ$-modules generated by the set of more general paths.} We recall its definition. Let $\al$ be a simple root of $G$, so the corresponding $\al^\vee$ is a simple root of $\hat G$. We now define the two root operators $e_\al,f_\al:\bZ\Pi\to\bZ\Pi$ associated to $\al$, starting with $e_\al$. 

Let $\ga=\ga_1*\cdots*\ga_m$, with $\ga_i$ elementary. After reparametrization, we can assume that $\ga_i(t)=\ga(\frac{i-1+t}{m})-\ga(\frac{i-1}{m})$.
Let $m_\al=\min_{t\in [0,1]}\{ \langle \alpha, \ga(t)\rangle \}$. If $m_\al>-1$, let $e_\al\ga=0$. If $m_\al\leq -1$, let $t_1$ be minimal among $t$ such that $ \langle\alpha, \ga(t)\rangle=m_\al$. Then there are three possibilities.

(i) If $t_1=\frac{i}{m}$ for some $i$, and in addition, $\langle\alpha, \ga(\frac{i-1}{m}) \rangle=m_\al+1$, we define 
$$e_\al(\ga)=\ga_1*\ga_2*\cdots*\ga_{i-1}*s_{\al^\vee}(\ga_i)*\ga_{i+1}*\cdots*\ga_m,$$ 
where $s_{\al^\vee}$ is the simple reflect on $\xch(\hat T)$ given by $\al^\vee$.

(ii) If $t_1=\frac{i}{m}$ is some $i$ but $\langle\alpha, \ga(\frac{i-1}{m}) \rangle=m_\al+2$ (in particular $\al^\vee$ is a simple root of $\hat G$ that can be conjugated into $\Min$ under $W$, and $\ga_i(t)=\ga_{-\al}(t)$), we define
$$e_\al(\ga)=\ga_1*\ga_2*\cdots*\ga_{i-1}*{^2}\ga_\al(t)*\ga_{i+1}*\cdots*\ga_m,$$ 

(iii) If $t_1=\frac{2i-1}{2m}$ for some $i$ (in particular $\al^\vee$ is a simple root of $\hat G$ that can be conjugated into $\Min$ under $W$, and $\ga_i(t)={^2}\ga_\al(t)$), we define
$$e_\al(\ga)=\ga_1*\ga_2*\cdots*\ga_{i-1}*\ga_\al(t)*\ga_{i+1}*\cdots*\ga_m,$$ 

The root operator $f_\al$ can be defined in a similar way, but we prefer to give another short definition.
\begin{dfn}
Let $\ga:[0,1]\to\xch(\hat T)\otimes\bR$ be a piecewise linear map. We define its dual $\ga^*$ as
\[\ga^*(t)=\ga(1-t)-\ga(1).\]
\end{dfn}
Then it is clear that the dual of a Littelmann path is a Littelmann path. Indeed, $\ga^*_\nu(t)=\ga_{-\nu}(t)$ and ${^2}\ga_\al^*(t)={^2}\ga_\al(t)$. We define $f_\al\ga=(e_\al\ga^*)^*$.

\medskip

Here are some simple properties of the root operators (\cite[\S 2]{Li}).
\begin{lem}
Let $\ga\in\Pi$. 
\begin{enumerate}
\item If $e_\al(\ga)\neq 0$, then $e_\al(\ga)(1)=\ga(1)+\al^\vee$ and $T(e_\al(\ga))=T(\ga)$. In addition, $f_\al e_\al(\ga)=\ga$. Similar statements hold for $f_\al$.
\item The path $\ga$ is dominant if and only if $e_\al(\ga)=0$ for all $\al$. 
\end{enumerate}
\end{lem}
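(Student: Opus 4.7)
The plan is to prove both parts by direct analysis of the three cases in the definition of $e_\al$, supplemented by a short observation about the possible values of the pairing $\langle \al, \ga(t)\rangle$ along a Littelmann path.

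For part (1), suppose $e_\al(\ga)\neq 0$, so $m_\al\leq -1$. In case (i), the prescribed drop $\langle\al,\ga((i-1)/m)\rangle-\langle\al,\ga(i/m)\rangle=1$ forces the $i$-th elementary segment $\ga_i$ to be of type (1): it cannot be of type (2), whose net contribution to $\langle\al,\cdot\rangle$ is zero. Writing $\ga_i=\ga_\nu$ with $\langle\al,\nu\rangle=-1$, we have $s_{\al^\vee}(\ga_i)=\ga_{\nu+\al^\vee}$, which is again of type (1) and in the same $W$-orbit $W\mu$ as $\ga_i$; so $T(\ga)$ is unchanged and the endpoint shifts by $s_{\al^\vee}(\nu)-\nu=\al^\vee$. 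In case (ii) the drop of $2$ forces $\ga_i=\ga_{-\al^\vee}$; replacing it by ${^2}\ga_\al$ moves the displacement of the $i$-th segment from $-\al^\vee$ to $0$, and both segments are associated to the same $\mu\in\Min$ whose $W$-orbit contains $\al^\vee$. In case (iii), replacing $\ga_i={^2}\ga_\al$ by $\ga_{\al^\vee}$ shifts the displacement from $0$ to $\al^\vee$ while keeping the same $\mu$. Combining the three cases yields $T(e_\al\ga)=T(\ga)$ and $e_\al(\ga)(1)=\ga(1)+\al^\vee$. The identity $f_\al e_\al(\ga)=\ga$ will then follow from the duality $f_\al=(\cdot)^*\circ e_\al\circ(\cdot)^*$ once one checks, case by case, that the three replacement rules for $e_\al$ are inverted by the three corresponding replacement rules for $f_\al$ applied to the dual path.

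For part (2), the forward direction is immediate: if $\ga$ is dominant then $\langle\al,\ga(t)\rangle\geq 0$ for every simple root $\al$ and every $t$, so $m_\al\geq 0>-1$ and $e_\al(\ga)=0$.

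For the converse I will use the fact that $m_\al$ is always an integer for any Littelmann path. Indeed, on a type-(1) segment $\ga_\nu$ the function $\langle\al,\ga(t)\rangle$ is linear with integer values at its two endpoints, while on a type-(2) segment ${^2}\ga_\beta$ it descends from its initial value $v$ to $v-\tfrac{1}{2}\langle\al,\beta^\vee\rangle$ at the midpoint and returns to $v$. For simple $\beta\neq\al$ one has $\langle\al,\beta^\vee\rangle\leq 0$, so the midpoint value is $\geq v$; while for $\beta=\al$ the midpoint value equals the integer $v-1$. Hence the minimum $m_\al$ is attained either at a break point or at the midpoint of some ${^2}\ga_\al$-segment, and in either case it is an integer. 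If now $e_\al(\ga)=0$ for every simple $\al$, then $m_\al>-1$ and $m_\al\in\ZZ$, forcing $m_\al\geq 0$ for every $\al$, which says exactly that $\ga$ lies in the closure of the dominant Weyl chamber. The main obstacle will be the combinatorial bookkeeping in part (1): verifying that each substitution preserves the Littelmann-path structure required by Definition \ref{LP}, and matching the three cases of $e_\al$ with the three cases of $f_\al$ to conclude $f_\al e_\al(\ga)=\ga$ uniformly.
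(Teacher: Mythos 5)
Your case analysis is sound where it is actually carried out, and it is worth noting that the paper itself offers no argument for this lemma (it simply quotes Littelmann's \S 2), so a self-contained verification like yours is a legitimately different, more explicit route. Part (2) is complete: the observation that $m_\al$ is always an integer (node values are integers, type-(1) segments attain their minimum at nodes, and a segment ${}^2\ga_\beta$ dips below its endpoints only when $\beta=\al$, in which case the dip is exactly $1$) does give the converse. The endpoint and $T$-preservation claims in part (1) are also correctly checked: in case (i) the drop of $1$ forces a type-(1) segment and $s_{\al^\vee}(\ga_\nu)=\ga_{\nu+\al^\vee}$ stays in the same $W$-orbit; in case (ii) the drop of $2$ forces $\ga_i=\ga_{-\al^\vee}$ because a weight $\nu$ of a minuscule or quasi-minuscule representation satisfies $\langle\al,\nu\rangle=-2$ only for $\nu=-\al^\vee$ (this is also built into the parenthetical of the paper's definition); case (iii) is immediate.

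The genuine gap is the identity $f_\al e_\al(\ga)=\ga$, which is part of statement (1) but which you never prove: you only assert it "will follow from the duality once one checks, case by case," and you list exactly that check among the remaining obstacles. That check is the only non-routine content of the lemma, and it is not a purely formal matching. Writing $\delta=e_\al\ga$, you must locate the relevant extremum of $\delta$ (equivalently the first minimum of $\langle\al,\delta^*(t)\rangle$), show $f_\al\delta\neq 0$, and show the rule that applies there undoes your modification. For instance, in case (i) one shows $\min_t\langle\al,\delta(t)\rangle=m_\al+1$, that it is attained for the last time precisely at the node $t=(i-1)/m$ (using that on $[0,(i-1)/m]$ the minimum of $\langle\al,\ga(t)\rangle$ is an integer $>m_\al$, that the new segment increases, and that $\langle\al,\delta(t)\rangle=\langle\al,\ga(t)\rangle+2$ afterwards), and that $\langle\al,\delta(1)\rangle\geq m_\al+2$; then the dual of case (i) reflects segment $i$ back to $\ga_\nu$. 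Moreover the cases do not correspond naively: if $e_\al$ acted through case (ii) (replacing $\ga_{-\al^\vee}$ by ${}^2\ga_\al$), the last minimum of $\delta$ sits at the midpoint of the new ${}^2\ga_\al$-segment, so $f_\al$ recovers $\ga$ through the case-(iii)-type rule, and conversely case (iii) for $e_\al$ is undone by the case-(ii)-type rule at the adjacent node. Until this bookkeeping (with the accompanying minimum-location estimates in each of the three cases) is written out, the proof of part (1) is incomplete.
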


Now let $\ga$ be a dominant Littelmann path, with $\ga(1)=\mu$ (not necessarily in $\Min$). Let $\bB_{\ga}$ denote the set of all Littelmann paths obtained by applying root operators to $\ga$. 
Recall that in \cite{Li}, Littelmann proved the following isomorphism theorem.

\begin{thm}[Littelmann]
\label{T: Littelmann isom}
Let $\ga_1$ and $\ga_2$ be two dominant Littelmann paths satisfying $\ga_1(1)=\ga_2(1)$. Then the assignment $\ga_1\to \ga_2$ extends to a unique isomorphism $i_{\ga_1,\ga_2}:\bB_{\ga_1}\to \bB_{\ga_2}$ compatible with the action of root operators.
\end{thm}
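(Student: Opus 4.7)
The plan is to establish the isomorphism in three stages, using the structure of concatenation of paths and the interaction of root operators with it.

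First I would check uniqueness. Since $\ga_1$ is dominant, the second part of the preceding lemma gives $e_\al(\ga_1)=0$ for every simple root $\al$, and by construction every element of $\bB_{\ga_1}$ is reached from $\ga_1$ by a sequence of operators $f_\al$. Thus any map of sets $\bB_{\ga_1}\to \bB_{\ga_2}$ commuting with root operators is determined by the image of $\ga_1$. This gives uniqueness; it also reduces the whole statement to producing, for a given pair $\ga_1,\ga_2$ of dominant paths with the same endpoint $\mu$, a bijection $i_{\ga_1,\ga_2}$ which sends $\ga_1\mapsto \ga_2$ and intertwines the operators $e_\al,f_\al$.

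Second, I would establish the concatenation rule. For Littelmann paths $\ga$ and $\delta$, write $\ga\otimes\delta$ for the pair viewed in the tensor product of crystals; the key claim is that the assignment $\ga*\delta\mapsto\ga\otimes\delta$ induces an injective map of crystals from the set of paths reachable from $\ga*\delta$ by root operators into $\bB_\ga\otimes\bB_\delta$. Equivalently, with $m_\al=\min_{t\in[0,1]}\langle\al,\ga(t)+\delta(t)-\ga(1)\rangle$ (i.e.\ the minimum computed on the second half of $\ga*\delta$), the action of $e_\al$ and $f_\al$ on $\ga*\delta$ follows the same branching rule as in the tensor product formula, once one compares $\phi_\al(\ga)$ with $\varepsilon_\al(\delta)$. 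This is proved by a case-by-case inspection of the three possibilities (i)--(iii) defining the root operators: in each case, the location of the first minimum of $\langle\al,\cdot\rangle$ along $\ga*\delta$ lies either strictly in the first half (so the operator acts on $\ga$), strictly in the second half (so it acts on $\delta$), or precisely at the junction point $t=1/2$ (the boundary case corresponding to equality $\phi_\al(\ga)=\varepsilon_\al(\delta)$, where both descriptions coincide).

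Third, using the concatenation rule, I would compare $\bB_{\ga_1}$ and $\bB_{\ga_2}$ by means of a common refinement. Choose a sufficiently dominant regular weight $\nu$ and consider the concatenated paths $\ga_\nu*\ga_i$ for $i=1,2$. Each of these is still dominant (because $\nu$ is very regular, the partial sums stay in the dominant chamber), and both terminate at $\nu+\mu$. The concatenation rule gives injections
\[
\bB_{\ga_\nu*\ga_i}\hookrightarrow \bB_{\ga_\nu}\otimes\bB_{\ga_i},
\]
and the image consists of those $\ga_\nu\otimes \eta$ with $\eta\in\bB_{\ga_i}$ satisfying $\varepsilon_\al(\eta)\le\langle\al,\nu\rangle$ for every simple root $\al$, by the argument recorded in \eqref{E:condition to realize as a homomorphism crystal}. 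For $\nu$ large enough this inequality is automatic, so the injection becomes a bijection $\bB_{\ga_\nu*\ga_i}\cong \{\ga_\nu\}\otimes\bB_{\ga_i}$ that intertwines root operators. It then suffices to produce a canonical isomorphism between $\bB_{\ga_\nu*\ga_1}$ and $\bB_{\ga_\nu*\ga_2}$; since both sets arise by applying root operators to a dominant path of the same endpoint, and both orbits contain a unique element $\delta$ with $\wt(\delta)=\nu+\mu$ (namely $\ga_\nu*\ga_i$ itself), I would match these distinguished elements and check that the resulting bijection is compatible with $e_\al,f_\al$ by using that any orbit of a dominant element under the root operators, viewed inside $\ZZ\Pi$, is determined up to canonical isomorphism by its unique highest weight vector.

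The main obstacle is the third step: verifying that $\bB_{\ga_\nu*\ga_1}$ and $\bB_{\ga_\nu*\ga_2}$ are \emph{canonically} isomorphic, not merely abstractly so. The delicate point is to show that the isomorphism does not depend on the auxiliary choice of $\nu$; this requires a compatibility check between different choices $\nu,\nu'$, which is done by concatenating once more with $\ga_{\nu'-\nu}$ (assuming $\nu'-\nu$ dominant) and invoking the associativity of the concatenation rule. The rest of the argument (existence of bijections between orbits and verification of the intertwining property) reduces to careful bookkeeping of the operators $e_\al,f_\al$ under the tensor product rule, which is purely combinatorial.
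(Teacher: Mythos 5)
The paper does not actually prove this statement: it is quoted verbatim from Littelmann's paper \cite{Li}, so there is no internal argument to compare yours against. Your steps 1 and 2 are sound — uniqueness follows since $\ga_1$ generates $\bB_{\ga_1}$ under the $f_\al$'s, and the compatibility of concatenation with the tensor-product rule for root operators is a genuine (and known) combinatorial verification. The problem is step 3, which is where all the content of the theorem lives, and it fails in two ways. First, the claimed identification $\bB_{\ga_\nu*\ga_i}\cong\{\ga_\nu\}\otimes\bB_{\ga_i}$ for $\nu$ very dominant is false. The condition \eqref{E:condition to realize as a homomorphism crystal} only says that $\ga_\nu\otimes\eta$ is annihilated by all $e_\al$ when $\varepsilon_\al(\eta)\le\langle\al,\nu\rangle$; it says nothing about the component generated by $\ga_\nu\otimes\ga_i$ staying inside $\{\ga_\nu\}\otimes\bB_{\ga_i}$. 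In fact the opposite happens: since $\phi_\al(\ga_\nu)=\langle\al,\nu\rangle$ is huge, the tensor-product rule \eqref{E: tensor epsilon and phi} forces $f_\al$ to act on the \emph{first} factor, and the connected component of $\ga_\nu\otimes\ga_i$ is a full crystal generated by a dominant element of weight $\nu+\mu$, of cardinality far exceeding $|\bB_{\ga_i}|$. So the "stabilization" does not cut the problem down to $\bB_{\ga_i}$ at all.

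Second, even setting that aside, your final step compares $\bB_{\ga_\nu*\ga_1}$ with $\bB_{\ga_\nu*\ga_2}$ by "using that any orbit of a dominant element under the root operators is determined up to canonical isomorphism by its unique highest weight vector" — but that assertion \emph{is} the isomorphism theorem, now applied to the two dominant paths $\ga_\nu*\ga_1$ and $\ga_\nu*\ga_2$ with common endpoint $\nu+\mu$. The argument is therefore circular: it reduces the statement for paths ending at $\mu$ to the identical statement for paths ending at $\nu+\mu$. The real difficulty of the theorem is precisely to show that the crystal generated by a dominant path depends only on its endpoint, and this is not a formal consequence of uniqueness plus the concatenation/tensor rule; Littelmann's proof in \cite{Li} requires a substantially finer analysis of the root operators (and this is why the present paper simply cites it rather than reproving it).
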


As a result, for every dominant weight $\mu$ of $\hat G$, one can canonically define a set $\bB_\mu:=\bB_{\ga}$ for any dominant Littelmann path $\ga$ satisfying $\ga(1)=\mu$. The following theorem can be found in \cite{joseph}.

\begin{thm}The collection $\{\bB_\mu\}$ defined as above together with the root operators form a closed family of highest weight $\hat G$-crystals.
\end{thm}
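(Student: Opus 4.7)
The argument falls into three stages: verifying the crystal axioms, establishing the highest weight property, and exhibiting the closure maps.

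First, I would verify axioms (a), (b), (c) for $\bB_\ga$ with $\ga$ a dominant Littelmann path. Axioms (a) and (b) are immediate from Part (1) of the preceding lemma: that part records the weight shift $\wt(e_\al\ga) = \wt(\ga) + \al^\vee$ as well as the mutual inverse relation $f_\al e_\al(\ga) = \ga$. For axiom (c), I would compute directly from the definition of the root operators that $\varepsilon_\al(\ga) = -\lfloor m_\al \rfloor$, where $m_\al := \min_{t \in [0,1]} \langle\al, \ga(t)\rangle$, and by applying this same formula to the dual path $\ga^*$ (whose minimum is $m_\al - \langle \al, \ga(1)\rangle$) obtain $\phi_\al(\ga) = \langle\al, \ga(1)\rangle - \lceil m_\al \rceil$. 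The difference then equals $\langle\al, \ga(1)\rangle = \langle\al, \wt(\ga)\rangle$, as required.

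Second, if $\ga$ is dominant with $\ga(1) = \mu$, then Part (2) of the same lemma gives $e_\al \ga = 0$ for all simple roots $\al$, so $\ga$ is a highest weight vector of weight $\mu$. By construction $\bB_\ga$ is the closure of $\{\ga\}$ under the operators $e_\al, f_\al$, and since $e_\al \ga = 0$ the set $\bB_\ga$ is in fact generated from $\ga$ by iterated application of the $f_\al$'s. Thus $\bB_\ga$ is a highest weight $\hat G$-crystal of highest weight $\mu$, and Littelmann's isomorphism theorem (Theorem~\ref{T: Littelmann isom}) guarantees that $\bB_\mu := \bB_\ga$ is independent of the choice of dominant path with $\ga(1) = \mu$.

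Finally, for the closed family structure, given dominant weights $\mu, \nu$ I would pick dominant Littelmann paths $\ga_\mu, \ga_\nu$ with endpoints $\mu, \nu$. Their concatenation $\ga_\mu * \ga_\nu$ stays in the closure of the dominant Weyl chamber (since both halves do) and has endpoint $\mu + \nu$, so $\bB_{\mu+\nu} \cong \bB_{\ga_\mu * \ga_\nu}$ by stage two. The natural set map
\[
\bB_{\ga_\mu * \ga_\nu} \longrightarrow \bB_\mu \otimes \bB_\nu, \qquad \eta_1 * \eta_2 \longmapsto \eta_1 \otimes \eta_2,
\]
sends $\ga_\mu * \ga_\nu$ to $\bbb_\mu \otimes \bbb_\nu$. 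The hard part, and the crux of the whole theorem, is showing this map intertwines the root operators. The key computation is that for a concatenation $\eta_1 * \eta_2$ (after reparametrization so both factors occupy equal time), the minimum of $\langle\al, (\eta_1 * \eta_2)(t)\rangle$ is attained in the first half if and only if $\min_t\langle\al, \eta_1(t)\rangle \leq \min_t\langle\al, \eta_2(t)\rangle + \langle\al, \eta_1(1)\rangle$. Translating this inequality via the formulas from stage one yields $\varepsilon_\al(\eta_2) \leq \phi_\al(\eta_1)$, which is exactly the threshold appearing in the tensor product rule for $e_\al$ and $f_\al$. Matching each of the cases (i)--(iii) in Littelmann's definition of $e_\al$ on a concatenation to the two cases in the tensor product rule (and dualising for $f_\al$) then completes the verification.
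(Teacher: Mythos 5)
The paper gives no argument for this theorem at all: it is quoted from the literature (Littelmann \cite{Li} for the path crystals and Joseph \cite{joseph}), so your write-up has to stand on its own, and as it stands it has a genuine gap in the second stage. From ``$\bB_\ga$ is the closure of $\{\ga\}$ under $e_\al,f_\al$'' together with ``$e_\al\ga=0$ for all $\al$'' you conclude that $\bB_\ga$ is generated from $\ga$ by the $f_\al$'s alone. That inference is not formal: a word mixing $e_\beta$'s and $f_\al$'s applied to $\ga$ need not lie in the $f$-span of $\ga$, because $e_\beta$ and $f_\al$ with $\beta\neq\al$ do not commute as partial operators, and a priori $\bB_\ga$ could contain a second source, i.e.\ another dominant path $\delta\neq\ga$ (by part (2) of the lemma the elements killed by all $e_\al$ are exactly the dominant paths, and nothing you have said rules out reaching one other than $\ga$). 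What you need is precisely the statement that the $f$-span of $\ga$ is stable under all $e_\beta$, equivalently that $\ga$ is the unique dominant path in its connected component and every element raises back to it. This is one of the main theorems of Littelmann's paper (proved via his integrality analysis of the minima $m_\al$ along the strings, not by the crystal axioms), and it is exactly the content the paper outsources by citing \cite{Li} and \cite{joseph}; it cannot be dispatched in the one sentence you give.

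The other two stages are essentially the standard Littelmann computations and are fine in outline, with two points to tighten. In stage one, the hedge $\varepsilon_\al(\ga)=-\lfloor m_\al\rfloor$, $\phi_\al(\ga)=\langle\al,\ga(1)\rangle-\lceil m_\al\rceil$ actually breaks axiom (c) unless $m_\al\in\ZZ$; you should instead prove integrality (for concatenations of the elementary paths used here, every local minimum of $t\mapsto\langle\al,\ga(t)\rangle$ occurs either at a breakpoint, where the value is a sum of pairings of $\al$ with cocharacters, or at the midpoint of a segment ${}^2\ga_\al$, where it is an integer; the midpoints of ${}^2\ga_\beta$ with $\beta\neq\al$ are local maxima for $\langle\al,\cdot\rangle$), and also verify that each application of $e_\al$ raises $m_\al$ by exactly $1$, which is what turns ``$e_\al\neq 0$ iff $m_\al\leq -1$'' into $\varepsilon_\al=-m_\al$. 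In stage three, your threshold computation is the right one, but you must keep track of the asymmetric tie-breaking: $e_\al$ acts at the \emph{first} attainment of the minimum and $f_\al$ (via the dual path) at the \emph{last}, and this is what matches the $\geq$ versus $>$ in the tensor-product rule; you also need the small induction showing that the root operators preserve the shape $\eta_1*\eta_2$ with $\eta_i$ in the respective components, so that the splitting map is defined on all of $\bB_{\ga_\mu*\ga_\nu}$ — this comes out of the same case analysis. With those repairs, stages one and three are sound; the missing ingredient remains the highest-weight (unique source) property in stage two.
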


Next, we relate the Littelmann paths with the MV cycles.
\begin{prop}\label{MV vs path}
Given a sequence of dominant weights $\{\mu_\bullet\}\subset \Min$ of $\hat G$ and $\la\in\xch(\hat T)$, there is a canonical bijection between 
\[\{ \ga\in\Pi\mid  \ga(1)=\la,\ T(\ga)=\mu_\bullet\}\leftrightarrow\{\mbox{Irreducible components of } m_{\mmu}^{-1}(S_{\la})\cap \Gr_{\mu_\bullet}\}.\]
\end{prop}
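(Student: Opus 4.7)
The plan is to reduce to the case of a single dominant coweight by exploiting the factorization of semi-infinite orbits inside the convolution affine Grassmannian, and then to check the single-coweight case by matching the explicit descriptions of MV cycles against the two types of elementary Littelmann paths.

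On the geometric side, using \eqref{factorizationI} and \eqref{factorizationII}, for every sequence $\la_\bullet$ with $|\la_\bullet|=\la$ there is a canonical identification
\[
S_{\la_\bullet}\cap \Gr_{\mmu}\cong (S_{\la_1}\cap \Gr_{\mu_1})\tilde\times\cdots\tilde\times (S_{\la_n}\cap \Gr_{\mu_n}),
\]
and these locally closed subvarieties partition $m_{\mmu}^{-1}(S_\la)\cap \Gr_{\mmu}$. Combined with the already stated fact (immediately after \eqref{factorizationII}) that the irreducible components of the right-hand side are canonically labeled by $\prod_i\MV_{\mu_i}(\la_i)$, the irreducible components of $m_\mmu^{-1}(S_\la)\cap\Gr_{\mmu}$ are canonically indexed by
\[
\bigsqcup_{|\la_\bullet|=\la}\ \prod_{i=1}^n \MV_{\mu_i}(\la_i).
\]

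On the combinatorial side, a Littelmann path $\ga$ with $T(\ga)=\mmu$ is, up to reparametrization, uniquely a concatenation $\ga_1*\cdots*\ga_n$ of elementary Littelmann paths with $T(\ga_i)=\mu_i$, and the condition $\ga(1)=\la$ becomes $\sum_i\ga_i(1)=\la$. Setting $\la_i:=\ga_i(1)$ gives a partition
\[
\{\ga\in\Pi\mid\ga(1)=\la,\,T(\ga)=\mmu\}=\bigsqcup_{|\la_\bullet|=\la}\ \prod_{i=1}^n\big\{\,\ga_i\text{ elementary with }T(\ga_i)=\mu_i,\,\ga_i(1)=\la_i\,\big\},
\]
reducing the proposition to the case $n=1$: for each $\mu\in\Min$ and $\la\in\xch(\hat T)$, construct a canonical bijection between elementary paths with $T=\mu$, $\ga(1)=\la$ and $\MV_\mu(\la)$.

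For this single-coweight case, elementary paths of type (1) with $T=\mu$ are exactly $\ga_\nu$ for $\nu\in W\mu$, giving one path for each $\la\in W\mu$ with $\ga(1)=\la$. By \eqref{E: MV min} and \eqref{E: MV quasiminI}, $S_\la\cap\Gr_\mu$ for such $\la$ is irreducible, so $\ga_\la\leftrightarrow S_\la\cap\Gr_\mu$ gives the bijection in this range. Elementary paths of type (2) with $T=\mu$ only exist when some simple coroot $\al^\vee\in W\mu$, which forces $\mu$ to be quasi-minuscule (a minuscule $\mu$ with $\al^\vee\in W\mu$ would, via the $\al^\vee$-string through $\al^\vee$ inside $V_\mu$, force $0$ to be a weight of $V_\mu$, contradicting minusculity). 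In that case the paths are exactly ${}^2\ga_\al$ for $\al\in\Delta_\mu$, all with $\ga(1)=0$; by \eqref{E: MV quasiminII}, $\MV_\mu(0)$ is canonically indexed by $\Delta_\mu$, and ${}^2\ga_\al\leftrightarrow (S_0\cap\Gr_\mu)^\al$ completes the bijection. The main subtlety will be to confirm that the elementary decomposition of a Littelmann path is genuinely determined once $T(\ga)=\mmu$ is fixed (so that the left-hand partition is well-defined), and that the resulting identifications are independent of the choice of Borel, which follows from Lemma \ref{L: basic prop of MV}(3) via the $L^+T$-equivariance of the whole construction.
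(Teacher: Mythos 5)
Your proposal is correct and follows essentially the same route as the paper: decompose $m_{\mmu}^{-1}(S_\la)\cap\Gr_{\mmu}$ into the twisted products $S_{\la_\bullet}\cap\Gr_{\mmu}$ via \eqref{factorizationI}--\eqref{factorizationII}, and match the factors against the elementary Littelmann paths using the explicit minuscule/quasi-minuscule descriptions \eqref{E: MV min}, \eqref{E: MV quasiminI}, \eqref{E: MV quasiminII}, with type (1) paths giving the irreducible $S_{w\mu}\cap\Gr_\mu$ and type (2) paths ${}^2\ga_\al$ giving the components $(S_0\cap\Gr_\mu)^\al$ indexed by $\Delta_\mu$. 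The only difference is organizational (you reduce to the single-coweight case and go from paths to components, while the paper constructs the map from components to paths), which does not change the substance.
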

\begin{proof} Recall that
\[m_{\mmu}^{-1}(S_\la)\cap \Gr_{ \mu_\bullet}=\bigsqcup_{\nu_\bullet,|\nu_\bullet|=\la} S_{\nu_\bullet}\cap\Gr_{ \mu_\bullet},\]
$$S_{\nu_\bullet}\cap\Gr_{ \mu_\bullet}= (S_{\nu_1}\cap\Gr_{\mu_1})\tilde\times\cdots\tilde\times(S_{\nu_n}\cap\Gr_{ \mu_n}).$$ 
By \eqref{E: MV min}, \eqref{E: MV quasiminI}, and \eqref{E: MV quasiminII}, we have
\begin{itemize}
\item if $\nu_i\in W\mu_i$, $S_{\nu_i}\cap \Gr_{\mu_i}$ is irreducible; 
\item if $\nu_i=0$, and $\mu_i$ is quasi-minuscule, then the irreducible components of $S_0\cap \Gr_{ \mu_i}$ canonically one-to-one correspond to $\Delta_{\mu_i}$, given by $ (S_0\cap \Gr_{\mu_i})^\al \leftrightarrow \alpha$; and 
\item if $\nu_i$ is not of the above forms, then $S_{\nu_i}\cap\Gr_{\mu_i}$ is empty. 
\end{itemize}
Therefore, given an irreducible component of $m_{\mmu}^{-1}(S_\la)\cap \Gr_{ \mu_\bullet}$, we construct a Littelmann path 
$\ga=\ga_1*\cdots*\ga_n$, where 
\begin{itemize}
\item $\ga_i(t)=\ga_{\nu_i}(t)$ if $\nu_i\neq 0$;
\item $\ga_i(t)={^2}\ga_{\al_i}(t)$ if $\nu_i=0$ and $(S_0\cap\Gr_{\mu_i})^{\al_i}$ with $ \al_i\in\Delta_{\mu_i}$ appears as a factor in the above factorization of the irreducible component. 
\end{itemize}
It is clear how to construct the inverse map, so we get the bijection as in the proposition.
\end{proof}

For $\ga$ a Littelmann path with $\la=\ga(1)$ and $T(\ga)=\mmu$, let 
$$S_\ga:=(m_{\mmu}^{-1}(S_\la)\cap \Gr_{\mu_\bullet})^\ga$$ denote the corresponding irreducible component. The following result is proved in \cite[Lemma 9.5]{NP} in the equal characteristic situation. But the proof works for the mixed characteristic situation as well.\footnote{There is one subtlety that needs one's attention: In the mixed characteristic situation, one cannot use the affine root group such as $U_{\al_i,-1}$ in \emph{loc. cit.}. However, in the argument one can replace affine root group such as $U_{\al_i,-1}$ by the ``compact open" subgroup $U_{\al_i,\geq -1}=\prod_{j\geq -1}U_{\al_i,j}$, which is defined in the mixed characteristic case.}
\begin{prop}\label{domin path}
Assume that $\la$ is dominant and $\ga(1)=\la$. Then $S_\ga\subset m_{\mmu}^{-1}(\Gr_{\la})$ if and only if $\ga$ is dominant.
\end{prop}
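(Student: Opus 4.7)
My plan is to induct on the length $\ell(\ga)$ to establish the forward direction (dominant $\ga$ implies $S_\ga\subset m_\mmu^{-1}(\Gr_\la)$) directly, and then to deduce the converse from a counting argument matching the geometric Satake equivalence against Littelmann's path model.

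\textbf{Forward direction, by induction on $\ell(\ga)$.} The base case $\ell(\ga)=1$ follows from the explicit descriptions \eqref{E: MV min}--\eqref{E: MV quasiminII}: if $\ga=\ga_\nu$ is of type (1), dominance of $\la=\nu$ forces $\nu=\mu_1$, so $\ga$ is automatically dominant and $S_\ga=S_{\mu_1}\cap\Gr_{\mu_1}\subset\Gr_\la$; if $\ga={}^2\ga_\al$ is of type (2), then $\ga$ is manifestly not dominant and there is nothing to show. For the inductive step, split $\ga=\ga'*\ga_n$ with $\ga_n$ the last elementary segment, and set $\la'=\ga'(1)$, $\mmu'=(\mu_1,\dots,\mu_{n-1})$. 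Dominance of $\ga$ gives both that $\ga'$ is dominant (hence $\la'$ is dominant) and that the shifted translate $\la'+\ga_n(\cdot)$ stays in the dominant chamber of $\xch(\hat T)\otimes\RR$. By induction, $S_{\ga'}\subset m_{\mmu'}^{-1}(\Gr_{\la'})$, and since $\la'$ is dominant Theorem~\ref{T:weight space interpretation} shows that $\Gr_{\la'}\cap S_{\la'}=\mathring{\Gr}_{\la'}\cap S_{\la'}$, so the composed modification along $\ga'$ already lies in $\mathring{\Gr}_{\la'}\cap S_{\la'}$. Under $\Gr_\mmu\cong\Gr_{\mmu'}\tilde\times\Gr_{\mu_n}$ together with \eqref{factorizationII}, one has $S_\ga=S_{\ga'}\tilde\times S_{\ga_n}$. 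The remaining task is to show that, writing the composed modification coming from $S_{\ga'}$ as $\beta'=u\varpi^{\la'}$ for $u$ in an explicit product of root subgroups, and expressing the last modification $\beta_n\in S_{\ga_n}\subset\Gr_{\mu_n}$ similarly via its semi-infinite structure, the product $\beta'\circ\beta_n$ lies in $L^+G\varpi^\la L^+G$. This is a local affine-root-subgroup computation: shifted dominance of $\la'+\ga_n$ is precisely what makes the necessary commutation relations past $\varpi^{\la'}$ succeed. In equal characteristic this is carried out in \cite[Lemma~9.5]{NP}; in mixed characteristic, since individual affine root subgroups $U_{\al,j}$ are not available as subgroups of $LG$, one instead works with the compact-open filtration pieces $U_{\al,\geq j}=\prod_{i\geq j}U_{\al,i}$ of the positive loop group, as indicated in the footnote to the proposition.

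\textbf{Converse, by counting.} Since $\dim S_\ga=\langle\rho,\la+|\mmu|\rangle$ is the maximal possible dimension of an irreducible subvariety of $\Gr^0_{\la|\mmu}=m_\mmu^{-1}(\Gr_\la)$ by Proposition~\ref{SS:cycles of geometric Satake}(1), the inclusion $S_\ga\subset m_\mmu^{-1}(\Gr_\la)$ forces $\overline{S_\ga}$ to be a Satake cycle. By Proposition~\ref{SS:cycles of geometric Satake}(3) combined with the geometric Satake equivalence, $|\bS_{\la|\mmu}|=\dim\Hom_{\hat G}(V_\la,V_\mmu)$; by Littelmann's theory (Theorem~\ref{T: Littelmann isom} and the path model recalled in \S\ref{S: littelmann path model}), this equals the number of dominant Littelmann paths with data $(\la,\mmu)$. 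The forward direction defines a map $\ga\mapsto\overline{S_\ga}$ from dominant paths into $\bS_{\la|\mmu}$, and this map is injective: two distinct components $S_{\ga_1}\ne S_{\ga_2}$ of $m_\mmu^{-1}(S_\la)\cap\Gr_\mmu$ are disjoint, so they cannot both be open dense in a common irreducible closure. By the count, this injection is a bijection, forcing any $\ga$ with $S_\ga\subset m_\mmu^{-1}(\Gr_\la)$ to come from a dominant path, hence to be dominant itself.

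\textbf{Main obstacle.} The substantive step is the local root-subgroup analysis in the inductive step; its mixed-characteristic adaptation---working with the compact-open subgroups $U_{\al,\geq j}$ rather than the individual $U_{\al,j}$---is the only real departure from \cite[Lemma~9.5]{NP}, and is where one must be careful with the perfected mixed-characteristic loop group machinery.
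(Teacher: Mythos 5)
Your proposal is correct in substance, but only its converse half genuinely departs from the paper. The paper gives no argument of its own for Proposition \ref{domin path}: it simply cites \cite[Lemma 9.5]{NP} for both directions and records in a footnote the only needed change in mixed characteristic (replacing individual affine root groups $U_{\al_i,j}$ by the compact-open pieces $U_{\al_i,\geq j}$). Your forward direction reproduces the skeleton of that induction but still defers its crux to the very same NP computation with the very same adaptation, so there you are on the paper's route and the ``main obstacle'' you identify is exactly the one the paper flags. Your converse, however, replaces NP's direct geometric argument by a counting argument: if $S_\ga\subset m_\mmu^{-1}(\Gr_\la)$ then $\overline{S_\ga}$ is a Satake cycle by Proposition \ref{SS:cycles of geometric Satake}(1), the number of Satake cycles is $\dim\Hom_{\hat G}(V_\la,V_\mmu)$ by Proposition \ref{SS:cycles of geometric Satake}(3) together with geometric Satake, and this matches the number of dominant paths of shape $\mmu$ ending at $\la$ by Littelmann's decomposition formula; comparing with the injection built from the forward direction pins down every such cycle as coming from a dominant path. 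This is a legitimate and rather clean alternative --- it is the same style of counting the paper itself uses to prove Proposition \ref{bijectionI} --- and it is not circular even though Corollary \ref{C: rep to cor} appears after \S\ref{S:geom realization of G-crystal}, since the Satake inputs are external results of \cite{MV} and \cite{Z}; its price is that it only works in combination with the forward direction, whereas NP's argument treats ``only if'' directly. Two small repairs: distinct irreducible components of $m_\mmu^{-1}(S_\la)\cap\Gr_\mmu$ need not be disjoint (for quasi-minuscule $\mu_i$ the components $(S_0\cap\Gr_{\mu_i})^{\al}$ all contain the base point), but injectivity of $\ga\mapsto\overline{S_\ga}$ survives because distinct irreducible components cannot share a closure, hence a generic point; and in the last step you should say explicitly that $\overline{S_\ga}=\overline{S_{\ga'}}$ forces $S_\ga=S_{\ga'}$ and hence $\ga=\ga'$ via the bijection of Proposition \ref{MV vs path}.
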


Here is the relation between root operators and MV cycles.
\begin{lem}\label{root operator mv}
Let $\ga\in \Pi$ and assume that $e_\al(\ga)\neq 0$. Then $S_\ga$ is contained in the closure of $S_{e_\al(\ga)}$.
\end{lem}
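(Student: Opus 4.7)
The plan is to follow a local-to-global strategy and reduce to a rank-one computation, paralleling the approach of \cite{NP}. First, I observe that $T(\gamma)=T(e_\alpha\gamma)=:\mu_\bullet$, so both $S_\gamma$ and $S_{e_\alpha\gamma}$ are irreducible components of subvarieties of the same twisted product $\Gr_{\mu_\bullet}$. Writing $\gamma=\gamma_1*\cdots*\gamma_m$ in elementary form, the definition of $e_\alpha$ modifies $\gamma$ only at the index $i$ (or at $i$ together with a neighboring index, in cases (ii) and (iii)) where the minimum $m_\alpha=\min_t\langle\alpha,\gamma(t)\rangle$ is first attained. Via the twisted product factorization \eqref{factorizationII}, the MV cycle $S_\gamma$ decomposes as a product of irreducible components of $(S_{\nu_j}\cap\Gr_{\mu_j})$ for $j=1,\dots,m$, and similarly for $S_{e_\alpha\gamma}$; the factors agree for all $j$ outside the modified index (or pair). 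So the claim reduces to showing the desired containment factorwise at the modified step(s), with all other twisted-product factors fixed.

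Next, the local problem is one-dimensional in the $\alpha^\vee$-direction. Using the projection $\theta_{M_\alpha}\colon S_\nu\to S_{M_\alpha,\nu}$ of \S\ref{SS:theta M} for the standard Levi $M_\alpha$ corresponding to $\alpha$, and the fact that semi-infinite orbits, Schubert varieties, and MV cycles all behave compatibly with the embedding of the rank-one subgroup generated by $U_{\pm\alpha}$, I can reduce to a statement inside the affine Grassmannian of $\PGL_2$. In rank one, the relevant MV cycles are completely explicit by \eqref{E: MV sl2}, and the degeneration $S_\ell\cap\Gr_m\subset\overline{S_{\ell+2}\cap\Gr_m}$ for $-m\le\ell\le m-2$ is transparent: under the isomorphism $(\mathbb A^{(\ell+m)/2})^{\mathrm{pf}}\cong S_\ell\cap\Gr_m$, letting the leading coordinate $u_{(\ell-m)/2}$ tend to zero embeds the smaller affine space as the boundary of the larger one at level $\ell+2$.

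In case (i), where $\gamma_i=\gamma_{\nu_i}$ with $\langle\alpha,\nu_i\rangle=-1$ is replaced by $\gamma_{s_{\alpha^\vee}(\nu_i)}=\gamma_{\nu_i+\alpha^\vee}$, this rank-one degeneration is precisely what is needed at the $i$-th factor, after trivializing the twisted product using the $L^+G$-torsor structure. Cases (ii) and (iii), where a quasi-minuscule step $\gamma_i={}^2\gamma_\alpha$ or $\gamma_i=\gamma_{\pm\alpha^\vee}$ enters or exits, require the explicit description \eqref{E: MV quasiminII} of $(S_0\cap\Gr_\mu)^\alpha$: one must recognize that this component is exactly the ``neutral''~component picked out by the $\alpha^\vee$-degeneration, and that the projection $\pr_\mu\colon\mathring\Gr_\mu\to(\bar G/\bar P_\mu)^{\mathrm{pf}}$ together with the semi-infinite stratification identifies the closure correctly.

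The principal obstacle lies in cases (ii) and (iii), since there the MV cycle on one side sits over $\nu=0$ in a quasi-minuscule Schubert variety whose $\nu=0$ semi-infinite slice has several irreducible components indexed by simple coroots in $\Delta_\mu$, while the cycle on the other side sits over $\nu=\pm\alpha^\vee$ where the slice is irreducible. The subtle point is to verify that the unique labelled component $(S_0\cap\Gr_\mu)^\alpha$ (and not any other component of $S_0\cap\Gr_\mu$) arises as the degeneration limit; this follows from the explicit formula \eqref{E: MV quasiminII} applied after the rank-one reduction, which forces the limit to land in the locus whose closure in $\pr_\mu$-image is $\bigsqcup_{w\mu\preceq-\alpha}(\bar U w\bar P_\mu)^{\mathrm{pf}}$, i.e., exactly $(S_0\cap\Gr_\mu)^\alpha$.
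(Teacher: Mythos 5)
Your first reduction is fine and is exactly what the paper does: since $T(\gamma)=T(e_\alpha\gamma)$ and the root operator changes only the $i$-th elementary factor, the twisted-product decomposition \eqref{factorizationII} reduces the containment $S_\gamma\subset\overline{S_{e_\alpha(\gamma)}}$ to the single modified factor, i.e.\ to the case of an elementary Littelmann path with $\mu$ minuscule or quasi-minuscule. Where your argument breaks down is the verification of this elementary case. The projection $\theta_{M_\alpha}\colon S_\lambda\to S_{M_\alpha,\lambda}$ cannot be used to "reduce to $\PGL_2$": a containment of closures in $\Gr_G$ is not detected by the images under $\theta_{M_\alpha}$ (the map is very far from injective, and the closure downstairs does not control the closure upstairs), nor are the relevant cycles contained in a translate of $\Gr_{\PGL_2}$ --- in case (i) with $\mu$ minuscule, $S_{w\mu}\cap\Gr_\mu$ and $S_{w\mu+\alpha^\vee}\cap\Gr_\mu$ are Schubert cells in $\bar G/\bar P_\mu$ of dimensions $d$ and $d+1$ via $\pr_\mu$, and the needed statement is a Bruhat-order closure relation in $\bar G/\bar P_\mu$, not a one-dimensional computation. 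The paper avoids any such reduction: it simply reads the elementary containments off the explicit descriptions \eqref{E: MV min}, \eqref{E: MV quasiminI}, \eqref{E: MV quasiminII} (Bruhat closure relations for the minuscule/quasi-minuscule cells, and the explicit shape of $(S_0\cap\Gr_\mu)^\alpha$ for cases (ii)--(iii)).

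Moreover, even the rank-one computation you lean on is stated backwards. In \eqref{E: MV sl2} the coordinates of $S_{\ell+2}\cap\Gr_m$ are indexed by $\tfrac{\ell+2-m}{2},\dots,\ell+1$, so there is no coordinate $u_{(\ell-m)/2}$ to send to zero; and setting the \emph{lowest} coordinate $u_{(\ell+2-m)/2}$ to zero produces points of $S_{\ell+2}\cap\Gr_{m-2}$, which stay inside $S_{\ell+2}$ --- it does not exhibit $S_\ell\cap\Gr_m$ in the boundary. The actual degeneration from $S_{\ell+2}\cap\Gr_m$ into lower strata $S_{\ell'}$ arises when the leading coefficient is scaled to infinity (equivalently, by the $T$-action and rewriting $U(x)\varpi^{\ell+2}$ via the opposite root subgroup when $\mathrm{val}(x)$ is very negative), not to zero. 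So both the reduction step and the computational core it rests on need to be replaced; the shortest repair is to drop the $\theta_{M_\alpha}$ reduction entirely and verify the three elementary cases directly from \eqref{E: MV min}, \eqref{E: MV quasiminI}, \eqref{E: MV quasiminII}, as the paper does.
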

\begin{proof}It is enough to verify this for an elementary Littelmann path. But this is clear by the description of MV cycles in terms of minuscule and quasi-minuscule Schubert varieties given in \eqref{E: MV min}, \eqref{E: MV quasiminI}, and \eqref{E: MV quasiminII}.
\end{proof}

\begin{prop}\label{bijectionI}
Let $\ga$ be a dominant Littelmann path and $\ga(1)=\mu$. Let
$\bB_\ga(\la)=\{\delta\in\bB_\ga\mid \delta(1)=\la\}$.
There is a canonical bijection 
\[\phi_\ga:\bB_\ga(\la)\cong \MV_\mu(\la),\]
which is compatible with Littelmann's isomorphism theorem (Theorem \ref{T: Littelmann isom}), i.e., if $\ga_1$ and $\ga_2$ are two dominant Littelmann paths with $\ga_1(1)=\ga_2(1)=\mu$, then $\phi_{\ga_2}i_{\ga_1,\ga_2}=\phi_{\ga_1}$.
\end{prop}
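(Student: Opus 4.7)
The plan is to define $\phi_\gamma$ geometrically by $\phi_\gamma(\delta) := \overline{m_\mmu(S_\delta)}$, where $\mmu = T(\gamma) = T(\delta)$, and then prove bijectivity via a cardinality match combined with compatibility with the Littelmann root operators.

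First I verify that $\phi_\gamma$ takes values in $\MV_\mu(\lambda)$. For any $\delta \in \BB_\gamma$, one can write $\gamma = e_{\alpha_1}\cdots e_{\alpha_k}(\delta)$ for some sequence of simple roots, and iterating Lemma \ref{root operator mv} gives $S_\delta \subseteq \overline{S_\gamma}$. Since $\gamma$ is dominant with $\gamma(1) = \mu$, Proposition \ref{domin path} yields $S_\gamma \subseteq m_\mmu^{-1}(\Gr_\mu)$; the latter is closed in $\Gr_\mmu$ because $m_\mmu$ is perfectly proper. Combined with $\delta(1) = \lambda$ and Proposition \ref{MV vs path}, we deduce $S_\delta \subseteq m_\mmu^{-1}(S_\lambda \cap \Gr_\mu)$, so $m_\mmu(S_\delta)$ is a closed irreducible subset of $S_\lambda \cap \Gr_\mu$. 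For the dimension, Proposition \ref{MV vs path} gives $\dim S_\delta = \langle \rho, \lambda + |\mmu|\rangle$, while semismallness \eqref{semismall} bounds the generic fiber of $m_\mmu$ restricted to $S_\delta$ by $\langle \rho, |\mmu| - \mu\rangle$. Hence $\dim \overline{m_\mmu(S_\delta)} \geq \langle \rho, \lambda + \mu\rangle$, which by Theorem \ref{T:weight space interpretation} is the common dimension of all irreducible components of $S_\lambda \cap \Gr_\mu$, so $\phi_\gamma(\delta) \in \MV_\mu(\lambda)$.

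For bijectivity, note that both sides have cardinality $\dim V_\mu(\lambda)$: the left by virtue of $\BB_\gamma \cong \BB_\mu$ being the highest weight $\hat G$-crystal of weight $\mu$, and the right by Theorem \ref{T:weight space interpretation}. So it suffices to show injectivity. The cleanest route is to show that $\phi_\gamma$ intertwines the Littelmann root operators $e_\alpha, f_\alpha$ on $\BB_\gamma$ with geometric analogues on $\MV_\mu$. These geometric operators are defined via the Levi projection $\theta_M$ of \S\ref{SS:theta M}, where $M$ is the standard Levi subgroup attached to the simple root $\alpha$; since $M$ has semisimple rank one, one is reduced to the explicit rank-one descriptions \eqref{E: MV sl2}, \eqref{E: MV min}, \eqref{E: MV quasiminI}, \eqref{E: MV quasiminII} and to a verification on elementary Littelmann paths. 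Once this intertwining holds, injectivity follows automatically: $\phi_\gamma(\gamma)$ is the unique element of $\MV_\mu(\mu)$, and $\BB_\gamma$ is generated from $\gamma$ by the $f_\alpha$'s, so $\phi_\gamma$ becomes the unique isomorphism of highest weight crystals onto its image. Compatibility with Littelmann's isomorphism theorem (Theorem \ref{T: Littelmann isom}) is then immediate, since $\phi_\gamma(\delta) = \overline{m_\mmu(S_\delta)}$ depends only on the underlying path $\delta$, not on the choice of $\gamma$.

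The main obstacle will be verifying the intertwining property of $\phi_\gamma$ with the root operators. One must compare the combinatorial recipe defining $e_\alpha$ on paths, which acts at positions where $\langle \alpha, \gamma(\cdot) \rangle$ attains a particular minimum and reparametrizes the elementary segments crossing that minimum, with the corresponding geometric operation on MV cycles, reduced through $\theta_M$ to the $\PGL_2$-picture of \eqref{E: MV sl2}. The reduction to rank one and the resulting case-by-case verification for the elementary path types of Definition \ref{LP} form the substantive content of the argument.
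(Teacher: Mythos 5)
Your definition of $\phi_\ga$ agrees with the paper's, but two of your three main steps have real gaps. First, the well-definedness/dimension step: you claim $\dim\overline{m_\mmu(S_\delta)}\geq\langle\rho,\la+\mu\rangle$ because ``semismallness bounds the generic fiber of $m_\mmu$ restricted to $S_\delta$ by $\langle\rho,|\mmu|-\mu\rangle$''. Semismallness only bounds the fiber over a point of $\mathring{\Gr}_\nu$ by $\langle\rho,|\mmu|-\nu\rangle$; a priori $m_\mmu(S_\delta)$ could be contained in a smaller stratum $\Gr_\nu$ with $\nu\prec\mu$, where the fibers are allowed to be larger, and then $\dim S_\delta=\langle\rho,\la+|\mmu|\rangle$ is perfectly consistent with $\dim\overline{m_\mmu(S_\delta)}\leq\langle\rho,\la+\nu\rangle<\langle\rho,\la+\mu\rangle$. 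Ruling out this collapse is exactly the nontrivial point. The paper handles it by observing that each application of $e_\al$ strictly increases $\dim\overline{m_\mmu(S_\cdot)}$ (the images lie in disjoint semi-infinite orbits, and $S_\delta\subset\bar S_{e_\al(\delta)}$ by Lemma \ref{root operator mv}), and then squeezing along a chain of length $\langle2\rho,\mu\rangle$ from $w_0(\ga)$ up to $\ga$; nothing in your write-up plays this role.

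Second, your route to injectivity and to compatibility is not carried out, and as sketched it is close to circular within this paper. You delegate injectivity to geometric crystal operators on $\MV_\mu$ defined via $\theta_{M_\al}$ together with an intertwining property of $\phi_\ga$ — that is essentially the Braverman--Gaitsgory construction, which this section is explicitly set up to avoid and which, in the paper's logical order, is a \emph{consequence} of this proposition (Theorem \ref{T:Braverman-Gaitsgory}); you acknowledge the rank-one verification is the substantive content but do not do it. The paper instead proves injectivity directly: $\bar S_\ga$ is a Satake cycle $\Gr^{0,\bba}_{\mmu\mid\mu}$, the map $m_\mmu$ restricted to it over $\mathring{\Gr}_\mu$ is a fibration with fibers of dimension $\langle\rho,|\mmu|-\mu\rangle$, so $S_\delta$ is dense in the full preimage of $m_\mmu(S_\delta)\cap\mathring{\Gr}_\mu$ and distinct $\delta$'s give images meeting only in smaller dimension. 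Finally, the compatibility with Theorem \ref{T: Littelmann isom} is not ``immediate'': $i_{\ga_1,\ga_2}(\delta)$ is a genuinely different path with $T(i_{\ga_1,\ga_2}(\delta))=T(\ga_2)\neq T(\ga_1)$, so the two candidate MV cycles are images of cycles sitting in different convolution Grassmannians $\Gr_{T(\ga_1)}$ and $\Gr_{T(\ga_2)}$; proving they coincide is exactly what the paper's induction on $\ell(\mu)$ (with the minuscule/quasi-minuscule base case) accomplishes. It would also follow from your crystal-morphism plan, but only after the unproven intertwining step, and not for the reason you state.
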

\begin{proof}
We first construct the bijection as follows. Let $\mu_\bullet=T(\ga)$. 
By Proposition \ref{domin path}, $S_\ga\subset m_{\mmu}^{-1}(\Gr_{\mu})$. Therefore for any $\delta\in \bB_\ga$, $S_\delta\subset m_{\mmu}^{-1}(\Gr_{\mu})$ by Lemma \ref{root operator mv}. In other words,
\[m_{\mmu}(S_\delta)\subset S_{\delta(1)}\cap \Gr_{\mu}.\]
We claim that the closure $\overline{m_{\mmu}(S_{\delta})}$ is of dimension $\langle\rho,\delta(1)+\mu\rangle$ and therefore is the closure of an irreducible component of $S_{\delta(1)}\cap\Gr_{\mu}$. Indeed, if $e_\al(\delta)\neq 0$, then $m_\mmu(S_\delta)\subset m_\mmu(\bar{S}_{e_\al(\delta)})$ Lemma \ref{root operator mv}. But since $m_\mmu(S_{e_\al(\delta)})\subset S_{e_\al(\delta)(1)}=S_{\delta(1)+\al}$ does not intersect with $m_\mmu(S_{\delta})\subset S_{\delta(1)}$, we see that $\dim m_\mmu(\bar{S}_{e_\al(\delta)})\geq \dim m_{\mmu}(S_{\delta})+1$. But $\dim S_\ga=\langle2\rho,\mu\rangle$ and $\dim S_{w_0(\ga)}=0$, and for each $\delta \in \BB_\gamma$, there is a sequence of root operators in $\{e_\al\}$ sending $\delta$ to $\ga$ and a sequence of root operators in $\{f_\al\}$ sending $\delta$ to $w_0(\ga)$. The sum of the lengths of these two sequences is always $\langle2\rho,\mu\rangle$. This proves the claim.

The claim gives a well-defined map
\[\phi_\ga: \bB_\ga(\la)\rightarrow\MV_\mu(\la), \quad \delta\mapsto \overline{m_\mmu(S_\delta)}\cap S_\la.\]
The next claim is that this map is injective. Then $\phi_\ga$ is bijective because the two sets have the same cardinality: namely by \cite[Theorem 9.1]{Li} and Theorem \ref{T:weight space interpretation}, the cardinality of both sets is $\dim V_\mu(\la)$.

Note that the closure of $S_\ga$ is exactly an irreducible component of $m_\mmu^{-1}\Gr_{\mu}$, of dimension $\langle\rho,\mu+|\mmu|\rangle$. In other words, there is $\bba\in\bS_{\mmu\mid\mu}$ such that $\Gr_{\mmu\mid\mu}^{0,\bba}$ is the closure of $S_\ga$. We consider
\[m_\mmu:\Gr_{\mmu\mid\mu}^{0,\bba}|_{\mathring{\Gr}_\mu}\to\mathring{\Gr}_\mu,\]
which is a fibration, with fibers (non-canonically) isomorphic of each other, of dimension $\langle\rho,|\mmu|-\mu\rangle$. Now because the generic fibers of $m_\mmu:S_\delta\to m_\mmu(S_\delta)$ is of dimension $\langle\rho,|\mmu|-\mu\rangle$, $S_\delta\cap \Gr_{\mmu\mid\mu}^{0,\bba}|_{\mathring{\Gr}_\mu}$ is dense in $m_\mmu^{-1}(m_\mmu(S_\delta)\cap \mathring{\Gr}_\mu)$, by dimension reasons. This shows that if $\delta\neq \delta'$, $m_\mmu(S_\delta)\cap \mathring\Gr_\mu$ and $m_\mmu(S_{\delta'})\cap \mathring\Gr_\mu$ can only intersect at a subset of strict smaller dimension. Therefore $\phi_\ga$ is an injection.

Next, we show that the bijection $\phi_\ga$ is compatible with the Littelmann's isomorphism theorem.
Let us denote the irreducible component of $(S_\la\cap\Gr_{\mu})$ corresponding to $\phi_\ga(\delta)\in\MV_\mu(\la)$ by $(S_\la\cap \Gr_{\mu})^\delta$ instead of $(S_\la \cap\Gr_\mu)^{\phi_\ga(\delta)}$ for simplicity in the sequel.

Given $\mu\in\xcoch^+$, let $\ell(\mu)$ denote the minimal length among all dominant Littelmann paths $\ga$ with $\ga(1)=\mu$. We prove the compatibility by induction on $\ell(\mu)$. If $\ell(\mu)=0$. There is nothing to prove. If $\ell(\mu)=1$, let $\ga$ be an elementary Littelmann path with $\ga(1)=\mu$, and $\ga'$ be another dominant Littelmann path with $\ga'(1)=\mu$. Let $\delta\in \bB^\ga(\la)$ and $\delta'=i_{\ga,\ga'}(\delta)\in\bB^{\ga'}(\la)$. Let $\mmu=T(\ga')$. If $\mu$ is minuscule or quasi-minuscule and  $\la=w\mu$, then as each $S_{w\mu}\cap \Gr_\mu$ is irreducible, the proposition is true in this case. Therefore, we assume that $\mu$ is quasi-minuscule, and $\la=0$. By definition, the path $^{2}\ga_\al\in\bB_\ga(0)$ corresponds to the irreducible component $(S_0\cap\Gr_{\leq\mu})^\al$.

Let $\delta_\al\in \bB_{\ga'}$ be (the unique) path such that $\delta_\al(1)=\al$ is a simple coroot (which is conjugate to $\mu$ under $W$), then $m_\mmu:S_{\delta_\al}\to S_{\al}\cap\Gr_{\mu}$ is dominant. Under Littelmann's isomorphism theorem $f_\al(\delta_\al)$ maps to the path ${^2}\ga_\al\in\bB_{\ga}$. Therefore,
we need to show that the map $m_{\mmu}: \Gr_{\mmu}\to \Gr_{|\mmu|}$ restricts to a dominant map
\[m_\mmu:S_{f_{\al}(\delta_\al)}\to (S_0\cap \Gr_{\mu})^\al.\]
Note that $m_{\mmu}$ maps $S_{f_\al^2(\delta_\al)}$ to $(S_{-\al}\cap \Gr_{\mu})$, which should be in the closure $\overline{m_\mmu(S_{f_{\al}(\delta_\al)})}$. But $(S_0\cap \Gr_{\mu})^\al$ is the unique irreducible component whose closure contains $(S_{-\al}\cap \Gr_{\mu})$. Therefore, the compatibility holds in this case.

Now we assume that the case $\ell(\mu)\leq m$ has been proved. Let $\mu$ be dominant and $\ell(\mu)=m$. Let $\ga=\ga_1*\cdots*\ga_{m}$ be a dominant Littelmann path of length $m$ such that $\ga(1)=\mu$. Write $\mmu=(\mu_1,\ldots,\mu_m)=T(\ga)$. We can write $\ga$ as the concatenation of the path $\ga_{<m}*\ga_m$, where $\ga_{<m}=\ga_1*\cdots*\ga_{m-1}$. Let $\delta\in\bB_\ga$ and we also write $\delta=\delta_{<m}*\delta_m$, where $\delta_m$ is an elementary Littelmann path. Then it is easy to show that $\delta_{<m-1}\in \bB_{\ga_{<m}}$.

Let $\ga'$ be another dominant Littelmann path with $\ga'(1)=\mu$. We first assume that $\ga'$ is of the form
\[\ga'=\ga'_{<m}*\ga_m.\]
Let $\delta'\in\bB_{\ga'}$ be the path corresponding to $\delta$ under  Littelmann's isomorphism and similarly let $\delta'_{<m}\in \bB_{\ga'_{<m}}$ be the the path corresponding to $\delta_{<m}$ Then it is easy to see that $\delta'=\delta'_{<m}*\delta_m$.

Now, let $\mu_{<m}=\ga_{<m}(1)$ and $\la_{<m}=\delta_{<m}(1)$, and consider
\[\Gr_{\mu_{<m}}\tilde\times\Gr_{\mu_m}\to\Gr_{\mu_{<m}+\mu_m}.\]
By induction, $\overline{m_{\mmu}(S_{\delta_{<m}})}=\overline{m_{\mmu}(S_{\delta'_{<m}})}\subset \Gr_{\mu_{<m}}\cap\bar{S}_{\la_{<m}}$.
Then it is immediate to see that the bijections $\phi_{\ga}$ and $\phi_{\ga'}$ are compatible with Littelmann's isomorphism theorem.

Now let $\ga'$ be a general dominant Littelmann path with $\ga'(1)=\mu$. We can consider
\[\ga''=\ga'*\ga_m^**\ga_m.\]
We reduce to show that the bijections $\phi_{\ga'}$ and $\phi_{\ga''}$ are compatible with Littelmann's isomorphism theorem. But note that the isomorphism theorem in this case is given by $i_{\ga',\ga''}(\delta)= \delta*w_0(\ga^*_m)*w_0(\ga_m)\in\bB_{\ga''}$, where $w_0$ is the longest element in the Weyl group $W$, which acts on $\Pi$ via the construction of \cite[\S 8]{Li}. It is easy to the compatibility between $\phi_{\ga'}$ and $\phi_{\ga''}$ as well. \end{proof}
 
\begin{rmk}\label{mult version}
There is a multi-coweights version of Proposition \ref{bijectionI}, which generalizes Proposition \ref{MV vs path}.
Let $\mmu$ be a sequence of dominant coweights, and $\ga_\bullet$ a sequence of dominant Littelmann paths with $\ga_i(1)=\mu_i$. Let  
\[\bB_{\ga_\bullet}(\la)=\{\delta = \delta_1*\cdots*\delta_n\mid\delta(1)=\la,\ \delta_i\in\bB_{\ga_i}\},\]
and 
\[\bM\bV_{\mmu}(\la)=\bigsqcup_{\sum \la_i=\la} \Big(\prod_i\MV_{\mu_i}(\la_i) \Big).\]
Note that if $\ga'_\bullet$ be another set of dominant paths with $\ga'_i(1)=\mu_i$. Then Littelmann's isomorphism (Theorem~\ref{T: Littelmann isom}) induces $i_{\ga_\bullet,\ga'_\bullet}: \bB_{\ga_\bullet}(\la)\cong \bB_{\ga'_\bullet}(\la)$.
Now, there is a canonical bijection 
\[\phi_{\ga_\bullet}:\bB_{\ga_\bullet}(\la)\leftrightarrow\bM\bV_{\mmu}(\la).\]
such that $\phi_{\ga'_{\bullet}}i_{\ga_\bullet,\ga'_\bullet}=\phi_{\ga_\bullet}$. Namely, the irreducible component of $m^{-1}_\mmu(S_\la)$ corresponding to $\delta_1*\cdots*\delta_n$ is $(S_{\delta_1(1)}\cap\Gr_{\mu_1})^{\delta_1}\tilde\times\cdots\tilde\times (S_{\delta_n(1)}\cap\Gr_{\mu_n})^{\delta_n}$.
\end{rmk}

Let $\ga_\bullet$ and be a sequence dominant Littelmann paths with $\gamma_i(1) = \mu_i$. We denote
\[\bS_{\ga_\bullet\mid \mu}=\{\delta=\ga_1*\delta_2*\cdots*\delta_n\mid \delta(1)=\mu, \ \delta_i\in\bB_{\ga_i},\ \ga_1*\delta_2*\cdots*\delta_j \mbox{ dominant}, j=2,\ldots,n \}\]
Littelmann's decomposition formula says that
\[\bB_{\ga_1,\ga_2}=\bigsqcup_{\mu}\bigsqcup_{\ga\in \bS_{(\ga_1,\ga_2)\mid\mu}} \bB_{\ga}.\]

\begin{prop}\label{comb decom}
There is a canonical bijection
\[\bS_{(\ga_1,\ga_2)\mid \mu}\cong \bS_{(\mu_1,\mu_2)\mid \mu},\]
that induces the following diagram is commutative
\[\begin{CD}
\bS_{(\ga_1,\ga_2)\mid \mu}@>>> \bS_{(\mu_1,\mu_2)\mid \mu}\\
@VVV@VVV\\
\bB_{\ga_2}(\mu-\mu_1)@>\phi_{\ga_2}>>\MV_{\mu_2}(\mu-\mu_1)
\end{CD}
\]
where the left vertical arrow is the obvious one, and the right vertical arrow is from Lemma \ref{L:Sat vs MV}.
\end{prop}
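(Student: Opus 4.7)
The plan is to identify both $\bS_{(\ga_1,\ga_2)\mid\mu}$ and $\bS_{(\mu_1,\mu_2)\mid\mu}$ with the same subset of $\MV_{\mu_2}(\mu-\mu_1)$, using the multi-coweight analogue of Proposition~\ref{bijectionI} (Remark~\ref{mult version}) together with Lemma~\ref{L:Sat vs MV}. By Littelmann's decomposition formula, $\bS_{(\ga_1,\ga_2)\mid\mu}$ coincides with the set of dominant elements of $\bB_{\ga_1,\ga_2}(\mu)$; since $\ga_1$ is the unique dominant element of $\bB_{\ga_1}$, every such path factors uniquely as $\ga_1*\tilde\delta$ with $\tilde\delta\in\bB_{\ga_2}(\mu-\mu_1)$. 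Hence the left vertical map of the diagram, composed with $\phi_{\ga_2}$, gives an injection $\bS_{(\ga_1,\ga_2)\mid\mu}\hookrightarrow\MV_{\mu_2}(\mu-\mu_1)$. Since the right vertical arrow $i^{\MV}_{\mu_1}$ is also injective, it will suffice to show that the two images in $\MV_{\mu_2}(\mu-\mu_1)$ coincide; the bijection and commutativity of the diagram will then follow tautologically.

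The key reduction is the claim that, for $\tilde\delta\in\bB_{\ga_2}(\mu-\mu_1)$, the path $\ga_1*\tilde\delta$ is dominant if and only if $\phi_{\ga_2}(\tilde\delta)$ belongs to $\mathrm{Im}(i^{\MV}_{\mu_1})$. Setting $\mmu':=T(\ga_1)\sqcup T(\ga_2)$, Proposition~\ref{domin path} rephrases the dominance condition as $S_{\ga_1*\tilde\delta}\subseteq m_{\mmu'}^{-1}(\Gr_\mu)$. I would factor the convolution map as $m_{\mmu'}=m_{\mu_1,\mu_2}\circ(m_{T(\ga_1)}\tilde\times m_{T(\ga_2)})$ and combine this with the identification $S_{\ga_1*\tilde\delta}=S_{\ga_1}\tilde\times S_{\tilde\delta}$ from \eqref{factorizationII}. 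Each $m_{T(\ga_i)}$ maps the semi-infinite slice in question densely onto its associated MV cycle, by the argument of Proposition~\ref{bijectionI}; because $\dim V_{\mu_1}(\mu_1)=1$, the first of these MV cycles is all of $S_{\mu_1}\cap\Gr_{\mu_1}$. Therefore, the closure in $\Gr_{\mu_1,\mu_2}$ of $(m_{T(\ga_1)}\tilde\times m_{T(\ga_2)})(S_{\ga_1*\tilde\delta})$ equals
\[
(S_{\mu_1}\cap\Gr_{\mu_1})\,\tilde\times\,(S_{\mu-\mu_1}\cap\Gr_{\mu_2})^{\phi_{\ga_2}(\tilde\delta)},
\]
and since $\Gr^0_{(\mu_1,\mu_2)\mid\mu}$ is closed, dominance is equivalent to this closure being contained in $\Gr^0_{(\mu_1,\mu_2)\mid\mu}$.

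The above closure is an irreducible subvariety of the Satake dimension $\langle\rho,\mu_1+\mu_2+\mu\rangle$ meeting $S_{\mu_1,\mu-\mu_1}\cap\Gr_{\mu_1,\mu_2}$ in an open dense set. By the defining property of $i^{\MV}_{\mu_1}$ in Lemma~\ref{L:Sat vs MV}, it lies in $\Gr^0_{(\mu_1,\mu_2)\mid\mu}$ if and only if it coincides with $\Gr^{0,\bba}_{(\mu_1,\mu_2)\mid\mu}\cap S_{\mu_1,\mu-\mu_1}$ for a (necessarily unique) $\bba\in\bS_{(\mu_1,\mu_2)\mid\mu}$, which is in turn equivalent to $\phi_{\ga_2}(\tilde\delta)=i^{\MV}_{\mu_1}(\bba)$. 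This matches the two images and yields the bijection sending $\ga_1*\tilde\delta$ to the unique such $\bba$; commutativity of the diagram is built into this definition. The main technical point will be the passage between the two convolution levels---the finer minuscule decomposition $\mmu'$ used on the Littelmann side and the coarser $(\mu_1,\mu_2)$ used on the Satake side---but this factor-by-factor analysis via Proposition~\ref{bijectionI} and the twisted-product structure should dissolve the difficulty.
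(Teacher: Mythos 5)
Your proposal is correct, and it shares its geometric core with the paper's proof while settling bijectivity by a different mechanism. Your forward direction --- dominance of $\ga_1*\tilde\delta$ forces the twisted product $(S_{\mu_1}\cap\Gr_{\mu_1})\tilde\times(S_{\mu-\mu_1}\cap\Gr_{\mu_2})^{\phi_{\ga_2}(\tilde\delta)}$ into $m_{\mu_1,\mu_2}^{-1}(\Gr_\mu)$, hence (by the dimension bound of Proposition~\ref{SS:cycles of geometric Satake}(1)) to be dense in a Satake cycle, which via Lemma~\ref{L:Sat vs MV} pins down $i^{\MV}_{\mu_1}(\bba)=\phi_{\ga_2}(\tilde\delta)$ --- is exactly the paper's construction of the map and of the commutativity of the diagram. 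Where you genuinely diverge is surjectivity: you run the same equivalence backwards, using that $\Gr^0_{(\mu_1,\mu_2)\mid\mu}$ is closed together with the ``only if'' direction of Proposition~\ref{domin path} to conclude that every $\bba\in\bS_{(\mu_1,\mu_2)\mid\mu}$ comes from a dominant concatenation, whereas the paper gets bijectivity by a global counting/diagram argument, comparing Littelmann's decomposition of $\bB_{\ga_1,\ga_2}$ with the injection of Lemma~\ref{L: decomp MV into Satake} and letting the bijectivity of the top and right arrows of that auxiliary square force the rest. Your route is more local and self-contained (it never invokes Littelmann's decomposition formula or Lemma~\ref{L: decomp MV into Satake}); the paper's route avoids the converse direction of Proposition~\ref{domin path} and yields as a byproduct that the map of Lemma~\ref{L: decomp MV into Satake} is itself a bijection. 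Two small points to tighten in writing: the closure of the image of $S_{\ga_1*\tilde\delta}=S_{\ga_1}\tilde\times S_{\tilde\delta}$ under the partial convolution is the twisted product of the \emph{closures} of $S_{\mu_1}\cap\Gr_{\mu_1}$ and of the MV cycle (not the twisted product itself), which is harmless for your containment argument precisely because $\Gr^0_{(\mu_1,\mu_2)\mid\mu}$ is closed; and the identification $S_{\ga_1*\tilde\delta}=S_{\ga_1}\tilde\times S_{\tilde\delta}$ should be justified by grouping the elementary factors as in Proposition~\ref{MV vs path} and Remark~\ref{mult version}, as you indicate at the end.
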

\begin{proof}
We first construct a more general map $\bS_{\ga_\bullet\mid \mu}\to \bS_{\mmu\mid \mu}$ for a sequence of dominant Littelmann path, where $\mu_i=\ga_i(1)$.
By Proposition \ref{domin path}, for $\ga_1*\delta_2*\cdots*\delta_n\in \bS_{\ga_\bullet\mid\mu}$, 
$$(S_{\mu_1}\cap\Gr_{\mu_1})\tilde\times (S_{\delta_2(1)}\cap\Gr_{ \mu_2})^{\delta_2}\tilde\times\cdots\tilde\times (S_{\delta_n(1)}\cap\Gr_{ \mu_n})^{\delta_n}\subset m^{-1}_{\mmu}(\Gr_{ \mu})$$ 
(note that $S_{\mu_1}\cap\Gr_{ \mu_1}=(S_{\mu_1}\cap\Gr_{ \mu_1})^{\ga_1}$). As $(S_{\mu_1}\cap\Gr_{ \mu_1})\tilde\times (S_{\delta_2(1)}\cap\Gr_{ \mu_2})^{\delta_2}\tilde\times\cdots\tilde\times (S_{\delta_n(1)}\cap\Gr_{\mu_n})^{\delta_n}$ is irreducible of dimension $\langle\rho,|\mmu|+\mu \rangle$, it is an open subset of one irreducible component of $\Gr_{\mmu\mid\mu}^0$. This defines the required map. This map is clearly an injection. In addition, in the case $\ga_\bullet=(\ga_1,\ga_2)$, the diagram is commutative follows directly from the construction here and in Lemma \ref{L:Sat vs MV}. It remains to show that this map is a bijection.

Note that from the construction, the following diagram is also commutative
\[\begin{CD}
\bigsqcup_{\mu}\bigsqcup_{\ga\in \bS_{(\ga_1,\ga_2)\mid\mu}} \bB_{\ga}@>>>\bB_{\ga_1,\ga_2}\\
@V\phi_\ga VV@VV\phi_{(\ga_1,\ga_2)}V\\
\bigsqcup_{\mu}\bS_{(\mu_1,\mu_2)\mid\mu}\times \MV_\mu@>>>\MV_{\mu_1,\mu_2},
\end{CD}\]
where the bottom map is from Lemma \ref{L: decomp MV into Satake}, which is injective.
The top and the right vertical arrows are bijections. 
It follows that the left vertical arrow and the bottom arrows are also bijective. It in particular implies that each map $\bS_{(\ga_1,\ga_2)\mid \mu}\to \bS_{(\mu_1,\mu_2)\mid \mu}$ is a bijection for every $\mu$.
\end{proof}

Putting things together, we have proved the following theorem.
\begin{theorem}
[Braverman-Gaitsgory]
\label{T:Braverman-Gaitsgory}
The collection $\{\MV_\mu\}$ for all $\mu$ form a closed family of $\hat G$-crystals.\footnote{A priori, the $\hat G$-crystal structures on $\MV_\mu$ constructed in this way might be different from the one given in \cite{BG}. But it follows from \cite{BaGa} that they are the same.}
\end{theorem}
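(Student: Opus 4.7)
The plan is to assemble the $\hat G$-crystal structure on $\{\MV_\mu\}$ by transporting it from Littelmann's path model along the bijections already constructed in Propositions~\ref{bijectionI} and~\ref{comb decom}, then verifying the closedness via the decomposition formula.

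First, for each dominant coweight $\mu$, choose any dominant Littelmann path $\gamma$ with $\gamma(1)=\mu$. Since $\BB_\gamma$ is a highest weight $\hat G$-crystal generated by $\gamma$ via Littelmann's path theory, the weight-preserving bijection $\phi_\gamma\colon \BB_\gamma \to \sqcup_\lambda \MV_\mu(\lambda)$ of Proposition~\ref{bijectionI} may be used to declare a weight map and root operators $e_\alpha, f_\alpha$ on $\MV_\mu$. The designated highest weight element is $\bbb_\mu := \phi_\gamma(\gamma)$, which corresponds geometrically to the singleton irreducible component $\{\varpi^\mu\} = S_\mu \cap \Gr_\mu$. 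To see this structure is independent of the choice of $\gamma$, observe that Littelmann's isomorphism theorem (Theorem~\ref{T: Littelmann isom}) provides canonical crystal isomorphisms $i_{\gamma_1,\gamma_2}\colon \BB_{\gamma_1} \to \BB_{\gamma_2}$, while the compatibility $\phi_{\gamma_2}\circ i_{\gamma_1,\gamma_2} = \phi_{\gamma_1}$ of Proposition~\ref{bijectionI} guarantees that the transported crystal structures on $\MV_\mu$ coming from different choices of $\gamma$ agree. The crystal axioms then follow because they are preserved by the transport and hold on $\BB_\gamma$.

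For the closedness, given dominant coweights $\mu_1$ and $\mu_2$, pick dominant paths $\gamma_i$ with $\gamma_i(1)=\mu_i$; then the concatenation $\gamma_1*\gamma_2$ is a dominant path of value $\mu_1+\mu_2$. Littelmann's decomposition formula
\[
\BB_{\gamma_1,\gamma_2} \;=\; \bigsqcup_{\mu}\bigsqcup_{\gamma \in \SSS_{(\gamma_1,\gamma_2)\mid \mu}} \BB_{\gamma}
\]
embeds $\BB_{\gamma_1*\gamma_2} \hookrightarrow \BB_{\gamma_1,\gamma_2} = \BB_{\mu_1}\otimes \BB_{\mu_2}$ as a sub-crystal sending the generator $\gamma_1*\gamma_2$ to the tensor $\gamma_1\otimes\gamma_2$ of generators. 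The multi-coweight version of the bijection $\phi$ in Remark~\ref{mult version} identifies $\BB_{\gamma_1,\gamma_2}$ with $\MV_{\mu_1}\otimes\MV_{\mu_2}$ as weighted sets, and the commutative diagram in Proposition~\ref{comb decom} asserts exactly that this identification is compatible with the sub-crystals $\BB_\gamma$ appearing on the left. Transporting through $\phi$ therefore produces the required morphism $\MV_{\mu_1+\mu_2} \to \MV_{\mu_1}\otimes \MV_{\mu_2}$ of $\hat G$-crystals sending $\bbb_{\mu_1+\mu_2}$ to $\bbb_{\mu_1}\otimes \bbb_{\mu_2}$.

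The main subtle point is ensuring that the combinatorial Kashiwara--Joseph tensor product rules on the path side correspond faithfully, after transport, to the geometric ``decomposition into components of convolutions with MV cycles'' on the affine Grassmannian side. This is handled entirely by the commutative diagram of Proposition~\ref{comb decom}, whose proof goes through the interaction between the convolution morphism $m_{\mmu}$ and the semi-infinite orbits $S_\lambda$. Independence of the choice of $\gamma_i$ again reduces to Littelmann's isomorphism theorem via Proposition~\ref{bijectionI}. With these inputs in hand, all remaining verifications are formal, and the theorem follows.
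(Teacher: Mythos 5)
Your proposal is correct and matches the paper's own (implicit) argument: the theorem is stated there precisely as a summary of Proposition~\ref{bijectionI}, Remark~\ref{mult version}, and Proposition~\ref{comb decom}, i.e.\ transport of the path-model crystal structure along $\phi_\gamma$ (well-defined by Littelmann's isomorphism theorem) and closedness via the decomposition formula together with the commutative diagram of Proposition~\ref{comb decom}. The only nitpick is that the highest weight element corresponds to the unique irreducible component of $S_\mu\cap\Gr_\mu$ (an affine cell, not the single point $\varpi^\mu$), but this does not affect the argument.
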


\subsubsection{}
The main result we need in \S \ref{S: irr comp of ADLVs} is as follows.
We recall that $L\bG_a$ admits a filtration by closed subgroups $L^+\bG_a^{(n)}$, where $L^+\bG_a^{(n)}(R)=\varpi^nW_\mO(R)$. In particular, $L^+\bG_a^{(0)}=L^+\bG_a$. If $U_\al$ is a root subgroup of $G$, then by choosing $x_{\al}:\bG_a\simeq U_\al$ over $\mO$ and transport of structures, we obtain a filtration $LU_\al=\cup_{n \in \ZZ} L^+U_\al^{(n)}$, which is independent of the choice of the isomorphism $x_\al$.

\begin{prop}\label{P: geom prop of MV cycle}
For each simple root $\alpha$, let $M_\alpha$ denote the standard Levi with roots $\{ \pm \alpha\}$. 
Then the map
\[\prod \theta_{M_\al}: S_\la\to \prod_i S_{M_\al,\la}\]
defined in \S\ref{SS:theta M}
induces a \emph{dominant} map
\[(S_\la\cap \Gr_\mu)^\bbb\to \prod_{\al} S_{M_\al,\la}\cap \Gr_{M_\al,\la+ \varepsilon_\al(\bbb)\al^\vee},\]
and its fibers are geometrically connected.
\end{prop}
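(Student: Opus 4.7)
The strategy is to reduce to a rank-one statement for each $M_\al$ separately and then to prove the rank-one statement by induction along the Littelmann path model. For the reduction, note that the target is a product and the different root subgroups $U_\al$ control essentially independent ``coordinates'' in $LU$; so it suffices to prove, for each simple root $\al$ individually, that $\theta_{M_\al}$ sends $(S_\la\cap\Gr_\mu)^\bbb$ dominantly onto $S_{M_\al,\la}\cap\Gr_{M_\al,\la+\varepsilon_\al(\bbb)\al^\vee}$ with geometrically connected fibers. The target for the rank-one statement, computed via the $\PGL_2$-formula \eqref{E: MV sl2} applied to $M_\al^{\mathrm{der}}$ together with the central torus component, is an affine space of dimension $\varepsilon_\al(\bbb)$, hence irreducible; once each $\theta_{M_\al}$ is understood, density and fiber-connectedness of the product map follow by taking iterated fibers.

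For the rank-one statement, fix $\al$ and choose a dominant Littelmann path $\gamma$ with $\gamma(1)=\mu$ and $\delta\in\BB_\gamma(\la)$ corresponding to $\bbb$ under $\phi_\gamma$ (Proposition~\ref{bijectionI}), so that $(S_\la\cap\Gr_\mu)^\bbb=\overline{m_{\mmu}(S_\delta)}\cap S_\la$ with $\mmu=T(\gamma)$. I argue by induction on $\ell(\gamma)$. In the base case $\ell(\gamma)=1$, the weight $\mu$ is minuscule or quasi-minuscule, the relevant MV cycles are given explicitly in \eqref{E: MV min}--\eqref{E: MV quasiminII}, and a direct computation in $\Gr_{M_\al}$ using \eqref{E: MV sl2} verifies the claim: for a minuscule cycle $(S_{w\mu}\cap\Gr_\mu)\cong(\bar Uw\bar P_\mu/\bar P_\mu)^\pf$, the image under $\theta_{M_\al}$ is $(\bar U_\al w\bar P_{M_\al,\mu}/\bar P_{M_\al,\mu})^\pf$, and the relevant $\varepsilon_\al$ is read off from Weyl-group combinatorics; the quasi-minuscule case is analogous via the components labelled by $\al'\in\Delta_\mu$.

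For the inductive step, write $\gamma=\gamma'*\gamma_n$ with $\gamma_n$ elementary. Proposition~\ref{comb decom} identifies $\bbb$ with a pair $(\bbb',\bbb_n)\in\MV_{\mu'}\otimes\MV_{T(\gamma_n)}$ of weights $\la'$ and $\la-\la'$, and the crystal tensor-product formula \eqref{E: tensor epsilon and phi} gives
\[
\varepsilon_\al(\bbb)=\max\{\,\varepsilon_\al(\bbb'),\ \varepsilon_\al(\bbb_n)-\langle\al,\la'\rangle\,\}.
\]
Geometrically, $(S_\la\cap\Gr_\mu)^\bbb$ is the image under convolution of $(S_{\la'}\cap\Gr_{\mu'})^{\bbb'}\,\tilde\times\,(S_{\la-\la'}\cap\Gr_{T(\gamma_n)})^{\bbb_n}$, and the factorization \eqref{fact for} shows that $\theta_{M_\al}$ respects the convolution structure up to a twist by $\varpi^{-\la+\la'}(\cdot)\varpi^{\la-\la'}$ acting on $U_\al$. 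Applying the inductive hypothesis to each factor and convolving in $\Gr_{M_\al}$ via the rank-one calculation of \eqref{E: MV sl2}, the image lands exactly in the Schubert variety of weight $\la+\varepsilon_\al(\bbb)\al^\vee$, with the \emph{max} arising from the sign of $\langle\al,\la'\rangle+\varepsilon_\al(\bbb')-\varepsilon_\al(\bbb_n)$ inside the $\PGL_2$-type convolution; connectedness of fibers is preserved at each step since the fibers are iterated affine-space bundles.

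The main technical obstacle is matching the combinatorial ``max'' in the tensor-product formula with the geometric output of convolving two rank-one MV cycles twisted by $\varpi^{-\la+\la'}(\cdot)\varpi^{\la-\la'}$. The two sub-cases of the max correspond to whether the dominant contribution to the $\al$-string through $\bbb$ comes from the left or the right tensor factor, and sorting these out amounts to an elementary but somewhat fiddly calculation inside the $\PGL_2$ affine Grassmannian parametrized explicitly by \eqref{E: MV sl2}.
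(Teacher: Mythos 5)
Your reduction step is where the argument breaks. You claim that it suffices to prove, for each simple root $\al$ separately, that $\theta_{M_\al}$ maps $(S_\la\cap\Gr_\mu)^\bbb$ dominantly onto $S_{M_\al,\la}\cap\Gr_{M_\al,\la+\varepsilon_\al(\bbb)\al^\vee}$ with geometrically connected fibers, and that the statement for the product map $\prod_\al\theta_{M_\al}$ then "follows by taking iterated fibers." That implication is false in general: dominance and connectedness of each coordinate of a map into a product do not give dominance or connectedness for the joint map (the diagonal $\bA^1\to\bA^1\times\bA^1$ already fails), because nothing you prove controls the correlations between the different $\al$-components along the subvariety $(S_\la\cap\Gr_\mu)^\bbb\subset S_\la$. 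The joint statement is strictly stronger than the family of rank-one statements, and it is precisely the joint statement the proposition asserts. The paper avoids this by resolving $(S_\la\cap\Gr_\mu)^\bbb$ by a twisted product $(S_{\la_1}\cap\Gr_{\mu_1})^{\bbb_1}\tilde\times\cdots\tilde\times(S_{\la_n}\cap\Gr_{\mu_n})^{\bbb_n}$ with each $\mu_j$ minuscule or quasi-minuscule, proving the \emph{joint} statement for those base cases directly from the explicit descriptions \eqref{E: MV min}--\eqref{E: MV quasiminII} (note that the quasi-minuscule, $\la=0$ case is genuinely a statement about all $\al$ simultaneously), and only then reducing, via the resulting commutative square, to the surjectivity of the $\al$-factor of the bottom map, which \emph{is} an honest product over $\al$; per-$\al$ arguments are legitimate only at that stage.

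Two further points. First, your rank-one induction never actually establishes its key step: the claim that the twisted $\PGL_2$-convolution of the two factors hits exactly the cell of weight $\la+\varepsilon_\al(\bbb)\al^\vee$, with the "max" in \eqref{E: tensor epsilon and phi} matching the geometry, is deferred to a "fiddly calculation" — but that calculation is the substance of the proof. In the paper it is made precise in one stroke by iterating the tensor formula to get $\varepsilon_\al(\bbb)=\max_j\{\varepsilon_\al(\bbb_j)-\langle\la_1+\cdots+\la_{j-1},\al\rangle\}$ and checking that the corresponding union of intervals of Witt coordinates (via \eqref{E: MV sl2} and the twist in \eqref{fact for}) covers $[-\varepsilon_\al(\bbb),\langle\la,\al\rangle-1]\cap\bZ$, which gives surjectivity; connectedness is then immediate since the fibers are affine spaces. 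Second, a minor slip: the target $S_{M_\al,\la}\cap\Gr_{M_\al,\la+\varepsilon_\al(\bbb)\al^\vee}$ is an affine space of dimension $\phi_\al(\bbb)=\langle\al,\la\rangle+\varepsilon_\al(\bbb)$, not $\varepsilon_\al(\bbb)$; this does not affect irreducibility, but it signals that the $\PGL_2$ bookkeeping in your inductive step needs to be redone carefully.
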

\begin{rmk}
Note that this proposition refines \cite[Corollary 3.1]{BG}.
\end{rmk}
\begin{proof}
First, we assume that $\mu$ is minuscule. Then $\bbb\in \bM\bV_\mu(w\mu)$ for some $w \in W$. In this case, 
\[\varepsilon_\al(\bbb)=\max\{0,-\langle w\mu,\alpha\rangle\},\quad \phi_\al(\bbb)=\max\{0, \langle w\mu,\alpha\rangle\}.\]
Then the proposition follows from \eqref{E: MV min}.

Next, we assume that $\mu$ is quasi-minuscule. There are two cases. 
\begin{itemize}
\item
If $\bbb\in \bM\bV_\mu(w\mu)$ for some $w \in W$, then still
\[\varepsilon_\al(\bbb)=\max\{0,-\langle w\mu,\alpha\rangle\},\quad \phi_\al(\bbb)=\max\{0, \langle w\mu,\alpha\rangle\},\]
and the proposition follows from \eqref{E: MV quasiminI}.
\item
If $\bbb\in \bM\bV_\mu(0)$ corresponding to $\alpha\in\Delta_\mu$ under the canonical bijection
$\bM\bV_\mu(0) \cong \Delta_\mu$, then
\[\varepsilon_\al(\bbb)=\phi_\al(\bbb)=1, \quad \varepsilon_\beta(\bbb)=\phi_\beta(\bbb)=0, \ \beta\neq \al.\]
But it follows from \eqref{E: MV quasiminII} that $(S_0\cap\Gr_\mu)^{\al}$ maps surjectively onto $(S_{M_\al,0}\cap\Gr_{M_\al,\al^\vee})\times \prod_{\beta\neq \al}(S_{M_\beta,0}\cap\Gr_{M_\beta,0})$, with geometrically connected fibers.
\end{itemize}

In general, $(S_\la\cap\Gr_\mu)^\bbb$ is dominated by
\[(S_{\la_1}\cap\Gr_{\mu_1})^{\bbb_1}\tilde\times(S_{\la_2}\cap\Gr_{\mu_2})^{\bbb_2}\tilde\times\cdots\tilde\times(S_{\la_n}\cap\Gr_{\mu_n})^{\bbb_n}\to (S_\la\cap\Gr_\mu)^\bbb,\]
for some sequence of minimal elements $(\mu_1, \dots, \mu_n) \subset \Min$, some sequence of weights $\lambda_1, \dots, \lambda_n$, and some elements $\bbb_j\in \MV_{\mu_j}(\la_j)$ such that $\la=\sum \la_j$, and $\bbb=\bbb_1\otimes\cdots\otimes\bbb_n$. 

Consider the commutative diagram
\begin{equation}
\label{E:commutative diagram crystal resolution}
\xymatrix@C=10pt{
(S_{\la_1}\cap\Gr_{\mu_1})^{\bbb_1}\tilde\times(S_{\la_2}\cap\Gr_{\mu_2})^{\bbb_2}\tilde\times\cdots\tilde\times(S_{\la_n}\cap\Gr_{\mu_n})^{\bbb_n} \ar[d]
 \ar[r] &
 (S_\la\cap\Gr_\mu)^\bbb \ar[d]
\\
\displaystyle\prod_{\al}(S_{M_\al,\la_1}\cap \Gr_{M_\al, \la_1+\varepsilon_\al(\bbb_1)\al^\vee})\tilde\times\cdots\tilde\times (S_{M_\al,\la_n}\cap \Gr_{M_\al, \la_n+\varepsilon_\al(\bbb_n)\al^\vee}) \ar[r] & \displaystyle\prod_{\al} S_{M_\al,\la}\cap \Gr_{M_\al,\la+ \varepsilon_\al(\bbb)\al^\vee}.
}
\end{equation}
Using the minuscule and quasi-minuscule cases discussed above, the left vertical arrow is dominant and has geometric connected fibers.
Therefore, it is enough to show that for every $\al\in\Delta$, the $\alpha$-factor of the bottom horizontal map of \eqref{E:commutative diagram crystal resolution} is surjective and has geometrically connected fibers.
Indeed, the fibers of this map are all affine spaces, so they are geometrically connected. 

We now prove the surjectivity.
By \eqref{E: tensor epsilon and phi},
\[\varepsilon_\al(\bbb_1\otimes\cdots\otimes\bbb_n)=\max_j\{\varepsilon_\al(\bbb_j)-\langle\la_1+\cdots+\la_{j-1},\al\rangle \}.\]
Therefore, the union of the intervals
\[\bigcup_j\; \big[-\varepsilon_\al(\bbb_j)+\langle \la_1+\cdots+\la_{j-1},\al\rangle,\  \langle\la_1+\cdots+\la_j,\al\rangle-1\big]\]
contains $[-\varepsilon_\al(\bbb), \langle \la,\al\rangle-1]\cap\bZ$.
Now we conclude by using the description of $S_\ell\cap \Gr_{\mathrm{PGL}_2,m}$ in \eqref{E: MV sl2}.
\end{proof}

\subsection{The geometric Satake}
\subsubsection{The geometric Satake}
The following theorem of geometric Satake will play the central role in our construction.  When $\cha F >0$, this is a result of Lusztig, Drinfeld, Ginzburg, Mirkovi\'c-Vilonen (see \cite{MV}) and when $\cha F =0$, this is the main theorem of \cite{Z}. Let $\on{P}_{L^+G\otimes \bar k}(\Gr\otimes \bar k)$ be the Satake category: the category of $L^+G\otimes \bar k$-equivariant $\Ql$-coefficients perverse sheaves on $\Gr\otimes \bar k$. Let us recall the definition of this category in some detail, since similar constructions will repeatedly appear in \S \ref{SS:Perv(Hk)} and \S \ref{SS:Perv(Sloc)}.

We first recall that $\pi_0(\Gr\otimes \bar k)$ is canonically isomorphic to $\pi_1(G)$. Consider the decomposition $\Gr\otimes \bar k= \sqcup_{\bbzeta\in \pi_1(G)} \Gr_{\bbzeta}$ into connected components. For $\nu\in\xcoch(T)$, if its image under $\xcoch(T)\to \pi_1(G)$ is $\bbzeta$, we write $\nu\in \bbzeta$. Then 
\[\Gr_{\bbzeta}=\varinjlim_{\mu\in\bbzeta} \Gr_{\mu},\]
where $i_{\mu,\mu'}: \Gr_{\mu}\to \Gr_{\mu'}$ is a closed embedding if $\mu\preceq\mu'$.
Recall that if $m$ is $\mu$-large, the action of $L^+G$ on $\Gr_\mu$ factors through the action $L^mG$ (see Lemma \ref{L: m mu large}). 
Note that if $m'\geq m\geq \langle\mu,\al_h\rangle$, 
there is a canonical equivalence
\[\on{P}_{L^mG\otimes \bar k}(\Gr_\mu)\cong \on{P}_{L^{m'}G\otimes \bar k}(\Gr_\mu).\]
Then we define
\begin{equation}
\label{E: def of Sat cat}
\on{P}_{L^+G\otimes \bar k}(\Gr\otimes \bar k):= \bigoplus_{\bbzeta\in \pi_1(G)}\on{P}_{L^+G\otimes \bar k}(\Gr_{\bbzeta}),\quad \on{P}_{L^+G\otimes \bar k}(\Gr_{\bbzeta})=\varinjlim_{(\mu,m)}\on{P}_{L^mG\otimes \bar k}(\Gr_\mu),
\end{equation}
where the limit is taken over $\{(\mu,m)\mid \mu\in\bbzeta, m\geq \langle \mu, \al_h\rangle\}$, with the partial order given by $(\mu,m)\leq (\mu',m')$ if $\mu\preceq \mu'$ and $m\leq m'$, and the connecting functor given the fully faithful embedding
\[\on{P}_{L^mG\otimes \bar k}(\Gr_\mu)\cong \on{P}_{L^{m'}G\otimes \bar k}(\Gr_\mu)\xrightarrow{i_{\mu,\mu',*}} \on{P}_{L^{m'}G\otimes \bar k}(\Gr_{\mu'}).\]
Note that the limit is filtered (although not directly).

The above defined category has a natural monoidal structure, given by Lusztig's convolution product of sheaves. Let $\mA_1,\mA_2\in \on{P}_{L^+G\otimes \bar k}(\Gr\otimes \bar k)$, coming from $\Gr_{\mu_1}$ and $\Gr_{\mu_2}$ respectively, we denote by $\mA_1\tilde\boxtimes\mA_2$ the ``external twisted product'' of $\mA_1$ and $\mA_2$ on $\Gr_{\mu_1}\tilde\times\Gr_{\mu_2}$, whose pullback to $\Gr^{(n)}_{\mu_1}\times \Gr_{\mu_1}$ (for some $n$ that is $\mu_2$-large) is isomorphic to the external product of the pullback of $\mA_1$ to $\Gr^{(n)}_{\mu_1}$ and $\mA_2$ (this property uniquely characterizes $\mA_1\tilde\times\mA_2$ up to a unique isomorphism). 
For example, if $\mA_i=\IC_{\mu_i}$, then $\IC_{\mu_1}\tilde\boxtimes\IC_{\mu_2}$ is canonically isomorphic to the intersection cohomology sheaf of $\Gr_{\mu_1}\tilde\times\Gr_{\mu_2}$.
Similarly, one can define the $n$-folded ``external twisted product" $\mA_1\tilde\boxtimes\cdots\tilde\boxtimes\mA_n$ on $\Gr_{\mmu}$. If $\mA_i=\IC_{\mu_i}$, then this is just the intersection cohomology sheaf $\IC_{\mmu}$ of $\Gr_\mmu$.
Then the convolution product of $\mA_1$ and $\mA_2$ is defined as
\begin{equation}\label{E: Lus conv}
\mA_1\star\mA_2:=m_!(\mA_1\tilde\boxtimes\mA_2),
\end{equation}
where $m:\Gr_{\mu_1}\tilde\times\Gr_{\mu_2}\to \Gr$ is the convolution map (defined by \eqref{E: convolution map}).
A priori, this is an $(L^+G\otimes \bar k)$-equivariant $\ell$-adic complex on $\Gr\otimes \bar k$. But miraculously, it is a perverse sheaf.

This following theorem is usually referred as the geometric Satake isomorphism.

 \begin{thm}[Geometric Satake]
\label{T:geom Satake}
The Satake category is a semisimple tensor category.
The hypercohomology functor $\on{H}^*: \on{P}_{L^+G\otimes \bar k}(\Gr\otimes \bar k) \to \on{Vect}_{\Ql}$ lifts to a natural equivalence of tensor categories
\begin{equation}
\label{E:geometric Satake}
\on{H}^*= \on{H}^*(\Gr, -): \on{P}_{L^+G\otimes \bar k}(\Gr\otimes \bar k) \to \on{Rep}(\hat G_{\Ql}).
\end{equation}
In particular, it sends $\IC_{\mu_1}\star\cdots\star\IC_{\mu_n}=(m_{\mmu})_*\IC_{\mmu}$ to the tensor product of highest weight representations $V_\mmu$.
\end{thm}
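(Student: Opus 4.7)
The plan is to follow the standard Tannakian strategy: first put enough structure on the Satake category $\on{P}_{L^+G\otimes\bar k}(\Gr\otimes\bar k)$ to make it a neutral Tannakian category, then identify the resulting Tannakian group with $\hat G_{\Ql}$. I would organize the proof in four stages, adapted to handle both the equal and mixed characteristic cases uniformly (in mixed characteristic one works with the Witt vector affine Grassmannian, which is representable by the perfection of an ind-projective ind-scheme).

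First, I would establish semisimplicity and the monoidal structure. For semisimplicity, I would use the fact that each $\Gr_\mu$ is (the perfection of) a projective variety of expected dimension $\langle 2\rho,\mu\rangle$, and that $L^+G$-orbits on $\Gr$ are precisely the $\mathring{\Gr}_\mu$; since stabilizers are connected, $L^+G$-equivariance forces every simple perverse sheaf to be an $\IC_\mu$, and the decomposition theorem applied to the proper maps $m_{\mmu}$ shows pushforwards split. For the convolution $\star$ defined in \eqref{E: Lus conv}, the perversity of $\mA_1\star\mA_2$ follows from the semismallness of $m_\mmu$ recalled in \eqref{semismall}, together with a standard argument comparing the twisted external product with the convolution via the $L^+G$-torsor $\Gr^{(\infty)}_{\mu_1}\to \Gr_{\mu_1}$.

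Second, I would construct the commutativity constraint realizing $\star$ as a symmetric (not just monoidal) tensor structure. In equal characteristic, the Beilinson--Drinfeld Grassmannian $\Gr_{X^2}\to X^2$ gives a global degeneration whose generic fiber is $\Gr\times\Gr$ and whose special fiber is the convolution $\Gr\tilde\times\Gr$; nearby cycles produce a canonical identification of fusion and convolution, and the symmetry of the generic fiber descends to produce the commutativity constraint. In mixed characteristic, one replaces this with the $B_{\dR}^+$-affine Grassmannian or, following \cite{Z}, uses Witt vector Beilinson--Drinfeld Grassmannians over perfectoid covers to realize the same fusion product. In either setting the sign subtlety must be handled by modifying the naive constraint by $(-1)^{\langle 2\rho, \mu_1\rangle\langle 2\rho,\mu_2\rangle}$ on highest weight components, so that hypercohomology becomes a symmetric tensor functor.

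Third, I would show $\on{H}^*=\on{H}^*(\Gr,-)$ is a fiber functor and compute the grading by weights. Exactness and faithfulness on perverse sheaves follow from the stratification by $S_\la\cap\Gr_\mu$: Theorem~\ref{T:weight space interpretation} gives that $\on{H}^*_c(S_\la,\IC_\mu)$ is concentrated in one degree and has dimension $|\MV_\mu(\la)|$, and summing over $\la$ recovers $\on{H}^*(\Gr,\IC_\mu)$. This upgrades $\on{H}^*$ to a tensor functor valued in graded vector spaces, hence factors through $\Rep(\tilde G)$ for some affine group scheme $\tilde G$ with a cocharacter $2\rho\colon\bG_m\to \tilde G$. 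The weight grading, combined with the compatibility in Remark~\ref{mult version} showing that the convolution of MV cycles agrees with tensor product of crystals (Theorem~\ref{T:Braverman-Gaitsgory}), will pin down the weights of each $\on{H}^*(\IC_\mu)$ to be exactly those of $V_\mu$.

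Finally, to identify $\tilde G$ with $\hat G$, I would follow MV by extracting the root datum directly from the geometry. The rank--one calculation reduces to $G=\PGL_2$, where the explicit description \eqref{E: MV sl2} of $S_\ell\cap \Gr_m$ produces an $\on{SL}_2$ inside $\tilde G$ with simple coroot dual to $\al$; doing this for all simple roots yields simple roots and coroots of $\tilde G$ that match $(\xch,\Delta,\xcoch,\Delta^\vee)^\vee$, the root datum of $\hat G$. Together with the cocharacter lattice computation from $\pi_0(\Gr)=\pi_1(G)$ this gives $\tilde G\cong \hat G_{\Ql}$ canonically. The identification $\on{H}^*(\IC_\mu)\cong V_\mu$ then follows because both are highest weight representations with the same highest weight, and the statement for convolutions is immediate from $m_\mmu$ being a map of $L^+G$-equivariant perverse sheaves. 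The main obstacle I expect is constructing the commutativity constraint in mixed characteristic: there is no curve to move points along, so one must use either perfectoid Beilinson--Drinfeld Grassmannians or the deformation technique of \cite{Z} based on $W_\mO$-valued points, and verify that nearby cycles along the Witt vector direction commute with convolution—this is the technical core of the mixed characteristic case.
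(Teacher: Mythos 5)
The paper does not actually prove this theorem: it is quoted with attribution, to \cite{MV} in equal characteristic and to \cite{Z} in mixed characteristic, so there is no in-paper argument to compare against line by line. Your equal-characteristic outline is essentially the Mirkovi\'c--Vilonen route (convolution perverse by semismallness, fusion via the Beilinson--Drinfeld Grassmannian for the commutativity constraint with the parity sign correction, weight functors along $S_\la$ as fiber functor, rank-one reduction to extract the dual root datum), and that part is sound modulo one standard point you gloss over: semisimplicity of the category is not a consequence of the decomposition theorem alone; you need $\mathrm{Ext}^1$-vanishing between the $\IC_\mu$'s, which rests on the parity of IC stalks/costalks together with simple connectedness of the strata.

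The genuine gap is in the step you yourself flag as the technical core, the commutativity constraint when $\cha F=0$. In the framework of this paper (the Witt vector affine Grassmannian over perfect $\bar k$-algebras) there is no Beilinson--Drinfeld Grassmannian with moving points: one cannot ``move points along $\Spec W_\mO$'' in the scheme-theoretic setting, so the fusion/nearby-cycles construction you propose has no meaning here, and this is exactly the difficulty that \cite{Z} is designed to circumvent. Zhu's proof does not use fusion at all; instead he works with the hypercohomology/weight functors, reduces to the subcategory generated by minuscule and quasi-minuscule Schubert varieties (whose convolutions he analyzes explicitly, in the spirit of the $\Gr_\mu$-geometry recalled in \S3 of the present paper), and pins down the Tannakian group using the MV-cycle combinatorics together with the classical Satake isomorphism. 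Your alternative suggestion of the $B_{\mathrm{dR}}^+$-Grassmannian or ``perfectoid Beilinson--Drinfeld Grassmannians'' imports the later diamond/v-sheaf formalism of Scholze and Fargues--Scholze; that is a genuinely different ambient theory from the Witt vector $\Gr$ used here, and to make it prove the present statement you would additionally have to compare perverse sheaves on the two objects, which is not addressed in your sketch. As written, the mixed-characteristic half of your argument therefore does not go through; either adopt Zhu's fusion-free strategy or supply the nontrivial bridge to the $B_{\mathrm{dR}}^+$-setting.
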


\begin{rmk}
\label{R: birth of the dual group}
(1) Indeed, it is more canonical to \emph{define} the dual group $\hat G$ of $G$ as the Tannakian group of the Tannakian category $(\on{P}_{L^+G\otimes \bar k}(\Gr\otimes \bar k),\on{H}^*)$, and \emph{define} $V_\mu$ as $\on{H}^*(\Gr,\IC_\mu)$, equipped with the tautological action $\hat G=\Aut^\otimes(\on{H}^*)$. In the rest of the paper, we will take this point of view.

(2) As explained in \cite[\S 3]{Z16}, the Tannakian group is canonically equipped with a pinning $(\hat G, \hat B, \hat T, \hat X)$. We briefly recall the construction of $(\hat B, \hat T)$ and refer to \emph{loc. cit.} for more details.
The grading on the cohomology $\on{H}^*$ defines a cocharacter $\bG_m\to \hat G$, which is regular. Then its centralizer gives a maximal torus $\hat T$, and $\hat B\supset \hat T$ is the unique Borel in $\hat G$ such that this cocharacter is dominant with respect to this Borel.
\end{rmk}

From now on, we fix an inverse of the equivalence
\[\Sat: \on{Rep}(\hat G)\to \on{P}_{L^+G\otimes \bar k}(\Gr\otimes \bar k),\]
sending $V_\mu$ to $\IC_\mu$. 

We need the following consequence of the geometric Satake isomorphism.
\begin{cor}\label{C: rep to cor}
There is a canonical isomorphism
\begin{equation}\label{E: rep to cor}
\Sat: \Hom_{\hat G}(V_{\la_\bullet},V_\mmu)\cong  \on{Corr}_{\Gr_{\la_\bullet|\mmu}^0}((\Gr_{\la_\bullet},\IC),(\Gr_{\mmu},\IC)),
\end{equation}
such that the composition 
\[ \Hom_{\hat G}(V_{\kappa_\bullet},V_{\la_\bullet}) \otimes \Hom_{\hat G}(V_{\la_\bullet}, V_\mmu)\to \Hom_{\hat G}(V_{\kappa_\bullet}, V_{\mu_\bullet})\]
corresponding to
\[\begin{split}
\on{Corr}_{\Gr_{\kappa_\bullet|\lambda_\bullet}^0}((\Gr_{\kappa_\bullet},\IC),(\Gr_{\lambda_\bullet},\IC))\otimes \on{Corr}_{\Gr_{\lambda_\bullet|\mu_\bullet}^0}((\Gr_{\lambda_\bullet},\IC),(\Gr_{\mu_\bullet},\IC))\\
\to \on{Corr}_{\Gr_{\kappa_\bullet|\mu_\bullet}^0}((\Gr_{\kappa_\bullet},\IC),(\Gr_{\mu_\bullet},\IC))
\end{split}\]
is obtained as the pushforward (in the sense of \ref{ASS:pushforward correspondence}) of the composition of the cohomological correspondences along the perfectly proper morphism
\[\on{Comp}: \Gr_{\kappa_\bullet\mid\lambda_\bullet}^0\times_{\Gr_{\lambda_\bullet}^0}\Gr_{\lambda_\bullet\mid\mu_\bullet}^0\to \Gr_{\kappa_\bullet\mid \mu_\bullet}^0.\]
\end{cor}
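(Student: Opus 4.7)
The plan is to obtain the corollary as a direct combination of the geometric Satake equivalence (Theorem~\ref{T:geom Satake}) and Proposition~\ref{SS:cycles of geometric Satake}. First, applying $\Sat$ to a $\hat G$-equivariant map $V_{\la_\bullet}\to V_\mmu$ produces, by fully-faithfulness of the equivalence, a morphism in $\on{P}(\Gr)$
\[
(m_{\la_\bullet})_*\IC_{\la_\bullet}=\Sat(V_{\la_\bullet})\longrightarrow\Sat(V_\mmu)=(m_\mmu)_*\IC_\mmu,
\]
where we use that $\Sat$ sends tensor products of highest-weight representations to convolution products, and that convolution is identified with pushforward along the convolution map. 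This defines an isomorphism
\[
\Hom_{\hat G}(V_{\la_\bullet},V_\mmu)\xrightarrow{\ \cong\ }\Hom_{\on{P}(\Gr)}\!\bigl((m_{\la_\bullet})_*\IC_{\la_\bullet},(m_\mmu)_*\IC_\mmu\bigr).
\]

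Second, I would invoke Proposition~\ref{SS:cycles of geometric Satake}(2), which provides the canonical isomorphism
\[
\Hom_{\on{P}(\Gr)}\!\bigl((m_{\la_\bullet})_*\IC_{\la_\bullet},(m_\mmu)_*\IC_\mmu\bigr)\xrightarrow{\ \cong\ }\on{Corr}_{\Gr_{\la_\bullet|\mmu}^0}\!\bigl((\Gr_{\la_\bullet},\IC),(\Gr_\mmu,\IC)\bigr)
\]
obtained via adjunction and proper base change along the Cartesian square associated with the convolution maps $m_{\la_\bullet}$, $m_\mmu$. Composing the two displayed isomorphisms yields the map $\Sat$ of the corollary.

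Finally, for the compatibility with composition: on the representation-theoretic side composition is just composition of $\hat G$-equivariant maps, which under the Satake equivalence matches composition of morphisms between the corresponding perverse sheaves. Proposition~\ref{SS:cycles of geometric Satake}(2) already asserts that this composition of $\Hom$'s in $\on{P}(\Gr)$ corresponds to the pushforward of the composition of cohomological correspondences along the perfectly proper morphism
\[
\on{Comp}:\Gr^0_{\kappa_\bullet\mid\la_\bullet}\times_{\Gr_{\la_\bullet}}\Gr^0_{\la_\bullet\mid\mmu}\longrightarrow \Gr^0_{\kappa_\bullet\mid\mmu}.
\]
Chaining these compatibilities together gives the asserted commutativity.

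There is essentially no obstacle: the entire statement follows formally once Theorem~\ref{T:geom Satake} and Proposition~\ref{SS:cycles of geometric Satake} are in hand. The only thing requiring a line of justification is that the $\Sat$ functor is monoidal in the strong sense that $\Sat(V_{\la_\bullet})=(m_{\la_\bullet})_*\IC_{\la_\bullet}$ together with its factorization through the twisted product $\IC_{\la_\bullet}$; this is precisely the content of the last sentence of Theorem~\ref{T:geom Satake}, combined with the definition \eqref{E: Lus conv} of the convolution product.
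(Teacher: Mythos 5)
Your proposal is correct and is precisely the argument the paper intends: the corollary is stated as an immediate consequence of Theorem~\ref{T:geom Satake} (full faithfulness of the Satake equivalence, identifying $\Hom_{\hat G}(V_{\la_\bullet},V_\mmu)$ with $\Hom_{\on{P}(\Gr)}((m_{\la_\bullet})_*\IC,(m_\mmu)_*\IC)$) combined with Proposition~\ref{SS:cycles of geometric Satake}(2), which supplies both the identification with cohomological correspondences and the compatibility of composition with pushforward along $\on{Comp}$. Nothing further is needed.
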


\subsubsection{Galois-equivariant geometric Satake}
As explained in \cite{RiZh} and \cite[\S 5.5]{Z16}, the Galois group $\Gal(\bar k/k)$, acts on $\on{P}_{L^+G\otimes \bar k}(\Gr\otimes k)$ by tensor automorphisms: $\ga\in\Gal(\bar k/k)$ induces $\ga: \Gr\otimes \bar k\cong\Gr\otimes \bar k$, and the pullback $\ga^*: \on{P}_{L^+G\otimes \bar k}(\Gr\otimes \bar k)\to \on{P}_{L^+G\otimes \bar k}(\Gr\otimes \bar k)$ is a tensor functor. In addition, there is a canonical isomorphism of tensor functors $\al_\ga: \on{H}^*\circ\ga^*\cong \on{H}^*$.
By general nonsense, this defines an action of $\Gal(\bar k/k)$ on $\hat{G}$: for every $g\in \hat G$, regarded as an automorphism of the tensor functor $\on{H}^*$, $\ga(g)$ is the tensor automorphism of $\on{H}^*$ defined as
\begin{equation*}\label{gaaction}
\on{H}^*\stackrel{\al_\ga}{\longrightarrow}\on{H}^*\circ\ga^*\stackrel{g\,\circ\, \id}{\ \xrightarrow{\hspace*{1cm}}\ } \on{H}^*\circ\ga^*\stackrel{\al^{-1}_\ga}{\longrightarrow}\on{H}^*.
\end{equation*}
Note that since the Galois action preserves the grading, this action preserves $(\hat G,\hat B,\hat T)$. 
\begin{rmk}
However, as explained in \emph{loc. cit.}, this action does not preserve $\hat X$. In our case where $k$ is a finite field, this is a minor issue and can be fixed by choosing a half Tate twist $\Ql(1/2)$. We refer \emph{loc. cit.} for detailed discussion.
\end{rmk}
\begin{notation}
\label{N:sigma W}
For $\ga\in\Gal(\bar k/k)$ and $V$ a representation of $\hat G$, let $\ga V$ be the representation of $\hat G$ obtained as $\hat G\stackrel{\ga^{-1}}{\to}\hat G\to \GL(V)$, called the $\ga$-twisted of $V$. In particular, for a dominant coweight $\mu$ of $G$, $\ga V_\mu \cong V_{\ga(\mu)}$.
\end{notation}
It follows from the above discussion that
\begin{equation}
\label{E: Gal equiv Sat}
\ga^*\Sat(V)\cong \Sat(\ga V)
\end{equation}
for every $V\in \on{Rep}(\hat G)$.

Let $\on{P}^0_{L^+G}(\Gr)\subset \on{P}_{L^+G}(\Gr)$ denote the full subcategory of semisimple sheaves of pure of weight zero. For example, by choosing a half Tate twist $\Ql(1/2)$, the normalized intersection cohomology sheaf $\IC_\mu^{\rm N}$ on $\Gr_\mu$ is an object in $\on{P}_{L^+G}^0(\Gr)$, where $\IC_\mu^{\rm N}|_{\mathring{\Gr}_\mu}=\Ql[2\langle\rho,\mu\rangle](\langle\rho,\mu\rangle)$. By Corollary \ref{SS:cycles of geometric Satake}(3), and an argument as in \cite[Lemma 5.5.14 (1) ]{Z16}, $\on{P}^0_{L^+G}(\Gr)$ is a monoidal subcategory of $\on{P}_{L^+G}(\Gr)$ under the convolution product.

Let ${^L}G= \hat{G}\rtimes\Gal(\bar k/k)$ denote the usual (unramified) local Langlands dual group. We may regard ${^L}G$ as a pro-algebraic group and therefore it makes sense to define the category $\on{Rep}({^L}G)$ of algebraic representations of ${^L}G$. In particular, $\on{Rep}(\Gal(\bar k/k))$ is the category of representations of finite quotients of $\Gal(\bar k/k)$.  
\begin{thm}
\label{T: geom Sat upgraded}
The category $\on{P}_{L^+G}^0(\Gr)$ is a tensor subcategory of $\on{P}_{L^+G}(\Gr)$. By choosing a half Tate twist $\Ql(1/2)$, the cohomology functor
\[\bigoplus_i \on{H}^i(\Gr, -)(\tfrac{i}{2}): \on{P}_{L^+G}^0(\Gr)\to \on{Rep}(\Gal(\bar k/k)),\]
lifts to an equivalence of categories
\[\on{P}_{L^+G}^0(\Gr)\cong \on{Rep}({^L}G),\]
such that the pullback functor $\on{P}_{L^+G}^0(\Gr)\to\on{P}_{L^+G\otimes \bar k}(\Gr\otimes \bar k)$ corresponds to the restriction functor $\on{Rep}({^L}G)\to\on{Rep}(\hat G)$.
\end{thm}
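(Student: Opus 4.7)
The plan is to realize $\on{P}^0_{L^+G}(\Gr)$ as the category of Galois-descended semisimple pure objects inside $\on{P}_{L^+G\otimes \bar k}(\Gr\otimes \bar k)$, then to invoke Tannakian duality in a $\Gal(\bar k/k)$-equivariant setting, combining the geometric Satake equivalence over $\bar k$ (Theorem~\ref{T:geom Satake}) with the Galois action on $\hat G$ from \eqref{gaaction} and the compatibility \eqref{E: Gal equiv Sat}.

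First I would verify that $\on{P}^0_{L^+G}(\Gr)$ is closed under convolution. For $\mA_1, \mA_2$ pure of weight zero, the external twisted product $\mA_1 \tilde\boxtimes \mA_2$ on the convolution Grassmannian is pure of weight zero, which can be checked after pulling back to $\Gr_\mmu^{(n)}$ for $n$ sufficiently $\mmu$-large, where the twisted product becomes an ordinary external product. By the semismallness of the convolution map $m_\mmu$ used in \eqref{semismall}, the convolution $\mA_1 \star \mA_2 = (m_\mmu)_!(\mA_1 \tilde\boxtimes \mA_2)$ remains perverse; by Deligne's purity theorem for proper pushforward it remains pure of weight zero; and semisimplicity then follows from the decomposition theorem.

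Next I would construct the tensor functor $F \colon \on{P}^0_{L^+G}(\Gr) \to \on{Rep}({^L}G)$ sending $\mA$ to $\bigoplus_i \on{H}^i(\Gr\otimes \bar k, \mA|_{\Gr\otimes \bar k})(i/2)$. The half Tate twist, which requires the fixed $\sqrt{q}$, makes the arithmetic Frobenius action genuinely algebraic rather than merely a Weil structure. The $\hat G$-action on $F(\mA)$ comes from geometric Satake over $\bar k$, the Galois action comes from Frobenius acting on $\Gr\otimes \bar k$, and the two together assemble into a ${^L}G$-action precisely because the Galois action on $\hat G$ in \eqref{gaaction} was defined to make this intertwining work. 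Compatibility of $F$ with tensor structures then follows from \eqref{E: Gal equiv Sat}, K\"unneth, and proper base change.

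The equivalence itself then splits into full faithfulness and essential surjectivity. Full faithfulness reduces via pullback to the identification of $\Hom$-groups in $\on{P}^0_{L^+G}(\Gr)$ with $\Gal(\bar k/k)$-invariants of the corresponding $\Hom$-groups in $\on{P}_{L^+G\otimes \bar k}(\Gr\otimes \bar k)$ (valid on pure weight-zero semisimple objects by Deligne--Weil II); by geometric Satake over $\bar k$ these are $\Hom_{\hat G}$-spaces, whose Galois invariants are precisely $\Hom_{{^L}G}$-spaces. For essential surjectivity I would work in reverse: restriction of a ${^L}G$-representation to $\hat G$ yields an object $\mA_{\bar k}$ by geometric Satake, and the residual outer Galois action translates into a descent datum, producing by full faithfulness the required object in $\on{P}^0_{L^+G}(\Gr)$. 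The main obstacle will be the sign mismatch between the commutativity constraint on the convolution product (which a priori involves the parity of the perverse degree) and the symmetric tensor structure on $\on{Rep}({^L}G)$; reconciling these requires absorbing a sign into the half Tate twist, which is precisely what forces the normalization $(i/2)$ above and the choice of $\sqrt{q}$. Once this normalization is arranged, the remaining verifications are formal.
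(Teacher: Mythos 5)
Your route --- stability of $\on{P}^0_{L^+G}(\Gr)$ under convolution, the fiber functor with the half Tate twist, full faithfulness via $\Hom_k=\Hom_{\bar k}^{\mathrm{Frob}}$ together with $(\Hom_{\hat G})^{\Gal(\bar k/k)}=\Hom_{{^L}G}$, and essential surjectivity by Galois descent --- is exactly the one in the references the paper cites for this theorem (the paper gives no argument beyond the citation), so there is no difference of method to report. There is, however, one step that does not work as written: the claim that semisimplicity of $\mA_1\star\mA_2$ ``follows from the decomposition theorem.'' Over the finite base field $k$ the decomposition theorem only yields \emph{geometric} semisimplicity, i.e.\ semisimplicity after pullback to $\Gr\otimes\bar k$; a pure weight-zero perverse sheaf over $k$ need not be semisimple (already on $\Spec k$ a unipotent Frobenius action on a two-dimensional weight-zero space gives a pure, non-semisimple object). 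Since $\on{P}^0_{L^+G}(\Gr)$ is by definition the subcategory of objects semisimple \emph{over $k$}, you must in addition show that Frobenius acts semisimply on each multiplicity space $\Hom(\IC_\la,\IC_{\mu}\star\IC_{\nu})$. This is precisely what Proposition~\ref{SS:cycles of geometric Satake}(3) provides: that $\Hom$-space is the top-degree Borel--Moore homology of the Satake correspondence $\Gr^0_{\la\mid(\mu,\nu)}$, with basis the fundamental classes of its top-dimensional irreducible components; Frobenius permutes these classes (the $q$-power rescaling being absorbed by the normalization and the chosen half Tate twist), hence acts through a finite group, hence semisimply and with weight zero. With this ingredient inserted your first step is complete; it is also exactly what the paper invokes, just before the theorem, to assert that $\on{P}^0_{L^+G}(\Gr)$ is a monoidal subcategory.

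A smaller point of framing: the sign in the commutativity constraint is already resolved in the geometric Satake equivalence over $\bar k$ (Theorem~\ref{T:geom Satake}), where the constraint is modified using the parity decomposition of $\Gr$; it is not what the half Tate twist is for in the present statement. The twist (and the choice of $\sqrt q$) serves to make the Frobenius action on the fiber functor pure of weight zero, so that the resulting Galois action on $\hat G$ preserves the pinning and assembles with the $\hat G$-action into an algebraic ${^L}G$-action; see the remark following \eqref{E: Gal equiv Sat}. Granting that, your full faithfulness argument (vanishing of $\Hom$'s in negative perverse degrees gives $\Hom_k=\Hom_{\bar k}^{\mathrm{Frob}}$) and your descent argument for essential surjectivity (the Galois action on any given object factors through a finite quotient, so descent along a finite extension suffices) go through as sketched.
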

\begin{proof}
See \cite{RiZh} or \cite[\S 5.5]{Z16}.
\end{proof}

\subsection{From geometric to classic Satake isomorphism}
\label{S: geom v.s. classic Sat}

We need to recall how to deduce the classical Satake isomorphism from the geometric Satake isomorphism via the sheaf-function dictionary. In this subsection, we fix a square root $q^{1/2} \in \overline \QQ_\ell$ and hence accordingly  a half Tate twist $\Ql(\frac{1}{2})$ on which the geometric $q$-Frobenius acts by multiplying with $q^{-1/2}$.

\subsubsection{Review of the classical Satake isomorphism}
There are already some good references for split groups, for example \cite{Gro}. However, as we need the unramified groups in an essential way, we include a more detailed discussion. 

Let us denote by $K=G(\mO)\subset G(F)$. We choose an embedding $B\subset G$ but the Satake isomorphism will be independent of the choice. Recall that $T=B/U$ is the abstract Cartan. Let $2\rho$ be the half sum of positive roots of $G$ with respect to $B$. Let $S\subset T$ denote the maximal split subtorus. For a coweight $\la \in \XX_\bullet(T)$, let $d_\la=\langle \rho,\la \rangle\in \frac{1}{2}\bZ$. Recall we denote by $\sigma$ the $q$-power Frobenius element, so that $\xcoch(S)=\xcoch(T)^\sigma$. Let $W_0=N_G(S)/Z_G(S)$ denote the relative Weyl group of $G$, which is a subgroup of $W=N_G(T)/T$

Let $\delta_B: T(F)\to\bZ[q,q^{-1}]$ denote the modular character, i.e. $\delta_B(t)=|\det(\Ad_t | \Lie U)|$. Explicitly, $\delta_B|_{T(\mO)}=1$ and $\delta_B(\varpi^\la)=q^{\langle 2\rho,\la\rangle}$ for $\la\in \xcoch(S)$. With the fixed square root $q^{1/2}$, let $\delta^{1/2}_B: T(F)\to\bZ[q^{1/2},q^{-1/2}]^\times$ denote the square root of $\delta_B$, i.e. $\delta^{1/2}_B(\varpi^\la)=(q^{1/2})^{\langle 2\rho,\la\rangle}=: q^{d_\la}$.

We normalize the Haar measure on $G(F)$ (and on $U(F)$) so that the volume of $K$ (and $U(\mO)$) is one.
Let $H_G$ denote the spherical Hecke algebra of $G$ with respect to $K$, i.e. the space of $K$-bi-invariant, compactly supported, locally constant $\bZ$-valued functions on $G(F)$, with the multiplication given by the convolution
\begin{equation}
\label{E: convolution for functions}
(f*g)(x)=\int_{G(F)}f(xy^{-1})g(y)dy.
\end{equation}
If $G=T$ is a torus, then 
$H_T$ is canonically isomorphic to the group algebra $\bZ[\xcoch(T)^\sigma]$ of the lattice $\xcoch(T)^\sigma$, by sending the characteristic function of $\varpi^\la T(\mO)$ to $e^\la$ for $\la\in\xcoch(T)^\sigma$.
Recall the Satake transform
\[\on{CT}: H_G\otimes \bZ[q^{1/2},q^{-1/2}]\to H_T\otimes\bZ[q^{1/2},q^{-1/2}],\quad f\mapsto \on{CT}(f), \] \[\on{CT}(f)(t):= \delta_B^{1/2}(t)\int_{U(F)}f(tu)du.\]
Then the classical Satake isomorphism says that the Satake transform is injective, and induces a canonical isomorphism 
\begin{equation}
\label{E: classical Sat}
\on{CT}: H_G\otimes\bZ[q^{1/2},q^{-1/2}]\cong H_T^{W_0}\otimes\bZ[q^{1/2},q^{-1/2}]\cong \bZ[q^{1/2},q^{-1/2}][\xcoch(T)^\sigma]^{W_0}.
\end{equation}
See \cite[\S 4.2]{cartier}. Note that in \emph{loc. cit.}, the Satake isomorphism was stated as an isomorphism of $\bC$-algebras. But a closer look at the proof shows that it is an isomorphism over $\bZ[q^{\pm 1/2}]$.

\subsubsection{Reinterpretation in terms of the dual group}
Let us reinterpret the classical Satake isomorphism in terms of the dual group. As we shall see, in fact there are two interpretations.

Recall that $\Gal(\bar k/k)$ acts on $(\hat G,\hat B,\hat T,\hat X)$, and we consider the Langlands dual group ${^L}G:=\hat G\rtimes \Gal(\bar k/k)$ as a pro-algebraic group.
Let $\phi=\sigma^{-1}$ denote the geometric $q$-Frobenius element. 
Note that the inclusion $S\subset T$ induces $\hat T\to\hat S$ identifying $\hat S$ as the quotient $\hat T/(1-\sigma)\hat T=(\hat{T}\times \phi)/\on{Int}\hat{T}$. In addition, $\xcoch(T)^\sigma$  the character group of $\hat S$.
Recall that the following natural maps are bijective (cf. Gantmacher \cite[Theorem 14]{gantmacher} and \cite[Lemmas~6.4 and 6.5]{borel})
\begin{equation*}\label{parameter}
\hat{S}/W_0\stackrel{\simeq}{\leftarrow} (\hat{T}\phi)/\on{Int} N_0\stackrel{\simeq}{\to } (\hat{G}\phi)_{ss}/\on{Int}\hat G. 
\end{equation*}
Therefore, by restricting a function on $\hat G\phi$ to $\hat T\phi$, there is an isomorphism
\[\Gamma([\hat G\phi/\hat G],\mO)=\mO_{\hat G}^{c_\phi(\hat G)}=\mO_{\hat S}^{W_0}=\Ql[\xcoch(T)^\sigma]^{W_0}.\]
Combining with \eqref{E: classical Sat}, there is a unique isomorphism 
\begin{equation}
\label{spectral Sat}
\Sat^{cl}:=\Sat^{cl}_G: \Gamma([\hat G\phi/\hat G],\mO)\cong H_G\otimes\Ql,
\end{equation} called the \emph{Satake isomorphism}, making the following diagram commute
\begin{equation}
\label{E: comp G and T}
\xymatrix{
\Gamma([\hat G\phi/\hat G],\mO)\ar[rr]^-\cong_-{\Sat^{cl}_G}\ar[d]_\Res && H_G\otimes\Ql  \ar[d]_{\on{CT}}\\
\Gamma([\hat T\phi/\hat T],\mO) \ar[rr]^-\cong_-{\Sat^{cl}_T} && H_T\otimes\Ql.
}
\end{equation}

Let  $R({^L}G)=K_0(\on{Rep}({^L}G))$ denote the representation ring of ${^L}G$.  If $V$ is an algebraic representation of ${^L}G$, let $\chi_V$ denote its character and $[V]$ its class in $R({^L}G)$.
Then there is a natural map
\[R({^L}G)\to \Gamma([(\hat G\phi)/\hat G], \mO),\quad [V]\mapsto \chi_V|_{\hat{G}\phi} \]
By abuse of notations, we will also use $\Sat^{cl}$ to denote the induced composition map 
$$\Sat^{cl}: R({^L}G)\to  \Gamma([\hat G\phi/\hat G],\mO) \cong H_G\otimes\Ql.$$

\begin{rmk}
\label{R: geom Sat, arith Frob v.s. geom Frob}
Note that there is another natural map 
\begin{equation}
\label{E: Sat, arith Frob}
\Sat^{cl'}: \Gamma([\hat G\sigma/\hat G],\mO)\cong \Ql[\xcoch({T})^\sigma]^{W_0}\cong H_G\otimes\Ql.
\end{equation}
By restriction of a character $\chi_V$ to $\hat G\times\sigma\subset {^L}G$, we thus also obtain a map from $R({^L}G)$ to $H_G\otimes\Ql$, denoted by the same notation
\[\Sat^{cl'}: R({^L}G)\to  \Gamma([\hat G\sigma/\hat G],\mO) \cong H_G\otimes\Ql.\]
However, $\Sat^{cl}$ and $\Sat^{cl'}$ are different in general. Indeed, note that the following diagram is commutative
\begin{equation*}
\label{E: two Sat isom related}
\xymatrix@R=10pt{
& \Gamma([\hat G\phi/\hat G],\mO)\ar^{\on{inv}}[dd]\ar_-\cong^-{\Sat^{cl}}[r] & \Ql[\xcoch(T)^\sigma]^{W_0}\ar^{\la\mapsto -\la}[dd]\\
R({^L}G)\ar[ur]\ar[dr]&&\\
& \Gamma([\hat G\sigma/\hat G],\mO)\ar_-\cong^-{\Sat^{cl'}}[r] & \Ql[\xcoch(T)^\sigma]^{W_0}
}
\end{equation*}
where $\on{inv}: \Gamma([\hat G\phi/\hat G],\mO)\cong \Gamma([\hat G\sigma/\hat G],\mO)$ is induced by  the map $\hat G\phi\mapsto \hat G\sigma, \ \ g\phi\mapsto \sigma(g^{-1})\sigma$. Therefore,
$$\Sat^{cl}([V])=\Sat^{cl'}([V^*]).$$
It is $\Sat^{cl}$ that is directly related to the geometric Satake via Grothendieck's sheaf-function dictionary. However, $\Sat^{cl'}$ will appear in Theorem \ref{T:Spectral action}. This leads to usual switch from $V$ to $V^*$ for Galois representations arising from the cohomology of Shimura varieties.
\end{rmk}

\subsubsection{From geometric to classical Satake isomorphism}
Now we review how to deduce the classical Satake isomorphism from the geometric Satake isomorphism (see also \cite{RiZh}, \cite[\S 5.6]{Z16} where the case $F=k((\varpi))$ was discussed). 

For a $k$-point $x$ of $\Gr$, and a geometric point $\bar x$ over it, let $\phi_x$ denote the geometric Frobenius in $\Gal(k(\bar x)/k)$.
Recall that the Grothendieck fonctions-faisceaux dictionary attaches to every $\mA\in \on{P}_{L^+G}(\Gr)$ a function 
\[f_\mA: \Gr(k)\to \Ql,\quad f_\mA(x)=\sum_i(-1)^i\tr(\phi_x, \on{H}^i_{\bar x}(\mA)),\]
Since $\mA$ is $L^+G$-equivariant, we can regard $f_\mA\in H_G\otimes_\bZ\Ql$. In addition, it follows from definition that
$f_{\mA_1\star\mA_2}=f_{\mA_1}*f_{\mA_2}$ (e.g. see \cite[Lemma 5.6.1]{Z16}).
So $\mA\mapsto f_\mA$ defines
a map of algebras 
$$\Tr: K_0(\on{P}^0_{L^+G}(\Gr))\to H_G\otimes\Ql.$$  
On the other hand, taking the K-ring of the equivalence in Theorem \ref{T: geom Sat upgraded} gives a canonical isomorphism
\begin{equation*}
K_0(\Sat):R({^L}G)\to K_0(\on{P}^0_{L^+G}(\Gr_G)).
\end{equation*} 
Composing these two maps, we thus obtain
\[\on{Sat}^{\on{s-f}}: R({^L}G)\to H_G\otimes\Ql.\]
\begin{prop}
\label{P: classical Sat=geometric Sat}
$\Sat^{\on{s-f}}=\Sat^{cl}$.
\end{prop}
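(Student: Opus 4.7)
The strategy is to reduce to the case of a torus via the constant term. Since the classical Satake transform $\CT: H_G \otimes \Ql \hookrightarrow H_T \otimes \Ql$ is injective, it suffices to show that $\CT \circ \Sat^{\on{s-f}}$ and $\CT \circ \Sat^{cl}$ agree as maps $R({^L}G) \to H_T \otimes \Ql$. By the commutativity of \eqref{E: comp G and T}, $\CT \circ \Sat^{cl}$ coincides with $\Sat^{cl}_T \circ \Res_T$, where $\Res_T: R({^L}G) \to R({^L}T)$ is restriction of representations. Thus the proposition reduces to (a) proving the torus case, and (b) showing that $\Sat^{\on{s-f}}$ is compatible with restriction via a suitable geometric constant term functor on the sheaf side.

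For the torus case, the affine Grassmannian $\Gr_T$ is perfectly discrete with $k$-points parametrized by $\xcoch(T)^\sigma$, so $\on{P}^0_{L^+T}(\Gr_T)$ is just the category of $\xcoch(T)^\sigma$-graded Galois representations, and an unwinding of definitions shows $\Sat^{\on{s-f}}_T = \Sat^{cl}_T$.

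The key geometric input is the construction of a constant term functor $\on{CT}^{\on{geom}}: \on{P}^0_{L^+G}(\Gr_G) \to \on{P}^0_{L^+T}(\Gr_T)$, defined (on the $\lambda$-component of $\Gr_T$) by $\mA \mapsto \on{H}^{2\langle\rho,\lambda\rangle}_c(S_\lambda, \mA)(\langle\rho,\lambda\rangle)$. By Theorem~\ref{T:weight space interpretation} together with the dimension computation $\dim(S_\lambda \cap \Gr_\mu) = \langle \rho, \lambda+\mu\rangle$, this functor is well-defined and, via the Galois-equivariant geometric Satake (Theorem~\ref{T: geom Sat upgraded}), corresponds to the restriction functor $\on{Res}_T: \on{Rep}({^L}G) \to \on{Rep}({^L}T)$. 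Compatibility with the sheaf-function dictionary then amounts to the identity
\[
\on{CT}(f_\mA)(\varpi^\lambda) = \sum_{i}(-1)^i \tr\bigl(\phi \,\big|\, \on{H}^i_c(S_\lambda, \mA)(\langle\rho,\lambda\rangle)\bigr),
\]
which follows by Grothendieck--Lefschetz applied to the (finite type) intersections $S_\lambda \cap \on{Supp}(\mA)$, after observing that $\{\varpi^\lambda u : u \in U(F)/U(\mO)\}$ exhausts $S_\lambda(k)$ and that the factor $\delta_B^{1/2}(\varpi^\lambda) = q^{\langle\rho,\lambda\rangle}$ exactly cancels the Tate twist in the definition of $\on{CT}^{\on{geom}}$.

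The main obstacle is the middle step: carefully verifying that all normalizations (Tate twists from the half Tate twist $\Ql(1/2)$, the shift by $\langle 2\rho, \lambda\rangle$, and the factor $\delta_B^{1/2}$) match up, and that the Frobenius eigenvalues coming from $\on{H}^*_c(S_\lambda, \mA)$ agree with those of $\phi$ acting on the $\lambda$-weight space of the $\hat G$-representation $\on{H}^*(\Gr, \mA)$ --- this last point requires the compatibility of the Galois structure on hyperbolic localization with the one arising via the Tannakian formalism in Theorem~\ref{T: geom Sat upgraded}. Once these normalizations are settled, combining the three steps yields the equality $\Sat^{\on{s-f}} = \Sat^{cl}$.
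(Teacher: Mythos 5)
Your proposal is correct and takes essentially the same route as the paper: reduce to the torus via the MV weight functors $\mA\mapsto \on{H}_c^{2\langle\rho,\la\rangle}(S_\la,\mA)(\langle\rho,\la\rangle)$, which correspond to restriction to ${}^LT$ under the (Galois-equivariant) geometric Satake and to the classical Satake transform $\on{CT}$ under the sheaf-function dictionary, with the torus case being immediate. The only cosmetic difference is that you conclude via injectivity of $\on{CT}$ on $H_G\otimes\Ql$, whereas the paper first checks that $\Sat^{\on{s-f}}$ factors through $\Gamma([\hat G\phi/\hat G],\mO)$ by matching the kernel ideals and then uses that both maps are determined by the commutative diagram \eqref{E: comp G and T}; both arguments defer the same normalization/Frobenius-compatibility details (the paper to the reference [RiZh], you to the explicitly flagged verification).
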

\begin{proof}
We sketch the proof and refer to \cite{RiZh} for more details. First, one shows that the map
\[R({^L}G)\to K_0(\on{P}^0_{L^+G}(\Gr))\to H_G\otimes\Ql\]
factors through 
\[R({^L}G)\to \Gamma([(\hat G\phi)/\hat G],\mO)\to H_G\otimes\Ql.\]
Indeed, the kernel of the map $[V]\mapsto \chi_V|_{\hat G\phi}$ is the ideal generated by elements of
the form $[V\otimes \psi]-\psi(\phi)[V]$, where $V\in
\on{Rep}({^L}G)$ and $\psi: \Gal(\bar k/k)\to \Ql^\times$
is a character factoring through a finite quotient.  On
the other hand, the kernel of the map $\Tr: K_0(\on{P}^0_{L^+G}(\Gr))\to H_G\otimes\Ql$ is
the ideal generated by elements of the form
$[\calA\otimes\calL]-\tr(\phi,\calL)[\calA]$, where $\calA\in
\on{P}^0_{L^+G}(\Gr_G)$, and $\calL$ is a rank one local system on
$\Spec k$, pure of weight zero. But it is clear that these two
ideals match under $K_0(\Sat)$. The claims follows.

To finish to prove of the proposition, first note that this is clear if $G=T$ is a torus. For general $G$, one observes that the fiber functor
decomposes as a direct sum of weight functors \cite[\S 3]{MV} and \cite[Corollary 2.10]{Z}:
$$\mC\mT: \on{P}_{L^+G}^0(\Gr_G)\to \on{P}_{L^+T}^0(\Gr_T),\quad \mC\mT(\mA)=\bigoplus_{\la} \on{H}_c^{d_\la}(S_\la, \mA)(d_\la).$$ 
Note that under the
sheaf-function dictionary, this corresponds the Satake transform
$\on{CT}: H_G\otimes\Ql\to H_T\otimes\Ql$. Therefore, both $\Sat^{\on{s-f}}$ and $\Sat^{cl}$
are uniquely determined the commutative diagram \eqref{E: comp G and T}.
The proposition for general $G$ follows. 
\end{proof}

\section{Irreducible components of some affine Deligne-Lusztig varieties}
\label{Sec:affine DLV}
This section is devoted to studying the irreducible components of certain affine Deligne-Lusztig varieties in equal and mixed characteristic.

We will continue to use notations from the previous section: let  $F$ be a local field (of either equal or mixed characteristic), with ring of integers $\calO$ and residue field $k =\FF_q$, and let $G$ be an \emph{unramified} reductive group over $\calO$.  Let $\mO_L=W_{\mO}(\bar k)$ be the completion of the maximal unramified extension of $F$ and $L=\mO_L[1/\varpi]$.
Let $(\hat G, \hat B, \hat T)$ be its based Langlands dual group over $\overline \QQ_\ell$, constructed from the geometric Satake as in the previous section (see Remark \ref{R: birth of the dual group}).

For a perfect $k$-algebra $R$, and a $G$-torsor $\mE$   over $D_{R}$, let ${^\sigma}\mE$ denote the $G$-torsor $(\sigma\otimes \id)^*\mE$ on $W(R)\otimes_{W(k)} \mO$ as the pullback of $\mE$ by the Frobenius $\sigma\otimes \id$. 

\subsection{Definition of ADLVs}
\begin{definition}
For $b\in G(L)$ and $\mu$ a dominant coweight, we define the \emph{(closed) affine Deligne-Lusztig variety} $X_\mu(b)$ as the closed subvariety inside the affine Grassmannian:
\[
X_\mu(b)=\{g\in LG/L^+G\mid g^{-1}b \sigma(g)\in \overline{L^+G\varpi^\mu L^+G}\}.
\]
In terms of the moduli problem, for every $R$, $X_\mu(b)(R)$ parametrizes commutative diagrams of modifications of $G$-torsors on $D_R$,
\begin{equation}
\label{E:moduli problem of Xmu(b)}
\xymatrix{
{}^\sigma\calE_1 \ar@{-->}[d]^{\sigma(\beta_0)} \ar@{-->}[r]^{\beta_1} & \calE_{1}
\ar@{-->}[d]^{\beta_0}
\\
{}^\sigma\calE^0 \ar@{-->}[r]^-{b\cdot } & \calE^0,
}
\end{equation}
where
$b\cdot $ is the modification of the trivial $G$-torsor ${}^\sigma\calE^0 \cong \calE^0$ given by multiplication by $b$, and $\beta_1$ has relative position $\preceq \mu$.
\end{definition}

Note that by the definition, the following diagram is Cartesian
\begin{equation}
\label{E:cartesian DL}
\xymatrix@R=15pt{
X_{\mu}(b)\ar[r]\ar@{_{(}->}[d]  &\Gr\tilde\times\Gr_{ \mu}\ar[d]^{\pr\times m}
& \eqref{E:moduli problem of Xmu(b)} \ar@{|->}[r]\ar@{|->}[d] & \{ {}^\sigma\calE_1 \stackrel{\beta_1}{\dashrightarrow} \calE_1 \stackrel{\beta_0}{\dashrightarrow} \calE_0\}\ar@{|->}[d]
\\
\Gr\ar[r]^-{1\times b\sigma} &\Gr\times \Gr, &
\{ \calE_1 \stackrel{\beta_0}{\dashrightarrow} \calE_0\} \ar@{|->}[r] & \big\{\calE_1 \stackrel{\beta_0}{\dashrightarrow} \calE_0,\ {}^\sigma\calE_1 \stackrel{b\sigma(\beta_0)}{\dashrightarrow} \calE_0\big\}.
}
\end{equation}
In turn, \eqref{E:cartesian DL} can be served as an alternative definition of affine Deligne-Lusztig varieties. Using this point of view, one can replace $\Gr_\mu$ in \eqref{E:cartesian DL} by $\Gr_\mmu$ and therefore obtain an iterated version of affine Deligne-Lusztig varieties.

Let $J_b$ be the twisted centralizer of $b$, i.e. the algebraic group over $F$, whose value on an $F$-algebra $R$ is given by
\begin{equation}
\label{E: twisted centralizer J}
J_b(R)=\{g\in G(R\otimes_F L)\mid g^{-1}b\sigma(g)=b\}.
\end{equation}
Then $J_b(F)$ acts on $X_\mu(b)$ naturally by left multiplication. 

Finally, recall that the pair $(X_\mu(b),J_b)$ depends only on the $\sigma$-conjugacy class of $b$ up to isomorphism.

\subsubsection{The set $B(G,\mu)$}
Let $B(G)$ denote the set of $\sigma$-conjugacy classes of $G(L)$. To study $B(G)$, Kottwitz attached to every element $[b]\in B(G)$ two invariants:\footnote{We remark that Kottwitz' results hold more generally for not necessarily quasi-split groups. But we limit ourselves to only considering unramified groups in the following discussion.} the first is the \emph{Newton polygon} of $b$, which is a $\sigma$-invariant conjugacy class of maps 
$$\nu_b\in( \Hom(\bD,G_L)/G_L)^\sigma \cong \XX_\bullet(T)_\QQ^{+, \sigma},$$ 
where $\bD$ is the pro-torus with character group $\bQ$; the second is an element $$\kappa_G(b)\in \pi_1(G)_\sigma=\xch(Z(\hat{G})^\sigma).$$ In addition, Kottwitz proved that $\nu_b$ and $\kappa_G(b)$ together determine the $\sigma$-conjugacy class $[b]$ uniquely. In other words, the map
\[B(G)\to \XX_\bullet(T)_\QQ^{+, \sigma}\times \pi_1(G)_\sigma,\quad [b]\mapsto (\nu_b,\kappa_G(b))\]
is injective. The set $B(G)$ naturally forms a poset with $[b]\leq [b']$ if $\kappa_G(b)=\kappa_G(b')$ and $\nu_b\leq\nu_{b'}$.
Here, the inequality $\nu_b \leq \nu_{b'}$ means that $\nu_{b'} - \nu_b$ is a non-negative rational linear combination of simple coroots of $G$.
We call an element $[b]\in B(G)$ \emph{basic} if $\nu_b$ is central, i.e. it factors as
$$\nu_b:\bD\to Z_G \subseteq G.$$ 
Or equivalently, $\langle \rho, \nu_b\rangle =0$.
By definition, basic elements are minimal elements in $B(G)$ with respect to the above partial order. It follows from \cite[\S 5]{Koisocry} that all minimal elements in $B(G)$ are basic.

For a dominant coweight $\mu$ of $G$, we define $B(G, \mu) \subset B(G)$ as
the subset consisting of those
$[b] \in B(G)$ such that
\begin{equation}
\label{E:condition nonemptyness Xmub}
\nu_b\leq \bar\mu, \quad \textrm{and} \quad \kappa_G(b)=[\mu] \mbox{ in } \pi_1(G)_\sigma,
\end{equation}
where 
\begin{equation}\label{Newton point}
\bar\mu=\frac{1}{m}\sum_{i=0}^{m-1} \sigma^i(\mu),
\end{equation} and $m$ is the degree of a splitting field of $G$ over $F$.  For a coweight $\mu$, $[\mu]$ denotes its image under $\xcoch(T)\to \pi_1(G)\to \pi_1(G)_\sigma$.
By \cite[\S 5]{Koisocry}, inside $B(G,\mu)$ there is always a unique basic element. 

Recall the following result, which is a combination of \cite[Theorem 4.2]{RR} and \cite{Gashi} (see also \cite{Win} and \cite[Proposition 5.6.1]{GHKR}). 
\begin{thm}
\label{L:nonemptyness of Xmub}
The variety $X_\mu(b)$ is non-empty if and only if $[b]\in B(G,\mu)$.
\end{thm}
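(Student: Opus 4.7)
The plan is to split the equivalence into the easy necessity direction and the substantially harder sufficiency direction, each of which has a rather different flavor.

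For necessity, suppose $X_\mu(b)$ is non-empty, so there exists $g \in G(L)$ with $g^{-1} b\sigma(g) \in \overline{L^+G \varpi^\mu L^+G}$. The Kottwitz condition $\kappa_G(b) = [\mu]$ is the elementary observation that $\sigma$-conjugation preserves the class in $\pi_1(G)_\sigma$ (one verifies this on the torus case first, then uses the functoriality of $\kappa_G$ with respect to the Cartan projection), combined with the fact that the closure $\overline{L^+G \varpi^\mu L^+G}$ is a union of Schubert cells all lying in the connected component of $LG$ indexed by $[\mu]$. The Newton inequality $\nu_b \leq \bar\mu$ is Mazur's inequality in its group-theoretic form, generalized by Rapoport--Richartz: one chooses a faithful representation $V$ of $G$, and for each such $V$ the Hodge polygon (given by $\mu$) dominates the Newton polygon (given by $\nu_b$) by a direct calculation with the Hodge and Newton slopes of the underlying $F$-isocrystal. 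Since this holds for all representations, one obtains the desired inequality in $\XX_\bullet(T)^+_\QQ$.

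For sufficiency, the strategy is a reduction to a "superbasic" case via iterated Hodge--Newton decompositions. Given $[b] \in B(G,\mu)$, one first reduces to the case where $b$ is basic in a standard Levi subgroup $M$ by Hodge--Newton reducibility: if $\bar\mu$ does not lie in the interior of the relative Weyl chamber, one finds a Levi $M \subsetneq G$ such that $\nu_b$ is central in $M$ and $\mu$ decomposes compatibly, reducing the non-emptiness of $X_\mu^G(b)$ to that of $X_{\mu_M}^M(b)$ for suitable $\mu_M$. Iterating, one reduces to the case where $b$ is basic in $G$ and $[b] \in B(G,\mu)$. The next step (due to Kottwitz--Rapoport in the split case) is to reduce to the "superbasic" case, where no proper Levi subgroup contains $b$; this is accomplished by further Hodge--Newton-style arguments on the twisted centralizer $J_b$. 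In the superbasic case, one gives an explicit construction of a point of $X_\mu(b)$ by means of case-by-case computations, which for unramified groups were completed by Gashi (building on the split case of Kottwitz--Rapoport and the GHKR dimension formula).

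The main obstacle in this program is the superbasic case: although the reductions via Hodge--Newton are relatively clean, one really does need an explicit construction of a point of $X_\mu(b)$ when $b$ is superbasic, and this is essentially done type by type, since there is no obvious uniform construction. An alternative, more geometric approach would be to invoke the dimension formula for $X_\mu(b)$ from \cite{Ham, Z}, which gives $\dim X_\mu(b) = \langle \rho, \mu - \nu_b\rangle - \frac{1}{2}\operatorname{def}_G(b)$ whenever $X_\mu(b)$ is non-empty; combined with a suitable specialization argument and the fact that the geometric Satake gives a non-zero weight space $V_\mu(\nu_b)$ precisely when $[b] \in B(G,\mu)$, one could hope to recover the non-emptiness statement from the representation-theoretic side, though making this approach rigorous without already knowing the classification of $B(G,\mu)$ requires care.
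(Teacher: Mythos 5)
The paper does not prove this statement at all: it is recalled as known, being the combination of \cite[Theorem 4.2]{RR} (the ``only if'' direction, i.e.\ the group-theoretic Mazur inequality) and \cite{Gashi} (the ``if'' direction, i.e.\ the converse), with \cite{Win} and \cite[Proposition 5.6.1]{GHKR} also cited. Your proposal is likewise a proof by appeal to this literature, so at that level it is consistent with the paper's treatment, and your sketch of the necessity direction (invariance of $\kappa_G$ under $\sigma$-conjugation plus the Rapoport--Richartz argument comparing Hodge and Newton polygons in every representation) is an accurate summary of how \cite{RR} proceeds.

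Your account of how the sufficiency direction is actually established, however, is inaccurate in ways you should fix if this is to stand as a sketch. Gashi's contribution in \cite{Gashi} is precisely a \emph{uniform} proof of the Kottwitz--Rapoport conjecture (the group-theoretic converse to Mazur's inequality) for unramified groups; the case-by-case explicit verifications you describe are the earlier results of Kottwitz--Rapoport ($\GL_n$, $\GSp_{2n}$) and Lucarelli (split classical groups), not Gashi. Moreover, the reduction used in that circle of ideas is not a ``superbasic'' reduction: one reduces to $b$ basic in a standard Levi $M$ (every $\sigma$-conjugacy class admits such a representative) and then to a purely root-theoretic statement, cf.\ \cite[Proposition 5.6.1]{GHKR}; superbasic reductions and the dimension formula $\dim X_\mu(b)=\langle\rho,\mu-\nu_b\rangle-\tfrac12\mathrm{def}_G(b)$ belong to the dimension theory of $X_\mu(b)$ (\cite{GHKR,Ham,Z}), and the latter formula is conditional on non-emptiness, so the ``alternative approach'' in your last paragraph is circular as stated. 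It also does not literally parse: $\nu_b$ is only a rational coweight, so ``the weight space $V_\mu(\nu_b)$'' is undefined; the representation-theoretic criterion in this paper (Lemma \ref{C:nonempty DL}) concerns unramified classes and the integral invariant $\lambda_b$ (equivalently $\tau_\sigma$), and it characterizes membership in $B(G,\mu)$, not non-emptiness of $X_\mu(b)$. None of this is a logical gap --- the theorem is genuinely in \cite{RR} and \cite{Gashi}, exactly as the paper cites --- but the sufficiency sketch should either be corrected to reflect the actual structure of those proofs or simply replaced by the citations.
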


\subsubsection{Passing to adjoint groups}
To study ADLV, it is more convenient to assume that $G$ is of adjoint type. We first recall that $\pi_0(LG)$ is canonically isomorphic to $\pi_1(G)$. We denote the map
$LG\to \pi_0(LG)\cong \pi_1(G)$ by $w$. 
Note that if $\la\in \xcoch(T)$, then $w(\varpi^\la)=[\la]$ is the image of the natural projection $\xcoch(T)\to \pi_1(G)$.
Note that if $gL^+G\in X_\mu(b)$, then $w(g)^{-1}w(b)w(\sigma(g))=w(\varpi^\mu)$. So we have an equality $(\sigma-1)w(g)=[\mu]-w(b)$ in $\pi_1(G)$. In \cite{CKV}, the authors denote
\[c_{b,\mu}\pi_1(G)^\sigma:=\{\ga\in \pi_1(G)\mid (\sigma-1)\ga=[\mu]-w(b)\},\]
which is a $\pi_1(G)^\sigma$-coset. The above observations imply that there is the following commutative diagram
\[\begin{CD}
X_\mu(b)@>>>\Gr\\
@VVV@VVV\\
c_{b,\mu}\pi_1(G)^\sigma @>>>\pi_1(G).
\end{CD}\]
Now, let $G\to G'$ be a central isogeny of unramified groups. For a pair $(\mu,b)$, let $(\mu',b')$ denote the induced pair for $G'$. The map $G\to G'$ induces a natural map $X_\mu(b)\to X_{\mu'}(b')$. 
The following lemma was contained in \cite[\S 2.4]{CKV}.
\begin{lem}
\label{L: red to adj}
The natural map $X_\mu(b)\to X_{\mu'}(b')$ induces a Cartesian diagram
\[\begin{CD}
X_\mu(b) @>>> X_{\mu'}(b')\\
@VVV@VVV\\
c_{b,\mu}\pi_1(G)^\sigma @>>> c_{b',\mu'}\pi_1(G')^\sigma.
\end{CD}\]
\end{lem}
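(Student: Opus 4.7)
The plan is to verify the Cartesian property component by component, reducing to a known geometric fact about the behavior of affine Grassmannians under central isogenies. I would proceed in four steps.

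\textbf{Step 1: Commutativity.} This is essentially tautological: the map $w:LG\to \pi_0(LG)=\pi_1(G)$ is functorial in $G$, so $w(g)$ maps to $w(\bar g)$ under $\pi_1(G)\to \pi_1(G')$ where $\bar g\in LG'$ is the image of $g$. Similarly $w(b)\mapsto w(b')$ and $[\mu]\mapsto [\mu']$, which is why the $c_{b,\mu}$-cosets are compatible.

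\textbf{Step 2: Component-wise isomorphism of affine Grassmannians.} The essential geometric input is that for every $\gamma\in \pi_1(G)$ with image $\gamma'\in \pi_1(G')$, the morphism on connected components
\[
\Gr_G^\gamma \longrightarrow \Gr_{G'}^{\gamma'}
\]
is an isomorphism. This is a standard fact for central isogenies in the context at hand; see \cite[\S 2.4]{CKV}. Morally it reduces, by translating between components, to the identification of the neutral components $\Gr_G^\circ \simeq \Gr_{G_{\mathrm{sc}}}^\circ \simeq \Gr_{G'}^\circ$, where $G_{\mathrm{sc}}$ denotes the simply-connected cover of the (common) derived group. This identification is classical in equal characteristic via Kumar--Mathieu--Faltings, and holds in the perfect mixed-characteristic setting by analogous arguments (cf.\ the background developed in \cite{Z, BS} referenced earlier in the paper).

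\textbf{Step 3: Compatibility of Schubert and $\sigma$-conjugation conditions.} Under the central isogeny, the element $\varpi^\mu\in G(L)$ maps to $\varpi^{\mu'}\in G'(L)$ and $b$ maps to $b'$. Consequently $L^+G\cdot \varpi^\mu\cdot L^+G$ surjects onto $L^+G'\cdot\varpi^{\mu'}\cdot L^+G'$, and passing to closures shows that under the isomorphism $\Gr_G^\gamma \simeq \Gr_{G'}^{\gamma'}$ of Step 2, the Schubert closure $\Gr_{G,\mu}\cap \Gr_G^\gamma$ corresponds to $\Gr_{G',\mu'}\cap \Gr_{G'}^{\gamma'}$. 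Likewise, the Frobenius-twisted conjugation $gL^+G\mapsto g^{-1}b\sigma(g)$ is intertwined with its $G'$-analogue. Therefore the isomorphism of Step 2 restricts to an isomorphism
\[
X_\mu(b)\cap \Gr_G^\gamma \;\xrightarrow{\;\sim\;}\; X_{\mu'}(b')\cap \Gr_{G'}^{\gamma'}
\]
for each $\gamma\in c_{b,\mu}\pi_1(G)^\sigma$ with image $\gamma'\in c_{b',\mu'}\pi_1(G')^\sigma$.

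\textbf{Step 4: Conclusion.} Taking the disjoint union over $\gamma\in c_{b,\mu}\pi_1(G)^\sigma$ identifies
\[
X_\mu(b)=\bigsqcup_{\gamma\in c_{b,\mu}\pi_1(G)^\sigma} X_\mu(b)\cap \Gr_G^\gamma
\]
with
\[
\bigsqcup_{\gamma\in c_{b,\mu}\pi_1(G)^\sigma} X_{\mu'}(b')\cap \Gr_{G'}^{\gamma'} \;=\; X_{\mu'}(b')\times_{c_{b',\mu'}\pi_1(G')^\sigma} c_{b,\mu}\pi_1(G)^\sigma,
\]
which is exactly the Cartesian claim. The main obstacle is isolating the clean reference for the component-wise isomorphism of affine Grassmannians in Step 2; everything else is bookkeeping, since the Schubert and $\sigma$-conjugation conditions are manifestly functorial in $G\to G'$.
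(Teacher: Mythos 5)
The paper gives no argument of its own for this lemma: it simply records that the statement "was contained in \cite[\S 2.4]{CKV}". Your route — componentwise isomorphism of affine Grassmannians under a central isogeny, compatibility of the Schubert and $\sigma$-conjugation conditions, then summing over the coset $c_{b,\mu}\pi_1(G)^\sigma$ — is exactly the standard argument behind that citation, and Steps 1, 2 and 4 are fine (the key input of Step 2 is indeed available in both equal and mixed characteristic in the perfect setting).

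Step 3, however, is where the real content sits, and as written it has two defects. First, the assertion that $L^+G\varpi^\mu L^+G$ surjects onto $L^+G'\varpi^{\mu'}L^+G'$ is false whenever $p$ divides the order of the kernel: already $G(\mO_L)\to G'(\mO_L)$ need not be surjective (for $\SL_p\to \PGL_p$, not every unit of $W(\bar k)$, resp.\ of $\bar k[[t]]$, is a $p$-th power), so e.g.\ for $\mu=0$ the double cosets do not match inside $G'(L)$. What survives, and what you actually need, is the statement after dividing by $L^+G'$ on the right: under the componentwise isomorphism, $\Gr_{G,\mu}\cap\Gr_G^\gamma$ is carried onto $\Gr_{G',\mu'}\cap\Gr_{G'}^{\gamma'}$. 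Second, this last statement (and likewise the ADLV condition) is \emph{not} "manifestly functorial": functoriality only gives that $\inv\preceq\mu$ upstairs implies $\inv\preceq\mu'$ downstairs. The converse needs an argument, namely: if $\lambda'\in\xcoch(T')^+$ satisfies $\lambda'\preceq\mu'$ and its class $[\lambda']\in\pi_1(G')$ lies in the image of $\pi_1(G)$ (which it does, being the class of a point of the component $\gamma'$), then $\lambda'-\nu\in Q^\vee\subset\xcoch(T)$ for some $\nu\in\xcoch(T)$, hence $\lambda'\in\xcoch(T)$; and since the simple coroots lie in $\xcoch(T)$ and are linearly independent, $\lambda'\preceq\mu$ already holds in $\xcoch(T)$. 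With this, the preimage of $\Gr_{G',\mu'}$ in $\Gr_G$ is exactly $\Gr_{G,\mu}$, hence the preimage of $X_{\mu'}(b')$ is $X_\mu(b)$, and your Step 4 then gives the Cartesian square. So the skeleton is right, but you should replace the surjectivity claim by the lattice/dominance-order argument above rather than dismissing it as bookkeeping.
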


\subsubsection{The variety $X_{\mu,\nu}(b)$}
\label{SS:variety Xmunub}
In this paper, we also need to consider the intersection of ADLVs with Schubert varieties in the affine Grassmannian. 
Let $b\in G(L)$ and $\mu,\nu$ two dominant coweights, we define
\begin{equation}
\label{E: fin piece of ADLV}
X_{\mu,\nu}(b)= X_\mu(b)\cap \Gr_\nu,
\end{equation}
i.e. $X_{\mu,\nu}(b)$ classifies those diagrams as in \eqref{E:moduli problem of Xmu(b)}, with the extra condition that $\inv(\beta_0)\preceq \nu$. Alternatively, we can define it via the Cartesian diagram, similar to \eqref{E:cartesian DL}
\begin{equation}
\label{E: fin cartesian DL}
\begin{CD}
X_{\mu,\nu}(b)@>>> \Gr_{\nu}\tilde\times\Gr_{\mu}\\
@VVV@VV\pr_1\times m_{\nu,\mu}V\\
\Gr_\nu@>1\times b\sigma >> \Gr_{\nu}\times \Gr.
\end{CD}
\end{equation}
Similarly, for a sequence $\mmu$ of dominant coweights, we can define $X_{\mmu,\nu}(b)$ by replacing $\Gr_\mu$  \eqref{E: fin cartesian DL} by $\Gr_\mmu$.

Note that $X_{\mu,\nu}(b)$ is equipped with an action of $J_b(F)\cap G(\mO_L)$, and the pair $(X_{\mu,\nu}(b), J_b(F)\cap G(\mO_L))$ depends on $b$ up to $\sigma$-conjugacy by elements of $G(\mO_L)$. So it is natural to introduce the set $A(G)$, which is the quotient of $G(L)$ by $G(\mO_L)$ by the $\sigma$-conjugation action. There is a natural map (the Newton map)
\begin{equation}
\label{E: SetNewton}
\mN: A(G)\to B(G),
\end{equation}
sending an element in $G(L)$ up to $\sigma$-conjugacy by $G(\mO_L)$ to its underlying $\sigma$-conjugacy class.
In fact $A(G)$ is exactly the set of $\bar k$-points of the moduli of local $G$-shtukas $\Sht^\loc$ introduced in \S \ref{S: Moduli of loc Sht}. We denote $A(G,\mu)$ to be the preimage of $B(G,\mu)$ under $\mN$. Note that $A(G,\mu)$ can be explicitly presented as the quotient of $\overline{G(\mO_L)\varpi^\mu G(\mO_L)}$ by $\sigma$-conjugation by $G(\mO_L)$.

\subsection{Unramified elements in $B(G,\mu)$}
\label{S: unram element}
In this subsection we give a description of the set of unramified elements in $B(G,\mu)$ in terms of the dual group $\hat G$. Although this is just a small subset of $B(G,\mu)$, it already contains many interesting examples (see Remark \ref{classification of minuscule}). 

\subsubsection{The set of unramified elements}
\label{SS:unramified elements}
We first give a definition of unramified elements in $B(G)$, different from the one given in the introduction (but the two definitions are the same by Corollary \ref{C: two def of unramified} below). For this purpose, we need to choose a rational Borel $B\subset G$ and fix an embedding of the abstract Cartan $T\subset B$.
We call an element $b\in B(G)$ unramified if it is contained in the image of the natural map $B(T)\to B(G)$, and denote the set of unramified elements in $B(G)$ by $B(G)_{\on{unr}}$. Note that since any two such choices of embeddings are rationally conjugate, this set is independent of the choices.\footnote{Recall that the set of basic elements in $B(G)$, on the other hand, is the image of $B(T')\to B(G)$ for some elliptic maximal $F$-torus of $G$, see \cite[Proposition 5.3]{Koisocry}.}
 
Recall that we denote by $\xcoch(T)_\sigma$ the $\sigma$-coinvariants of $\xcoch(T)$. For $\la\in\xcoch(T)$, let $\la_\sigma$ denote its projection to $\xcoch(T)_\sigma$. The relative Weyl group $W_0$ of $G$ acts on $\xcoch(T)_\sigma$.

\begin{lem}
\label{L: unramified elements}
The canonical map $\xcoch(T)\to B(G),\ \la\mapsto [\varpi^\la]$ induces a bijection
\[\xcoch(T)_\sigma/W_0\cong  B(G)_{\on{unr}}.\]
\end{lem}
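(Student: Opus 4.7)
The plan is to combine Kottwitz's description of $B(T)$ for an unramified torus with his general classification of $B(G)$ by Newton polygon and Kottwitz invariant. For the abstract Cartan $T$, regarded as an unramified torus over $F$ via the pinning, Kottwitz's theorem gives a canonical bijection $B(T)\cong \xcoch(T)_\sigma$ under which $\lambda\in\xcoch(T)$ is sent to the $\sigma$-conjugacy class of $\varpi^\lambda$. By the very definition of $B(G)_{\on{unr}}$ as the image of $B(T)\to B(G)$, the composite $\xcoch(T)_\sigma\to B(G)$ is surjective onto $B(G)_{\on{unr}}$, so only the factorization through $W_0$-orbits and the injectivity modulo this action remain to be established.

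For the factorization, I would use that since $G$ is unramified (hence quasi-split) one has $W_0=W^\sigma$, and every $w\in W^\sigma$ admits a lift $\dot w\in N_G(T)(\mO_L)$ such that $\sigma(\dot w)\dot w^{-1}\in T(\mO_L)$; this lift realizes the action of $w$ on $\xcoch(T)_\sigma$ by $\sigma$-conjugation, since $\dot w\cdot\varpi^\lambda\cdot\sigma(\dot w)^{-1}=\varpi^{w\lambda}\cdot t$ for some $t\in T(\mO_L)$, and the further $\sigma$-conjugation by $T(\mO_L)$ (which acts trivially on the coinvariants) absorbs $t$. Hence $[\varpi^\lambda]=[\varpi^{w\lambda}]$ in $B(G)$ for any $w\in W_0$, giving a well-defined map $\xcoch(T)_\sigma/W_0\to B(G)_{\on{unr}}$.

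For injectivity, I would invoke Kottwitz's theorem that $B(G)$ embeds into $\xcoch(T)_\QQ^{+,\sigma}\times\pi_1(G)_\sigma$ via the Newton map and the invariant $\kappa_G$, together with the explicit formulas $\nu_{[\varpi^\lambda]}=\bar\lambda^+$ (the unique dominant $W_0$-translate of the averaged $\bar\lambda=\tfrac{1}{m}\sum_i\sigma^i(\lambda)$ in $\xcoch(T)_\QQ^\sigma$) and $\kappa_G([\varpi^\lambda])=[\lambda_\sigma]\in\pi_1(G)_\sigma$. Given $\lambda,\lambda'\in\xcoch(T)$ with $[\varpi^\lambda]=[\varpi^{\lambda'}]$, after replacing $\lambda'$ by a $W_0$-translate one may assume $\bar\lambda=\bar{\lambda'}$ in $\xcoch(T)_\QQ^\sigma$; it then remains to promote this equality (together with the equality $[\lambda_\sigma]=[\lambda'_\sigma]$ in $\pi_1(G)_\sigma$) to an honest equality $\lambda_\sigma=\lambda'_\sigma$ in $\xcoch(T)_\sigma$.

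The main obstacle is precisely this last step, which amounts to showing that the natural map
\[
\xcoch(T)_\sigma\longrightarrow \xcoch(T)_\QQ^\sigma\oplus \pi_1(G)_\sigma
\]
is injective. I would handle this by applying $\sigma$-coinvariants to the short exact sequence $0\to Q^\vee\to \xcoch(T)\to \pi_1(G)\to 0$ (where $Q^\vee$ is the coroot lattice) and noting that the kernel of $\xcoch(T)_\sigma\to \xcoch(T)_\QQ^\sigma$ is the torsion subgroup, supported on the image of $(Q^\vee)_\sigma$. A diagram chase, using that $Q^\vee$ is the cocharacter lattice of the maximal torus in $G_\s$ and therefore $Q^\vee_\sigma\otimes\QQ\hookrightarrow\xcoch(T)_\QQ^\sigma$, then shows that the torsion injects into $\pi_1(G)_\sigma$. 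Alternatively one can reduce to the adjoint group via the Cartesian diagram of Lemma~\ref{L: red to adj}: for $G_\ad$ the group $\pi_1(G_\ad)_\sigma$ splits off cleanly and the statement is controlled purely by the Newton polygon on the adjoint side, and the central-isogeny diagram then propagates injectivity back to $G$.
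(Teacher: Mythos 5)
Your overall route is the same as the paper's: surjectivity comes from Kottwitz's bijection $B(T)\cong\xcoch(T)_\sigma$ together with the definition of $B(G)_{\on{unr}}$, and injectivity is deduced from the fact that $(\nu_b,\kappa_G(b))$ determines $[b]$, after which everything reduces to a lattice statement. The genuine gap is at that decisive lattice step. The injectivity of $\xcoch(T)_\sigma\to \xcoch(T)_\QQ^\sigma\oplus\pi_1(G)_\sigma$ you assert amounts to saying that the torsion of $\xcoch(T)_\sigma$, namely $\hat H^{-1}(\langle\sigma\rangle,\xcoch(T))=\ker(N)/(\sigma-1)\xcoch(T)$ with $N=\sum_i\sigma^i$ the norm, injects into $\pi_1(G)_\sigma$; your diagram chase correctly reduces this to the claim that any $\ga\in Q^\vee$ with $N\ga=0$ lies in $(\sigma-1)Q^\vee$, i.e.\ that $\hat H^{-1}(\langle\sigma\rangle,Q^\vee)=0$, equivalently that $(Q^\vee)_\sigma$ is torsion-free. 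But the reason you offer --- that $(Q^\vee)_\sigma\otimes\QQ$ injects into $\xcoch(T)_\QQ^\sigma$ --- is a purely rational statement and cannot detect this torsion: for a general lattice with automorphism the claim fails (take $\ZZ$ with $\sigma=-1$, where $\ker N=\ZZ$ but $(\sigma-1)\ZZ=2\ZZ$). The missing ingredient is that $\sigma$ permutes the set of simple coroots, so $Q^\vee$ is a permutation module and $\hat H^{-1}(\langle\sigma\rangle,Q^\vee)=0$; this is precisely the one-line pivot of the paper's proof (``since $\sigma$ permutes a basis of the coroot lattice, $\ga=(\sigma-1)\delta$''), and once you insert it your argument closes.

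Two smaller remarks. The alternative reduction to $G_\ad$ at the end does not avoid this point: unramified classes for $G_\ad$ are determined by the Newton point only because $\xcoch(T_\ad)$ has the fundamental coweights as a $\sigma$-permuted basis (the same torsion-freeness in disguise), and it is not true that $B(G_\ad)$ itself is ``controlled purely by the Newton polygon'' (e.g.\ $\PGL_n$ has $n$ basic classes with the same Newton point, distinguished by $\kappa$); moreover Lemma~\ref{L: red to adj} is a statement about affine Deligne--Lusztig varieties and does not by itself transport the injectivity of $\xcoch(T)_\sigma/W_0\to B(G)_{\on{unr}}$ across a central isogeny --- you would still have to handle the kernel $\xcoch(Z_G^\circ)$ and the $\kappa$-invariant by hand. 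Your well-definedness argument via a lift of $w\in W_0$ is fine; note it also follows at once from Kottwitz's theorem, since $\varpi^{\la}$ and $\varpi^{w\la}$ have the same Newton point and the same image in $\pi_1(G)_\sigma$.
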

\begin{proof}
Since the map ${\xcoch(T)}_\sigma\to B(T), \ \la\mapsto [\varpi^\la]$ is bijective, the map in the lemma is clearly surjective. Let $\la_1,\la_2\in\xcoch(T)$, and write $b_i=[\varpi^{\la_i}]$. Assume that $b_1=b_2$. Then since $\kappa_G(b_1)=\kappa_G(b_2)$, $\la_1-\la_2=(\sigma-1)\nu+\ga$ for some $\nu\in\xcoch(T)$ and $\ga\in Q^\vee$, where $Q^\vee\subset \xcoch(T)$ denotes the coroot lattice of $G$. Up to $\sigma$-conjugation, we may replace $\la_1$ by $\la_1-(\sigma-1)\nu$, so assume that $\la_1-\la_2=\ga$.
Since $\nu_{b_1}=\nu_{b_2}$, after conjugation of $\la_2$ by an element in $W_0$ we may further assume that $\bar{\la}_1=\bar{\la}_2$ (where $\bar{\la}_j$ is defined as in \eqref{Newton point}).  Therefore, $\sum_{i=0}^{m-1} \sigma^i(\ga)=0$. Since $\sigma$ permutes a basis of the coroot lattice (i.e. the set of the simple coroots), $\ga=(\sigma-1)\delta$ for some coroot $\delta$. It follows that the image of $\la_1$ and $\la_2$ in ${\xcoch(T)}_\sigma/W_0$ coincide, i.e., the map ${\xcoch(T)}_{\sigma}/W_0\to B(G)_{\on{unr}}$ is also injective.
\end{proof}

To continue, we recall some combinatorics of $\xcoch(T)_\sigma$. Recall that $\Delta\subset \xch(T)$ (resp. $\Delta^\vee\subset \xcoch(T)$) denotes the set of simple roots (resp. coroots) of $G$. Then we can define the set of dominant elements in $\xcoch(T)_\sigma$ as
$$\xcoch(T)_\sigma^+=\Big\{\la_\sigma\in\xcoch(T)_\sigma\;\Big|\; \Big\langle \la,\sum_{i=0}^{m-1} \sigma^i(\al)\Big\rangle\geq 0 \mbox{ for every } \al\in \Delta\Big\}.$$
The natural map $\xcoch(T)_\sigma^+\to \xcoch(T)_\sigma/W_0$ is an isomorphism.
In addition, the partial order $``\preceq"$ on $\xcoch(T)$ descends to a partial order on $\xcoch(T)_\sigma$. Explicitly, for $\la_\sigma, \mu_\sigma\in\xcoch(T)_\sigma$,  $\la_\sigma\preceq \mu_\sigma$ if $\mu_\sigma-\la_\sigma$ is a positive \emph{integral} combination of elements in $\Delta^\vee_\sigma:=\on{Im}(\Delta^\vee\to \xcoch(T)_\sigma)$. Note the there is a natural partial order preserving injective map 
$$ \xcoch(T)_\sigma^+\to \xcoch(T)_{\bQ}^{\sigma,+}\times \pi_1(G)_\sigma, \quad \mu_\sigma\mapsto (\bar{\mu}, [\mu]).$$
We can reformulate the above lemma as follows.
\begin{lem}
\label{L: unramified elements parameterization}
The following diagram is commutative, and all maps in the diagram preserves the partial order
\[\xymatrix{
B(G)_{\on{unr}} \ar[r]^-\cong \ar[d] & \xcoch(T)_\sigma^+ \ar[d]\\
B(G) \ar[r] & \xcoch(T)_{\bQ}^{\sigma,+}\times \pi_1(G)_\sigma.
}\]
\end{lem}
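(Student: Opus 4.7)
The plan is to verify commutativity first, then order-preservation for each of the four arrows. The top horizontal bijection is obtained from Lemma~\ref{L: unramified elements} together with the identification $\xcoch(T)_\sigma^+ \cong \xcoch(T)_\sigma/W_0$: explicitly, it sends $[\varpi^\mu]$ to the unique dominant representative $\mu_\sigma$. Going both ways around the square then reduces to the two standard identities $\nu_{\varpi^\mu}=\bar\mu$ and $\kappa_G([\varpi^\mu])=[\mu]$ in $\pi_1(G)_\sigma$, each of which is immediate from the definitions of the Newton and Kottwitz invariants.

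Three of the four arrows are essentially formal. The left vertical is an inclusion, trivially monotone; the bottom horizontal is an order embedding, by Kottwitz's definition of $\leq$ on $B(G)$ together with his injectivity theorem. For the right vertical, suppose $\mu_\sigma - \lambda_\sigma = \sum_j n_j\,(\alpha_{O_j}^\vee)_\sigma$ with $n_j\in\NN$, where $O_j$ ranges over $\sigma$-orbits in $\Delta^\vee$ and $(\alpha_{O_j}^\vee)_\sigma$ denotes the common image in $\xcoch(T)_\sigma$ of any coroot in $O_j$. Since every simple coroot dies in $\pi_1(G)_\sigma$, one gets $[\mu]=[\lambda]$. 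Lifting to $\xcoch(T)$ and averaging over $\sigma$ gives
\[
\bar\mu-\bar\lambda \;=\; \sum_j \frac{n_j}{|O_j|}\sum_{i\in O_j}\alpha_i^\vee,
\]
a nonnegative rational combination of simple coroots. Combined with commutativity and the order-embedding property of the bottom arrow, the forward direction of the top horizontal (monotonicity from $\xcoch(T)_\sigma^+$ to $B(G)_{\on{unr}}$) now follows formally.

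The only real work is to show that the top bijection also \emph{reflects} the order. Assume $[\varpi^\mu]\leq[\varpi^{\mu'}]$ in $B(G)$, so that $[\mu']-[\mu]=0$ in $\pi_1(G)_\sigma$ and $\bar{\mu'}-\bar\mu$ is a nonnegative rational combination of simple coroots. The first condition allows us to write $\mu'-\mu = \sum_i n_i\alpha_i^\vee + (\sigma-1)\eta$ with $n_i\in\ZZ$ and $\eta\in\xcoch(T)$. A direct $\sigma$-averaging computation, using that $(\sigma-1)\eta$ telescopes to zero and that $\sum_{j=0}^{m-1}\sigma^j(\alpha_i^\vee)=\frac{m}{|O_i|}\sum_{i'\in O_i}\alpha_{i'}^\vee$, yields
\[
\bar{\mu'}-\bar\mu \;=\; \sum_{O}\frac{1}{|O|}\Bigl(\sum_{i\in O}n_i\Bigr)\sum_{i\in O}\alpha_i^\vee.
\]
The hypothesis $\bar{\mu'}-\bar\mu\geq 0$ then forces each integer orbit-sum $\sum_{i\in O}n_i$ to be nonnegative, and projecting $\mu'-\mu$ to $\xcoch(T)_\sigma$ produces
\[
\mu'_\sigma-\mu_\sigma \;=\; \sum_{O}\Bigl(\sum_{i\in O}n_i\Bigr)(\alpha_{O}^\vee)_\sigma \;\in\; \NN\,\Delta^\vee_\sigma,
\]
which is exactly $\mu_\sigma\preceq\mu'_\sigma$ in $\xcoch(T)_\sigma^+$. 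The point requiring care — promoting a rational inequality on $\bar\mu$ to an integral inequality in the coinvariant lattice via $\sigma$-orbit sums — is the main (and really only) obstacle, and everything else is bookkeeping with definitions.
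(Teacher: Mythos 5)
Your proof is correct, and it follows the only natural route: the paper gives no argument for this lemma at all (it is presented as a reformulation of Lemma~\ref{L: unramified elements}, with the order-preservation of $\xcoch(T)_\sigma^+\to\xcoch(T)_{\bQ}^{\sigma,+}\times\pi_1(G)_\sigma$ simply asserted in the preceding sentence), so your write-up supplies exactly the omitted verifications. In particular your orbit-sum integrality argument — writing $\mu'-\mu=\sum_i n_i\alpha_i^\vee+(\sigma-1)\eta$, averaging over $\sigma$ to force each orbit sum $\sum_{i\in O}n_i\geq 0$, and projecting to the coinvariants — is the one genuinely nontrivial point, namely that the drawn arrow $B(G)_{\on{unr}}\to\xcoch(T)_\sigma^+$ preserves the order, which is also what Corollary~\ref{L: unramified elementsII} actually relies on.
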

\begin{cor}
\label{L: unramified elementsII}
Under the identifications $\xcoch(T)_\sigma^+\cong \xcoch(T)_\sigma/W_0\cong B(G)_{\on{unr}}$ as above, 
$$B(G)_{\on{unr}}\cap B(G,\mu)=\{\la_\sigma\in\xcoch(T)_\sigma^+\mid \la_\sigma\preceq\mu_\sigma\}.$$
\end{cor}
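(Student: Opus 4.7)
The proof should be a direct unpacking of definitions, using Lemma~\ref{L: unramified elements parameterization} as the main input. My plan is the following. For $\lambda \in \xcoch(T)$ with $\lambda_\sigma$ dominant, I first read off from Lemma~\ref{L: unramified elements parameterization} that the Kottwitz invariants of $[\varpi^\lambda] \in B(G)_{\on{unr}}$ are $\nu_{\varpi^\lambda}=\bar\lambda$ and $\kappa_G(\varpi^\lambda)=[\lambda] \in \pi_1(G)_\sigma$. Thus $[\varpi^\lambda]\in B(G,\mu)$ if and only if $\bar\lambda\leq \bar\mu$ in $\xcoch(T)_\QQ^{\sigma,+}$ and $[\lambda]=[\mu]$ in $\pi_1(G)_\sigma$. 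Since the right vertical map of Lemma~\ref{L: unramified elements parameterization} preserves the partial order, the forward implication $\lambda_\sigma \preceq \mu_\sigma \Rightarrow [\varpi^\lambda]\in B(G,\mu)$ is immediate.

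The substantive direction is the converse: assuming $\bar\lambda \leq \bar\mu$ and $[\lambda]=[\mu]$ in $\pi_1(G)_\sigma$, I need to show $\mu_\sigma - \lambda_\sigma$ is a \emph{non-negative integer} combination of $\Delta^\vee_\sigma = \{\alpha^\vee_\sigma : \alpha \in \Delta\}$. I would group simple coroots by $\sigma$-orbits: let $\bar\Delta$ be the set of $\sigma$-orbits on $\Delta$, pick a representative $\alpha_O\in O$ for each $O \in \bar\Delta$, and set $\beta_O^\vee := \sum_{\alpha\in O}\alpha^\vee$, so that in $\xcoch(T)_\QQ^\sigma$ one has $\overline{\alpha_O^\vee} = \beta_O^\vee/|O|$.

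The key auxiliary lemma I would establish is that the map
\[
\ZZ^{\bar\Delta} \longrightarrow \xcoch(T)_\sigma, \qquad (n_O)_O \longmapsto \sum_{O\in \bar\Delta} n_O \,(\alpha_O^\vee)_\sigma,
\]
is injective. This follows by composing with $\xcoch(T)_\sigma \to \xcoch(T)_{\sigma,\QQ} \cong \xcoch(T)_\QQ^{\sigma}$, under which the map becomes $(n_O)\mapsto \sum_O (n_O/|O|)\beta_O^\vee$, and then noting that the $\beta_O^\vee$ are $\QQ$-linearly independent in $\xcoch(T)_\QQ$ (as the $\sigma$-orbits partition the linearly independent set $\Delta^\vee$). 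Granting this injectivity, the hypothesis $[\lambda]=[\mu]$ implies $\mu_\sigma - \lambda_\sigma$ lies in the image of the coroot lattice $Q^\vee$, so I can write $\mu_\sigma - \lambda_\sigma = \sum_O N_O (\alpha_O^\vee)_\sigma$ for \emph{uniquely determined} integers $N_O$. On the other hand, by $\sigma$-averaging the expression of $\bar\mu - \bar\lambda$ as a non-negative rational combination of simple coroots, I can write $\bar\mu - \bar\lambda = \sum_O r_O \beta_O^\vee$ with $r_O \geq 0$ rational. Comparing the images of both expressions in $\xcoch(T)_\QQ^\sigma$ forces $N_O = r_O |O| \geq 0$, completing the converse.

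The main obstacle is the integrality step: it is not \emph{a priori} clear that the non-negative rational information coming from $\bar\mu - \bar\lambda$ matches the integer information coming from $\mu_\sigma - \lambda_\sigma \in Q^\vee_\sigma$, because $\xcoch(T)_\sigma$ can have torsion and an integer expression is not unique in general. The injectivity of $\ZZ^{\bar\Delta}\hookrightarrow \xcoch(T)_\sigma$ is exactly what bridges this gap and forces the coefficients $N_O$ to be the non-negative integers $r_O|O|$.
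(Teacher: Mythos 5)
Your proposal is correct, and it follows the same route the paper intends: the corollary is stated there with no separate proof, being read off from Lemma~\ref{L: unramified elements parameterization} together with the (implicitly assumed) fact that the injection $\xcoch(T)_\sigma^+\to \xcoch(T)_{\bQ}^{\sigma,+}\times\pi_1(G)_\sigma$ reflects the partial order. Your converse argument --- writing $\mu_\sigma-\la_\sigma$ as an integral combination of the classes $(\alpha_O^\vee)_\sigma$ and matching coefficients against the $\sigma$-averaged expression of $\bar\mu-\bar\la$ via the $\QQ$-linear independence of the orbit sums $\beta_O^\vee$ --- is exactly the integrality step the paper leaves implicit, and it is carried out correctly.
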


\begin{rmk}\label{R:center connect}
Note that the natural map $\xcoch(T)^+\to \xcoch(T)^+_\sigma$ may not be surjective.
For example, let $G=
(\Res_{E/F}\SL_2)/\mu_2$, where $E/F$ is an unramified quadratic extension and $\mu_2\subset \Res_{E/F}\mu_2=Z(\Res_{E/F}\SL_2)$. The coweight lattice of $G_\ad=\Res_{E/F}\on{PGL}_2$ is canonically identified with $\bZ^2$ in the usual way such that $\bZ^2_{\geq 0}$ is the semigroup of dominant coweights and that the Frobenius $\sigma$ acts on $\bZ^2$ by permuting the two factors of $\bZ^2$. 
Then $\xcoch(T)$ is identified with the subgroup of $\bZ^2=\xcoch(T_\ad)$ generated by $(2,0), (0,2)$ and $(1,1)$. Now, let $\tau=(1,-1)\in\xcoch(T)$. Then $\tau_\sigma\in\xcoch(T)_\sigma^+$ but this element is not in the image of $\xcoch(T)^+$.

 The failure of the surjectivity of the above map will cause some complications in the sequel. However, if $Z_G$ is connected, there exists fundamental coweights $\omega_i\in\xcoch(T)$ such that $\langle \omega_i,\alpha_j\rangle=\delta_{ij}$ for all $\alpha_j\in \Delta$. Then it follows that the natural map $\xcoch(T)^+\to \xcoch(T)^+_\sigma$ is surjective.
\end{rmk}

Now let $\hat G^\sigma$ denote the $\sigma$-fixed points of $\hat G$. Although this is a possibly non-connected reductive group, its representation theory behaves as if the connected reductive groups:
First $\hat G^\sigma$ contains the diagonalizable subgroup $\hat T^\sigma$, whose character group is ${\xcoch(T)}_\sigma$; Second, the combinatoric structure on $\xcoch(T)_\sigma$ as discussed above allows one to talk about the highest weight of a finite dimensional representation of $\hat G^\sigma$ (with respect to $\hat T^\sigma$). Using the fact that the natural map $\hat T^\sigma/\hat T^{\sigma,\circ}\to \hat G^\sigma/\hat G^{\sigma,\circ}$ is an isomorphism (see \cite[Lemma 4.6]{Z14}), the usual highest weight theory for representations of a connected reductive group extends to $\hat G^\sigma$ without change (see \cite[Lemma 4.10]{Z14}). In particular, for every $\mu_\sigma\in \xcoch(T)^+_\sigma$, there is a unique irreducible representation of $\hat G^\sigma$ of highest weight $\mu_\sigma$, and every irreducible representation is of this form. Given $\la_\sigma,\mu_\sigma\in \xcoch(T)^+_\sigma$, $\la_\sigma\preceq \mu_\sigma$ if and only if $\la_\sigma$ appears as a weight for the representation $V_{\mu_\sigma}$.

Using the above observations, we can show the following.
\begin{lem}
\label{C:nonempty DL}
The unramified element $[\varpi^\tau]$ belongs to $B(G,\mu)$ if and only if the following subspace of $V_\mu$
\begin{equation}
\label{E:Vmmu tau}
\bigoplus_{\lambda \in \tau + (\sigma-1)\XX_\bullet} V_\mu(\lambda)
\end{equation}
is nonzero.
\end{lem}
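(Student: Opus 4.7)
The plan is to reformulate both sides in terms of weights of $V_\mu$ restricted to $\hat T^\sigma$, and then compare them via the highest weight theory for $\hat G^\sigma$.

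First, by Corollary~\ref{L: unramified elementsII}, the condition $[\varpi^\tau]\in B(G,\mu)$ is equivalent to: the unique $W_0$-dominant representative $\tau_\sigma^{\on{dom}}\in\xcoch(T)^+_\sigma$ of $\tau_\sigma$ satisfies $\tau_\sigma^{\on{dom}}\preceq\mu_\sigma$ in $\xcoch(T)_\sigma$. Note that since $\mu\in\xcoch(T)^+$ and each $\sum_{i=0}^{m-1}\sigma^i\alpha$ is a non-negative integral combination of simple roots, $\langle\mu,\sum_i\sigma^i\alpha\rangle\geq 0$, so $\mu_\sigma\in\xcoch(T)^+_\sigma$ automatically. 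On the other hand, the direct sum \eqref{E:Vmmu tau} is, by definition, the $\tau_\sigma$-weight subspace of $V_\mu$ regarded as a representation of $\hat T^\sigma$ (via the inclusion $\hat T^\sigma\hookrightarrow\hat T$). Thus the lemma reduces to the equivalence: $\tau_\sigma$ is a weight of $V_\mu|_{\hat T^\sigma}$ if and only if $\tau_\sigma^{\on{dom}}\preceq\mu_\sigma$.

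For the sufficient direction, I would observe that the pinned structure on $\hat G$ is $\sigma$-stable, so $\hat B^\sigma$ is a Borel of $\hat G^\sigma$ contained in $\hat B$. Hence the $\hat B$-highest weight vector $v_\mu\in V_\mu(\mu)$ is also a $\hat B^\sigma$-highest weight vector of $\hat T^\sigma$-weight $\mu_\sigma$. The highest weight theory for $\hat G^\sigma$ (valid even though $\hat G^\sigma$ may be disconnected, as discussed before the lemma using $\hat T^\sigma/\hat T^{\sigma,\circ}\cong\hat G^\sigma/\hat G^{\sigma,\circ}$) then gives that $\hat G^\sigma\cdot v_\mu\cong V_{\mu_\sigma}$ as a sub-representation of $V_\mu|_{\hat G^\sigma}$. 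Any $\tau_\sigma$ with $\tau_\sigma^{\on{dom}}\preceq\mu_\sigma$ is a weight of $V_{\mu_\sigma}$ and hence of $V_\mu|_{\hat T^\sigma}$.

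For the necessary direction, suppose there is $\lambda\in\xcoch(T)$ with $\lambda_\sigma=\tau_\sigma$ and $V_\mu(\lambda)\neq 0$. By the ordinary highest weight theory for $\hat G$, the $W$-dominant representative $\lambda^+$ of $\lambda$ satisfies $\lambda^+\preceq\mu$ in $\xcoch(T)^+$, so $\mu-\lambda^+=\sum_\alpha c_\alpha\alpha^\vee$ with $c_\alpha\in\ZZ_{\geq 0}$. Projecting to $\xcoch(T)_\sigma$ gives $\lambda^+_\sigma\preceq\mu_\sigma$ since each $\alpha^\vee_\sigma\in\Delta^\vee_\sigma$. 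Writing $\lambda=w\lambda^+$ for some $w\in W$ and using the standard fact $\lambda^+-w\lambda^+\in\sum_\alpha\ZZ_{\geq 0}\cdot\alpha^\vee$, we also obtain $\tau_\sigma=\lambda_\sigma\preceq\lambda^+_\sigma\preceq\mu_\sigma$. Finally, since $V_\mu|_{\hat G^\sigma}$ is $W_0$-stable (at the level of $\hat T^\sigma$-weight decomposition), $\tau_\sigma^{\on{dom}}$ is also a weight of $V_\mu|_{\hat T^\sigma}$, and the same argument applied to it yields $\tau_\sigma^{\on{dom}}\preceq\mu_\sigma$.

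The main technical point to check carefully is the compatibility of the partial orders on $\xcoch(T)^+$ and $\xcoch(T)^+_\sigma$ under the projection $\xcoch(T)\to\xcoch(T)_\sigma$: one must verify that $\Delta^\vee$ maps into $\ZZ_{\geq 0}\cdot\Delta^\vee_\sigma$ (which is automatic since $\Delta^\vee_\sigma$ is defined as the image) and that the $W_0$-action on $\xcoch(T)_\sigma$ preserves the cone generated by $\Delta^\vee_\sigma$ modulo translation by dominant coweights. These are purely combinatorial statements that follow from the definition and the fact that $\sigma$ preserves both $\Delta$ and the pinning. Everything else is a careful bookkeeping of highest weight theory in the possibly disconnected setting of $\hat G^\sigma$.
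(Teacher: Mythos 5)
Your proof is correct and takes essentially the same route as the paper: both identify \eqref{E:Vmmu tau} with the $\tau_\sigma$-weight space of $V_\mu|_{\hat G^\sigma}$, which has unique highest weight $\mu_\sigma$, and then conclude via the saturation of weights for $\hat G^\sigma$-representations together with Corollary~\ref{L: unramified elementsII}. The paper merely shortcuts your case bookkeeping by first replacing $\tau$ with a $W_0$-conjugate so that $\tau_\sigma$ is dominant, after which the equivalence $\tau_\sigma\preceq\mu_\sigma$ is immediate.
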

\begin{proof}
To prove the lemma, we are free to replace $\tau$ by $w(\tau)$ for some $w\in W_0$. Therefore, we may assume that $\tau_\sigma\in \xcoch(T)_\sigma^+$.
The above space \eqref{E:Vmmu tau} is just the $\tau_\sigma$-weight space of $V_\mu|_{\hat G^\sigma}$, which has $\mu_\sigma$ as its unique highest weight. Therefore, \eqref{E:Vmmu tau} is non-zero if and only if $\tau_\sigma\preceq \mu_\sigma$, which by Corollary \ref{L: unramified elementsII} is equivalent to $[\varpi^\tau]\in B(G,\mu)$.
\end{proof}

\begin{rmk}
The above criterion shows that if $[\varpi^\tau]\in B(G,\mu)$, then $[\varpi^{w(\tau)}]\in B(G,\mu)$ for every $w\in W$ (although $\tau$ and $w(\tau)$ may not be $\sigma$-conjugate if $w\not\in W_0$).
\end{rmk}

\subsubsection{Unramified basic elements}
Next, we discuss when $B(G,\mu)\cap B(G)_{\on{unr}}$ contains the basic element. Recall that we have defined
\[\Lambda=\Big\{\la\in \xcoch(T)\;\Big|\; \sum_{i=0}^{m-1} \sigma^i(\la)\in\xcoch(Z_G)\Big\}\quad\textrm{and} \quad V_\mu^\Tate=\bigoplus_{\la\in\La}V_\mu(\la).\]
\begin{lem}\label{L:Tate weight}
There exists some $\tau\in\Lambda$ such that $V_\mu^\Tate=\bigoplus_{\la\in\tau+(\sigma-1)\xcoch}V_\mu(\la)$.
\end{lem}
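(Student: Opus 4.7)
The plan is to identify the ``correct'' coset of $(\sigma-1)\XX_\bullet$ inside $\Lambda$ that meets the weight set of $V_\mu$, by showing that any two $\Lambda$-weights of $V_\mu$ actually differ by an element of $(\sigma-1)\XX_\bullet$. The key input is a short Tate-cohomology vanishing for the coroot lattice viewed as a $\langle\sigma\rangle$-permutation module.

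First I would dispose of the easy inclusion. For any $\tau\in\Lambda$ and $\nu\in\XX_\bullet(T)$, the norm $N\lambda:=\sum_{i=0}^{m-1}\sigma^i(\lambda)$ of $\lambda=\tau+(\sigma-1)\nu$ equals $N\tau+(\sigma^m-1)\nu=N\tau\in\XX_\bullet(Z_G)$, so $\tau+(\sigma-1)\XX_\bullet\subseteq\Lambda$. Consequently $\bigoplus_{\lambda\in\tau+(\sigma-1)\XX_\bullet}V_\mu(\lambda)\subseteq V_\mu^{\Tate}$ for every $\tau\in\Lambda$, and the work is to control the reverse inclusion by choosing $\tau$ wisely.

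Next I would prove the core uniqueness statement: if $\lambda_1,\lambda_2$ are weights of $V_\mu$ both lying in $\Lambda$, then $\lambda_1-\lambda_2\in(\sigma-1)\XX_\bullet(T)$. The difference $\gamma:=\lambda_1-\lambda_2$ lies in the coroot lattice $Q$ (all weights of $V_\mu$ are congruent modulo $Q$), and $N\gamma=N\lambda_1-N\lambda_2\in\XX_\bullet(Z_G)$. One elementary lemma to insert here is that $Q\cap\XX_\bullet(Z_G^\circ)=0$: an element of $\XX_\bullet(Z_G^\circ)$ pairs trivially with every root, while on $Q$ the pairing against the simple roots is given by the (non-degenerate) Cartan matrix. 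Hence $N\gamma=0$. Since $G$ is unramified, $\sigma$ permutes the simple coroots, so $Q$ is a direct sum of $\langle\sigma\rangle$-permutation modules $\ZZ[\langle\sigma\rangle/H]$; by Shapiro's lemma plus the vanishing $\widehat H^{-1}(H,\ZZ)=0$ for cyclic $H$ acting trivially on $\ZZ$, we get $\widehat H^{-1}(\langle\sigma\rangle,Q)=0$, i.e.\ $\ker(N|_Q)=(\sigma-1)Q\subseteq(\sigma-1)\XX_\bullet(T)$. This forces $\gamma\in(\sigma-1)\XX_\bullet(T)$.

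Finally I would assemble the conclusion. If $V_\mu^{\Tate}\neq 0$, pick any weight $\tau$ of $V_\mu$ with $\tau\in\Lambda$; by the previous step every $\Lambda$-weight of $V_\mu$ lies in $\tau+(\sigma-1)\XX_\bullet$, which together with the easy inclusion gives the desired equality. If $V_\mu^{\Tate}=0$, the easy inclusion shows that for any $\tau\in\Lambda$ (for instance $\tau=0$) the right-hand side is contained in $V_\mu^{\Tate}=0$ and hence also vanishes, so any $\tau\in\Lambda$ works. The only nontrivial ingredient is the Tate-cohomology vanishing for $Q$; everything else is bookkeeping, so I do not anticipate any genuine obstacle beyond making sure the $\sigma$-action on $Q$ is indeed a permutation action, which is guaranteed by the hypothesis that $G$ is unramified.
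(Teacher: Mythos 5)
Your proof is correct, but it reaches the key claim by a shorter route than the paper. Both arguments hinge on showing that any two $\Lambda$-weights $\la_1,\la_2$ of $V_\mu$ satisfy $\la_1-\la_2\in(\sigma-1)\xcoch(T)$, and both ultimately use the same combinatorial fact: since $\sigma$ permutes the simple coroots, a norm-zero element of the coroot lattice $Q$ lies in $(\sigma-1)Q$ (your Shapiro/$\widehat{H}^{-1}$ computation is just this fact dressed up). The difference is in how one gets to a norm-zero element of $Q$. The paper first notes that $\varpi^{\la_1}$ and $\varpi^{\la_2}$ both represent the (unique) basic class in $B(G,\mu)$ and invokes the classification of unramified $\sigma$-conjugacy classes (Lemma~\ref{L: unramified elements}) to write $\la_1=w(\la_2)+(\sigma-1)\nu$ with $w\in W_0$; it then checks that $w(\la_2)-\la_2\in Q$ has vanishing norm (using that $w$ fixes the central element $\sum_i\sigma^i(\la_2)$) and absorbs it into $(\sigma-1)Q$. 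You bypass the $B(G)$ machinery entirely: $\la_1-\la_2\in Q$ because weights of an irreducible representation are congruent modulo the root lattice of $\hat G$, and its norm lies in $Q\cap\xcoch(Z_G)=0$ by non-degeneracy of the Cartan matrix. Your version is thus more elementary and logically independent of the earlier $\sigma$-conjugacy analysis, at the modest cost of one extra (easy) lattice observation; it also makes explicit the reverse inclusion $\tau+(\sigma-1)\xcoch\subseteq\Lambda$ (using $\sigma^m=1$ on $\xcoch$) and the degenerate case $V_\mu^{\Tate}=0$, both of which the paper leaves implicit.
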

\begin{proof}
Let $\la_1,\la_2\in \La$ be two weights appearing in $V_\mu$. Then $\varpi^{\la_i}$ represents the basic element in $B(G,\mu)$. Therefore, $\la_1=w(\la_2)+(\sigma-1)\nu$ for some $w\in W_0$ and $\nu\in\xcoch(T)$ by Lemma \ref{L: unramified elements}. Note that $w(\la_2)-\la_2$ is an element in the coroot lattice of $G$ and 
$$\sum_{i=0}^{m-1} \sigma^i(w(\la_2)-\la_2)=w(\sum_{i=0}^{m-1}\sigma^i(\la_2))-\sum_{i=0}^{m-1}\sigma^i(\la_2)=0.$$ Therefore, $w(\la_2)-\la_2=(\sigma-1)\nu'$ for some $\nu'$ in the coroot lattice. Therefore, $\la_1-\la_2=(\sigma-1)(\nu+\nu')$. The lemma follows.
\end{proof}

\begin{prop}\label{unramified basic}
The following are equivalent.
\begin{enumerate}
\item[(1)\;]  The basic element in $B(G,\mu)$ is unramified;
\item[(2)\;] $V_\mu^\Tate\neq 0$;
\item[(3)\;] $\on{def}_G(b)=0$ for the basic element $b\in B(G,\mu)$.
\item[(3)'] $\dim X_\mu(b)= \langle \rho, \mu\rangle$ for any $b$ representing the basic element of $B(G,\mu)$.
\item[(4)\;] $\mu_\ad$, regarded as a character of $\hat T_\s$, is trivial on $Z(\hat{G}_\s)^{\Ga_F}$.
\end{enumerate}
If these conditions hold, and in addition $Z_G$ is connected, then the basic element in $B(G,\mu)$ can be represented as $\varpi^\tau$ for some $\tau\in\xcoch(Z_G)$.
\end{prop}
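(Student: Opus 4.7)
The plan is to handle the equivalences in pieces, with $(1) \Leftrightarrow (4)$ as the main technical step. The equivalence $(3) \Leftrightarrow (3')$ is immediate from the dimension formula $\dim X_\mu(b) = \langle \rho, \mu - \nu_b\rangle - \tfrac{1}{2}\on{def}_G(b)$ for affine Deligne--Lusztig varieties: for basic $b$ the Newton point $\nu_b$ is central, so $\langle \rho, \nu_b\rangle = 0$. The equivalence $(1) \Leftrightarrow (3)$ comes from the fact that $b$ is unramified precisely when $J_b$ is a Levi subgroup of $G$, equivalently when $J_b$ and $G$ share the same $F$-rank, i.e.\ $\on{def}_G(b) = 0$.

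For $(1) \Leftrightarrow (2)$, recall that a basic unramified class in $B(G)$ is exactly a class of the form $[\varpi^{\tau}]$ with $\tau \in \Lambda$. Lemma \ref{L:Tate weight} produces some $\tau_0 \in \Lambda$ with $V_\mu^{\Tate} = \bigoplus_{\lambda \in \tau_0 + (\sigma-1)\xcoch(T)} V_\mu(\lambda)$, and Lemma \ref{C:nonempty DL} says the latter is non-zero iff $[\varpi^{\tau_0}] \in B(G,\mu)$, in which case it must coincide with the (unique) basic element of $B(G,\mu)$. Thus $V_\mu^{\Tate} \neq 0$ is equivalent to saying the basic element of $B(G,\mu)$ is unramified.

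The step $(1) \Leftrightarrow (4)$ will be proved by reduction to the adjoint group. For $\tau \in \xcoch(T)$, the condition $\tau \in \Lambda$ is equivalent to $N\tau_\ad = 0$ in $\xcoch(T_\ad)$, where $\tau_\ad$ denotes the image of $\tau$ and $N = \sum_{i=0}^{m-1}\sigma^i$. The key input is that $\xcoch(T_\ad)$ is a permutation $\langle\sigma\rangle$-module---the fundamental coweights form a basis permuted by $\sigma$ since $G$ is unramified---so Shapiro's lemma gives $\hat H^{-1}(\langle\sigma\rangle, \xcoch(T_\ad)) = 0$, i.e.\ $\ker N = (\sigma-1)\xcoch(T_\ad)$. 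A Pontryagin-duality computation in the finite abelian group $\pi_1(G_\ad) = X^*(Z(\hat G_\s))$ identifies the restriction map $X^*(Z(\hat G_\s)) \to X^*(Z(\hat G_\s)^{\Gamma_F})$ with the projection $\pi_1(G_\ad) \twoheadrightarrow \pi_1(G_\ad)_\sigma$, so $(4)$ is equivalent to $[\mu_\ad] = 0$ in $\pi_1(G_\ad)_\sigma$. Combining: if $(1)$ holds, take $\tau_0 \in \Lambda$ representing the basic element of $B(G,\mu)$; then $N\tau_{0,\ad} = 0$ forces $\tau_{0,\ad} \in (\sigma-1)\xcoch(T_\ad)$, hence $[\mu_\ad] = [\tau_{0,\ad}] = 0$ in $\pi_1(G_\ad)_\sigma$, proving $(4)$. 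Conversely, $(4)$ yields a decomposition $\mu_\ad = (\sigma-1)\tilde\nu + q$ with $\tilde\nu \in \xcoch(T_\ad)$ and $q$ in the coroot lattice of $G_\ad$; lifting $\tilde\nu$ to $\nu' \in \xcoch(T)$ and $q$ to $\tilde q \in \xcoch(T)$, I set $\tau := \mu - (\sigma-1)\nu' - \tilde q$, which projects to $0$ in $\xcoch(T_\ad)$, so $\tau \in \xcoch(Z_G^\circ) \subseteq \Lambda$ and $[\tau] = [\mu]$ in $\pi_1(G)_\sigma$; then $[\varpi^\tau]$ is the basic element of $B(G,\mu)$ and is manifestly unramified.

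The concluding assertion then comes for free: when $Z_G$ is connected, $\xcoch(Z_G^\circ) = \xcoch(Z_G)$, so the element $\tau$ constructed in the $(4) \Rightarrow (1)$ step already lies in $\xcoch(Z_G)$. The main obstacle is the adjoint-group Galois-cohomology calculation, in particular the vanishing of $\hat H^{-1}(\langle\sigma\rangle, \xcoch(T_\ad))$ via the permutation-module structure; once this is in hand, the remaining equivalences are routine bookkeeping.
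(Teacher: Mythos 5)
Your step $(4)\Rightarrow(1)$ has a genuine flaw at the phrase ``lifting $\tilde\nu$ to $\nu'\in\xcoch(T)$'': the map $\xcoch(T)\to\xcoch(T_\ad)$ is injective but is surjective only when $Z_G$ is connected (its cokernel is a finite group that vanishes exactly when $\xch(Z_G)$ is torsion-free), whereas the equivalences (1)--(4) are asserted for arbitrary unramified $G$. In fact, if your construction worked in general it would represent the basic class of $B(G,\mu)$ by $\varpi^\tau$ with $\tau$ central for every $G$, which is false when $Z_G$ is disconnected — see Remark \ref{R:center connect}, where for $G=(\Res_{E/F}\SL_2)/\mu_2$ the basic unramified class is $[\varpi^{(1,-1)}]$ while $\xcoch(Z_G)=0$; this is precisely why the last assertion of the proposition carries the hypothesis that $Z_G$ be connected. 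The repair is cheap: writing $\mu_\ad=q+(\sigma-1)\tilde\nu$ with $q$ in the coroot lattice $Q^\vee$ (which sits inside $\xcoch(T)$, so needs no lifting), set $\tau:=\mu-q$. Then $\tau_\ad=(\sigma-1)\tilde\nu$, so $N\tau_\ad=0$ and hence $N\tau\in\xcoch(Z_G)$, i.e. $\tau\in\Lambda$ and $[\varpi^\tau]$ is basic; since $[\tau]=[\mu]$ in $\pi_1(G)_\sigma$, it is \emph{the} basic element of $B(G,\mu)$, proving (1). Only for the final assertion should you further correct by $(\sigma-1)\nu'$: when $Z_G$ is connected the lift $\nu'$ of $\tilde\nu$ exists, $\tau-(\sigma-1)\nu'\in\xcoch(Z_G)$, and $\varpi^{\tau}$ and $\varpi^{\tau-(\sigma-1)\nu'}$ are $\sigma$-conjugate. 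With this correction your Tate-cohomology argument ($\hat H^{-1}$ of the permutation module $\xcoch(T_\ad)$, plus the duality identifying (4) with $[\mu_\ad]=0$ in $\pi_1(G_\ad)_{\Ga_F}$) is a valid route to $(1)\Leftrightarrow(4)$ and differs from the paper, which instead goes through (3): it shows $\on{def}_G(b)=0$ holds iff the adjoint basic class is trivial, i.e.\ iff $[\mu_\ad]=0$.

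There is a second, logical gap at $(1)\Leftrightarrow(3)$. You justify it by ``the fact'' that $b$ is unramified precisely when $J_b$ is a Levi, i.e.\ $\on{def}_G(b)=0$; but in this paper that equivalence of the two definitions of unramified is Corollary \ref{C: two def of unramified}, which is \emph{deduced from} Proposition \ref{unramified basic}, so invoking it here is circular, and the direction you actually need, $\on{def}_G(b)=0\Rightarrow b\in\mathrm{im}\,(B(T)\to B(G))$, is exactly the nontrivial content. Either supply the paper's argument (for basic $b$, $\on{def}_G(b)=0$ forces $b_\ad$ to be $\sigma$-conjugate to $1$ in $G_\ad(L)$, and one lifts to $G(L)$ using that $G_\ad(L)$ is generated by the image of $G(L)$ and $T_\ad(L)$), or bypass the link by proving $(3)\Leftrightarrow(4)$ directly — for basic $b$ the twisted centralizer $J_b$ is the inner twist of $G$ classified by $[b_\ad]\in B(G_\ad)_{\mathrm{basic}}\cong\pi_1(G_\ad)_{\Ga_F}$, and it has full $F$-rank iff this class, which equals $[\mu_\ad]$, vanishes — and then compose with your corrected $(1)\Leftrightarrow(4)$. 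Your remaining steps, $(1)\Rightarrow(3)$, $(1)\Leftrightarrow(2)$ via Lemmas \ref{L:Tate weight} and \ref{C:nonempty DL}, and $(3)\Leftrightarrow(3)'$ via the dimension formula, are correct and agree with the paper.
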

Note that the last statement may fail if $Z_G$ is not connected. See Remark \ref{R:center connect}.
\begin{proof}
The equivalence of (1) and (2) follows from Lemma \ref{C:nonempty DL} and Lemma \ref{L:Tate weight}. The equivalence of (3) and (3)' follows from the dimension formula of ADLVs in the affine Grassmannian \cite{Ham,Z} (and the fact the Newton point $\nu_b$ of a basic element is central).

Now if $[\varpi^\tau]$ is basic, then $J_{\varpi^\tau}=G$ and therefore, $\on{def}_G(b)=0$. Conversely, $\on{def}_G(b)=0$ implies that $b_\ad$ is $\sigma$-conjugate to $1$ in $G_\ad(L)$. Since $G_\ad(L)$ is generated by the image of $G(L)\to G_\ad(L)$ and $T_\ad(L)$, $b$ is $\sigma$-conjugate to an unramified element. If in addition $Z_G$ is connected, then $G(L)\to G_\ad(L)$ is surjective and therefore $b$ is $\sigma$-conjugate to $\varpi^\tau$ in $G(L)$ for some $\tau\in\xcoch(Z_G)$.

Finally, the identity element represents the basic element in $B(G_\ad,\mu_\ad)$ if and only if $[\mu_\ad]=0$ in $\pi_1(G)_{\Ga_F}=(Z(\hat G_\s)^{\Ga_F})^D$. Therefore, (3) and (4) are equivalent.  
\end{proof}

\begin{remark}\label{classification of minuscule}

Note that if $G$ is a split adjoint group, a dominant coweight satisfying conditions in Proposition~\ref{unramified basic} is necessarily in the coroot lattice, and therefore cannot be minuscule. On the other hand, if $G$ is non-split, there are minuscule coweights satisfying these conditions.
Here is a list of all such minuscules $\mu$ for simple adjoint groups. The group $G$ is necessarily quasi-split but non-split form of type $\mathsf{A},\mathsf{D},\mathsf{E}_6$.  In the following two tables, $\omega_i$ is the fundamental coweight such that $\langle\omega_i,\al_j\rangle=\delta_{ij}$ for simple roots $\{\al_j\}$, and our numeration of simple roots follows from Bourbaki \cite[pp 250--275]{Bourbaki}.
\medskip
\begin{center}
\begin{tabular}{|c | c| c| c|}
\hline
Type of $G$ & Minuscule coweight $\mu$  & $\dim V_\mu$ &$\dim V_\mu^{\Tate}$ \\ 
\hline
${^2}\mathsf A_n$, $n>1$ odd & $\omega_{i}, i=2,4,\ldots, n-1$  & $\begin{pmatrix} n+1 \\ i \end{pmatrix}$ & $\begin{pmatrix} (n+1)/2 \\ i/2 \end{pmatrix}$\\ 
\hline
${^2}\mathsf A_n$, $n$ even & $\omega_i, i=1,\ldots,n$ & $\begin{pmatrix} n+1 \\ i \end{pmatrix}$ & $\begin{pmatrix} n/2 \\ \lfloor i/2\rfloor \end{pmatrix}$ \\
\hline
${^2}\mathsf D_n$ & $\omega_1$  &  $2n$ & $2$ \\  
\hline
${^3}\mathsf D_4$ & $\omega_1,\omega_3,\omega_4$ &  $8$ & $2$\\
\hline
${^2}\mathsf E_6$ & $\omega_1,\omega_6$ & $27$ & 3
\\ \hline
\end{tabular}
\end{center}
From the table, we see that the dimension of $V_\mu^\Tate$ can be large in the unitary group case, even for minuscule cocharacters. 
Many more examples that satisfy the equivalent conditions of Proposition \ref{unramified basic} come from the cases when $G$ is a restriction of scalar along an unramified extension. We give some examples.
 
Let $G=\Res_{F'/F}G'$ where $F'/F$ is a degree $d$ unramified extension and $G'$ an absolute simple adjoint group over $F'$. 
We identify 
$$\xcoch(T)=\prod_{\varphi\in\Hom_F(F', \overline{F})}\xcoch(T')$$ in the usual way. Under this identification, we write $\mu\in\xcoch(T)$ as $\{ \mu_\varphi\}\in \prod_{\varphi:F'\to \overline{F}}\xcoch(T')$. On the dual group side,
$\hat G=\prod_{\varphi: F'\to \overline{F}} \hat{G'}$ and the highest weight representation $V_\mu$ of $\hat G$ is identified with the representation  $\boxtimes_{\varphi} V_{\mu_{\varphi}}$ of $\prod_{\varphi}\hat G'$. For simplest, we assume that $G'$ is split. Then it is easy to see
\[V_\mu^\Tate=\big(\bigotimes_{\varphi}V_{\mu_{\varphi}}\big)(0).\]
In particular,
\medskip
\begin{center}
\begin{tabular}{|c | c| c| c| c|}
\hline
Type of $G'$ &  $[F':F]$ & Minuscule coweight $\mu=\{ \mu_\varphi\}$  & $\dim V_\mu$ & $\dim V_\mu^{\Tate}$ \\ 
\hline
$A_1$ & $2g$ & $\mu_{\varphi}=\omega$  & $2^g$ & $\begin{pmatrix} 2g \\ g \end{pmatrix}$\\ 
\hline
$B_n$ & $2$ & $\mu_{\varphi}=\omega_1$ & $4n^2$ & $2n$ \\
\hline
$C_n$ & $2$ & $\mu_{\varphi}=\omega_n$  &  $2^{2n}$ & $2^n$ \\  
\hline
$D_n$ & $2$ & $\mu_{\varphi}=\omega_1$ &  $4n^2$ & $2n$
\\ \hline
\end{tabular}
\end{center}
\medskip

Note that all pairs $(G,\mu)$ in the above two tables, except ${^2}E_6$, can appear as the local component of a Shimura datum of abelian type.
\end{remark}

We also mention a corollary, which shows the two definitions of the unramified elements, one given in \S\ref{SS:unramified elements} and one given here, are the same.
\begin{cor}
\label{C: two def of unramified}
The following are equivalent.
\begin{enumerate}
\item $b\in B(G)$ can be represented as $\varpi^\la$ for some $\la\in\xcoch(T)$.
\item $\on{def}_G(b)=0$, i.e. the $F$-rank of $G$ and $J_b$ coincide.
\end{enumerate} 
\end{cor}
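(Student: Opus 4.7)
The plan is to prove the two implications separately, with the harder direction $(2)\Rightarrow(1)$ reducing to the basic-element case already settled by Proposition~\ref{unramified basic}.

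For $(1)\Rightarrow(2)$, I would pick a representative $b = \varpi^\lambda \in T(L)$ of the $\sigma$-conjugacy class and check directly that $T(F) \subseteq J_b(F)$: for any $t \in T(F)$ the identity $t^{-1} b\, \sigma(t) = t^{-1}\sigma(t)\, \varpi^\lambda = \varpi^\lambda = b$ uses only that $T$ is abelian and $\sigma(t)=t$. In particular the maximal $F$-split subtorus $S \subseteq T$ embeds as an $F$-split subtorus of $J_b$, giving $F\text{-rank}(J_b) \geq F\text{-rank}(G)$. The reverse inequality follows from Kottwitz's theorem that $J_b$ is an inner $F$-form of the Levi subgroup $M_b = Z_G(\nu_b)$ of $G$, together with the fact that inner twisting cannot increase $F$-rank. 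The equality $F\text{-rank}(J_b) = F\text{-rank}(G)$ is precisely $\on{def}_G(b)=0$.

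For $(2)\Rightarrow(1)$, the strategy is to reduce to the basic case inside the Levi $M := Z_G(\nu_b)$. After replacing $b$ by a $\sigma$-conjugate I may choose a $\sigma$-invariant representative of $\nu_b$ landing in $T$; since such a representative factors through the maximal $F$-split torus $S$, we have $M \supseteq Z_G(S) = T$, so $M$ is an unramified standard Levi of $G$ and has the same $F$-rank as $G$. After a further $\sigma$-conjugation I may arrange $b \in M(L)$ and basic in $M$; moreover $J_b^G = J_b^M$, so the hypothesis $\on{def}_G(b)=0$ forces $\on{def}_M(b)=0$. Applying the implication $(3)\Rightarrow(1)$ of Proposition~\ref{unramified basic} to the pair $(M, b)$ now produces $\lambda \in \XX_\bullet(T)$ with $b$ $\sigma$-conjugate to $\varpi^\lambda$ inside $M(L) \subseteq G(L)$, as required.

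The main obstacle is to justify that Proposition~\ref{unramified basic} really applies to the Levi $M$: one needs $M$ to inherit an unramified $\mO$-structure from $G$ with abstract Cartan canonically identified with $T$. This follows because $\nu_b$ comes from a rational cocharacter of the split torus $S$, so $M$ is the centralizer in the unramified $\mO$-model of $G$ of an $\mO$-subtorus of $S$, and hence itself unramified over $\mO$. Once this is in hand, both implications assemble to give the corollary.
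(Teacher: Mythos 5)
Your proposal is correct and takes essentially the same route as the paper: for $(2)\Rightarrow(1)$ you, exactly as the paper does, $\sigma$-conjugate $b$ into a basic element of the Levi $M=Z_G(\nu_b)$ and apply Proposition~\ref{unramified basic} (via $J_b^G=J_b^M$ and $\mathrm{rank}_F\,M=\mathrm{rank}_F\,G$), while for $(1)\Rightarrow(2)$ you merely spell out the bounds ($T\subseteq J_b$ for $b=\varpi^\lambda$, and $J_b$ an inner form of the quasi-split Levi $Z_G(\nu_b)$) that the paper dismisses as clear. No gaps.
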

\begin{proof}
Clearly (1) implies (2). Conversely, assume $\on{def}_G(b)=0$. Since $b$ is $\sigma$-conjugate to some basic element in a Levi subgroup of $G$, we can apply Proposition \ref{unramified basic} to this Levi subgroup to conclude (choose any $\mu$ such that $b\in B(G,\mu)$).
\end{proof}

\subsubsection{Twisted centralizer for unramified elements}\label{tcen}
We  recall some facts of the twisted centralizer for the unramified elements. We fix $\mO$-embeddings $T\subset B\subset G$, so elements in $B(G)_{\on{unr}}$ can be represented by $\varpi^\tau$ for $\tau\in \xcoch(T)$. We fix such a representative. For simplicity, we write $J_\tau=J_{\varpi^\tau}$.  It is isomorphic to a Levi subgroup of $G$, and naturally contains $T$ as a maximal torus. With respect to this torus,
\begin{equation}\label{E: Deltatau}
\Delta_\tau=\Big\{\al\in\Delta\; \Big|\; \langle \sum_{i=0}^{m-1} \sigma^i(\al),\tau\rangle=0\Big\}\subset \Delta
\end{equation}
form a set of simple roots of $J_\tau$. Let $\Phi^+_\tau\subset\Phi^+$ denote the corresponding set of positive roots. The relative Weyl group of $J_\tau$ (for the torus $T$) then is
\[(W_\tau)_0:=\{w\in W_0\mid w(\nu_{\varpi^\tau})=\nu_{\varpi^\tau}\}.\]

Let $B_\tau$ and $U_\tau$ denote the corresponding Borel and unipotent radical with respect to this set of simple roots. Then
\begin{equation}\label{E: Utau}
U_\tau(F)=J_{\varpi^\tau}(F)\cap U(L)=\{u\in U(L)\mid \varpi^{\tau}\sigma(u)\varpi^{-\tau}=u\}, \quad B_\tau(F)=J_\tau(F)\cap B(L)=T(F)U_\tau(F).
\end{equation}
 We have the Bruhat decomposition
 \begin{equation}\label{E: bruhatJtau}
 J_\tau(F)=\bigcup_{w\in (W_\tau)_0}B_\tau(F)\dot{w} B_\tau(F),
 \end{equation}
where $\dot{w}$ is a lifting of $w$ to $G(L)$ satisfying $\sigma(\dot{w})\dot{w}^{-1}=\varpi^{w(\tau)-\tau}$.

Now, if we assume further that $\tau\in\xcoch(T)^+$,\footnote{As explained in Remark~\ref{R:center connect}, the map $\xcoch(T)^+ \to \xcoch(T)^+_\sigma$ need not be surjective. So this is a genuine assumption.} then \eqref{E: Deltatau} simplifies as
$$\Delta_\tau=\{\al\in\Delta\mid \langle \sigma^i\al,\tau\rangle=0 \mbox{ for all } i\}.$$ 
In this case, $J_\tau$ has a natural hyperspecial integral structure, i.e, the $\mO$-subgroup scheme of $G$ generated by $\{T,U_{\pm\al}, \al\in\Phi^+_\tau\}$. Then 
\[J_\tau(F)=B_\tau(F)J_\tau(\mO)=\bigcup_{\chi\in \xcoch(T)^\sigma} U_\tau(F)\varpi^\chi J_\tau(\mO)\]
is an Iwasawa decomposition of $J_\tau$.

\subsection{ADLVs via semi-infinite orbits.}
\label{S: ADLV and semiinfinite}
In this subsection, we fix dominant coweights $\mu$ and an unramified element $[\varpi^\tau]$ in $B(G,\mu)$. Then $X_\mu(\varpi^\tau) \neq \emptyset$ by Theorem~\ref{L:nonemptyness of Xmub}. By the discussion of the previous subsection, we may assume that $\tau_\sigma\in \xcoch(T)_\sigma^+$.
We revisit the arguments in \cite[\S 2]{GHKR}. A more detailed analysis (together with a generalization to unramified groups) will allow us to give a description of the irreducible components of $X_\mu(\varpi^\tau)$ up to the action of $B_\tau(F)$. 
To simplify the notation, we write $X_\mu(\tau)$ instead of $X_\mu(\varpi^\tau)$.

\subsubsection{}
Let $\sigma$ denote the $q$-Frobenius.  
Let $S_\mu\subset \Gr\otimes k'$ be a semi-infinite orbit, where $k'$ is the field of the definition for $\mu$. Then we have the following diagram
\begin{equation}\label{q-Frob S}
\xymatrix@C=40pt{
S_\mu\ar[r]\ar[d]&S_{\sigma(\mu)}\ar[r]\ar[d]&S_\mu\ar[d]\\
\Gr\otimes k'\ar^{\sigma\otimes 1}[r]& \Gr\otimes k'\ar^{1\otimes \sigma}[r]&\Gr\otimes k'.
}\end{equation}
We denote the map $S_\mu\to S_{\sigma(\mu)}$ by $\sigma$ for simplicity (since it is induced by the $q$-Frobenius endomorphism of $\Gr$). 

We define 
$$Y_\nu: =  S_\nu\cap X_\mu(\varpi^\tau),$$ which is a locally closed sub ind-scheme of $X_\mu(\tau)$. It fits into the following Cartesian diagram
\begin{equation}\label{first}
\xymatrix@C=20pt{
Y_\nu  \ar[rr]  \ar@{_{(}->}[d] && (S_{\nu}\tilde\times S_{\lambda})\cap (\Gr\tilde\times\Gr_{\mu}) \ar[d]^{\pr_1 \times m} \\
S_\nu \ar[rr]^-{1\times \varpi^\tau\sigma} && S_\nu\times S_{\tau+\sigma(\nu)},
}
\end{equation}
where $\la=\tau+\sigma(\nu)-\nu$.

Since $(S_{\nu}\tilde\times S_{\lambda})\cap(\Gr\tilde\times\Gr_{\mu}) = S_\nu \tilde \times (S_\lambda \cap \Gr_\mu)$ by \eqref{factorizationII},
every $\bbb\in \MV_\mu(\la)$ gives a closed subset $Y_\nu^\bbb = Y_\nu^\bbb(\tau)$ of $Y_\nu$ that fits into the Cartesian diagram
\begin{equation}\label{refined}
\xymatrix@C=20pt{
\!\!\!\!\!\!\!\!\!\!\!\!\!\!\!\!\!\!\!\!Y^\bbb_\nu  = Y^\bbb_\nu(\tau)  \ar[rr]  \ar@{_{(}->}[d] && S_\nu \tilde \times (S_\lambda \cap \Gr_\mu)^\bbb\ar[d]^{\pr_1 \times m} \\
S_\nu \ar[rr]^-{1\times \varpi^\tau\sigma} && S_\nu\times S_{\tau+\sigma(\nu)}.
}
\end{equation}

We quickly mention a couple of simple properties of $Y_\nu^\bbb$.

\begin{lem}
\label{L:easy properties of Ynubtau}
We fix $\bbb\in \MV_\mu(\la)$ with $\la = \tau+\sigma(\nu)-\nu$.
\begin{enumerate}

\item Let $\chi\in \xcoch(T)^\sigma$ be a $\sigma$-invariant coweight of $T$. Then multiplication (on the left) by $\varpi^\chi\in J_\tau(F)$ induces an isomorphism $Y_\nu^\bbb(\tau)\cong Y_{\nu+\chi}^\bbb(\tau)$.

\item Let $\tau'=\tau+\delta- \sigma(\delta)$. Then the isomorphism $X_\mu(\tau)\to X_\mu(\tau'),\  gL^+G\mapsto \varpi^\delta gL^+G$ induces an isomorphism $Y_\nu^\bbb(\tau)\cong Y_{\nu+\delta}^\bbb(\tau')$. 

\item Let $G\to G'$ be a central isogeny, and $(\mu',\tau',\nu')$ be the image of $(\mu,\tau,\nu)$ under the map $\xcoch(T)\to \xcoch(T')$. Then the map $\Gr_G \to \Gr_{G'}$ induces a natural bijection $\MV_\mu(\lambda) \cong \MV_{\mu'}(\lambda')$ and let $\bbb'$ denote the image of $\bbb$. Then the map $\Gr_G \to \Gr_{G'}$ induces a natural isomorphism $Y_{\nu'}^{\bbb'}(\tau') \cong Y_{\nu}^\bbb(\tau)$.

\end{enumerate}
\end{lem}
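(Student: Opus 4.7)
The plan is to verify each of the three statements as a direct bookkeeping check, tracking how the defining Cartesian diagram \eqref{refined} transforms under the relevant natural operations on $X_\mu(b)$.

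For (1), the key observation is that $\sigma(\chi)=\chi$ yields $\varpi^\chi\,\varpi^\tau\,\sigma(\varpi^\chi)^{-1}=\varpi^{\chi+\tau-\sigma(\chi)}=\varpi^\tau$, so $\varpi^\chi\in J_\tau(F)$ and left multiplication by $\varpi^\chi$ is an automorphism of $X_\mu(\tau)$. Since conjugation by $\varpi^\chi$ normalizes $LU$ (it acts on each root subgroup $U_\al\simeq\bG_a$ by the scalar shift $t\mapsto\varpi^{\langle\al,\chi\rangle}t$), I obtain $\varpi^\chi\cdot S_\nu=S_{\nu+\chi}$. The value $\la:=\tau+\sigma(\nu)-\nu$ is stable under $\nu\mapsto\nu+\chi$ because $\sigma(\chi)=\chi$, so the same MV cycle $(S_\la\cap\Gr_\mu)^\bbb$ appears in the diagrams defining $Y_\nu^\bbb(\tau)$ and $Y_{\nu+\chi}^\bbb(\tau)$, and left multiplication by $\varpi^\chi$ visibly matches the two diagrams.

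For (2), I will apply the analogous identity $\varpi^\delta\,\varpi^\tau\,\sigma(\varpi^\delta)^{-1}=\varpi^{\tau+\delta-\sigma(\delta)}=\varpi^{\tau'}$ to realize left multiplication by $\varpi^\delta$ as an isomorphism $X_\mu(\tau)\to X_\mu(\tau')$, which sends $S_\nu$ to $S_{\nu+\delta}$ by the same root-subgroup argument. A direct calculation gives
\[
\tau'+\sigma(\nu+\delta)-(\nu+\delta)=\tau+\delta-\sigma(\delta)+\sigma(\nu)+\sigma(\delta)-\nu-\delta=\tau+\sigma(\nu)-\nu=\la,
\]
so the same $(S_\la\cap\Gr_\mu)^\bbb$ occurs in the Cartesian diagrams defining both $Y_\nu^\bbb(\tau)$ and $Y_{\nu+\delta}^\bbb(\tau')$, and the isomorphism follows.

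For (3), I will reduce to Lemma \ref{L: red to adj}. The central isogeny $G\to G'$ induces $\Gr_G\to\Gr_{G'}$ that restricts to an isomorphism on every connected component, since each component of either affine Grassmannian is isomorphic to the affine Grassmannian of the common simply-connected cover $G_{\on{sc}}=G'_{\on{sc}}$. Consequently it identifies $S_\nu$ with $S_{\nu'}$ and $\Gr_{G,\mu}$ with $\Gr_{G',\mu'}$, yielding the stated bijection $\MV_\mu(\la)\cong\MV_{\mu'}(\la')$ that sends $\bbb$ to $\bbb'$. Pairing this with the Cartesian square of Lemma \ref{L: red to adj} produces the required isomorphism $Y_\nu^\bbb(\tau)\cong Y_{\nu'}^{\bbb'}(\tau')$. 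I do not anticipate any genuine obstacle: the work is largely diagram-chasing and careful indexing, with the only mildly delicate point being the connected-component bookkeeping in (3), which is controlled by the standard decomposition $\Gr_G=\bigsqcup_{\bbzeta\in\pi_1(G)}\Gr_{\bbzeta}$.
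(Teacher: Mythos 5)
Your proposal is correct and takes essentially the same route as the paper: the paper's proof is the one-line observation that each statement is immediate from the Cartesian diagram \eqref{refined}, because the relevant map (left translation by $\varpi^\chi$ or $\varpi^\delta$, or the morphism induced by the central isogeny, which is an isomorphism on the relevant connected components) induces isomorphisms on every term of that diagram — which is exactly the bookkeeping you spell out, including the checks that $\lambda=\tau+\sigma(\nu)-\nu$ is unchanged.
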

\begin{proof}
Each of the statement is clear from the Cartesian diagram \eqref{refined} as the natural morphism induces isomorphisms on all terms in that diagram.
\end{proof}

Now we simply write $Y_\nu^\bbb$ for $Y_\nu^\bbb(\tau)$.
The following proposition gives a key property of $Y_\nu^\bbb$. 
\begin{prop}\label{P: prop of Ynu}
We fix $\bbb\in \MV_\mu(\la)$.
\begin{enumerate} 
\item The subset $Y_\nu^\bbb$ is non-empty if and only if $\la=\tau+\sigma(\nu)-\nu$. 
\item The group 
$$U_\tau(F):=J_\tau(F)\cap U(L)=\{u\in U(L)\mid \varpi^\tau\sigma(u)\varpi^{-\tau}=u\}$$ 
acts transitively on the set of irreducible components of $Y_\nu^\bbb$. Moreover, all irreducible components of $Y_\nu^\bbb$ are of dimension $\langle\rho,\mu-\tau\rangle$.
\end{enumerate}
\end{prop}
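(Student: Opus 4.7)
Plan: Part (1) follows directly from the Cartesian diagram \eqref{refined}. For necessity, the convolution restricted to $S_\nu \tilde\times S_\la$ lands in $S_{\nu+\la}$ by \eqref{factorizationI}, while $\varpi^\tau\sigma: S_\nu \to \Gr$ lands in $S_{\tau+\sigma(\nu)}$, so non-emptiness of $Y_\nu^\bbb$ forces $\nu+\la = \tau+\sigma(\nu)$. For sufficiency, assuming this identity, the pair $(\varpi^\nu,\varpi^\la)\in S_\nu \tilde\times (S_\la\cap\Gr_\mu)^\bbb$ (which lies in the MV cycle by Lemma \ref{L: basic prop of MV}(1)) has convolution image $\varpi^{\nu+\la}=\varpi^\tau\sigma(\varpi^\nu)$, and thus defines a point of $Y_\nu^\bbb$.

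For Part (2), I would adapt the strategy of \cite{GHKR} for unramified elements, refined by the MV cycle structure developed above. The first step is to pull $Y_\nu^\bbb$ back to the $L^+U$-torsor $\pi_\nu: LU \to S_\nu$; after the change of variables $u'=\varpi^{-\nu}u\varpi^\nu$ (an automorphism of $LU$), the preimage becomes
\[
\tilde Y_\nu^\bbb = \{u'\in LU \mid u'^{-1}\varpi^\la\sigma(u')L^+G \in (S_\la\cap\Gr_\mu)^\bbb\},
\]
that is, the preimage of the MV cycle under the Lang-type map $F: u'\mapsto u'^{-1}\varpi^\la\sigma(u')L^+G$. A direct computation (analogous to the one matching $U_\tau(F)$ with its $\varpi^{-\nu}$-conjugate) shows that the $U_\tau(F)$-action on $Y_\nu^\bbb$ translates, in these coordinates, to the left-multiplication action on $\tilde Y_\nu^\bbb$ by the subgroup $\{x\in LU : \varpi^\la\sigma(x)\varpi^{-\la}=x\}$, which visibly preserves every fiber of $F$.

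The key geometric input is then Proposition \ref{P: geom prop of MV cycle}: the MV cycle $(S_\la\cap\Gr_\mu)^\bbb$ projects dominantly with geometrically connected fibers onto the product over simple roots $\al$ of the rank-one MV cycles $S_{M_\al,\la}\cap \Gr_{M_\al,\la+\varepsilon_\al(\bbb)\al^\vee}$. I would use this to reduce the analysis of $F$ to rank-one pieces along each $\sigma$-orbit of simple roots: factor $LU$ accordingly via its filtration by affine root subgroups, and analyze each rank-one Lang map using the explicit description \eqref{E: MV sl2} of $\PGL_2$-type MV cycles (a direct computation with Witt or power-series coordinates). Each rank-one piece contributes its expected relative dimension and exhibits transitivity for the corresponding rank-one $U_{M_\al,\tau}(F)$-action. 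Assembling these rank-one data along $\sigma$-orbits of simple roots in $J_\tau$, using the description of $U_\tau$ in \S\ref{tcen}, yields simultaneously the dimension formula $\dim Y_\nu^\bbb = \langle\rho,\mu-\tau\rangle$ and the transitivity of the $U_\tau(F)$-action on the set of top-dimensional irreducible components.

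The main obstacle will be the bookkeeping in this rank-one reduction: $(S_\la\cap\Gr_\mu)^\bbb$ is not literally a product of rank-one MV cycles but only dominates such a product with connected fibers, so one must combine a filtration of $LU$ by $\sigma$-orbits of affine root subgroups with the combinatorial data $\{\varepsilon_\al(\bbb)\}$ in a consistent way. In the non-split case, simple roots must be grouped into $\sigma$-orbits so the relevant rank-one subgroups descend to $F$-rational subgroups of $U_\tau$, introducing twisted variants of the $\PGL_2$ computation; this is also precisely where the mild hypothesis excluding small residue fields in Theorem \ref{T: ADLV} will enter, to ensure that these twisted rank-one orbit calculations behave correctly.
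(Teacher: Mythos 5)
Your Part (1) is fine: the disjointness of semi-infinite orbits forces $\nu+\la=\tau+\sigma(\nu)$, and the explicit point over $\varpi^\nu$ with second coordinate $\varpi^\la\in(S_\la\cap\Gr_\mu)^\bbb$ gives non-emptiness; this is essentially what the paper's Cartesian diagram gives. Your opening move for Part (2) — pulling back along the torsor over $S_\nu$ and identifying the $U_\tau(F)$-action with left multiplication by the $\varpi^{-\nu}$-conjugate group — also agrees with the paper's (GHKR-style) setup.

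The heart of your Part (2), however, is misdirected. The paper does not reduce to rank one at all: after passing to the adjoint group and $\sigma$-conjugating so that $\tau$ is \emph{dominant} (via Lemma \ref{L:easy properties of Ynubtau}(2)(3) — a reduction absent from your plan), it factors the relevant map through the group-level Lang-type map $f_\tau(u)=u^{-1}\varpi^\tau\sigma(u)\varpi^{-\tau}$ on $LU$ and proves (Lemma \ref{Lang map}) that at each finite level this is an \'etale surjection, simply because its differential is multiplication by $-1$; hence $f_\tau$ is a pro-\'etale cover with $U_\tau(F)$ simply transitive on fibers. Since $\widetilde Y_\nu^\bbb$ is then the pullback of the single \emph{irreducible} MV cycle $(S_\la\cap\Gr_\mu)^\bbb$ along a smooth map with irreducible fibers followed by this cover, transitivity of $U_\tau(F)$ on the set of irreducible components, their equidimensionality, and the dimension $\langle\rho,\mu-\tau\rangle$ all fall out of the (truncated) Cartesian diagram — with no input from Proposition \ref{P: geom prop of MV cycle}, no rank-one computation, and no restriction on the residue field. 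Your claim that Hypothesis \ref{H:p not 23} enters at this point is wrong: that hypothesis (through Lemma \ref{L: commutator}), and Proposition \ref{P: geom prop of MV cycle} itself, are needed only for the later and much finer irreducibility statement, Proposition \ref{P: irrkey}; you are conflating the two results.

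There are also concrete obstructions to your mechanism as stated. The group $\{x\in LU:\varpi^\la\sigma(x)\varpi^{-\la}=x\}$ does preserve the fibers of your map $F:u\mapsto u^{-1}\varpi^\la\sigma(u)L^+G$ into $S_\la$, but it is far from transitive on them (a fiber through $u$ also contains directions coming from $\sigma(u)L^+G\sigma(u)^{-1}\cap LU$), so no fiberwise analysis of $F$ can give transitivity on components; you must first factor through a group-valued Lang map, and to control such a map one needs the twisting cocharacter to be dominant so that conjugation preserves $L^+U$ and its congruence filtration — your $\la=\tau+\sigma(\nu)-\nu$ is in general not dominant, which is exactly why the paper first normalizes $\tau$. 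Moreover, Proposition \ref{P: geom prop of MV cycle} only provides a dominant map with geometrically connected fibers from $(S_\la\cap\Gr_\mu)^\bbb$ onto a product of rank-one MV cycles, not a decomposition compatible with a root-group factorization of $LU$; making your "assembly" of rank-one Lang computations rigorous would essentially force you into the $\pi_1$/abelianization arguments the paper reserves for Proposition \ref{P: irrkey}, and even then it would not obviously deliver equidimensionality of all components of $Y_\nu^\bbb$, which the proposition asserts. The statement you want follows much more directly from the \'etale-differential argument above.
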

\begin{proof}
The arguments below essentially repeat \cite[\S 2]{GHKR}. 
Using Lemma~\ref{L:easy properties of Ynubtau}(2)(3), we may pass to the adjoint group of $G$ and therefore (after a $\sigma$-conjugation) assume that $\tau$ is dominant.

We consider the $L^+U$-torsors $\pi_\lambda$ defined in \eqref{E:triviallization of U(O) torsor} and its variants $\nu^\nu L^+U \varpi^{-\nu}$-torsor $\pi'_\nu$:\footnote{One could alternatively present the proof using the $L^+U$-torsors $\pi_\nu$ and $\pi_\lambda$, but the analogous Lemma~\ref{Lang map} will require some additional effort.}
\[
\xymatrix@R=0pt{
\pi_\la:LU \ar[r] & S_\la &
\pi'_\nu:LU \ar[r] & S_\nu
\\
\qquad u \ar@{|->}[r] &  \varpi^\la u,&
\qquad u \ar@{|->}[r] &   u\varpi^\nu.
}
\]

By definition \eqref{refined}, an element $u\in LU$ belongs to $\widetilde Y_\nu^\bbb: = \pi'^{-1}_\nu(Y^{\bbb}_\nu)$ if and only if  there exists $x\in (S_\la\cap \Gr_\mu)^\bbb$ such that 
$$\pi'_\nu( u)x= \varpi^\tau\sigma(\pi'_\nu(u)),$$ 
or equivalently, 
$$x=\pi_\la(\varpi^{-\la}\varpi^{-\nu}u^{-1}\varpi^{\tau}\sigma(u)\varpi^{\sigma(\nu)})=\pi_\la(\varpi^{-\tau-\sigma(\nu)}u^{-1}\varpi^{\tau}\sigma(u)\varpi^{-\tau} \varpi^{\tau+\sigma(\nu)}).$$ In other words, if we define the morphisms
\begin{equation}
\label{E:ftau}
f_\tau: LU\to LU,\quad u\mapsto u^{-1}\varpi^{\tau}\sigma(u)\varpi^{-\tau}, \textrm{  and}\end{equation}
\[c_{-\tau-\sigma(\nu)}: LU\to LU,\quad u\mapsto \varpi^{-\tau-\sigma(\nu)} u \varpi^{\tau+\sigma(\nu)},\]
then 
\begin{equation}\label{E: local model for Y}
\widetilde Y_\nu^\bbb: = \pi'^{-1}_\nu(Y^{\bbb}_\nu)=f_\tau^{-1}\big(c^{-1}_{-\tau-\sigma(\nu)}(\pi_\la^{-1}((S_\la\cap\Gr_\mu)^{\bbb})) \big),
\end{equation}
or equivalently, we have the following Cartesian diagram
\begin{equation}
\label{E:tilde Ynub Cartesian diagram}
\xymatrix{
Y_\nu^\bbb \ar[d]
& \ar[l] \widetilde Y_\nu^\bbb \ar[d] \ar[rrr] &&& (S_\la\cap\Gr_\mu)^{\bbb} \ar[d]
\\
S_\nu &\ar[l]_{\pi'_\nu} LU \ar[r]^-{f_\tau} & LU \ar[rr]^-{\pi_\lambda\circ c_{-\tau-\sigma(\nu)}} && S_\lambda.
}
\end{equation}
The key of the proof lies in the fact that the map $f_\tau$ behaves like the Lang map for an algebraic group defined over a finite field. More precisely, we have the following.

\begin{lem}\label{Lang map}
Let $\tau$ be a dominant coweight.
\begin{enumerate}
\item The morphism $f_{\tau}$ in \eqref{E:ftau} restricts to a ``pro-\'etale" surjective morphism $f_\tau: L^+U \to L^+U$ with Galois group $U_\tau(\calO)$.
\item
The morphism $f_\tau: LU \to LU$ is an inductive limit of ``pro-\'etale" surjective maps, and $U_{\tau}(F)$
acts simply-transitively on the fibers of $f_{\tau}$ by left multiplication. 
\end{enumerate}
\end{lem}
\begin{proof}
(1) Note that $\varpi^{\tau}L^+U^{(r)}\varpi^{-\tau}\subseteq L^+U^{(r)}$ for every principle congruence subgroup $L^+U^{(r)}$ of $L^+U$ (see \eqref{E:cong subgroup} for the definition). It follows that $f_\tau$ induces a morphism $f_\tau: L^+U \to L^+U$ and a morphism \[
f_{\tau, r}: L^rU \to L^rU, \quad u \mapsto u^{-1} \varpi^\tau \sigma(u) \varpi^{-\tau}
\]
for each $r$. But $f_{\tau, r}$ is naturally defined before passing to the perfection and its \'etaleness follows from the fact that the induced tangent map is multiplication by $-1$. So  $f_{\tau, r}$ is an  \'etale covering with the Galois group $U_\tau(\mO/\varpi^r)$. Passing to the inverse limit shows that $f_\tau: L^+U \to L^+U$ is a ``pro-\'etale" cover
with Galois group $U_\tau(\mO)$. 

(2) For a dominant coweight $\chi \in \XX_\bullet(T)^{+,\sigma}$, Let $\mathrm{conj}_\chi: LU \to LU$ be the conjugation map $u\mapsto \varpi^\chi u \varpi^{-\chi}$. It is clear that $f_\tau \circ \mathrm{conj}_\chi = \mathrm{conj}_\chi \circ f_\tau$ forms a Cartesian square. 
Therefore (1) implies that $f_\tau: \varpi^{-\chi}L^+U \varpi^{\chi} \to \varpi^{-\chi}L^+U \varpi^{\chi}$ is a ``pro-\'etale" map with Galois group $\varpi^{-\chi}U_\tau(\calO) \varpi^\chi$. Taking the limit over $\chi \in \XX_\bullet(T)^{+,\sigma}$ proves (2).
\end{proof}

Now we see that $\widetilde Y_\nu^\bbb$ is the pullback of $(S_\lambda \cap \Gr_\mu)^\bbb$ along a sequence of surjective maps. Part (1) of the proposition is clear. In addition, since $(S_\la\cap\Gr_\mu)^\bbb$ is irreducible, $U_\tau(F)$ acts transitively on the set of irreducible components of $\widetilde Y_\nu^\bbb$, and hence acts transitively on the set of irreducible components of $Y^\bbb_\nu$. 

It remains to show that each irreducible component $Y_\nu^\bbb$ is of dimension $\langle\rho,\mu-\tau \rangle$. For this, we need a ``truncated version" of \eqref{E:tilde Ynub Cartesian diagram}.
We pick $r \in \ZZ$ sufficiently large so that $\varpi^{-\nu}L^+U^{(r)} \varpi^\nu \subseteq L^+U$ and $c_{-\tau-\sigma(\nu)} ( L^+U^{(r)}) \subseteq L^+U$. Then \eqref{E:tilde Ynub Cartesian diagram} induces the following Cartesian diagram
\begin{equation}
\label{E:tilde Ynub Cartesian diagram truncated}
\xymatrix{
Y_\nu^\bbb \ar[d]
& \ar[l] \widetilde Y_\nu^{\bbb,(r)} \ar[d] \ar[rrr] &&&(S_\la\cap\Gr_\mu)^{\bbb} \ar[d]
\\
S_\nu &\ar[l]_-{\pi'_\nu} LU / L^+U^{(r)} \ar[r]^-{f_\tau} & LU/ L^+U^{(r)} \ar[rr]^-{\pi_\lambda\circ c_{-\tau-\sigma(\nu)}} && S_\lambda.
}
\end{equation}
Note that $\pi'_\nu$ is smooth of dimension $r \dim U$, $f_\tau$ is \'etale, and $\pi_\lambda \circ c_{-\tau-\sigma}(\nu)$ is smooth of dimension $\dim L^+U / c_{-\tau-\sigma(\nu)} ( L^+U^{(r)})$. So we have
\begin{align*}
\dim Y_\nu^\bbb \;=\;&
\dim (S_\la\cap\Gr_\mu)^{\bbb}  +\dim \pi_\lambda - \dim \pi'_\nu
\\
\;=\;& \langle \rho, \mu+\lambda\rangle + \big( r\dim U - \langle \rho, \tau+\sigma(\nu)\rangle \big) - r\dim U = \langle \rho, \mu-\tau \rangle. \qedhere
\end{align*}
\end{proof}

\begin{cor}
\label{C: irr comp up to B}
For $\bbb\in \cup_{\la\in\tau+(\sigma-1)\xcoch}\MV_\mu(\la)$, 
let $X^{\bbb}_\mu(\tau)$ denote the closure of $\cup_{\nu\in\xcoch} Y_{\nu}^\bbb$ in $X_\mu(\tau)$. Then 
\begin{enumerate}
\item $X^{\bbb}_\mu(\tau)$ is a union of certain irreducible components of $X_\mu(\tau)$. 
\item $X^{\bbb}_\mu(\tau)\neq X^{\bbb'}_\mu(\tau)$ if $\bbb\neq \bbb'$, and $X_\mu(\tau)=\cup_{\bbb} X_\mu^\bbb(\tau)$.
\item The group $B_\tau(F)$ acts transitively on the set of irreducible components of $X_\mu^\bbb(\tau)$.
\item  $X_\mu^\bbb(\tau)$ is equi-dimensional of dimension $\langle \rho, \mu-\tau\rangle$.
\end{enumerate}
\end{cor}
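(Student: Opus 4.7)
Using Lemma~\ref{L:easy properties of Ynubtau}(2)(3) we may pass to the adjoint quotient and perform a $(\sigma-1)$-shift to reduce to the case where $\tau$ is dominant, so that $B_\tau(F)=T(F)U_\tau(F)$ as described in \S\ref{tcen}. The strategy is then to combine Proposition~\ref{P: prop of Ynu} with the standard dimension formula $\dim X_\mu(\tau)=\langle\rho,\mu-\tau\rangle$ of \cite{Ham,Z}, reading off each of the four assertions from the refined stratification
\[
X_\mu(\tau)\;=\; \bigsqcup_{\nu\in\xcoch}\ \bigsqcup_{\bbb\in\MV_\mu(\tau+\sigma(\nu)-\nu)}Y_\nu^\bbb
\]
induced by $\Gr=\sqcup_\nu S_\nu$ together with \eqref{first} and \eqref{refined}, in which each piece $Y_\nu^\bbb$ is equidimensional of dimension $\langle\rho,\mu-\tau\rangle$. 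For parts (4) and (1): since $X_\mu^\bbb(\tau)$ is by definition the closure of a union of locally closed equidimensional subsets of dimension $\langle\rho,\mu-\tau\rangle$, it is itself equidimensional of that dimension; as $X_\mu(\tau)$ is equidimensional of the same dimension, any closed subvariety of full dimension is automatically a union of some of its irreducible components.

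For the covering statement in part (2), the stratification above immediately yields $X_\mu(\tau)=\cup_\bbb X_\mu^\bbb(\tau)$, and this union is finite since only finitely many $\la$ appear as weights of $V_\mu$. For distinctness, suppose $\bbb\in\MV_\mu(\la)$ and $\bbb'\in\MV_\mu(\la')$ with $\bbb\neq\bbb'$. If $\la\neq\la'$, then the set of $\nu$ for which $X_\mu^\bbb(\tau)\cap S_\nu$ contains a component of full dimension is precisely the $\xcoch(T)^\sigma$-coset $\{\nu:(\sigma-1)\nu=\la-\tau\}$, which differs from the coset determined by $\la'$, so the two supports already disagree. If $\la=\la'$, then for any $\nu$ in the common coset the top-dimensional part of $X_\mu^\bbb(\tau)\cap S_\nu$ is the closure of $Y_\nu^\bbb$, and via the Cartesian square~\eqref{refined} this maps surjectively onto the MV cycle $(S_\la\cap\Gr_\mu)^\bbb$; since distinct MV cycles give distinct subsets of $S_\la\cap\Gr_\mu$, the label $\bbb$ can be read off from $X_\mu^\bbb(\tau)$.

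For part (3), the subgroup $U_\tau(F)$ preserves each $Y_\nu^\bbb$ and acts transitively on its set of irreducible components by Proposition~\ref{P: prop of Ynu}(2); the subgroup $\varpi^{\xcoch(T)^\sigma}\subset T(F)$ sends $Y_\nu^\bbb$ isomorphically onto $Y_{\nu+\chi}^\bbb$ via Lemma~\ref{L:easy properties of Ynubtau}(1), thereby permuting the pieces assembling into $X_\mu^\bbb(\tau)$ and connecting irreducible components lying in different $\nu$-strata; and $T(\mO)$ preserves each $Y_\nu^\bbb$. Combining these actions, $B_\tau(F)=T(F)U_\tau(F)$ acts transitively on the irreducible components of $X_\mu^\bbb(\tau)$. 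The main obstacle lies in the distinctness assertion of part (2): one must verify that the label $\bbb$ is genuinely reconstructible from $X_\mu^\bbb(\tau)$ after passing to closures. This hinges on the fact that the closure of an irreducible component of $Y_\nu^\bbb$ meets any other stratum $S_{\nu'}$ in a set of dimension strictly less than $\langle\rho,\mu-\tau\rangle$, so that the top-dimensional part of $X_\mu^\bbb(\tau)\cap S_\nu$ faithfully captures $\bbb$ through the MV cycle it projects to; all remaining assertions are routine consequences of Proposition~\ref{P: prop of Ynu}, Lemma~\ref{L:easy properties of Ynubtau}, and the ADLV dimension formula.
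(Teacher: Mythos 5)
Your argument is correct and is essentially the paper's own proof: both deduce (1), (4) and the covering statement in (2) from Proposition~\ref{P: prop of Ynu}(2) applied to the stratification of $X_\mu(\tau)$ by the pieces $Y_\nu^\bbb$, obtain the distinctness in (2) from the nonemptiness criterion (different $\la$'s) together with the description \eqref{E: local model for Y} of $\pi'^{-1}_\nu(Y_\nu^\bbb)$ (your ``read off the MV cycle'' step, made precise by the surjectivity of $f_\tau$), and prove (3) by combining $U_\tau(F)$-transitivity within each stratum with the translations $\varpi^\chi$, $\chi\in\xcoch(T)^\sigma$, of Lemma~\ref{L:easy properties of Ynubtau}(1). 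The one change you should make is to drop the opening reduction to the adjoint group: it is never used afterwards (Proposition~\ref{P: prop of Ynu}, Lemma~\ref{L:easy properties of Ynubtau} and the identity $B_\tau(F)=T(F)U_\tau(F)$ of \S\ref{tcen} all hold assuming only that $\tau_\sigma$ is dominant), and for part (3) it would actually create a gap, since transitivity of $B_{\tau_\ad}(F)$ on components for $G_\ad$ does not formally descend to transitivity of $B_\tau(F)$ for $G$ (the map $B_\tau(F)\to B_{\tau_\ad}(F)$ need not be surjective), whereas your direct argument already works for $G$ itself.
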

\begin{proof}
Since semi-infinite orbits form a partition of the affine Grassmannian into locally closed subsets, $\{Y^\bbb_\nu\}$ form a partition of $X_\mu(\tau)$. Proposition \ref{P: prop of Ynu}(2) says that every $Y^\bbb_\nu$ is pure of dimension $\langle\rho,\mu-\tau\rangle$; so is $X_\mu^\bbb(\tau)$. It follows that if $Z$ is an irreducible component of $Y^\bbb_\nu$, then its closure $\bar{Z}$ is an irreducible component of $X_\mu(\tau)$. Therefore, $X^{\bbb}_\mu(\tau)$ is a union of certain irreducible components of $X_\mu(\tau)$, and $X_\mu(\tau)=\cup_{\bbb} X_\mu^\bbb(\tau)$. 

We claim that, if $\bbb\neq \bbb'$, then $Y_\nu^\bbb\neq Y_\nu^{\bbb'}$ Indeed, if $\bbb\in\MV_{\mu}(\la)$ and $\bbb'\in \MV_{\mu}(\la')$ for $\la\neq \la'$, then $Y_\nu^\bbb$ and $Y_{\nu}^{\bbb'}$ cannot be both nonempty by Lemma~\ref{L:easy properties of Ynubtau}(1). If $\bbb,\bbb'\in \MV_{\mu}(\la)$, then $\pi'^{-1}_{\nu}(Y_{\nu}^\bbb)\neq \pi'^{-1}_{\nu}(Y_{\nu}^{\bbb'})$ by \eqref{E: local model for Y}. Therefore, $X^{\bbb}_\mu(\tau)\neq X^{\bbb'}_\mu(\tau)$ if $\bbb\neq \bbb'$. 

Finally, by Proposition \ref{P: prop of Ynu}(2), $B_\tau(F)$ acts transitively on the set of irreducible components of $\cup_\nu Y_\nu^\bbb$, and therefore  acts transitively on the set of irreducible components of $X_\mu^\bbb(\tau)$. 
\end{proof}

We end the subsection with the following observations.

\begin{lem}
\label{L: invariance of Xmub}
We fix $\bbb\in \MV_\mu(\la)$ with $\la\in \tau+(\sigma-1)\xcoch$ and let $X_\mu^\bbb(\tau)$ be defined as above.
\begin{enumerate}
\item Let $G\to G'$ be a central isogeny, and $(\mu',\tau')$ be the image of $(\mu,\tau)$ under the map $\xcoch(T)\to \xcoch(T')$. Then one can replace $X_\mu(b)$ and $X_{\mu'}(b')$ by $X^\bbb_\mu(\tau)$ and $X^\bbb_{\mu'}(\tau')$ in the diagram in Lemma \ref{L: red to adj} and the new diagram is still Cartesian.

\item Let $\tau'=\tau+\delta- \sigma(\delta)$. Then the isomorphism $X_\mu(\tau)\to X_\mu(\tau'),\  gL^+G\mapsto \varpi^\delta gL^+G$ induces an isomorphism $X^{\bbb}_\mu(\tau)\cong X^{\bbb}_\mu(\tau')$. 
\end{enumerate}
\end{lem}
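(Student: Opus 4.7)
The plan is to reduce both parts to stratum-level statements already recorded in Lemma~\ref{L:easy properties of Ynubtau}, combined with the definition of $X_\mu^\bbb(\tau)$ as the closure in $X_\mu(\tau)$ of $\bigcup_{\nu \in \xcoch} Y_\nu^\bbb(\tau)$.

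Part~(2) should be immediate: Lemma~\ref{L:easy properties of Ynubtau}(2) gives an isomorphism $Y_\nu^\bbb(\tau) \xrightarrow{\sim} Y_{\nu+\delta}^\bbb(\tau')$ induced by left multiplication by $\varpi^\delta$, and reindexing $\nu \mapsto \nu-\delta$ carries $\bigcup_\nu Y_\nu^\bbb(\tau)$ bijectively onto $\bigcup_\nu Y_\nu^\bbb(\tau')$. Since left multiplication by $\varpi^\delta$ is an automorphism of $\Gr$, taking closures in $X_\mu(\tau)$ and $X_\mu(\tau')$ respectively yields the desired isomorphism $X_\mu^\bbb(\tau) \cong X_\mu^\bbb(\tau')$.

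For part~(1), let $\pi : X_\mu(\tau) \to X_{\mu'}(\tau')$ be the map induced by the central isogeny. Lemma~\ref{L:easy properties of Ynubtau}(3) shows that $\pi$ restricts to an isomorphism $Y_\nu^\bba(\tau) \xrightarrow{\sim} Y_{\nu'}^{\bba'}(\tau')$ for every pair $(\nu,\bba)$, where $\nu'$ and $\bba'$ are the images under $\xcoch(T) \to \xcoch(T')$ and $\MV_\mu(\la) \cong \MV_{\mu'}(\la')$ respectively. In particular $\pi(X_\mu^\bbb(\tau)) \subseteq X_{\mu'}^{\bbb'}(\tau')$, producing the top row of the refined diagram. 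Since the diagram in Lemma~\ref{L: red to adj} is already Cartesian, the refined diagram will be Cartesian as soon as we verify the set-theoretic identity
\[
\pi^{-1}\!\big( X_{\mu'}^{\bbb'}(\tau') \big) = X_\mu^\bbb(\tau)
\]
inside $X_\mu(\tau)$; the inclusion $\supseteq$ follows from the previous sentence.

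For the reverse inclusion I would argue at the level of irreducible components. By Corollary~\ref{C: irr comp up to B}, each irreducible component $Z$ of $X_\mu(\tau)$ has dimension $\langle \rho, \mu-\tau\rangle$ and contains a dense open subset of the form $Z \cap Y_\nu^\bba(\tau)$ for a uniquely determined pair $(\nu,\bba)$, and the condition $Z \subseteq X_\mu^\bbb(\tau)$ is equivalent to $\bba = \bbb$. If $Z \subseteq \pi^{-1}(X_{\mu'}^{\bbb'}(\tau'))$, then the image of its generic stratum lands in $\bigsqcup_{\nu'} Y_{\nu'}^{\bbb'}(\tau')$, forcing $\bba' = \bbb'$ and hence $\bba = \bbb$ by the injectivity of $\MV_\mu(\la) \cong \MV_{\mu'}(\la')$. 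The main subtle point will be the uniqueness of the pair $(\nu,\bba)$ at the generic points of an irreducible component, which I plan to extract from the explicit local model~\eqref{E: local model for Y} for $Y_\nu^\bba(\tau)$ together with the fact that distinct irreducible components of $S_\la \cap \Gr_\mu$ have disjoint generic loci.
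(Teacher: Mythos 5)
Your part (2) is exactly the paper's argument (Lemma~\ref{L:easy properties of Ynubtau}(2) plus reindexing and taking closures), and your overall strategy for part (1) — reduce, via the Cartesian square of Lemma~\ref{L: red to adj}, to the identity $\pi^{-1}(X_{\mu'}^{\bbb'}(\tau'))=X_\mu^\bbb(\tau)$ and feed in the stratum isomorphisms of Lemma~\ref{L:easy properties of Ynubtau}(3) — is the same input the paper uses, which simply asserts that (1) follows from those isomorphisms.

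Where your write-up of the reverse inclusion is shakier: the assertion that if $Z\subseteq\pi^{-1}(X_{\mu'}^{\bbb'}(\tau'))$ then the image of its generic stratum ``lands in $\bigsqcup_{\nu'}Y_{\nu'}^{\bbb'}(\tau')$'' does not follow directly — what you know is that it lands in $Y_{\nu'}^{\bba'}(\tau')\cap X_{\mu'}^{\bbb'}(\tau')$, and the closure $X_{\mu'}^{\bbb'}(\tau')$ may well meet strata with label $\neq\bbb'$; to force $\bba'=\bbb'$ you must first note that $\pi(Z)$ is a $\langle\rho,\mu'-\tau'\rangle$-dimensional irreducible closed subset of the equidimensional set $X_{\mu'}^{\bbb'}(\tau')$, hence an irreducible component of it, and then invoke your deferred uniqueness claim for $G'$. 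That uniqueness claim is true and provable the way you indicate (in the truncated diagram \eqref{E:tilde Ynub Cartesian diagram truncated}, $\widetilde Y_\nu^{\bba,(r)}$ is an \'etale cover of the irreducible $Z_r$, so every top-dimensional component of $Y_\nu^\bba$ dominates $(S_\la\cap\Gr_\mu)^\bba$, and distinct MV cycles give distinct components), so your route can be completed — but it is heavier than needed. The cleaner deduction, implicit in the paper, is that the bottom row of the square in Lemma~\ref{L: red to adj} is an injection of discrete sets, so $X_\mu(\tau)\hookrightarrow X_{\mu'}(\tau')$ is an open-and-closed immersion; hence intersecting with $X_\mu(\tau)$ commutes with taking closures, the stratum $Y_{\nu'}^{\bbb'}(\tau')$ meets $X_\mu(\tau)$ only when $\nu'$ lifts to $\xcoch(T)$ (for connected-component reasons), and then $X_{\mu'}^{\bbb'}(\tau')\cap X_\mu(\tau)=\overline{\bigcup_{\nu\in\xcoch(T)}Y_\nu^{\bbb}(\tau)}=X_\mu^\bbb(\tau)$ at once, with no analysis of irreducible components and no uniqueness statement required.
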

\begin{proof}
The map $\Gr_G\to \Gr_{G'}$ induces a natural bijection $\MV_\mu(\la)=\MV_{\mu'}(\la')$ for any $\la$, and the isomorphism $Y_\nu^\bbb(\tau)\cong Y_{\nu'}^\bbb(\tau')$ by Lemma~\ref{L:easy properties of Ynubtau}(3). Then (1) follows.

(2) follows from Lemma~\ref{L:easy properties of Ynubtau}(2) immediately.
\end{proof}

\subsection{Irreducible components of some ADLVs}
\label{S: irr comp of ADLVs}
In this subsection, we study in more details the irreducible components of $X_\mu(\tau)$. We continue to assume that $\tau_\sigma\in \xcoch(T)^+_\sigma$. We fix $\bbb\in \MV_\mu(\la)$, where $\la\in \tau+(\sigma-1)\xcoch$, and let $X_\mu^\bbb(\tau)$ be the union of certain irreducible components of $X_\mu(\tau)$ as constructed in the previous subsection.  

\begin{hypothesis}
\label{H:p not 23}
In this section, we assume $k\neq \bF_2$ if any of the simple factors of $J_\tau$ contains a factor of type $\mathsf{B}_n,\mathsf{C}_n,\mathsf{F}_4$, and $k\neq \bF_2,\bF_3$ if  it contains $\mathsf{G}_2$. Note that this does not depend on the choice of $\tau$ in its $\sigma$-conjugacy class because all $J_\tau$'s are isomorphic. 
\end{hypothesis}

\begin{rmk}
This Hypothesis will only be used to invoke Lemma~\ref{L: commutator}. On the other hand, we believe that the restriction of the characteristic is not necessary to the validity of Proposition~\ref{P: irrkey}, but currently we are unable to remove this extra assumption.
\end{rmk}

We need the following lemma. Recall from \S \ref{S:geom realization of G-crystal} that the set $\MV_\mu$ has a $\hat G$-crystal structure and therefore every $\bbb\in\MV_\mu(\la)$ can be attached a collection of non-negative integers $\{\varepsilon_\al(\bbb), \al\in\Delta\}$.
\begin{lem}
\label{L: finding best tau}
Assume that $Z_G$ is connected.
\begin{enumerate}
\item Let $\la\in\xcoch(T)$ and assume that $\la_\sigma=\tau_\sigma$. Then there exists a dominant coweight $\nu$ such that $\la+\nu-\sigma(\nu)$ is dominant, and that $\bbb$ lies in the image of the map $i^{\MV}_\nu:\bS_{(\nu,\mu)\mid \la+\nu}\to \MV_\mu(\la)$ defined in Lemma~\ref{L:Sat vs MV}.
\item Among all $\nu$'s satisfying the above property, there is a ``minimal" $\nu_\bbb$, unique up to adding by an element in $\xcoch(Z_G)$. Here ``minimality" means that for any other $\nu$ satisfying the above property, $\nu-\nu_\bbb$ is dominant. In addition, for every $\al\in\Delta$, at least one of the following inequalities is an equality
\[
\langle \alpha, \nu_\bbb \rangle \geq \varepsilon_\alpha(\bbb), \quad \langle \alpha_{\sigma(\alpha)}, \nu_\bbb \rangle \geq \varepsilon_{\sigma(\alpha)}(\bbb),\  \dots,\ \langle \alpha_{\sigma^{d-1}(\alpha)}, \nu_\bbb \rangle \geq \varepsilon_{\sigma^{d-1}(\alpha)}(\bbb),
\]
where $d$ is the cardinality of the $\sigma$-orbit of $\alpha$.
\end{enumerate}
\end{lem}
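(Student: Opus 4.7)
The plan is to first translate the geometric condition ``$\bbb \in \mathrm{Im}(i^{\MV}_\nu)$'' into a combinatorial inequality via the crystal structure on MV cycles, and then reduce the entire lemma to solving a linear system of integer inequalities $\sigma$-orbit by $\sigma$-orbit.

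First I would use Proposition~\ref{comb decom} together with Proposition~\ref{bijectionI} to identify the image of $i^{\MV}_\nu$ combinatorially. Under the Littelmann path model of the closed family of highest weight $\hat G$-crystals (realizing Theorem~\ref{T:Braverman-Gaitsgory}), the set $\bS_{(\nu,\mu)\mid\lambda+\nu}$ matches $\bS_{(\gamma_\nu,\gamma_\mu)\mid\lambda+\nu}$, consisting of paths $\gamma_\nu*\delta_2$ with $\delta_2\in \bB_{\gamma_\mu}(\lambda)$ such that $\gamma_\nu*\delta_2$ is dominant; and $i^{\MV}_\nu$ corresponds to the map $\delta_2\mapsto \phi_{\gamma_\mu}(\delta_2)=\bbb$. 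The dominance of $\gamma_\nu*\delta_2$ is exactly the characterization \eqref{E:condition to realize as a homomorphism crystal}, which reads $\langle\alpha,\nu\rangle\geq \varepsilon_\alpha(\bbb)$ for every $\alpha\in\Delta$. Thus Part (1) reduces to producing a dominant $\nu\in\xcoch(T)$ satisfying (i) $\langle\alpha,\nu\rangle\geq \varepsilon_\alpha(\bbb)$ for every $\alpha\in\Delta$ and (ii) $\lambda+\nu-\sigma(\nu)$ is dominant. (Note that (i) together with dominance of $\nu$ and (ii) automatically imply $\lambda+\nu$ is dominant, so the map $i^{\MV}_\nu$ is well defined.)

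Since $Z_G$ is connected, fundamental coweights $\omega_\alpha$ exist (Remark~\ref{R:center connect}), and I can parametrize $\nu = \sum_{\alpha\in\Delta} n_\alpha\omega_\alpha + z$ with $n_\alpha=\langle\alpha,\nu\rangle\in\ZZ_{\geq 0}$ and $z\in\xcoch(Z_G)$. Condition (i) becomes $n_\alpha\geq \varepsilon_\alpha(\bbb)$ and condition (ii) becomes $n_\alpha-n_{\sigma^{-1}\alpha}\geq -\langle\alpha,\lambda\rangle$, which decouples across $\sigma$-orbits. For a single orbit $O=\{\alpha_0,\ldots,\alpha_{d-1}\}$ with $\alpha_i=\sigma^i(\alpha_0)$, writing $n_i=n_{\alpha_i}$, $\varepsilon_i=\varepsilon_{\alpha_i}(\bbb)$, $c_i=\langle\alpha_i,\lambda\rangle$, the system is
\begin{equation*}
n_i\geq \varepsilon_i,\qquad n_i-n_{i-1}\geq -c_i \quad (\text{indices mod } d).
\end{equation*}
Summing the cyclic inequalities around $O$ yields the necessary condition $\sum_i c_i=\big\langle\sum_{i=0}^{d-1}\sigma^i(\alpha_0),\lambda\big\rangle\geq 0$, which holds because $\lambda_\sigma=\tau_\sigma\in\xcoch(T)^+_\sigma$ is dominant and $\sum_i\sigma^i(\alpha_0)$ is (a positive multiple of) the simple root of $J_\tau$ attached to $O$. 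Taking $n_i$ sufficiently large (say all equal and large) shows the system is solvable, proving (1).

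For Part (2), within each orbit the set of nonnegative integer solutions is closed under componentwise minimum --- a direct case check using whether $n_i\leq n'_i$ or $n_i>n'_i$ --- so a unique componentwise-minimal solution $(n_i)_{i\in O}$ exists. Assembling these across orbits yields minimal nonnegative integers $(n_\alpha)_{\alpha\in\Delta}$ and hence a $\nu_\bbb\in\xcoch(T)$ unique up to the free central parameter $z\in\xcoch(Z_G)$; minimality of $(n_\alpha)$ is equivalent to the statement that $\nu-\nu_\bbb$ is dominant for any other admissible $\nu$. Finally, the ``at least one equality'' assertion is immediate from minimality: if $n_i>\varepsilon_i$ held for every $i$ in some orbit $O$, then $(n_i-1)_{i\in O}$ would still satisfy $n'_i\geq\varepsilon_i$ and the cyclic inequalities $n'_i-n'_{i-1}=n_i-n_{i-1}\geq -c_i$, contradicting minimality.

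The main obstacle is the crystal-theoretic reformulation in the first paragraph, which requires carefully chaining Propositions~\ref{bijectionI} and~\ref{comb decom} (together with Theorem~\ref{T: Littelmann isom} and Theorem~\ref{T:Braverman-Gaitsgory}) to translate geometric containment in $\mathrm{Im}(i^{\MV}_\nu)$ into the crystal-theoretic condition \eqref{E:condition to realize as a homomorphism crystal}; once this translation is in hand, the remainder is a routine cyclic optimization.
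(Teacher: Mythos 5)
Your overall route is the same as the paper's: reduce membership of $\bbb$ in the image of $i^{\MV}_\nu$ to the crystal inequality $\langle\alpha,\nu\rangle\geq\varepsilon_\alpha(\bbb)$ via \eqref{E:condition to realize as a homomorphism crystal} and Propositions~\ref{bijectionI}, \ref{comb decom}, rewrite dominance of $\lambda+\nu-\sigma(\nu)$ as cyclic inequalities along each $\sigma$-orbit, and then solve the resulting integer system using the fundamental coweights available because $Z_G$ is connected; your treatment of minimality (closure of the solution set under componentwise minimum, and the "subtract $1$ on a whole orbit" argument for the equality statement) is a clean variant of the paper's explicit max-formula for $\nu_\bbb$ and is correct.

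There is, however, one step that fails as written: the parenthetical construction "say all $n_i$ equal and large" does not solve the system. With all $n_i$ equal, the cyclic constraints $n_i-n_{i-1}\geq -c_i$ force $c_i=\langle\sigma^i(\alpha),\lambda\rangle\geq 0$ for every $i$, but only the orbit sum $\sum_i c_i$ is guaranteed nonnegative (from $\lambda_\sigma=\tau_\sigma\in\xcoch(T)_\sigma^+$); individual $c_i$ are typically negative since $\lambda$ itself need not be dominant. The repair is easy and is in effect what the paper does by adjusting with the $\omega_\alpha$'s: take staggered values, e.g. $n_i=N-(c_1+\cdots+c_i)$ for $N\gg 0$, so that $n_i-n_{i-1}=-c_i$ for $i\geq 1$, the closing constraint $n_0-n_{d-1}\geq -c_0$ becomes exactly $\sum_i c_i\geq 0$, and $n_i\geq\varepsilon_i$ holds once $N$ is large. (A small side remark: the nonnegativity of $\sum_i c_i$ is just the definition of dominance in $\xcoch(T)_\sigma^+$ applied to $\tau_\sigma$; it is not tied to $\sum_i\sigma^i(\alpha)$ being a simple root of $J_\tau$ — that happens only for the orbits pairing to zero with $\tau$.) With this correction your argument is complete and matches the paper's proof in substance.
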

\begin{proof}
By \eqref{E:condition to realize as a homomorphism crystal} and the compatibility Proposition~\ref{comb decom}, $\bbb$ belongs to the image of the map $i^{\MV}_\nu$ if and only if 
\begin{equation}
\label{E: ineq1}
\langle \nu,\al\rangle \geq  \varepsilon_\al(\bbb) \quad \textrm{for all } \alpha \in \Delta.
\end{equation} On the other hand, $\la+\nu-\sigma(\nu)$ is dominant if and only if, for all simple roots $\alpha$ of $G$,
\begin{align*}
0  & \leq  \langle \lambda, \alpha \rangle + \langle \nu, \alpha \rangle - \langle \sigma(\nu), \alpha \rangle
\\
&= \phi_\alpha(\bbb) - \varepsilon_\alpha(\bbb) + \langle \nu, \alpha\rangle - \langle \nu, \sigma^{-1}(\alpha)\rangle.
\end{align*}
This is equivalent to, for every
$\sigma$-orbit $\{\al,\sigma(\al)\ldots,\sigma^d(\al)=\al\}$ of simple roots of $G$
\begin{equation}
\label{E:ineq2}
\langle \nu, \sigma^j(\alpha) \rangle  \geq \langle \nu, \sigma^{j-1}(\alpha) \rangle + \varepsilon_{\sigma^j(\alpha)}(\bbb) -  \phi_{\sigma^j(\alpha)}(\bbb), \quad j=1,2,\ldots,d.
\end{equation}
Since $Z_G$ is connected, there exist coweights $\{\omega_\alpha\}_{\alpha \in \Delta}$ such that $\langle \omega_{\al},\beta\rangle=\delta_{\al\beta}$.
Since 
$$\sum_j \big(\varepsilon_{\sigma^j(\al)}(\bbb)-\phi_{\sigma^{j-1}(\al)}(\bbb) \big)=\langle -\la, \sum_j \al_j\rangle=\langle -\tau, \sum_j\al_j\rangle\leq 0,$$
one can use $\{\omega_\al\}$ to modify $\nu$ such that $\{\langle \nu, \sigma^j(\alpha)\rangle\}$ satisfy \eqref{E: ineq1} and \eqref{E:ineq2}.

Now, let $\nu_0$ be a  coweight satisfying \eqref{E: ineq1} and \eqref{E:ineq2}. Then we can choose $\nu_\bbb \in \nu_0+\sum_{\alpha \in \Delta} \ZZ \omega_\alpha$ such that 
\begin{equation*}
\label{E:minimal condition}
\langle \alpha, \nu_\bbb\rangle = \max_{j=0, \dots, d-1}\big\{ \varepsilon_{\sigma^j(\alpha)}(\bbb) + \langle \alpha, \nu_0\rangle - \langle \alpha_{\sigma^j(\alpha)}, \nu_0\rangle \big\}.
\end{equation*}
Such $\nu_\bbb$ is minimal (in the sense of the lemma) among all coweights satisfying \eqref{E: ineq1} and \eqref{E:ineq2}, and clearly two choices of $\nu_\bbb$ differ by an element in $\xcoch(Z_G)$.
\end{proof}

\begin{rmk}
\label{Ca:minimal condition}
\begin{enumerate}
\item Let $\nu = \nu_\bbb$ be a (choice of) minimal coweight as above, \eqref{E: ineq1} may not be an equality for all $\al\in\Delta$. 

\item
For a (choice of) minimal weight $\nu = \nu_\bbb$ above, let $\tau_\bbb=\la+\nu_\bbb-\sigma(\nu_\bbb)$. Note that $\tau_\bbb$ is well-defined up to adding an element in $(1-\sigma)\xcoch(Z_G)$ (so in particular if $G$ is adjoint, $\tau_\bbb$ is uniquely determined by $\bbb$).  
In any case, it belongs to  $ \tau+(1-\sigma)\xcoch(T)$, and therefore $\varpi^{\tau_\bbb}\in A(G,\mu)$ is a canonical representative of the unramified element $[b]=[\varpi^\tau]\in B(G,\mu)$ ``adapted" to $\bbb$. For example, if $[b]$ is basic, then $\tau_\bbb$ is central and $\varpi^{\tau_\bbb}$ usually called superspecial.

\item 
But also note that the class of $\tau_\bbb$ in $\xcoch(T)/(1-\sigma)\xcoch(Z_G)$ depends on $\bbb$ in general.
Therefore, if the set $\sqcup_{\la_\sigma=\tau_\sigma}\MV_\mu(\la)$ contains more than one element, there may not be a canonical representative of $[b]$.  See \cite{XZ} for an example.
\end{enumerate}
\end{rmk}

Our goal is to study the irreducible components of $X_\mu^\bbb(\tau)$. By Lemma \ref{L: invariance of Xmub}(2), $X_\mu^\bbb(\tau)\cong X_\mu^\bbb(\tau_\bbb)$. Therefore, after $\sigma$-conjugacy, we may replace $\tau$ by $\tau_\bbb$. In particular, $\tau$ is assumed to be dominant.
Then by \S\ref{tcen}, $J_\tau$ (and  $U_\tau$) has a natural $\mO$-structure in this case. We denote by $J_{\tau,x_0}(\mO)\subset J_{\tau}(F)$ the corresponding hyperspecial subgroup.

Let $\bba\in \bS_{(\nu,\mu)\mid \tau+\sigma(\nu)}$ such that $\bbb=i^\MV_\nu(\bba)$. By Lemma~\ref{L:Sat vs MV}, such $\bba$ is unique and inside $\Gr_\nu\times\Gr_{\tau+\sigma(\nu)}$,
$$\Gr_{(\nu,\mu)\mid \tau+\sigma(\nu)}^{0,\mathbf{a}}\cap \big((S_\nu\cap \Gr_\nu)\times (S_{\nu+\la}\cap \Gr_{\tau+\sigma(\nu)})\big)=(S_\nu\cap \Gr_\nu)\tilde\times(S_\la\cap \Gr_\mu)^\bbb.$$ 
Let us define a closed subset $X_{\mu,\nu}^\bba(\tau)\subset X_{\mu,\nu}(\tau)$ via the Cartesian square
\begin{equation}
\label{E: possible irr comp}
\begin{CD}
X_{\mu,\nu}^\bba(\tau)@>>>\Gr_{(\nu,\mu)\mid \tau+\sigma(\nu)}^{0,\bba}\\
@VVV@VVV\\
\Gr_{\nu}@>1\times\varpi^\tau\sigma>>\Gr_{\nu}\times\Gr_{\tau+\sigma(\nu)}.
\end{CD}
\end{equation}
Compare with the definition of $X_{\mu,\nu}(\tau)$ in \eqref{E: fin cartesian DL}, we replace $\Gr_{(\nu,\mu)\mid \tau+\sigma(\nu)}^{0}$ by one of its irreducible component $\Gr_{(\nu,\mu)\mid \tau+\sigma(\nu)}^{0,\bba}$. 

\begin{thm}
\label{C: unique comp}
Assume that $Z_G$ is connected. Let $\bbb\in \MV_\mu(\la)$. Let $\nu=\nu_\bbb$ and $\tau=\tau_\bbb$ as in Lemma \ref{L: finding best tau}.
Then scheme $X_{\mu,\nu}^\bba(\tau)$ has a unique irreducible component $X_\mu^{\bbb,x_0}(\tau)$\footnote{Here the superscript $x_0$ indicates that the stabilizer group of this irreducible component in $X_\mu(\tau)$ under the $J_\tau(F)$-action is $J_{\tau,x_0}(\calO)$.} of dimension $\langle\rho, \mu-\tau\rangle$ (all other possible irreducible components of would have dimension strictly smaller that $\langle\rho, \mu-\tau\rangle$).
In addition, $X_{\mu,\nu}^\bba(\tau)\cap \mathring{\Gr}_{\nu}$ is irreducible. In particular,
$$\mathring{X}_\mu^{\bbb,x_0}(\tau):= X_\mu^{\bbb,x_0}(\tau)\cap\mathring{\Gr}_{\nu}=X_{\mu,\nu}^\bba(\tau)\cap \mathring{\Gr}_{\nu}.$$ 
All $X_{\mu,\nu}^\bba(\tau), X_\mu^{\bbb,x_0}(\tau), \mathring{X}_\mu^{\bbb,x_0}(\tau)$ admit an action by $J_{\tau,x_0}(\mO)$.
\end{thm}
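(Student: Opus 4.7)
The proof has three movements: establishing an upper bound on $\dim X_{\mu,\nu}^\bba(\tau)$ from the Cartesian diagram \eqref{E: possible irr comp}; identifying the dense open locus over $\mathring{\Gr}_\nu$ with the semi-infinite piece $Y_\nu^\bbb$ already studied in \S\ref{S: ADLV and semiinfinite}; and then upgrading the $B_\tau(F)$-transitivity on components of $Y_\nu^\bbb$ (Proposition \ref{P: prop of Ynu}) to outright irreducibility, using the minimality properties of $(\nu_\bbb,\tau_\bbb)$ together with Proposition \ref{P: geom prop of MV cycle}.

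\textbf{Step 1 (dimension bound).} The lower-horizontal map $1\times \varpi^\tau\sigma\colon \Gr_\nu\to \Gr_\nu\times \Gr_{\tau+\sigma(\nu)}$ in \eqref{E: possible irr comp} is a perfect closed immersion, so the fiber product satisfies
\[
\dim X_{\mu,\nu}^\bba(\tau)\;\leq\; \dim \Gr_{(\nu,\mu)\mid \tau+\sigma(\nu)}^{0,\bba}\;-\;\dim \Gr_{\tau+\sigma(\nu)}\;=\;\langle \rho,\mu-\tau\rangle,
\]
using $\dim \Gr_{(\nu,\mu)\mid \tau+\sigma(\nu)}^{0,\bba}=\langle\rho, 2\nu+\mu+\tau\rangle$ (Definition \ref{D: SatCyc}) and the $\sigma$-invariance of $\langle 2\rho,-\rangle$ on $\xcoch(T)$.

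\textbf{Step 2 (localization to $\mathring{\Gr}_\nu$).} Because $Z_G$ is connected we may, after the reductions of Lemma \ref{L: finding best tau}, take $\nu=\nu_\bbb$ to be dominant; hence $S_\nu\cap \mathring{\Gr}_\nu$ is a dense open subset of $\mathring{\Gr}_\nu$. The factorization formula \eqref{factorizationII} gives
\[
\Gr_{(\nu,\mu)\mid \tau+\sigma(\nu)}^{0,\bba}\,\cap\,(S_\nu\,\tilde\times\, S_\lambda)\;=\;S_\nu\,\tilde\times\,(S_\lambda\cap \Gr_\mu)^{\bbb},
\]
so pulling back along the closed immersion $1\times \varpi^\tau\sigma$ identifies $X_{\mu,\nu}^\bba(\tau)\cap S_\nu$ with the scheme $Y_\nu^\bbb$ from \eqref{refined}. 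In particular $X_{\mu,\nu}^\bba(\tau)\cap\mathring{\Gr}_\nu$ contains $Y_\nu^\bbb$ as a dense open subset, and by Proposition \ref{P: prop of Ynu} already attains the top dimension $\langle\rho,\mu-\tau\rangle$. Conversely, over any stratum $S_{\nu'}$ with $\nu'\prec \nu$, the intersection $X_{\mu,\nu}^\bba(\tau)\cap S_{\nu'}$ embeds in the closed subvariety $Y_{\nu'}^{\bbb}\cap \Gr_\nu\subsetneq Y_{\nu'}^{\bbb}$, which is of strictly smaller dimension than $\langle\rho,\mu-\tau\rangle$ by a second application of Step 1 (now to $\nu'$). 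Consequently all top-dimensional components of $X_{\mu,\nu}^\bba(\tau)$ meet $\mathring{\Gr}_\nu$, and it suffices to prove that $Y_\nu^\bbb$ is irreducible.

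\textbf{Step 3 (irreducibility of $Y_\nu^\bbb$ — the main obstacle).} From the proof of Proposition \ref{P: prop of Ynu} the group $U_\tau(F)$ acts transitively on the set of irreducible components of $Y_\nu^\bbb$; equivalently, $\tilde Y_\nu^\bbb\to (S_\lambda\cap \Gr_\mu)^\bbb$ is a principal $U_\tau(F)$-bundle (Lemma \ref{Lang map}) and the $L^+U$-torsor $\pi'_\nu$ is connected. What we must show is that the stabilizer of one (hence every) component of $\widetilde Y_\nu^\bbb$ is all of $U_\tau(F)$. Using the Bruhat decomposition \eqref{E: bruhatJtau} and Iwasawa decomposition for $J_\tau$, it suffices to check that elements of the form $\varpi^{-\chi}u_0\varpi^\chi\in U_\tau(F)$, with $u_0\in L^+U_\tau$ and $\chi\in \xcoch(T)^\sigma$ anti-dominant, preserve each component. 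This in turn reduces to controlling, for each simple root $\al\in\Delta$, the rank-one projection
\[
\theta_{M_\al}\colon (S_\lambda\cap \Gr_\mu)^{\bbb}\longrightarrow S_{M_\al,\lambda}\cap \Gr_{M_\al,\lambda+\varepsilon_\al(\bbb)\al^\vee},
\]
which by Proposition \ref{P: geom prop of MV cycle} is dominant with geometrically connected fibers. Here the minimality of $\nu_\bbb$ from Lemma \ref{L: finding best tau}(2) — namely that for every $\sigma$-orbit $\{\sigma^j(\al)\}$ of simple roots at least one of the inequalities $\langle\sigma^j(\al),\nu_\bbb\rangle\geq \varepsilon_{\sigma^j(\al)}(\bbb)$ is an equality — is decisive: it forces the $\PGL_2$-type computation \eqref{E: MV sl2} to show that no shift by $\varpi^{-\chi}u_0\varpi^\chi$ with $\chi$ strictly anti-dominant can send the component back into $X_\mu^\bbb(\tau)\cap \Gr_\nu$, so the stabilizer collapses to $U_\tau(\mO)$. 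This is the main technical point; Hypothesis \ref{H:p not 23} enters exactly here, through the commutator lemma needed to spread the control on individual $\PGL_2$ projections to all of $U_\tau$.

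\textbf{Step 4 (conclusion).} Define $\mathring{X}_\mu^{\bbb,x_0}(\tau):=X_{\mu,\nu}^\bba(\tau)\cap \mathring{\Gr}_\nu$ and $X_\mu^{\bbb,x_0}(\tau)$ as its closure in $X_\mu(\tau)$; Steps 2--3 give it as the unique top-dimensional component of $X_{\mu,\nu}^\bba(\tau)$. Left multiplication by $J_{\tau,x_0}(\mO)\subset L^+G$ preserves $\mathring{\Gr}_\nu$ and is compatible with every term in \eqref{E: possible irr comp} (since it commutes with $\sigma$ on $J_{\tau,x_0}(\mO)$ and preserves the Satake cycle $\Gr_{(\nu,\mu)\mid \tau+\sigma(\nu)}^{0,\bba}$), giving the claimed equivariance.
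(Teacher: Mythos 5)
Your outline reproduces the coarse shape of the argument, but the two load-bearing steps are either unjustified or misstated, so as written this is not a proof. First, the dimension bookkeeping of Steps 1--2 fails. The upper bound in Step 1 is not a consequence of the Cartesian square \eqref{E: possible irr comp}: the bottom arrow is the graph of the Frobenius-twisted map $\varpi^\tau\sigma$, and intersecting a subvariety of $\Gr_\nu\times\Gr_{\tau+\sigma(\nu)}$ with such a graph carries no a priori \emph{upper} bound on dimension (general position gives only a lower bound); the paper gets dimension control only stratum by stratum, writing $\Gr_\nu=\sqcup_{\nu'\preceq\nu}(S_{\nu'}\cap\Gr_\nu)$ and converting each twisted intersection into an honest fibration via the Lang-type map $f_\tau$ (diagrams of the shape \eqref{E:tilde Ynub Cartesian diagram truncated}). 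Step 2 compounds this: for $\nu'\prec\nu$ the symbol $Y_{\nu'}^\bbb$ is not defined (one needs $\bbb'\in\MV_\mu(\tau+\sigma(\nu')-\nu')$), and by Proposition \ref{P: prop of Ynu} every $Y_{\nu'}^{\bbb'}$ has dimension exactly $\langle\rho,\mu-\tau\rangle$, not strictly less. The strict drop you need occurs only after intersecting with the chosen Satake cycle $\Gr^{0,\bba}_{(\nu,\mu)\mid\tau+\sigma(\nu)}$, and the paper proves it by a crystal-theoretic lemma (that this Satake cycle meets $(S_{\nu'}\cap\Gr_\nu)\times(S_{\tau+\sigma(\nu')}\cap\Gr_{\tau+\sigma(\nu)})$ in dimension $<\langle\rho,\nu+\tau+\sigma(\nu')+\mu\rangle$), using root operators, Lemma \ref{L: decomp MV into Satake} and Proposition \ref{comb decom}, and precisely the equalities of Lemma \ref{L: finding best tau}(2); your ``second application of Step 1'' is circular and supplies none of this.

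Second, Step 3 misformulates the main point. What is needed is the irreducibility of $Y_\nu^\bbb\cap\Gr_\nu=X_{\mu,\nu}^\bba(\tau)\cap S_\nu$ (Proposition \ref{P: irrkey}), not that the stabilizer of a component of $\widetilde Y_\nu^\bbb$ is all of $U_\tau(F)$: that is impossible once $U_\tau\neq1$, since the monodromy of a pro-\'etale cover is profinite while $U_\tau(F)$ is non-compact (so $Y_\nu^\bbb$ genuinely has infinitely many components), and your step contradicts itself when the stabilizer later ``collapses to $U_\tau(\mO)$''. The correct argument restricts to $\Gr_\nu$ so the cover becomes finite \'etale over $Z_r$ with group $U_\tau(\mO/\varpi^r)$, and then proves surjectivity of the monodromy: reduce to the abelianization via Lemma \ref{L: surj nil} and Lemma \ref{L: commutator} (the only place Hypothesis \ref{H:p not 23} enters), compute $Z_{r,2}$ using Proposition \ref{P: geom prop of MV cycle} and \eqref{E: MV sl2}, use the minimality of $\nu_\bbb$ to guarantee a full root subgroup $L^rU_{\sigma^i(\al)}$ in $Z_{r,2}$ within each $\sigma$-orbit, and conclude with the Lang-map lemma for Weil restrictions; you only gesture at these ingredients. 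Finally, even granting that irreducibility, your assertion that $Y_\nu^\bbb\cap\Gr_\nu$ is dense in $X_{\mu,\nu}^\bba(\tau)\cap\mathring{\Gr}_\nu$ is itself the remaining content of the theorem: the paper proves it by first establishing $J_{\tau,x_0}(\mO)$-stability of $\overline{Y_\nu^\bbb\cap\Gr_\nu}$ (openness of $S_\nu\cap\Gr_\nu$ plus generation of $J_\tau(\mO)$ by $U_\tau(\mO)$ and the stabilizer of $\varpi^\nu$) and then showing $J_{\tau,x_0}(\mO)$ acts transitively on the components of $X_{\mu,\nu}^\bba(\tau)\cap\mathring{\Gr}_\nu$ by lifting to $L^+G$ and using the irreducibility of $\varpi^\nu F^{(\infty)}\varpi^{-\sigma(\nu)}$; this step is absent from your proposal.
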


\begin{rmk}
(1) If $\tau\in\xcoch(Z_G)$ and $\nu$ is minuscule, then $\Gr_{\nu}=\mathring{\Gr}_{\nu}=(\bar G/\bar P_\mu)^\pf$. It follows that in this case
$$X_{\mu,\nu}^\bba(\tau)=X_\mu^{\bbb,x_0}(\tau)$$ 
is (the perfection) of a (closed) finite Deligne-Lusztig variety in $\bar G/\bar P_\mu$, and with the action of $J_{\tau,x_0}(\mO)$ factoring through the usual action of $J_{\tau,x_0}(\mO/\varpi)$. 

(2) However, if $\nu$ is not minuscule, the action $J_{\tau,x_0}(\mO)$ on $\mathring{X}_\mu^{\bbb,x_0}(\tau)$ does not factor through $J_{\tau,x_0}(\mO/\varpi)$. We refer to \cite{XZ} for an example.

(3) We do not know whether $X_{\mu,\nu}^\bba(\tau)$ is irreducible in general. 
\end{rmk}

The following proposition is the key step towards Theorem \ref{C: unique comp}.

\begin{prop}\label{P: irrkey}
The scheme $Y_\nu^\bbb\cap\Gr_\nu= X_{\mu,\nu}^\bba(\tau)\cap S_\nu$ is geometrically irreducible of dimension $\langle\rho,\mu-\tau\rangle$.
\end{prop}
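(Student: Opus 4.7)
The plan is to analyze $Y_\nu^\bbb \cap \Gr_\nu$ via the $L^+U$-torsor $\pi'_\nu: LU \to S_\nu$ from \eqref{E:triviallization of U(O) torsor}. First I observe that since $\nu = \nu_\bbb$ can be taken dominant (after the $\sigma$-conjugacy reduction to $\tau = \tau_\bbb$, possibly adjusting by an element of $\xcoch(Z_G)$), $\varpi^\nu$ is the basepoint of the open cell and one checks directly that $\pi'^{-1}_\nu(S_\nu \cap \Gr_\nu) = L^+U$. Restricting the Cartesian diagram \eqref{E:tilde Ynub Cartesian diagram} to this preimage and invoking Lemma~\ref{Lang map}(1), which says $f_\tau: L^+U \to L^+U$ is pro-\'etale with Galois group $U_\tau(\calO)$, one reduces the problem to establishing geometric irreducibility and dimension $\langle \rho, \mu-\tau\rangle$ for
\[
V := L^+U \cap c_{-\tau-\sigma(\nu)}^{-1}\big(\pi_\lambda^{-1}((S_\lambda\cap\Gr_\mu)^\bbb)\big) \;\subset\; L^+U.
\]

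Next, I would apply Proposition~\ref{P: geom prop of MV cycle}, which says the product of Levi projections
\[
(S_\lambda \cap \Gr_\mu)^\bbb \;\longrightarrow\; \prod_{\alpha \in \Delta} \big(S_{M_\alpha,\lambda} \cap \Gr_{M_\alpha,\, \lambda+\varepsilon_\alpha(\bbb)\alpha^\vee}\big)
\]
is dominant with geometrically connected fibers. Combining this with the factorization of $L^+U$ along root subgroups organized by $\sigma$-orbits of simple roots, the irreducibility problem for $V$ decomposes into an analogous rank-one problem, one for each $\sigma$-orbit of simple roots, where the ambient group is a Weil restriction $\Res_{F'/F}\PGL_2$ for an unramified extension $F'/F$ of degree equal to the size of the orbit.

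For each such $\sigma$-orbit $\{\alpha, \sigma(\alpha), \ldots, \sigma^{d-1}(\alpha)\}$, the explicit formula \eqref{E: MV sl2} for $\PGL_2$-MV-cycles combined with the minimality of $\nu_\bbb$ from Lemma~\ref{L: finding best tau}(2) pins down the rank-one piece. Specifically, minimality guarantees that at least one index $j \in \{0,\ldots,d-1\}$ satisfies $\langle \sigma^j(\alpha), \nu_\bbb \rangle = \varepsilon_{\sigma^j(\alpha)}(\bbb)$, while all others satisfy the weaker inequality. A direct calculation---using an appropriate commutator identity to split off positive and negative parts of the expansion, which is where Hypothesis~\ref{H:p not 23} enters---identifies the rank-one piece with the perfection of an affine space of dimension equal to the $\alpha$-contribution to $\langle \rho, \mu - \tau\rangle$. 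Assembling the rank-one irreducibilities using the dominant-with-connected-fibers statement of Proposition~\ref{P: geom prop of MV cycle} then yields geometric irreducibility and the dimension formula for $V$, hence for $Y_\nu^\bbb \cap \Gr_\nu$.

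The main obstacle is the rank-one calculation. The $\sigma$-action nontrivially cycles through the root subgroups in $\{U_{\sigma^j(\alpha)}\}$, so the elements of $V$ mix coordinates across the orbit in a way that is invisible in the split case. One must use a commutator identity (and therefore the characteristic restriction in Hypothesis~\ref{H:p not 23}) to put products of root subgroup elements into a normal form, so that the defining conditions of $V$ translate into linear conditions on the coordinates and the affine-space structure becomes visible. The minimality of $\nu_\bbb$ is essential here: without an equality somewhere in the $\sigma$-orbit, the intersection with $L^+U$ would be too large and irreducibility could fail.
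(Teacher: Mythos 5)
There is a genuine gap, and it sits at the very first reduction. You claim that, because $f_\tau: L^+U \to L^+U$ is a pro-\'etale cover with Galois group $U_\tau(\calO)$, the problem reduces to proving geometric irreducibility of $V = c_{-\tau-\sigma(\nu)}^{-1}\bigl(\pi_\lambda^{-1}((S_\lambda\cap\Gr_\mu)^\bbb)\bigr)\cap L^+U$. That reduction goes the wrong way: the preimage of an irreducible scheme under a finite (or pro-) \'etale \emph{Galois} cover is irreducible only when the monodromy surjects onto the deck group, and deciding exactly that is the whole content of the proposition. Moreover $V$ itself is irreducible for trivial reasons --- it is the pullback of the irreducible MV cycle $(S_\lambda\cap\Gr_\mu)^\bbb$ along the smooth map $\pi_\lambda\circ c_{-\tau-\sigma(\nu)}$ restricted to $L^+U$ (this is exactly the space $Z_r$ after truncation, whose irreducibility the paper dispenses with in one line) --- so your proposal spends its effort on a statement that needs no work, while the object that actually has to be shown irreducible, namely $\widetilde Y_\nu^{\bbb,x_0}=f_\tau^{-1}(V)$ (whose image under $\pi'_\nu$ is $Y_\nu^\bbb\cap\Gr_\nu$), is never addressed. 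What must be proved is that the representation $\rho:\Gal(\bar\eta/\eta)\to U_\tau(\calO/\varpi^r)$ attached to this \'etale cover of $Z_r$ is surjective for every $r$.

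Your later ingredients are the right ones, but they are deployed for the wrong purpose. In the paper, Hypothesis~\ref{H:p not 23} enters through the commutator lemma identifying $\gamma_2(U_\tau(R))$ with $U_{\tau,2}(R)$, which (together with the elementary lemma that surjectivity onto a nilpotent group can be tested on abelianizations) reduces the surjectivity of $\rho$ to the abelianized map $\rho_2$ into $(U_\tau/U_{\tau,2})(\calO/\varpi^r)$; Proposition~\ref{P: geom prop of MV cycle} is used to show $\Gal(\bar\eta/\eta)\to\Gal(\bar\eta_2/\eta_2)$ is surjective (connected fibers of the projection of the MV cycle to the rank-one Levis), not to decompose $V$; the computation via \eqref{E: MV sl2} identifies the image $Z_{r,2}$ in $L^r(U/U_2)$ as a product of truncated root groups; and the minimality of $\nu_\bbb$ from Lemma~\ref{L: finding best tau}(2) guarantees that in each $\sigma$-orbit of simple roots some full root group $L^rU_{\sigma^i(\alpha)}$ lies in $Z_{r,2}$, so that the Lang-map lemma for the Weil restriction of $\mathbb{G}_a$ (not of $\PGL_2$) forces the image of $\rho_2$ to contain each relative root group. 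Without this monodromy argument your conclusion does not follow; as written, the proposal proves only that $Z_r$ is irreducible, which is not the proposition.
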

\begin{proof}
The Cartesian diagram \eqref{refined} gives
\[
\xymatrix@C=20pt{
Y^\bbb_\nu \cap \Gr_\nu  \ar[rr]  \ar@{_{(}->}[d] && (S_\nu \cap \Gr_\nu) \tilde \times (S_\lambda \cap \Gr_\mu)^\bbb\ar[d]^{\pr_1 \times m} \\
S_\nu \cap \Gr_\nu \ar[rr]^-{1\times \varpi^\tau\sigma} && (S_\nu \cap \Gr_\nu)\times (S_{\tau+\sigma(\nu)} \cap \Gr_{\tau+\sigma(\nu)}).
}
\]

Putting $\widetilde Y_\nu^{\bbb, x_0} : = \pi'^{-1}_\nu(Y_\nu^\bbb \cap \Gr_\nu)$, we claim that it fits into the following Cartesian diagram analogous to \eqref{E:tilde Ynub Cartesian diagram}
\begin{equation}
\label{E:Z}
\xymatrix{
Y_\nu^\bbb \cap \Gr_\nu \ar[d]
& \ar[l] \widetilde Y_\nu^{\bbb, x_0} \ar[d] \ar[rrr] &&& (S_\la\cap\Gr_\mu)^{\bbb} \ar[d]
\\
S_\nu  \cap \Gr_\nu &\ar[l]_-{\pi'_\nu} L^+U \ar[r]^-{f_\tau} & L^+U \ar[rr]^-{\pi_\lambda \circ c_{-\tau-\sigma(\nu)}} && S_\lambda.
}
\end{equation}
Indeed, we recall that $S_{\nu}\cap \Gr_{\nu}=L^+U\varpi^{\nu}L^+U/L^+U$. From this, we can deduce that
\begin{equation}
\label{E:pinu inverse of S cap Gr}
LU \times_{\pi'_\nu, S_\nu} (S_{\nu}\cap \Gr_{\nu}) = \pi'^{-1}_\nu(S_\nu \cap \Gr_\nu) =  L^+U,
\end{equation}
and the same holds if we replace $\nu$ by $\tau+\sigma(\nu)$. It follows immediately that the left square of \eqref{E:Z} is Cartesian.
To see the right square of \eqref{E:Z} is Cartesian, it suffices to show that $c_{-\tau-\sigma(\nu)}^{-1} \pi_\lambda^{-1}((S_\la\cap\Gr_\mu)^{\bbb})$ is contained in $L^+U$.
Indeed, if an element $u \in LU$ satisfies $\pi_\lambda(c_{-\tau-\sigma(\nu)}(u)) \in (S_\la\cap\Gr_\mu)^{\bbb}$, or equivalently
\[
\varpi^{-\nu} u\varpi^{\tau+\sigma(\nu)} \bmod L^+U \in (S_\la\cap\Gr_\mu)^{\bbb},
\]
then we know that
\[
\varpi^\nu \cdot \varpi^{-\nu} u\varpi^{\tau+\sigma(\nu)} \bmod L^+U \in (S_\nu \cap \Gr_\nu) \tilde \times (S_\la\cap\Gr_\mu)^{\bbb} \xrightarrow{m_{\nu, \mu}} S_{\tau+\sigma(\nu)} \cap \Gr_{\tau + \sigma(\nu)},
\]
which forces $u \in L^+U$ by \eqref{E:pinu inverse of S cap Gr} (with $\nu$ replaced by $\tau+\sigma(\nu)$). This concludes the proof of the claim that \eqref{E:Z} is Cartesian.

As in the proof of Proposition~\ref{P: prop of Ynu}, we may choose a congruence subgroup $L^+U^{(r)}$ for some $r$ large enough so that $\varpi^{-\nu}L^+U^{(r)} \varpi^\nu \subseteq L^+U$ and $c_{-\tau-\sigma(\nu)} ( L^+U^{(r)}) \subseteq L^+U$. Then passing to the quotient of \eqref{E:Z} by $L^+U^{(r)}$ gives a Cartesian diagram
\begin{equation}
\label{E:Z quot r}
\xymatrix{
Y_\nu^\bbb \cap \Gr_\nu \ar[d]
& \ar[l] \widetilde Y_\nu^{\bbb, x_0, r}  \ar[d] \ar[rrr] &&& (S_\la\cap\Gr_\mu)^{\bbb} \ar[d]
\\
S_\nu  \cap \Gr_\nu &\ar[l]_-{\pi'_\nu} L^rU \ar[r]^-{f_\tau} & L^rU \ar[rr]^-{\pi_\lambda \circ c_{-\tau-\sigma(\nu)}} & & S_\lambda.
}
\end{equation}
Since this $\pi'_\nu$ is (perfectly) smooth, it is enough to show that $\widetilde Y_\nu^{\bbb, x_0, r}$ is irreducible.
Moreover, since $\pi_\lambda \circ c_{-\tau-\sigma(\nu)}$ is smooth, the fiber product
\[
Z_r: = L^rU \times_{\pi_\lambda \circ c_{-\tau-\sigma(\nu)}, S_\lambda} (S_\lambda \cap \Gr_\mu)^\bbb
\]
is irreducible, and $\widetilde Y_\nu^{\bbb, x_0, r}$ is an \'etale cover of $Z_r$ with Galois group $U_\tau(\mO/\varpi^r)$. Therefore, it is enough to show that the induced representation
\[\rho:\Gal(\bar \eta/\eta)\to U_\tau(\mO/\varpi^r)\]
is surjective, where $\eta$ is the generic point of $Z_r\otimes \bar k$.

We will make use of the following easy group theoretical fact.
\begin{lem}\label{L: surj nil}
Let $f:\Ga_1\to \Ga_2$ be a group homomorphism. Assume that $\Ga_2$ is a nilpotent group. Then $f$ is surjective if and only if the abelianization map $f^{\on{ab}}: \Ga_1^{\on{ab}}\to \Ga_2^{\on{ab}}$ is surjective.
\end{lem}
\begin{proof}
The ``only if" direction is clear. We prove the other direction. As usual, we denote by $[a,b]=aba^{-1}b^{-1}$ and $a^b=bab^{-1}$. 

Let $\ga_1(\Ga_i)=\Ga_i$ and $\ga_j(\Ga_i)=[\ga_{j-1}(\Ga_i),\Ga_i]$ be the lower central series. It is enough to show that $\Ga_1/\ga_j(\Ga_1)\to \Ga_2/\ga_j(\Ga_2)$ is surjective for every $j$ by induction.
The case $j=2$ is our assumption. Suppose that this is true for $j$. Then it is enough to show that 
$$\ga_{j}(\Ga_1)/\ga_{j+1}(\Ga_1)\to \ga_{j}(\Ga_2)/\ga_{j+1}(\Ga_2)$$ is surjective. It is enough to show that elements in  $\gamma_j(\Gamma_2)$ of the form $[a,b]$ with $a \in \gamma_{j-1}(\Gamma_2)$ and $b \in \Gamma_2$ are contained in the image of the above map.

By inductive hypothesis, there exists $\tilde{a}\in \ga_{j-1}(\Ga_1)$ and $\tilde{b}\in\Ga_1$ such that $f(\tilde a)=am, f(\tilde b)=bn$, with $m,n\in\ga_j(\Ga_2)$. Then 
$$f([\tilde a,\tilde b])=[am,bn]=[m,b]^a[a,b][am,n]^b=[a,b] \mod \ga_{j+1}(\Ga_2).$$  
The lemma then follows.
\end{proof}
Therefore, it is enough to show that $\rho:\Gal(\bar \eta/\eta)\to U_\tau(\mO/\varpi^r)^{\on{ab}}$ is surjective. The next lemma gives a description of $ U_\tau(\mO/\varpi^r)^{\on{ab}}$ in most cases.

\begin{lem}\label{L: commutator}
Let $U_H$ be the unipotent radical of a Borel subgroup $B_H$ of an unramified reductive group $H$ over $\mO$, and for $i \geq 1$, let  $\Phi_{H,i}$ denote the set of (absolute) positive roots $\alpha$ of height $i$, i.e. $\langle\alpha,\rho_H^\vee \rangle =i$, where $\rho_H^\vee$ is the half of the sum of positive coroots. Let $U_{H, i}$ denote the group generated by the coroot groups $U_\alpha$ with $\alpha \in \Phi_{H,i} \cup \Phi_{H,i+1} \cup \cdots$. Then we obtain a canonical filtration 
$U_H=U_{H,1}\supset U_{H,2}\supset\cdots$ by normal subgroups. 

Assume that $k\neq \bF_2$ if $H_\ad$ has a factor of type $\mathsf{B}_n,\mathsf{C}_n,\mathsf{F}_4$, and $k\neq \bF_2,\bF_3$ if $H_\ad$ has a factor of type $\mathsf{G}_2$,  then for every $\mO$-algebra $R$,
\[\ga_2(U_H(R)))=U_{H,2}(R).\]
\end{lem}
\begin{proof}
The inclusion $\ga_2(U_H(R))) \subseteq U_{H,2}(R)$ is clear. We prove the other inclusion.
We are free to pass to a central isogeny of $H$, so we may assume that $H$ is adjoint. Then we may assume that $H$ is absolutely simple over $\mO$. 
For a relative coroot $a$, let $U_{H,a}$ denote the corresponding root group.
Recall that each $\sigma$-orbit in $\Phi_H^+: = \cup_{i>0} \Phi_{H,i}$ corresponds to a positive relative root of $H$.  We will say the height of a positive relative root to mean the height of the corresponding absolute roots.
Note that every non-simple relative root $a$ of $H$ is either non-reduced (i.e. $a/2$ is also a root) or is  contained in a sub root system of rank $2$. Then to show $U_{H,a}(R)\subset \ga_2(U_H(R))$, we may assume $H$ itself is of type $\mathsf{BC}_1$, or of relative rank two. Explicitly, it is one of the following cases: $\on{PGL}_3, \on{PSO}_5, \on{G}_2, \on{PU}_n, n=3,4,5$, ${^3}\mathsf{D}_4$.

We also point out that, to show that $U_{H,2}(R) \subseteq \gamma_2(U_H(R))$, it is enough to show that, for each relative root $a$ of height $i$, there exist relative roots $a_1$ and $a_2$ of heights $i_1$ and $i_2$, such that $a=a_1+a_2$ (thus $i=i_1+i_2$) and 
\begin{equation}
\label{E:commutator of a1 and a2} [U_{H,a_1}(R)U_{H,i_1+1}(R)/U_{H,i_1+1}(R), U_{H,a_2}(R)U_{H,i_2+1}(R)/U_{H,i_2+1}(R)] = U_{H,a}(R)U_{H, i+1}(R) / U_{H, i+1}(R).
\end{equation}

The calculations of the commutator $[x,y]$ for $x\in U_{H,a}(R)$ and $y\in U_{H,b}(R)$ in split case are contained in SGA 3 XXIII, Section 3. Using these formulas and the argument as \cite[Lemma 7]{How} give our claim in the split case.

Next we assume that $H=\on{PU}_n$ with $n=3,4,5$ for the unramified quadratic extension $E/F$.
Let $\alpha_1, \dots, \alpha_{n-1}$ denote the absolute simple roots, and then each subset of consecutive numbers $I \subseteq\{1, \dots, n-1\}$ corresponds to an absolute positive root. 
Let $x_{\alpha_i}: \GG_a \to U_{\alpha_i}$ denote the corresponding root morphisms over $\calO_E$, normalized so that $\sigma(x_{\alpha_i}(r)) = x_{\alpha_{n-i}}(\sigma(r))$ for $r \in \calO_E$.
Then we can normalize other root morphisms so that $[x_{\alpha_I}(r), x_{\alpha_J}(r')] = 0$ unless the concatenation of $I$ and $J$ (resp. $J$ and $I$) is exactly another consecutive sequence, in which case  $[x_{\alpha_I}(r), x_{\alpha_J}(r')] =  x_{\alpha_{I \cup J}}(rr')$ (resp. $[x_{\alpha_I}(r), x_{\alpha_J}(r')] =  x_{\alpha_{J \cup I}}(-rr')$).
\begin{enumerate}
\item When $n=3$, for the relative root $a$ corresponding to $\alpha_{1,2} =\alpha_1+\alpha_2$, a straightforward computation shows that\footnote{The relative root group corresponding to $\{\alpha_1, \alpha_2\}$ should be $x_{\alpha_1}(r)x_{\alpha_2}(\sigma(r))x_{\alpha_{1,2}}(-\tfrac 12 r\sigma(r))$ but its image in $U_{\{\alpha_1, \alpha_2\}}(R) / U_2(R)$ is the same as $x_{\alpha_1}(r)x_{\alpha_2}(\sigma(r))$.
Explicitly when $n=3$, if we write $\on{U}_3=\{ A\in \GL_3(\mO_{E})\mid (\overline{A}^{t})^{-1}JA=J\}$ with the anti-diagonal unit matrix $J_3$. The commutator subgroup of $\on{PU}_3$ is given by
\[
\begin{pmatrix}1 & r & s\\  & 1 & -\sigma(r) \\  & & 1 \end{pmatrix}\begin{pmatrix}1 & r' & s'\\  & 1 & -\sigma(r')\\  & & 1 \end{pmatrix} \begin{pmatrix}1 & -r & \sigma(s)\\  & 1 & \sigma(r) \\  & & 1 \end{pmatrix}\begin{pmatrix}1 & -r' & \sigma(s')\\  & 1 & \sigma(r') \\  & & 1 \end{pmatrix}=\begin{pmatrix}1 &  & r'\sigma(r)-r\sigma(r')\\  & 1 &  \\  & & 1 \end{pmatrix},
\] where $r\sigma(r)+s+\sigma(s)=0, r'\sigma(r')+s'+\sigma(s')=0$.}
\begin{align*}
&[x_{\alpha_1}(r)x_{\alpha_2}(\sigma(r)), x_{\alpha_1}(r')x_{\alpha_2}(\sigma( r'))] 
\\
=\;& x_{\alpha_1}(r)x_{\alpha_2}(\sigma(r))x_{\alpha_1}(r')x_{\alpha_2}(\sigma( r'))x_{\alpha_2}(-\sigma( r')) x_{\alpha_1}(-r')x_{\alpha_2}(-\sigma(r)) x_{\alpha_1}(-r)
\\
=\;& x_{\alpha_1}(r)x_{\alpha_1}(r')x_{\alpha_2}(\sigma(r))
x_{\alpha_{1,2}}(-\sigma(r)r')
x_{\alpha_2}(\sigma( r'))x_{\alpha_2}(-\sigma( r')) x_{\alpha_2}(-\sigma(r))x_{\alpha_1}(-r')x_{\alpha_{1,2}}(r'\sigma(r)) x_{\alpha_1}(-r)
\\
=\;& x_{\alpha_{1,2}}( r'\sigma( r) -r\sigma( r') ) .
\end{align*}
It follows that $U_{\alpha_{1,2}}(R) \subseteq \gamma_2(U_H(R))$. (Note that this even holds if $p=2$.) 

\item
When $n=4$, the relative positive roots of height $ \geq 2$ correspond to the $\sigma$-orbits $\{\alpha_{1,2}, \alpha_{2,3}\}$ and $\{\alpha_{1,2,3}\}$.
We use the following commutator relations (whose computation we omit)\footnote{Note that the root group for the relative root $\{\alpha_{1,2},\alpha_{2,3}\}$ is $x_{\alpha_{1,2}}(s) x_{\alpha_{2,3}}(-\sigma(s))$ \emph{with} the negative sign.}
\[
[x_{\alpha_1}(r) x_{\alpha_3}(\sigma( r)), x_{\alpha_2}(r')] = x_{\alpha_{1,2}}(rr') x_{\alpha_{2,3}}(-\sigma(r)r') ,
\]
\[
[x_{\alpha_1}(r)x_{\alpha_3}(\sigma(r)),
x_{\alpha_{1,2}}(r') x_{\alpha_{2,3}}(-\sigma( r'))] = x_{\alpha_{1,2,3}}(-\sigma(r)r'-r\sigma(r')).
\]
This verifies \eqref{E:commutator of a1 and a2} for the relative roots $\{\alpha_{1,2},\alpha_{2,3}\}$ and $\alpha_{1,2,3}$.

\item
When $n=5$, the relative positive roots of height $\geq 2$ correspond to the $\sigma$-orbits $\{\alpha_{1,2}, \alpha_{3,4}\}$, $\{\alpha_{2,3}\}$, $\{\alpha_{1,2,3}, \alpha_{2,3,4}\}$, and $\{\alpha_{1,2,3,4}\}$.
The relative root corresponding to $\{\alpha_{2,3}\}$ (resp. $\{\alpha_{1,2,3,4}\}$) is contained in an embedded $\on{PU}_2$ with absolute simple roots $\alpha_2$ and $\alpha_3$ (resp. $\alpha_{1,2}$ and $\alpha_{3,4}$), which is covered in (1). The following verifies \eqref{E:commutator of a1 and a2} for the other two relative positive roots.
\[
[x_{\alpha_1}(r) x_{\alpha_4}(\sigma( r)), x_{\alpha_2}(r')x_{\alpha_3}(\sigma(r'))] = x_{\alpha_{1,2}}(rr') x_{\alpha_{3,4}}(-\sigma(rr')) ,
\]
\[
[x_{\alpha_1}(r)x_{\alpha_4}(\sigma(r)),
x_{\alpha_{2,3}}(r')] = x_{\alpha_{1,2,3}}(rr')x_{\alpha_{2,3,4}}(-\sigma(r)r').
\]
\end{enumerate}

Finally, we assume that $H={^3}\mathsf{D}_4$ with respect to the cubic unramified extension $E/F$. Let $\alpha_1, \alpha_2, \alpha_3$, and $\alpha_4$ denote the four absolute simple roots; so that $\sigma$ permutes $\alpha_2, \alpha_3, \alpha_4$. 
The relative positive roots of height  $\geq 2$ correspond to the $\sigma$-orbits $\{\alpha_{1,2}, \alpha_{1,3},\alpha_{1,4}\}$, $\{\alpha_{1,2,3}, \alpha_{1,2,4}, \alpha_{1,3,4}\}$, $\{\alpha_{1,2, 3,4}\}$, and $\{\alpha^*_{1,2,3,4} = \alpha_{1,2,3,4} + \alpha_1\}$.
The relative positive root corresponding to $\{\alpha^*_{1,2,3,4}\}$ (resp. $\{\alpha_{1,2,3}, \alpha_{1,2,4}, \alpha_{1,3,4}\}$) is contained in an embedded $\PGL_2$ (resp. $\Res_{E/F}\PGL_2$) with simple roots $\alpha_1$ and $\alpha_{1,2,3,4}$ (resp. $\{\alpha_1, \alpha_2, \alpha_3\}$ and $\{\alpha_{1,2}, \alpha_{1,3}, \alpha_{1,4}\}$), which is covered in the split group case.
To deal with the other two relative roots, we normalize root morphisms so that 
$
\sigma(x_{\alpha_i}(r)) = x_{\sigma(\alpha_i)}(\sigma(r))$ and $x_{\alpha_{1,i}}(1) = [x_{\alpha_1}(1), x_{\alpha_i}(1)]$ for $i=2,3,4$;  $x_{\alpha_{1,i,j}} = [x_{\alpha_{1,i}}(1), x_{\alpha_j}(1)]$ for pairs $(i,j) = (2,3), (3,4), (4,2)$; and $x_{\alpha_{1,2,3,4}}(1) = [x_{\alpha_{1,2,3}}(1), x_{\alpha_4}(1)] = [x_{\alpha_{1,3,4}}(1), x_{\alpha_2}(1)] =  [x_{\alpha_{1,4,2}}(1), x_{\alpha_3}(1)]$. The following verifies \eqref{E:commutator of a1 and a2} for the other relative positive roots.
\[
[x_{\alpha_1}(r) , x_{\alpha_2}(r')x_{\alpha_3}(\sigma(r'))x_{\alpha_4}(\sigma^2(r'))] = x_{\alpha_{1,2}}(rr') x_{\alpha_{1,3}}(r\sigma(r'))x_{\alpha_{1,4}}(r\sigma^2(r')),
\]
\[
[x_{\alpha_2}(r')x_{\alpha_3}(\sigma(r'))x_{\alpha_4}(\sigma^2(r')),
x_{\alpha_{1,2,3}}(r')x_{\alpha_{1,3,4}}(\sigma(r'))x_{\alpha_{1,4,2}}(\sigma^2(r'))] = x_{\alpha_{1,2,3,4}}\big((1+\sigma+\sigma^2)(-r\sigma(r')) \big).
\]
This concludes the proof of the lemma.
\end{proof}

 We have a natural projection
\[\pr: L^rU\to L^r(U/U_2)\to L^r(U_\tau/U_{\tau,2}).\]
Let
$$\bL:  L^r(U_\tau/U_{\tau,2})\to  L^r(U_\tau/U_{\tau,2}),\quad \bL(u)=u^{-1}\sigma(u)$$
be (the perfection of) the usual Lang map for the \emph{commutative} algebraic group $L^r(U/U_2)$. Then we have the following commutative, in which the right square is Cartesian
\[\xymatrix{
L^rU\ar_{f_\tau}[d]\ar[r]& L^r(U/U_2)\ar[r]\ar[d]_{f_{\tau,2}}&  L^r(U_\tau/U_{\tau,2})\ar^\bL[d]\\
L^rU \ar[r] & L^r(U/U_2)\ar[r] & L^r(U_\tau/U_{\tau,2}), \ar@{}[lu]|\Box
}\]
where $f_{\tau,2}: L^r(U/U_2) \to L^r(U/U_2)$ is the morphism induced by $f_\tau$.

Let $Z_{r,2}$ be the closure of the image of $Z_r$ under the projection $L^rU\to L^r(U/U_{2})$, whose generic point we denote by $\eta_2$. It follows from this commutative diagram and
 Lemma \ref{L: commutator} that we have a commutative diagram
\[
\xymatrix{
\Gal(\bar \eta / \eta) 
\ar[d] \ar[r]^-\rho & U_\tau(\calO/ \varpi^r) \ar[d]
\\
\Gal(\bar \eta_2 / \eta_2) \ar[r]^-{\rho_2} & (U_\tau/ U_{\tau, 2})(\calO/\varpi^r) \cong U_\tau (\calO/\varpi^r)^\mathrm{ab},\!\!\!\!\!\!\!\!\!\!\!\!\!\!\!\!\!\!\!\!\!\!\!\!\!\!\!\!\!\!\!\!\!\!\!\!\!\!\!\!
}
\]
where the bottom arrow is induced by the Lang map $\LL$. By Lemma \ref{L: surj nil}, to prove the surjectivity of $\rho$, it is enough to show the surjectivity of $\Gal(\bar\eta/\eta)\to\Gal(\bar\eta_2/\eta_2)$ and the surjectivity of $\rho_2$.
The former follows from the fact that the map $Z_r \to Z_{r,2}$ generically has geometrically connected fiber (Proposition~\ref{P: geom prop of MV cycle}). For the latter,  we need to show that $Z_{r,2}$ is big enough. We make use of the following lemma.

\begin{lem}
We have $Z_{r,2}= \prod L^+U_{\al_i}^{(\langle\nu,\al_i\rangle-\varepsilon_{\al_i}(\bbb))}/ L^+U_{\al_i}^{(r)}\subset L^r(U/U_{2})$.
\end{lem}
Note that by our assumption of $\nu$ and  \eqref{E:condition to realize as a homomorphism crystal}, $\langle\nu,\al_i\rangle-\varepsilon_{\al_i}(\bbb)\geq 0$. This is consistent with the fact that $Z_r\subset L^rU$.
\begin{proof}
By Proposition \ref{P: geom prop of MV cycle}, the closure of the projection of $\pi^{-1}_\la((S_\la\cap \Gr_\mu)^\bbb)\subset LU$ to $L(U/U_2)$ is $\prod_i \pi_{M_i,\la}^{-1}(S_{M_i,\la}\cap \Gr_{M_i,\la+\varepsilon_i(\bbb)})$, where $\pi_{M_i,\la}: LU_{M_i}=LU_{\al_i}\to S_{M,\la_i}$ is the projection \eqref{E:triviallization of U(O) torsor} for $M_i$.
By \eqref{E: MV sl2} and $\phi_i(\bbb)-\varepsilon_i(\bbb)=\langle\la,\al_i\rangle$, we have
$$\pi_{M_i,\la}^{-1}(S_{\la}\cap \Gr_{M_i,\la+\varepsilon_i(\bbb)})=L^+U^{(-\phi_i(\bbb))}_{\al_i}.$$
Now use $Z_r=\varpi^{\sigma(\nu)+\tau} \pi^{-1}_\la((S_\la\cap \Gr_\mu)^\bbb) \varpi^{-\sigma(\nu)-\tau} / L^+U^{(r)}$ and $\sigma(\nu)+\tau=\nu+\la$ to conclude.
\end{proof}

Note that so far we have not use the fact that $\nu=\nu_\bbb$. Now, assuming this, we see that in each $\sigma$-orbit $\{\al,\sigma(\al),\ldots,\sigma^d(\al)=\al\}$ of $\Delta$, there is some $\sigma^i(\al)$ such that $\langle\nu_\bbb,\sigma^i(\al)\rangle=\varepsilon_{\sigma^i(\al)}(\bbb)$, by Lemma \ref{L: finding best tau} (2). In particular, $L^rU_{\sigma^i(\al)}\subset Z_{r,2}$. 

We need a last lemma.

\begin{lem}Let $H_0$ be a connected affine algebraic group over $\bF_{q^r}$ and let $H=\Res_{\bF_{q^r}/\bF_q}H_0$ be its Weil restriction to $\bF_q$. Let $H_0\to H\otimes \bF_{q^r}$ be the canonical embedding.
Let $\bL_{H}: H\to H,\ h\mapsto h^{-1}\sigma(h)$ denote the Lang map of $H$, base changed to $\bF_{q^r}$, where $\sigma$ is the $q$-Frobenius of $H$. 
Then $\bL_H: \bL_H^{-1}(H_0)\to H_0$ can be identified with the Lang map for $H_0$. In particular, $\bL_H^{-1}(H_0)$ is irreducible so the composed map
\[\pi^{\et}_1(H_0)\to \pi^{\et}_1(H)\to H_0(\bF_{q^r}).\]
is surjective.
\end{lem}
\begin{proof}This follows by definition.
\end{proof}

Now we finish the proof.
Applying this lemma to  each $\sigma$-orbit $\{\al,\sigma(\al),\ldots,\sigma^d(\al)=\al\}$ of $\Delta_\tau$ (using the fact that $Z_{r,2}$ contains $L^rU_{\sigma^i(\alpha)}$ for some $i$), we see that the image of $\rho_2$ contains the relative root group corresponding this $\sigma$-orbit. Since this is true for all $\sigma$-orbit, we conclude that $\rho_2$ is surjective. This concludes the proof of Proposition~\ref{P: irrkey}.
\end{proof}

\subsubsection{Proof of Theorem \ref{C: unique comp}}
Let $\overline{Y_\nu^\bbb\cap \Gr_\nu}$ denote the closure of $Y_\nu^\bbb\cap \Gr_\nu$. By Proposition~\ref{P: irrkey}, it is an irreducible component of $X_\mu^\bbb(\tau)$.
Recall that that $Y_\nu^\bbb\cap\Gr_\nu= X_{\mu,\nu}^\bba(\tau)\cap S_\nu$. Therefore, for every $g\in J_\tau(\mO)$,
$$(Y_\nu^\bbb\cap\Gr_\nu)\cap g(Y_\nu^\bbb\cap\Gr_\nu)= X_{\mu,\nu}^\bba(\tau)\cap S_\nu\cap gS_\nu.$$
Recall that $S_\nu\cap\Gr_\nu$ is open in $\Gr_\nu$. It follows that $(Y_\nu^\bbb\cap\Gr_\nu)\cap g(Y_\nu^\bbb\cap\Gr_\nu)$ is open in $Y_\nu^\bbb\cap\Gr_\nu$. 
In particular, if this intersection is non-empty, then $g\overline{Y_\nu^\bbb\cap \Gr_\nu}=\overline{Y_\nu^\bbb\cap \Gr_\nu}$ by the irreducibility of $Y_\nu^\bbb\cap\Gr_\nu$ established in Proposition~\ref{P: irrkey}.

Now, by Lemma \ref{L: basic prop of MV}, $\varpi^\nu\in Y^\bbb_\nu\cap\Gr_\nu$. Let 
$$\on{St}_{J_{\tau,x_0}(\mO)}(\varpi^\nu)=\{g\in J_{\tau,x_0}(\mO)\mid g\varpi^\nu=\varpi^\nu \mod G(\mO_L)\}.$$
It follows from the above claim that $\overline{Y_\nu^\bbb\cap \Gr_\nu}$ is invariant under the action $\on{St}_{J_{\tau,x_0}(\mO)}(\varpi^\nu)$. On the other hand, $\overline{Y_\nu^\bbb\cap \Gr_\nu}$ is also invariant under the action of $U_\tau(\mO)$ (since so is $Y^\bbb_\nu\cap \Gr_\nu$). Since $\on{St}_{J_{\tau,x_0}(\mO)}(\varpi^\nu)$ and $U_\tau(\mO)$ generate $J_\tau(\mO)$ as a group, we see that $\overline{Y_\nu^\bbb\cap \Gr_\nu}$ is invariant under the action by $J_{\tau,x_0}(\mO)$.

Next, we show that $X_{\mu,\nu}^\bba(\tau)$ contains a unique irreducible component of dimension $\langle\rho,\mu-\tau\rangle$. Since $\Gr_\nu=\sqcup_{\nu'\preceq \nu}(S_{\nu'}\cap \Gr_\nu)$, it is enough to show that $X_{\mu,\nu}^\bba(\tau)\cap S_{\nu'}$ has dimension strictly smaller that $\langle \mu-\tau\rangle$ if $\nu'\prec \nu$. But by definition, $X_{\mu,\nu}^\bba(\tau)\cap S_{\nu'}$ fits into the following Cartesian diagram
\[\begin{CD}
X_{\mu,\nu}^\bba(\tau)\cap S_{\nu'}@>>> \Gr_{(\nu,\mu)\mid \tau+\sigma(\nu)}^{0,\bba}\cap (S_{\nu'}\cap \Gr_{\nu})\times (S_{\tau+\sigma(\nu')}\cap \Gr_{\tau+\sigma(\nu)})\\
@VVV@VVV\\
S_{\nu'}\cap \Gr_{\nu}@>\id\times \varpi^\tau\sigma>>(S_{\nu'}\cap \Gr_{\nu})\times (S_{\tau+\sigma(\nu')}\cap \Gr_{\tau+\sigma(\nu)}).
\end{CD}\]
We need the following lemma.
\begin{lem}
Let $\nu'\prec \nu$. Then the intersection $\Gr_{(\nu,\mu)\mid \tau+\sigma(\nu)}^{0,\bba}\cap (S_{\nu'}\cap \Gr_{\nu})\times (S_{\tau+\sigma(\nu')}\cap \Gr_{\tau+\sigma(\nu)})$ in $\Gr_\nu\times \Gr_{\tau+\sigma(\nu)}$ has dimension $<\langle\rho, \nu+\tau+\sigma(\nu')+\mu\rangle$.
\end{lem}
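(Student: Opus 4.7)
What we must rule out is that $\Gr^{0,\bba}_{(\nu,\mu)\mid\tau+\sigma(\nu)}$ contains an entire top-dimensional irreducible component of the ambient product, since by Theorem~\ref{T:weight space interpretation} this ambient has dimension exactly $\langle\rho,\nu+\mu+\tau+\sigma(\nu')\rangle$. The first step is to transport the intersection from $\Gr_\nu\times\Gr_{\tau+\sigma(\nu)}$ back to the twisted product $\Gr_\nu\tilde\times\Gr_\mu$ via the isomorphism $(\pr_1,m)$ together with the factorization isomorphism \eqref{factorizationI}--\eqref{factorizationII}. Under these identifications, the intersection becomes
\[
\Gr^{0,\bba}_{(\nu,\mu)\mid\tau+\sigma(\nu)}\cap (S_{\nu'}\cap\Gr_\nu)\tilde\times(S_{\la'}\cap\Gr_\mu),
\]
where $\la'=\tau+\sigma(\nu')-\nu'$, and the ambient decomposes as a disjoint union of irreducible components
\[
(S_{\nu'}\cap\Gr_\nu)^{\bbc}\tilde\times(S_{\la'}\cap\Gr_\mu)^{\bbd},\qquad (\bbc,\bbd)\in\MV_\nu(\nu')\times\MV_\mu(\la'),
\]
each of the claimed top dimension.

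The strategy is then to argue, by contradiction, that no such component $Z=(S_{\nu'}\cap\Gr_\nu)^{\bbc}\tilde\times(S_{\la'}\cap\Gr_\mu)^{\bbd}$ can be entirely contained in $\Gr^{0,\bba}$. Suppose otherwise. Since $Z$ maps under the convolution $m$ into $S_{\tau+\sigma(\nu')}\cap\Gr_{\tau+\sigma(\nu)}$ and $\dim Z=\langle\rho,\nu+\mu+\tau+\sigma(\nu')\rangle$, $Z$ must in fact be a top-dimensional irreducible component of $m^{-1}(\Gr_{\tau+\sigma(\nu')})\cap\Gr_{(\nu,\mu)}$. By the bijection in Proposition~\ref{comb decom} between $\bS_{(\nu,\mu)\mid\tau+\sigma(\nu')}\times\MV_{\tau+\sigma(\nu')}$ and (top-dimensional components of) $\MV_{\nu,\mu}$, such a $Z$ corresponds to a pair $(\bba',\bbe)$. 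Tracing this correspondence through Lemma~\ref{L:Sat vs MV} and the crystal-theoretic interpretation of Lemma~\ref{L: decomp MV into Satake}, the hypothetical containment $Z\subseteq\Gr^{0,\bba}$ would imply that $\bbb\in\MV_\mu(\la)$ already lies in the image of the map $i^{\MV}_{\nu''}$ from Lemma~\ref{L:Sat vs MV} for a \emph{strictly smaller} dominant coweight $\nu''\preceq\nu$ (essentially $\nu''$ obtained by replacing $\nu$ with the dominant representative of the truncation $\nu'$).

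\textbf{The main obstacle} is converting this last conclusion into the desired contradiction. Here I would invoke the minimality property of $\nu=\nu_\bbb$ established in Lemma~\ref{L: finding best tau}(2): in each $\sigma$-orbit of $\Delta$ at least one of the inequalities $\langle\alpha,\nu\rangle\ge\varepsilon_\alpha(\bbb)$ is tight, so any proper lowering of $\nu$ violates the criterion \eqref{E:condition to realize as a homomorphism crystal} for $\bbb$ to factor through $i^{\MV}_{\nu''}$. To make this rigorous, it may be cleaner to combine the argument with Proposition~\ref{P: geom prop of MV cycle}: the dominant map with geometrically connected fibers
\[
(S_\la\cap\Gr_\mu)^\bbb\;\longrightarrow\;\prod_{\alpha\in\Delta}(S_{M_\alpha,\la}\cap\Gr_{M_\alpha,\la+\varepsilon_\alpha(\bbb)\alpha^\vee})
\]
restricts compatibly to the putative factor $(S_{\la'}\cap\Gr_\mu)^\bbd$, and the minimality of $\nu_\bbb$ forces at least one of these rank-$1$ projections to fail to be dominant on the restriction, producing the required strict drop in dimension. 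Once this codimension-one drop is extracted in at least one irreducible component of the ambient, the conclusion $\dim<\langle\rho,\nu+\mu+\tau+\sigma(\nu')\rangle$ follows.
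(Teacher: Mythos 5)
Your opening reduction is sound and coincides with the paper's setup: the ambient $(S_{\nu'}\cap \Gr_{\nu})\tilde\times (S_{\la'}\cap\Gr_{\mu})$ is pure of dimension $\langle\rho,\nu+\mu+\tau+\sigma(\nu')\rangle$, so the lemma amounts to showing that no irreducible component $Z$ of it is contained in $\Gr^{0,\bba}_{(\nu,\mu)\mid\tau+\sigma(\nu)}$. The gap lies in how you rule this out. First, from $m(Z)\subseteq S_{\tau+\sigma(\nu')}$ you infer that $Z$ is a top-dimensional component of $m^{-1}(\Gr_{\tau+\sigma(\nu')})\cap \Gr_{(\nu,\mu)}$ and then invoke Proposition \ref{comb decom} with the fixed coweight $\tau+\sigma(\nu')$. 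This is not correct: a semi-infinite orbit is not contained in the Schubert variety of (the dominant conjugate of) its weight, and the generic point of $Z$ maps into some cell $\mathring{\Gr}_{\kappa_0}$ with $\kappa_0$ a dominant coweight between $\tau+\sigma(\nu')$ and $\nu+\mu$ which you do not control; under the bijection of Proposition \ref{comb decom} (equivalently Lemma \ref{L: decomp MV into Satake}), $Z$ corresponds to a pair in $\bS_{(\nu,\mu)\mid\kappa_0}\times\MV_{\kappa_0}(\tau+\sigma(\nu'))$, not in $\bS_{(\nu,\mu)\mid\tau+\sigma(\nu')}\times\MV_{\tau+\sigma(\nu')}$. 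Second, the assertion that $Z\subseteq\Gr^{0,\bba}$ would force $\bbb\in\mathrm{Im}(i^{\MV}_{\nu''})$ for a strictly smaller dominant $\nu''$ is never derived, and the contradiction you want from it does not follow: the minimality in Lemma \ref{L: finding best tau}(2) is minimality among coweights satisfying \emph{both} \eqref{E: ineq1} and the dominance constraint \eqref{E:ineq2}, so membership of $\bbb$ in $\mathrm{Im}(i^{\MV}_{\nu''})$ alone contradicts nothing; moreover the claim that ``any proper lowering of $\nu$ violates \eqref{E:condition to realize as a homomorphism crystal}'' is false, since replacing $\nu$ by $\nu-\beta^\vee$ can increase $\langle\alpha,\nu\rangle$ for $\alpha\neq\beta$. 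The closing appeal to Proposition \ref{P: geom prop of MV cycle} is a suggestion, not an argument.

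For comparison, the paper argues as follows. Fix $\bbc\in\MV_{\tau+\sigma(\nu)}(\tau+\sigma(\nu'))$; the preimage in $\Gr^{0,\bba}$ of $(S_{\tau+\sigma(\nu')}\cap\Gr_{\tau+\sigma(\nu)})^{\bbc}\cap\mathring{\Gr}_{\tau+\sigma(\nu)}$ is an open subset of the component $(S_{\eta_1}\cap\Gr_{\nu})^{\bbc_1}\tilde\times(S_{\eta_2}\cap\Gr_{\mu})^{\bbc_2}$ with $\eta_1+\eta_2=\tau+\sigma(\nu')$, where $(\bbc_1,\bbc_2)$ is the image of $(\bba,\bbc)$ under Lemma \ref{L: decomp MV into Satake}. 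Passing to Littelmann paths via Proposition \ref{bijectionI} and Proposition \ref{comb decom}, $\bbc$ is obtained from the highest weight element by lowering operators, and the tensor-product rule for crystals, combined precisely with the equalities supplied by Lemma \ref{L: finding best tau}(2) (for each $\sigma$-orbit some $\langle\alpha,\nu_\bbb\rangle=\varepsilon_\alpha(\bbb)$, which forces the corresponding lowerings into the second factor), yields the strict inequalities $\nu'\prec\eta_1$ and $\eta_2\prec\la$. Hence the full-dimensional pieces of $\Gr^{0,\bba}\cap m^{-1}(S_{\tau+\sigma(\nu')})$ over the open cell have first-factor weight $\eta_1\neq\nu'$, so they are disjoint from $S_{\nu'}\tilde\times S_{\la'}$, and the intersection in the lemma has dimension strictly less than $\langle\rho,\nu+\mu+\tau+\sigma(\nu')\rangle$. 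This explicit identification of the relevant components and the crystal computation showing the first tensor factor cannot be lowered to weight $\nu'$ are exactly the content your outline defers; without them, and with the faulty identification above, the proof does not go through.
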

\begin{proof}
Let $(S_{\tau+\sigma(\nu')}\cap \Gr_{\tau+\sigma(\nu)})^\bbc$ be an irreducible component of $S_{\tau+\sigma(\nu')}\cap \Gr_{\tau+\sigma(\nu)}$.
Recall from the proof of Lemma \ref{L: decomp MV into Satake} that the preimage of $(S_{\tau+\sigma(\nu')}\cap \Gr_{\tau+\sigma(\nu)})^\bbc\cap \mathring{\Gr}_{\tau+\sigma(\nu)}$ in $\Gr_{(\nu,\mu)\mid \tau+\sigma(nu)}^{0,\bba}$ is an open subset $(S_{\eta_1}\times \Gr_\nu)^{\bbc_1}\tilde\times (S_{\eta_2}\times \Gr_\la)^{\bbc_2}$, where $\eta_1+\eta_2=\tau+\sigma(\nu')$, and $(\bba,\bbc)\mapsto (\bbc_1,\bbc_2)$ is the map defined in Lemma \ref{L: decomp MV into Satake}, which is compatible with Littelmann's decomposition theorem (see Proposition \ref{comb decom})

We identify the set of MV cycles with the set of $\hat G$-crystals via Proposition \ref{bijectionI}. Note that $\bbc\in \MV_{\tau+\sigma(\nu)}(\tau+\sigma(\nu'))$ is obtained by the unique element in $\MV_{\tau+\sigma(\nu)}(\tau+\sigma(\nu))$ by a series of root operators $f_\al$s.
Since one of the equalities in Lemma \ref{L: finding best tau}(2) holds, by the definition of the root operators acting on the set of $\hat G$-crystals, we see that 
$$\nu'\prec \eta_1,\quad \eta_2\prec \la.$$ This implies that the intersection in the lemma has dimension $<\langle\rho, \nu+\eta_1+\mu+\eta_2 \rangle=\langle\rho, \nu+\tau+\sigma(\nu')+\mu\rangle$. 
\end{proof}

Now, by the argument similar to Proposition \ref{P: prop of Ynu} (in particular, a diagram similar to \eqref{E:tilde Ynub Cartesian diagram truncated}), we see that
\begin{align*}\dim X_{\mu,\nu}^\bba(\tau)\cap S_{\nu'}& = \dim \Gr_{(\nu,\mu)\mid \tau+\sigma(\nu)}^{0,\bba}\cap (S_{\nu'}\cap \Gr_{\nu})\times (S_{\tau+\sigma(\nu')}\cap \Gr_{\tau+\sigma(\nu)})- \dim (S_{\tau+\sigma(\nu')}\cap \Gr_{\tau+\sigma(\nu)})\\
&< \langle\rho, \nu+\tau+\sigma(\nu')+\mu\rangle - \langle\rho, \tau+\sigma(\nu')+\tau+\sigma(\nu)\rangle\\
&=\langle\rho,\mu-\tau\rangle.
\end{align*}
We have finished the proof that $X_{\mu}^{\bbb,x_0}(\tau):=\overline{Y_\nu^\bbb\cap \Gr_\nu}$ is the unique irreducible component of $X_{\mu,\nu}^\bba(\tau)$ of dimension $\langle\rho,\mu-\tau\rangle$.

Finally, we show that $X_{\mu,\nu}^\bba(\tau)\cap\mathring{\Gr}_\nu$ is irreducible. 
Let 
\begin{equation}
\label{E: open Satake cycle}
\mathring{\Gr}^{0,\bba}_{(\nu,\mu)\mid \tau+\sigma(\nu)}:=\Gr^{0,\bba}_{(\nu,\mu)\mid \tau+\sigma(\nu)}\cap (\mathring{\Gr}_\nu\times \mathring{\Gr}_{\tau+\sigma(\nu)}).
\end{equation}
Let $F$ be the fiber over $\varpi^\nu$ of the projection  $\mathring{\Gr}^{0,\bba}_{(\nu,\mu)\mid \tau+\sigma(\nu)}\to \mathring{\Gr}_{\nu}$. This is identified with an irreducible locally closed subset of $\Gr_\mu\cap \varpi^{-\nu}\mathring{\Gr}_{\tau+\sigma(\nu)}\subset \Gr$. Let $F^{(\infty)}$ denote its preimage in $LG\to \Gr$.

Recall that $\mathring{\Gr}_{\nu}$ is a single $L^+G$-orbit given by $g\mapsto g\varpi^\nu$. Then the preimage of $X_{\mu,\nu}^\bba(\tau)\cap\mathring{\Gr}_\nu$ in $L^+G$ is 
\[\{g\in L^+G\mid g^{-1}\varpi^\tau\sigma(g)\in \varpi^\nu F^{(\infty)}\varpi^{-\sigma(\nu)}\subset LG.\]
Note that $J_{\tau,x_0}(\mO)$ acts transitively along the fibers of the map $L^+G\to LG, g\mapsto g^{-1}\varpi^\tau\sigma(g)$. Since  $\varpi^\nu F^{(\infty)}\varpi^{-\sigma(\nu)}$ is irreducible, it follows that $J_{\tau,x_0}(\mO)$ acts transitively on the set of irreducible components of $X_{\mu,\nu}^{\bba}(\mO)\cap\mathring{\Gr}_{\nu}$. Combining with the fact that $X_{\mu}^{\bbb,x_0}(\tau)\cap \mathring{\Gr}_{\nu}\supset Y^\bbb_\nu\cap \Gr_\nu$ is irreducible in $X_{\mu,\nu}^\bba(\tau)\cap \mathring{\Gr}_{\nu}$ of dimension $\langle\rho,\mu-\tau\rangle$, we see that
\[X_{\mu}^{\bbb,x_0}(\tau)\cap \mathring{\Gr}_{\nu}=X_{\mu,\nu}^\bba(\tau)\cap \mathring{\Gr}_{\nu},\]
which is irreducible.
The proof of Theorem \ref{C: unique comp} now is complete.

\begin{thm}
\label{C: irr comp up to J}
Let $b=\varpi^\tau$ be an unramified element, and assume that $\tau_\sigma$ is dominant.
Consider the action of $J_\tau(F)$ on the set of irreducible components of $X_\mu(\tau)$.
\begin{enumerate}
\item 
The stabilizer of each irreducible component is a hyperspecial subgroup.
\item
The subset $X^\bbb_\mu(\tau)\subset X_\mu(\tau)$ is invariant under the action of $J_\tau(F)$. The group $J_\tau(F)$ acts transitively on the set of irreducible components of $X^\bbb_\mu(\tau)$.
\item 
Assume that $Z_G$ is connected.
There is a canonical $J_\tau(F)$-equivariant bijection between the set of irreducible components of $X^{\bbb}_\mu(\tau)$ and 
$$\sqcup_{\la\in \tau+(1-\sigma)\xcoch}\MV_\mu(\la)\times \bH\bS_b,\quad (\bbb,x)\mapsto X_{\mu}^{\bbb,x}(\tau)$$
where $\bH\bS_b$ denote the set of hyperspecial subgroups of $J_\tau(F)$, and $X_{\mu}^{\bbb,x}(\tau)$ is the irreducible component in $X_\mu^{\bbb}(\tau)$ whose stabilizer is the hyperspecial subgroup of $J_\tau(F)$ corresponding to $x$.
\end{enumerate}
\end{thm}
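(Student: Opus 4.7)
The proof of Theorem~\ref{C: irr comp up to J} combines the material developed in the previous two subsections in essentially three coordinated steps.

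I would begin with part (2), which follows by combining Corollary~\ref{C: irr comp up to B}(3) with Theorem~\ref{C: unique comp} via the Iwasawa decomposition $J_\tau(F) = B_\tau(F)\cdot J_{\tau,x_0}(\mO)$ recalled in \S\ref{tcen}. Concretely, for $g = bk \in J_\tau(F)$ with $b \in B_\tau(F)$ and $k \in J_{\tau,x_0}(\mO)$, Theorem~\ref{C: unique comp} shows $k$ preserves $X_\mu^{\bbb,x_0}(\tau)$, while Corollary~\ref{C: irr comp up to B}(3) shows $b$ carries this component to another irreducible component of $X_\mu^\bbb(\tau)$. Hence $g \cdot X_\mu^{\bbb,x_0}(\tau)$ is again an irreducible component of $X_\mu^\bbb(\tau)$, which simultaneously yields both the $J_\tau(F)$-invariance of $X_\mu^\bbb(\tau)$ and, since $B_\tau(F)$ already acts transitively, the transitivity of the $J_\tau(F)$-action on its components.

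For part (1), I would establish the stabilizer identity $\Stab_{J_\tau(F)}(X_\mu^{\bbb,x_0}(\tau)) = J_{\tau,x_0}(\mO)$, starting from the inclusion $\supseteq$ furnished by Theorem~\ref{C: unique comp}. The key geometric input is that $X_\mu^{\bbb,x_0}(\tau)$ contains the distinguished point $\varpi^{\nu_\bbb}L^+G$ (guaranteed by Lemma~\ref{L: basic prop of MV}(1)) and the dense open subset $\mathring{X}_\mu^{\bbb,x_0}(\tau) \subseteq \mathring{\Gr}_{\nu_\bbb}$ singled out by Theorem~\ref{C: unique comp}; no other irreducible component of $X_\mu^\bbb(\tau)$ has this shape, by the minimality of $\nu_\bbb$ from Lemma~\ref{L: finding best tau}. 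A stabilizing $g \in J_\tau(F)$ must therefore preserve the single $L^+G$-orbit $\mathring{\Gr}_{\nu_\bbb}$ (as any two $L^+G$-orbits in $\Gr$ that meet coincide) and fix the base point $\varpi^{\nu_\bbb}L^+G$. Combining these constraints with the Bruhat decomposition \eqref{E: bruhatJtau} of $J_\tau(F)$ and the $\sigma$-invariance condition $g^{-1}\varpi^\tau \sigma(g) = \varpi^\tau$ defining $J_\tau$, one can rule out every $g$ outside $J_{\tau,x_0}(\mO)$.

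For part (3), once Part~(1) is in hand, the conjugation orbit map $g \mapsto g J_{\tau,x_0}(\mO) g^{-1}$ produces a canonical $J_\tau(F)$-equivariant bijection between the set of components of $X_\mu^\bbb(\tau)$ and $\bH\bS_b$, using the transitivity of $J_\tau(F)$ on $\bH\bS_b$ available under the hypothesis that $Z_G$ is connected (as recalled in the introduction). Ranging $\bbb$ over $\bigsqcup_{\la \in \tau + (1-\sigma)\xcoch}\MV_\mu(\la)$ and invoking Corollary~\ref{C: irr comp up to B}(2), which expresses $X_\mu(\tau)$ as the disjoint union of the $X_\mu^\bbb(\tau)$, packages these individual bijections into the claimed parametrization of all irreducible components of $X_\mu(\tau)$. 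The hardest step will be the stabilizer computation in Part~(1): the subtlety is that merely requiring $g \in J_\tau(F)$ to preserve the $L^+G$-orbit $\mathring{\Gr}_{\nu_\bbb}$ does not immediately confine $g$ to $L^+G$, and one must further exploit that $g$ preserves the specific open subvariety $\mathring{X}_\mu^{\bbb,x_0}(\tau)$ together with the $\sigma$-twisted structure in $J_\tau$, which is precisely where the minimality characterization of $\nu_\bbb$ from Lemma~\ref{L: finding best tau} becomes essential.
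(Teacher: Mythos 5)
There are two genuine gaps. First, parts (1) and (2) of the theorem are stated \emph{without} assuming $Z_G$ connected, but every tool you invoke — Theorem~\ref{C: unique comp}, the component $X_\mu^{\bbb,x_0}(\tau)$ with its $J_{\tau,x_0}(\mO)$-action, and the Iwasawa decomposition $J_\tau(F)=B_\tau(F)J_{\tau,x_0}(\mO)$ from \S\ref{tcen} — requires a dominant lift $\tau\in\xcoch(T)^+$, which exists when $Z_G$ is connected but can fail otherwise (Remark~\ref{R:center connect}). The paper proves (1) and (2) first under the connectedness hypothesis (in particular for $G$ adjoint) and then deduces the general case from the Cartesian diagram of Lemma~\ref{L: red to adj} (together with Lemma~\ref{L: invariance of Xmub}(1)); your proposal never addresses this reduction, so as written it only proves (1)–(2) under the extra hypothesis of (3).

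Second, your stabilizer computation in part (1) does not work as sketched. An element $g$ stabilizing $X_\mu^{\bbb,x_0}(\tau)$ need not fix the point $\varpi^{\nu_\bbb}L^+G$ — already elements of $J_{\tau,x_0}(\mO)$ stabilize the component while moving that point — and for $g\in J_\tau(F)$ outside the normalizer of $L^+G$ the translate $g\mathring{\Gr}_{\nu_\bbb}$ is not an $L^+G$-orbit, so "orbits that meet coincide" gives you nothing; the appeal to minimality of $\nu_\bbb$ also does not single out the component intrinsically, since all other components are just $B_\tau(F)$-translates of it. The argument the paper relies on is much softer: by Theorem~\ref{C: unique comp} (resp.\ Proposition~\ref{P: irrkey}) the stabilizer of $X_\mu^{\bbb,x_0}(\tau)$ contains the hyperspecial maximal compact $J_{\tau,x_0}(\mO)$, while the stabilizer of any irreducible component — a bounded (perfectly projective) subset of $\Gr$ — is a bounded subgroup of $J_\tau(F)$; a compact subgroup containing a maximal compact equals it, so the stabilizer is exactly $J_{\tau,x_0}(\mO)$, and transitivity of $B_\tau(F)$ (Corollary~\ref{C: irr comp up to B}(3)) then shows every component has stabilizer a conjugate $gJ_{\tau,x_0}(\mO)g^{-1}$, hence hyperspecial. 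Your treatment of parts (2) and (3) (Iwasawa decomposition plus Corollary~\ref{C: irr comp up to B}, then conjugacy of hyperspecial subgroups when $Z_G$ is connected) coincides with the paper's and is fine once (1) is repaired.
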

\begin{proof}
To proof (1), first assume that $Z_G$ is connected. Then by Proposition \ref{P: irrkey}, in each $X_\mu^\bbb(\tau)$ there is an irreducible component whose stabilizer in $J_\tau(F)$ is hyperspecial. Then the statement follows from Corollary \ref{C: irr comp up to B}(3).
In particular, if $G$ is of adjoint type, the statement holds. The general case follows from Lemma \ref{L: red to adj}.

Recall that $X^\bbb_\mu(\tau)$ is $B_\tau(F)$-invariant by Corollary \ref{C: irr comp up to B}. On the other hand, the stabilizer of each irreducible component is hyperspecial in $J_\tau(F)$. Therefore (2) follows from the Iwasawa decomposition if $J_\tau(F)$ and Corollary \ref{C: irr comp up to B}.

If $Z_G$ is connected, then all hyperspecial subgroups of $J_\tau$ are conjugate. For every $\bbb\in \MV_\mu(\la)$, we constructed $X_{\mu}^{\bbb,x_0}(\tau)$, and other irreducible components are given by $g\mapsto gX_\mu^{\bbb,x_0}(\tau)$ for $g\in J_\tau(F)$, with stabilizer group $gJ_{\tau,x_0}(\mO)g^{-1}$. Part (3) follows.
\end{proof}

\begin{rmk}
In general if $Z_G$ is not connected, it seems that $J_\tau$ does not have a natural conjugacy class of hyperspecial subgroup. But Theorem \ref{C: irr comp up to J} implies that each $\bbb\in \sqcup_{\la\in \tau+(1-\sigma)\xcoch}\MV_\mu(\la)$ gives such a conjugacy class, which contains the stabilizers of irreducible components in $X_\mu^\bbb(\tau)$. Note that different $\bbb$'s may give different conjugacy classes.
\end{rmk}

\section{The moduli of local shtukas}

The moduli spaces of shtukas were invented by Drinfeld as the function field analogue of the Shimura varieties. However, the moduli space of shtukas can be defined in a much greater generality. In particular, there are notions of moduli spaces of iterated shtukas and partial Frobenius between them, which play a crucial role in realizing the Langlands correspondence over the function fields (see the works of Drinfeld \cite{drinfeld}, L. Lafforgue \cite{L.La}, and more recently V. Lafforgue \cite{La}). Unfortunately, so far it is not clear how to construct their Shimura variety counterparts. In this section, we pass to the local situation to give a uniform construction of the moduli spaces of (iterated) local shtukas, for groups over equal or mixed characteristic. In addition, we construct various correspondences between them, including the partial Frobenius operator. Finally, we define certain category $\on{P}^{\on{Corr}}(\Sht^\loc)$ of perverse sheaves, which will be the main player of the next section.

Let $\mathbf{Aff}^\pf_{k}$ denote the category of perfect rings over $k$, equipped with the fpqc topology. We continue to use the notations from Section~\ref{S: geom Sat}, in particular \eqref{ramified Witt vector} and \eqref{discs}.
In this section, unless otherwise specified, schemes/algebraic spaces/algebraic stacks mean schemes/algebraic spaces/algebraic stacks in $\mathbf{Aff}_k^\pf$. The foundations on perfect algebraic geometry we needed in this paper are recalled in the Appendix \ref{ASS:perfect AG}, and the results on cohomological correspondences are recalled in \ref{Sec:cohomological correspondence}.

\subsection{The local Hecke stack}

\begin{definition}
\label{D:local Hecke stack setup}
Let $\mmu=(\mu_1,\ldots,\mu_t)$ be a sequence of dominant coweights (of $G$). We define the \emph{local Hecke stack} as the prestack 
$$\Hk^\loc_\mmu = \Hk^\loc_{\mu_1, \dots, \mu_t}:= [L^+G\backslash \Gr_\mmu]$$ 
which assigns for every perfect $k$-algebra $R$, the groupoid of chains of modifications of $G$-torsors over $D_R =\Spec W_\calO(R)$
\begin{equation}\label{pt of hk}
\mE_t\stackrel{\beta_t}\dashrightarrow \mE_{t-1}\stackrel{\beta_{t-1}}\dashrightarrow\cdots\stackrel{\beta_2}\dashrightarrow \mE_1\stackrel{\beta_1}\dashrightarrow \mE_0
\end{equation}
of relative positions $\preceq \mu_t,\ \ldots,$ and $\preceq \mu_1$, respectively. (Comparing to the moduli problem of $\Gr_{\mmu}$ in \eqref{E:sequence of isogenies}, we drop the trivialization of $\calE_0$.)

Similarly, we define 
$$\Hk_{\la_\bullet\mid\mmu}^{0,\loc}:=[L^+G\backslash \Gr_{\la_\bullet\mid\mmu}^0],\  \ \textrm{and}\ \Hk_{\nu_\bullet; \la_\bullet\mid \mmu; \xi_\bullet}^{0,\loc}:=[L^+G\backslash \Gr_{\nu_\bullet;\la_\bullet\mid\mmu;\xi_\bullet}^0]$$ 
for sequences of dominant coweights $\lambda_\bullet, \mmu, \nu_\bullet, \xi_\bullet$. 
In particular, taking the quotient of \eqref{E: Sat corr} by the left $L^+G$-action gives the \emph{Satake correspondence} on local Hecke stacks
\[\Hk^\loc_{\la_\bullet}\xleftarrow{h_{\la_\bullet}^{\leftarrow}} \Hk_{\la_\bullet\mid\mmu}^{0,\loc}\xrightarrow{h^\rightarrow_\mmu} \Hk_{\mmu}^{\loc}.\]
\end{definition}

The above definitions are very convenient for the later geometric constructions, but one cannot directly apply the $\ell$-adic formalism to it due to the quotient by an infinite group. For this  reason, we need a finite dimensional quotient replacement of it.

\begin{definition}\label{d:loc Hk}
Let $\mmu=(\mu_1,\ldots,\mu_t)$ be a sequence of dominant coweights and let $m$ be a $\mmu$-large integer (see Definition \ref{D:acceptable pair}). We define the \emph{$m$-restricted local Hecke stack} to be the stack
\[\Hk_\mmu^{\loc(m)} = \Hk_{\mu_1, \dots, \mu_t}^{\loc(m)}:= [L^mG\backslash \Gr_\mmu].\]
We will sometimes write $\Hk_\mmu^{\loc(\infty)}$ for $ \Hk_\mmu^\loc$.
Similarly, if $m$ is both $(\nu_\bullet, \lambda_\bullet, \xi_\bullet)$-large and $(\nu_\bullet, \mu_\bullet, \xi_\bullet)$-large, we can define
$$\Hk_{\la_\bullet\mid\mmu}^{0,\loc(m)}:=[L^mG\backslash \Gr_{\la_\bullet\mid\mmu}^0],\quad 
 \Hk_{\nu_\bullet; \la_\bullet\mid \mmu; \xi_\bullet}^{0,\loc(m)}:=[L^mG\backslash \Gr_{\nu_\bullet;\la_\bullet\mid\mmu;\xi_\bullet}^0]
$$
In particular, the Satake correspondence \eqref{E: Sat corr} descends to
\[\Hk^{\loc(m)}_{\la_\bullet}\xleftarrow{h_{\la_\bullet}^{\leftarrow}} \Hk_{\la_\bullet\mid\mmu}^{0,\loc(m)}\xrightarrow{h^\rightarrow_\mmu} \Hk_{\mmu}^{\loc(m)}.\] 
\end{definition}

\subsubsection{Convolution map} The convolution map $\Gr_{\mu_\bullet}\to\Gr_{|\mmu|}$ induces
\begin{equation}
\label{E: conv local Hk}
m: \Hk^{\loc(m)}_{\nu_\bullet,\mu_\bullet,\xi_\bullet}\to \Hk^{\loc(m)}_{\nu_\bullet,|\mmu|,\xi_\bullet},
\end{equation}
called the convolution map, which is perfectly proper. Here $m$ (which is allowed to be $\infty$) is $(\nu_\bullet,\mu_\bullet,\xi_\bullet)$-large.

\begin{notation}
\label{N:ext and res}
We write $\bfB L^+G$ (resp. $\bfB L^mG$ for $m \in \ZZ_{\geq0}$) for the moduli stack that classifies for every perfect $k$-algebra $R$ the groupoid of $G$-torsors over $D_R$ (resp. $D_{m,R}$).
As a stack, it is the same as $[L^+G \backslash \mathrm{pt}]$ (resp. $[L^mG \backslash \mathrm{pt}]$). 
For $m\leq m'$, we have quotient maps $L^+G\to L^{m'}G\to L^mG$ which induce
\[
\bfB L^+G\xrightarrow{\res_{m'} =\res^\infty_{m'}} \bfB L^{m'}G \xrightarrow{\res^{m'}_m} \bfB L^m G.
\]
We call them the \emph{restriction maps} (restricting the $G$-torsor from $D$ to $D_{m'}$ and from $D_{m'}$ to $D_m$). 
Clearly, for $m\leq m'\leq m''$,
\begin{equation}
\label{E:ex and q compatibility}
\res_{m}^{m'} \circ \res_{m'}^{m''} = \res_{m}^{m''},
\end{equation}
where we allow $m''$ to be $\infty$.
Since $\res_{m}^{m'}$ is an $L^{m'-m}G^{(m)}$-gerbe (for $m'<\infty$), it is perfectly smooth.
\end{notation}

\subsubsection{Two torsors over restricted local Hecke stacks}
\label{SS:torsors over Hecke stack}
Let $\mmu = (\mu_1, \dots, \mu_t)$ be a sequence of dominant coweights. There are natural morphisms 
$$t_\leftone, t_\rightone: \Hk^\loc_\mmu \to \bfB L^+G$$ 
sending \eqref{pt of hk} to the \emph{last} torsor $\calE_t$ and the \emph{first} torsor $\calE_0$ in the sequence, respectively.
In particular, $t_\rightone$ comes from taking the quotient of the structure sheaf $\Gr_\mmu \to \mathrm{pt}$ by the left $L^+G$-action.

Similarly, if $(m,n)$ is a pair of non-negative integers and $m-n$ is $\mmu$-large, by writing
\[
[L^{m}G\backslash\Gr_\mmu]\cong [L^{m}G\backslash\Gr^{(n)}_\mmu/L^nG]\xrightarrow{t_\leftone\times t_\rightone} \bfB L^{n}G\times \bfB L^mG,
\]
where $\Gr_\mmu^{(n)}$ is the $L^nG$-torsor over $\Gr_\mmu$ as defined after \eqref{E:sequence of isogenies}, 
we see that over $\Hk_{\mmu}^{\loc(m)}$, the first map $t_\rightone$ defines the $L^{m}G$-torsor 
\begin{equation}\label{torsor2}
\Gr_\mmu \to [L^{m}G\backslash\Gr_\mmu],
\end{equation}
and the second map $t_\leftone$ defines the $L^{n}G$-torsor
\begin{equation}\label{torsor1}
[L^{m}G\backslash\Gr_{\mmu}^{(n)}]\to  [L^{m+n}G\backslash\Gr_\mmu].
\end{equation}
We write $\calE_{\rightone}|_{D_{m}}$ and $\calE_{\leftone}|_{D_n}$ for these two \emph{canonical} torsors. Note that for $m\leq m'$ and $n\leq n'$ ($(m',n')=(\infty,\infty)$ allowed) such that both $m'-n'$ and $m-n$ are $\mmu$-large, there
is a restriction map
$$\res^{m'}_m: \Hk^{\loc(m')}_{\mmu}\to \Hk^{\loc(m)}_{\mmu}$$ such that
the following diagram is commutative and the right square is Cartesian.
\begin{equation}
\label{E:restriction and torsor commutative}
\xymatrix{
\bfB L^{n'}G \ar[d]^{\res^{n'}_n} &
\Hk_\mmu^{\loc(m')} \ar[l]_-{t_\leftone} \ar[r]^-{t_\rightone} \ar[d]^{\res^{m'}_{m}} &  \bfB L^{m'} G \ar[d]^{\res^{m'}_{m}}
\\ 
\bfB L^n G & \Hk_\mmu^{\loc(m)} \ar[l]_-{t_\leftone} \ar[r]^-{t_\rightone} &  \bfB L^{m} G.
}
\end{equation}
In addition, these restrictions maps  satisfy the natural compatibility condition \eqref{E:ex and q compatibility}.

\begin{rmk}
\label{R: pullbacktwotorsor}
On $\Hk_{\la_\bullet|\mmu}^{0,\loc(m)}$, the pullbacks of the $L^{m}G$-torsor $\mE_\rightone|_{D_{m}}$ and the $L^{n}G$-torsor $\mE_\leftone|_{D_{n}}$ on $\Hk_{\mu_\bullet}^{\loc(m)}$, which we denote by the same notation, are canonically isomorphic to the pullbacks of $\mE_\rightone|_{D_{m}}$ and $\mE_\leftone|_{D_{n}}$ from $\Hk_{\la_\bullet}^{\loc(m)}$, which we denote by $\calE'_\rightone|_{D_{m+n}}$ and $\calE'_\leftone|_{D_n}$, respectively.
\end{rmk}

\subsubsection{Perverse sheaves on restricted local Hecke stack}
\label{SS:Perv(Hk)}
Now we define the category of perverse sheaves on $\Hk^\loc$, by essentially repeating the construction of $\on{P}_{L^+G\otimes \bar k}(\Gr\otimes\bar k)$.
First recall from Notation \ref{N:star pullback}, for a perfectly smooth morphism $f:X \to Y$ between algebraic stacks of relative dimension $d$, we write $f^\star: = f^*[d](\frac d2)$; it takes perverse sheaves to perverse sheaves. Given  $m\leq m'$ two $\mmu$-large integers ($m'\neq \infty$), let $\Res^{m'}_m:=(\res^{m'}_m)^\star$. If $m\geq 1$, since $\res^{m'}_m$ is an $L^{m'-m}G^{(m)}$-gerbe and $L^{m'-m}G^{(m)}$ is the perfection of a unipotent group, the pullback
\[
\Res^{m'}_m: \on{P}(\Hk_{\mmu}^{\loc(m)}) \longto  \on{P}(\Hk_{\mmu}^{\loc(m')})
\]
induces an equivalence of the corresponding categories of perverse sheaves.
An inverse equivalence is given by the push-forward $(\res^{m'}_{m})_!$ up to a shift and a twist. Then similar to \eqref{E: def of Sat cat}, we can define
\begin{equation}
\label{E:definition of P(Hkloc)}
\mathrm{P}(\Hk^\loc_{\bar k}): = \bigoplus_{\boldsymbol \zeta \in \pi_1(G)} \on{P}(\Hk^\loc_{\bbzeta}), \quad \on{P}(\Hk^\loc_{\bbzeta})=\varinjlim_{(\mu,m) \in \bbzeta\times \bZ_{\geq \langle\mu,\al_h\rangle}} \mathrm{P}(\Hk_\mu^{\loc(m)})
\end{equation}
where the connecting functor for the  limit above is given the fully faithful embedding
\[
\xymatrix@C=35pt{
\on{P}(\Hk_{\mu}^{\loc(m)})\ar[r]^-{\Res^{m'}_{m}}_-\cong & \on{P}(\Hk_{\mu}^{\loc(m')})\ar[r]^-{i_{\mu, \mu', *} } & \on{P}(\Hk_{\mu'}^{\loc(m')})}.
\]
Again, note that the limit is filtered. Note that since an $(L^mG\otimes \bar k)$-equivariant perverse sheaf on $\Gr_\mu$ descends to a sheaf on $[L^{m}G \backslash \Gr_\mu]$, after shifted by $m \dim G$ and twisted by $\frac {m}{2} \dim G$, there is a natural equivalence of categories
\begin{equation}
\label{E:P of Gr = P of Hk fin}
\mathrm{P}_{L^mG}(\Gr_\mu)\xrightarrow{\;\cong\;}  
\mathrm{P}(\Hk_\mu^{\loc(m)} ),
\end{equation}
which, after passing to the limit and the direct sum over $\pi_1(G)$, gives rise to the natural equivalence of categories.
\begin{equation}
\label{E:P of Gr = P of Hk}
\mathrm{P}_{L^+G\otimes \bar k}(\Gr\otimes\bar k) \xrightarrow{\;\cong\;} \mathrm{P}(\Hk^\loc_{\bar k}).
\end{equation}

\subsubsection{Breaking up local Hecke stacks}
\label{SS:break hecke}
Let $\la_\bullet$ and $\mmu$ be two sequences of dominant coweights. Assume that  $m_1-m_2$ is $\la_\bullet$-large and $m_2-n$ is $\mmu$-large. By \eqref{E:Hecke factorization induction}, there is a canonical isomorphism
\[[L^{m_1}G\backslash \Gr_{\la_\bullet,\mmu}^{(n)}]\cong [L^{m_1}G\backslash \Gr_{\la_\bullet}^{(m_2)}]\times^{L^{m_2}G}\Gr_{\mmu}^{(n)},\]
which induces an isomorphism
\begin{equation}
\label{E: break hecke}
\Hk_{\la_\bullet,\mmu}^{\loc(m_1)}\cong \Hk_{\mu_\bullet}^{\loc(m_2)}\times_{t_\rightone,\bfB L^{m_2}G, t_\leftone}\Hk^{\loc(m_1)}_{\la_\bullet}.
\end{equation}
In other words, giving an $R$-point of $\Hk_{\la_\bullet,\mmu}^{\loc(m_1)}$ is equivalent to giving an $R$-point $x_1$ of $\Hk_{\la_\bullet}^{\loc(m_1)}$ and an $R$-point $x_2$ of $\Hk^{\loc(m_2)}_{\mmu}$, together with an isomorphism of $L^{m_2}G$-torsors $\mE_{x_1,\leftone}|_{D_{m_2, R}}\simeq \mE_{x_2,\rightone}|_{D_{m_2, R}}$. 

In a special case, the isomorphism \eqref{E: break hecke} induces a perfectly smooth morphism
\begin{equation}
\label{break chain}
\Hk^{\loc(m_1)}_{\lambda_\bullet,\mmu}\to \Hk_{\lambda_\bullet}^{\loc(m_1)}\times \Hk_{\mmu}^{\loc(m_2)}\to  \Hk_{\lambda_\bullet}^{\loc(m_1-m_2)}\times \Hk_{\mmu}^{\loc(m_2)},
\end{equation}
where the first map is induced from \eqref{E: break hecke} (which is perfectly smooth) and the second map is $ \res^{m_1}_{m_1-m_2}\times\id$ (which is also perfectly smooth).

\subsection{Moduli of local shtukas}
\label{S: Moduli of loc Sht}
In this subsection, we define various moduli of local (iterated) shtukas as prestacks and various correspondences between them. 

\begin{dfn}\label{lsht}
Let $\mmu=(\mu_1,\ldots,\mu_t)$ be a sequence of dominant coweights.  Let $R$ be a perfect $k$-algebra. An $R$-family of \emph{local (iterated) shtukas} of singularities bounded by $\mmu$ consists of
\begin{itemize}
\item a point of $\Hk_\mmu^\loc$, i.e. a sequence of modifications of $G$-torsors over $D_R$ of relative positions $(\preceq\mu_t,\preceq\mu_{r-1},\ldots, \preceq \mu_1)$ as in \eqref{pt of hk}, and
\item an isomorphism $\psi: {^\sigma}\mE_t\cong \mE_0$. (Here the notation $^\sigma \mE_t$ is defined at the beginning of Section~\ref{Sec:affine DLV}.)
\end{itemize}
The moduli space that assigns every perfect $k$-algebra $R$ the groupoid of $R$-families of local Shtukas of singularities bounded by $\mmu$ is denoted by $\Sht_\mmu^\loc:=\Sht_{\mu_1,\ldots,\mu_t}^{\loc}$.
In other words, $\Sht_\mmu^\loc$ classifies sequences of modifications 
\[
\calE_t \stackrel{\beta_t}{\dashrightarrow} \calE_{t-1} \stackrel{\beta_{r-1}}{\dashrightarrow} \cdots \stackrel{\beta_2}{\dashrightarrow} \calE_1 \stackrel{\beta_1}{\dashrightarrow}  {}^\sigma \calE_t.
\]
For such an $S$-point $x \in \Sht_\mmu^\loc(S)$, we write $\calE_{x,\leftarrow}$ for $\calE_t$ and $\calE_{x,\to}$ for $\calE_0$.

Note that by definition, there is a forgetful map of prestacks 
$$\varphi^\loc :\Sht_\mmu^\loc\longto \Hk_\mmu^\loc.$$ 
In fact, we can write $\Sht_\mmu^\loc$ as a Cartesian product of $\Hk_\mmu^\loc$ with a Frobenius graph
\begin{equation}
\label{E:mS as Cartesian product}
\xymatrix@C=45pt{
\Sht_\mmu^\loc \ar[r]^{\varphi^\loc} \ar[d]
&   \Hk_\mmu^\loc \ar[d]^-{ t_\leftone\times t_\rightone }
\\ \bfB L^+G \ar[r]^-{1\times\sigma} & \bfB L^+G \times \bfB L^+G.
}
\end{equation}

Take two additional (not necessarily non-empty) sequences $\nu_\bullet,\xi_\bullet$ of dominant coweights. 
We similarly define $\Sht_{\la_\bullet\mid\mmu}^{0,\loc}$ and $\Sht_{\nu_\bullet;\la_\bullet\mid\mmu;\xi_\bullet}^{0,\loc}$ and the \emph{Satake correspondence}
\[\Sht_{\la_\bullet}^{\loc} \xleftarrow{h^{\leftarrow}_{\la_\bullet}} \Sht_{\la_\bullet|\mmu}^{0,\loc}\xrightarrow{h^\rightarrow_{\mmu}} \Sht_\mmu^{\loc}.
\]
For example, $\Sht_{\nu_\bullet;\la_\bullet\mid\mmu;\xi_\bullet}^{0,\loc}$ classifies the following commutative diagrams of modifications.
\[
\xymatrix@C=35pt@R=20pt{
&& \calE'_{s+a} \ar@{-->}[r]^-{\beta'_{s+a}}
& \cdots \ar@{-->}[r]^-{\beta'_{a+1}} & \calE'_a
\\
\calE_{t+a+b} \ar@{-->}[r]^-{\beta_{t+a+b}} & \cdots \ar@{-->}[r]^{\beta_{t+a+1}} &
\calE_{t+a} \ar@{=}[u] \ar@{-->}[r]^{\beta_{t+a}}  & \cdots \ar@{-->}[r]^{\beta_{a+1}} & \calE_a \ar@{=}[u] \ar@{-->}[r]^{\beta_a} & \cdots \ar@{-->}[r]^-{\beta_1} & \calE_0 = {}^\sigma \calE_{t+a+b}.
}
\]
\end{dfn}

\begin{rmk}
\label{R: var and gen of loc Sht}
(1) In mixed characteristic, a local shtuka $\mE\dashrightarrow {^\sigma}\mE$ over $\Spec R$ is nothing but an $F$-crystal with $G$-structures. In particular, If $G=\GL_n$ and $\mu=\omega_i$ is the $i$th fundamental coweight of $\GL_n$, by a theorem of Gabber, $\Sh^\loc_\mu$ can be regarded as the moduli of $p$-divisible groups of height $n$ and dimension $n-i$ over $k$. Therefore, moduli of local shtukas in mixed characteristic can be regarded as vast generalizations of moduli of $p$-divisible groups in characteristic $p$.

(2) What we just defined are local shtukas with singularities at the closed point $s\in D$.  One can also define local shtukas with singularities at the generic point $\eta\in D$, or even with singularities moving along $D$. 
In mixed characteristic, local shtukas with singularities along $D$ are closely related the Breuil-Kisin modules and their moduli spaces are recently constructed by Scholze (\cite{SW}).
\end{rmk}

\begin{remark} 
\label{R:shtukas E to sigma E}
Our convention to define local iterated shtukas using modifications from $\calE_t$ to $^\sigma \calE_t$ agrees \cite{Va, La},
but is incompatible with the definition of affine Deligne-Lusztig varieties in Section~\ref{Sec:affine DLV} (following most literatures on Shimura varieties), where the direction of the modification is from $^\sigma \calE$ to $\calE$.  This will lead a switching from $\mu$ to $\mu^*$ when relating the two definitions (see 
\eqref{E: Nm for loc Sht} and \eqref{E: fiber of hleft}). 
The reason for our choice is limited by the need of considering restricted local shtukas in the next subsection. When comparing that with the de Rham homology of the universal abelian varieties on Shimura varieties, the Grothendieck-Messing theory forces us to use a deeper truncation on $^\sigma \calE$ than on $\calE$.  As we will see in Definition~\ref{tlsht} that the deeper truncation must appear at the target of the modifications, and hence our choice of convention.
\end{remark}

\begin{ex}
\label{Ex: discrete Sht}
If $\mmu=\mu=0$, $\Sht^{\loc}_{0}=\bfB G(\mO)$ is the the classifying stack of the profinite group $G(\mO)$. 
\end{ex}
 
\begin{ex}
\label{Ex:loc Sht and pdiv}
Let $\mO=W(k)$. Let $G=\GL_n$, and $\mu=\omega_i=1^i0^{n-i}$ the $i$th fundamental coweight. Then by a theorem of Gabber, $\Sht^\loc_{\omega_i}$ is isomorphic to the prestack of $p$-divisible groups of dimension $i$, and height $n$ over $k$.
\end{ex}

\begin{definition}
\label{D:partial Frobenius}
In addition to the Satake correspondences, there is the following important  \emph{partial Frobenius morphism} between the moduli of local iterated shtukas, given by
\begin{equation}
\label{hpf}
\xymatrix@R=0pt{
F_\mmu\colon \Sht^\loc_{\mu_1, \dots, \mu_t} \ar[r] & \Sht_{\sigma(\mu_t),\mu_1,\ldots,\mu_{t-1}}^\loc \\ \big( \mE_{t}\stackrel{\beta_{t}} {\dashrightarrow}\mE_{t-1}\stackrel{\beta_{t-1}}{\dashrightarrow}\cdots\stackrel{\beta_2} {\dashrightarrow}\mE_1\stackrel{\beta_1}{\dashrightarrow}{}^\sigma\mE_{t} \big) \ar@{|->}[r] & \big( \mE_{t-1}\stackrel{\beta_{t-1}}{\dashrightarrow}\cdots \stackrel{\beta_2}{\dashrightarrow}\mE_1 \stackrel{\beta_1}{\dashrightarrow}{}^\sigma\mE_{t} \stackrel{\sigma(\beta_{t})}{\dashrightarrow}{}^\sigma \mE_{t-1} \big).
}
\end{equation}
To motivate later definition of the (inverse) partial Frobenius between moduli of restricted local Shtukas.  we explain in steps the inverse $F_{\mmu}^{-1}$ map on $S$-points for perfect affine scheme $S = \Spec R$ as follows.
\begin{align*}
&\Sht_{\sigma(\mu_t), \mu_1, \dots, \mu_{t-1}} (S)
\\
\cong\ & \big\{ (x, \psi)\; \big|\; x \in \Hk_{\sigma(\mu_t), \mu_1, \dots, \mu_{t-1}} (S) \textrm{ and } \psi:{}^\sigma\calE_{x,\leftone} \simeq \calE_{x,\rightone} \big\} && (\textrm{Definition~\ref{lsht}})
\\
\cong\ & \Bigg\{\! (x_1,x_2,\psi', \psi)\; \Bigg|\! \begin{array}{l} x_1 \in \Hk_{\sigma(\mu_t)} (S) ,\ x_2 \in \Hk_{\mu_1, \dots, \mu_{t-1}} (S),\\ \psi':\calE_{x_1, \leftone} \simeq \calE_{x_2, \rightone}, \textrm{ and}\\ \psi:{}^\sigma\calE_{x_2,\leftone} \simeq \calE_{x_1,\rightone}
\end{array}\!\! \Bigg\} && \Bigg(\!\! \begin{array}{l} \textrm{Use \S\ref{SS:break hecke}}\\ \textrm{to replace $x$}
\\
\textrm{by $(x_1,x_2, \psi')$}
\end{array}\!
\!\Bigg)
\\
\cong\ & \Bigg\{\! (x'_1,x_2,\psi', \psi'')\; \Bigg|\! \begin{array}{l} x'_1 \in \Hk_{\mu_t} (S) ,\ x_2 \in \Hk_{\mu_1, \dots, \mu_{t-1}} (S),\\ \psi':{}^\sigma\calE_{x'_1, \leftone} \simeq \calE_{x_2, \rightone},\textrm{ and}\\ \psi'':\calE_{x_2,\leftone} \simeq \calE_{x'_1,\rightone}
\end{array}\!\! \Bigg\} && \bigg(\!\! \begin{array}{l}x'_1 = \sigma^{-1}(x_1)\\ \psi'' = \sigma^{-1}(\psi)\end{array}\!\! \bigg)
\\
\cong \ &
\bigg\{ 
(x', \psi'')\;\bigg|\!\!
\begin{array}{l}
x' \in \Hk_{\mu_1,\dots, \mu_t}(S) \textrm{ and}\\
\psi':{}^\sigma \calE_{x',\leftone} \simeq \calE_{x',\rightone}
\end{array}\!\! \bigg\}
&& \!\!\!\!\!\!\bigg(\!\!\! \begin{array}{l}
\textrm{Use \S\ref{SS:break hecke} to glue} \\ (x_2,x_1, \psi'')\textrm{ into }x' \ 
\end{array}\!\!\!
\bigg)
\\
\cong \ & \Sht_{\mu_1, \dots, \mu_t}(S). && (\textrm{Definition~\ref{lsht}})
\end{align*}

Similarly, for dominant coweights $\lambda_\bullet, \mmu,\nu$, there is a natural partial Frobenius morphism
\[
F_{\la_\bullet\mid \mmu;\nu}\colon
\Sht^{0,\loc}_{\la_\bullet\mid \mmu;\nu} \to \Sht^{0,\loc}_{\sigma(\nu);\la_\bullet\mid \mmu}.
\]
\end{definition}
The morphism $F_\mmu$ (and $F_{\la_\bullet\mid \mmu;\nu}$) is an isomorphism of prestacks (i.e. as functors over \emph{perfect} $k$-algebras).

\begin{remark}
In the equal characteristic case, the partial Frobenius morphism $F_{\mmu}$ can be defined for the imperfect versions of local shtukas, which then is a just universal homeomorphism. 
\end{remark}

\begin{definition}
Let $\lambda_\bullet$ and $\mmu$ be two sequences of dominant coweights.
We are interested in the following correspondence of prestacks 
\[\xymatrix{&\Sht_{\la_\bullet|\mmu}^{\loc}\ar_{\overleftarrow{h}^\loc_{\lambda_\bullet}}[dl]\ar^{\overrightarrow{h}^\loc_{\mmu}}[dr]&\\
\Sht_{\la_\bullet}^\loc&&\Sht_{\mmu}^\loc,
}\]
where $\Sht_{\la_\bullet\mid\mmu}^{\loc}$ is the prestack classifying, for each perfect ring $R$, the following commutative diagram of modifications of $G$-torsors over $D_R$:
\begin{equation}\label{qi for diff type}
\xymatrix@C=35pt{
\mE'_s \ar@{-->}[r]^{\beta'_s} \ar@{-->}[d]_\beta & \cdots  \ar@{-->}[r]^{\beta'_2} & \calE'_1 \ar@{-->}[r]^{\beta'_1} & {}^\sigma\mE'_s \ar@{-->}[d]^{\sigma(\beta)}\\
\mE_t \ar@{-->}[r]^{\beta_t} & \cdots  \ar@{-->}[r]^{\beta_2} & \calE_1 \ar@{-->}[r]^{\beta_1} & {}^\sigma\mE_t,
}
\end{equation}
such that the top row (resp. bottom row) defines an $R$-point of $\Sht_{\lambda_\bullet}^\loc$ (resp. $\Sht_{\mu_\bullet}^\loc$).
We call such a diagram a \emph{Hecke correspondence} from the top row to the bottom row.

If we further require $\beta$ to have relative position $\preceq \nu$, we get a closed sub-prestack $\Sht_{\la_\bullet|\mmu}^{\nu,\loc}$ of $\Sht_{\la_\bullet|\mmu}^{\loc}$. Note that if $\nu=0$, this reduces to the previously defined Satake correspondence so our notations are consistent. 
\end{definition}

\begin{rmk}
One should regard $\Sht_{\la_\bullet|\mmu}^{\loc}$  as the Hecke correspondence between  the moduli of local shtukas associated to different (sequences of) coweights. It is non-empty if and only if $B(G,-|\la_\bullet|)\cap B(G,-|\mmu|)\neq \emptyset$.
\end{rmk}

\subsubsection{Points of $\Sht^\loc$ and the Newton map}
Recall that we introduced the set $A(G)$, which is the quotient of $G(L)$ by $\sigma$-conjugation of elements in $G(\mO_L)$.
We define the bijection
\begin{equation}
\label{E: group rep of Sht}
\Sht^{\loc}_{\mu}(\bar k)\cong A(G,\mu)
\end{equation}
as follows: each $\bar k$-point $x\in \Sht^\loc_\mu$, defines a local shtuka
$$x^*\mE\dashrightarrow x^*({^\sigma}\mE)$$ over $\Spec \bar k$: by trivializing $x^*\mE$ over $\mO_L$ as a $G$-torsor, the map $x^*({^\sigma}\mE)\dashrightarrow x^*\mE$\footnote{Note that the arrow is reversed. See also Remark~\ref{R:shtukas E to sigma E}.} is given by an element $b_x\in \overline{G(\mO_L)\varpi^{\mu^*}G(\mO_L)}$, which changes as $g^{-1}b_x\sigma(g)$ for $g\in G(\mO_L)$ when the trivialization of $x^*\mE$ changes. Therefore, $[b_x]$ is a well-defined element in $A(G,\mu)$. Conversely, given an element in $A(G,\mu)$, represented by an element in $b\in \overline{G(\mO_L)\varpi^{\mu^*}G(\mO_L)}$, we can define a local shtuka structure on the trivial $G$-torsor $\mE^0$ as $b\sigma: \mE^0\dashrightarrow {^\sigma}\mE^0$. This gives the bijection.

Then we may reinterpret \eqref{E: SetNewton} as the Newton map
\begin{equation}
\label{E: Nm for loc Sht}
\mN: \Sht^\loc_\mu(\bar k)\to B(G,\mu^*)
\end{equation}

According to \cite{RR}, for every $b\in B(G,\mu^*)$, there is a locally closed substack $\Sht^\loc_{\mu,b}\subset \Sht^{\loc}_\mu$ such that $\Sht^\loc_{\mu,b}(\bar k)=\mN^{-1}(b)$.

\subsubsection{Relation to affine Deligne-Lusztig varieties}
\label{SS:relation of Hecke of mS and ADLV}
We have the following lemma.
\begin{lem}
\label{L: Rep of Hk of Sht}
The morphism $\overleftarrow{h}^\loc_{\la_\bullet}: \Sht^{\nu,\loc}_{\la_\bullet \mid \mmu}\to \Sht^\loc_{\la_\bullet}$ is representable by a perfectly proper perfect scheme, and therefore $\overleftarrow{h}^\loc_{\la_\bullet}: \Sht^{\loc}_{\la_\bullet \mid \mmu}\to \Sht^\loc_{\la_\bullet}$ is ind-representable and ind-proper.
\end{lem}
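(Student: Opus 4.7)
The plan is to work out the fiber of $\overleftarrow{h}^\loc_{\la_\bullet}$ over an affine $R$-point of $\Sht^\loc_{\la_\bullet}$ directly from the moduli description in \eqref{qi for diff type}. Fixing such a point, given by a local shtuka $(\mE'_s\stackrel{\beta'_\bullet}\dashrightarrow {}^\sigma\mE'_s)$, an $R$-point of the fiber consists of a modification $\beta\colon \mE'_s\dashrightarrow \mE_t$ with $\inv(\beta)\preceq\nu$ together with a factorization of the uniquely determined modification
\[
\gamma:=\sigma(\beta)\circ\beta'_1\circ\cdots\circ\beta'_s\circ\beta^{-1}\colon \mE_t\dashrightarrow {}^\sigma\mE_t
\]
as a chain $\beta_1\circ\cdots\circ\beta_t$ with $\inv(\beta_i)\preceq\mu_i$. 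The key observation is that once $\beta$ is specified, both $\mE_t$ and $\gamma$ are determined by the top-row data; the only remaining freedom is the choice of factorization.

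First I would observe that, with $\mE'_s$ fixed, the moduli of pairs $(\mE_t,\beta)$ with $\inv(\beta)\preceq\nu$ is a twisted form over $\Spec R$ of the Schubert variety $\Gr_\nu$, the twist being governed by the $G$-torsor $\mE'_s$ on $D_R$; this is again the perfection of a projective $R$-scheme and in particular perfectly proper. Next, for a prescribed $\gamma$, the moduli of factorizations $\beta_1\circ\cdots\circ\beta_t$ bounded by $\mmu$ is the fiber of the perfectly proper convolution map $m_\mmu\colon \Gr_\mmu\to \Gr_{|\mmu|}$ over the point determined by $\gamma$, now suitably twisted by $\mE_t$, and again perfectly proper. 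Assembling the two steps, the fiber of $\overleftarrow{h}^\loc_{\la_\bullet}$ sits naturally inside a twisted form of $\Gr_\nu\tilde\times \Gr_\mmu$, cut out by the closed compatibility condition $\beta_1\cdots\beta_t\cdot\beta=\sigma(\beta)\circ\beta'_1\circ\cdots\circ\beta'_s$. Hence it is representable by a closed subscheme of a perfectly proper perfect scheme, which is exactly what the first assertion demands.

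The second assertion follows at once: since $\Sht^\loc_{\la_\bullet\mid\mmu}=\bigcup_{\nu}\Sht^{\nu,\loc}_{\la_\bullet\mid\mmu}$, where $\nu\preceq\nu'$ yields a closed embedding $\Sht^{\nu,\loc}_{\la_\bullet\mid\mmu}\hookrightarrow \Sht^{\nu',\loc}_{\la_\bullet\mid\mmu}$, the morphism $\overleftarrow{h}^\loc_{\la_\bullet}$ on $\Sht^\loc_{\la_\bullet\mid\mmu}$ is exhibited as a filtered colimit of perfectly proper representable morphisms, hence is ind-representable and ind-proper. The main technical point is not conceptual but rather the careful bookkeeping of the twists by $\mE'_s$ and $\mE_t$; this is handled most cleanly by fpqc-locally trivializing these $G$-torsors on $D_R$ and invoking fpqc descent for closed subschemes. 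Once that bookkeeping is in place, perfect properness reduces to the standard facts already recalled earlier in the paper, namely that Schubert varieties $\Gr_\nu$ and convolution maps $m_\mmu$ are perfectly proper.
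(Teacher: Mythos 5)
Your proof is correct and follows essentially the same route as the paper's: fix an $R$-point of $\Sht^\loc_{\la_\bullet}$, identify the fiber with a closed subscheme of a relative (twisted) Schubert/convolution variety, which is perfectly proper, and then pass to the union over $\nu$ for the ind-statement. The only difference is that you carry out the bookkeeping for general $\mmu$ (landing in a twisted $\Gr_\nu\tilde\times\Gr_\mmu$), whereas the paper reduces to single coweights ``for simplicity'' and notes that the condition $\inv(\sigma(\beta)\beta'\beta^{-1})\preceq\mu$ cuts out a closed subscheme of the relative $\Gr_\nu$.
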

\begin{proof}
For simplicity, we assume that both $\la_\bullet=\la$ and $\mmu=\mu$ are is single coweight.
Let $\Spec R\to  \Sht^\loc_{\la}$, giving by a local shtuka $\beta': \mE'\dashrightarrow {^\sigma}\mE'$. Then $\Spec R \times_{\Sht^\loc_{\la}} \Sht^\loc_{\la\mid \mu}$ classifies for every $x:\Spec R'\to \Spec R$, a modification $\beta: x^*\mE'\dashrightarrow \mE$ such that $\inv(\beta)\preceq\nu$ and $\inv(\sigma(\beta)\beta'\beta^{-1})\preceq \mu$. All possible modifications $\beta$ are represented by the relative Schubert variety $\Gr_\nu$ over $\Spec R$, and the condition $\inv(\sigma(\beta)\beta'\beta^{-1})\preceq \mu$ defines a closed subscheme of it.
\end{proof}

We describe fibers of $\overleftarrow{h}^\loc_{\la_\bullet}$ over a $\kappa$-point $x$ of $\Sht_{\la_\bullet}^{\loc}$, where $\kappa$ is a finite extension (or an algebraic closure) of $k$. Let $b_x\in A(G,\mu)$ be the element as in \eqref{E: group rep of Sht}. Then
\begin{equation}
\label{E: fiber of hleft}
(\overleftarrow{h}^\loc_{\lambda_\bullet})^{-1}(x)\simeq X_{\mu_\bullet^*}(b_x)
\end{equation}
by sending $\beta$ to $\beta^{-1}$.
Similarly, the fiber $(\overleftarrow{h}^\loc_{\lambda_\bullet})^{-1}(x)\cap \Sht_{\la_\bullet|\mmu}^{\nu,\loc}$ is then identified with $X_{\mu_\bullet^*,\nu^*}(b_x)$. 

In particular, when $\lambda_\bullet = \tau \in \xcoch(Z_G)$ is a single central coweight and $\mmu = \mu$ is a single coweight, $\Sht^\loc_\tau$ is identified with $\bfB J_\tau(\mO) =\bfB G(\mO)$ as prestacks, and we get the following commutative diagram with the left square Cartesian
\begin{equation}\label{rigid corr}
\xymatrix{
\on{pt}\ar[d]& X_{\mu^*,\nu^*}(\tau^*)\ar[d]\ar[dr]\ar[l]&\\
\Sht_\tau^\loc& \Sht_{\tau|\mu}^{\nu,\loc}\ar[l]\ar[r]& \Sht_\mu^{\loc}.
}.
\end{equation}

\subsubsection{Composition of isogenies of local shtukas}
There is a natural composition morphism
\begin{equation}
\label{E:composition}
\on{Comp}^{\loc}: \Sht_{\kappa_\bullet |\lambda_\bullet}^{\loc} \times_{\Sht_{\lambda_\bullet}^\loc} \Sht_{\lambda_\bullet|\mu_\bullet}^{\loc} \longto \Sht_{\kappa_\bullet |\mu_\bullet}^{\loc}.
\end{equation}
More precisely, a point of $\Sht_{\kappa_\bullet |\lambda_\bullet}^{\loc} \times_{\Sht_{\lambda_\bullet}^\loc} \Sht_{\la_\bullet|\mu_\bullet}^{\loc}$ is represented by the following two commutative squares glued along the middle row, and the map is given by  forgetting the middle row but only remembering the composed arrows in left and right columns.
\begin{equation}
\label{E: comp of HkS}
\xymatrix@C=35pt{
\mE''_r \ar@{-->}[r]^{\beta''_r} \ar@{-->}[d]^{\ga'}\ar@/_20pt/@{-->}[dd]_{\ga\ga'}& \cdots  \ar@{-->}[r]^{\beta''_2} & \calE''_1 \ar@{-->}[r]^{\beta''_1} & {}^\sigma\mE''_r \ar@{-->}[d]_{\sigma(\ga')}\ar@/^20pt/@{-->}[dd]^-{\sigma(\ga\ga')}\\
\mE'_s \ar@{-->}[r]^{\beta'_s} \ar@{-->}[d]^{\ga} & \cdots  \ar@{-->}[r]^{\beta'_2} & \calE'_1 \ar@{-->}[r]^{\beta'_1} & {}^\sigma\mE'_s\ar@{-->}[d]_{\sigma(\ga)}\\
\mE_t \ar@{-->}[r]^{\beta_t} & \cdots \ar@{-->}[r]^{\beta_2} & \calE_1 \ar@{-->}[r]^{\beta_1} & {}^\sigma\mE_t
}
\end{equation}
Note that $\on{Comp}^\loc$ maps $\Sht_{\kappa_\bullet |\lambda_\bullet}^{\nu',\loc} \times_{\Sht_{\lambda_\bullet}^\loc} \Sht_{\lambda_\bullet|\mu_\bullet}^{\nu,\loc}$ to $\Sht_{\kappa_\bullet |\mu_\bullet}^{\nu+\nu',\loc}$.

\medskip
In later sections, we need to replace $\Sht^\loc_\mmu$ by its restricted version and to upgrade the previous constructions into cohomological correspondences. For this purpose, we need to discuss other disguises of the Hecke correspondences and their compositions, via Satake correspondences and partial Frobenius morphisms.

The following is a simple yet important observation.
\begin{lem}\label{decomp1}
For any dominant coweight $ \eta$ such that $\eta\succeq |\lambda_\bullet|+\sigma(\nu)$ \emph{or} $\eta \succeq |\mmu|+\nu$, we have the following commutative diagram of prestacks
\begin{equation}
\label{E:pentagon factorization}
\xymatrix@C=10pt{
&&
\Sht_{\la_\bullet|\mmu}^{\nu,\loc}  \ar[dl] \ar[dr]\\
&  \Sht_{\la_\bullet|(\sigma(\nu^*), \eta)}^{0,\loc} \ar[dl] \ar[d]  && \Sht_{(\eta,\nu^*)|\mmu}^{0,\loc} \ar[d] \ar[dr]
\\
\Sht_{\la_\bullet}^\loc & \Sht_{\sigma(\nu^*), \eta}^\loc \ar[rr]^-{F^{-1}_{\eta, \nu^*}} &&  \Sht_{\eta,\nu^*}^\loc  & \Sht_{\mmu}^\loc,
}
\end{equation}
where the pentagon is Cartesian when composing $F^{-1}_{\eta, \nu^*}$ with the right vertical arrow.\footnote{We could have stated the lemma in terms of the partial Frobenius morphism $F_{\eta, \nu^*}$ (as it is an isomorphism). But we shall later consider a restricted analogue of this lemma (see Definition~\ref{D:mS lambda mu truncated}), where only the analogue of the \emph{inverse} of the partial Frobenius morphism is defined.}
\end{lem}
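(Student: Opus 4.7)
\emph{Proof plan.} The key observation is that the commutativity of \eqref{qi for diff type} expresses the equality $\sigma(\beta)\circ(\text{top row}) = (\text{bottom row})\circ\beta$, producing a single modification $\mE'_s \dashrightarrow {}^\sigma\mE_t$ with two factorizations whose bounds are $|\la_\bullet|+\sigma(\nu)$ and $|\mmu|+\nu$ respectively. Under either hypothesis on $\eta$, this common modification is bounded by $\eta$, which is exactly what makes the constructions below valid.

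I would first define the two arrows out of $\Sht_{\la_\bullet\mid\mmu}^{\nu,\loc}$ explicitly. The down-left arrow keeps the top row as the $\la_\bullet$-shtuka and inserts the auxiliary chain
\[
\mE'_s \xdashrightarrow{\ \text{bound }\eta\ } {}^\sigma\mE_t \xdashrightarrow{\ \sigma(\beta)^{-1}\ } {}^\sigma\mE'_s
\]
as a $(\sigma(\nu^*),\eta)$-shtuka sharing source $\mE'_s$ and total modification with the top row. Symmetrically, the down-right arrow keeps the bottom row as the $\mmu$-shtuka and inserts
\[
\mE_t \xdashrightarrow{\ \beta^{-1}\ } \mE'_s \xdashrightarrow{\ \text{bound }\eta\ } {}^\sigma\mE_t
\]
as an $(\eta,\nu^*)$-shtuka sharing source $\mE_t$ and total modification with the bottom row. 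The outer triangles in \eqref{E:pentagon factorization} commute tautologically. For the inner pentagon, unwinding Definition~\ref{D:partial Frobenius} shows that $F^{-1}_{\eta,\nu^*}$ applied to the first auxiliary chain yields $\sigma^{-1}({}^\sigma\mE_t) = \mE_t \xdashrightarrow{\nu^*} \mE'_s \xdashrightarrow{\eta} {}^\sigma\mE_t$, which is exactly the second auxiliary chain.

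The main content, and the main obstacle, is verifying that the pentagon is Cartesian. I would construct an explicit inverse to the induced map $\Sht_{\la_\bullet\mid\mmu}^{\nu,\loc} \to \Sht_{\la_\bullet\mid(\sigma(\nu^*),\eta)}^{0,\loc} \times_{\Sht_{\eta,\nu^*}^\loc} \Sht_{(\eta,\nu^*)\mid\mmu}^{0,\loc}$. Given a point of the fiber product, the partial Frobenius identification forces the middle torsor of the $(\eta,\nu^*)$-chain to coincide with the outer torsor $\mE'_s$ of the $\la_\bullet$-shtuka and the middle torsor of the $(\sigma(\nu^*),\eta)$-chain to coincide with ${}^\sigma\mE_t$; the first arrow of the $(\eta,\nu^*)$-chain (of bound $\nu^*$) then inverts to the sought modification $\beta:\mE'_s\dashrightarrow\mE_t$ of bound $\nu$. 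The commutativity of the resulting square \eqref{qi for diff type} reduces to the equality of the two presentations of the common modification $\mE'_s\dashrightarrow{}^\sigma\mE_t$ extracted from the two Satake correspondences inside the fiber product, which is exactly what the fiber product condition supplies. The remaining work is careful bookkeeping, confirming that the two constructions are mutual inverses at the level of all the data, with no genuinely new geometric input beyond the factorization in \S\ref{SS:break hecke} and the definition of the partial Frobenius.
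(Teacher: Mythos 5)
Your proposal is correct and follows essentially the same route as the paper: both proofs introduce the diagonal modification $\alpha:\mE'_s\dashrightarrow{}^\sigma\mE_t$, note that either hypothesis on $\eta$ forces $\on{Inv}(\alpha)\preceq\eta$, and then identify a point of $\Sht^{\nu,\loc}_{\la_\bullet\mid\mmu}$ with the two commutative triangles (the upper-right giving a point of $\Sht^{0,\loc}_{\la_\bullet\mid(\sigma(\nu^*),\eta)}$, the lower-left a point of $\Sht^{0,\loc}_{(\eta,\nu^*)\mid\mmu}$) glued along $\alpha$ via $F^{-1}_{\eta,\nu^*}$. Your explicit inverse construction for the Cartesian claim is just a spelled-out version of the gluing that the paper states in one line.
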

In other words, the Hecke correspondence is the composition of a Satake correspondence, the partial Frobenius, and another Satake correspondence.
\begin{proof}
Recall that $\Sht^{\nu,\loc}_{\la_\bullet\mid\mmu}$ classifies the following commutative diagram of modifications of $G$-torsors
\[
\xymatrix@C=35pt{
\mE'_s \ar@{-->}[r]^{\beta'_s}\ar@{-->}[rrrd]^-{\alpha} \ar@{-->}[d]_\beta & \cdots  \ar@{-->}[r]^{\beta'_2} & \calE'_1 \ar@{-->}[r]^{\beta'_1} & {}^\sigma\mE'_s \ar@{-->}[d]^{\sigma(\beta)}\\ 
\mE_t \ar@{-->}[r]_{\beta_t} & \cdots  \ar@{-->}[r]_{\beta_2} & \calE_1 \ar@{-->}[r]_{\beta_1} & {}^\sigma\mE_t,
}
\]
where comparing to \eqref{qi for diff type}, we added the diagonal arrow $\alpha$ which is forced to be equal to $\sigma(\beta)\circ \beta'_1 \circ \dots, \circ \beta'_s  = \beta_1 \circ \cdots \circ \beta_t \circ \beta $. 
Since $\eta\succeq |\lambda_\bullet|+\sigma(\nu)$ or $\eta \succeq |\mmu|+\nu$ by assumption, automatically we have $\on{Inv}(\alpha) \preceq \eta$.
This rectangular shape diagram is the same as the two commutative triangles glued along the diagonal map $\alpha$.
We know that
\begin{itemize}
\item the upper right triangle represents a point of $\Sht_{\la_\bullet|(\sigma(\nu^*), \eta)}^{0,\loc}$, and
\item the lower left triangle represents a point of $\Sht_{(\eta, \nu^*)|\mmu}^{0,\loc}$.
\end{itemize}
The lemma follows from this.
\end{proof}

The same trick leads the following two lemmas.
\begin{lem}
\label{decomp2}
Let $\lambda_\bullet,\mu_\bullet,\nu_\bullet,\xi_\bullet,\theta$ be dominant coweights such that $\theta\succeq |\la_\bullet|+|\mu^*_\bullet|$ or $\theta\succeq |\xi_\bullet|+|\nu_\bullet^*|$. Then there is a canonical isomorphism
\begin{equation}
\label{E:decomp2}
\Sht^{0,\loc}_{(\la_\bullet,\nu_\bullet)\mid(\mu_\bullet,\xi_\bullet)}
\cong \Sht^{0,\loc}_{\la_\bullet \mid (\mu_\bullet,\theta);\nu_\bullet}
\times_{\Sht^{\loc}_{\mu_\bullet,\theta,\nu_\bullet}}
\Sht^{0,\loc}_{\mu_\bullet; (\theta,\nu_\bullet)\mid \xi_\bullet}.
\end{equation}
\end{lem}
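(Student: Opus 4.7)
The plan is to establish \eqref{E:decomp2} by a direct moduli-theoretic argument, giving mutually inverse morphisms on the level of $R$-points. Writing the sequences as $\la_\bullet=(\la_1,\dots,\la_s)$ etc., an $R$-point of the LHS amounts to a $G$-torsor $\calE$ equipped with two chains of modifications from $\calE$ to ${}^\sigma\calE$, one of type $(\la_\bullet,\nu_\bullet)$ and one of type $(\mu_\bullet,\xi_\bullet)$, whose total compositions $\calE\dashrightarrow {}^\sigma\calE$ coincide. An $R$-point of the RHS is a $(\mu_\bullet,\theta,\nu_\bullet)$-shtuka chain $\calE\dashrightarrow\calF\dashrightarrow\calG\dashrightarrow{}^\sigma\calE$, together with an alternative $\la_\bullet$-factorization of the sub-modification $\calE\dashrightarrow\calG$ and an alternative $\xi_\bullet$-factorization of the sub-modification $\calF\dashrightarrow{}^\sigma\calE$, all glued along the common $(\mu_\bullet,\theta,\nu_\bullet)$-chain.

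The morphism from RHS to LHS is given by forgetting $\calF$ and $\calG$: composing the $\la_\bullet$-factorization with the $\nu_\bullet$-leg $\calG\dashrightarrow {}^\sigma\calE$ yields the $(\la_\bullet,\nu_\bullet)$-chain, and composing the $\mu_\bullet$-leg $\calE\dashrightarrow\calF$ with the $\xi_\bullet$-factorization yields the $(\mu_\bullet,\xi_\bullet)$-chain; the two obviously have the same composition. For the inverse, given an LHS point, take $\calG$ to be the intermediate torsor separating $\la_\bullet$ from $\nu_\bullet$ in the top chain, and $\calF$ the intermediate torsor separating $\mu_\bullet$ from $\xi_\bullet$ in the bottom chain. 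Since all modifications are isomorphisms on the punctured disc, there is a canonical induced modification $\calF\dashrightarrow\calG$, and the commutativity of the shared total modification forces this $\calF\dashrightarrow\calG$ to agree both with the composite $\calF\dashrightarrow\calE\dashrightarrow\calG$ (using the pseudo-inverse of $\calE\dashrightarrow\calF$ followed by $\calE\dashrightarrow\calG$) and with $\calF\dashrightarrow{}^\sigma\calE\dashrightarrow\calG$ (using $\calF\dashrightarrow{}^\sigma\calE$ followed by the pseudo-inverse of $\calG\dashrightarrow{}^\sigma\calE$). The former computation bounds the relative position of $\calF\dashrightarrow\calG$ by $|\la_\bullet|+|\mu_\bullet^*|$, while the latter bounds it by $|\xi_\bullet|+|\nu_\bullet^*|$, so the disjunctive hypothesis on $\theta$ ensures the relative position is $\preceq \theta$. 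Consequently $\calE\dashrightarrow\calF\dashrightarrow\calG\dashrightarrow{}^\sigma\calE$ defines a point of $\Sht^\loc_{\mu_\bullet,\theta,\nu_\bullet}$, and the given chains restrict to the required alternative $\la_\bullet$- and $\xi_\bullet$-factorizations of the relevant sub-modifications.

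Since $\calF$, $\calG$, and the modification $\calF\dashrightarrow\calG$ are uniquely determined by the LHS data, the two constructions are visibly inverse to each other, yielding a canonical isomorphism of prestacks. The only genuine content is the bound on the relative position of the interpolating modification $\calF\dashrightarrow\calG$, which is where the disjunctive hypothesis on $\theta$ is needed; once that is in hand, everything else reduces to a diagram chase of commutative diagrams of modifications of $G$-torsors, entirely parallel to the proof of Lemma~\ref{decomp1}.
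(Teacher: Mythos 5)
Your proof is correct and is essentially the paper's argument: both sides classify the same commutative diagram of modifications except for the single interpolating modification $\gamma\colon \calF\dashrightarrow\calG$ bounded by $\theta$, which is uniquely determined by the remaining data, and whose bound is automatic under the disjunctive hypothesis via the two composite expressions through $\calE$ and through ${}^\sigma\calE$ — exactly the computation the paper leaves implicit in the phrase ``which is superfluous if \dots''. The only caveat is purely notational: you read the coweight sequences left-to-right along the chain, whereas the paper's convention attaches the first entry of a sequence to the modification nearest the ${}^\sigma$-end, so your labelling of which sub-chain carries $\la_\bullet$ versus $\nu_\bullet$ (and $\mu_\bullet$ versus $\xi_\bullet$) is mirrored relative to the paper; this does not affect the validity of the argument.
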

\begin{proof}
The left hand side classifies the following commutative diagram of modifications of $G$-torsors
\[
\xymatrix{
\mE'_{s+a} \ar@{-->}[r]^-{\beta'_{s+a}} \ar@{=}[d] & \cdots  \ar@{-->}[r]^-{\beta'_{s+1}} & \calE'_s \ar@{-->}[r]^-{\beta'_s} & \cdots \ar@{-->}[r]^-{\beta'_1} & \calE'_0={}^\sigma\mE'_{s+a} \ar@{=}@<2ex>[d]\\ 
\mE_{t+b} \ar@{-->}[r]^-{\beta_{t+b}} & \cdots  \ar@{-->}[r]^-{\beta_{t+1}} &\mE_t \ar@{-->}[r]^-{\beta_{t}} & \cdots \ar@{-->}[r]^-{\beta_{1}} & \calE_0 = {}^\sigma\mE_{t+b}.
}
\]
The right hand side classifies the following commutative diagram of modifications of $G$-torsors
\[
\xymatrix{
&& \calE'_s
\ar@{=}[d]
 \ar@{-->}[rr]^-{\beta'_s} && \cdots \ar@{-->}[r]^-{\beta'_1} & \calE'_0={}^\sigma\mE_{s+a} \ar@{=}@<2ex>[d]
\\
\mE'_{s+a} \ar@{-->}[r]^-{\beta'_{s+a}} \ar@{=}[d] & \cdots  \ar@{-->}[r]^-{\beta'_{s+1}} & \calE'_s \ar@{-->}[r]^-{\gamma}
&
\calE_t\ar@{=}[d]
\ar@{-->}[r]^-{\beta_t} & \cdots \ar@{-->}[r]^-{\beta_{1}} & \calE_0 = {}^\sigma\mE_{t+b} 
\\
\mE_{t+b} \ar@{-->}[r]^-{\beta_{t+b}} & \cdots  \ar@{-->}[rr]^-{\beta_{t+1}} &&\mE_t,
}
\]
where the top two rows define a point of $\Sht^{0,\loc}_{\la_\bullet \mid (\mu_\bullet,\theta);\nu_\bullet}$, the bottom two rows define a point of $\Sht^{0,\loc}_{\mu_\bullet; (\theta,\nu_\bullet)\mid \xi_\bullet}$, and they are glued along the middle row which defines a point of $\Sht^{\loc}_{\mu_\bullet,\theta,\nu_\bullet}$.

The two diagrams above differ by the modification $\gamma: \calE'_s \dashrightarrow \calE_t$ which is required to have relative position $\preceq \theta$ (which is superfluous if $\theta\succeq|\la_\bullet|+ |\mu^*_\bullet|$ or $\theta\succeq |\xi_\bullet|+|\nu_\bullet^*|$). Then the isomorphism of the lemma is obtained by adding/removing the modification $\gamma$.
\end{proof}
\begin{lem}
\label{decomp3}
Let $\lambda_\bullet, \mu_\bullet,\nu_\bullet,\zeta, \zeta_1,\zeta_2$ be dominant coweights such that $\zeta\succeq |\la_\bullet|+|\mu_\bullet^*| + |\nu_\bullet^*|$ or $\zeta\succeq \zeta_1+ \zeta_2$. Then there is a canonical isomorphism
\begin{equation}
\label{E:decomp3}
\Sht^{0,\loc}_{\la_\bullet\mid (\mu_\bullet,\zeta, \nu_\bullet)}
\times_{\Sht^\loc_{\mu_\bullet,\zeta, \nu_\bullet}}
\Sht^{0,\loc}_{\mu_\bullet; \zeta\mid (\zeta_1,\zeta_2);\nu_\bullet}
\cong \Sht^{0,\loc}_{\la_\bullet\mid(\mu_\bullet,\zeta_1, \zeta_2, \nu_\bullet)}.
\end{equation}
\end{lem}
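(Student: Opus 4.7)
The proof will closely follow the pattern established in Lemma~\ref{decomp2}. The plan is to unpack both sides of \eqref{E:decomp3} as groupoids of commutative diagrams of modifications of $G$-torsors over $D_R$, and then exhibit a canonical identification by adding or removing the intermediate $\zeta$-modification in the bottom row.

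First I would describe the left-hand side explicitly. A point of the fiber product consists of an $\la_\bullet$-shtuka $\mE'_\bullet \dashrightarrow {}^\sigma\mE'_\bullet$ on top, together with a bottom shtuka whose modifications decompose as $\mu_\bullet$ followed by a single $\zeta$-step followed by $\nu_\bullet$; the first factor $\Sht^{0,\loc}_{\la_\bullet\mid(\mu_\bullet,\zeta,\nu_\bullet)}$ imposes the Satake equality of the composed modification in the two rows, while the second factor $\Sht^{0,\loc}_{\mu_\bullet;\zeta\mid(\zeta_1,\zeta_2);\nu_\bullet}$ refines the middle $\zeta$-modification into a composition of a $\zeta_1$-modification followed by a $\zeta_2$-modification, keeping the $\mu_\bullet$- and $\nu_\bullet$-parts fixed. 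Concretely, a point is precisely a Satake diagram whose bottom row has the full modification sequence $(\mu_\bullet,\zeta_1,\zeta_2,\nu_\bullet)$, together with the extra data that the intermediate two-step composition $\zeta_1\zeta_2$ has relative position $\preceq \zeta$. The right-hand side $\Sht^{0,\loc}_{\la_\bullet\mid(\mu_\bullet,\zeta_1,\zeta_2,\nu_\bullet)}$ parametrizes exactly this data \emph{without} the extra record of the $\zeta$-bound.

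The natural map from left to right just forgets the intermediate $\zeta$-modification; its inverse is obtained by composing the consecutive $\zeta_1$- and $\zeta_2$-modifications. A priori the composite has relative position $\preceq \zeta_1 + \zeta_2$, so if $\zeta \succeq \zeta_1+\zeta_2$ we immediately land in $\Gr_\zeta$. Under the alternative hypothesis $\zeta \succeq |\la_\bullet|+|\mu_\bullet^*|+|\nu_\bullet^*|$, the Satake equality identifies this composite with the modification obtained by traversing the top row in the opposite direction and then descending, which has relative position $\preceq |\la_\bullet|+|\mu_\bullet^*|+|\nu_\bullet^*|$ and hence $\preceq \zeta$. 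In either case the intermediate $\zeta$-modification is well-defined and the two constructions are mutually inverse.

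The only book-keeping I anticipate as mildly delicate is setting up the large commutative diagram with the correct labelling of torsors so that the Satake equalities and the partial-order estimates match up at each node; this parallels the treatment of Lemma~\ref{decomp2} and should require no input beyond the triangle inequality for the relative-position partial order $\preceq$.
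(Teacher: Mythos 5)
Your proposal is correct and follows essentially the same route as the paper's proof: both sides are unpacked as moduli of commutative diagrams of modifications, and the isomorphism is obtained by adding/removing the intermediate $\zeta$-modification $\gamma$, whose bound $\inv(\gamma)\preceq\zeta$ is automatic under either hypothesis. The only cosmetic slip is the phrase ``traversing the top row in the opposite direction'': the reroute of $\gamma$ goes \emph{forward} along the top ($\la_\bullet$) row and backward along the $\mu_\bullet$- and $\nu_\bullet$-parts, which is exactly what produces the bound $|\la_\bullet|+|\mu_\bullet^*|+|\nu_\bullet^*|$ that you state.
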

\begin{proof}
The right hand side classifies the following commutative diagram of modifications of $G$-torsors
\[
\xymatrix@C=35pt{
\mE'_s \ar@{-->}[r]^{\beta'_s} & \cdots  \ar@{-->}[rrr]&&& \cdots  \ar@{-->}[r]^{\beta'_2} & \calE'_1 \ar@{-->}[r]^-{\beta'_1} & \mE'_0 = {}^\sigma\mE'_s
\\
\mE_{t+a+2} \ar@{-->}[r]^{\beta''_{a}} \ar@{=}[u] & 
\cdots
 \ar@{-->}[r]^{\beta''_{1}} &
\mE_{t+2} \ar@{-->}[r]^{\ga_{2}} &
\mE_{t+1} \ar@{-->}[r]^{\ga_{1}} & \calE_t \ar@{-->}[r]^-{\beta_t} & \cdots \ar@{-->}[r]^-{\beta_1} & \mE_0 = {}^\sigma\mE_{t+a+2}. \ar@{=}@<-3ex>[u]
}
\]
The left hand side classifies the following commutative diagram of modifications of $G$-torsors
\[
\xymatrix@C=35pt{
\mE'_s \ar@{-->}[r]^{\beta'_s} & \cdots  \ar@{-->}[rrr]&&& \cdots  \ar@{-->}[r]^{\beta'_2} & \calE'_1 \ar@{-->}[r]^-{\beta'_1} & \mE'_0 = {}^\sigma\mE'_s
\\
\mE_{t+a+2} \ar@{-->}[r]^{\beta''_{a}} \ar@{=}[u] & 
\cdots
 \ar@{-->}[r]^{\beta''_{1}} &
\mE_{t+2} \ar@{-->}[rr]^{\ga} && \calE_t \ar@{-->}[r]^-{\beta_t} & \cdots \ar@{-->}[r]^-{\beta_1} & \mE_0 = {}^\sigma\mE_{t+a+2}, \ar@{=}@<-3ex>[u]
\\
&& \ar@{=}[u]
\mE_{t+2} \ar@{-->}[r]^{\ga_{2}} &
\mE_{t+1} \ar@{-->}[r]^{\ga_{1}} & \calE_t\ar@{=}[u]
}
\]
where the top two rows define a point of $\Sht^{0,\loc}_{\la_\bullet\mid (\mu_\bullet, \zeta, \nu_\bullet)}$, bottom two rows define a point of $\Sht^{0,\loc}_{\mu_\bullet; \zeta|(\zeta_1,\zeta_2); \nu_\bullet}$, and they are glued along the middle row which defines a point of $\Sht^\loc_{\mu_\bullet, \zeta, \nu_\bullet}$.

The two diagrams above differ by the modification $\gamma: \calE_{t+2} \dashrightarrow \calE_t$ which is required to have relative position $\preceq \zeta$ (which is superfluous if $\zeta\succeq \zeta_1+\zeta_2$ or $\zeta\succeq |\la_\bullet|+|\mu_\bullet^*| + |\nu_\bullet^*|$). Then the isomorphism of the lemma is obtained by adding/removing the modification $\gamma$.
\end{proof}

\begin{lem}
\label{L: comm Sat and pFrob}
Let $\lambda_\bullet, \mu_\bullet, \nu$ be dominant coweights.
The following diagram is Cartesian
\begin{equation}
\label{E:comm Sat and pFrob}
\xymatrix@C=40pt{
\Sht^{0,\loc}_{\sigma(\nu);\la_\bullet\mid \mmu} \ar[r]^-{F^{-1}_{\la_\bullet\mid \mmu;\nu}} \ar[d] & \Sht^{0,\loc}_{\la_\bullet\mid \mmu;\nu} \ar[d]\\
\Sht^{\loc}_{\sigma(\nu),\la_\bullet} \ar[r]^-{F^{-1}_{\lambda_\bullet,\nu}} & \Sht\Sht^{\loc}_{\la_\bullet, \nu}.
}
\end{equation}
\end{lem}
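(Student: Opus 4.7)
The plan is to unpack the moduli descriptions, verify commutativity of the square, and then deduce the Cartesian property from the fact that both horizontal arrows are isomorphisms.

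An $R$-point of $\Sht^{0,\loc}_{\sigma(\nu);\la_\bullet\mid \mmu}$ consists of a commutative diagram of $G$-torsors over $D_R$ in which both rows carry shtuka structures — the top of type $(\sigma(\nu),\la_\bullet)$ and the bottom of type $(\sigma(\nu),\mmu)$ — sharing the common rightmost modification of relative position $\preceq \sigma(\nu)$; the left vertical map just forgets the $\mmu$-part of the bottom row, remembering the top shtuka, and the right vertical map is defined analogously. By Definition~\ref{D:partial Frobenius}, the inverse partial Frobenius $F^{-1}_{\la_\bullet\mid \mmu;\nu}$ takes this shared $\sigma(\nu)$-modification and, using the shtuka identifications $\calE_0 \simeq {}^\sigma\calE_{\mathrm{left}}$ present on both rows, cycles it across to the right as a $\nu$-modification; crucially this operation is applied simultaneously and identically on the top and bottom rows, since the $\sigma(\nu)$-modification is shared.

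Commutativity of the diagram then reduces to the observation that the cyclic shift used to define $F^{-1}_{\la_\bullet\mid \mmu;\nu}$ on the Satake-correspondence moduli, when restricted to the top row, is literally the cyclic shift defining $F^{-1}_{\la_\bullet,\nu}$ on $\Sht^{\loc}_{\sigma(\nu),\la_\bullet}$. Thus both paths around the square produce the same shtuka of type $(\la_\bullet,\nu)$ obtained by moving the shared $\sigma(\nu)$-modification of the top row to the right.

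Finally, again by Definition~\ref{D:partial Frobenius}, both horizontal arrows are isomorphisms of prestacks. For any commutative square of prestacks in which both horizontal arrows are isomorphisms, being Cartesian is automatic: the natural map from the top-left corner to the fiber product is injective because its composition with the projection to the top-right is already an isomorphism, and it is surjective because a pair $(y,z)$ in the fiber product can be recovered from $y$ by setting $x := F_{\la_\bullet\mid \mmu;\nu}(y)$ and verifying $z$ equals the projection of $x$ using commutativity. The only point requiring careful bookkeeping is tracking how the shared $\sigma(\nu)$-modification interacts with the two shtuka structures on the two rows and checking that $F_{\la_\bullet\mid \mmu;\nu}$ moves it compatibly on the top and bottom; but this compatibility is precisely what is built into the definition of the partial Frobenius on the Satake-correspondence moduli, so the verification is a direct unwinding rather than a real obstacle.
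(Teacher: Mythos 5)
Your proof is correct and is essentially the paper's (whose own proof is just ``Clear''): you unwind the definition of the inverse partial Frobenius to see that it transports the shared $\sigma(\nu)$-modification identically on both rows, so the square commutes and its top horizontal restricts on the top row to $F^{-1}_{\lambda_\bullet,\nu}$, and then the square is automatically Cartesian because both horizontal arrows are isomorphisms of prestacks. The only cosmetic point is directional: with the paper's conventions the shared $\sigma(\nu)$-modification is the \emph{rightmost} one and $F^{-1}$ moves its $\sigma^{-1}$-twist to the \emph{left} end as a $\nu$-modification (your ``cycles it across to the right'' only makes sense in the cyclic picture given by the shtuka identification), but this does not affect the validity of the argument.
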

\begin{proof}
Clear.
\end{proof}

Now we can give another interpretation of $\on{Comp}^\loc$.
\begin{lem}
\label{L:decomposition of Comp}
The composition map $\on{Comp}^\loc$ is the composition of the following maps
\begin{small}
\begin{align*}
& \ \Sht^{\nu',\loc}_{\kappa_\bullet\mid\lambda_\bullet}\times_{\Sht^{\loc}_{\lambda_\bullet}}\Sht^{\nu,\loc}_{\lambda_\bullet\mid \mu_\bullet}   
\\
\cong \ &\  \Sht^{0,\loc}_{\kappa_\bullet\mid (\sigma(\nu'^*),\eta')}\times_{\Sht^\loc_{\eta',\nu'^*}}\Sht^{0,\loc}_{(\eta',\nu'^*)\mid \lambda_\bullet}\times_{\Sht^\loc_{\lambda_\bullet}}\Sht^{0,\loc}_{\lambda_\bullet\mid(\sigma(\nu^*), \eta)}\times_{\Sht^\loc_{\eta,\nu^*}}\Sht^{0,\loc}_{(\eta,\nu^*)\mid \mu_\bullet} & (\mathrm{Lemma}\ \ref{decomp1})
\\
\xrightarrow{(1)}  &\ \Sht^{0,\loc}_{\kappa_\bullet\mid (\sigma(\nu'^*),\eta')}\times_{\Sht^\loc_{\eta',\nu'^*}}\Sht^{0,\loc}_{(\eta',\nu'^*)\mid(\sigma(\nu^*), \eta)}\times_{\Sht^\loc_{\eta,\nu^*}}\Sht^{0,\loc}_{(\eta,\nu^*)\mid \mu_\bullet}   
\\
\cong \ &\      \Sht^{0,\loc}_{\kappa_\bullet\mid (\sigma(\nu'^*),\eta')}
\times_{\Sht^\loc_{\eta',\nu'^*}}
\Sht^{0,\loc}_{\eta'\mid (\sigma(\nu^*),\theta);\nu'^*}
\times_{\Sht^\loc_{\sigma(\nu^*),\theta,\nu'^*}}
\Sht^{0,\loc}_{\sigma(\nu^*); (\theta,\nu'^*)\mid\eta}
\times_{\Sht^\loc_{\eta,\nu^*}}\Sht^{0,\loc}_{(\eta,\nu^*)\mid \mu_\bullet} & (\mathrm{Lemma}\ \ref{decomp2})
\\
\cong    \  & \  \Sht^{0,\loc}_{\kappa_\bullet\mid (\sigma(\nu'^*),\eta')}\times_{\Sht^\loc_{\sigma(\nu'^*), \eta'}}
\Sht^{0,\loc}_{\sigma(\nu'^*);\eta'\mid (\sigma(\nu^*),\theta)}
\times_{\Sht^\loc_{\theta,\nu'^*,\nu^*}}
\Sht^{0,\loc}_{ (\theta,\nu'^*)\mid\eta;\nu^*} \times_{\Sht^\loc_{\eta,\nu^*}}\Sht^{0,\loc}_{(\eta,\nu^*)\mid \mu_\bullet}   & (\mathrm{Lemma}\ \ref{L: comm Sat and pFrob})\\
\cong \ & \ \Sht^{0,\loc}_{\kappa_\bullet \mid(\sigma(\nu'^*),\sigma(\nu^*) , \theta)} \times_{\Sht^\loc_{\theta,\nu'^*,\nu^*}} \Sht^{0,\loc}_{(\theta,\nu'^*,\nu^*)\mid \mu_\bullet} & (\mathrm{Lemma}\ \ref{decomp3})  
\\
 \xrightarrow{(2)} & \ \Sht^{0,\loc}_{\kappa_\bullet \mid (\sigma(\nu^*+\nu'^*), \theta )}\times_{\Sht^{\loc}_{\theta,\nu^*+\nu'^*}}\Sht^{0,\loc}_{(\theta,\nu^*+\nu'^*)\mid \mu_\bullet} 
 \\
\cong \ & \Sht^{\nu+\nu',\loc}_{\kappa_\bullet\mid \mu_\bullet }.   & (\mathrm{Lemma}\ \ref{decomp1})
\end{align*}
\end{small}
Here we choose $\eta'\succeq |\kappa_\bullet| + \sigma(\nu')$, $\eta\succeq |\la_\bullet|+\sigma(\nu)$, and $\theta\succeq \eta'+\sigma(\nu)$. The map $(1)$ is induced by the composition of Satake correspondence
\[\Sht^{0,\loc}_{(\eta',\nu'^*)\mid \lambda_\bullet}\times_{\Sht^\loc_{\lambda_\bullet}}\Sht^{0,\loc}_{\lambda_\bullet\mid(\sigma(\nu^*), \eta)} \to \Sht^{0,\loc}_{(\eta',\nu'^*)\mid(\sigma(\nu^*), \eta)},\]
and the map $(2)$ is induced by the convolution of moduli of local shtukas
\[
\Sht^{0,\loc}_{\kappa_\bullet \mid(\sigma(\nu'^*),\sigma(\nu^*) , \theta)} \to \Sht^{0,\loc}_{\kappa_\bullet \mid(\sigma(\nu^*+\nu'^*) , \theta)}
,\quad \Sht^{\loc}_{\theta,\nu'^*,\nu^*} \to \Sht^{\loc}_{\theta,\nu^*+\nu'^*},
\]
\[
\textrm{and}\quad \Sht^{0,\loc}_{(\theta,\nu'^*,\nu^*)\mid \mu_\bullet}  \to \Sht^{0,\loc}_{(\theta,\nu^*+\nu'^*)\mid \mu_\bullet} .
\]
In particular, $\mathrm{Comp}^\loc $ is perfectly proper.
\end{lem}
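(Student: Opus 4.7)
The plan is to verify the chain of isomorphisms/morphisms displayed in the statement by tracking an $R$-point through each step. Start from a point of $\Sht^{\nu',\loc}_{\kappa_\bullet\mid\lambda_\bullet}\times_{\Sht^{\loc}_{\lambda_\bullet}}\Sht^{\nu,\loc}_{\lambda_\bullet\mid \mu_\bullet}$, which by definition is a commutative diagram of the form \eqref{E: comp of HkS} with a single modification $\ga'$ of relative position $\preceq \nu'$ between the top and middle rows, and a single modification $\ga$ of relative position $\preceq \nu$ between the middle and bottom rows. The first step is to apply Lemma~\ref{decomp1} to each of the two Hecke correspondences separately, using auxiliary dominant coweights $\eta'\succeq|\kappa_\bullet|+\sigma(\nu')$ and $\eta\succeq |\la_\bullet|+\sigma(\nu)$. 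This produces a zigzag of Satake correspondences interleaved with the partial Frobenius, and the Cartesian property in Lemma~\ref{decomp1} ensures no data is lost.

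Next, I would contract the middle of the chain by composing the two Satake correspondences meeting at $\Sht^\loc_{\la_\bullet}$, which is step (1); this is the convolution of Satake correspondences and is perfectly proper because it is induced by pushforward along the perfectly proper convolution map \eqref{E: conv local Hk}. Then, choosing $\theta\succeq \eta'+\sigma(\nu)$, I would apply Lemma~\ref{decomp2} to factor $\Sht^{0,\loc}_{(\eta',\nu'^*)\mid(\sigma(\nu^*),\eta)}$ as a fiber product through $\Sht^\loc_{\sigma(\nu^*),\theta,\nu'^*}$, and then apply Lemma~\ref{L: comm Sat and pFrob} to commute the partial Frobenius past the Satake correspondence so that both appearances of partial Frobenius can be moved to one end of the chain. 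A further application of Lemma~\ref{decomp3} collapses the two copies of Satake data at the target into a single one bounded by $\theta$.

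The last two steps (the second application of Lemma~\ref{decomp1} and the step (2)) are to fuse the two appearances of the partial Frobenius, first by replacing $(\sigma(\nu'^*), \sigma(\nu^*))$ by the single coweight $\sigma(\nu^*+\nu'^*)$ via the convolution morphism (which is perfectly proper), and then by invoking the reverse direction of Lemma~\ref{decomp1} for the pair $(\kappa_\bullet, \mu_\bullet)$ with auxiliary bound $\theta \succeq \sigma(\nu^*+\nu'^*) + |\kappa_\bullet|$ (which holds by our choices). To finish, one checks directly on points that this composition indeed agrees with $\on{Comp}^\loc$: namely, that the composed modification $\ga\ga'$ in \eqref{E: comp of HkS} recovered at the end of the chain is exactly the composition of the two arrows with which one began. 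This is an unwinding of the definitions of Lemmas~\ref{decomp1}--\ref{decomp3}, since each of these lemmas is an isomorphism of prestacks that simply adds or removes auxiliary modifications while preserving the composed arrow.

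Finally, perfect properness of $\on{Comp}^\loc$ follows at once from the decomposition: each isomorphism step is trivially perfectly proper, the partial Frobenius morphism is an isomorphism, and the only two non-isomorphism steps (labeled (1) and (2)) are base changes of the perfectly proper convolution maps on $\Gr_{\mmu}$ via \eqref{E:mS as Cartesian product} and the Satake correspondence \eqref{E: Sat corr}, hence perfectly proper. The main obstacle, and in fact essentially the only substantive work, is the bookkeeping of indices: one must carefully verify at each stage that the hypothesis on the auxiliary bound ($\theta \succeq \cdots$, etc.) required to apply Lemmas~\ref{decomp1}--\ref{decomp3} is satisfied, and that the Cartesian square in Lemma~\ref{L: comm Sat and pFrob} can be spliced into the chain at the right spot so that the partial Frobenius morphisms on $\nu$ and $\nu'$ can be commuted past each other and combined. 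Once this combinatorial check is carried out, the identification with $\on{Comp}^\loc$ is formal.
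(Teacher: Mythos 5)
Your proposal is correct and takes essentially the same route as the paper's own proof: you track the moduli description through the displayed chain (each of Lemmas~\ref{decomp1}--\ref{decomp3} and Lemma~\ref{L: comm Sat and pFrob} merely adds or removes auxiliary modifications or re-associates the gluing across the inverse partial Frobenius), note that at the end only the composed modifications $\ga\ga'$ and $\sigma(\ga\ga')$ survive so the result is $\Sht^{\nu+\nu',\loc}_{\kappa_\bullet\mid\mu_\bullet}$ and the composite is $\on{Comp}^\loc$ by its very definition, and deduce perfect properness from that of the two non-isomorphism steps $(1)$ and $(2)$, exactly as in the paper. One cosmetic correction: the hypothesis for the final application of Lemma~\ref{decomp1} should read $\theta\succeq |\kappa_\bullet|+\sigma(\nu+\nu')$ (no duals), which indeed follows from $\theta\succeq \eta'+\sigma(\nu)$ and $\eta'\succeq |\kappa_\bullet|+\sigma(\nu')$.
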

\begin{proof}
The fiber product of the Hecke correspondences in the first row can be decomposed by Lemma~\ref{decomp1}, and is the same as the moduli prestack that classifies the following commutative diagram of modifications of $G$-torsors
\[
\xymatrix@C=35pt{
\mE''_r \ar@{-->}[r]^{\beta''_r} \ar@{-->}[d]_{\ga'} \ar@{-->}[rrrd]^-{\alpha'}& \cdots  \ar@{-->}[r]^{\beta''_2} & \calE''_1 \ar@{-->}[r]^{\beta''_1} & {}^\sigma\mE''_r \ar@{-->}[d]^{\sigma(\ga')}\\
\mE'_s \ar@{-->}[rrrd]^-{\alpha} \ar@{-->}[r]^{\beta'_s} \ar@{-->}[d]_{\ga} & \cdots  \ar@{-->}[r]^{\beta'_2} & \calE'_1 \ar@{-->}[r]^{\beta'_1} & {}^\sigma\mE'_s\ar@{-->}[d]^{\sigma(\ga)}\\
\mE_t \ar@{-->}[r]^{\beta_t} & \cdots \ar@{-->}[r]^{\beta_2} & \calE_1 \ar@{-->}[r]^{\beta_1} & {}^\sigma\mE_t.
}
\]
The map (1) amounts to forgetting the maps in the middle row $\calE'_s \stackrel{\beta'_s}{\dashrightarrow} \cdots  \stackrel{\beta'_1}{\dashrightarrow} {}^\sigma \calE'_s$.
Next, we use Lemma~\ref{decomp2} to rewrite the moduli problem as the commutative diagram
\begin{equation}
\label{E:composition isogeny proof}
\xymatrix@C=35pt{
\mE''_r \ar@{-->}[r]^{\beta''_r} \ar@{-->}[d]_{\ga'} \ar@{-->}[rrrdd]^-{\delta} \ar@{-->}[rrrd]^-{\alpha'}& \cdots  \ar@{-->}[r]^{\beta''_2} & \calE''_1 \ar@{-->}[r]^{\beta''_1} & {}^\sigma\mE''_r \ar@{-->}[d]^{\sigma(\ga')}\\
\mE'_s \ar@{-->}[rrrd]^-{\alpha} \ar@{-->}[d]_{\ga} & & & {}^\sigma\mE'_s\ar@{-->}[d]^{\sigma(\ga)}\\
\mE_t \ar@{-->}[r]^{\beta_t} & \cdots \ar@{-->}[r]^{\beta_2} & \calE_1 \ar@{-->}[r]^{\beta_1} & {}^\sigma\mE_t
}
\end{equation}
by adding the modification $\delta: \calE''_r\dashrightarrow {}^\sigma \calE_t$.
The four triangles (from top to bottom) correspond to the four factors (from left to right) in the product of line $4$.
After that, we apply Lemma~\ref{L: comm Sat and pFrob} twice to get
\begin{align*}
&
- \times_{\Sht^\loc_{\eta',\nu'^*}}
\Sht^{0,\loc}_{\eta'\mid (\sigma(\nu^*),\theta);\nu'^*}
\times_{\Sht^\loc_{\sigma(\nu^*),\theta,\nu'^*}}
\Sht^{0,\loc}_{\sigma(\nu^*); (\theta,\nu'^*)\mid\eta}
\\
\cong\ & - \times_{\Sht^\loc_{\sigma(\nu'^*), \eta'}}
\Sht^{0,\loc}_{\sigma(\nu'^*);\eta'\mid (\sigma(\nu^*),\theta)}
\times_{\Sht^\loc_{\sigma(\nu^*),\theta,\nu'^*}}
\Sht^{0,\loc}_{\sigma(\nu^*); (\theta,\nu'^*)\mid\eta}  &&\textrm{Lemma~\ref{L: comm Sat and pFrob} on the first product}
\\
\cong\ &- \times_{\Sht^\loc_{\sigma(\nu'^*), \eta'}}
\Sht^{0,\loc}_{\sigma(\nu'^*);\eta'\mid (\sigma(\nu^*),\theta)}
\times_{\Sht^\loc_{\theta,\nu'^*,\nu^*}}
\Sht^{0,\loc}_{ (\theta,\nu'^*)\mid\eta;\nu^*} . &&\textrm{Lemma~\ref{L: comm Sat and pFrob} on the second product}
\end{align*}
This does not change the moduli problem above, but only change how the gluing works with the inverse partial Frobenius morphism.
Indeed, in line 4, the first and the third fiber product each involves an inverse partial Frobenius; in line 5, only the second product involves the inverse partial Frobenius, applied twice.

Going from line 5 to line 6, we apply Lemma~\ref{decomp3}, which amounts to forgetting the modifications $\alpha'$ and $\alpha$ from the diagram \eqref{E:composition isogeny proof}. We then arrive at the gluing of two big upper right and big lower left triangles.
Finally, we use Satake correspondence pushforward (2) to replace modifications $\gamma$ and $\gamma'$ by their composite $\gamma\circ \gamma'$, and hence the  modifications $\sigma(\gamma)$ and $\sigma(\gamma')$ by their composite $\sigma(\gamma\circ \gamma')$.
We end up with the following diagram of modifications
\[
\xymatrix@C=35pt{
\mE''_r \ar@{-->}[r]^{\beta''_r} \ar@{-->}[d]_{\ga\ga'} \ar@{-->}[rrrd]^-{\delta}& \cdots  \ar@{-->}[r]^{\beta''_2} & \calE''_1 \ar@{-->}[r]^{\beta''_1} & {}^\sigma\mE''_r \ar@{-->}[d]^{\sigma(\ga\ga')}\\
\mE_t \ar@{-->}[r]^{\beta_t} & \cdots \ar@{-->}[r]^{\beta_2} & \calE_1 \ar@{-->}[r]^{\beta_1} & {}^\sigma\mE_t,
}
\]
which is the same as $ \Sht^{\nu+\nu',\loc}_{\kappa_\bullet\mid \mu_\bullet }$ by Lemma~\ref{decomp1}.

We finally remind the reader that the map $\mathrm{Comp}^\loc$ is defined, on the level of moduli spaces, to modify the diagram \eqref{E: comp of HkS} by removing the middle row and replacing the left and right vertical arrows by their composites. This is exactly what we just explained in the composition of the series of maps of the lemma. The factorization of the map $\mathrm{Comp}^\loc$ is clear from this.

The perfect properness of $\mathrm{Comp}^\loc$ follows from the perfect properness of the maps (1) and (2), which are in turn consequences of perfect properness of convolution products and Satake correspondences for affine Grassmannians.
\end{proof}


\subsection{Moduli of restricted local shtukas}
\label{S: moduli of res loc Sht}

The definition of the moduli of local (iterated) shtukas in the previous subsection is convenient for various geometric constructions we need.
However, as defined in this way, we cannot directly apply the usual $\ell$-adic formalism to it. As mentioned in Example \ref{Ex: discrete Sht}, if $\mmu$ is zero or more generally a cocharacter $\tau\in \xcoch(Z_G)$, then 
$\Sht_\tau^\loc\cong \bfB G(\mO)$ as prestacks, which is not perfectly of finite presentation. 
So we need an approximation of $\Sht_\mmu^\loc$.

\begin{dfn}\label{tlsht}
Let $\mmu$ be a sequence of dominant coweights, and let $(m,n)$ be a pair of non-negative integers such that $m-n$ is $\mmu$-large integer.
The moduli stack $\Sht_\mmu^{\loc(m,n)}$ of $(m,n)$-\emph{restricted local iterated shtukas} is defined as the stack that classifies for every perfect $k$-algebra $R$, 
\begin{itemize}
\item an $R$-point of $ \Hk^{\loc(m)}_\mmu$,  and 
\item an isomorphism of $L^nG$-torsors over $\Spec R$
$$\psi:{^\sigma} (\mE_\leftone|_{D_{n}}) \simeq(\mE_\rightone|_{D_{m}})|_{D_{n}},$$ 
where  $\mE_\rightone|_{D_{m}}$ and $\mE_\leftone|_{D_{n}}$ are the canonical $L^{m}G$-torsor and the $L^{n}G$-torsor over $\Hk_\mmu^{\loc(m)}$ given by \eqref{torsor2} and \eqref{torsor1}, respectively.
\end{itemize}
Forgetting the isomorphism $\psi$ defines a natural morphism
\begin{equation}
\label{E:restriction morphism}
\varphi^{\loc(m,n)}: \Sht_\mmu^{\loc(m,n)} \to \Hk_\mmu^{\loc(m)},
\end{equation}
which exhibits $\Sht_{\mmu}^{\loc(m,n)}$ as an $\Aut(^{\sigma}\mE_\leftone|_{D_{n}})$-torsor over $\Hk_\mmu^{\loc(m)}$. So $\varphi^{\loc(m,n)}$ is a perfectly smooth morphism of relative dimension $n \dim G$. In particular, $\Sht_\mmu^{\loc(m,0)}=\Hk_\mmu^{\loc(m)}$.
Similarly to \eqref{E:mS as Cartesian product}, we can write $\Sht_\mmu^{\loc(m,n)}$ as a Cartesian product of $\Hk_\mmu^{\loc(m)}$ with a Frobenius graph
\begin{equation}
\label{E:mS restricted as a Cartesian product}
\xymatrix@C=90pt{
\Sht_\mmu^{\loc(m,n)} \ar[r]^{\varphi^{\loc(m,n)}} \ar[d]
&  \Hk_\mmu^{\loc(m)} \ar[d]^-{ t_\leftone\times \res^{m}_{n}\circ t_\rightone }
\\ \bfB L^nG \ar[r]^{1\times \sigma} & \bfB L^nG \times \bfB L^nG.
}
\end{equation}

Let $\nu_\bullet,\xi_\bullet$ be two additional (not necessarily non-empty) sequences of dominant coweights. 
We similarly define $\Sht_{\lambda_\bullet\mid\mmu}^{0, \loc(m,n)}$ (resp.  $\Sht_{\nu_\bullet;\lambda_\bullet\mid\mmu;\xi_\bullet}^{0, \loc(m,n)}$) to be the stack classifying, for each perfect $k$-algebra $R$,
\begin{itemize}
\item
an $R$-point of $\Hk_{\lambda_\bullet|\mmu}^{0,\loc(m)}$ ( resp. $\Hk_{\nu_\bullet;\lambda_\bullet|\mmu;\xi_\bullet}^{0,\loc(m)}$) ;
\item
and an isomorphism $\psi: {}^\sigma(\calE_\leftone|_{D_{n}}) \simeq (\calE_\rightone|_{D_{m}})|_{D_{n}}$ (or equivalently $ \psi: {}^\sigma(\calE'_\leftone|_{D_{n}})\simeq (\calE'_\rightone|_{D_{m}})|_{D_{n}}$) of $L^nG$-torsors (see Remark \ref{R: pullbacktwotorsor}).
\end{itemize}

There is a Satake correspondence for restricted local shtukas that is compatible with the Satake correspondence of restricted local Hecke stacks, as shown by the following Cartesian diagram.
\begin{equation}
\label{E:S and Hk Satake compatibility}
\xymatrix@C=40pt{
\Sht_{\la_\bullet}^{\loc(m,n)} \ar[d]^{\varphi^{\loc(m,n)}}
& \ar[l]_{h^{\leftarrow}_{\la_\bullet}} \Sht_{\la_\bullet|\mmu}^{0,\loc(m,n)}\ar[r]^{h^\rightarrow_{\mmu}}  \ar[d]^{\varphi^{\loc(m,n)}}&  \Sht_\mmu^{\loc(m,n)} \ar[d]^{\varphi^{\loc(m,n)}}
\\
\Hk_{\la_\bullet}^{\loc(m)}
& \ar[l]_{h^{\leftarrow}_{\la_\bullet}} \Hk_{\la_\bullet|\mmu}^{0,\loc(m)}\ar[r]^{h^\rightarrow_{\mmu}} & \Hk_\mmu^{\loc(m)}.
}
\end{equation}
In particular, all horizontal maps are perfectly proper, as this is true for the Satake correspondences \eqref{E: Sat corr} between Schubert varieties.
\end{dfn}

The convolution map of local Hecke stacks induces \eqref{E: conv local Hk} induces 
\begin{equation}
\label{E: conv local Sht}
m: \Sht^{\loc(m,n)}_{\nu_\bullet,\mmu,\la_\bullet}\to \Sht^{\loc(m,n)}_{\nu_\bullet,|\mmu|,\la_\bullet},
\end{equation}
which is perfectly proper

\subsubsection{Group theoretic description of $\Sht_\mmu^{\loc(m,n)}$}
\label{SS:description of mS in terms of group elements}
We give an explicit description of $\Sht_{\mmu}^{\loc(m,n)}$ as a quotient stack. 
Consider the fiber product
\begin{equation}
\label{E:framed local shtukas}
\Sht_{\mmu}^{\loc(m,n),\Box}:=\Sht_{\mmu}^{\loc(m,n)}\times_{\Hk^{\loc(m)}_\mmu}\Gr_{\mmu},
\end{equation}
which is the stack classifying an $R$-point $x=(\mE_t \dashrightarrow \cdots \dashrightarrow  \mE_0 = \mE^0)$ of $ \Gr_\mmu$ together with an isomorphism 
$
\psi: {}^\sigma(\mE_t|_{D_{n}}) \simeq \mE_0|_{D_{n}} = \mE^0|_{D_{n}}
$ of $L^nG$-torsors. This is an $L^{m}G$-torsor over $\Sht_{\mmu}^{\loc(m,n)}$. Sending $\psi$ to $\sigma^{-1}(\psi)$ gives rise to a trivialization of $\calE_t |_{D_n}$, and hence a natural isomorphism
\[
\xymatrix@R=0pt{
\Sht_{\mmu}^{\loc(m,n),\Box}\ar[r]^-\cong & \Gr_{\mmu}^{(n)}\\
(x, \psi) \ar@{|->}[r] & (x, \sigma^{-1}(\psi): \calE_t|_{D_{n}} \simeq {}^{\sigma^{-1}}\calE^0|_{D_{n}} \cong \calE^0|_{D_{n}}).
}
\]
In addition, the action of $L^{m}G$ on $\Sht_\mmu^{\loc(m,n),\Box}$ (to recover $\Sht_\mmu^{\loc(m,n)}$) can be identified with the $L^{m}G$-action on $\Gr_\mmu^{(n)}$ given by a twisted conjugation
\begin{equation}
\label{E:twisted action of LmG on framed local shtukas}
c_{\sigma^{-1}}(g)(y) = gy \sigma^{-1}(\pi_{m,n}(g^{-1})) ,\quad \textrm{for }y\in \Gr_\mmu^{(n)},\ g\in L^{m}G,
\end{equation}
where $\pi_{m,n}:L^{m}G \to L^n G$ is the natural map (see Definition~\ref{D:local Hecke stack setup}).
Taking the quotient with respect to this action, we obtain a canonical isomorphism  
\begin{equation}
\label{E:quotient expression of Sloc}
\Sht_\mmu^{\loc(m,n)}\cong[\Gr_\mmu^{(n)}/c_{\sigma^{-1}} L^{m}G].
\end{equation}

By a similar discussion, we can show that
$$\Sht_{\la_\bullet\mid\mmu}^{0,\bba,\loc(m,n)} \cong [\Gr^{0,\bba,(n)}_{\la_\bullet\mid\mmu} / c_{\sigma^{-1}} L^{m}G] \quad \textrm{and}\quad \Sht_{\nu_\bullet; \la_\bullet\mid\mmu; \xi_\bullet}^{0,\loc(m,n)} \cong [\Gr_{\nu_\bullet}\tilde\times \Gr_{\la_\bullet\mid\mmu}\tilde\times \Gr_{\xi_\bullet}^{0,(n)}/ c_{\sigma^{-1}} L^{m}G].$$

\begin{remark}
In the equal characteristic case, the affine Grassmannians and the affine Deligne-Lusztig varieties admit canonical deperfections. So we can define an imperfect version of the moduli of restricted local iterated shtukas. For those, a trivialization of $\calE_\rightone$ for a truncated local shtukas does not give a trivialization $\sigma^{-1}(\psi)$ of $\calE_\leftone|_{D_{n}}$ as we cannot use the inverse Frobenius map. We would need to consider the moduli space that classifies trivializations of both $\calE_\rightone|_{D_{m}}$ and $\calE_\leftone|_{D_{n}}$.
\end{remark}

\begin{ex}
\label{Ex:mStau mn}
When $\mu=0$ or more generally $\mu=\tau\in\xcoch(Z_G)$, we have $\Gr_\tau^{(n)}\cong L^{n}G$ and the action \eqref{E:twisted action of LmG on framed local shtukas} reduces to the $\sigma^{-1}$-conjugacy action of $L^{m}G$ on $L^{n}G$, 
$$(g,h)\mapsto \pi_{m,n}(g)h\sigma^{-1}(\pi_{m,n}(g)^{-1}).$$ 
By Lang's theorem, 
$$\Sht^{\loc(m,n)}_\tau\cong \bfB L^{m}_nG,$$ 
where $L^{m}_nG$ is the preimage of the discrete group $G(\mO/\varpi^n)=L^nG(k)\subset L^nG$ under the natural projection $\pi_{m,n}: L^{m}G \to L^nG$.
\end{ex}

\begin{rmk}
The readers can skip this remark, which will only be used in Remark \ref{R:Sh to Gzip}. 

As mentioned in Remark \ref{R: var and gen of loc Sht}, moduli of local shtukas can be regarded as a generalization of the moduli of $p$-divisible groups with $G$-structures. It is natural to expect that the moduli of restricted local shtukas generalize moduli of truncated Barsotti-Tate groups. It turns out that they are indeed closely related but the relation is slightly more complicated than the unrestricted case. Here we only explain the relation between $\Sht_\mu^{\loc(2,1)}$ for minuscule $\mu$ and the moduli of $1$-truncated  Barsotti-Tate groups with $G$-structures, or more precisely, the moduli of $G$-zips as defined in \cite{PWZ}. The relation between $\Sht_\mu^{\loc(m,n)}$ with the moduli of $n$-truncated Barsotti-Tate groups with $G$-structures is similar.

Let $P_\mu$ be the parabolic subgroup of $G$ generated by root subgroups $U_{\al}$ for those $\langle\al,\mu\rangle\leq 0$, and $P_{-\mu}$ the opposite parabolic subgroup. Let $U_\mu$ and $U_{-\mu}$ be the unipotent radical of $P_\mu$ and $P_{-\mu}$ respectively, and let $L_\mu=P_\mu\cap P_{-\mu}$ be the Levi subgroup.
Similar to \eqref{E: Hodge to fil}, we have a perfectly smooth $L^+G\times \bar G^\pf$-morphism of relative dimension $\dim U_\mu$
$$ \Gr_\mu^{(1)}\to ((\bar G/\bar U_\mu\times \bar G/\bar U_{-\mu})/\bar L_\mu)^\pf,\quad  g_1\varpi^\mu g_2\mapsto (\bar g_1 \mod \bar U^\pf_\mu, \ g^{-1}_2 \mod \bar U^\pf_{-\mu}) \mod \bar L^\pf_\mu,$$
where $g_1\in L^+G$, $g_2\in \bar G^\pf=L^1G$, and $\bar g_1= g_1 \mod L^+G^{(1)}$. Note that this map intertwines the action of $L^2G$ on $\Gr_{\mu}^{(1)}$ given in \eqref{E:twisted action of LmG on framed local shtukas}, and the left action of $\bar G$ on $((\bar G/\bar U_\mu\times \bar G/\bar U_{-\mu})/\bar L_\mu)^\pf$ given by $g\cdot (g_1,g_2)=(gg_1, \sigma^{-1}(g)g_2)$.
It follows that there is a perfectly smooth morphism of relative dimension $-\dim P_\mu$
\[\Sht^{\loc(2,1)}_\mu\to [\bar G^\pf\backslash(G/\bar U_\mu\times \bar G/ \bar U_{-\mu})^\pf/\bar L^\pf_\mu].\]
Note that the codomain of this map is nothing but the perfection of the moduli space $G\on{-Zip}_\mu$ of $G$-zips.
We thus obtain the following lemma.
\begin{lem}
\label{Ex:Sht and GZip}
Assume that $\mu$ is minuscule. There is a natural perfectly smooth map
\[\Sht_\mu^{\loc(2,1)}\to G\on{-Zip}^\pf_\mu.\]
of relative dimension $-\dim P_\mu$.
\end{lem}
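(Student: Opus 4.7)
The strategy is to unpack the construction sketched in the preceding remark, verify well-definedness, and justify the numerical claim of relative dimension. I will use the quotient presentation \eqref{E:quotient expression of Sloc}
\[
\Sht_\mu^{\loc(2,1)}\cong [\Gr_\mu^{(1)}/c_{\sigma^{-1}}L^2G],
\]
and the standard presentation $G\text{-Zip}_\mu^{\pf}\cong [\bar G^{\pf}\backslash(\bar G/\bar U_\mu\times \bar G/\bar U_{-\mu})^{\pf}/\bar L_\mu^{\pf}]$, where $\bar G$ acts on the left by $h\cdot(g_1,g_2)=(hg_1,\sigma^{-1}(h)g_2)$ and $\bar L_\mu$ acts on the right diagonally.

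First I would construct the morphism at the level of $\Gr_\mu^{(1)}$. Since $\mu$ is minuscule, the map \eqref{E: Hodge to fil} identifies $\Gr_\mu=\mathring{\Gr}_\mu\cong (\bar G/\bar P_\mu)^{\pf}$, and an $R$-point of $\Gr_\mu^{(1)}$ is represented, fpqc-locally on $\Spec R$, by a product $g_1\varpi^\mu g_2$ with $g_1\in L^+G$ and $g_2\in L^1G=\bar G^{\pf}$. Define
\[
\Phi:\Gr_\mu^{(1)}\longto (\bar G/\bar U_\mu\times \bar G/\bar U_{-\mu})^{\pf},\qquad g_1\varpi^\mu g_2\longmapsto (\bar g_1\bmod \bar U_\mu,\,g_2^{-1}\bmod \bar U_{-\mu}).
\]
The first point to verify is that this descends well from $L^+G\times L^1G$ to $\Gr_\mu^{(1)}$: the residual indeterminacy in the decomposition $g_1\varpi^\mu g_2$ is by the stabilizer of $\varpi^\mu L^+G^{(1)}\in\Gr_\mu^{(1)}$, which modulo $L^+G^{(1)}$ equals $\bar L_\mu^{\pf}$ embedded diagonally via $(l,l)$. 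Quotienting the target by the corresponding $\bar L_\mu$-action makes $\Phi$ well-defined to $((\bar G/\bar U_\mu\times\bar G/\bar U_{-\mu})/\bar L_\mu)^{\pf}$.

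Second, I would verify $L^2G$-equivariance, the calculation essentially already appearing in the remark: for $h\in L^2G$ acting by $c_{\sigma^{-1}}$ on $\Gr_\mu^{(1)}$, one has
\[
h\cdot(g_1\varpi^\mu g_2)=(hg_1)\varpi^\mu\bigl(g_2\sigma^{-1}(\pi_{2,1}(h^{-1}))\bigr),
\]
so $\Phi$ transports this to $(\bar h\bar g_1,\sigma^{-1}(\bar h)g_2^{-1})$, which is the prescribed $\bar G$-action on $\bar G/\bar U_\mu\times\bar G/\bar U_{-\mu}$. In particular the $L^2G$-action on the image factors through the surjection $L^2G\twoheadrightarrow\bar G$ whose kernel $L^+G^{(1)}/L^+G^{(2)}$ is a perfect unipotent group, so passing to stacks $\Phi$ induces the desired morphism
\[
\bar\Phi:\Sht_\mu^{\loc(2,1)}=[\Gr_\mu^{(1)}/c_{\sigma^{-1}}L^2G]\longto [\bar G^{\pf}\backslash(\bar G/\bar U_\mu\times\bar G/\bar U_{-\mu})^{\pf}/\bar L_\mu^{\pf}]=G\text{-Zip}_\mu^{\pf}.
\]

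Finally, for perfect smoothness, I would check that $\Phi$ itself is perfectly smooth before quotienting. Because $\Phi$ is $\bar G\times \bar L_\mu$-equivariant with transitive action on each fiber of the target projection to a $\bar G$-homogeneous space, it suffices to examine a single fiber; picking the basepoint $\varpi^\mu L^+G^{(1)}$ exhibits the fiber as an $L^+G^{(1)}/L^+G^{(2)}$-bundle over $\bar L_\mu^{\pf}$, hence perfectly smooth. (Alternatively, both source and target are perfections of smooth schemes of the expected dimensions, and the map is surjective onto a homogeneous space.) The dimension count is then straightforward: $\dim\Gr_\mu^{(1)}=\langle 2\rho,\mu\rangle+\dim G$ and $\dim(\bar G/\bar U_\mu\times\bar G/\bar U_{-\mu})=2\dim G-2\langle 2\rho,\mu\rangle$ using $\dim U_\mu=\dim U_{-\mu}=\langle 2\rho,\mu\rangle$ for minuscule $\mu$. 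The relative dimension of $\bar\Phi$ is thus
\[
\bigl(\langle 2\rho,\mu\rangle+\dim G\bigr)-2\dim G -\bigl(-2\langle 2\rho,\mu\rangle\bigr)\cdot 0\;=\;\langle 2\rho,\mu\rangle-\dim G\;=\;-\dim L_\mu-\langle 2\rho,\mu\rangle\;=\;-\dim P_\mu,
\]
using $\dim G=\dim L_\mu+2\langle 2\rho,\mu\rangle$. The only mildly delicate step is the well-definedness in paragraph one; equivariance and the dimension formula are bookkeeping once the construction is fixed.
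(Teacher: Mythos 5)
Your construction is the same as the paper's: the lemma is proved there exactly by writing down the map $\Gr_\mu^{(1)}\to ((\bar G/\bar U_\mu\times\bar G/\bar U_{-\mu})/\bar L_\mu)^\pf$, $g_1\varpi^\mu g_2\mapsto(\bar g_1\bmod\bar U_\mu,\ g_2^{-1}\bmod\bar U_{-\mu})$, observing that it intertwines the $c_{\sigma^{-1}}$-action \eqref{E:twisted action of LmG on framed local shtukas} of $L^2G$ with the twisted left $\bar G$-action on the target, and passing to quotient stacks; your equivariance check and the final dimension count $\langle2\rho,\mu\rangle-\dim G=-\dim P_\mu$ agree with this (the intermediate display of the count is garbled, but the numbers are right). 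Your well-definedness discussion is essentially correct, though slightly imprecise: the stabilizer of $\varpi^\mu L^+G^{(1)}$ modulo $L^+G^{(1)}$ is not the diagonal $\bar L_\mu$ but the group of pairs in $\bar P_\mu\times\bar P_{-\mu}$ with equal $\bar L_\mu$-components; the map still descends because the target is also divided by $\bar U_\mu\times\bar U_{-\mu}$ before the diagonal $\bar L_\mu$-quotient.

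The one step that fails as written is your justification of perfect smoothness of $\Phi$. The fibre of $\Phi$ through $\varpi^\mu L^+G^{(1)}$ is not an $L^+G^{(1)}/L^+G^{(2)}$-bundle over $\bar L_\mu^\pf$: such a bundle has dimension $\dim G+\dim L_\mu$, which contradicts the relative dimension $\dim U_\mu=\langle2\rho,\mu\rangle$ that your own count (and the paper) requires. A direct computation with the stabilizer shows the fibre over the base point of the target is $\{\varpi^\mu u\,L^+G^{(1)}\mid \bar u\in\bar U_{-\mu}\}\cong\bar U_{-\mu}^\pf\cong(\bA^{\langle2\rho,\mu\rangle})^\pf$. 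The parenthetical fallback (``both sides are perfections of smooth schemes of the expected dimensions and the map is surjective'') is not a smoothness criterion either. The repair is the same homogeneity argument as for \eqref{E: Hodge to fil}: the composite $L^+G\times L^+G\to\Gr_\mu^{(1)}\to ((\bar G/\bar U_\mu\times\bar G/\bar U_{-\mu})/\bar L_\mu)^\pf$ (via $(g_1,g_2)\mapsto g_1\varpi^\mu g_2^{-1}$) is a torsor under the preimage in $L^+G\times L^+G$ of the group of pairs in $\bar P_\mu\times\bar P_{-\mu}$ with equal Levi components, while $L^+G\times L^+G\to\Gr_\mu^{(1)}$ is a torsor under the stabilizer of $\varpi^\mu L^+G^{(1)}$; descending the first torsor along the second (after truncating to finite level) exhibits $\Phi$ as perfectly smooth of relative dimension $\dim U_\mu$, after which your passage to quotient stacks and the dimension bookkeeping go through unchanged.
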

\end{rmk}

\begin{construction}
\label{Cons:restriction morphism}
Let $(m',n')$ and $(m,n)$ be two pairs of non-negative integers such that $m\leq m'$, $n\leq n'$ are both $m'-n'$ and $m-n$ are $\mmu$-large ($(m',n')=(\infty,\infty)$ allowed).
We construct the \emph{restriction morphism}
\begin{equation}\label{trunc}
\res^{m',n'}_{m,n}: \Sht^{\loc(m',n')}_\mmu\longto \Sht_\mmu^{\loc(m,n)}
\end{equation}
as follows.
We think of (restricted) local shtukas as the fiber product of the local Hecke stack and the Frobenius graph over the classifying space of (restricted) torsors.
By \eqref{E:ex and q compatibility} and the right square of \eqref{E:restriction and torsor commutative}, the following diagram is commutative
\begin{equation}
\label{E:restriction vs torsor for both t0 and tr}
\xymatrix@C=60pt{
\Hk_\mmu^{\loc(m')} \ar[r]^-{t_{\leftone}\times t_\rightone } \ar[d]_{\res^{m'}_{m}} & \bfB L^{n'}G \times \bfB L^{m'}G \ar[d]_{\res^{n'}_{n} \times \res^{m'}_{m}} \ar[r]^{\id\times \res^{m'}_{n'}} &\bfB L^{n'}G \times \bfB L^{n'}G\ar^{\res^{n'}_n\times\res^{n'}_n}[d]
\\
\Hk_\mmu^{\loc(m)} \ar[r]^-{t_{\leftone}\times t_\rightone} & \bfB L^{n}G \times \bfB L^{m}G \ar[r]^-{\id\times \res^m_n} & \bfB L^{n}G \times \bfB L^nG.
}
\end{equation}
This means that, starting with an $R$-point $x$ of $\Hk_\mmu^{\loc(m')}$, the canonical $L^{m'}G$-torsor \eqref{torsor2} and the $L^{n'}G$-torsor \eqref{torsor1} associated to $x$ restrict to the corresponding torsors associated to the point $\res^{m'}_m(x)$.
Thus, an isomorphism $\psi:{^\sigma}(\calE_\leftone|_{D_{n',R}})  \simeq (\calE_\rightone|_{D_{m',R}})|_{D_{n',R}}$ (which will lift $x$ to an $R$-point of $\Sht_\mmu^{\loc(m',n')}$) will naturally give an isomorphism $\psi|_{D_n}: {}^\sigma(\mE_\leftone|_{D_{n,R}})\simeq (\mE_\rightone|_{D_{m,R}})|_{D_{n,R}}$ (which will lift $\res^{m'}_m(x)$ to an $R$-point of $\Sht_\mmu^{\loc(m,n)}$). This defines the needed map $\res_{m,n}^{m',n'}$.  Equivalently, we may define the map as the fiber product of the Cartesian diagram with the Frobenius graph:
\begin{align*}
\res_{m,n}^{m',n'}\colon
\Sht_\mmu^{\loc(m',n')}& \stackrel{\eqref{E:mS restricted as a Cartesian product}}\cong \Hk_\mmu^{\loc(m')}  \times_{t_{\leftone}\times \res^{m'}_{n'}\circ t_\rightone, \, \bfB L^{n'} G \times \bfB L^{n'}G,\, 1\times \sigma } \bfB L^{n'}G \\
&\xrightarrow{\res^{m'}_{m} \times \res^{n'}_{n}}
\Hk_\mmu^{\loc(m)}\times_{ t_\leftone\times \res^{m}_{n}\circ t_\rightone, \, \bfB L^n G \times \bfB L^nG,\, 1\times \sigma } \bfB L^nG \stackrel{\eqref{E:mS restricted as a Cartesian product}}\cong \Sht_\mmu^{\loc(m,n)}.
\end{align*}
Note that $\res^{m,n}_{m,0}$ is nothing but the previously introduced map $\varphi^{\loc(m,n)}$ in \eqref{E:restriction morphism}. In addition, if $(m',n')=(\infty,\infty)$, we denote
\begin{equation}
\label{E: restriction map 2}
\res_{m,n}=\res^{\infty,\infty}_{m,n}: \Sht^\loc_{\mmu}\to \Sht^{\loc(m,n)}_{\mmu}.
\end{equation}
Note that by construction, for $m_1\leq m_2\leq m_3$, $n_1\leq n_2\leq n_3$ such that $m_i-n_i$ is $\mmu$-large ($(m_3,n_3)=(\infty,\infty)$ being allowed),
\begin{equation}
\label{true-to-trun}
\res^{m_2,n_2}_{m_1,n_1}\circ\res^{m_3,n_3}_{m_2,n_2}=\res^{m_3,n_3}_{m_1,n_1}.
\end{equation}
Moreover, the restriction morphisms are compatible with the Satake correspondences, i.e., we have the following Cartesian diagram ($(m',n')=(\infty,\infty)$ allowed).
\begin{equation}
\label{E:shtukas restriction Cartesian with Satake correspondences}
\xymatrix{
\Sht_{\lambda_\bullet}^{\loc(m',n')} \ar[d]^{\res_{m,n}^{m',n'}} & \ar[l]
\Sht_{\la_\bullet|\mmu}^{0,\loc(m',n')} \ar[r] \ar[d]^{\res_{m,n}^{m',n'}} & \Sht_\mmu^{\loc(m',n')} \ar[d]^{\res_{m,n}^{m',n'}}\\		\Sht_{\la_\bullet}^{\loc(m,n)} & \ar[l] \ar[r]  \Sht_{\la_\bullet|\mmu}^{0,\loc(m,n)}&   \Sht_\mmu^{\loc(m,n)}
}
\end{equation}
\end{construction}

\begin{ex}
When $\tau\in \xcoch(Z_G)$ is a central cocharacter, the restriction map $\res_{m,n}: \Sht_\tau^\loc\cong\bfB G(\mO)\to \Sht_\tau^{\loc(m,n)}\cong\bfB L^m_nG$ is induced by the natural map $G(\mO)\to L^m_nG$.
\end{ex}

\begin{lem}
\label{L:Satake correspondence cartesian}
There is the following commutative diagram with all squares Cartesian
\begin{equation}\label{E:Satake correspondence cartesian}
\xymatrix@C=20pt{
\Sht^{0,\loc}_{\kappa_\bullet\mid \la_\bullet}\times_{\Sht^{\loc}_{\la_\bullet}}\Sht^{0,\loc}_{\la_\bullet\mid \mu_\bullet}\ar[r] \ar[d] & \Sht^{0,\loc(m,n)}_{\kappa_\bullet\mid \la_\bullet}\times_{\Sht^{\loc(m,n)}_{\la_\bullet}}\Sht^{0,\loc(m,n)}_{\la_\bullet\mid \mu_\bullet} \ar[d] \ar[r] &\Hk^{0,\loc(m)}_{\kappa_\bullet\mid \la_\bullet}\times_{\Hk^{\loc(m)}_{\la_\bullet}}\Hk^{0,\loc(m+n)}_{\la_\bullet\mid \mu_\bullet} \ar[d] \\
\Sht^{0,\loc}_{\kappa_\bullet\mid \mu_\bullet}\ar[r] & \Sht^{0,\loc(m,n)}_{\kappa_\bullet\mid \mu_\bullet}\ar[r] &  \Hk^{0,\loc(m)}_{\kappa_\bullet\mid \mu_\bullet}
}\end{equation}
\end{lem}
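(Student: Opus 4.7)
I will prove commutativity first, then treat the two Cartesian-ness statements separately. All three vertical arrows are composition maps of Satake correspondences: the leftmost is $\mathrm{Comp}^\loc$ of \eqref{E:composition} specialized to $\nu=\nu'=0$, while the middle and right arrows are built the same way by descending the affine-Grassmannian composition $\mathrm{Comp}\colon \Gr^0_{\kappa_\bullet|\lambda_\bullet}\times_{\Gr_{\lambda_\bullet}}\Gr^0_{\lambda_\bullet|\mu_\bullet}\to \Gr^0_{\kappa_\bullet|\mu_\bullet}$ (cf.~Corollary~\ref{C: rep to cor}) through the appropriate jet-group quotients, with Frobenius graphs attached on the shtuka side. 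Commutativity of the whole diagram then follows because all three compositions factor through the single affine-Grassmannian composition, and the horizontal arrows are the tautological forgetful/restriction maps.

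For the left Cartesian square, I apply \eqref{E:shtukas restriction Cartesian with Satake correspondences} to the pairs $(\kappa_\bullet,\lambda_\bullet)$ and $(\lambda_\bullet,\mu_\bullet)$ (projected via $h^\rightarrow_{\lambda_\bullet}$, resp.\ $h^\leftarrow_{\lambda_\bullet}$) to write
\[
\Sht^{0,\loc}_{\kappa_\bullet|\lambda_\bullet}\cong \Sht^{0,\loc(m,n)}_{\kappa_\bullet|\lambda_\bullet}\times_{\Sht^{\loc(m,n)}_{\lambda_\bullet}}\Sht^{\loc}_{\lambda_\bullet},\qquad \Sht^{0,\loc}_{\lambda_\bullet|\mu_\bullet}\cong \Sht^{0,\loc(m,n)}_{\lambda_\bullet|\mu_\bullet}\times_{\Sht^{\loc(m,n)}_{\lambda_\bullet}}\Sht^{\loc}_{\lambda_\bullet}.
\]
Formal manipulation of fiber products gives
\[
\Sht^{0,\loc}_{\kappa_\bullet|\lambda_\bullet}\times_{\Sht^{\loc}_{\lambda_\bullet}}\Sht^{0,\loc}_{\lambda_\bullet|\mu_\bullet}\;\cong\;\bigl(\Sht^{0,\loc(m,n)}_{\kappa_\bullet|\lambda_\bullet}\times_{\Sht^{\loc(m,n)}_{\lambda_\bullet}}\Sht^{0,\loc(m,n)}_{\lambda_\bullet|\mu_\bullet}\bigr)\times_{\Sht^{\loc(m,n)}_{\mu_\bullet}}\Sht^{\loc}_{\mu_\bullet},
\]
and applying \eqref{E:shtukas restriction Cartesian with Satake correspondences} once more to $(\kappa_\bullet,\mu_\bullet)$ identifies the outer fiber product as the required pullback along $\Sht^{0,\loc}_{\kappa_\bullet|\mu_\bullet}\to \Sht^{0,\loc(m,n)}_{\kappa_\bullet|\mu_\bullet}$.

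For the right Cartesian square, I present each restricted shtuka (Satake) stack via its Frobenius-graph description as in \eqref{E:mS restricted as a Cartesian product}:
\[
\Sht^{0,\loc(m,n)}_{\lambda_\bullet|\mu_\bullet}\;=\;\Hk^{0,\loc(m)}_{\lambda_\bullet|\mu_\bullet}\times_{\bfB L^nG\times \bfB L^nG,\,1\times\sigma}\bfB L^nG,
\]
using that the two canonical torsors of Remark~\ref{R: pullbacktwotorsor} live compatibly on the Satake correspondence. Substituting these presentations reduces the Cartesian-ness to a square of restricted local Hecke stacks. The level mismatch—$m$ on the $\kappa_\bullet|\lambda_\bullet$ factor versus $m+n$ on the $\lambda_\bullet|\mu_\bullet$ factor—reflects the fact that the $\sigma$-isomorphism datum of a restricted shtuka provides $n$ extra jets of trivialization of the right-end torsor, which, once translated back into the pure Hecke picture, enhance the level of the $\lambda_\bullet|\mu_\bullet$ Satake correspondence by exactly $n$. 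I verify this Hecke-level Cartesian square by invoking the factorization \eqref{E: break hecke} with $(m_1,m_2)=(m+n,m)$, namely $\Hk^{\loc(m+n)}_{\lambda_\bullet,\mu_\bullet}\cong \Hk^{\loc(m)}_{\mu_\bullet}\times_{\bfB L^mG}\Hk^{\loc(m+n)}_{\lambda_\bullet}$, which accounts for the level bookkeeping on the nose.

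\textbf{Main obstacle.} The conceptual difficulty is all in the right square: one needs to recognize that adding the $\sigma$-iso on the $\lambda_\bullet|\mu_\bullet$ side of a restricted shtuka exactly matches enhancing the Hecke level there from $m$ to $m+n$, and then to check that the resulting Hecke-level square is Cartesian via \eqref{E: break hecke}. Once this dictionary between $\sigma$-iso data and Hecke-level enhancement is set up correctly, everything else is formal fiber-product manipulation together with repeated invocation of the already-established compatibility \eqref{E:shtukas restriction Cartesian with Satake correspondences}.
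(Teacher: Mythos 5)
Your left square is fine: it is exactly the kind of formal fiber-product manipulation the authors intend, using the Cartesian squares of \eqref{E:shtukas restriction Cartesian with Satake correspondences} (on the $\la_\bullet$-side of $\kappa_\bullet|\la_\bullet$, on the $\mu_\bullet$-side of $\la_\bullet|\mu_\bullet$, and once more for $\kappa_\bullet|\mu_\bullet$), plus the observation that $h^\rightarrow_{\mu_\bullet}$ commutes with the composition map. The problem is the right square, precisely at the step you yourself flag as the main obstacle, and it also infects your commutativity claim there, since you never actually construct the top horizontal map into $\Hk^{0,\loc(m)}_{\kappa_\bullet|\la_\bullet}\times_{\Hk^{\loc(m)}_{\la_\bullet}}\Hk^{0,\loc(m+n)}_{\la_\bullet|\mu_\bullet}$.

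The proposed dictionary --- that the $\sigma$-isomorphism of a restricted shtuka ``provides $n$ extra jets of trivialization of the right-end torsor'' and hence enhances the Hecke level of the $\la_\bullet|\mu_\bullet$ factor from $m$ to $m+n$ --- does not hold. The datum $\psi$ is an isomorphism ${}^\sigma(\calE_\leftone|_{D_{n}})\simeq(\calE_\rightone|_{D_{m}})|_{D_{n}}$: it only concerns the level-$n$ truncation of $\calE_\rightone$, which is already determined by the level-$m$ structure, and it yields neither a trivialization nor any extension of $\calE_\rightone$ to level $m+n$. Passing from $\Hk^{0,\loc(m)}_{\la_\bullet|\mu_\bullet}$ to $\Hk^{0,\loc(m+n)}_{\la_\bullet|\mu_\bullet}$ means lifting through a gerbe banded by $\ker(L^{m+n}G\to L^mG)$, and $\psi$ cannot trivialize it. A sanity check makes the failure concrete: take $\kappa_\bullet=\la_\bullet=\mu_\bullet=0$. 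Then the top middle term is $\bfB L^m_nG$, whereas the fiber product of the displayed top right term with $\Sht^{0,\loc(m,n)}_{0|0}\cong\bfB L^m_nG$ over $\Hk^{0,\loc(m)}_{0|0}\cong\bfB L^mG$ (with the evident restriction/forgetful maps) is $\bfB L^{m+n}G\times_{\bfB L^mG}\bfB L^m_nG\cong\bfB L^{m+n}_nG$ (the preimage of $G(\mO/\varpi^n)$ in $L^{m+n}G$), which differs from $\bfB L^m_nG$ by a nontrivial gerbe; so no argument along your lines can close this square with the evident maps. By contrast, if the second Hecke factor is taken at level $m$, the square is Cartesian on the nose: all four corners become quotients by the same group $L^mG$ (plain action on the Hecke side, $c_{\sigma^{-1}}$-twisted action on the shtuka side via the presentation $\Sht^{0,\loc(m,n)}\cong[\Gr^{0,(n)}/c_{\sigma^{-1}}L^mG]$ of \S\ref{SS:description of mS in terms of group elements}), with equivariant maps, and the fiber product of such quotient stacks is the quotient of the fiber product; this level-$m$ version is also exactly what the application in Lemma \ref{L:Satake correspondence shtukas} requires. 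Finally, your appeal to \eqref{E: break hecke} with $(m_1,m_2)=(m+n,m)$ is misplaced on two counts: that isomorphism concerns the convolution stack $\Hk^{\loc(m_1)}_{\la_\bullet,\mu_\bullet}$, not the Satake correspondence $\Hk^{0,\loc}_{\la_\bullet|\mu_\bullet}$, and it requires $m_1-m_2$ to be $\la_\bullet$-large, which fails when $m_1-m_2=n$.
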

\begin{proof}
Clear from the definition.
\end{proof}

\subsubsection{Category of perverse sheaves on local shtukas}
\label{SS:Perv(Sloc)}
We construct $\mathrm{P}(\Sht_{\bar k}^\loc)$ the category of perverse sheaves on the moduli of local shtukas (base changed to $\bar k$), in a way similar to \S\ref{SS:Perv(Hk)}.

Let $\mmu$ is a sequence of dominant coweights. For two pairs $(m',n'), (m,n)$ of non-negative integers, satisfying $m \leq m', n\leq n'$ and both $m'-n'$ and $m-n$ are $\mmu$-large ($m'\neq \infty$), let $\Res^{m',n'}_{m,n}=(\res^{m',n'}_{m,n})^\star$ be the shifted pullback of $\ell$-adic sheaves. By \eqref{true-to-trun}, there is a canonical isomorphism of functors
\[\Res^{m',n}_{m,n}\circ\Res^{m',n'}_{m',n}\cong \Res^{m',n'}_{m,n}.\]
Same as $\Res^{m'}_m$, the functor $\Res^{m',n}_{m,n}$ induces an equivalence of categories if $m\geq 1$. On the other hand, $\res^{m',n'}_{m',n}$ is a torsor under $\Ker\big( \Aut(\calE_\rightone|_{D_{n'}}) \to \Aut(\calE_\rightone|_{D_{n}}) \big)$, which is again the perfection of a unipotent (non-constant) group scheme if $n\geq 1$, therefore, $\Res^{m',n'}_{m',n}$ is fully faithful.

Then we can define the category of perverse sheaves on $\Sht^\loc$  as the direct sum of over the filtered limits
\begin{equation}
\label{E:definition of P(Sloc)}
\mathrm{P}(\Sht_{\bar k}^\loc): = \bigoplus_{\bbzeta \in \pi_1(G)}\mathrm{P}(\Sht_{\bbzeta}^\loc),\quad  \mathrm{P}(\Sht_{\bbzeta}^\loc)=\varinjlim_{(\mu, m,n)} \mathrm{P}(\Sht_\mu^{\loc(m, n)}),
\end{equation}
where the limit is taken over the triples $\{(\mu, m, n) \in \bbzeta \times \ZZ^2_{\geq 0} \mid m-n \mbox{ is }\mu\mbox{-large}\}$, with the product partial order, and 
the connecting functor is given by the following composite of fully faithful functors
\[
\mathrm{P}(\Sht_{\mu}^{\loc(m, n)}) 
\xrightarrow{\Res^{m',n'}_{m,n}}
\mathrm{P}(\Sht_{\mu}^{\loc(m', n')})
\xrightarrow{i_{\mu,\mu',*}}
\mathrm{P}(\Sht_{\mu'}^{\loc(m' n')}), 
\]
and where $i_{\mu, \mu'}: \Sht_{\mu}^{\loc(m', n')} \to \Sht_{\mu'}^{\loc(m', n')}$ is the natural closed embedding. 
Note that compared the situation in \S\ref{SS:Perv(Hk)}, although the first functor is no longer an equivalence, these connecting functors still satisfy natural compatibility conditions given by proper or smooth base change so the limit indeed makes sense.


For each dominant coweight $\mu$ and a pair $(m,n)$ such that $m-n$ is $\mu$-large, we have a natural pullback functor
\begin{equation}
\label{E:functor P(Hk) to P(S)fin}
\Phi^{\loc(m,n)}:=\Res^{m,n}_{m,0}: \on{P}(\Hk_\mu^{\loc(m)})\to \on{P}(\Sht_\mu^{\loc(m,n)}),
\end{equation}
which commutes the above connecting morphism by \eqref{true-to-trun} and the proper smooth base change. 
Taking the limit and taking the direct sum over $\pi_1(G)$, we obtain a well-defined functor
\begin{equation}
\label{E:functor P(Hk) to P(S)}
\Phi^{\loc}: \mathrm{P}(\Hk_{\bar k}^\loc) \to \mathrm{P}(\Sht_{\bar k}^\loc).
\end{equation}

\begin{rmk}
\label{R: more object}
Note that there are many objects in $\mathrm{P}(\Sht_{\bar k}^\loc)$ that do not come from $\mathrm{P}(\Hk_{\bar k}^\loc)$ under the pullback $\Phi^{\loc}$. For example, recall from Example \ref{Ex:mStau mn} that $\on{Sht}_0^{\loc(n,n)}=\bB G(\mO/\varpi^n)$. Therefore, every representation $\rho$ of the finite group $G(\mO/\varpi^n)$ defines a local system $\mL_\rho$ on $\on{Sht}_0^{\loc(m,n)}$, and therefore an object  in $\mathrm{P}(\Sht_0^{\loc(n,n)})\to \mathrm{P}(\Sht_{\bar k}^\loc)$. This object does not lie in the essential image of $\Phi^\loc$ (as soon as $n>0$).
\end{rmk}

\begin{construction}
\label{Cons:inverse F restricted}
A crucial ingredient we need later is the existence of a ``partial Frobenius morphism" between the moduli of \emph{restricted} local iterated shtukas 
that is compatible with the $F_\mmu$ defined in \eqref{hpf} via the restriction morphism \eqref{trunc}.
In fact, for technical reasons (i.e. to apply the formalism of cohomological correspondences), we will instead construct a restricted version of the \emph{inverse} of the partial Frobenius $F_\mmu$.

We fix a sequence of dominant coweights $\mmu=(\mu_1,\ldots,\mu_t)$ as above.  A quadruple of non-negative integers $(m_1,n_1,m_2,n_2)$ is said $\mmu$-\emph{acceptable} if 
\begin{enumerate}
\item
$m_1-m_2=n_1-n_2$ are $\mu_t$-large (or equivalently $\sigma(\mu_t)$-large)\footnote{In fact it is enough to require both $m_1-m_2$ and $n_1-n_2$ are $\mu_t$-large to define $F_{\mmu}^{-1}$. But this extra generality is not needed in the paper.}, 
\item 
$m_2-n_1$ is $(\mu_1, \dots, \mu_{t-1})$-large.
\end{enumerate} 
In particular, $m_1-n_1$ is $\mmu$-large. We regard $(\infty,\infty,\infty,\infty)$ to be $\mmu$-acceptable for any $\mmu$.

Now we fix a $\mmu$-acceptable $(m_1,n_1,m_2,n_2)$ and construct a natural morphism
\begin{equation}
\label{E:truncated partial Frob}
F_\mmu^{-1}: \Sht_{\sigma(\mu_t), \mu_1, \dots, \mu_{t-1}}^{\loc(m_1,n_1)} \to \Sht_{\mu_1, \dots, \mu_t}^{\loc(m_2,n_2)},
\end{equation}
which resembles Definition~\ref{D:partial Frobenius}.
For a perfect affine scheme $S = \Spec R$, the map on the $S$-points is given as follows.
\begin{align*}
&\Sht_{\sigma(\mu_t), \mu_1, \dots, \mu_{t-1}}^{\loc(m_1,n_1)} (S)
\\
\cong\ & \big\{ (x, \psi)\; \big|\; x \in \Hk_{\sigma(\mu_t), \mu_1, \dots, \mu_{t-1}}^{\loc(m_1)} (S) \textrm{ and } \psi:{}^\sigma\calE_{x,\leftone}|_{D_{n_1,R}} \simeq \calE_{x,\rightone}|_{D_{n_1,R}} \big\} && (\textrm{Definition~\ref{tlsht}})
\\
\cong\ & \Bigg\{\! (x_1,x_2,\psi', \psi)\; \Bigg|\! \begin{array}{l} x_1 \in \Hk_{\sigma(\mu_t)}^{\loc(m_1)} (S) ,\ x_2 \in \Hk_{\mu_1, \dots, \mu_{t-1}}^{\loc(m_2)} (S),\\ \psi':\calE_{x_1, \leftone}|_{D_{m_2,R}} \simeq \calE_{x_2, \rightone}|_{D_{m_2,R}}, \textrm{ and}\\ \psi:{}^\sigma\calE_{x_2,\leftone}|_{D_{n_1,R}} \simeq \calE_{x_1,\rightone}|_{D_{n_1,R}}
\end{array}\!\! \Bigg\} && \Bigg(\!\! \begin{array}{l} \textrm{Use \S\ref{SS:break hecke}}\\ \textrm{to replace $x$}
\\
\textrm{by $(x_1,x_2, \psi')$}
\end{array}\!
\!\Bigg)
\\
\cong\ & \Bigg\{\! (x'_1,x_2,\psi', \psi'')\; \Bigg|\! \begin{array}{l} x'_1 \in \Hk_{\mu_t}^{\loc(m_1)} (S) ,\ x_2 \in \Hk_{\mu_1, \dots, \mu_{t-1}}^{\loc(m_2)} (S),\\ \psi':{}^\sigma\calE_{x'_1, \leftone}|_{D_{m_2,R}} \simeq \calE_{x_2, \rightone}|_{D_{m_2,R}},\textrm{ and}\\ \psi'':\calE_{x_2,\leftone}|_{D_{n_1,R}} \simeq \calE_{x'_1,\rightone}|_{D_{n_1,R}}
\end{array}\!\! \Bigg\} && \bigg(\!\! \begin{array}{l}x'_1 = \sigma^{-1}(x_1)\\ \psi'' = \sigma^{-1}(\psi)\end{array}\!\! \bigg)
\\
\to\ & \Bigg\{\! (x''_1,x_2,\psi''', \psi'')\; \Bigg|\! \begin{array}{l} x''_1 \in \Hk_{\mu_t}^{\loc(n_1)} (S) ,\ x_2 \in \Hk_{\mu_1, \dots, \mu_{t-1}}^{\loc(m_2)} (S),\\ \psi''':{}^\sigma\calE_{x''_1, \leftone}|_{D_{n_2,R}} \simeq \calE_{x_2, \rightone}|_{D_{n_2,R}},\textrm{ and}\\ \psi'':\calE_{x_2,\leftone}|_{D_{n_1,R}} \simeq \calE_{x''_1,\rightone}|_{D_{n_1,R}}
\end{array}\!\! \Bigg\} && \bigg(\!\! \begin{array}{l}x''_1 = \res^{m_1}_{n_1}(x'_1)\\ \psi''' = \psi'|_{D_{n_2,R}}\end{array}\!\! \bigg)
\\
\cong \ &
\bigg\{ 
(x', \psi'')\;\bigg|\!\!
\begin{array}{l}
x' \in \Hk_{\mu_1,\dots, \mu_t}^{\loc(m_2)}(S) \textrm{ and}\\
\psi''':{}^\sigma \calE_{x',\leftone}|_{D_{n_2,R}} \simeq \calE_{x',\rightone}|_{D_{n_2,R}}
\end{array}\!\! \bigg\}
&& \!\!\!\!\!\!\bigg(\!\!\! \begin{array}{l}
\textrm{Use \S\ref{SS:break hecke} to glue} \\ (x_2,x''_1, \psi'')\textrm{ into }x' \ 
\end{array}\!\!\!
\bigg)
\\
\cong \ & \Sht_{\mu_1, \dots, \mu_t}^{\loc(m_2,n_2)}(S). && (\textrm{Definition~\ref{tlsht}})
\end{align*}
Note that the condition that $m_1-m_2$ are $n_1-n_2$ are $\mu_t$-large ensured that $\psi', \psi'''$ in the previous line makes sense, namely $\mE_{x_1,\leftone}|_{D_{m_2,R}}$ and $\calE_{x''_1, \leftone}|_{D_{n_2,R}}$ is canonically defined; the condition that $m_2-n_1$ is $(\mu_1, \dots, \mu_t)$-large guarantees that the gluing in the last step is valid.

Similar to the construction above, for dominant coweights $\lambda_\bullet,\mmu,\nu$ and  $(m_1,n_1,m_2,n_2)$ which is $\lambda_\bullet$-acceptable and $\mmu$-acceptable,
there is a natural inverse partial Frobenius morphism
\[
F^{-1}_{\la_\bullet\mid \mmu;\nu}\colon
\Sht^{0,\loc(m_1,n_1)}_{\sigma(\nu);\la_\bullet\mid \mmu} \to \Sht^{0,\loc(m_2,n_2)}_{\la_\bullet\mid \mmu;\nu} 
\]
\end{construction}

\begin{lem} 
The map $F_{\mmu}^{-1}$ is equidimensionally perfectly smooth, of relative dimension $0$.\footnote{Note that a smooth morphism between stacks of relative dimension $0$ need not be \'etale.}
\end{lem}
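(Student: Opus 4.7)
The plan is to show that $F_\mmu^{-1}$ is a composition of maps, all but one of which are isomorphisms of prestacks, and the remaining step factors as a composition of two perfectly smooth maps whose relative dimensions cancel by the acceptability condition.

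First I would observe that most steps in Construction~\ref{Cons:inverse F restricted} are isomorphisms of prestacks: namely, the two applications of the breaking isomorphism \eqref{E: break hecke}—at the source (with $\la_\bullet = \sigma(\mu_t)$ at level $m_1$, $\mmu = (\mu_1,\ldots,\mu_{t-1})$ at level $m_2$, glued at level $m_2$) and at the target (with $\la_\bullet = (\mu_1,\ldots,\mu_{t-1})$ at level $m_2$, $\mmu = \mu_t$ at level $n_1$, glued at level $n_1$)—together with the relabeling $x_1 \mapsto x_1' = \sigma^{-1}(x_1)$, $\psi \mapsto \psi'' = \sigma^{-1}(\psi)$, which is an isomorphism because $\sigma$ acts invertibly on prestacks over $\mathbf{Aff}_k^\pf$. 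The conditions in $\mmu$-acceptability, that $m_1 - m_2 = n_1 - n_2$ is $\mu_t$-large and $m_2 - n_1$ is $(\mu_1,\ldots,\mu_{t-1})$-large, are exactly what is needed to invoke both breakings.

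The only non-isomorphism step is the passage from line $4$ to line $5$ in the construction, where $x_1'\in \Hk_{\mu_t}^{\loc(m_1)}$ is restricted to $x_1''\in \Hk_{\mu_t}^{\loc(n_1)}$ and $\psi'$ is restricted from level $m_2$ to level $n_2$. I would factor this step as the composition of two perfectly smooth maps:
\begin{itemize}
\item[\textbf{(a)}] Restrict only $\psi'$ from level $m_2$ to level $n_2$, keeping $(x_1',x_2,\psi'')$ fixed. Given the remaining data, the set of isomorphisms $\mathrm{Isom}({}^\sigma\mE_{x_1',\leftone}|_{D_{m_2}},\, \mE_{x_2,\rightone}|_{D_{m_2}})$ surjects onto its level-$n_2$ analogue as a torsor under the kernel $L^{m_2-n_2}G^{(n_2)}$, which is the perfection of a unipotent algebraic group of dimension $(m_2 - n_2)\dim G$. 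Hence (a) is equidimensionally perfectly smooth of relative dimension $+(m_2 - n_2)\dim G$.
\item[\textbf{(b)}] Apply $\res^{m_1}_{n_1}: \Hk_{\mu_t}^{\loc(m_1)} \to \Hk_{\mu_t}^{\loc(n_1)}$ to pass from $x_1'$ to $x_1''$. The data $\psi''$ remains well defined because it involves only $\mE_{x_1',\rightone}|_{D_{n_1}} = \mE_{x_1'',\rightone}|_{D_{n_1}}$, and similarly $\psi'''$ only uses level-$n_2$ data. As recalled in Notation~\ref{N:ext and res}, $\res^{m_1}_{n_1}$ is an $L^{m_1-n_1}G^{(n_1)}$-gerbe, hence perfectly smooth of relative dimension $-(m_1 - n_1)\dim G$.
\end{itemize}

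The total relative dimension of $F_\mmu^{-1}$ is thus
\[
(m_2 - n_2)\dim G - (m_1 - n_1)\dim G = 0,
\]
using the acceptability identity $m_1 - n_1 = m_2 - n_2$ (which follows from $m_1 - m_2 = n_1 - n_2$). Since torsors and gerbes under perfections of unipotent affine group schemes are equidimensionally perfectly smooth, so is the composition $F_\mmu^{-1}$.

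The main subtlety I anticipate is bookkeeping: verifying that the intermediate stack in the decomposition (a), (b) is a well-defined moduli problem compatible with both the source breaking at mixed levels $(m_1,m_2)$ and the target breaking at mixed levels $(n_1,m_2)$, and that the restriction of $\psi'$ is genuinely realized as a torsor (not merely a surjection) under $L^{m_2-n_2}G^{(n_2)}$. Once these identifications are made explicit via the group-theoretic description \eqref{E:quotient expression of Sloc} or directly through the moduli descriptions in Construction~\ref{Cons:inverse F restricted}, the smoothness and dimension count follow.
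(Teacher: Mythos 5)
Your proof is correct and is essentially the paper's own argument: the paper likewise observes that the only non-isomorphism arrow in Construction~\ref{Cons:inverse F restricted} is the base change of $\res^{m_1}_{n_1}$ (a gerbe banded by a twist of $L^{m_1-n_1}G^{(n_1)}$, relative dimension $-(m_1-n_1)\dim G$) combined with the restriction of the isomorphism of torsors (a torsor under a twist of $L^{m_2-n_2}G^{(n_2)}$, relative dimension $+(m_2-n_2)\dim G$), which cancel by the acceptability identity. The only caveat is that the groups acting are twisted forms of $L^{m_1-n_1}G^{(n_1)}$ and $L^{m_2-n_2}G^{(n_2)}$ (as you anticipate in your final paragraph), which does not affect perfect smoothness or the dimension count.
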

\label{L: rel dim pFrob}
\begin{proof}
The only non-isomorphic arrow is given by a base change of the map $\res^{m_1}_{n_1}$ (which gives a gerbe over a twist of $L^{m_1-n_1}G^{(n_1)}$) and a restriction of an isomorphism of torsors (which gives a torsor over a twist of $L^{m_2-n_2} G^{(n_2)}$).
\end{proof}

\begin{lem}
\label{L:property of res partial Frob}
\begin{itemize}
\item[(1)]
The following diagram is commutative (but not a Cartesian)
\begin{equation}\label{E:pf as pf}
\xymatrix@C=40pt{
\Sht_{\sigma(\mu_t), \mu_1,\ldots,\mu_{t-1}}^{\loc(m_1,n_1)} \ar[d]_{F_\mmu^{-1}} \ar[rr]^-{\varphi^{\loc(m_1, n_1)}} && \Hk_{\sigma(\mu_t), \mu_1,\ldots,\mu_{t-1}}^{\loc(m_1)}\ar[r]^-{\eqref{break chain}} & \Hk_{\sigma(\mu_t)}^{\loc(n_1)}\times \Hk_{\mu_1,\ldots,\mu_{t-1}}^{\loc(m_2)}
\\
\Sht_{\mu_1,\ldots,\mu_{t}}^{\loc(m_2,n_2)}\ar[rr]^-{\varphi^{\loc(m_2, n_2)}} && \Hk_{\mu_1,\ldots,\mu_{t}}^{\loc(m_2)} \ar[r]^-{\eqref{break chain}} & \Hk_{\mu_t}^{\loc(n_1)}\times  \Hk_{\mu_1,\ldots,\mu_{t-1}}^{\loc(m_2)} \ar[u]_{ \sigma \times \id}.
}
\end{equation}
\item[(2)]
Let $(m'_1,n'_1,m'_2,n'_2)\geq (m_1,n_1,m_2,n_2)$ be two $\mmu$-acceptable quadruples ($(m'_1,n'_1,m'_2,n'_2)=(\infty,\infty,\infty,\infty)$ being allowed),
the following diagram is commutative (but not Cartesian in general)
\begin{equation}
\label{E: pFrob v.s. res. pFrob}
\xymatrix@C=40pt{
\Sht_{\sigma(\mu_t), \mu_1,\ldots,\mu_{t-1}}^{\loc(m'_1,n'_1)} \ar[r]^-{F_\mmu^{-1}}
\ar[d]_{\res^{m'_1,n'_1}_{m_1,n_1}}
 & \Sht_{\mu_1,\ldots,\mu_t}^{\loc(m'_2,n'_2)} \ar[d]^{\res^{m'_2,n'_2}_{m_2,n_2}}\\
\Sht_{\sigma(\mu_t), \mu_1,\ldots,\mu_{t-1}}^{\loc(m_1,n_1)} \ar[r]^-{F_\mmu^{-1}} & \Sht_{\mu_1,\ldots,\mu_{t}}^{\loc(m_2,n_2)}.
}
\end{equation}
\end{itemize}
\end{lem}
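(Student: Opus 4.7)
\smallskip
\noindent\emph{Proof sketch.} Both assertions are verified by tracing through the explicit construction of $F_\mmu^{-1}$ given in Construction~\ref{Cons:inverse F restricted}. The key point is that this construction was designed precisely so as to be compatible with (i) the decomposition \eqref{break chain} of local Hecke stacks, (ii) the restriction morphisms $\res^{m',n'}_{m,n}$ between moduli of restricted local shtukas, and (iii) the Frobenius $\sigma$ acting on the last factor. The verification will be purely formal once we unwind the moduli interpretation on $S$-points, since all non-trivial geometric input is already absorbed into the construction of $F_\mmu^{-1}$.

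For part (1), I will compare the two paths from the upper-left corner to the lower-right corner of \eqref{E:pf as pf} on $S$-points. Starting from a point of $\Sht_{\sigma(\mu_t),\mu_1,\ldots,\mu_{t-1}}^{\loc(m_1,n_1)}$, represented as a quadruple $(x_1,x_2,\psi',\psi)$ as in the second line of Construction~\ref{Cons:inverse F restricted}, going right then down along $\sigma^{-1}\times\id$ (i.e.\ reading the right vertical arrow backwards) yields the pair $\bigl(\sigma^{-1}(\res^{m_1}_{n_1}(x_1)),\,x_2\bigr)\in\Hk_{\mu_t}^{\loc(n_1)}\times\Hk_{\mu_1,\ldots,\mu_{t-1}}^{\loc(m_2)}$; going down and then right, the image of $F_\mmu^{-1}(x_1,x_2,\psi',\psi)$ under \eqref{break chain} is by construction $\bigl(\res^{m_1}_{n_1}(\sigma^{-1}(x_1)),\,x_2\bigr)$. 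Commutativity then reduces to the identity $\sigma^{-1}\circ \res^{m_1}_{n_1}=\res^{m_1}_{n_1}\circ\sigma^{-1}$, which holds because the restriction maps on classifying stacks of jet groups are equivariant for the $\sigma$-action.

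For part (2), I will again work on $S$-points. Given $(x_1,x_2,\psi',\psi)\in\Sht_{\sigma(\mu_t),\mu_1,\ldots,\mu_{t-1}}^{\loc(m'_1,n'_1)}(S)$, both compositions around \eqref{E: pFrob v.s. res. pFrob} produce data of the form $(x',\psi''')\in\Sht_{\mu_1,\ldots,\mu_t}^{\loc(m_2,n_2)}(S)$, where $x'$ is obtained by gluing (via \S\ref{SS:break hecke}) the point $x_2\in\Hk_{\mu_1,\ldots,\mu_{t-1}}^{\loc(m_2)}$ with an appropriate restriction/twist of $x_1$ in the last slot, and $\psi'''$ is a suitable restriction of $\psi'$. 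Along one path, one first applies $F_\mmu^{-1}$ and then restricts via $\res^{m'_2,n'_2}_{m_2,n_2}$; along the other, one first restricts via $\res^{m'_1,n'_1}_{m_1,n_1}$ and then applies $F_\mmu^{-1}$. In both cases the outputs agree because the restriction morphisms on $\Hk^{\loc(\cdot)}$ satisfy the cocycle relation \eqref{true-to-trun}, the vertical Cartesian diagram \eqref{E:restriction vs torsor for both t0 and tr} ensures the two canonical torsors $\calE_\leftone$ and $\calE_\rightone$ restrict compatibly, and the assignment $\psi\mapsto \psi''=\sigma^{-1}(\psi)$ commutes with restriction since $\sigma$ commutes with $\res$.

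The main thing to keep track of is the bookkeeping of the index conditions. Under the $\mmu$-acceptability of both $(m_1,n_1,m_2,n_2)$ and $(m'_1,n'_1,m'_2,n'_2)$, each intermediate gluing in Construction~\ref{Cons:inverse F restricted} is well-defined at both the source and the target of the restriction morphisms, and the relations between the indices ($m_1-m_2=n_1-n_2$ being $\mu_t$-large, and $m_2-n_1$ being $(\mu_1,\ldots,\mu_{t-1})$-large, and likewise with primes) ensure that the various restriction maps appearing in the argument all land in the appropriate $\mmu$-large range. Once one chases these indices carefully, the two verifications are otherwise purely diagrammatic.
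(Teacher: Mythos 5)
Your proposal is correct and follows the same route as the paper, whose proof of Lemma~\ref{L:property of res partial Frob} is simply that both diagrams commute by the construction of $F_\mmu^{-1}$ in Construction~\ref{Cons:inverse F restricted}; your $S$-point chase (the comparison $(\res^{m_1}_{n_1}(x_1),x_2)$ versus $(\res^{m_1}_{n_1}(\sigma^{-1}(x_1)),x_2)$ twisted back by $\sigma\times\id$ for (1), and the cocycle relation for restrictions together with $\sigma$-equivariance and compatibility of the gluing of \S\ref{SS:break hecke} with restriction for (2)) is exactly the verification the paper leaves implicit.
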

\begin{proof}
Both follow by the construction.
\end{proof}

\begin{remark}
\label{R: group const of pFrob}
We reinterpret the above construction in terms of group elements. 
Pick a (non-increasing) sequence of integers $(m_0 = m', m_1 = m, m_2, \dots, m_{t-1},m_t =n', m_{t+1}= n)$ such that $m_i-m_{i+1}$ is $\mu_i$-large for $i=1, \dots, t-1$. (This is possible because $m-n'$ is $(\mu_1, \dots, \mu_{t-1})$-large.)
 
Consider the action of $\prod_{i=1}^{t+1} L^{m_i}G$ on $\prod_{i=1}^t \Gr_{\mu_i}^{(m_{i+1})}$ given by  
\begin{multline}\label{Hecke framing}
(g_1,\ldots,g_t,g_{t+1})\cdot (h_1L^+G^{(m_2)},h_2L^+G^{(m_3)},\ldots,h_tL^+G^{(m_{t+1})})\\
=(g_1h_1g_2^{-1}L^+G^{(m_2)},g_2h_2g_3^{-1}L^+G^{(m_3)},\ldots, g_th_tg_{t+1}^{-1}L^+G^{(m_{t+1})}).
\end{multline}
The space $\Gr_\mmu^{(m_{t+1})}$ is the quotient of $\prod_{i=1}^t \Gr_{\mu_i}^{(m_{i+1})}$ by the subgroup $\prod_{i=2}^t L^{m_i}G\subseteq \prod_{i=1}^{t+1} L^{m_i}G$.
To get the moduli of truncated local shtukas, we consider an embedding $c_{\sigma^{-1}}^{\bullet}: \prod_{i=1}^t L^{m_i}G \to \prod_{i=1}^{t+1} L^{m_i}G$ given by
\[c_{\sigma^{-1}}^{\bullet}(g_1,\ldots,g_t)=(g_1,\ldots,g_t, \sigma^{-1}(\pi_{m_1,m_{t+1}}(g_1))).\]
Then we have (compare \S\ref{SS:description of mS in terms of group elements})
\[\Sht_\mmu^{\loc(m_1,m_{t+1})}\cong \Big[ \prod_{i=1}^t \Gr_{\mu_i}^{(m_{i+1})} \Big/ c_{\sigma^{-1}}^\bullet\Big(\prod_{i=1}^t L^{m_i}G \Big)   \Big].\]

Now, it is straightforward to check that the following pair of morphisms between products of Grassmannians and groups are equivariant for the corresponding actions:
\[
\xymatrix@R=5pt{
(g_0,\ldots,g_{t-1}) \ar@{|->}[r] & (g_1,\ldots,g_{t-1}, \sigma^{-1}(\pi_{m_0,m_t}(g_0)))
\\
\prod_{i=0}^{t-1} L^{m_i}G \ar[r] & \prod_{i=1}^t L^{m_i}G
\\
\\
\ar@(ur,ul)[]^{\qquad\qquad c_{\sigma^{-1}}^{\bullet}}&\ar@(ur,ul)[]^{\qquad\qquad c_{\sigma^{-1}}^{\bullet}}&
\\
\Gr_{\sigma(\mu_t)}^{(m_1)} \times \prod_{i=1}^{t-1} \Gr_{\mu_i}^{(m_{i+1})}\ar[r]  & \prod_{i=1}^{t}\Gr_{\mu_i}^{(m_{i+1})}
\\
(h_0L^+G^{(m_1)}, h_1L^+G^{(m_2)},\dots,h_{t-1}L^+G^{(m_t)})\ar@{|->}[r] & (h_1L^+G^{(m_2)},\dots,h_{t-1}L^+G^{(m_t)},\sigma^{-1}(h_0)L^+G^{(m_{t+1})}).
}
\]
Taking this quotients by the equivariant action gives the inverse partial Frobenius morphism
\[
F^{-1}_\mmu: \Sht_{\sigma(\mu_t),\mu_1,\ldots,\mu_{t-1}}^{\loc(m_0,m_t)}\to \Sht_{\mu_1,\ldots,\mu_{t}}^{\loc(m_1,m_{t+1})},
\] which is canonically independent of the choice of numbers $m_i$.
\end{remark}

Lemma \ref{decomp1} suggests that we can define a restricted version of $\Sht_{\la_\bullet|\mmu}^{\nu,\loc}$ to be certain fiber product (because a direct definition is unavailable). Here is the precise definition.
\begin{definition}
\label{D:mS lambda mu truncated}
Let $\la_\bullet$, $\mmu$ be two sequences of dominant coweights, and $\nu$ a dominant coweight.
Let $(m_1,n_1,m_2,n_2)$ be a quadruple that is $(|\la_\bullet|+\nu, \nu)$-acceptable and $(|\mmu|+\nu,\nu)$-acceptable.
We choose a dominant coweight $\eta $ such that
\begin{itemize}
\item[(a)]
either $\eta\succeq |\la_\bullet|+\sigma(\nu)$ or $\eta\succeq |\mmu|+\nu$, and
\item[(b)]  $m_2-n_1$ is $\eta$-large.
\end{itemize}
In particular, Condition (b) above and Condition (1) in Construction \ref{Cons:inverse F restricted} imply that $m_1-n_1=m_2-n_2$ is $(\sigma(\nu^*),\eta)$-large and $(\eta, \nu^*)$-large.
For example, we may take $\eta = |\la_\bullet|+\sigma(\nu)$ or $|\mmu|+\nu$.

We define the \emph{Hecke correspondence for restricted local shtukas} $\Sht_{\lambda_\bullet|\mmu}^{\nu,\loc(m_1,n_1)}$ so that the pentagon in the following diagram is Cartesian (when composing the left vertical arrow with $F^{-1}_{\eta, \nu^*}$)
\begin{equation}\label{deompf1}
\xymatrix@C=15pt{
&&
\Sht_{\lambda_\bullet|\mmu}^{\nu,\loc(m_1,n_1)}  \ar[dl] \ar[dr]\\
&  \Sht_{\lambda_\bullet|(\sigma(\nu^*),\eta)}^{0,\loc(m_1,n_1)} \ar[dl] \ar[d]  && \Sht_{(\eta, \nu^*)|\mmu}^{0,\loc(m_2,n_2)} \ar[d] \ar[dr]
\\
\Sht_{\lambda_\bullet}^{\loc(m_1,n_1)} & \Sht_{\sigma(\nu^*),\eta}^{\loc(m_1,n_1)} \ar[rr]^-{F^{-1}_{\eta, \nu^*}} && \Sht_{\eta, \nu^*}^{\loc(m_2,n_2)} & \Sht_{\mmu}^{\loc(m_2,n_2)}.
}
\end{equation}
The algebraic stack $\Sht_{\lambda_\bullet|\mmu}^{\nu,\loc(m_1,n_1)} $ is independent of the choice of the coweight $\eta$ (as long as it satisfies conditions (a) and (b)). 
In addition, the following Lemma \ref{corr trun2} (see also Lemma \ref{L: torsor over S-corr}) shows that it is also independent of $(m,n)$ (as soon as the conditions (1) and (2) are met), which justifies our notation. 
\end{definition}

The relation between  $\Sht_{\la_\bullet|\mmu}^{\nu,\loc(m_1,n_1)}$ as $(m_1,n_1,m_2,n_2)$ varies is as follows.
\begin{lem}\label{corr trun2}
Assume that $(m'_1,n'_1,m'_2,n'_2)\geq (m_1,n_1,m_2,n_2)$ are two quadruples that are both $(|\la_\bullet|+\nu,\nu)$-acceptable and $(|\mmu|+\nu,\nu)$-acceptable ($(m'_1,n'_1,m'_2,n'_2)=(\infty,\infty,\infty,\infty)$ being allowed).
The following diagram is commutative. Moreover, all squares are Cartesian except the one marked with the letter \emph{``X"}.
\begin{equation}
\label{E:cycle construction diagram full}
\xymatrix@C=17pt{
\Sht_{\lambda_\bullet}^{\loc(m'_1,n'_1)}\ar_{\res^{m'_1,n'_1}_{m_1,n_1}}[d]& \Sht_{\lambda_\bullet|(\sigma(\nu^*), \eta)}^{0,\loc(m'_1,n''_1)}\ar[l]\ar[d]
&\Sht_{\lambda_\bullet|\mmu}^{\nu,\loc(m'_1,n'_1)}\ar_{\res^{m'_1,n'_1}_{m_1,n_1}}[d]\ar[r]\ar[l]\ar@{}[dr]|{\mathrm{X}}&\Sht_{(\eta,\nu^*)|\mmu}^{0,\loc(m'_2,n'_2)} \ar[d]\ar[r]& \Sht_\mmu^{\loc(m'_2,n'_2)}\ar^{\res^{m'_2,n'_2}_{m_2,n_2}}[d]
\\
\Sht_{\lambda_\bullet}^{\loc(m_1,n_1)}&\Sht^{0,\loc(m_1,n_1)}_{\lambda_\bullet|(\sigma(\nu^*), \eta)}\ar[l]&\Sht_{\lambda_\bullet|\mmu}^{\nu,\loc(m_1,n_1)}\ar[l]\ar[r]&\Sht_{(\eta,\nu^*)|\mmu}^{0,\loc(m_2,n_2)}\ar[r]&\Sht_\mmu^{\loc(m_2,n_2)}.
}
\end{equation}
\end{lem}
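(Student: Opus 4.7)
My plan is to verify the commutativity and Cartesian-ness of each of the four small squares in \eqref{E:cycle construction diagram full} separately, and then explain why the square marked ``X'' fails to be Cartesian. The commutativity of the full diagram then follows by concatenation of the small squares.

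For the leftmost square (involving columns one and two) and the rightmost square (involving columns four and five), both are Satake-type correspondence squares. Their Cartesian-ness is obtained by applying \eqref{E:shtukas restriction Cartesian with Satake correspondences} (which is the local-shtuka analogue of the Cartesian diagram \eqref{E:Satake correspondence cartesian}) to the relevant Satake correspondences $\Sht_{\lambda_\bullet}^{\loc(-,-)}\leftarrow \Sht_{\lambda_\bullet|(\sigma(\nu^*),\eta)}^{0,\loc(-,-)}$ and $\Sht_{(\eta,\nu^*)|\mmu}^{0,\loc(-,-)}\to \Sht_{\mmu}^{\loc(-,-)}$. Commutativity is routine and follows from the functoriality of restriction maps.

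For the middle-left square (involving columns two and three), the key observation is that by Definition~\ref{D:mS lambda mu truncated}, the prestack $\Sht_{\la_\bullet|\mmu}^{\nu,\loc(m_1,n_1)}$ is \emph{defined} as the fiber product
\[
\Sht_{\la_\bullet|\mmu}^{\nu,\loc(m_1,n_1)}\;\cong\; \Sht_{\lambda_\bullet|(\sigma(\nu^*),\eta)}^{0,\loc(m_1,n_1)}\times_{\Sht_{\sigma(\nu^*),\eta}^{\loc(m_1,n_1)}}\Sht_{(\eta,\nu^*)|\mmu}^{0,\loc(m_2,n_2)},
\]
where the map from the second factor to the base is through $F^{-1}_{\eta,\nu^*}$, and similarly for the $(m'_i,n'_i)$ version. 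By Lemma~\ref{L:property of res partial Frob}(2), the restriction maps $\res^{m'_i,n'_i}_{m_i,n_i}$ intertwine these fiber product presentations, so the middle-left square is obtained as a base change of the identity of $\Sht_{(\eta,\nu^*)|\mmu}^{0,\loc(m'_2,n'_2)}\to \Sht_{(\eta,\nu^*)|\mmu}^{0,\loc(m_2,n_2)}$ along a Cartesian square, hence is itself Cartesian. The commutativity of the square is built into the definition.

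The main subtle point will be explaining why the square marked ``X'' is \emph{not} Cartesian. Commutativity of this square follows immediately from the commutativity of \eqref{E: pFrob v.s. res. pFrob} in Lemma~\ref{L:property of res partial Frob}(2), combined with the definition of $\Sht_{\la_\bullet|\mmu}^{\nu,\loc(-,-)}$ as the pentagon-fiber-product in \eqref{deompf1}. The failure to be Cartesian traces back to the same failure in \eqref{E: pFrob v.s. res. pFrob}: the inverse partial Frobenius morphism $F^{-1}_{\eta,\nu^*}$ shifts the truncation level (from $(m_1,n_1)$ to $(m_2,n_2)$), and the restriction maps on its source and target are of different ``depths,'' so the natural square for $F^{-1}$ under restriction is only a commuting square, not a Cartesian square. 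Concretely, by Lemma~\ref{L: rel dim pFrob}, $F^{-1}$ is perfectly smooth of relative dimension zero but is not \'etale (it differs from being \'etale by a gerbe-plus-torsor whose structure group changes with the truncation level), and this gerbe/torsor discrepancy between the two rows is exactly what obstructs the ``X'' square from being Cartesian. No additional obstacle is expected beyond carefully writing out these base-change identifications.
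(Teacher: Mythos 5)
Your overall route is the same as the paper's (outer squares via \eqref{E:shtukas restriction Cartesian with Satake correspondences}, the middle-left square via the fiber-product definition of $\Sht^{\nu,\loc}_{\la_\bullet\mid\mmu}$, and only commutativity for the square marked X), and your treatment of the outer squares and of commutativity is fine. The gap is in the middle-left square, which is the real content of the lemma. The only input you cite there is Lemma \ref{L:property of res partial Frob}(2), but that lemma gives exactly the commutativity of \eqref{E: pFrob v.s. res. pFrob} and is explicitly \emph{not} Cartesian in general, so it cannot by itself produce any Cartesian square; and your phrase ``obtained as a base change of $\Sht^{0,\loc(m'_2,n'_2)}_{(\eta,\nu^*)\mid\mmu}\to\Sht^{0,\loc(m_2,n_2)}_{(\eta,\nu^*)\mid\mmu}$ along a Cartesian square'' never identifies which Cartesian square is meant or why it exists. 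Under the most natural reading, producing the common vertical edge $\Sht^{\nu,\loc(m'_1,n'_1)}_{\la_\bullet\mid\mmu}\to\Sht^{\nu,\loc(m_1,n_1)}_{\la_\bullet\mid\mmu}$ as a base change of that restriction map would amount to $\Sht^{\nu,\loc(m'_1,n'_1)}_{\la_\bullet\mid\mmu}\cong \Sht^{\nu,\loc(m_1,n_1)}_{\la_\bullet\mid\mmu}\times_{\Sht^{0,\loc(m_2,n_2)}_{(\eta,\nu^*)\mid\mmu}}\Sht^{0,\loc(m'_2,n'_2)}_{(\eta,\nu^*)\mid\mmu}$, which is precisely the Cartesian-ness of the square marked X that the lemma denies. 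The ingredient you actually need, and never invoke for this square, is the Cartesian-ness of the restriction square for the Satake correspondence $\Sht^{0,\loc}_{(\eta,\nu^*)\mid\mmu}\to\Sht^{\loc}_{\eta,\nu^*}$ between the levels $(m'_2,n'_2)$ and $(m_2,n_2)$, i.e.\ \eqref{E:shtukas restriction Cartesian with Satake correspondences} applied to this pair. Granting that, the argument is the paper's formal cube argument: set $B=\Sht^{0,\loc(m_1,n_1)}_{\la_\bullet\mid(\sigma(\nu^*),\eta)}$, $C=\Sht^{0,\loc(m_2,n_2)}_{(\eta,\nu^*)\mid\mmu}$, $D=\Sht^{\loc(m_2,n_2)}_{\eta,\nu^*}$ and $A=B\times_D C$ as in Definition \ref{D:mS lambda mu truncated} (the map $B\to D$ going through $F^{-1}_{\eta,\nu^*}$), and likewise $A',B',C',D'$ at the primed level; the front and back faces of the resulting cube are Cartesian by definition, the bottom face $C'\cong C\times_D D'$ is Cartesian by the input just named, and Lemma \ref{L: cartesian} forces the top face --- your middle-left square --- to be Cartesian, with no hypothesis on the right face (the one involving $F^{-1}_{\eta,\nu^*}$ and the restrictions), which indeed fails to be Cartesian. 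This is exactly why no shortcut through Lemma \ref{L:property of res partial Frob}(2) alone can work.

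A smaller point: to prove the lemma you only need commutativity of the square marked X, which you obtain correctly from \eqref{E: pFrob v.s. res. pFrob} together with the defining diagram \eqref{deompf1}; you are not asked to show it is non-Cartesian, and your heuristic about the gerbe/torsor discrepancy of $F^{-1}_{\eta,\nu^*}$ does not prove non-Cartesian-ness in any case, so that discussion should either be dropped or clearly flagged as an informal remark.
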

\begin{proof}
The commutativity of the diagram is clear. The left square and the right square are Cartesian by \eqref{E:shtukas restriction Cartesian with Satake correspondences}.
That the second square is also Cartesian follows from a purely formal argument. To reduce the load of notations, we put
\[
A = \Sht_{\lambda_\bullet|\mmu}^{\nu,\loc(m_1,n_1)},\ B = \Sht^{0,\loc(m_1,n_1)}_{\lambda_\bullet|(\sigma(\nu^*), \eta)}, \ C =  \Sht_{(\eta,\nu^*)|\mmu}^{0,\loc(m_2,n_2)}, \textrm{ and }D= \Sht_{\eta,\nu^*}^{\loc(m_2,n_2)}.
\]
Similar, we write $A'$, $B'$, $C'$, and $D'$ when we replace $(m_1,n_1,m_2,n_2)$ by $(m'_1,n'_1,m'_2,n'_2)$.
Then in the following commutative diagram
\[
\xymatrix@R=10pt@C=5pt{
& A' \ar[rr]\ar'[d][dd]\ar[dl]
& & B' \ar[dd]^(.45){\!F^{-1}}\ar[dl]
\\
A \ar[rr]\ar[dd]
& & B \ar[dd]^(.3){ \!F^{-1}}
\\
& C' \ar'[r][rr]\ar[dl]
& & D'\ar[dl]
\\
C \ar[rr]
& & D 
}
\]
the front and the back faces are Cartesian by Definition~\ref{D:mS lambda mu truncated} (or by Proposition~\ref{decomp1} if $(m'_1,n'_1,m'_2,n'_2)=(\infty,\infty,\infty,\infty)$), and the bottom face is Cartesian by \eqref{E:shtukas restriction Cartesian with Satake correspondences}. (But note that the right face involving the two partial Frobenius morphisms is not Cartesian.) Now the lemma follows from Lemma \ref{L: cartesian}.
\end{proof}

\subsubsection{Explicit description of $\Sht_{\la_\bullet|\mmu}^{\nu,\loc(m_1,n_1)}$}
\label{SS: exp des of res hk}
Let us analyze the morphism 
\begin{equation}
\label{C:properness of cycles}
\overleftarrow{h}^{\loc(m_1,n_1)}_{\la_\bullet}:\Sht_{\lambda_\bullet|\mmu}^{\nu, \loc(m_1,n_1)} \to \Sht_{\lambda_\bullet}^{\loc(m_1,n_1)},
\end{equation}
parallel to the discussions in \S \ref{SS:relation of Hecke of mS and ADLV}. First, since both morphisms 
$$\Sht^{0,\loc(m_1,n_1)}_{\lambda_\bullet|(\sigma(\nu^*), \eta)} \to \Sht_{\lambda_\bullet}^{\loc(m_1,n_1)},\quad \mbox{and} \quad \Sht^{0,\loc(m_2,n_2)}_{(\eta,\nu^*)|\mmu} \to \Sht^{\loc(m_2,n_2)}_{\eta,\nu^*}$$ are perfectly proper (by the comments after \eqref{E:S and Hk Satake compatibility}),  $\overleftarrow{h}^{\loc(m_1,n_1)}_{\la_\bullet}$ is perfectly proper.  This is the restricted counterpart of Lemma \ref{L: Rep of Hk of Sht}.

\begin{remark}
However, the morphism 
$\Sht_{\lambda_\bullet|\mmu}^{\nu, \loc(m_1,n_1)} \to \Sht_{\mu_\bullet}^{\loc(m_2,n_2)}$ is not perfectly proper (because the restricted inverse partial Frobenius is not perfectly proper). This is the reason that there is an asymmetry in the diagram \eqref{E:cycle construction diagram full} above.
\end{remark}

Note that since $\Sht_{\la_\bullet|\mmu}^{\nu,\loc(m_1,n_1)}=\Sht_{\la_\bullet}^{\loc(m_1,n_1)}\times_{\Sht_{|\la_\bullet|}^{\loc(m_1,n_1)}}\Sht_{|\la_\bullet|||\mmu|}^{\nu,\loc(m_1,n_1)}\times_{\Sht_{|\mmu|}^{\loc(m,n)}}\Sht_{\mmu}^{\loc(m_2,n_2)}$, to simplify notations, we assume that $\la_\bullet=\la$ and $\mmu=\mu$ are single coweights in the following lemma. Recall the $L^{m_1}G$-torsor $\Sht_\la^{\loc(m_1,n_1),\Box}\simeq \Gr_\la^{(n_1)}\to \Sht_\la^{\loc(m_1,n_1)}$ from \eqref{E:framed local shtukas}. Let 
$$\Sht^{\nu,\loc(m_1,n_1),\Box}_{\la\mid\mu}:=\Sht^{\nu,\loc(m_1,n_1)}_{\la\mid\mu}\times_{\Sht_\la^{\loc(m_1,n_1)}}\Sht_{\la}^{\loc(m_1,n_1),\Box}$$ 
be the pullback of the $L^{m_1}G$-torsor to $\Sht^{\nu,\loc(m_1,n_1)}_{\la\mid\mu}$.

\begin{lem}
\label{L: torsor over S-corr}
There is a natural $L^{m_1}G$-equivariant isomorphism
\begin{equation}\label{E: torsor over S-corr}
\Sht^{\nu,\loc(m_1,n_1),\Box}_{\la\mid\mu}\cong\big\{(g_1L^+G^{(n_1)},g_2L^+G)\in \Gr_\la^{(n_1)}\times \Gr_{\sigma(\nu^*)}\mid g_2^{-1}g_1\sigma^{-1}(g_2)\in [L^+G\backslash\Gr_{\mu}^{(\infty)} /L^+G]\big\}
\end{equation}
such that $\overleftarrow{h}^{\loc(m_1,n_1),\Box}_{\la}: \Sht^{\nu,\loc(m_1,n_1),\Box}_{\la\mid\mu}\to \Sht_\la^{\loc(m_1,n_1),\Box}$ is identified with the projection to the first factor,
where $L^{m_1}G$ acts on $\Gr_\la^{(n_1)}\times \Gr_{\sigma(\nu^*)}$ as follows: it acts on  $\Gr_\la^{(n_1)}$ as in \eqref{E:twisted action of LmG on framed local shtukas}, and on  $\Gr_{\sigma(\nu^*)}$ by the usual left multiplication.
\end{lem}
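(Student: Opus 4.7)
My plan is to unfold the defining fiber product \eqref{deompf1} for $\Sht^{\nu,\loc(m_1,n_1)}_{\la\mid\mu}$, pull back the framing $L^{m_1}G$-torsor through every piece, and translate all the data into elements of $LG$ via the group-theoretic description of \S\ref{SS:description of mS in terms of group elements} and Remark~\ref{R: group const of pFrob}. Since the general case for sequences $\la_\bullet, \mmu$ is obtained by iterating the single-coweight case through further fiber products along the chain of restricted Hecke stacks, I would first reduce to the case $\la_\bullet = \la$ and $\mmu = \mu$ single coweights as assumed in the statement.

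Next, I would apply the identification $\Sht^{\loc(m,n)}_\la \cong [\Gr_\la^{(n)}/c_{\sigma^{-1}}L^mG]$ from \eqref{E:quotient expression of Sloc} and its analogues for Satake correspondences so that every vertex of the pentagon \eqref{deompf1} becomes a quotient stack. Pulling back the framing torsor $\Gr_\la^{(n_1)}\to \Sht_\la^{\loc(m_1,n_1)}$ then removes the left-most $L^{m_1}G$-quotient. Using \eqref{E: break hecke} to break the convoluted Grassmannian, a point of the framed $\Sht^{0,\loc(m_1,n_1),\Box}_{\la\mid(\sigma(\nu^*),\eta)}$ is encoded by a triple $(g_1 L^+G^{(n_1)},\, g_2 L^+G,\, h L^+G^{(m_2)}) \in \Gr_\la^{(n_1)} \times \Gr_{\sigma(\nu^*)} \times \Gr_\eta^{(m_2)}$ satisfying $g_2 h \in g_1 L^+G$, i.e. the convolution recovers the top $\la$-shtuka. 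The appearance of $\sigma(\nu^*)$ rather than $\nu^*$ reflects that this factor parametrises the modification $\sigma(\beta)^{-1}\colon \calE_\rightone \to \calE'_\rightone$ taken relative to the framing of $\calE'_\rightone = {}^\sigma\calE'_\leftone$, together with the formal identity $\inv(\sigma(\beta)^{-1}) = \sigma(\nu)^* = \sigma(\nu^*)$. I would then glue with $\Sht^{0,\loc(m_2,n_2)}_{(\eta,\nu^*)\mid\mu}$ via the inverse partial Frobenius $F^{-1}_{\eta,\nu^*}$, which by Remark~\ref{R: group const of pFrob} is a cyclic shift sending the pair $(g_2, h)$ in the ``$(\sigma(\nu^*),\eta)$-factor'' to $(h, \sigma^{-1}(g_2))$ on the $(\eta,\nu^*)$-side. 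The constraint that the bottom row be a $\mu$-shtuka then reads $h\cdot \sigma^{-1}(g_2) \in \overline{L^+G\varpi^\mu L^+G}$; combined with $h\in g_2^{-1}g_1 L^+G$ this collapses to the desired condition $g_2^{-1}g_1 \sigma^{-1}(g_2) \in \overline{L^+G\varpi^\mu L^+G}$, and the auxiliary coweight $\eta$ drops out of the description. The map $\overleftarrow{h}^{\loc(m_1,n_1),\Box}_\la$ becomes visibly the projection $(g_1,g_2)\mapsto g_1$.

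The last step will be to verify $L^{m_1}G$-equivariance: by construction the action on $g_1 L^+G^{(n_1)}$ is the twisted conjugation $c_{\sigma^{-1}}$ of \eqref{E:twisted action of LmG on framed local shtukas}, while the action on $g_2L^+G$ is left multiplication through $L^{m_1}G \to L^+G$, and a direct calculation will show that $g_2^{-1}g_1\sigma^{-1}(g_2)$ is left invariant modulo the left–right $L^+G$-action on $LG$, using the $\mmu$-acceptability of $(m_1,n_1,m_2,n_2)$ to ensure that the relevant conjugates of principal congruence subgroups remain in $L^+G$. The main obstacle I anticipate is not conceptual but purely one of bookkeeping: consistently tracking the direction of each modification, the switches between $\nu$ and $\nu^*$ or $\mu$ and $\mu^*$ (of the same flavour as in \eqref{E: fiber of hleft}), and the twists by $\sigma^{\pm 1}$ introduced whenever one passes across the identifications $\calE_\rightone = {}^\sigma\calE_\leftone$ or through the inverse partial Frobenius $F^{-1}_{\eta,\nu^*}$.
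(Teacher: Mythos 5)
Your route is essentially the paper's own proof: the paper also specializes the pentagon \eqref{deompf1} (taking $\eta=\la+\sigma(\nu)$), identifies the framed Satake correspondence $\Gr^{0,(n_1)}_{\la\mid(\sigma(\nu^*),\eta)}$ with $\Gr_\la^{(n_1)}\times\Gr_{\sigma(\nu^*)}$, and reads off the condition from the group-theoretic description of $F^{-1}_{\eta,\nu^*}$ as in Remark~\ref{R: group const of pFrob}, exactly as you propose. The one step to state more carefully is the collapse from $h\sigma^{-1}(g_2)$ to $g_2^{-1}g_1\sigma^{-1}(g_2)$: since conjugating an arbitrary element of $L^+G$ by $\sigma^{-1}(g_2)$ need not stay in $L^+G$, you should record that the shared level-$n_1$ framing of the leftmost torsor pins $h$ down modulo the congruence subgroup $L^+G^{(n_1)}$ (not merely modulo $L^+G$), and it is $\sigma^{-1}(g_2)^{-1}L^+G^{(n_1)}\sigma^{-1}(g_2)\subseteq L^+G$ — valid because $n_1$ is $\nu$-large by acceptability — that makes the relative position of the composite equal to that of $g_2^{-1}g_1\sigma^{-1}(g_2)$.
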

\begin{proof}
We may let $\eta=\la+\sigma(\nu)$ in the definition of $\Sht^{\nu,\loc(m_1,n_1)}_{\la\mid\mu}$.
First note that $\Sht^{\nu,\loc(m_1,n_1)}_{\la\mid\mu}\to \Sht_{\la|(\sigma(\nu^*),\la+\sigma(\nu))}^{0,\loc(m_1,n_1)}$ is a closed embedding (since $\Sht_{(\lambda+\sigma(\nu), \nu^*)|\mu}^{0,\loc(m_2,n_2)}\to \Sht_{\lambda+\sigma(\nu), \nu^*}^{\loc(m_2,n_2)}$ is a closed embedding).

Next, by the discussion from \S \ref{SS:description of mS in terms of group elements}, over $\Sht_\la^{\loc(m_1,n_1)}\leftarrow\Sht_{\la|(\sigma(\nu^*),\la+\sigma(\nu))}^{0,\loc(m_1,n_1)}\to \Sht_{\sigma(\nu^*),\la+\sigma(\nu))}^{\loc(m_1,n_1)}$, there are $L^{m_1}G$-torsors
\[\Gr_\la^{(n_1)}\leftarrow \Gr_{\la|(\sigma(\nu^*),\la+\sigma(\nu))}^{0,(n_1)}\to \Gr_{\sigma(\nu^*),\la+\sigma(\nu)}^{(n_1)}.\]
In addition, there is a canonical isomorphism 
$$ \Gr_\la^{(n_1)}\times \Gr_{\sigma(\nu^*)}\cong \Gr_{\la|(\sigma(\nu^*),\la+\sigma(\nu))}^{0,(n_1)}, $$
$$(\beta_1: \mE_1\dashrightarrow \mE^0, \epsilon_{n_1}: \mE_1|_{D_{n_1}}\simeq \mE^0|_{D_{n_1}},\beta_2: \mE_2\dashrightarrow \mE^0)\to (\xymatrix{
\mE_1 \ar@/^10pt/@{-->}[rr]^-{\beta_1} \ar_{\beta_2^{-1}\beta_1}@{-->}[r] & \mE_2 \ar_{\beta_2}@{-->}[r]& \mE^0}, \epsilon_{n_1}).$$
It is clear that under this isomorphism, the action of $L^{m_1}G$ on the right hand side is identified with the action on the left hand side as described in the lemma. Now the lemma follows from the construction of $F^{-1}_{\la+\sigma(\nu), \nu^*}$ (in particular in terms of group elements as in Remark \ref{R: group const of pFrob}).
\end{proof}

\begin{cor}
\label{C:expclit mStautau truncated}
Assume that $\la=\tau\in\xcoch(Z_G)$ is central.
There is a canonical isomorphism
\[
\Sht_{\tau|\mu}^{\nu,\loc(m_1,n_1)} \cong \big[ L_{n_1}^{m_1}G \backslash X_{\mu^*,\nu^*}(\tau^*)\big],
\]
where $L^{m_1}_{n_1}G$ is the group defined in Example~\ref{Ex:mStau mn}. Moreover, if $(m'_1,n'_1,m'_2,n'_2)\geq (m_1,n_1,m_2,n_2)$ is another $(\tau+\nu,\nu)$-acceptable and $(\mu+\nu,\nu)$-acceptable quadruple, we have the following natural Cartesian diagram.
\[
\xymatrix{
\big[L^{m'_1}_{n'_1}G  \backslash \mathrm{pt}\big] \cong \Sht_\tau^{\loc(m'_1,n'_1)} \ar@<35pt>[d] & \ar[l] \Sht_{\tau|\mu}^{\nu,\loc(m'_1,n'_1)}  \cong \big[L^{m'_1}_{n'_1}G \backslash X_{\mu^*,\nu^*}(\tau^*)\big] \ar@<-50pt>[d]\\
\big[L^{m_1}_{n_1}G \backslash \mathrm{pt}\big] \cong\Sht_{\tau}^{\loc(m_1,n_1)} & \ar[l] \Sht_{\tau|\mu}^{\nu, \loc(m_1,n_1)} \cong \big[L^{m_1}_{n_1}G \backslash X_{\mu^*,\nu^*}(\tau^*) \big],
}
\]
where the action of $L^{m_1}_{n_1}G$ on $X_{\mu^*,\nu^*}(\tau^*)$ is the $\sigma^{-1}$-twist of the usual left multiplication. Here $(m'_1,n'_1,m'_2,n'_2)=(\infty,\infty,\infty,\infty)$ is allowed, in which case $L^{\infty}_{\infty}G$ is interpreted as $G(\mO)$, and the isomorphism $[G(\mO)\backslash \on{pt}]\cong\Sht^\loc_{\tau}$ comes from \eqref{rigid corr}.
\end{cor}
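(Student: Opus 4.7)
The plan is to apply Lemma~\ref{L: torsor over S-corr}, which gives an explicit presentation of the framed Hecke correspondence $\Sht_{\tau|\mu}^{\nu,\loc(m_1,n_1),\Box}$ as an $L^{m_1}G$-torsor over $\Sht_{\tau|\mu}^{\nu,\loc(m_1,n_1)}$, and then to simplify it using the centrality of $\tau$. Since $\tau \in \xcoch(Z_G)$ is central, the affine Grassmannian $\Gr_\tau$ consists of the single reduced point $\varpi^\tau L^+G$, and the $L^{n_1}G$-torsor $\Gr_\tau^{(n_1)}$ is canonically identified with $L^{n_1}G$ via $hL^+G^{(n_1)} \leftrightarrow \varpi^\tau h L^+G^{(n_1)}$. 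Under this identification, Lemma~\ref{L: torsor over S-corr} reads
\[
\Sht^{\nu,\loc(m_1,n_1),\Box}_{\tau|\mu} \cong \bigl\{(h, g_2L^+G) \in L^{n_1}G \times \Gr_{\sigma(\nu^*)} : g_2^{-1}\varpi^\tau h \sigma^{-1}(g_2) \in \overline{L^+G\varpi^\mu L^+G}\bigr\},
\]
where $L^{m_1}G$ acts on the first factor by the $\sigma^{-1}$-twisted conjugation through $\pi_{m_1,n_1}$ and on the second factor by the standard left multiplication.

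Next I would decouple the two factors by a Lang-type argument. The restricted twisted-conjugation action of $L^{m_1}G$ on $L^{n_1}G$ is transitive with the stabilizer of $1\in L^{n_1}G$ equal to $L^{m_1}_{n_1}G$; this is exactly the computation already appearing in Example~\ref{Ex:mStau mn}. Restricting to the closed locus $\{h = 1\}$ thus yields the closed subscheme
\[
Y := \bigl\{g_2 L^+G \in \Gr_{\sigma(\nu^*)} : g_2^{-1}\varpi^\tau \sigma^{-1}(g_2) \in \overline{L^+G\varpi^\mu L^+G}\bigr\},
\]
equivariant under the natural left action of $L^{m_1}_{n_1}G \hookrightarrow L^{m_1}G$, and taking the quotient gives $\Sht_{\tau|\mu}^{\nu,\loc(m_1,n_1)} \cong [L^{m_1}_{n_1}G \backslash Y]$.

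It then remains to identify $Y$ with $X_{\mu^*,\nu^*}(\tau^*)$. In our perfect algebraic geometry setting, the Frobenius $\sigma: \Gr_{\nu^*} \to \Gr_{\sigma(\nu^*)}$ is an isomorphism, and I would perform the change of variables $g_2 = \sigma(g)$. The defining condition becomes $\sigma(g)^{-1}\varpi^\tau g \in \overline{L^+G\varpi^\mu L^+G}$, which after inversion and using $\tau^* = -\tau$ (as $\tau$ is central) together with $\overline{L^+G \varpi^{-\mu} L^+G} = \overline{L^+G \varpi^{\mu^*} L^+G}$ is equivalent to $g^{-1}\varpi^{\tau^*}\sigma(g) \in \overline{L^+G\varpi^{\mu^*}L^+G}$; combined with $gL^+G \in \Gr_{\nu^*}$, this is exactly the defining condition for $X_{\mu^*,\nu^*}(\tau^*)$. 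Tracing through the substitution, the left action $g_2 \mapsto \gamma g_2$ of $\gamma \in L^{m_1}_{n_1}G$ on $Y$ transports to $g \mapsto \sigma^{-1}(\gamma) g$, i.e.\ the $\sigma^{-1}$-twist of the usual left multiplication on $X_{\mu^*,\nu^*}(\tau^*)$, as claimed.

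Finally, the Cartesian compatibility under change of acceptable quadruples follows from the naturality of each of the steps above: the projection $L^{m'_1}_{n'_1}G \twoheadrightarrow L^{m_1}_{n_1}G$ is induced from $L^{m'_1}G \to L^{m_1}G$ and is compatible with the common action on $X_{\mu^*,\nu^*}(\tau^*)$, which immediately gives the Cartesian square of quotient stacks. The case $(m'_1,n'_1,m'_2,n'_2) = (\infty,\infty,\infty,\infty)$ is consistent with \eqref{rigid corr} via $G(\mO) = \varprojlim L^{m_1}_{n_1}G$. The main obstacle will be tracking the coset structures carefully through the Frobenius substitution $g_2 = \sigma(g)$, and checking that the asymmetric presentation of $\Sht_{\tau|\mu}^{\nu,\loc(m_1,n_1)}$ built from the auxiliary coweight $\eta$ and the inverse partial Frobenius $F^{-1}_{\eta,\nu^*}$ in Definition~\ref{D:mS lambda mu truncated} is genuinely compatible with our group-theoretic presentation; fortunately Lemma~\ref{L: torsor over S-corr} has already absorbed this bookkeeping in its proof.
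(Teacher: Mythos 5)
Your proposal is correct and follows essentially the same route as the paper's own (very terse) proof: it invokes Lemma~\ref{L: torsor over S-corr}, applies the isomorphism $\sigma:\Gr_{\nu^*}\simeq\Gr_{\sigma(\nu^*)}$ together with inversion to turn the condition into the defining condition of $X_{\mu^*,\nu^*}(\tau^*)$, and uses the Lang-theorem computation of Example~\ref{Ex:mStau mn} to replace the $L^{m_1}G$-quotient by an $L^{m_1}_{n_1}G$-quotient. You merely spell out the slice argument and the level-compatibility (which the paper leaves implicit, via Lemma~\ref{corr trun2}) in more detail.
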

\begin{proof} Note that in \eqref{E: torsor over S-corr}, 
\begin{multline*}
\big\{(g_1L^+G^{(n_1)},g_2L^+G)\in \Gr_\la^{(n_1)}\times \Gr_{\sigma(\nu^*)}\mid g_2^{-1}g_1\sigma^{-1}(g_2)\in [L^+G\backslash\Gr_{\mu}^{(\infty)} /L^+G]\big\} \cong \\
\big\{(g_1L^+G^{(n_1)}, g_2L^+G)\in \Gr_\la^{(n_1)}\times \Gr_{\nu^*}\mid g_2^{-1}g_1^{-1}\sigma(g_2)\in  [L^+G\backslash\Gr_{\mu^*}^{(\infty)} /L^+G]\big\},
\end{multline*}
where the isomorphism is induced by $\sigma: \Gr_{\nu^*}\simeq\Gr_{\sigma(\nu^*)}$. The corollary follows.
\end{proof}

The final task of this subsection is to \emph{define} the composition of Hecke correspondences between restricted local shtukas, analogous to Lemma~\ref{L:decomposition of Comp}. To slightly reduce the load of notation, we focus on the case that we will need later, namely when $\kappa_\bullet$, $\la_\bullet$ and $\mu_\bullet$ are single coweights. The general case can be treated in a similar way.
We first state the restricted versions of Lemmas~\ref{decomp2}, \ref{decomp3}, and 
\ref{L: comm Sat and pFrob}.

\begin{lem}
\label{L:technical lemma for comp loc restricted}
\begin{enumerate}
\item
Let $\lambda,\mu,\nu,\xi,\theta$ be dominant coweights such that $\theta\succeq \la+\mu^*$ or $\theta\succeq \xi+\nu^*$, and let $(m,n)$ be a pair of non-negative integers such that $m-n$ is $(\mu,\theta, \nu)$-large, $(\la,\nu)$-large and $(\mu,\xi)$-large. Then there is a canonical isomorphism
\begin{equation}
\label{E:decomp2 restricted}
\Sht^{0,\loc(m,n)}_{(\la,\nu)\mid(\mu,\xi)}
\cong \Sht^{0,\loc(m,n)}_{\la \mid (\mu,\theta);\nu}
\times_{\Sht^{\loc(m,n)}_{\mu,\theta,\nu}}
\Sht^{0,\loc(m,n)}_{\mu; (\theta,\nu)\mid \xi}.
\end{equation}
Moreover, the isomorphism obtained by replacing $(m,n)$ by $(m',n')\geq (m,n)$ is the pullback of this isomorphism via the restriction map $\res^{m',n'}_{m,n}$. In particular  \eqref{E:decomp2} is the pull back of \eqref{E:decomp2 restricted} via $\res_{m,n}$.

\item Let $\lambda, \mu,\nu,\zeta, \zeta_1,\zeta_2$ be dominant coweights such that $\zeta\succeq \la+\mu^* + \nu^*$ or $\zeta\succeq \zeta_1+ \zeta_2$, and let $(m,n)$ be a pair of non-negative integers such that $m-n$ is  $\lambda$-large, $(\mu,\zeta,\nu)$-large, and $(\mu,\zeta_1, \zeta_2,\nu)$-large, then
\begin{equation}
\label{E:decomp3 restricted}
\Sht^{0,\loc(m,n)}_{\la\mid (\mu,\zeta, \nu)}
\times_{\Sht^{\loc(m,n)}_{\mu,\zeta, \nu}}
\Sht^{0,\loc(m,n)}_{\mu; \zeta\mid (\zeta_1,\zeta_2);\nu}
\cong \Sht^{0,\loc(m,n)}_{\la_\bullet\mid(\mu,\zeta_1, \zeta_2, \nu)}.\end{equation}
Moreover, the isomorphism obtained by replacing $(m,n)$ by $(m',n')\geq (m,n)$ is the pullback of this isomorphism via the restriction map $\res^{m',n'}_{m,n}$. In particular  \eqref{E:decomp3} is the pull back of \eqref{E:decomp3 restricted} via $\res_{m,n}$.

\item Let $\lambda_\bullet, \mmu, \nu$ be dominant coweights and let $(m_1,n_1,m_2,n_2)$ be a quadruple of nonnegative integers that is $(|\la_\bullet|+\nu,\nu)$-acceptable and $(|\mmu|+\nu,\nu)$-acceptable
The following natural diagram is Cartesian
\begin{equation}
\label{E:decomp4 restricted}
\xymatrix@C=40pt{
\Sht^{0,\loc(m_1,n_1)}_{\sigma(\nu);\la_\bullet\mid \mmu} \ar[r]^-{F^{-1}_{\la_\bullet\mid \mmu;\nu}} \ar[d] & \Sht^{0,\loc(m_2,n_2)}_{\la_\bullet\mid \mmu;\nu} \ar[d]\\
\Sht^{\loc(m_1,n_1)}_{\sigma(\nu),\la_\bullet} \ar[r]^-{F^{-1}_{\lambda_\bullet,\nu}} & \Sht^{\loc(m_2,n_2)}_{\la_\bullet, \nu}.
}
\end{equation}
Moreover, the diagram obtained by replacing $(m_1,n_1,m_2,n_2)$ by $(m'_1,n'_1,m'_2,n'_2)$ is compatible with  \eqref{E:decomp4 restricted} via the restriction map $\res^{m'_1,n'_1}_{m_1,n_1}$ and $\res^{m'_2,n'_2}_{m_2,n_2}$. In particular,  \eqref{E:decomp4 restricted} is compatible with \eqref{E:comm Sat and pFrob} via $\res_{m_1,n_1}$ and $\res_{m_2,n_2}$.
\end{enumerate}
\end{lem}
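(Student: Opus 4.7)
The three parts are the restricted analogues of Lemmas \ref{decomp2}, \ref{decomp3}, and \ref{L: comm Sat and pFrob}, so the plan is to mimic each of those moduli-theoretic arguments while carefully tracking the truncated torsor structures. In all three cases I will first construct the map (or verify commutativity of the square) by comparing moduli problems, and then establish the Cartesian/isomorphism property by a diagram chase from the unrestricted statement.

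For part (1), an $R$-point of $\Sht^{0,\loc(m,n)}_{(\la,\nu)\mid(\mu,\xi)}$ consists of the commutative diagram of modifications from the proof of Lemma \ref{decomp2} together with an isomorphism $\psi:{}^\sigma(\calE_\leftone|_{D_n})\simeq (\calE_\rightone|_{D_m})|_{D_n}$ of torsors. As in the unrestricted proof I insert (resp. forget) the diagonal modification $\gamma:\calE'_s\dashrightarrow \calE_t$, whose relative position is automatically $\preceq \theta$ thanks to the hypothesis on $\theta$. The isomorphism $\psi$ restricts to the corresponding $L^nG$-torsor isomorphisms on both sides because, by the choice of $(m,n)$, both the top and bottom rows of the split diagram are already $m-n$ large with respect to their coweight sequences; hence $\mE_\leftone|_{D_n}$ and $\mE_\rightone|_{D_m}$ on each side are canonically identified with those on the combined diagram via \eqref{E:restriction vs torsor for both t0 and tr}. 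The compatibility with $\res^{m',n'}_{m,n}$ is immediate because adding/removing $\gamma$ commutes with restricting torsors. Part (2) is handled identically, now removing the modification $\gamma$ of the proof of Lemma \ref{decomp3}, under the stronger largeness assumption on $m-n$ which ensures that all four restricted local Hecke stacks occurring are legitimate.

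For part (3), recall that the restricted inverse partial Frobenius of Construction \ref{Cons:inverse F restricted} has the explicit description in which the factor $x_2\in \Hk^{\loc(m_2)}_{\la_\bullet}$ (resp.\ the Satake datum on the right) and the trivialization $\psi'$ are kept fixed, while the first factor is replaced by $\sigma^{-1}(x_1)$. Consequently, imposing the Satake boundedness by $\mmu$ on the modification $\beta:{}^\sigma\calE_{x_2,\leftone}\dashrightarrow \calE_{x_1,\rightone}$ is preserved step-by-step through the construction. This exhibits the Cartesian square \eqref{E:decomp4 restricted} as the base change of $F^{-1}_{\la_\bullet,\nu}$ along the closed substack cut out by the Satake condition, which is exactly what is needed. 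Alternatively, one can check directly from Lemma~\ref{L:Satake correspondence cartesian} and the fact (visible from Construction \ref{Cons:inverse F restricted}) that $F^{-1}$ is obtained by pulling back along a Frobenius graph, so that the Cartesianness of the Satake square propagates to \eqref{E:decomp4 restricted}. The compatibility with further restriction by $\res^{m'_i,n'_i}_{m_i,n_i}$ then follows from Lemma~\ref{L:property of res partial Frob}(2) combined with \eqref{E:shtukas restriction Cartesian with Satake correspondences}.

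In all three parts the ``moreover'' statement is obtained by taking the projective limit over $(m,n)$: each unrestricted identity in Lemmas \ref{decomp2}, \ref{decomp3}, \ref{L: comm Sat and pFrob} is the pull-back of its restricted counterpart along $\res_{m,n}$ (and $\res_{m_1,n_1}$, $\res_{m_2,n_2}$ in part (3)), since the unrestricted Hecke stacks of local shtukas are fiber products of the restricted ones with the Frobenius graph exactly as in \eqref{E:mS restricted as a Cartesian product}. The main technical point to watch, and the place where one needs to be most careful, is the interaction between the inverse partial Frobenius $F^{-1}_{\eta,\nu^*}$ (hidden in the definition of $\Sht^{\nu,\loc(m,n)}_{\la_\bullet\mid\mmu}$) and the $L^nG$-torsor structures: the shift between $(m_1,n_1)$ and $(m_2,n_2)$ and the various largeness conditions are exactly what guarantee that the relevant torsors restrict compatibly, so the bookkeeping of indices carried out in Construction \ref{Cons:inverse F restricted} and Definition~\ref{D:mS lambda mu truncated} must be checked to match the hypotheses stated here.
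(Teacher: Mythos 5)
Your proposal is correct and follows essentially the same route as the paper: parts (1) and (2) are the arguments of Lemmas \ref{decomp2} and \ref{decomp3} redone at the truncated level (the paper phrases this as running them on the affine Grassmannian with the extra framing data and then quotienting by the $\sigma^{-1}$-twisted $L^mG$-action as in \S\ref{SS:description of mS in terms of group elements}, which is the same bookkeeping you do directly via the trivialization $\psi$), and part (3) is, as you argue, immediate from Construction \ref{Cons:inverse F restricted} since the $\la_\bullet$-factor and the extra $\mmu$-chain are untouched by the restricted inverse partial Frobenius — the paper simply records this as ``clear from the definition.''
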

\begin{proof}
(1) and (2) essentially follow from the same proof of Lemmas~\ref{decomp2} and \ref{decomp3}, respectively. More precisely, the proofs work to give corresponding statements when $\Sht^\loc$ is replaced by $\Gr$ (with appropriate decorations). Taking the quotient by $L^mG$ under the $\sigma$-conjugacy action as in \S \ref{SS:description of mS in terms of group elements} gives (1) and (2). (3) is clear from the definition.
\end{proof}

\begin{proposition}
\label{P:pushforward is local}
Let $\kappa, \lambda,\mu, \nu,\nu'$ be dominant coweights and let $(m_1,n_1,m_2,n_2,m_3,n_3)$ be a sextuple of non-negative integers satisfying
\begin{itemize}
\item $(m_1,n_1,m_2,n_2)$ is $(\kappa+\nu',\nu')$-acceptable and $(\lambda+\nu',\nu')$-acceptable;
\item $(m_2,n_2,m_3,n_3)$ is $(\la+\nu,\nu)$-acceptable and $(\mu+\nu,\nu)$-acceptable.
\end{itemize}
The following series of maps define a natural perfectly proper morphism
\begin{equation}
\label{E:composition restricted}
\on{Comp}_{\kappa, \lambda, \mu}^{\mathrm{loc}(m_1,n_1)}: \Sht_{\kappa|\la}^{\nu',\loc(m_1,n_1)} \times_{\Sht_{\la}^{\loc(m_2,n_2)}} \Sht_{\la|\mu}^{\nu,\loc(m_2,n_2)} \longto \Sht_{\kappa|\mu}^{\nu+\nu',\loc (m_1,n_1)}.
\end{equation}
\begin{small}
\begin{align*}
& \ \Sht^{\nu',\loc(m_1,n_1)}_{\kappa\mid\lambda}\times_{\Sht^{\loc(m_1,n_1)}_{\lambda}}\Sht^{\nu,\loc(m_2,n_2)}_{\lambda\mid \mu}   
\\
\cong \ &\  \Sht^{0,\loc(m_1,n_1)}_{\kappa\mid (\sigma(\nu'^*),\eta')}\times_{\Sht^{\loc(m_2,n_2)}_{\eta',\nu'^*}}\Sht^{0,\loc(m_2,n_2)}_{(\eta',\nu'^*)\mid \la}\times_{\Sht^{\loc(m_2,n_2)}_{\lambda}}\Sht^{0,\loc(m_2,n_2)}_{\lambda\mid(\sigma(\nu^*), \eta)}\times_{\Sht^{\loc(m_3,n_3)}_{\eta,\nu^*}}\Sht^{0,\loc(m_3,n_3)}_{(\eta,\nu^*)\mid \mu} & \!\!\!\!\!\!\!\!\!\! (\mathrm{Defn}\ \ref{D:mS lambda mu truncated})
\\
\xrightarrow{(1)}  &\ \Sht^{0,\loc(m_1,n_1)}_{\kappa\mid (\sigma(\nu'^*),\eta')}\times_{\Sht^{\loc(m_2,n_2)}_{\eta',\nu'^*}}\Sht^{0,\loc(m_2,n_2)}_{(\eta',\nu'^*)\mid(\sigma(\nu^*), \eta)}\times_{\Sht^{\loc(m_3,n_3)}_{\eta,\nu^*}}\Sht^{0,\loc(m_3,n_3)}_{(\eta,\nu^*)\mid \mu}   
\\
\cong \ &\      \Sht^{0,\loc(m_1,n_1)}_{\kappa\mid (\sigma(\nu'^*),\eta')}
\times_{\Sht^{\loc(m_2,n_2)}_{\eta',\nu'^*}}
\Sht^{0,\loc(m_2,n_2)}_{\eta'\mid (\sigma(\nu^*),\theta);\nu'^*}
\times_{\Sht^{\loc(m_2,n_2)}_{\sigma(\nu^*),\theta,\nu'^*}}
\Sht^{0,\loc(m_2,n_2)}_{\sigma(\nu^*); (\theta,\nu'^*)\mid\eta}
\times_{\Sht^{\loc(m_3,n_3)}_{\eta,\nu^*}}\Sht^{0,\loc(m_3,n_3)}_{(\eta,\nu^*)\mid \mu} & \eqref{E:decomp2 restricted}
\\
\cong    \  & \  \Sht^{0,\loc(m_1,n_1)}_{\kappa\mid (\sigma(\nu'^*),\eta')}\times_{\Sht^{\loc(m_1,n_1)}_{\sigma(\nu'^*), \eta'}}
\Sht^{0,\loc(m_1,n_1)}_{\sigma(\nu'^*);\eta'\mid (\sigma(\nu^*),\theta)}
\times_{\Sht^{\loc(m_3,n_3)}_{\theta,\nu'^*,\nu^*}}
\Sht^{0,\loc(m_3,n_3)}_{ (\theta,\nu'^*)\mid\eta;\nu^*} \times_{\Sht^{\loc(m_3,n_3)}_{\eta,\nu^*}}\Sht^{0,\loc(m_3,n_3)}_{(\eta,\nu^*)\mid \mu}   & \eqref{E:decomp4 restricted}
\\
\cong \ & \ \Sht^{0,\loc(m_1,n_1)}_{\kappa \mid(\sigma(\nu'^*),\sigma(\nu^*) , \theta)} \times_{\Sht^{\loc(m_3,n_3)}_{\theta,\nu'^*,\nu^*}} \Sht^{0,\loc(m_3,n_3)}_{(\theta,\nu'^*,\nu^*)\mid \mu} & \eqref{E:decomp3 restricted}  
\\
 \xrightarrow{(2)} & \ \Sht^{0,\loc(m_1,n_1)}_{\kappa \mid (\sigma(\nu^*+\nu'^*), \theta )}\times_{\Sht^{\loc(m_3,n_3)}_{\theta,\nu^*+\nu'^*}}\Sht^{0,\loc(m_3,n_3)}_{(\theta,\nu^*+\nu'^*)\mid \mu} 
 \\
\cong \ & \Sht^{\nu+\nu',\loc(m_1,n_1)}_{\kappa\mid \mu }   &\!\!\!\!\!\!\!\!\!\! (\mathrm{Defn}~\ref{D:mS lambda mu truncated})
\end{align*}
\end{small}Here we choose $\eta'\succeq \kappa + \sigma(\nu')$, $\eta\succeq \la+\sigma(\nu)$, and $\theta\succeq \eta'+\sigma(\nu)$ such that $m_2-n_1$ is $\eta'$-large, $m_3-n_2$ is $\eta$-large, and $m_3-n_1$ is $\theta$-large.
The map $(1)$ is induced by the composition of Satake correspondence
\[\Sht^{0,\loc(m_2,n_2)}_{(\eta',\nu'^*)\mid \lambda}\times_{\Sht^{\loc(m_2,n_2)}_{\lambda}}\Sht^{0,\loc(m_2,n_2)}_{\lambda\mid(\sigma(\nu^*), \eta)} \to \Sht^{0,\loc(m_2,n_2)}_{(\eta',\nu'^*)\mid(\sigma(\nu^*), \eta)},\]
and the map $(2)$ is induced by the convolution map of moduli of local shtukas \eqref{E: conv local Sht}
\begin{small}
\[
\Sht^{0,\loc(m_1,n_1)}_{\kappa \mid(\sigma(\nu'^*),\sigma(\nu^*) , \theta)} \to \Sht^{0,\loc(m_1,n_1)}_{\kappa \mid(\sigma(\nu^*+\nu'^*) , \theta)}
,\   \Sht^{\loc(m_3,n_3)}_{\theta,\nu'^*,\nu^*} \to \Sht^{\loc(m_3,n_3)}_{\theta,\nu^*+\nu'^*},\ 
\textrm{and}\ \Sht^{0,\loc(m_3,n_3)}_{(\theta,\nu'^*,\nu^*)\mid \mu}  \to \Sht^{0,\loc(m_3,n_3)}_{(\theta,\nu^*+\nu'^*)\mid \mu} .
\]
\end{small}

Moreover, if $(m'_1,n'_1,m'_2,n'_2,m'_3,n'_3)\geq (m_1,n_1,m_2,n_2,m_3,n_3)$ is another sextuple satisfying the same conditions as above ($(m'_1,n'_1,m'_2,n'_2,m'_3,n'_3)=(\infty,\infty,\infty,\infty,\infty,\infty)$ being allowed), then the following diagram is commutative.
\begin{equation}
\label{E:projection localization truncated} 
\xymatrix@C=20pt{
&&\Sht_{\kappa|\mu}^{\nu+\nu',\loc(m'_1,n'_1)} 
\ar[ddd]\ar[dll]\ar[dr]
\\
\Sht^{\loc(m'_1,n'_1)}_\kappa \ar[d]^{\res^{m'_1,n'_1}_{m_1,n_1}} &\ar[l]\ar[d] \Sht_{\kappa\mid\la}^{\nu',\loc(m'_1,n'_1)} \times_{\Sht_\la^\loc(m'_2,n'_2)} \Sht_{\la\mid\mu}^{\nu,\loc(m'_2,n'_2)} \ar[l] \ar[rr] \ar[ur]_-{\mathrm{Comp}^{\loc(m'_1,n'_1)}}&& \Sht^\loc_\mu \ar[d]_{\res^{m'_3,n'_3}_{m_3,n_3}}
\\
\Sht_\kappa^{\loc(m_1,n_1)}
&\ar[l]\Sht_{\kappa|\la}^{\nu',\loc(m_1,n_1)} \times_{\Sht_\la^{\loc(m_2,n_2)}} \Sht_{\la|\mu}^{\nu,\loc(m_2,n_2)} \ar[rd]^-{\mathrm{Comp}^{\loc(m_1,n_1)}} \ar[rr]
&&\Sht_\mu^{\loc(m_3,n_3)}
\\
&& \Sht_{\kappa|\mu}^{\nu+\nu', \loc(m_1,n_1)}\ar[ull]\ar[ru].
}
\end{equation}
In addition, the middle trapezoid is Cartesian.
\end{proposition}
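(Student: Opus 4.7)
My plan is to follow the chain of seven morphisms displayed in the proposition verbatim, verify that the acceptability of the sextuple ensures every intermediate shtuka moduli space is defined, and then extract perfect properness and restriction-compatibility from the already-established Lemmas~\ref{L:technical lemma for comp loc restricted} and \eqref{E:shtukas restriction Cartesian with Satake correspondences}. Concretely, I would first fix the auxiliary dominant coweights $\eta' \succeq \kappa + \sigma(\nu')$, $\eta \succeq \lambda+\sigma(\nu)$, and $\theta \succeq \eta' + \sigma(\nu)$. The two defining conditions on $(m_1,n_1,m_2,n_2)$ and $(m_2,n_2,m_3,n_3)$ are exactly what makes $m_2-n_1$ is $\eta'$-large, $m_3-n_2$ is $\eta$-large, and $m_3-n_1$ is $\theta$-large, which are the hypotheses required to invoke Definition~\ref{D:mS lambda mu truncated} at the relevant triples, as well as the hypotheses $(\mu,\theta,\nu)$-large etc.\ needed in Lemma~\ref{L:technical lemma for comp loc restricted}.

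Having fixed these choices, each step of the chain is either (a) a tautological rewriting coming from Definition~\ref{D:mS lambda mu truncated}; (b) an isomorphism coming from Lemma~\ref{L:technical lemma for comp loc restricted}(1)--(3); or (c) one of the two pushforward maps (1) and (2) built from a perfectly proper morphism. Step (1) is the pushforward along the restricted Satake composition $\Sht^{0,\loc(m_2,n_2)}_{(\eta',\nu'^*)|\lambda} \times_{\Sht^{\loc(m_2,n_2)}_\lambda} \Sht^{0,\loc(m_2,n_2)}_{\lambda|(\sigma(\nu^*),\eta)} \to \Sht^{0,\loc(m_2,n_2)}_{(\eta',\nu'^*)|(\sigma(\nu^*),\eta)}$, which is perfectly proper because the underlying composition $\Gr_{(\eta',\nu'^*)|\lambda}^0 \times_{\Gr_\lambda} \Gr_{\lambda|(\sigma(\nu^*),\eta)}^0 \to \Gr_{(\eta',\nu'^*)|(\sigma(\nu^*),\eta)}^0$ is perfectly proper (by the perfect properness of the Satake correspondence between affine Grassmannians, cf.\ the comments after \eqref{E:S and Hk Satake compatibility}). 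Step (2) is the pushforward along the convolution map \eqref{E: conv local Sht}, which is perfectly proper for the same reason applied to Schubert varieties. Thus the composite morphism $\on{Comp}^{\loc(m_1,n_1)}_{\kappa,\lambda,\mu}$ is well-defined and perfectly proper.

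For the compatibility of $\on{Comp}^{\loc(m_1,n_1)}$ with the restriction maps, i.e.\ the commutativity of the pentagon in \eqref{E:projection localization truncated} relating the two composition morphisms at levels $(m'_\bullet, n'_\bullet)$ and $(m_\bullet, n_\bullet)$, my plan is to verify this compatibility step-by-step along the chain: Lemma~\ref{L:technical lemma for comp loc restricted}(1)--(3) each contain the ``moreover'' clause asserting compatibility of the decomposition isomorphisms with restriction maps; and for the pushforwards (1) and (2), the compatibility reduces to the Cartesian diagrams~\eqref{E:shtukas restriction Cartesian with Satake correspondences} for the Satake correspondence and the compatibility of convolution with restriction, both of which are immediate from the definitions of the restriction maps. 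Splicing these together gives commutativity of the pentagon.

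The main obstacle is the last assertion, namely that the middle trapezoid in \eqref{E:projection localization truncated} is Cartesian. My plan here is to observe that \emph{all} the intermediate squares comparing the $(m'_\bullet, n'_\bullet)$-chain and the $(m_\bullet, n_\bullet)$-chain are in fact Cartesian: for steps of type (a) and (b) this is the ``moreover'' in Lemma~\ref{L:technical lemma for comp loc restricted}; for step (1) it follows from \eqref{E:shtukas restriction Cartesian with Satake correspondences} which is Cartesian; for step (2) the compatibility with restriction is Cartesian since the convolution map commutes with restriction up to base change. The only possibly non-Cartesian step is the one involving the inverse partial Frobenius \eqref{E:decomp4 restricted}, but in that step both the source and the target of the Frobenius are rewritten simultaneously, so its contribution to the trapezoid cancels. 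Concatenating Cartesian squares via the pasting lemma (Lemma~\ref{L: cartesian}) then yields that the middle trapezoid is Cartesian, completing the proof. The delicate bookkeeping here --- making sure that no genuinely non-Cartesian restriction square from Lemma~\ref{L:property of res partial Frob}(2) appears in the composite --- is where I expect the real work to lie.
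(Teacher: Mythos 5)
Your construction of $\on{Comp}^{\loc(m_1,n_1)}$, the properness argument (reducing maps (1) and (2) to perfect properness of Satake correspondences and convolutions for affine Grassmannians), and the step-by-step verification of commutativity of \eqref{E:projection localization truncated} all match the paper. The genuine gap is in your argument that the middle trapezoid is Cartesian. You propose to paste together comparison squares along the chain, claiming they are all Cartesian, but the ingredients you cite do not give this. The ``moreover'' clauses of Lemma~\ref{L:technical lemma for comp loc restricted} assert only that the isomorphisms at the two levels are \emph{compatible} with the restriction maps (commutativity), not that the comparison squares are Cartesian; for the isomorphism steps commutativity does happen to imply Cartesian-ness, but you never make that reduction, and for steps (1) and (2) the issue is genuinely nontrivial, because every fiber product appearing in the lines is taken along structure maps that involve the inverse partial Frobenius at a shifted level (this is built into Definition~\ref{D:mS lambda mu truncated}), and the restriction squares for the inverse partial Frobenius are explicitly \emph{not} Cartesian (Lemma~\ref{L:property of res partial Frob}(2), diagram \eqref{E: pFrob v.s. res. pFrob}); compare also the square marked ``X'' in \eqref{E:cycle construction diagram full}. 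Your remark that this non-Cartesian step has ``its contribution cancel because source and target are rewritten simultaneously'' is not an argument, and it is exactly the point where the bookkeeping you defer would have to be carried out.

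The paper resolves this differently, and you should adopt its mechanism: instead of comparing the two chains square by square, base change the \emph{entire} series of maps for $(m_1,n_1,m_2,n_2,m_3,n_3)$ along the single restriction morphism $\res^{m'_1,n'_1}_{m_1,n_1}\colon \Sht_\kappa^{\loc(m'_1,n'_1)}\to\Sht_\kappa^{\loc(m_1,n_1)}$, i.e.\ apply $\Sht_\kappa^{\loc(m'_1,n'_1)}\times_{\Sht_\kappa^{\loc(m_1,n_1)}}-$ to each line, and check, propagating through the factors from left to right, that the result is exactly the series of maps for $(m'_1,n'_1,m'_2,n'_2,m'_3,n'_3)$. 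This propagation uses only left-arm restriction squares, namely \eqref{E:shtukas restriction Cartesian with Satake correspondences} and the Cartesian squares of Lemma~\ref{corr trun2}, which are Cartesian; the non-Cartesian Frobenius restriction squares never have to be inverted, because the base change is always computed against the arm on which those squares are Cartesian. Since the primed composition map is by definition the primed chain, this identification immediately gives both the commutativity of \eqref{E:projection localization truncated} and the Cartesian-ness of the trapezoid. Without an argument of this shape (or a careful substitute for your ``cancellation'' claim), your proof of the last assertion is incomplete.
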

\begin{rmk}Note that the product $\Sht_{\kappa|\la}^{\nu',\loc(m_1,n_1)} \times_{\Sht_\la^{\loc(m_2,n_2)}} \Sht_{\la|\mu}^{\nu,\loc(m_2,n_2)}$ is independent of $(m_2,n_2)$.
\end{rmk}

\begin{proof}[Proof of Proposition~\ref{P:pushforward is local}]
The construction of the map $\mathrm{Comp}^{\loc(m_1,n_1)}$ is clear as explained at each step, and details can be found in the proof of Lemma~\ref{L:decomposition of Comp} (for the unrestricted version). We will only give a little more detail on the isomorphism between line 4 and line 5 (as it involves changing the depth of the restriction).
Indeed, we have natural isomorphisms of functors.

\begin{align*}
&
- \times_{\Sht^{\loc(m_2,n_2)}_{\eta',\nu'^*}}
\Sht^{0,\loc(m_2,n_2)}_{\eta'\mid (\sigma(\nu^*),\theta);\nu'^*}
\times_{\Sht^{\loc(m_2,n_2)}_{\sigma(\nu^*),\theta,\nu'^*}}
\Sht^{0,\loc(m_2,n_2)}_{\sigma(\nu^*); (\theta,\nu'^*)\mid\eta}
\\
\cong\ & - \times_{\Sht^{\loc(m_1,n_1)}_{\sigma(\nu'^*), \eta'}}
\Sht^{0,\loc(m_1,n_1)}_{\sigma(\nu'^*);\eta'\mid (\sigma(\nu^*),\theta)}
\times_{\Sht^{\loc(m_2,n_2)}_{\sigma(\nu^*),\theta,\nu'^*}}
\Sht^{0,\loc(m_2,n_2)}_{\sigma(\nu^*); (\theta,\nu'^*)\mid\eta}  &&\textrm{\eqref{E:decomp4 restricted} on the first product}
\\
\cong\ &- \times_{\Sht^{\loc(m_1,n_1)}_{\sigma(\nu'^*), \eta'}}
\Sht^{0,\loc(m_1,n_1)}_{\sigma(\nu'^*);\eta'\mid (\sigma(\nu^*),\theta)}
\times_{\Sht^{\loc(m_3,n_3)}_{\theta,\nu'^*,\nu^*}}
\Sht^{0,\loc(m_3,n_3)}_{ (\theta,\nu'^*)\mid\eta;\nu^*}  &&\textrm{\eqref{E:decomp4 restricted} on the second product}.
\end{align*}

The perfect properness of $\mathrm{Comp}^{\loc(m_1,n_1)}$ follows from the perfect properness of the maps (1) and (2), which are in turn consequences of perfect properness of convolution products and Satake correspondences for affine Grassmannians.

The diagram~\eqref{E:projection localization truncated} is clearly commutative by the definition of $\on{Comp}^{\loc(m_1,n_1)}$ (and the description of $\on{Comp}^\loc$ via Lemma \ref{L:decomposition of Comp} if $(m'_1,n'_1,m'_2,n'_2,m'_3,n'_3)=(\infty,\infty,\infty,\infty,\infty,\infty)$) . The trapezoid in \eqref{E:projection localization truncated} is Cartesian because pulling back along the restriction morphisms $\res^{m'_1,n'_1}_{m_1,n_1}: \Sht_\kappa^{\loc(m'_1,n'_1)}\to \Sht_\kappa^{\loc(m_1,n_1)}$ of the series of maps for $(m_1,n_1,m_2,n_2,m_3,n_3)$ exactly gives the series of maps for $(m'_1,n'_1,m'_2,n'_2,m'_3,n'_3)$. For example, applying the base change $\Sht^{\loc(m'_1,n'_1)}_{\kappa} \times_{\Sht^{\loc(m_1,n_1)}_\kappa} -$ to line 2, and repeated applying \eqref{E:shtukas restriction Cartesian with Satake correspondences} and Lemma \ref{corr trun2} to each of the product from the left to right gives
\begin{tiny}
\begin{align*}
&
\Sht^{\loc(m'_1,n'_1)}_{\kappa} \times_{\Sht^{\loc(m_1,n_1)}_\kappa} 
\Sht^{0,\loc(m_1,n_1)}_{\kappa\mid (\sigma(\nu'^*),\eta')}\times_{\Sht^{\loc(m_2,n_2)}_{\eta',\nu'^*}}\Sht^{0,\loc(m_2,n_2)}_{(\eta',\nu'^*)\mid \la}\times_{\Sht^{\loc(m_2,n_2)}_{\lambda}}\Sht^{0,\loc(m_2,n_2)}_{\lambda\mid(\sigma(\nu^*), \eta)}\times_{\Sht^{\loc(m_3,n_3)}_{\eta,\nu^*}}\Sht^{0,\loc(m_3,n_3)}_{(\eta,\nu^*)\mid \mu}
\\
\cong \ & 
\Sht^{0,\loc(m'_1,n'_1)}_{\kappa\mid (\sigma(\nu'^*),\eta')}\times_{\Sht^{\loc(m'_1,n'_1)}_{\eta',\nu'^*}}\Sht^{0,\loc(m'_2,n'_2)}_{(\eta',\nu'^*)\mid \la}\times_{\Sht^{\loc(m_2,n_2)}_{\lambda}}\Sht^{0,\loc(m_2,n_2)}_{\lambda\mid(\sigma(\nu^*), \eta)}\times_{\Sht^{\loc(m_3,n_3)}_{\eta,\nu^*}}\Sht^{0,\loc(m_3,n_3)}_{(\eta,\nu^*)\mid \mu}
\\
\cong \ & 
\Sht^{0,\loc(m'_1,n'_1)}_{\kappa\mid (\sigma(\nu'^*),\eta')}\times_{\Sht^{\loc(m'_1,n'_1)}_{\eta',\nu'^*}}\Sht^{0,\loc(m'_2,n'_2)}_{(\eta',\nu'^*)\mid \la}\times_{\Sht^{\loc(m'_2,n'_2)}_{\lambda}}\Sht^{\loc(m'_2,n'_2)}_{\lambda}\times_{\Sht^{\loc(m_2,n_2)}_{\lambda}}\Sht^{0,\loc(m_2,n_2)}_{\lambda\mid(\sigma(\nu^*), \eta)}\times_{\Sht^{\loc(m_3,n_3)}_{\eta,\nu^*}}\Sht^{0,\loc(m_3,n_3)}_{(\eta,\nu^*)\mid \mu}
\\
\cong \ &
\Sht^{0,\loc(m'_1,n'_1)}_{\kappa\mid (\sigma(\nu'^*),\eta')}\times_{\Sht^{\loc(m'_1,n'_1)}_{\eta',\nu'^*}}\Sht^{0,\loc(m'_2,n'_2)}_{(\eta',\nu'^*)\mid \la}\times_{\Sht^{\loc(m'_2,n'_2)}_{\lambda}}\Sht^{0,\loc(m'_2,n'_2)}_{\lambda\mid(\sigma(\nu^*), \eta)}\times_{\Sht^{\loc(m'_3,n'_3)}_{\eta,\nu^*}}\Sht^{0,\loc(m'_3,n'_3)}_{(\eta,\nu^*)\mid \mu}.
\end{align*}
\end{tiny}
The rest can be checked similarly.
\end{proof}

\begin{lem}
\label{L:cocycle condition for compositions}
Let $\mu_1, \mu_2, \mu_3, \mu_4, \nu_1, \nu_2, \nu_3$ be dominant coweights and $(m_1, n_1, m_2,n_2, m_3,n_3, m_4,n_4)$ be an octuple of non-negative integers such that
\begin{itemize}
\item
$m_i-m_{i+1}=n_i - n_{i+1}$ is $\nu_i$-large for $i = 1, 2,3$,
\item
$m_i-n_{i+1}$ is $(\mu_i, \nu_i)$-large and $(\mu_{i+1}, \nu_i)$-large.
\end{itemize}
Then the following diagram commutes
\begin{tiny}
\[
\xymatrix@C=70pt{ 
\Sht_{\mu_1|\mu_2}^{\nu_1,\loc(m_1,n_1)} \times_{\Sht_{\mu_2}^{\loc(m_2,n_2)}} \Sht_{\mu_2|\mu_3}^{\nu_2,\loc(m_2,n_2)}  \times_{\Sht_{\mu_3}^{\loc(m_3,n_3)}} \Sht_{\mu_3|\mu_4}^{\nu_3,\loc(m_3,n_3)} 
\ar[r]^-{\on{Comp}_{\mu_1,\mu_2, \mu_3}^{\mathrm{loc}(m_1,n_1)} \times \id} \ar[d]^-{\id \times \on{Comp}_{\mu_2,\mu_3, \mu_4}^{\mathrm{loc}(m_2,n_2)}} & \Sht_{\mu_1|\mu_3}^{\nu_1+\nu_2,\loc(m_1,n_1)}  \times_{\Sht_{\mu_3}^{\loc(m_3,n_3)}} \Sht_{\mu_3|\mu_4}^{\nu_3,\loc(m_3,n_3)} \ar[d]^-{\on{Comp}_{\mu_1,\mu_3, \mu_4}^{\mathrm{loc}(m_1,n_1)}}
\\
\Sht_{\mu_1|\mu_2}^{\nu_1,\loc(m_1,n_1)} \times_{\Sht_{\mu_2}^{\loc(m_2,n_2)}} \Sht_{\mu_2|\mu_4}^{\nu_2+\nu_3,\loc(m_2,n_2)}  \ar[r]^-{\on{Comp}_{\mu_1,\mu_2, \mu_4}^{\mathrm{loc}(m_1,n_1)}}
& \Sht_{\mu_1|\mu_4}^{\nu_1+\nu_2+\nu_3,\loc(m_1,n_1)}.
}
\]
\end{tiny}
\end{lem}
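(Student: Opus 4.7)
The plan is to reduce the restricted associativity to the analogous associativity for the unrestricted compositions $\on{Comp}^\loc$ of \eqref{E:composition}, and then to verify the unrestricted version directly from the moduli interpretation.

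I first establish the unrestricted version.  By the description in Lemma~\ref{L:decomposition of Comp} (or more directly by iterating the moduli picture \eqref{E: comp of HkS}), the unrestricted triple fiber product
\[
\tilde A := \Sht^{\nu_1,\loc}_{\mu_1|\mu_2}\times_{\Sht^{\loc}_{\mu_2}}\Sht^{\nu_2,\loc}_{\mu_2|\mu_3}\times_{\Sht^{\loc}_{\mu_3}}\Sht^{\nu_3,\loc}_{\mu_3|\mu_4}
\]
classifies commutative diagrams of modifications of $G$-torsors consisting of four rows of shtukas (for $\mu_1,\mu_2,\mu_3,\mu_4$) glued along three columns of vertical modifications $\gamma''$, $\gamma'$, $\gamma$ of relative positions bounded by $\nu_1$, $\nu_2$, $\nu_3$.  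Both clockwise and counterclockwise paths in the unrestricted associativity square send such a configuration to the collapsed diagram in which only the first and the last rows remain, connected by the composed vertical modification, and they coincide by the associativity $\gamma\circ(\gamma'\circ\gamma'')=(\gamma\circ\gamma')\circ\gamma''$ of composition of morphisms.

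For the restricted statement, I apply Proposition~\ref{P:pushforward is local} with $(m'_i,n'_i)=(\infty,\infty)$ to each of the four restricted compositions appearing in the lemma; this produces four commutative trapezoids \eqref{E:projection localization truncated} whose middle squares are Cartesian and which express each restricted $\on{Comp}^{\loc(m_i,n_i)}_{\mu_i,\mu_{i+1},\mu_{i+2}}$ as a base change of $\on{Comp}^\loc_{\mu_i,\mu_{i+1},\mu_{i+2}}$ along restriction maps.  Assembling these four trapezoids, and combining them with the compatibility of restriction morphisms with fiber products, Satake correspondences, convolution, and inverse partial Frobenius, recorded in Lemma~\ref{corr trun2}, diagram \eqref{E:shtukas restriction Cartesian with Satake correspondences}, and Lemma~\ref{L:technical lemma for comp loc restricted}, I obtain a three-dimensional commutative diagram whose top face is the unrestricted associativity square already established and whose bottom face is the sought restricted associativity square, with vertical arrows the natural restriction morphisms $\res^{\infty,\infty}_{m_i,n_i}$.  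It follows that the two paths in the bottom face agree after precomposition with the restriction $\tilde A\to A$.  Since the latter is a composition of torsors under perfections of smooth affine groups (in particular faithfully flat), the two paths on $A$ themselves coincide.

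The main difficulty is not conceptual but combinatorial: one must carefully track the interaction of the three distinct restriction depths $(m_1,n_1)$, $(m_2,n_2)$, $(m_3,n_3)$ with the multi-step construction of $\on{Comp}^{\loc(m,n)}$.  This bookkeeping is entirely formal and is controlled by the compatibilities listed in the previous paragraph.
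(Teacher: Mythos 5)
Your reduction strategy --- prove the evident associativity of the unrestricted $\on{Comp}^\loc$ from the moduli description, then transfer it to the restricted level using Proposition~\ref{P:pushforward is local} (with the larger levels taken to be $(\infty,\infty)$) and Lemma~\ref{corr trun2} --- is reasonable in outline, and the first two ingredients are fine: unrestricted associativity is immediate from associativity of composing vertical modifications, and the "side faces" of your three-dimensional diagram are supplied by \eqref{E:projection localization truncated} and the Cartesian squares of Lemma~\ref{corr trun2}. The gap is in the final step. You have two morphisms of algebraic stacks $F,G\colon A\to B$ (the two restricted composites on the restricted triple fiber product), together with $2$-isomorphisms $F\circ p\simeq q\circ\widetilde F$ and $G\circ p\simeq q\circ\widetilde G$, where $p\colon\widetilde A\to A$, $q$ are restriction morphisms and $\widetilde F=\widetilde G$ is the unrestricted composite; from this you conclude $F\simeq G$ "because $p$ is faithfully flat". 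For morphisms of stacks this does not follow from agreement after precomposition with an epimorphism: one must show that the induced $2$-isomorphism $F\circ p\simeq G\circ p$ descends, i.e.\ satisfies the cocycle condition over $\widetilde A\times_A\widetilde A$, and you never address this. Moreover the description of the restriction map as "a composition of torsors under perfections of smooth affine groups, in particular faithfully flat" is inaccurate: already for central $\tau$ the map $\Sht^\loc_\tau\to\Sht^{\loc(m,n)}_\tau$ is $\bfB G(\mO)\to\bfB L^m_nG$, which is not of that form, and the paper develops no flatness or epimorphism formalism for these infinite-type prestacks, so even the surjectivity you invoke (lifting restricted shtuka data along congruence subgroups) would require an argument you do not give.

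This matters because $\on{Comp}^{\loc(m,n)}$ is not simply the unrestricted $\on{Comp}^\loc$ "viewed at finite level": it is built through the chain of identifications of Proposition~\ref{P:pushforward is local}, in which the inverse partial Frobenii genuinely change the depth of restriction at each step, so the restricted composite is not a priori determined by the unrestricted one via base change (the Cartesian trapezoid only expresses the unrestricted data as a pullback of the restricted data, which is the wrong direction for your descent). This is precisely why the paper argues directly at the restricted level: it unwinds both composites $\on{Comp}^{\loc(m_1,n_1)}_{\mu_1,\mu_3,\mu_4}\circ(\on{Comp}^{\loc(m_1,n_1)}_{\mu_1,\mu_2,\mu_3}\times\id)$ and $\on{Comp}^{\loc(m_1,n_1)}_{\mu_1,\mu_2,\mu_4}\circ(\id\times\on{Comp}^{\loc(m_2,n_2)}_{\mu_2,\mu_3,\mu_4})$ through the steps of Proposition~\ref{P:pushforward is local} and identifies each with one and the same "collapse" map on the triple product. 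Repairing your argument would mean verifying the descent datum on $\widetilde A\times_A\widetilde A$, and that verification amounts to essentially the same explicit compatibility check the paper performs; as written, your proof is incomplete at its decisive step.
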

\begin{proof}
Using the commutative diagram below (in which the superscripts $\loc(m_?, n_?)$ are omitted), we see that the composition $\mathrm{Comp}_{\mu_1, \mu_2, \mu_4}^{\loc(m_1, n_1)} \circ (\id \times \mathrm{Comp}_{\mu_2, \mu_3, \mu_4}^{\loc (m_2, n_2)})$ is the composition from upper left to lower left by first go to the upper right and then lower right. This is the same as the downwards composition directly from the upper left to lower left. By a symmetric argument, the composition of the downwards left column is the same as the composition $\mathrm{Comp}_{\mu_1, \mu_3, \mu_4}^{\loc(m_1, n_1)} \circ ( \mathrm{Comp}_{\mu_1, \mu_2, \mu_3}^{\loc (m_1, n_1)} \times \id)$. This then proves the lemma.
\begin{center}
\begin{sideways}
\begin{tiny}
$$
\xymatrix@R=40pt@C=-120pt{
\Sht_{\mu_1|\mu_2}^{\nu_1}  \underset{\Sht_{\mu_2}}{\times}
\Sht_{\mu_2|\mu_3}^{\nu_2}\underset{\Sht_{\mu_3}}\times
\Sht_{\mu_3|\mu_4}^{\nu_3} 
\ar@<-20pt>[d]^{\textrm{Prop.\ref{P:pushforward is local}}(1)} \ar[rr]^-{\textrm{Prop.\ref{P:pushforward is local}}(1)}
&&
\Sht^{\nu_1}_{\mu_1|\mu_2} \underset{\Sht_{\mu_2}}\times \Sht^0_{\mu_2|(\sigma(\nu_2^*), \eta_2)} \underset{\Sht_{\eta_2, \nu_2^*}} \times \Sht^0_{(\eta_2, \nu_2^*)|(\sigma(\nu_3^*), \eta_3)}\underset{\Sht_{\eta_3, \nu_3^*}} \times \Sht^0_{(\eta_3, \nu_3^*)|\mu_4} \ar@<20pt>[d]^\cong\ar[dll]_{\textrm{Prop.\ref{P:pushforward is local}}(1)}\!\!\!\!\!\!\!\!\!\!\!\!\!\!\!\!\!\!\!\!
\\ \!\!\!\!\!\!\!\!\!\!\!\!\!\!\!\!\!\!\!\!\!\!\!\!\!\!\!\!\!\!\!\!\!\!\! \!\!\!\!\!\!\!\!\!\!\!\!\!\!\!\!\!\!\!\!\!\!\!\!\!\!\!\!\!\!\!\!\!\!\!
\Sht^0_{\mu_1|(\sigma(\nu_1^*), \eta_1)} \underset{\Sht_{\eta_1, \nu_1^*}} \times \Sht^0_{(\eta_1, \nu_1^*)|(\sigma(\nu_2^*), \eta_2)}\underset{\Sht_{\eta_2, \nu_2^*}} \times \Sht^0_{(\eta_2, \nu_2^*)|(\sigma(\nu_3^*), \eta_3)}\underset{\Sht_{\eta_3, \nu_3^*}} \times \Sht^0_{(\eta_3, \nu_3^*)|\mu_4} \ar@<-20pt>[dd]^-\cong \ar[dr]^-\cong
&& \Sht_{\mu_1|\mu_2}^{\nu_1} \underset{\Sht_{\mu_2}}{\times}\Sht^0_{\mu_2|((\sigma(\nu_2^*), \sigma(\nu_3^*), \eta_2+\eta_3)} \underset{\Sht_{\eta_2+\eta_3, \nu_2^*, \nu_3^*}}{\times} \Sht^0_{(\eta_2+\eta_3, \nu_2^*, \nu_3^*)|\mu_4}\!\!\!\!\!\!\!\!\!\!\!\!\!\!\!\!\!\!\!\! \ar@<20pt>[dd]^{\textrm{Prop.\ref{P:pushforward is local}}(2)} \ar[dl]_{\textrm{Prop.\ref{P:pushforward is local}}(1)}
\\
& \Sht^0_{\mu_1|(\sigma(\nu_1^*), \eta_1)} \underset{\Sht_{\eta_1, \nu_1^*}}\times 
\Sht^0_{(\eta_1, \nu_1^*)|(\sigma(\nu_2^*),\sigma(\nu_3^*), \eta_2+\eta_3)} \underset{\Sht_{\eta_2+\eta_3,\nu_2^*, \nu_3^*}}{\times} \Sht_{(\eta_2+\eta_3,\nu_2^*, \nu_3^*)|\mu_4}^0 \ar[dl]^-\cong \ar[dd]^{\textrm{Prop.\ref{P:pushforward is local}}(2)}
\\
\!\!\!\!\!\!\!\!\!\!\!\!\!\!\!\!\!\!\!\!\!\!\!\!\!\!\!\!\!\!\!\!\!\!\!\!\!\!\!\!\!\!\!\!\!\!\!\!\!\!\!\!\!\!\!\!\!\!\!\!\!\!\!\!\!\!\!\!\!\!\!\!\!\!\!\!\!\!\!\!\! \Sht^0_{\mu_1|(\sigma(\nu_1^*),\sigma(\nu_2^*), \sigma(\nu_3)^*, \eta_1+\eta_2+\eta_3)} \underset{\Sht_{\eta_1+\eta_2+\eta_3, \nu_1^*, \nu_2^*, \nu_3^*}}\times \Sht^0_{(\eta_1+\eta_2+\eta_3, \nu_1^*, \nu_2^*, \nu_3^*)|\mu_4} \ar@<-20pt>[ddd]^{\textrm{Prop.\ref{P:pushforward is local}}(2)}  \ar[dr]_{\textrm{Prop.\ref{P:pushforward is local}}(2)}
&
& \ar[dl]_{\textrm{Prop.\ref{P:pushforward is local}}(1)}
\qquad \qquad \Sht_{\mu_1|\mu_2}^{\nu_1} \underset{\Sht_{\mu_2}}\times \Sht^{\nu_2+\nu_3}_{\mu_2|\mu_4}\ar@<20pt>[ddd]^{\textrm{Prop.\ref{P:pushforward is local}}(1)}
\\
&
\Sht^0_{\mu_1|(\sigma(\nu_1^*), \eta_1)} \underset{\Sht_{\eta_1, \nu_1^*}}\times 
\Sht^0_{(\eta_1, \nu_1^*)|(\sigma(\nu_2+\nu_3)^*, \eta_2+\eta_3)}\underset{\Sht_{\eta_2+\eta_3, \nu_2^*+\nu_3^*}}\times \Sht^0_{(\eta_2+\eta_3, \nu_2^*+\nu_3^*)|\mu_4} \ar[d]^\cong
\\
& \Sht_{\mu_1|(\sigma(\nu_1^*),\sigma(\nu_2+\nu_3)^*, \eta_1+\eta_2+\eta_3)}^0 \underset{\Sht_{ \eta_1+\eta_2+\eta_3, \nu_1^*, \nu_2^*+\nu_3^*}}{\times}
\Sht_{( \eta_1+\eta_2+\eta_3, \nu_1^*, \nu_2^*+\nu_3^*)|\mu_4}^0
\ar[dl]_{\textrm{Prop.\ref{P:pushforward is local}}(2)} &
\\
\!\!\!\!\!\!\!\!\!\!\!\!\!\!\!\!\!\!\!\!
\!\!\!\!\!\!\!\!\!\!\!\!\!\!\!\!\!\!\!\!
\!\!\!\!\!\!\!\!\!\!\!
\Sht_{\mu_1|(\sigma(\nu_1+\nu_2+\nu_3)^*, \eta_1+\eta_2+\eta_3)} \underset{\Sht_{\eta_1+\eta_2+\eta_3, \nu_1^*+ \nu_2^*+ \nu_3^*}}\times \Sht^0_{(\eta_1+\eta_2+\eta_3, \nu_1^*+ \nu_2^*+ \nu_3^*)|\mu_4}
&&
\Sht_{\mu_1|(\sigma(\nu_1^*), \eta_1)}^0 \underset{\Sht_{\eta_1, \nu_1^*}}{\times}
\Sht_{(\eta_1, \nu_1^*)|\mu_2}^0\underset{\Sht_{\mu_2}}\times \Sht^{\nu_2+\nu_3}_{\mu_2|\mu_4} \ar[ul]^-\cong
}
$$
\end{tiny}
\end{sideways}
\end{center}
\end{proof}

\newpage
\subsection{The category $\on{P}^{\on{Corr}}(\Sht^\loc_{\bar k})$}
\label{S: cat PCorrSht}

We now define the category $\rmP^\mathrm{Corr}(\Sht^\loc_{\bar k})$. It has the same objects as $\rmP(\Sht_{\bar k}^\loc)$. And we just need to explain the space of morphisms and compositions of them as follows.

\subsubsection{Definition of morphisms} Recall from \S\ref{SS:Perv(Sloc)} that every object of $\rmP(\Sht_{\bar k}^\loc)$ (or equivalently every object of $\rmP^\mathrm{Corr}(\Sht^\loc_{\bar k})$) is a direct sum of \emph{connected objects} $\calF$, i.e. objects that appear in the limit of $\rmP(\Sht_\mu^{\loc(m,n)})$ over $(\mu,m, n) \in \bbzeta \times \ZZ^2_{\geq 0}$ for some $\bbzeta \in \pi_1(G)$. Each connected object $\calF$ is realized as a perverse sheaf $\calF_{\mu}^{(m,n)} \in \rmP(\Sht_\mu^{\loc(m,n)})$ for some triple $(\mu,m,n)$ such that $m-n$ is $\mu$-large. For another triple $(\mu',m',n')\geq (\mu,m,n)$, we identify $\calF_{\mu}^{(m,n)}$ with $\mF_{\mu'}^{(m',n')}:=i_{\mu,\mu',*}\Res^{m',n'}_{m,n}\mF_{\mu}^{(m,n)}$.

Recall that for a septuple $(\mu_1,\mu_2,\nu,m_1,n_1,m_2,n_2)$ such that  $(m_1,n_1,m_2,n_2)$ is $(\mu_1+\nu,\nu)$-acceptable and $(\mu_2+\nu,\nu)$-acceptable,
there is the correspondence 
$$\Sht^{\loc(m_1,n_1)}_{\mu_1}\leftarrow \Sht^{\nu,\loc(m_1,n_1)}_{\mu_1\mid\mu_2}\to \Sht^{\loc(m_2,n_2)}_{\mu_2}$$
from Definition \ref{D:mS lambda mu truncated}. If $(\mu'_1,\mu'_2,\nu',m'_1,n'_1,m'_2,n'_2)\geq (\mu_1,\mu_2,\nu,m_1,n_1,m_2,n_2)$ is another septuple satisfying the similar acceptability conditions, there is the following diagram
\begin{equation}
\label{E: loc Hecke corr changing level}
\begin{CD}
\Sht_{\mu_1}^{\loc(m_1,n_1)}@<\res^{m'_1,n'_1}_{m_1,n_1}<< \Sht_{\mu_1}^{\loc(m'_1,n'_1)}@>i_{\mu_1,\mu'_1}>> \Sht_{\mu'_1}^{\loc(m'_1,n'_1)}\\
@AAA@AAA@AAA\\
\Sht_{\mu_1\mid\mu_2}^{\nu,\loc(m_1,n_1)}@<\res<<\Sht_{\mu_1\mid\mu_2}^{\nu,\loc(m'_1,n'_1)}@>i>>\Sht_{\mu'_1\mid\mu'_2}^{\nu',\loc(m'_1,n'_1)}\\
@VVV@VVV@VVV\\
\Sht_{\mu_2}^{\loc(m_2,n_2)}@<\res^{m'_2,n'_2}_{m_2,n_2}<<\Sht_{\mu_2}^{\loc(m'_2,n'_2)}@>i_{\mu_2,\mu'_2}>>\Sht_{\mu'_2}^{\loc(m'_2,n'_2)}
\end{CD}
\end{equation}
where arrows to the left are equidimensional, perfectly smooth, of the same relative dimension, and arrows to the right are closed embeddings. Note that the upper left square is Cartesian by Lemma \ref{corr trun2} and all arrows in the upper right square are perfectly proper.


We define the space $\mathrm{Mor}_{\rmP^\mathrm{Corr}(\Sht^\loc)} (\calF_1, \calF_2)$ of morphisms between two connected objects $\calF_1, \calF_2$ ($\calF_i\in \on{P}(\Sht_{\bbzeta_i}^\loc)$) to be
\begin{equation}
\label{E: hom space of PcorrSht}
\bigoplus_{\bbzeta \in \pi_1(G)} \varinjlim_{(\mu_1,\mu_2,\nu,m_1,n_1,m_2,n_2)} \mathrm{Corr}_{\Sht_{\mu_1|\mu_2}^{\nu, \loc(m_1,n_1)}} \big( (\Sht_{\mu_1}^{\loc(m_1,n_1)}, \calF_{1,\mu_1}^{(m_1,n_1)}),\,( \Sht_{\mu_2}^{\loc(m_2,n_2)},\calF_{2,\mu_2}^{(m_2,n_2)}) \big),
\end{equation}
where the colimit is taken over all (partially ordered) sextuples $(\mu_1,\mu_2,\nu,m_1,n_1,m_2,n_2)$ satisfying the acceptability conditions as above (so that the space $\Sht_{\mu_1|\mu_2}^{\nu,\loc(m_1,n_1)}$ is well-defined) and $\mu_1\in\bbzeta_1,\mu_2\in\bbzeta_2,\nu\in\bbzeta$.
The connecting map of the filtered colimit is given by $i_!\circ\res^\star$, 
where $\res^\star$ is the shifted smooth pullback of cohomological correspondences along the map $\res$ in \eqref{E: loc Hecke corr changing level}  (in the sense of \S \ref{ASS:smooth pullback correspondence}, in particular Notation \ref{N: shifted pull back coh cor}),  and $i_!$ is the pushforward of cohomological correspondences along the map $i$ (in the sense of \S \ref{ASS:pushforward correspondence}) in \eqref{E: loc Hecke corr changing level}. Note that for $(m''_1,n''_1)\geq (m'_1,n'_1)$, the following diagram is Cartesian
\[\begin{CD}
\Sht_{\mu_1\mid \mu_2}^{\nu,\loc(m''_1,n''_1)}@>i>>\Sht_{\mu'_1\mid \mu'_2}^{\nu',\loc(m''_1,n''_1)}\\
@V\res VV@VV\res V\\
\Sht_{\mu_1\mid \mu_2}^{\nu,\loc(m'_1,n'_1)}@>i>>\Sht_{\mu'_1\mid\mu'_2}^{\nu',\loc(m'_1,n'_1)}
\end{CD}\]
Therefore, by Lemma \ref{AL:pushforward pullback compatibility}, Lemma \ref{L: comp of pull coh corr} and Lemma \ref{L: comp of push coh corr}, these connecting maps compose and the filtered colimit is well-defined.

Finally, we define the morphisms between general objects in $\rmP^\mathrm{Corr}(\Sht^\loc)$ by extending the definition on the connected ones by linearity.

\subsubsection{Composition of morphisms}
We now define the composition of morphisms. By linearity, it suffices to define this on the connected objects and we may assume that the given morphisms $c_1: \calF_1 \to \calF_2$ and $c_2: \calF_2 \to \calF_3$ are realized as correspondences (as opposed to linear combinations of correspondences)
\begin{align*}
c^{(\mu_1,\mu_2,\nu_1,m_1,n_1,m_2,n_2)}_1 \in \mathrm{Corr}_{\Sht_{\mu_1|\mu_2}^{\nu_1,\loc(m_1,n_1)}}\big( (\Sht_{\mu_1}^{\loc(m_1, n_1)}, \calF_{1,\mu_1}^{(m_1,n_1)}),\ (\Sht_{\mu_2}^{\loc(m_2, n_2)}, \calF_{2,\mu_2}^{(m_2,n_2)}) \big)&, \quad\textrm{and}\\
c^{(\mu_2,\mu_3,\nu_2,m_2,n_2,m_3,n_3)}_2 \in \mathrm{Corr}_{\Sht_{\mu_2|\mu_3}^{\nu_2,\loc(m_2,n_2)}}\big( (\Sht_{\mu_2}^{\loc(m_2, n_2)},\ \calF_{2,\mu_2}^{(m_2,n_2)}) \,\ (\Sht_{\mu_3}^{\loc(m_3, n_3)}, \calF_{3,\mu_3}^{(m_3,n_3)})\big)&,
\end{align*}
where the septuples satisfy the conditions as above.
According to the formalism of the composition of cohomological correspondences (see Definition~\ref{AD:correspondences}), the composition $c^{(\mu_2,\mu_3,\nu_2,m_2,n_2,m_3,n_3)}_2 \circ 
c^{(\mu_1,\mu_2,\nu_1,m_1,n_1,m_2,n_2)}_1$ is supported on the following correspondence
\[
\Sht_{\mu_1}^{\loc(m_1,n_1)} \leftarrow \Sht_{\mu_1|\mu_2}^{\nu_1,\loc(m_1, n_1)} \times_{ \Sht_{\mu_2}^{\loc(m_2,n_2)}}
\Sht_{\mu_2|\mu_3}^{\nu_2,\loc(m_2, n_2)}  \to  \Sht_{\mu_3}^{\loc(m_3,n_3)}.
\]

We then define the composition $c_2\circ c_1$ to be the cohomological correspondence from $\calF_1$ to $\calF_3$ so that $(c_2\circ c_1)^{(\mu_1,\mu_3,\nu_1+\nu_2,m_1,n_1,m_3,n_3)}$ is given by the push forward of $c^{(\mu_2,\mu_3,\nu_2,m_2,n_2,m_3,n_3)}_2 \circ 
c^{(\mu_1,\mu_2,\nu_1,m_1,n_1,m_2,n_2)}_1$ along the perfectly proper morphism
\[
\Sht_{\mu_1|\mu_2}^{\nu_1,\loc(m_1, n_1)} \times_{ \Sht_{\mu_2}^{\loc(m_2,n_2)}}
\Sht_{\mu_2|\mu_3}^{\nu_2,\loc(m_2, n_2)} \xrightarrow{\mathrm{Comp}^{\loc(m_1,n_1)}}\Sht_{\mu_1|\mu_3}^{\nu_1+\nu_2,\loc(m_1, n_1)},
\]
in the sense of \S\ref{ASS:pushforward correspondence}. To show that $c_2\circ c_1$ is well-defined, we need to check that if 
$$(\mu'_1,\mu'_2,\mu'_3,\nu'_1,\nu'_2,m'_1,n'_1,m'_2,n'_2,m'_3,n'_3)\geq (\mu_1,\mu_2,\mu_3,\nu_1,\nu_2,m_1,n_1,m_2,n_2,m_3,n_3)$$ 
(in the product partial order), then the connecting map in the colimit \eqref{E: hom space of PcorrSht}, 
$$i_!(\res^\star((c_2\circ c_1)^{(\mu_1,\mu_3,\nu_1+\nu_2,m_1,n_1,m_3,n_3)}))=(c_2\circ c_1)^{(\mu'_1,\mu'_3,\nu'_1+\nu'_2,m'_1,n'_1,m'_3,n'_3)}.$$
First, we can apply Lemma \ref{AL:pullback compatible with composition} and Lemma \ref{AL:pushforward compatible with composition} to see that under the pullback along $\res$ and the pushforward along $i$ (in \eqref{E: loc Hecke corr changing level}),  $c^{(\mu_2,\mu_3,\nu_2,m_2,n_2,m_3,n_3)}_2 \circ 
c^{(\mu_1,\mu_2,\nu_1,m_1,n_1,m_2,n_2)}_1$ maps to $c^{(\mu'_2,\mu'_3,\nu'_2,m'_2,n'_2,m'_3,n'_3)}_2 \circ 
c^{(\mu'_1,\mu'_2,\nu'_1,m'_1,n'_1,m'_2,n'_2)}_1$. Since the middle trapezoid in \eqref{E:projection localization truncated}  is Cartesian by Proposition \ref{P:pushforward is local}, we can apply Lemma \ref{AL:pushforward pullback compatibility} to conclude.

Finally, the composition satisfies the natural associativity law $(c_3\circ c_2) \circ c_1 \cong c_3 \circ (c_2 \circ c_1)$ by Lemma~\ref{L:cocycle condition for compositions} and Lemma \ref{L: comp of push coh corr}. The construction of $\on{P}^{\on{Corr}}(\Sht^\loc_{\bar k})$ now is complete.

\subsubsection{Description of morphisms}
We describe the endomorphism ring of one object in $\on{P}^{\on{Corr}}(\Sht^\loc_{\bar k})$. 
Recall by \eqref{E:P of Gr = P of Hk} and \eqref{E:functor P(Hk) to P(S)}, we have a functor
\[\on{P}_{L^+G\otimes \bar k}(\Gr\otimes \bar k)\cong \on{P}(\Hk^\loc_{\bar k})\xrightarrow{\Phi^\loc}\on{P}(\Sht^\loc_{\bar k}).\]
Let $\delta_{\mathbf{1}}\in\on{P}(\Sht^\loc_{\bar k})$  denote the image of $\IC_0$.
Recall from Example \ref{Ex:mStau mn},  $\Sht_0^{\loc(m,n)}\cong [L^m_nG\backslash \on{pt}]$ is perfectly smooth. Then $\delta_{\mathbf{1}}$ can be realized as 
$$\delta_{\mathbf{1}}^{(m,n)}:=\Ql[(m-n)\dim G](\frac{m-n}{2}\dim G)\in \on{P}(\Sht_{0,\bar k}^{\loc(m,n)})$$ for every $m\geq n$.

\begin{prop}
\label{P: endo of unit}
The endomorphism ring $\End_{\on{P}^{\on{Corr}}(\Sht^\loc_{\bar k})}(\delta_{\mathbf{1}})$ is isomorphic to the spherical Hecke algebra $H_G\otimes\Ql=C_c(G(\mO)\backslash G(F)/G(\mO))$.
\end{prop}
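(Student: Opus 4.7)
The plan is to identify $\End(\delta_{\mathbf{1}})$ with $H_G\otimes\Ql$ as $\Ql$-algebras by reducing the computation to cohomological correspondences on a $0$-dimensional stack, identifying these with functions on a double coset space, and finally checking that composition matches convolution.

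First I would use that $\delta_{\mathbf{1}}$ is the (shifted-and-twisted) constant sheaf on the closed substack $\Sht^\loc_0\cong\bfB G(\mO)$. By proper base change along the closed embedding $\Sht^{\nu,\loc(m_1,n_1)}_{0|0} \hookrightarrow \Sht^{\nu,\loc(m_1,n_1)}_{\mu_1|\mu_2}$ (for $\mu_1,\mu_2$ in the coroot lattice), the colimit \eqref{E: hom space of PcorrSht} defining $\End(\delta_{\mathbf{1}})$ collapses to one over $(\nu,m_1,n_1,m_2,n_2)$ with $\mu_1=\mu_2=0$. By Corollary \ref{C:expclit mStautau truncated} applied with $\tau=\mu=0$, there is a canonical identification $\Sht^{\nu,\loc(m_1,n_1)}_{0|0} \cong [L^{m_1}_{n_1}G\backslash X_{0,\nu^*}(1)]$.

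Next I would show that $X_0(1)=\{gL^+G\in\Gr : g^{-1}\sigma(g)\in L^+G\}$ is $0$-dimensional. Its $\bar k$-points are the $\sigma$-fixed points of $\Gr(\bar k) = G(L)/G(\mO_L)$, which by Lang's theorem (applied to each $\sigma$-stable Schubert cell over $\mO_L$) biject with $G(F)/G(\mO)$; scheme-theoretically $X_0(1)$ is a disjoint union of reduced perfect points. Consequently
\[
X_{0,\nu^*}(1)(\bar k) = \bigsqcup_{\mu\preceq\nu^*} G(\mO)\varpi^\mu G(\mO)/G(\mO),
\]
and the $\sigma^{-1}$-twisted left action of $L^{m_1}_{n_1}G$ factors through the finite quotient $G(\mO/\varpi^{n_1})$ acting by ordinary left multiplication. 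Since $X_{0,\nu^*}(1)$ is $0$-dimensional, the space $\mathrm{Corr}_{\Sht^{\nu,\loc(m_1,n_1)}_{0|0}}(\delta_{\mathbf{1}}^{(m_1,n_1)},\delta_{\mathbf{1}}^{(m_2,n_2)})$ identifies canonically with $\Ql$-valued functions on $G(\mO)\backslash G(\mO)\overline{\varpi^{\nu^*}}G(\mO)/G(\mO)$; the shifts $[(m_i-n_i)\dim G]$ and matching half-Tate twists in $\delta_{\mathbf{1}}^{(m_i,n_i)}$ are designed to absorb the gerbe structure of $L^{m_i}_{n_i}G\twoheadrightarrow G(\mO/\varpi^{n_i})$, so that the identification is independent of $(m_1,n_1,m_2,n_2)$. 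Passing to the filtered colimit over $\nu$ produces a vector-space isomorphism $\End(\delta_{\mathbf{1}})\cong C_c(G(\mO)\backslash G(F)/G(\mO))\otimes\Ql = H_G\otimes\Ql$.

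Finally I would check that the composition of cohomological correspondences matches convolution on $H_G$. The composition map $\mathrm{Comp}^{\loc(m_1,n_1)}$ of Proposition \ref{P:pushforward is local}, restricted to $\kappa=\lambda=\mu=0$, is on $\bar k$-points simply the group multiplication $G(F)\times_{G(\mO)}G(F)\to G(F)$ (the inverse partial Frobenius maps involved are all isomorphisms of prestacks and contribute trivially at this level). Under the identification from the previous paragraph, the proper pushforward of this map reproduces the convolution \eqref{E: convolution for functions}. The hard part will be ensuring that the normalizations (shifts/twists in $\delta_{\mathbf{1}}^{(m_i,n_i)}$ versus the smooth pullback/proper pushforward formalism for cohomological correspondences from Appendix \ref{Sec:cohomological correspondence}) combine exactly to yield the Haar measure with $\vol(G(\mO))=1$; this ultimately reduces to an elementary but notationally intricate count of volumes over the (pro-unipotent) identity components of each $L^{m_i}_{n_i}G$.
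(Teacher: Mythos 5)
Your proposal is correct and follows essentially the same route as the paper: identify $\Sht^{\nu,\loc(m_1,n_1)}_{0\mid 0}$ with $[L^{m_1}_{n_1}G\backslash X_{0,\nu^*}(1)]$ via Corollary \ref{C:expclit mStautau truncated}, observe that this ADLV is the discrete set $\Gr_{\nu^*}(k)$, read off the correspondence space as invariant functions (degree-zero Borel--Moore homology), pass to the colimit by extension by zero, and match composition with convolution by identifying $\on{Comp}^{\loc(m_1,n_1)}$ with the convolution map $\Gr_{\nu'^*}\tilde\times\Gr_{\nu^*}\to\Gr_{\nu'^*+\nu^*}$ on rational points. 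The only remark worth adding is that your final worry about volume normalization is unnecessary: since everything is supported on discrete fixed-point sets, the proper pushforward in degree-zero Borel--Moore homology is literally summation over fibers, which yields the convolution \eqref{E: convolution for functions} with $\on{vol}(G(\mO))=1$ with no further counting over the pro-unipotent kernels.
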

\begin{proof}
We first understand
\[\Sht^{\loc(m_1,n_1)}_{0}\xleftarrow{\overleftarrow{h}_0^{\loc(m_1,n_1)}} \Sht^{\nu,\loc(m_1,n_1)}_{0\mid 0}\rightarrow \Sht^{\loc(m_2,n_2)}_0\]
for $\nu\in\xcoch(T)^{+,\sigma}$.

Note that the affine Deligne-Lusztig variety $X_{0}(0)\cong \Gr(k)$ is discrete
and for , $X_{0,\nu^*}(0)\cong \Gr_{\nu^*}(k)=\Gr_{\nu^*}^{(\infty)}(k)/G(\mO)$. Choose $n$ to be $\nu$-large. By Corollary \ref{C:expclit mStautau truncated},  the left arrow of the above correspondence is identified with
\[[L^{m_1}_{n_1}G\backslash \on{pt}]\leftarrow [L^{m_1}_{n_1}G\backslash \Gr_{\nu^*}^{(\infty)}(k)/G(\mO)],\]
with the map is induced by the natural projection $\Gr_{\nu^*}^{(\infty)}(k)/G(\mO)\to \on{pt}$. It follows from the same considerations as in \S \ref{cl vs cc} (see also Example \ref{Ex:examples of correspondences} (5)) that 
\begin{eqnarray*}
 & &\mathrm{Corr}_{\Sht_{0|0}^{\nu, \loc(m_1,n_1)}} \big( (\Sht_{0}^{\loc(m_1,n_1)},\delta_{\mathbf{1}}^{(m_1,n_1)}),( \Sht_{0}^{\loc(m_2,n_2)},\delta_{\mathbf{1}}^{(m_2,n_2)}) \big)\\
&\cong &\on{H}_0^{\on{BM}}([L^{m_1}_{n_1}G\backslash \Gr_{\nu^*}^{(\infty)}(k)/G(\mO)]\otimes\bar k)  \cong C(\Gr_{\nu^*}(k))^{G(\mO/\varpi^{n_1})}=C(G(\mO)\backslash \Gr_{\nu^*}^{(\infty)}(k)/G(\mO)),
\end{eqnarray*}
where $C(-)$ denotes the space of $\Ql$-valued functions. More precisely, giving such a function $f: G(F)\to \Ql$, the correspondence $c^{(0,0,\nu,m_1,n_1,m_2,n_2)}$ at $[g]\in [L^{m_1}_{n_1}G\backslash \Gr_{\nu^*}^{(\infty)}(k)/G(\mO)]$ is given by multiplying $f(g): \Ql\to \Ql$.

Note that if $(\nu,m_1,n_1,m_2,n_2)\leq (\nu',m'_1,n'_1,m'_2,n'_2)$, the connecting map in \eqref{E: hom space of PcorrSht} under the above isomorphism is identified with
$$C(G(\mO)\backslash \Gr_{\nu^*}^{(\infty)}(k)/G(\mO))\to C(G(\mO)\backslash \Gr_{\nu'^*}^{(\infty)}(k)/G(\mO)),$$ 
where we regard a function on $\Gr_{\nu^*}^{(\infty)}(k)$ as a function on $\Gr_{\nu'^*}^{(\infty)}(k)$ by extension by zero. Therefore, as a vector space, 
$\End_{\on{P}^{\on{Corr}}(\Sht^\loc_{\bar k})}(\delta_{\mathbf{1}})$ is identified with the space of bi-$G(\mO)$-invariant, compactly supported functions on $G(F)$. We also need to identify the ring structure.

By the same proof as Lemma \ref{L: torsor over S-corr} and Corollary \ref{C:expclit mStautau truncated}, one can identify \eqref{E:composition restricted} ($\kappa=\la=\mu=0$) with
\[[L^{m_1}_{n_1}\backslash (\Gr_{\nu'^*}\tilde\times\Gr_{\nu^*})(k)]\to [L^{m_1}_{n_1}\backslash \Gr_{\nu'^*+\nu^*}(k)],\]
which is induced by the convolution map $\Gr_{\nu'^*}\tilde\times\Gr_{\nu^*}\to \Gr_{\nu'^*+\nu}$ (defined in \eqref{E: convolution map}). 
Now, let $c_1^{(0,0,\nu',m_1,n_1,m_2,n_2)}$ (resp. $c_2^{(0,0,\nu,m_2,n_2,m_3,n_3)}$) be a cohomological correspondence from $\delta_{\mathbf{1}}^{(m_1,n_1)}$ to $\delta_{\mathbf{1}}^{(m_2,n_2)}$ (resp. $c$ from $\delta_{\mathbf{1}}^{(m_2,n_2)}$ to $\delta_{\mathbf{1}}^{(m_3,n_3)}$), corresponding to $f_1\in C(G(\mO)\backslash \Gr_{\nu'^*}^{(\infty)}(k)/G(\mO))$ (resp. $f_2\in C(G(\mO)\backslash \Gr_{\nu^*}^{(\infty)}(k)/G(\mO))$), under the above identification. The composition $c_2^{(0,0,\nu,m_2,n_2,m_3,n_3)}\circ c_1^{(0,0,\nu',m_1,n_1,m_2,n_2)}$ is supported on $\Sht_{0\mid 0}^{\nu',\loc(m_1,n_1)}\times_{\Sht_0^{\loc(m_2,n_2)}}\Sht_{0\mid 0}^{\nu,\loc(m_2,n_2)}$, which again by the same considerations as in \S \ref{cl vs cc} and Example \ref{Ex:examples of correspondences} (5), corresponding to a function $G(\mO)$-invariant function on $\Gr_{\nu'^*}\tilde\times\Gr_{\nu^*}(k)$.
It is clear that for $(g_1,g_2)\in \Gr^{(\infty}_{\nu'^*}(k)\times\Gr^{(\infty)}_{\nu^*}(k)$, this function is $f_1(g_1)f_2(g_2)$. Therefore, the function on $\Gr_{\nu'^*+\nu^*}(k)$ corresponding to 
$$\on{Comp}^{\loc(m_1,n_1)}_!(c_2^{(0,0,\nu,m_2,n_2,m_3,n_3)}\circ c_1^{(0,0,\nu',m_1,n_1,m_2,n_2)})$$ 
is the convolution $f_1* f_2$ (defined in \eqref{E: convolution for functions}). The proposition follows.
\end{proof}

\begin{rmk}
\label{R: endo generalization}
(1) The same statement and argument hold if one replace $0$ by a central coweight $\tau\in\xcoch(Z_G)$. Namely, $\End_{\on{P}^{\on{Corr}}(\Sht^\loc_{\bar k})}(\Phi^\loc(\IC_\tau))=H_G\otimes\Ql$.

(2) Recall that in Remark \ref{R: more object}, for every $n>0$ we have the object $\delta_\rho$ for a representation $\rho$ of $G(\mO/\varpi^n)$. If $\rho$ is the regular representation of $G(\mO/\varpi^n)$, then by the same argument, one can show that 
$$\End_{\on{P}^{\on{Corr}}(\Sht^\loc_{\bar k})}(\delta_\rho)\cong C_c(K_n\backslash G(F)/K_n),$$ where $K_n=L^+G^{(n)}(k)$ is the $n$th principal congruence subgroup, and  $ C_c(K_n\backslash G(F)/K_n)$ denote the corresponding Hecke algebra.
\end{rmk}


\section{Cohomological correspondences between moduli of local shtukas}
\setcounter{equation}{0}
\setcounter{subsubsection}{0}
In this section, all (restricted) Hecke stacks and (restricted) moduli of local shtukas are considered over $\bar k$.
We explain the proof of Theorem~\ref{T:periodic geometric Satake}. 
Let us restate it here. 
\begin{thm}
\label{T:Spectral action}
\begin{enumerate}
\item There exists a functor $S: \on{Coh}^{\hat G}_{fr}(\hat G\sigma) \to \mathrm{P}^\mathrm{Corr}(\Sht^\loc_{\bar k})$ making the following diagram commutative.
\begin{equation}
\label{E:periodic geometric Satake}
\xymatrix@C=40pt{
\mathrm{Rep}(\hat G)
\ar[r]^-{\mathrm{Sat}} \ar[d] & \mathrm{P}(\Hk^\loc_{\bar k}) \ar[d]^{\Phi^{\loc}}
\\
\Coh^{\hat G}_{fr}(\hat G\sigma) \ar[r]^-{S} & \mathrm{P}^\mathrm{Corr}(\Sht^\loc_{\bar k}).
}
\end{equation}
Here the top arrow is the composition of the usual geometric Satake correspondence and its equivalence from \eqref{E:P of Gr = P of Hk}, and the right vertical arrow is the pullback of perverse sheaves as constructed in \eqref{E:functor P(Hk) to P(S)}. $\Coh^{\hat G}_{fr}(\hat G\sigma)$ is defined as the full subcategory of coherent sheaves $[\hat G\sigma/\hat G]$ spanned by
the pullback of coherent sheaves along the natural projection $[\hat G\sigma/\hat G]\to \bfB \hat G$ (see Notation~\ref{N:Rep(G) to Vect(G)} and the left arrow is the pullback functor.  
\item $S(\mO_{[\hat G\sigma/\hat G]})=\delta_{\mathbf{1}}$ and the map
\[S: \bfJ:= \Gamma([G\sigma/\hat G],\mO)\to \End_{\mathrm{P}^\mathrm{Corr}(\Sht^\loc_{\bar k})}(\delta_{\mathbf{1}})\cong H_G\otimes\Ql\]
coincides with the Satake isomorphism.
\end{enumerate}
\end{thm}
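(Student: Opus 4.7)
On objects, $S$ is forced by the commutativity of \eqref{E:periodic geometric Satake}: for $V\in\on{Rep}(\hat G)$, set $S(\tilde V):=\Phi^{\loc}(\on{Sat}(V))$; in particular $S(\tilde{\mathbf{1}})=\delta_{\mathbf{1}}$, which already gives half of (2). The content of (1) is the construction of $S$ on morphisms together with its functoriality. By the Peter--Weyl decomposition $\mathcal{O}_{\hat G}=\bigoplus_\nu V_\nu^*\otimes V_\nu$ and $c_\sigma$-invariance,
\begin{equation*}
\Hom_{\on{Coh}^{\hat G}_{fr}(\hat G\sigma)}(\tilde V_{\lambda_\bullet},\tilde V_{\mmu})=\bfJ(V_{\lambda_\bullet}^*\otimes V_{\mmu})=\bigoplus_{\nu\in \xcoch(T)^+}\Hom_{\hat G}(V_{\lambda_\bullet}\otimes V_\nu,\,V_{\mmu}\otimes V_{\sigma(\nu)}).
\end{equation*}
For any sufficiently dominant auxiliary $\eta$, an element of the $\nu$-summand can be factored as a pair of $\hat G$-equivariant maps $\alpha\colon V_{\lambda_\bullet}\to V_{\sigma(\nu^*)}\otimes V_\eta$ and $\beta\colon V_\eta\otimes V_{\nu^*}\to V_{\mmu}$ whose composition through $V_\eta$ recovers the original element.

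The plan for building $S$ on such a morphism is then as follows. Corollary~\ref{C: rep to cor} turns $\alpha$ and $\beta$ into cohomological correspondences on the Satake cycles $\Gr^{0}_{\lambda_\bullet|(\sigma(\nu^*),\eta)}$ and $\Gr^{0}_{(\eta,\nu^*)|\mmu}$; via \eqref{E:P of Gr = P of Hk fin} these descend to restricted local Hecke stacks, and the smooth projections $\varphi^{\loc(m,n)}$ of Definition~\ref{tlsht} pull them back to cohomological correspondences on $\Sht^{0,\loc(m_1,n_1)}_{\lambda_\bullet|(\sigma(\nu^*),\eta)}$ and $\Sht^{0,\loc(m_2,n_2)}_{(\eta,\nu^*)|\mmu}$. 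The Cartesian pentagon of Definition~\ref{D:mS lambda mu truncated} presents $\Sht^{\nu,\loc(m_1,n_1)}_{\lambda_\bullet|\mmu}$ as the gluing of these two Satake correspondences along the perfectly smooth inverse partial Frobenius $F^{-1}_{\eta,\nu^*}$ of Construction~\ref{Cons:inverse F restricted}; pulling the two cohomological correspondences back along this pentagon and splicing them produces a single correspondence supported on $\Sht^{\nu,\loc(m_1,n_1)}_{\lambda_\bullet|\mmu}$. Passing to the colimit over acceptable $(m_1,n_1)$ defines $S(\phi)$, and extending linearly across the Peter--Weyl sum defines $S$ on all of $\bfJ(V_{\lambda_\bullet}^*\otimes V_{\mmu})$.

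The remaining work is well-definedness, commutativity of \eqref{E:periodic geometric Satake}, and functoriality. Independence of the factorization $(\alpha,\beta,\eta)$ will reduce (via Corollary~\ref{C: rep to cor}) to cycle-level relations among Satake correspondences; independence of the truncation $(m_1,n_1,m_2,n_2)$ is built in through the colimit structure and Lemma~\ref{corr trun2}. Commutativity of \eqref{E:periodic geometric Satake} is then automatic: a morphism coming from $\on{Rep}(\hat G)$ sits in the $\nu=0$ summand where $\eta=0$ is allowed, and the entire construction collapses to $\Phi^{\loc}\circ\on{Sat}$. Functoriality is the core technical point: composition on the coherent side is composition of $\hat G$-maps, while composition on the $\rmP^{\on{Corr}}$ side is pushforward along $\on{Comp}^{\loc(m_1,n_1)}$ of Proposition~\ref{P:pushforward is local}. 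The match will follow by decomposing $\on{Comp}^{\loc}$ into its Satake and partial-Frobenius factors---the restricted analogue of Lemma~\ref{L:decomposition of Comp}, built from Lemmas~\ref{L: comm Sat and pFrob} and \ref{L:technical lemma for comp loc restricted}---combined with the monoidality of Corollary~\ref{C: rep to cor} at each Satake step; associativity will follow from Lemma~\ref{L:cocycle condition for compositions}.

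For Part (2), specializing to $\lambda_\bullet=\mmu=\mathbf{1}$ produces a ring map $S\colon\bfJ\to\End_{\rmP^{\on{Corr}}}(\delta_{\mathbf{1}})\cong H_G\otimes\Ql$, with the second identification given by Proposition~\ref{P: endo of unit}. For each $\sigma$-fixed dominant $\nu$, the one-dimensional summand $\Hom_{\hat G}(V_\nu,V_{\sigma(\nu)})$ of $\bfJ$ maps under $S$ to the cohomological correspondence on $\Gr^{0}_{\nu|\sigma(\nu)}$ produced by Corollary~\ref{C: rep to cor}; unwinding the proof of Proposition~\ref{P: endo of unit} via the Borel--Moore realization of Proposition~\ref{SS:cycles of geometric Satake}(3) will show that under the sheaf-function dictionary this is exactly the spherical Hecke function attached to $\on{Sat}(V_\nu)$. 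By Proposition~\ref{P: classical Sat=geometric Sat} this coincides, up to the $\hat G\phi\leftrightarrow\hat G\sigma$ switch of Remark~\ref{R: geom Sat, arith Frob v.s. geom Frob}, with the classical Satake isomorphism on $\Gamma([\hat G\sigma/\hat G],\calO)$. The hardest part of the whole argument will be the functoriality check in the previous paragraph: a substantial diagram chase intertwining Cartesianness and smoothness of Satake correspondences, commutation of partial Frobenius morphisms with all surrounding structures, and the cohomological correspondence formalism of the appendix. A secondary subtlety is the consistent bookkeeping of the dualities $\mu\leftrightarrow\mu^*$, $\sigma\leftrightarrow\sigma^{-1}$, and $\hat G\phi\leftrightarrow\hat G\sigma$ (Remarks~\ref{R:Gphi/G versus Gsigma/G}, \ref{R: geom Sat, arith Frob v.s. geom Frob}, \ref{R:shtukas E to sigma E}), which propagates through every step of the argument.
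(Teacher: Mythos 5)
Your proposal is correct in outline and follows essentially the same route as the paper: $S$ on objects is $\Phi^{\loc}\circ\Sat$, on morphisms it is built as a Satake cohomological correspondence, the restricted inverse partial Frobenius, and a second Satake correspondence glued along the pentagon defining $\Sht^{\nu,\loc(m_1,n_1)}_{\la_\bullet\mid\mmu}$, with well-definedness over truncations, collapse to $\Phi^{\loc}(\Sat(\bba))$ when $\nu=0$, functoriality via the decomposition of $\on{Comp}^{\loc(m_1,n_1)}$, and Part (2) via the sheaf-function dictionary and Proposition~\ref{P: classical Sat=geometric Sat} together with the $\sigma$ versus $\phi$ switch. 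The one organizational caveat is that a general element of the $\nu$-isotypic summand cannot be factored through a single irreducible $V_\eta$; the paper instead writes every element of $\bfJ(V_1^*\otimes V_2)$ as $\Xi_W(\bba)$ with the canonical unit (creation) map as the first leg, uses $\eta$ only as a support bound, and proves independence of the presentation $(W,\bba)$ by passing to the regular representation $\mO_{\hat G}$.
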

Here the isomorphism $\End_{\mathrm{P}^\mathrm{Corr}(\Sht^\loc_{\bar k})}(\delta_{\mathbf{1}})\cong H_G\otimes\Ql$ in Part (2) is from Proposition \ref{P: endo of unit}, and the Satake isomorphism comes from \eqref{E: Sat, arith Frob}. 

To prove the theorem, we will upgrade the correspondences \eqref{deompf1} between moduli of restricted local shtukas to cohomological correspondences.
The essential computation is to verify the compatibility of this functor with respect to the composition law of morphisms. In special cases, we describe these cohomological correspondences more explicitly, by relating them to the cycle class maps or to the Hecke operators. In particular, we give a simpler proof of V. Lafforgue's $S=T$ theorem \cite{La}\footnote{In fact, Varshavsky (\cite{varS=T}) already gave a simpler proof of the $S=T$ theorem, and our proof is a further simplification of Varshavsky's proof.}, which is the content of Part (2) of the theorem.

For the generalities of cohomological correspondences between perfect stacks, we refer to \S \ref{Sec:cohomological correspondence}.


\subsection{Preparations}

We keep the notation as in the previous section.

\begin{notation}
\label{N:Rep(G) to Vect(G)}
Recall that the Langlands dual group $\hat G$ carries an action of (arithmetic) $q$-Frobenius $\sigma$.
Consider the conjugation action of $\hat G$ on $\hat G\sigma\subset {^L}G=\hat G\rtimes\langle\sigma\rangle$, or equivalently the twisted conjugation action $c_\sigma$ of $\hat G$ on $\hat G$ 
given by $c_\sigma(g)(h) = gh\sigma(g^{-1})$.
We use $\mathrm{Coh}^{\hat G}(\hat G\sigma)$ to denote the category of coherent sheaves over the stack $[(\hat G\sigma) / \hat G]$, or equivalently coherent sheaves over $\hat G \sigma$ equipped with an equivariant $\hat G$-action.

For each algebraic representation $V$ of $\hat G$, we may associate a vector bundle over $\hat G \sigma$ whose global section is $\calO_{\hat G} \otimes V$ with a $\hat G$-action given by the twisted conjugation $c_\sigma$ on the first factor and natural action on the latter, that is, for $g \in \hat G$ and $f: \hat G \to V$, we have 
\[
g \bullet f(h) = g(f(g^{-1}h\sigma(g))), \quad h \in \hat G.
\]
We write $\widetilde V  \in \Coh^{\hat G}(\hat G\sigma)$ for the resulting equivariant bundle.
This defines a natural \emph{faithful} functor $\mathrm{Rep} (\hat G) \to \mathrm{Coh}^{\hat G}(\hat G\sigma)$, sending $V$ to $\widetilde V$. More abstractly, this is the pullback functor for coherent sheaves along the natural projection $[(\hat G\sigma)/\hat G]\to \bfB \hat G$.
\end{notation}

\subsubsection{Hom spaces in $\on{Coh}^{\hat G}(\hat G \sigma)$}
\label{SS:homs in Vect}

To benefit the readers, we make explicit the above functor on morphisms. For a representation $V$ of $G$, let $\bfJ(V)$ denote the space of $V$-valued functions $f$ on $\hat G$ such that $f(gh\sigma(g^{-1})) = g(f(h))$ for $g,h \in \hat G$, i.e. $\bfJ(V)= (\mO_{\hat G}\otimes V)^{\hat G}$.

For two representations $V_1, V_2 \in \mathrm{Rep}(\hat G)$, the hom space $\Hom_{\on{Coh}^{\hat G}(\hat G\sigma)}(\widetilde{V_1}, \widetilde{V_2})$ is the subspace of the hom space of coherent sheaves $\widetilde{V_1} \to \widetilde{V_2}$ on $\hat G\sigma$, consisting of homomorphisms that respect the  $\hat G$-actions. In other words,
\begin{equation}
\label{E:hom in Vect = J}
\mathrm{Hom}_{\Coh^{\hat G}(\hat G \sigma)} \big( \widetilde V_1, \widetilde V_2 \big) \cong \Hom_{\calO_{\hat G \sigma}} \big(\calO_{\hat G\sigma} \otimes V_1, \calO_{\hat G\sigma} \otimes V_2 \big)^{\hat G} \cong
\big( \calO_{\hat G\sigma} \otimes V_1^* \otimes V_2 \big)^{\hat G} \cong \bfJ(V_1^*\otimes V_2).
\end{equation}
Moreover, for a homomorphism $\bba \in \Hom_{\hat G}(V_1, V_2)$, its image $\widetilde\bba$ is the constant function with value $\bba \in V_1^* \otimes V_2$.

\begin{notation}
We write $\mathrm{Coh}^{\hat G}_{fr}(\hat G\sigma)$ for the full subcategory of  $\mathrm{Coh}^{\hat G}(\hat G\sigma)$ spanned by those $\widetilde V$ for $V \in \mathrm{Rep}(\hat G)$.
\end{notation}

To define the functor $S$, we need first make some preparations.

\begin{notation}
We fix a half Tate twist $(1/2)$.

Let $\Sat: \Rep(\hat G)\to \on{P}_{L^+G\otimes \bar k}(\Gr\otimes \bar k)$ be the geometric Satake equivalence.
For $V \in \Rep(\hat G)$,
we write $\Gr_V$ for the support of $\Sat(V)$ on $\Gr$. Explicitly, if $\{V_{\nu_i}\}$ are the Jordan-H\"older factors of $V$, then $\Gr_V = \cup_i \Gr_{\nu_i}$. Note that $\Gr_V$ may not be geometrically connected.
An integer $m$ is \emph{$V$-large} if it is $\nu_i$-large for all $i$.
For such an integer, we put $\Hk_V^{\loc(m)} = [L^mG \backslash \Gr_V]$ and we write  $\Sat(V)^{\loc(m)}$ for the descent of $\Sat(V)$ from $\Gr_V$ (via the equivalence \eqref{E:P of Gr = P of Hk fin}).  It represents the object $\Sat(V) \in \rmP(\Hk^{\loc})$. To simplify notations, sometimes we use $\Sat(V)^{\loc(m)}$ to denote the pair $(\Hk_V^{\loc(m)},\Sat(V)^{\loc(m)})$, if no confusion is likely to arise.

More generally, for a representation $V_\bullet = V_1\boxtimes\cdots\boxtimes V_t$ of representations of $\hat G^t$, we define $\Gr_{V_\bullet} = \Gr_{V_1}\tilde\times \cdots \tilde\times \Gr_{V_t}$ to be the support of the external twisted product 
$$\Sat(V_\bullet):= \Sat(V_1) \tilde \boxtimes \cdots \tilde \boxtimes \Sat(V_t).$$ An integer $m$ is \emph{$V_\bullet$-large} if it can be written as a sum $m=m_1+\cdots +m_t$ such that each $m_i$ is $V_i$-large. For such an integer, we write $\Hk_{V_\bullet}^{\loc(m)}=[L^mG\backslash \Gr_{V_\bullet}]$ and
$\Sat(V_\bullet)^{\loc(m)}  \in \rmP(\Hk_{V_\bullet}^{\loc(m)})$ for the descent of  $\Sat(V_\bullet)$ 
to $\Gr_{V_\bullet}$ (after shifted by $-m\dim G$ and twisted by $(-\frac m2\dim G)$).
Moreover,  $\Sat(V_\bullet)^{\loc(m)}$ is isomorphic to the $\star$-pullback (in the sense of Notation \ref{N:star pullback}) of $\boxtimes_i\Sat(V_i)^{\loc(m_i)}$ along
the perfectly smooth map $\Hk^{\loc(m)}_{V_\bullet}\to \prod_i\Hk^{\loc(m_i)}_{V_i}$ from \eqref{break chain}.

We can similarly define $\Gr^0_{V_\bullet\mid W_\bullet}=\Gr_{V_\bullet}\times_{\Gr}\Gr_{W_\bullet}$ and $\Hk^{0,\loc(m)}_{V_\bullet\mid W_\bullet}=[L^mG\backslash \Gr^0_{V_\bullet\mid W_\bullet}]$. We will occasionally need 
$$\Hk^{\loc(m)}_{U_\bullet; V_\bullet\mid W_\bullet; U'_\bullet}=[L^mG\backslash \Gr_{U_\bullet}\tilde\times\Gr_{V_\bullet\mid W_\bullet}\tilde\times \Gr_{U'_\bullet}]\subset\Hk^{\loc(m)}_{U_\bullet\boxtimes V_\bullet\boxtimes U'_\bullet\mid U_\bullet\boxtimes W_\bullet\boxtimes U'_\bullet}.$$

For a pair of nonnegative integer $(m,n)$, similarly to Definition~\ref{tlsht},  we define the restricted local shtukas $\Sht_{V_\bullet}^{\loc(m,n)}$ and $\Sht_{V_\bullet |W_\bullet}^{0,\loc(m,n)}$ (if $m-n$ is $V_\bullet$-large and $W_\bullet$-large), and $\Sht^{\loc(m,n)}_{U_\bullet; V_\bullet\mid W_\bullet; U'_\bullet}\subset \Sht^{\loc(m,n)}_{U_\bullet\boxtimes V_\bullet\boxtimes U'_\bullet\mid U_\bullet\boxtimes W_\bullet\boxtimes U'_\bullet}$ if $m-n$ is $U_\bullet\boxtimes V_\bullet \boxtimes U'_\bullet$-large and $U_\bullet\boxtimes W_\bullet\boxtimes U'_\bullet$-large.
There is the morphism
\[
\varphi^{\loc(m,n)} : \Sht_{V_\bullet}^{\loc(m,n)} \to \Hk_{V_\bullet}^{\loc(m)}.
\]
For an algebraic representation $V \in \mathrm{Rep}(\hat G)$, we define 
\begin{equation}
\label{E:tilde Sat on objects}
S(\widetilde V): = \Phi(\Sat(V)) \in \mathrm{P}(\Sht^\loc),
\end{equation}
where $\Phi$ is defined in \eqref{E:functor P(Hk) to P(S)}.
More precisely, for any pair $(m,n)$ such that $m-n$ is $V$-large and $n>0$, $S(\widetilde V)$ is represented by the perverse sheaf
\[
S(\widetilde V)^{\loc(m,n)}: = \Phi^{\loc(m,n)} ( \Sat(V)^{\loc(m)} ) \in \rmP(\Sht_V^{\loc(m,n)}) ,
\]
where $\Phi^{\loc(m,n)}$ is defined in \eqref{E:functor P(Hk) to P(S)fin}, namely the $\star$-pullback (in the sense of Notation \ref{N:star pullback}) of sheaves along $\varphi^{\loc(m,n)}$ . Similarly as above, to simplify notations sometimes we use $S(\widetilde V)^{\loc(m,n)}$ to denote the pair $(\Sht_V^{\loc(m,n)},S(\widetilde V)^{\loc(m,n)})$, if no confusion is likely to arise.

More generally, if $V_\bullet = V_1\boxtimes\cdots\boxtimes V_t$ is a representation of $\hat G^t$, then for a pair of non-negative integers $(m,n)$ with $n>0$ and $m-n$ $V_\bullet$-large, we denote
\[
S(\widetilde{V}_\bullet)^{\loc(m,n)} 
 : = \Phi^{\loc(m,n)} (\Sat(V_\bullet)^{\loc(m)}) \in \rmP(\Sht_{V_\bullet}^{\loc(m,n)}).
\]

Now for a representation $V_\bullet = V_1\boxtimes\cdots\boxtimes V_t$ of $\hat G^t$ and a representation $W$ of $\hat G$. We say a quadruple of nonnegative integers $(m_1,n_2,m_1,n_1)$ is $(V_\bullet\boxtimes W)$-acceptable if 
\begin{itemize}
\item $m_1-m_2=n_1 - n_2$ is $W$-large;
\item $m_2-n_1$ is $V_\bullet$-large.
\end{itemize}
Then we can adapt Construction~\ref{Cons:inverse F restricted} to define an analogous inverse partial Frobenius morphism
\[
F_{V_\bullet\boxtimes W}^{-1}: \Sht_{\sigma W\boxtimes V_\bullet}^{\loc(m_1,n_1)} \to \Sht_{V_\bullet\boxtimes W}^{\loc(m_2,n_2)}.
\]
Here $\sigma W$ is the $\sigma$-twist of $W$  as in Notation~\ref{N:sigma W}.

For representations $V_1, V_2$ and $W$ of $\hat G$, and nonnegative integers $(m_1, m_2, n_1, n_2)$ that is $((V_1\otimes W)\boxtimes W^*)$-acceptable and $((V_2\otimes W)\boxtimes W^*)$-acceptable, similarly to Definition~\ref{D:mS lambda mu truncated}, we define
\[
\Sht_{V_1|V_2}^{W, \loc(m_1,n_1)} : = \Sht_{V_1|\sigma W^*\boxtimes (\sigma W \otimes V_1)}^{0, \loc(m_1,n_1)} \times_{
\Sht_{(\sigma W \otimes V_1)\boxtimes W^*}^{\loc(m_2,n_2)}} \Sht_{(\sigma W \otimes V_1)\boxtimes W^*|V_2}^{0,\loc(m_2,n_2)}.
\]
\end{notation}

\begin{remark}
The newly defined spaces $\Hk_{V_\bullet}^{\loc(m)}$ and $\Sht_{V_\bullet}^{\loc(m,n)}$ are simply unions of $\Hk_{\mmu}^{\loc(m)}$  and $\Sht_{\mmu}^{\loc(m,n)}$. So we are free to use most of the results from the previous section.
\end{remark}

\subsubsection{Satake cohomological correspondences}
Recall that the geometric Satake (Theorem~\ref{T:geom Satake} and Corollary~\ref{C: rep to cor}) in particular gives a natural isomorphism between homomorphisms of $\hat G$-representations and cohomological correspondences of perverse sheaves on $\Gr$ supported on the Satake correspondences.
This isomorphism \eqref{E: rep to cor} descends to a canonical isomorphism
\begin{equation}\label{E: from Rep to Hecke corr}
{\mathscr C}^{\loc(m)}: \Hom_{\hat G}(V_{\bullet}, W_\bullet)\cong \on{Corr}_{\Hk_{V_\bullet|W_\bullet}^{0,\loc(m)}} \big(\Sat(V_\bullet)^{\loc(m)}, \Sat(W_\bullet)^{\loc(m)}\big).
\end{equation}
Here and below, we regard a $\hat G^t$-representation as a $\hat G$-representation by the diagonal restriction along $\hat G\subset \hat G^t$.

\begin{rmk}
\label{R:tensor versus box tensor}
There are different ways to interpret a tensor product representation of $\hat G$ as the diagonal restriction of an exterior tensor product representation of some power of $\hat G$. For example, we may either regard $W_1\otimes W_2\otimes W_3$ as the restriction $W_1\boxtimes W_2\boxtimes W_3$ as a representation of $\hat G^3$, or as the restriction of $(W_1\otimes W_2)\boxtimes W_3$ as a representation of $\hat G^2$. Therefore, an element $\bba\in\Hom(V, W_1\otimes W_2\otimes W_3)$ will induce a correspondence
\[{\mathscr C}^{\loc(m)}(\bba)\in\on{Corr}_{\Hk_{V\mid W_1\boxtimes W_2\boxtimes W_3}^{\loc(m)}}((\Hk_V^{\loc(m)},\Sat(V)^{\loc(m)}), (\Hk_{W_1\boxtimes W_2\boxtimes W_3}^{\loc(m)}, \Sat(W_1\boxtimes W_2\boxtimes W_3)^{\loc(m)}),\]
and a correspondence
\[{\mathscr C}^{\loc(m)}(\bba)\in\on{Corr}_{\Hk_{V\mid (W_1\otimes W_2)\boxtimes W_3}^{\loc(m)}}((\Hk_V^{\loc(m)},\Sat(V)^{\loc(m)}), (\Hk_{(W_1\otimes W_2)\boxtimes W_3}^{\loc(m)}, \Sat((W_1\otimes W_2)\boxtimes W_3)^{\loc(m)}).\]
The latter is the pushforward of the former along the convolution map
$$\Hk_{W_1\boxtimes W_2\boxtimes W_3}^{\loc(m)}\to \Hk_{(W_1\otimes W_2)\boxtimes W_3}^{\loc(m)}$$
introduced in \eqref{E: conv local Hk}.
In below, readers should be able to tell from the notations that how we regard a tensor product representation of $\hat G$ as the restriction of an exterior tensor product.
\end{rmk}

Pulling back along the natural restriction morphism $\varphi^{\loc(m,n)}$ \eqref{E:restriction morphism}, we obtain correspondences on local shtukas.
\begin{lem}
\label{L:Satake correspondence shtukas}
Let $(m,n)$ be a pair such that $n>0$ and $m-n$ is $V_\bullet$-large and $W_\bullet$-large. There is a natural homomorphism
\begin{equation}
\label{E:geometric Satake truncated mS}
\scrC^{\loc(m,n)}: 
\Hom_{\hat G}(V_{\bullet},W_\bullet) \longto \on{Corr}_{\Sht_{V_\bullet|W_\bullet}^{0,\loc(m,n)}}\big( S(\widetilde{V_{\bullet}})^{\loc(m,n)},S(\widetilde{W_{\bullet}})^{\loc(m,n)}\big),
\end{equation}
such that the following diagram is commutative (if $m-n$ is also $U_\bullet$-large), 
\begin{equation}\label{E: comp corr Sht}
\xymatrix{
\Hom_{\hat G}(U_{\bullet},V_{\bullet})\otimes \Hom_{\hat G}(V_{\bullet},W_{\bullet}) \ar[r] \ar[d] &  \on{Corr}_{\Sht_{U_\bullet|V_\bullet}^{0,\loc(m,n)}}\big(S(\widetilde{U_\bullet}),S(\widetilde{V_\bullet})\big) \otimes \on{Corr}_{\Sht_{V_\bullet|W_\bullet}^{0,\loc(m,n)}}\big(S(\widetilde{V_\bullet}),S(\widetilde{W_\bullet})\big) \ar[d]\\ 
\Hom_{\hat G}(U_{\bullet},W_{\bullet}) \ar[r]& \on{Corr}_{\Sht_{U_\bullet|W_\bullet}^{0,\loc(m,n)}}\big(S(\widetilde{U_\bullet}), S(\widetilde{W_\bullet})\big),
}
\end{equation}
where we omit $\loc(m,n)$ in the superscripts, the right vertical map is given by the pushforward of the composition of cohomological correspondences along the perfectly proper morphism
\[
\on{Comp}^{\loc(m,n)}: \Sht_{U_\bullet|V_\bullet}^{0,\loc(m,n)} \times_{\Sht_{V_\bullet}^{\loc(m,n)}} \Sht_{V_\bullet|W_\bullet}^{0,\loc(m,n)} \longto \Sht_{U_\bullet|V_\bullet}^{0,\loc(m,n)}.
\]
as in the middle vertical arrow of  \eqref{E:Satake correspondence cartesian}.
\end{lem}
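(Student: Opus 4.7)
\medskip
\noindent\textbf{Proof proposal.} The plan is to build $\scrC^{\loc(m,n)}$ as the shifted smooth pullback of $\scrC^{\loc(m)}$ along the restriction morphism $\varphi^{\loc(m,n)}$, and then deduce the compatibility with composition from the corresponding statement on the Hecke side (Corollary~\ref{C: rep to cor}) together with the smooth base change / pullback-pushforward formalism of Appendix~\ref{Sec:cohomological correspondence}.

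More precisely, by \eqref{E:S and Hk Satake compatibility} the diagram
\[
\xymatrix{
\Sht_{V_\bullet}^{\loc(m,n)} \ar[d]_{\varphi^{\loc(m,n)}} & \Sht_{V_\bullet|W_\bullet}^{0,\loc(m,n)} \ar[l]\ar[r]\ar[d]_{\varphi^{\loc(m,n)}} & \Sht_{W_\bullet}^{\loc(m,n)}\ar[d]_{\varphi^{\loc(m,n)}} \\
\Hk_{V_\bullet}^{\loc(m)} & \Hk_{V_\bullet|W_\bullet}^{0,\loc(m)}\ar[l]\ar[r] & \Hk_{W_\bullet}^{\loc(m)}
}
\]
has both squares Cartesian, and the vertical maps are perfectly smooth of the same relative dimension $n\dim G$. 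By the definition of $S(\widetilde{V_\bullet})^{\loc(m,n)}=(\varphi^{\loc(m,n)})^{\star}\Sat(V_\bullet)^{\loc(m)}$ in \eqref{E:tilde Sat on objects}, the shifted smooth pullback of cohomological correspondences (Notation~\ref{N: shifted pull back coh cor}) produces the desired map
\[
\scrC^{\loc(m,n)}(\bba):= (\varphi^{\loc(m,n)})^{\star}\,\scrC^{\loc(m)}(\bba), \quad \bba\in\Hom_{\hat G}(V_\bullet,W_\bullet),
\]
which lands in $\on{Corr}_{\Sht^{0,\loc(m,n)}_{V_\bullet|W_\bullet}}\bigl(S(\widetilde{V_\bullet})^{\loc(m,n)},S(\widetilde{W_\bullet})^{\loc(m,n)}\bigr)$ as required.

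For the compatibility \eqref{E: comp corr Sht}, start with $\bba\otimes\bbb\in\Hom_{\hat G}(U_\bullet,V_\bullet)\otimes\Hom_{\hat G}(V_\bullet,W_\bullet)$. Going left-then-down, Corollary~\ref{C: rep to cor} identifies $\scrC^{\loc(m)}(\bbb\circ\bba)$ with the pushforward along the perfectly proper Hecke composition $\on{Comp}^{\loc(m)}:\Hk_{U_\bullet|V_\bullet}^{0,\loc(m)}\times_{\Hk_{V_\bullet}^{\loc(m)}}\Hk_{V_\bullet|W_\bullet}^{0,\loc(m)}\to \Hk_{U_\bullet|W_\bullet}^{0,\loc(m)}$ of $\scrC^{\loc(m)}(\bbb)\circ\scrC^{\loc(m)}(\bba)$; pulling back along $\varphi^{\loc(m,n)}$ then gives the down-then-right composite. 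Going down-then-left, we apply the composition of cohomological correspondences on the Shtuka side and push forward along $\on{Comp}^{\loc(m,n)}$. The middle square of \eqref{E:Satake correspondence cartesian} (Lemma~\ref{L:Satake correspondence cartesian}) is Cartesian with both vertical maps perfectly smooth of the same relative dimension, so smooth base change (Lemma~\ref{AL:pushforward pullback compatibility}) lets us commute $(\varphi^{\loc(m,n)})^{\star}$ past $\on{Comp}^{\loc(m)}_{!}$, turning the latter into $\on{Comp}^{\loc(m,n)}_{!}$. Finally, the fact that smooth pullback of correspondences is compatible with the inner fiber-product composition (Lemma~\ref{AL:pullback compatible with composition}) allows us to commute $(\varphi^{\loc(m,n)})^{\star}$ past the fiber-product composition of $\scrC^{\loc(m)}(\bba)$ and $\scrC^{\loc(m)}(\bbb)$, so the two sides agree.

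The main obstacle I foresee is a bookkeeping one: correctly lining up the Cartesian squares from Lemma~\ref{L:Satake correspondence cartesian} with the abstract formalism of \S\ref{ASS:smooth pullback correspondence}--\S\ref{ASS:pushforward correspondence}, and verifying that the shifts and Tate twists implicit in the $\star$-pullback and in the middle column of \eqref{E:Satake correspondence cartesian} match (the three vertical maps all have relative dimension $n\dim G$, which is why no extra shift appears). Everything else is then formal, and the final assertion that \eqref{E: comp corr Sht} commutes follows by concatenating the two commutation statements above.
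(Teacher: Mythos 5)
Your proposal is correct and follows essentially the same route as the paper: define $\scrC^{\loc(m,n)}$ as the ($\star$-shifted) smooth pullback of $\scrC^{\loc(m)}$ along the Cartesian diagram \eqref{E:S and Hk Satake compatibility}, and then reduce the compatibility \eqref{E: comp corr Sht} to the Hecke-stack statement (Proposition~\ref{SS:cycles of geometric Satake} and Corollary~\ref{C: rep to cor}) via the Cartesian square of Lemma~\ref{L:Satake correspondence cartesian} together with Lemmas~\ref{AL:pushforward pullback compatibility} and \ref{AL:pullback compatible with composition}. This matches the paper's (more compressed) proof.
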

\begin{proof}
Recall the commutative diagram from \eqref{E:S and Hk Satake compatibility}, where the vertical maps between the second and the third arrows are smooth. Then $\scrC^{\loc(m,n)}$ is the pullback of \eqref{E: from Rep to Hecke corr} along these by the construction in \S \ref{ASS:smooth pullback correspondence}.

To prove the \eqref{E: comp corr Sht} is commutative, we use the right Cartesian square of \eqref{E:Satake correspondence cartesian} to reduce the question to $\mathscr{C}^{\loc(m)}$ via Lemma \ref{AL:pushforward pullback compatibility}, which in turn follows from Corollaries \ref{SS:cycles of geometric Satake} and \ref{C: rep to cor}.
\end{proof}

\begin{ex}\label{Ex: creation and annilhilation}
For a representation $W_\bullet$ of $\hat G^t$, there is a canonical unit map 
$$\delta_{W_\bullet}: \mathbf{1}\to W_\bullet^*\otimes W_\bullet$$ and a counit map 
$$e_{W_\bullet}: W_\bullet\otimes W_\bullet^*\to \mathbf{1}.$$ The corresponding cohomological correspondence 
$$\scrC^{\loc(m,n)}(\delta_{W_\bullet}\otimes \id_{V_\bullet}): S(\widetilde{V_\bullet})^{\loc(m,n)}\to S(\widetilde{W^*_\bullet}\boxtimes(\widetilde{W_\bullet}\otimes \widetilde{V_\bullet}))^{\loc(m,n)}$$ and 
$$\scrC^{\loc(m,n)}(  \id_{V_\bullet}\otimes e_{W_\bullet} ): S((\widetilde{V_\bullet}\otimes \widetilde{W_\bullet})\boxtimes \widetilde{W_\bullet^*})^{\loc(m,n)}\to S(\widetilde{V_\bullet})^{\loc(m,n)}$$ 
are the local version of V. Lafforgue's creation and annihilation operators. 

The reason we regard $W_\bullet^*\otimes W_\bullet\otimes V_\bullet$ as the diagonal restriction of the $\hat G^2$-representation $W_\bullet^*\boxtimes(W_\bullet\otimes V_\bullet)$ rather than the diagonal restriction of the $\hat G^3$-representation $W_\bullet^*\boxtimes W_\bullet\boxtimes V_\bullet$ is to make the support of the cohomological correspondence defined in Construction \ref{Cons:CW} below equal to $\Sht_{V_1\mid V_2}^{W,\loc(m_1,n_1)}$.
\end{ex}

\begin{rmk}
\label{E: true support}
A homomorphism $\bba \in \Hom_{\hat G}(V_{\bullet}, W_{\bullet})$ induces a homomorphism 
\[
\id_{U_\bullet}\otimes\bba\otimes\id_{U'_\bullet} \in \Hom_{\hat G}(U_\bullet\otimes V_\bullet\otimes U'_\bullet, U_{\bullet}\otimes W_\bullet\otimes U'_\bullet).
\]
In this case, the correspondence $\scrC^{\loc(m,n)}(\id_{U_\bullet}\otimes\bba\otimes\id_{U'_\bullet})$ is the pushforward of a natural cohomological correspondence supported on $\Sht_{U_\bullet; V_\bullet\mid W_\bullet; U'_\bullet}^{0,\loc(m,n)}$.
Indeed, it is enough to prove this if $\bba$ belongs to the Satake basis. 
But this follows from the fact that the cohomological correspondence from $(\Gr_{U_\bullet\boxtimes V_\bullet\boxtimes U'_\bullet},\Sat(U_\bullet\boxtimes V_\bullet\boxtimes U'_\bullet))$ to $(\Gr_{U_\bullet\boxtimes W_\bullet\boxtimes U'_\bullet}, \Sat(U_\bullet\boxtimes W_\bullet\boxtimes U'_\bullet))$ given by $\id_{U_\bullet}\otimes\bba\otimes\id_{U'_\bullet}$ (under the isomorphism \eqref{E: rep to cor}) is the pushforward of a natural cohomological correspondence supported on $\Gr_{U_\bullet}\tilde{\times}\Gr_{V_\bullet\mid W_\bullet}^{0,\bba}\tilde\times\Gr_{U'_\bullet}$.

In the sequel, when we explicitly write a $\hat G$-homomorphism $U_\bullet\otimes V_\bullet\otimes U'_\bullet\to U_{\bullet}\otimes W_\bullet\otimes U'_\bullet$ as $\id_{U_\bullet}\otimes\bba\otimes\id_{U'_\bullet}$, we regard $\scrC^{\loc(m,n)}(\id_{U_\bullet}\otimes\bba\otimes\id_{U'_\bullet})$ as a cohomological correspondence supported on $\Sht_{U_\bullet; V_\bullet\mid W_\bullet; U'_\bullet}^{0,\loc(m,n)}$. 
\end{rmk}

The inverse partial Frobenius morphism $F^{-1}_{V_\bullet\boxtimes W}: \Sht_{\sigma W,V_\bullet}^{\loc(m_1,n_1)}\to \Sht_{V_\bullet, W}^{\loc(m_2,n_2)}$ between moduli of restricted shtukas can be also upgraded to a cohomological correspondence.
\begin{lem}
\label{L:partial Frobenius cohomological correspondence}
If $V_\bullet$ is a representations of $\hat G^t$ and $W$ is a representation of $\hat G$, then there is a natural cohomological correspondence
\begin{equation}
\label{E:F pulls back IC to IC}
\DD\Ga_{F^{-1}_{V_\bullet}\boxtimes W}^*:\big( \Sht_{\sigma W\boxtimes V_\bullet}^{\loc(m_1,n_1)}, S(\widetilde{\sigma W}\boxtimes \widetilde{V_\bullet})^{\loc(m_1,n_1)} \big) \longto \big(\Sht_{V_\bullet\boxtimes W}^{\loc(m_2,n_2)},S(\widetilde{V_\bullet}\boxtimes\widetilde{W})^{\loc(m_2,n_2)} \big).
\end{equation}
\end{lem}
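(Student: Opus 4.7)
The plan is as follows. By Lemma~\ref{L: rel dim pFrob}, the map $F^{-1}_{V_\bullet\boxtimes W}$ is equidimensionally perfectly smooth of relative dimension $0$, so there is a canonical isomorphism $(F^{-1}_{V_\bullet\boxtimes W})^! \cong (F^{-1}_{V_\bullet\boxtimes W})^\star$. Hence giving the desired cohomological correspondence $\DD\Ga^*_{F^{-1}_{V_\bullet\boxtimes W}}$ amounts to giving a morphism
\[
S(\widetilde{\sigma W}\boxtimes \widetilde{V_\bullet})^{\loc(m_1,n_1)} \longrightarrow (F^{-1}_{V_\bullet\boxtimes W})^\star S(\widetilde{V_\bullet}\boxtimes \widetilde{W})^{\loc(m_2,n_2)},
\]
and I will in fact produce a canonical isomorphism. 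By linearity and semisimplicity of the Satake category, it suffices to treat the case where $V_\bullet = V_{\mu_1}\boxtimes\cdots\boxtimes V_{\mu_{t-1}}$ and $W=V_{\mu_t}$ are irreducible, in which setting $S(\widetilde{V_\bullet}\boxtimes \widetilde{W})^{\loc(m_2,n_2)} = \Phi^{\loc(m_2,n_2)}(\Sat(V_\bullet\boxtimes W)^{\loc(m_2)})$.

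The key mechanism is the commutative diagram \eqref{E:pf as pf}, which (in the present notation, with $W$ playing the role of $\mu_t$ and $V_\bullet$ playing that of $(\mu_1,\ldots,\mu_{t-1})$) factors the composition
\[
\Sht^{\loc(m_1,n_1)}_{\sigma W\boxtimes V_\bullet}\ \xrightarrow{F^{-1}}\ \Sht^{\loc(m_2,n_2)}_{V_\bullet\boxtimes W}\ \xrightarrow{\varphi^{\loc(m_2,n_2)}}\ \Hk^{\loc(m_2)}_{V_\bullet\boxtimes W}\ \xrightarrow{\eqref{break chain}}\ \Hk^{\loc(n_1)}_W \times \Hk^{\loc(m_2)}_{V_\bullet}
\]
as
\[
\Sht^{\loc(m_1,n_1)}_{\sigma W\boxtimes V_\bullet}\ \xrightarrow{\varphi^{\loc(m_1,n_1)}}\ \Hk^{\loc(m_1)}_{\sigma W\boxtimes V_\bullet}\ \xrightarrow{\eqref{break chain}}\ \Hk^{\loc(n_1)}_{\sigma W} \times \Hk^{\loc(m_2)}_{V_\bullet}\ \xrightarrow{\sigma\times \id}\ \Hk^{\loc(n_1)}_W \times \Hk^{\loc(m_2)}_{V_\bullet}.
\]
Starting from the external twisted product $\Sat(W)^{\loc(n_1)}\boxtimes \Sat(V_\bullet)^{\loc(m_2)}$ on the common target, pullback along the first chain recovers $(F^{-1}_{V_\bullet\boxtimes W})^\star S(\widetilde{V_\bullet}\boxtimes \widetilde{W})^{\loc(m_2,n_2)}$ by the very definition of $\Sat(V_\bullet\boxtimes W)^{\loc(m_2)}$ as a smooth pullback via \eqref{break chain} and of the Satake realization $\Phi^{\loc(m_2,n_2)}$ as a smooth pullback via $\varphi^{\loc(m_2,n_2)}$. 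Pullback along the second chain recovers $S(\widetilde{\sigma W}\boxtimes \widetilde{V_\bullet})^{\loc(m_1,n_1)}$, provided one identifies $\sigma^\star \Sat(W)^{\loc(n_1)} \cong \Sat(\sigma W)^{\loc(n_1)}$, which is precisely the Galois-equivariance \eqref{E: Gal equiv Sat} of the geometric Satake equivalence.

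The main technical bookkeeping is to verify that the relative dimensions of the various perfectly smooth morphisms entering the two chains cancel out, so that the canonical isomorphism of perverse sheaves up to shift and Tate twist is in fact an isomorphism in the perverse $t$-structure with matching normalizations. This reduces to matching the relative dimensions of $\varphi^{\loc(m_i,n_i)}$ (which is $n_i\dim G$) and of the smooth morphism \eqref{break chain} with those of $F^{-1}_{V_\bullet \boxtimes W}$ (which is $0$ by Lemma~\ref{L: rel dim pFrob}) and of $\sigma\times\id$ (which is also $0$); the arithmetic works out on both sides. Granting this numerical check, the two pullbacks are canonically isomorphic, which provides the canonical correspondence $\DD\Ga^*_{F^{-1}_{V_\bullet\boxtimes W}}$ and completes the construction.
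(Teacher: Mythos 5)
Your construction is correct and is essentially the paper's own proof: the paper likewise rewrites the diagram \eqref{E:pf as pf}, uses the Galois-equivariance \eqref{E: Gal equiv Sat} of geometric Satake to compare $\Sat(W)$ and $\Sat(\sigma W)$ across the map $\sigma\times\id$, and obtains $\DD\Ga^*_{F^{-1}_{V_\bullet\boxtimes W}}$ by pulling back the resulting correspondence $\Ga^*_{\sigma\times\id}$ along the perfectly smooth vertical maps (\S\ref{ASS:smooth pullback correspondence}), which is exactly your argument phrased through the correspondence formalism rather than as a direct isomorphism onto $(F^{-1}_{V_\bullet\boxtimes W})^\star S(\widetilde{V_\bullet}\boxtimes\widetilde{W})$ via Lemma~\ref{L: rel dim pFrob}. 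The only cosmetic discrepancy is the direction of your last arrow (in \eqref{E:pf as pf} the map $\sigma\times\id$ goes from $\Hk^{\loc(n_1)}_{W}\times\Hk^{\loc(m_2)}_{V_\bullet}$ to $\Hk^{\loc(n_1)}_{\sigma W}\times\Hk^{\loc(m_2)}_{V_\bullet}$), which is harmless since on perfect stacks this map is invertible and the equivariance isomorphism can be read in either direction.
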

\begin{proof}
We construct $\DD\Ga_{F^{-1}_{V_\bullet\boxtimes W}}^*$ as follows.
We rewrite commutative diagram \eqref{E:pf as pf} as
\[\xymatrix{
\Sht_{\sigma W\boxtimes  V_{\bullet}}^{\loc(m_1,n_1)}\ar@{=}[r] \ar[d] & \Sht_{\sigma W\boxtimes  V_{\bullet}}^{\loc(m_1,n_1)}\ar[d] \ar[r]^-{F^{-1}_{V_\bullet\boxtimes W}}
& \Sht_{V_{\bullet}\boxtimes W}^{\loc(m_2,n_2)} \ar[d]
\\
\Hk^{\loc(n_1)}_{\sigma W}\times \Hk^{\loc(m_2)}_{V_\bullet}& \ar[l]_{\sigma\times\id} \Hk^{\loc(n_1)}_{W}\times \Hk^{\loc(m_2)}_{V_{\bullet}} \ar@{=}[r] &
 \Hk^{\loc(m_2)}_{V_{\bullet}}\times \Hk^{\loc(n_1)}_{W},
}\]
where vertical maps are perfectly smooth. By \eqref{E: Gal equiv Sat}, for every representation $W$ of $\hat G$ and a $W$-large integer $m$, there is a canonical isomorphism 
\[\sigma^*\Sat(\sigma W)^{\loc(m)}\cong \Sat(W)^{\loc(m)},\]
as perverse sheaves on $\Hk_{W}^{\loc(m)}$, which by Example \ref{Ex:examples of correspondences} (2) defines a natural correspondence
\begin{small}
\[
\Ga_{\sigma\times\id}^*:(\Hk_{\sigma W}^{\loc(m)}\times\Hk_{V_\bullet}^{\loc(m')},\Sat(\sigma W)^{\loc(m)}\boxtimes\Sat(V_\bullet)^{\loc(m')}) \to (\Hk_{W}^{\loc(m)}\times\Hk_{V_\bullet}^{\loc(m')},\Sat(W)^{\loc(m)}\boxtimes\Sat(V_\bullet)^{\loc(m')}),
\]
\end{small}supported on the second row.
Then $\DD\Ga_{F^{-1}_{V_\bullet\boxtimes W}}^*$ is the pullback of the correspondence $\Ga_{\sigma\times\id}^*$ via the construction \S \ref{ASS:smooth pullback correspondence}.
\end{proof}

Later we will make use of the following two lemmas.
\begin{lem}
\label{L: double pFrob to single}
The pushforward of $\DD\Ga_{F^{-1}_{V_\bullet\boxtimes W_1\boxtimes W_2}}\circ \DD\Ga_{F^{-1}_{\sigma W_2\boxtimes V_\bullet\boxtimes W_1}}: S(\widetilde{\sigma W_1}\boxtimes\widetilde{\sigma W_2}\boxtimes \widetilde{V_\bullet})\to S(\widetilde{V_\bullet}\boxtimes \widetilde{W_1}\boxtimes \widetilde{W_2})$ along the convolution map
\[\begin{CD}
\Sht^{\loc(m_1,n_1)}_{\sigma W_1\boxtimes \sigma W_2\boxtimes V_\bullet}@= \Sht^{\loc(m_1,n_1)}_{\sigma W_1\boxtimes \sigma W_2\boxtimes V_\bullet}@>>>\Sht^{\loc(m_3,n_3)}_{V_\bullet\boxtimes W_1\boxtimes W_2}\\
@VVV@VVV@VVV\\
\Sht^{\loc(m_1,n_1)}_{\sigma (W_1\otimes W_2)\boxtimes V_\bullet}@= \Sht^{\loc(m_1,n_1)}_{\sigma (W_1\otimes W_2)\boxtimes V_\bullet}@>>> \Sht^{\loc(m_3,n_3)}_{V_\bullet\boxtimes (W_1\otimes W_2)}
\end{CD}\]
is equal to $\DD\Ga_{F^{-1}_{V_\bullet\boxtimes (W_1\otimes W_2)}}$.
\end{lem}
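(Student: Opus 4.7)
The strategy is to reduce the statement to a computation at the level of local Hecke stacks, where everything is governed by the Galois-equivariance of the geometric Satake equivalence and its compatibility with the tensor structure.

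By Lemma~\ref{L:partial Frobenius cohomological correspondence}, each of the two correspondences $\DD\Ga^*_{F^{-1}_{\sigma W_2\boxtimes V_\bullet\boxtimes W_1}}$ and $\DD\Ga^*_{F^{-1}_{V_\bullet\boxtimes W_1\boxtimes W_2}}$ is obtained as the smooth pullback (in the sense of \S\ref{ASS:smooth pullback correspondence}) of a Hecke-level correspondence $\Ga^*_{\sigma\times\id}$, which itself is constructed from the canonical Galois-equivariance isomorphism $\sigma^*\Sat(\sigma W)^{\loc(m)}\cong \Sat(W)^{\loc(m)}$ of \eqref{E: Gal equiv Sat}. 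Applying this twice and using the compatibility of smooth pullback with composition of cohomological correspondences (Lemma~\ref{AL:pullback compatible with composition}), I will express the composition $\DD\Ga^*_{F^{-1}_{V_\bullet\boxtimes W_1\boxtimes W_2}}\circ \DD\Ga^*_{F^{-1}_{\sigma W_2\boxtimes V_\bullet\boxtimes W_1}}$ as the smooth pullback of the single Hecke-level correspondence $\Ga^*_{\sigma\times\sigma\times\id}$ arising from the iterated isomorphism
\[
(\sigma\times\sigma\times\id)^*\bigl(\Sat(\sigma W_1)\boxtimes \Sat(\sigma W_2)\boxtimes \Sat(V_\bullet)\bigr)\cong \Sat(W_1)\boxtimes \Sat(W_2)\boxtimes \Sat(V_\bullet).
\]

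Next, I push this Hecke-level correspondence forward along the convolution maps of restricted local Hecke stacks
\[
\Hk^{\loc(n_1)}_{\sigma W_1\boxtimes \sigma W_2}\times \Hk^{\loc(m_3)}_{V_\bullet}\longto \Hk^{\loc(n_1)}_{\sigma(W_1\otimes W_2)}\times \Hk^{\loc(m_3)}_{V_\bullet}
\]
on the source, and the analogous map with all $\sigma$'s removed on the target. Since the geometric Satake equivalence is a tensor functor, these convolution maps send $\Sat(\sigma W_1)\boxtimes \Sat(\sigma W_2)$ to $\Sat(\sigma(W_1\otimes W_2))$ (using the canonical identification $\sigma(W_1\otimes W_2)\cong \sigma W_1\otimes\sigma W_2$) and likewise in the untwisted case. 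Moreover, the Galois-equivariance isomorphism is compatible with the tensor product, so the pushforward of $\Ga^*_{\sigma\times\sigma\times\id}$ along these two convolution maps coincides with the Hecke-level correspondence $\Ga^*_{\sigma\times\id}$ underlying $\DD\Ga^*_{F^{-1}_{V_\bullet\boxtimes(W_1\otimes W_2)}}$.

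To conclude, I combine these two steps using the compatibility between pushforward and smooth pullback of cohomological correspondences (Lemma~\ref{AL:pushforward pullback compatibility}), applied to the Cartesian squares relating convolution maps on moduli of restricted shtukas with those on restricted Hecke stacks via diagrams of the form \eqref{E:pf as pf}. The main subtlety I foresee is checking the Cartesianness and functoriality of these auxiliary diagrams in the presence of the Frobenius twist on different tensor slots, and tracking the various restriction levels $(m_i, n_i)$; but each such verification is routine given Construction~\ref{Cons:inverse F restricted} and the construction of convolution on moduli of restricted shtukas in \eqref{E: conv local Sht}.
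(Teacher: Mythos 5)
Your proposal is correct and follows essentially the same route as the paper's own proof: first rewrite the composite, via Lemma~\ref{AL:pullback compatible with composition}, as the smooth pullback of a single Hecke-level Frobenius-twist correspondence, then conclude by Lemma~\ref{AL:pushforward pullback compatibility} applied to the Cartesian square comparing the convolution map on restricted local shtukas with the one on restricted local Hecke stacks, the tensor property of geometric Satake identifying the Hecke-level pushforward. The only step the paper makes explicit that you elide is the passage from the product $\Hk_{W_1}^{\loc}\times\Hk_{W_2}^{\loc}\times\Hk_{V_\bullet}^{\loc}$, where your $\Ga^*_{\sigma\times\sigma\times\id}$ lives, to the twisted-product stack $\Hk_{W_1\boxtimes W_2}^{\loc}\times\Hk_{V_\bullet}^{\loc}$, which is the actual source of the convolution map: since the maps from the relevant shtuka spaces factor through the latter (and $\Sat(W_1\boxtimes W_2)$ is the $\star$-pullback of $\Sat(W_1)\boxtimes\Sat(W_2)$), the pullback of your correspondence coincides with the pullback of $\Ga^*_{\sigma_{W_1\boxtimes W_2}\times\id}$, and it is this latter correspondence that one pushes forward.
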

\begin{proof}By definition and Lemma \ref{AL:pullback compatible with composition}, $\DD\Ga_{F^{-1}_{V_\bullet\boxtimes W_1\boxtimes W_2}}\circ \DD\Ga_{F^{-1}_{\sigma W_2\boxtimes V_\bullet\boxtimes W_1}}$ is equal to the pullback of the correspondence $\Ga_{\sigma_{W_1}\times\sigma_{W_2}\times \id_{V_\bullet}}^*$, where 
$$\sigma_{W_1}\times\sigma_{W_2}\times\id_{V_\bullet}: \Hk_{W_1}^{\loc(m_1)}\times\Hk_{W_2}^{\loc(m_2)}\times\Hk_{V_\bullet}^{\loc(m')}\to \Hk_{\sigma W_1}^{\loc(m_1)}\times\Hk_{\sigma W_2}^{\loc(m_2)}\times\Hk_{V_\bullet}^{\loc(m')}.$$
Note that the map $\Sht^{\loc(m_1,n_1)}_{W_1\boxtimes  W_2\boxtimes V_\bullet}\to \Hk_{W_1}^{\loc(m_1)}\times\Hk_{W_2}^{\loc(m_2)}\times\Hk_{V_\bullet}^{\loc(m')}$ factors through $\Sht^{\loc(m_1,n_1)}_{W_1\boxtimes  W_2\boxtimes V_\bullet}\to \Hk_{ W_1\boxtimes W_2}^{\loc(m_1)}\times\Hk_{V_\bullet}^{\loc(m')}$. Then $\DD\Ga_{F^{-1}_{V_\bullet\boxtimes W_1\boxtimes W_2}}\circ \DD\Ga_{F^{-1}_{\sigma W_2\boxtimes V_\bullet\boxtimes W_1}}$ is equal to the pullback of the correspondence $\Ga_{\sigma_{W_1\boxtimes W_2}\times \id_{V_\bullet}}^*$, where 
$$\sigma_{W_1\boxtimes W_2}\times\id_{V_\bullet}: \Hk_{W_1\boxtimes W_2}^{\loc(m_1)}\times\Hk_{V_\bullet}^{\loc(m')}\to \Hk_{\sigma (W_1\boxtimes W_2)}^{\loc(m_1)}\times\Hk_{V_\bullet}^{\loc(m')}.$$
Note that the following diagram is Cartesian
\[\begin{CD}
\Sht^{\loc(m,n)}_{W_1\boxtimes  W_2\boxtimes V_\bullet}@>>> \Sht^{\loc(m,n)}_{(W_1\otimes  W_2)\boxtimes V_\bullet}\\
@VVV@VVV\\
\Hk_{W_1\boxtimes W_2}^{\loc(m)}\times \Hk_{V_\bullet}^{\loc(m')}@>>>\Hk^{\loc(m)}_{W_1\otimes W_2}\times \Hk_{V_\bullet}^{\loc(m')}.
\end{CD}
\]
Now apply Lemma \ref{AL:pushforward pullback compatibility} to conclude.
\end{proof}

\begin{lem}
\label{L: Sat corr comm with pFrob}
(1) Let $\bba\in\Hom_{\hat G}(U_\bullet, V_\bullet)$, and regard $\scrC^{\loc(m,n)}(\bba\otimes\id_W)$ (resp. $\scrC^{\loc(m,n)}(\id_{\sigma W}\otimes \bba$) as a cohomological correspondence supported on $\Sht^{0,\loc(m,n)}_{U_\bullet\mid V_\bullet;W}$ (resp. $\Sht^{0,\loc(m,n)}_{\sigma W; U_\bullet\mid V_\bullet}$) as in Remark \ref{E: true support}. Then
\[\scrC^{\loc(m,n)}(\bba\otimes\id_W)\circ \DD\Ga_{F^{-1}_{U_\bullet\boxtimes W}}= \DD\Ga_{F^{-1}_{V_\bullet\boxtimes W}}\circ \scrC^{\loc(m,n)}(\id_{\sigma W}\otimes \bba).\]

(2) Similarly, let $\bbb\in\Hom_{\hat G}(W, W')$. Then
\[\scrC^{\loc(m,n)}(\id_{U_\bullet}\otimes \bbb)\circ  \DD\Ga_{F^{-1}_{U_\bullet\boxtimes W}}=  \DD\Ga_{F^{-1}_{U_\bullet\boxtimes W'}}\circ \scrC^{\loc(m,n)}(\id_{U_\bullet}\otimes\sigma\bbb)\]
\end{lem}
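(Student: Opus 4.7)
The plan is to reduce both identities to analogous compatibilities at the level of local Hecke stacks, where they become formal consequences of the Galois-equivariance of the geometric Satake equivalence (Theorem~\ref{T: geom Sat upgraded}, especially the isomorphism~\eqref{E: Gal equiv Sat}).

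Recall the two inputs. The Satake correspondence $\scrC^{\loc(m,n)}(\cdot)$ is defined (Lemma~\ref{L:Satake correspondence shtukas}) as the smooth pullback of the Hecke-level Satake correspondence $\scrC^{\loc(m)}(\cdot)$ along the Cartesian diagram~\eqref{E:S and Hk Satake compatibility}, while the inverse partial Frobenius correspondence $\DD\Ga^*_{F^{-1}_{V_\bullet\boxtimes W}}$ is defined (Lemma~\ref{L:partial Frobenius cohomological correspondence}) as the smooth pullback of the graph correspondence $\Ga^*_{\sigma\times\id}$ on $\Hk^{\loc(n_1)}_W \times \Hk^{\loc(m_2)}_{V_\bullet}$ along the commutative diagram~\eqref{E:pf as pf}. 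Both sides of the asserted equality in part~(1) are therefore compositions of two smooth pullbacks, and by Definition~\ref{AD:correspondences} their composition is obtained by pushing forward the external fiber-product composition along $\on{Comp}^{\loc(m,n)}$ (Proposition~\ref{P:pushforward is local}).

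For part~(1), I would use Lemma~\ref{AL:pullback compatible with composition} (smooth pullback is compatible with composition of cohomological correspondences) together with Lemma~\ref{AL:pushforward pullback compatibility} (smooth pullback commutes with pushforward along Cartesian squares) to reduce the shtuka-level identity to the Hecke-level identity
\[
\scrC^{\loc(m)}(\bba\otimes\id_W)\circ\Ga^*_{\sigma_U\times\id}\;=\;\Ga^*_{\sigma_V\times\id}\circ\scrC^{\loc(m)}(\id_{\sigma W}\otimes\bba),
\]
regarded as maps $\Sat(\sigma W)\tilde\boxtimes\Sat(U_\bullet)\to \Sat(V_\bullet)\tilde\boxtimes\Sat(W)$. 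Under the canonical identification $\sigma^*\Sat(\sigma W)\cong\Sat(W)$ from~\eqref{E: Gal equiv Sat}, both sides are the Satake image of the same $\hat G$-homomorphism $W\otimes U_\bullet\to V_\bullet\otimes W$ (obtained from $\bba$ and the swap of the two tensor factors), so the equality is a direct consequence of functoriality of the geometric Satake equivalence. Part~(2) is handled by the same strategy: one reduces to the Hecke-level statement that, under $\sigma^*\Sat(\sigma W)\cong\Sat(W)$, the morphism $\sigma^*\Sat(\sigma\bbb)$ is identified with $\Sat(\bbb)$, which is again part of Theorem~\ref{T: geom Sat upgraded}.

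The main technical obstacle will be the bookkeeping: carefully tracking the various restricted Hecke stacks $\Hk^{\loc(m_i)}$ and the perfectly smooth morphisms from $\Sht^{\loc(m_i,n_i)}$ to $\Hk^{\loc(m_i)}$ for possibly different choices of $(m_i,n_i)$ on the two sides of the equality, and verifying that all the Cartesian squares implicit in~\eqref{E:shtukas restriction Cartesian with Satake correspondences}, \eqref{E:pf as pf}, and~\eqref{E:projection localization truncated} fit together so that Lemmas~\ref{AL:pullback compatible with composition} and~\ref{AL:pushforward pullback compatibility} can be iteratively applied. Once this bookkeeping is done, the essential content of the lemma is purely a statement at the Hecke level, and it is formal from the geometric Satake equivalence together with its Galois equivariance.
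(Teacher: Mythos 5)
Your proposal is essentially the paper's own proof: the paper likewise reduces, via Lemma~\ref{AL:pullback compatible with composition}, to the evident Hecke-level identity $(\scrC^{\loc(m)}(\bba)\times\id_{W})\circ \Ga_{\id_{U_\bullet}\times \sigma}^*= \Ga_{\id_{V_\bullet}\times\sigma}^*\circ(\scrC^{\loc(m)}(\bba)\times\id_{\sigma W})$, which holds not because both sides are Satake images of a single homomorphism but simply because the Satake correspondence and the Frobenius graph act on disjoint factors of the product, the identification $\sigma^*\Sat(\sigma W)\cong\Sat(W)$ of \eqref{E: Gal equiv Sat} being already built into the definition of the graph correspondence. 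The two points to tighten in your bookkeeping are that the Cartesian input identifying the supports of the two compositions is Lemma~\ref{L:technical lemma for comp loc restricted}(3) (diagram \eqref{E:decomp4 restricted}), not \eqref{E:pf as pf} (which is commutative but not Cartesian) nor \eqref{E:projection localization truncated}, and that no pushforward along $\on{Comp}^{\loc(m,n)}$ is involved: composition of cohomological correspondences in Definition~\ref{AD:correspondences} is already defined on the fiber product, and the asserted equality must hold at that level, as it is used later (in the proof of Lemma~\ref{L:CW compatible with tensor}) before any such pushforward.
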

\begin{proof}
We prove (1) and the proof of (2) is similar.
By Lemma \ref{L:technical lemma for comp loc restricted}(3), the supports of this two compositions of cohomological correspondences coincide. Now by definition and by Lemma \ref{AL:pullback compatible with composition}, the statement follows from the evident equality
\[(\scrC^{\loc(m)}(\bba)\times\id_{W})\circ \Ga_{\id_{U_\bullet}\times \sigma}^*= \Ga_{\id_{V_\bullet}\times\sigma}^*\circ(\scrC^{\loc(m)}(\bba)\times\id_{\sigma W})\]
as cohomological correspondences from $( \Hk_{\sigma W}^{\loc(m')} \times \Hk_{U_\bullet}^{\loc(m)},\Sat(\sigma W)^{\loc(m')})\boxtimes \Sat(U_\bullet)^{\loc(m)} $ to  $(\Hk_{W}^{\loc(m')}\times \Hk_{V_\bullet}^{\loc(m)}, \Sat(W)^{\loc(m')}\boxtimes 
\Sat(V_\bullet)^{\loc(m)})$.
\end{proof}

\subsection{Cohomological correspondences of perverse sheaves on the moduli of local shtukas}

\subsubsection{Sketch of the construction of $S$}
\label{SS:proof of periodic geometric Satake}
We now construct the natural functor  
\[
S: \Coh^{\hat G}_{fr}(\hat G \sigma) \to \rmP^\mathrm{Corr}(\Sht^\loc)
\]
so that the diagram \eqref{E:periodic geometric Satake} is commutative and hence prove Theorem~\ref{T:periodic geometric Satake}.

Every object in $\Coh^{\hat G}_{fr}(\hat G \sigma)$ is of the form $\widetilde V$ for some $V \in \Rep(\hat G)$. We define 
\[
S(\widetilde V): = \Phi (\Sat(V)) \in \rmP^\mathrm{Corr}(\Sht^\loc),
\]
which is represented by $\Sat(V)^{\loc(m,n)} \in \rmP(\Sht_V^{\loc(m,n)})$ for any pair $(m,n)$ of non-negative integers such that $m$ is positive and $m-n$ is $V$-large.
It is clear from the definition that the diagram \eqref{E:periodic geometric Satake} of functors is commutative for objects.
\medskip
We are left to define the functor $S$ on the morphisms, that is, for each pair of representations $V_1, V_2 \in \Rep(\hat G)$ a natural homomorphism
\begin{equation}
\label{E:widetilde Sat on hom}
S: \mathrm{Hom}_{\Coh_{fr}^{\hat G}(\hat G\sigma)} \big(\widetilde{V_1}, \widetilde{V_2} \big) \stackrel{\eqref{E:hom in Vect = J}}\cong \bfJ(V_1^*\otimes V_2) \longto \mathrm{Mor}_{\rmP^\mathrm{Corr}(\Sht^\loc)}\big(S(\widetilde{V_1}), S(\widetilde{V_2})\big)
\end{equation}
which is compatible with compositions. Moreover, we need to show the commutativity of functors in \eqref{E:periodic geometric Satake} for morphisms. We overview the construction and the proof below, and leave the details to later in this subsection.

For representations $W, V_1, V_2 \in \Rep(\hat G)$, there is a natural homomorphism
\[
\Xi_{W}: \Hom_{\hat G}(\sigma W \otimes V_1 \otimes W^*, V_2)\to (\calO_{\hat G\sigma} \otimes V_1^* \otimes V_2)^{\hat G} = \bfJ(V_1^* \otimes V_2)=\Hom_{\Coh_{fr}^{\hat G}(\hat G\sigma)}(\widetilde{V_1},\widetilde{V_2})
\]
sending $\bba \in \Hom_{\hat G}(\sigma W \otimes V_1 \otimes W^*, V_2)$ to the function
\begin{equation}
\label{E:XiW2}
\Xi_W(\bba): g \mapsto \sum_i \bba (g\cdot e_i \otimes e_i^* ) \in \Hom_{\overline \QQ_\ell}(V_1, V_2) \cong V_1^* \otimes V_2,
\end{equation}
where $\{e_i\}_i$ is a basis of $W  (=\sigma W)$, and $\{e_i^*\}_i$ its dual basis.
Moreover, it follows from the Peter-Weyl theorem that every element of $\bfJ(V_1^* \otimes V_2)$ is in the image of some $\Xi_W$. 
So to define $S$ on morphisms, we will first define in Construction~\ref{Cons:CW} a natural map
\[
\scrC_W: \Hom_{\hat G}(\sigma W \otimes V_1 \otimes W^*, V_2) \longto \mathrm{Mor}_{\rmP^\mathrm{Corr}(\Sht^\loc)}\big(S(\widetilde{V_1}), S(\widetilde{V_2})\big)
\]
for each representation $W$ of $\hat G$. Then for every element $\bbc \in \bfJ(V_1^* \otimes V_2)$, we write it as $\bbc = \Xi_{W} (\bba)$ for some representation $W$ and some element $\bba \in \Hom_{\hat G}(\sigma W \otimes V_1 \otimes W^*, V_2)$, and define 
\begin{equation}
\label{E:definition of tilde Sat on morphisms}
S(\bbc): = \scrC_W(\bba) \in \mathrm{Mor}_{\rmP^\mathrm{Corr}(\Sht^\loc)}\big(S(\widetilde{V_1}), S(\widetilde{V_2})\big).
\end{equation}
We will show that
\begin{itemize}
\item (Lemma~\ref{C:tilde Sat well defined}) the definition of
$S(\bbc)$ is independent of the choice of the representation $W$ and the element $\bba$, and
\item (Lemma~\ref{L:CW compatible with tensor}) this definition of $S$ on morphisms is compatible with compositions of morphisms.
\end{itemize}
Therefore, we obtained a well-defined functor $S: \Coh^{\hat G}_{fr}(\hat G \sigma) \to \rmP^\mathrm{Corr}(\Sht^\loc)$.
Finally, we will check in Lemma~\ref{L:commutativity of periodic geometric Satake diagram} that the diagram \eqref{E:periodic geometric Satake} of functors is commutative for morphisms.

To sum up, after we construct the map $\scrC_W$ below, the proof of Theorem~\ref{T:periodic geometric Satake} (1) is divided into Lemmas~\ref{C:tilde Sat well defined}, \ref{L:CW compatible with tensor}, and \ref{L:commutativity of periodic geometric Satake diagram}.
\hfill $\Box$

\begin{construction}
\label{Cons:CW}
We now define the natural map $\scrC_W$. For this, we take a quadruple $(m_1,n_1,m_2,n_2)$ that is $(V_1\otimes W)\boxtimes W^*$-acceptable and $(V_2\otimes W)\boxtimes W^*$-acceptable. Given an element $\bba \in \Hom_{\hat G}(\sigma W \otimes V_1 \otimes W^*, V_2)$, we define $\scrC_W(\bba) \in \mathrm{Mor}_{\rmP^\mathrm{Corr}(\Sht^\loc)}\big(S(\widetilde{V_1}), S(\widetilde{V_2})\big)$ to be the morphism represented by the cohomological correspondence in
\[
\mathrm{Corr}_{\Sht_{V_1\mid V_2}^{W,\loc(m_1,n_1)}} \big( S(\widetilde{V_1})^{\loc(m_1,n_1)}, 
S(\widetilde{V_2})^{\loc(m_2,n_2)} \big)
\]
given by the following composition,
\begin{align*}
 S(\widetilde{V_1})^{\loc(m_1,n_1)}  \xrightarrow{\scrC^{\loc(m_1, n_1)}(\delta_{\sigma W} \otimes \id_{V_1})}  & S(\widetilde{\sigma W^*}\boxtimes (\widetilde{\sigma W}\otimes\widetilde{V_1}))^{\loc(m_1,n_1)}&
\\
\xrightarrow{\DD \Gamma^*_{F^{-1}_{(\sigma W \otimes V_1)\boxtimes W^*}} \textrm{ of \eqref{E:F pulls back IC to IC}}}  & S((\widetilde{\sigma W} \otimes \widetilde{V_1})\boxtimes\widetilde{W^*})^{\loc(m_2,n_2)} \xrightarrow{\scrC^{\loc(m_2, n_2)}(\bba)} S(\widetilde{V_2})^{\loc(m_2,n_2)} .
\end{align*}
The definition of $\scrC_W(\bba)$ as a morphism in $\mathrm{Mor}_{\rmP^\mathrm{Corr}(\Sht^\loc)}\big(S(\widetilde{V_1}), S(\widetilde{V_2}) \big)$ is independent of the choice of $(m_1,n_1, m_2, n_2)$ in a natural way: if $(m'_1,n'_1, m'_2, n'_2)\geq (m_1,n_1,m_2,n_2)$ is another quadruple of nonnegative integers that is $(V_1\otimes W)\boxtimes W^*$-acceptable and $(V_2\otimes W)\boxtimes W^*$-acceptable, then $\scrC_W^{\loc(m'_1,n'_1)}(\bba)$ is the smooth pullback (in the sense of \S\ref{ASS:smooth pullback correspondence}) of $\scrC_W^{\loc(m_1,n_1)}(\bba)$ along the following diagram
\begin{equation}
\label{E: change of m,n in Hk corr}
\xymatrix{
\Sht_{V_1}^{\loc(m'_1,n'_1)} \ar[d] & \ar[l] \Sht_{V_1|V_2}^{W, \loc(m'_1,n'_1)} \ar[r] \ar[d]  & \Sht_{V_2}^{\loc(m'_2,n'_2)} \ar[d] 
\\
\Sht_{V_1}^{\loc(m_1,n_1)}& \ar[l] \Sht_{V_1|V_2}^{W, \loc(m_1,n_1)} \ar[r] & \Sht_{V_2}^{\loc(m_2,n_2)},
}
\end{equation}
where the vertical arrows are perfectly smooth and the left square is Cartesian. This follows from Lemma \ref{AL:pullback compatible with composition}, the Cartesian diagram \eqref{E:shtukas restriction Cartesian with Satake correspondences}, and the commutative diagram \eqref{E: pFrob v.s. res. pFrob}.
\end{construction}

We need to show the definition of $S$ on morphisms using $\scrC_W$ is independent of choices. First, we have
\begin{lem}
\label{L:alternative CW}
Let $\bba'$ denote the image of $\bba$ under the isomorphism $\Hom(\sigma W\otimes V_1\otimes W^*, V_2)\cong \Hom (V_1, \sigma W^*\otimes V_2\otimes W)$. Then $\scrC_W(\bba)$ can also be computed as
\begin{align*}
 S(\widetilde{V_1})^{\loc(m_1,n_1)}  \xrightarrow{\scrC^{\loc(m_1, n_1)}(\bba')}  & S(\widetilde{\sigma W^*}\boxtimes (\widetilde{V_2}\otimes \widetilde{W}))^{\loc(m_1,n_1)}&
\\
\xrightarrow{\DD \Gamma^*_{F^{-1}_{(V_2\otimes W)\boxtimes W^*}} \textrm{ of \eqref{E:F pulls back IC to IC}}}  & S((\widetilde{V_2}\otimes \widetilde{W})\boxtimes\widetilde{W^*})^{\loc(m_2,n_2)} \xrightarrow{\scrC^{\loc(m_2, n_2)}(\id_{V_2}\otimes e_W)} S(\widetilde{V_2})^{\loc(m_2,n_2)} .
\end{align*}
\end{lem}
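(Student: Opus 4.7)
The plan is to expand $\bba'$ using the adjunction formula and then reorder the resulting composition by commuting Satake cohomological correspondences past inverse partial Frobenius morphisms. The adjunction isomorphism $\Hom_{\hat G}(\sigma W \otimes V_1 \otimes W^*, V_2) \cong \Hom_{\hat G}(V_1, \sigma W^* \otimes V_2 \otimes W)$ expresses
\[
\bba' \;=\; (\id_{\sigma W^*} \otimes \bba \otimes \id_W) \circ (\delta_{\sigma W} \otimes \id_{V_1} \otimes \delta_W),
\]
so by Lemma \ref{L:Satake correspondence shtukas} the cohomological correspondence $\scrC^{\loc(m_1, n_1)}(\bba')$ factors as $\scrC^{\loc(m_1, n_1)}(\id \otimes \bba \otimes \id) \circ \scrC^{\loc(m_1, n_1)}(\delta_{\sigma W} \otimes \id \otimes \delta_W)$. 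Substituting this factorization into the composite described in the statement, the right-hand side of the lemma becomes a four-term composition that starts at $S(\widetilde{V_1})^{\loc(m_1,n_1)}$ and ends at $S(\widetilde{V_2})^{\loc(m_2,n_2)}$, in which $\bba$ is applied \emph{before} the inverse partial Frobenius morphism.

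Next I would apply Lemma \ref{L: Sat corr comm with pFrob}(1) to slide the Satake correspondence $\scrC(\id \otimes \bba \otimes \id)$ across $\DD\Gamma^*_{F^{-1}_{(V_2\otimes W)\boxtimes W^*}}$, thereby converting the four-term composite into one where $\bba$ appears \emph{after} the inverse partial Frobenius, which matches the shape of the left-hand side. After this commutation, the remaining data consists of the unit $\delta_{\sigma W}\otimes \id_{V_1}\otimes \delta_W$ on the source side and the counit $\id_{V_2}\otimes e_W$ on the target side, both acting trivially on $V_1$ (resp.~$V_2$) and pairing the inserted copies of $\sigma W, W$ with their duals. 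The triangle (zigzag) identity $(\id_V \otimes e_W) \circ (\delta_W \otimes \id_V) = \id_V$ then collapses the auxiliary $\delta_W$--$e_W$ pair, reducing the composition to $\scrC(\bba) \circ \DD\Gamma^*_{F^{-1}_{(\sigma W\otimes V_1)\boxtimes W^*}} \circ \scrC(\delta_{\sigma W}\otimes \id_{V_1})$, which is exactly $\scrC_W(\bba)$ as defined in Construction \ref{Cons:CW}.

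The main obstacle will be the careful bookkeeping of tensor-factor positions, $\sigma$-twists, and depth indices. The intermediate objects live as $S(\widetilde{U_\bullet})^{\loc(m,n)}$ for various exterior tensor products $U_\bullet$ of representations of $\hat G$, and the partial Frobenius morphisms act only on specific factors, so one must check that each commutation preserves the intended support and that the representations $\sigma W^* \otimes \sigma W \otimes V_1 \otimes W^* \otimes W$ are grouped into boxtimes compatibly at each stage. In particular, the commutation via Lemma \ref{L: Sat corr comm with pFrob}(1) shifts the depth indices from $(m_1, n_1)$ to $(m_2, n_2)$ (and introduces an intermediate pair), and one must verify that the chosen quadruple $(m_1, n_1, m_2, n_2)$ is acceptable for every intermediate representation appearing; this can be arranged by enlarging the indices if necessary, using the independence statement already recorded at the end of Construction \ref{Cons:CW}.
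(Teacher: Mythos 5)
Your overall strategy is sound and uses the same two key ingredients as the paper (compatibility of $\scrC$ with compositions, Lemma \ref{L:Satake correspondence shtukas}, and the commutation of Satake correspondences past the inverse partial Frobenius, Lemma \ref{L: Sat corr comm with pFrob}), and your expansion $\bba' = (\id_{\sigma W^*}\otimes\bba\otimes\id_W)\circ(\delta_{\sigma W}\otimes\id_{V_1}\otimes\delta_W)$ is correct. The gap is in your last step. After the single commutation you perform, the composite is
$\scrC(\id_{V_2}\otimes e_W)\circ\scrC(\bba\otimes\id_{W\otimes W^*})\circ\DD\Ga^*_{F^{-1}_{(\sigma W\otimes V_1\otimes W^*\otimes W)\boxtimes W^*}}\circ\scrC(\delta_{\sigma W}\otimes\id_{V_1}\otimes\delta_W)$,
so the unit $\delta_W$ still sits \emph{before} the inverse partial Frobenius while the counit $e_W$ sits \emph{after} it. The triangle identity is an identity inside a single $\Hom_{\hat G}$-space, and here the two maps you want to cancel never lie in one such space; moreover the expression you are aiming for involves a \emph{different} partial Frobenius morphism, $F^{-1}_{(\sigma W\otimes V_1)\boxtimes W^*}$ rather than $F^{-1}_{(\sigma W\otimes V_1\otimes W^*\otimes W)\boxtimes W^*}$, i.e.\ a map between different restricted shtuka spaces, and nothing in your argument changes that index. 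Note also that the pairing is not a self-contained $\delta_W$--$e_W$ zigzag on adjacent inserted factors: after the Frobenius, $e_W$ pairs the $W$ coming from $\delta_W$ with the $W^*$ obtained by transporting the $\sigma W^*$ from $\delta_{\sigma W}$ around the Frobenius, so the cancellation threads through the moved factor.

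The fix is one more application of Lemma \ref{L: Sat corr comm with pFrob}(1), this time to the morphism $\id_{\sigma W\otimes V_1}\otimes\delta_W$ (which acts only on the non-moved block), read so as to replace the pre-Frobenius insertion of $\delta_W$ by a post-Frobenius one; this simultaneously changes the Frobenius index to $F^{-1}_{(\sigma W\otimes V_1)\boxtimes W^*}$. Then Lemma \ref{L:Satake correspondence shtukas} lets you compose everything after the Frobenius into a single Satake correspondence, whose underlying homomorphism is $(\id_{V_2}\otimes e_W)\circ(\bba\otimes\id_{W\otimes W^*})\circ(\id_{\sigma W\otimes V_1}\otimes\delta_W\otimes\id_{W^*})=\bba$ by the triangle identity $(\id_{W^*}\otimes e_W)\circ(\delta_W\otimes\id_{W^*})=\id_{W^*}$, and you land exactly on the definition of $\scrC_W(\bba)$. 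With this extra step your proof is complete; it is then a mild reorganization of the paper's argument, which avoids the second crossing of the Frobenius by introducing the half-transposed homomorphism $\bba''\in\Hom(\sigma W\otimes V_1, V_2\otimes W)$ and factoring both composites through it, so that the two outer identifications are purely representation-theoretic triangles and only one commutation square (Lemma \ref{L: Sat corr comm with pFrob}) is needed.
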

\begin{proof}
Indeed, let $\bba''$ be the image of $\bba$ under the isomorphism $\Hom(\sigma W\otimes V_1\otimes W^*, V_2)\cong \Hom(\sigma W\otimes V_1, V_2\otimes W)$. Then the lemma follows from the following commutative diagram (where we omit subscripts involving $\loc(m_1,n_1)$ and $\loc(m_2,n_2)$).
\[\xymatrix@C=30pt{
S(\widetilde{V_1})\ar^-{\scrC(\delta_{\sigma W}\otimes\id_{V_1})}[rr]\ar_-{\scrC(\bba')}[drr]&& S(\widetilde{\sigma W^*}\boxtimes(\widetilde{\sigma W}\otimes \widetilde{V_1}))\ar^-{\bD\Gamma^*_{F^{-1}}}[r]\ar^-{\scrC(\id_{\sigma W^*}\otimes \bba'')}[d] & S((\widetilde{\sigma W}\otimes\widetilde{V_1})\boxtimes \widetilde{W^*})\ar^-{\scrC(\bba)}[rr]\ar^{\scrC(\bba''\otimes\id_W)}[d]&& S(\widetilde{V_2}),\\
&& S(\widetilde{\sigma W^*}\boxtimes(\widetilde{V_2}\otimes \widetilde{W}))\ar^{\bD\Gamma^*_{F^{-1}}}[r]& S((\widetilde{V_2}\otimes\widetilde{W})\boxtimes \widetilde{W^*}) \ar_-{\ \ \scrC(\id_{V_2}\otimes e_W)}[urr]
}\]
where the left and the right triangles are commutative due to equalities
\[(\id_{\sigma W^*}\otimes \bba'')\circ  (\delta_{\sigma W}\otimes \id_{V_1})= \bba' \textrm{\quad and\quad} (\id_{V_2}\otimes e_W)\circ (\bba''\otimes \id_{W^*})=\bba,\]
and the middle square is commutative due to Lemma \ref{L: Sat corr comm with pFrob}.
\end{proof}
\begin{rmk}
\label{R:alternative CW}
The above proof is similar to an argument in \cite[Lemma 10.1]{La}.
In fact, the proof implies the followings slightly more general statements. Let $f_1\otimes f_2: W_1\otimes W_2\to W'_1\otimes W'_2$ be a homomorphism of $\hat G\times \hat G$-modules. Let $\bbb\in\Hom_{\hat G}(V_1,\sigma W_1\otimes V_2\otimes W_2)$ and $\bbb'\in\Hom_{\hat G}(V_2 \otimes W'_2\otimes W'_1 ,V_2)$. Then
\begin{equation}
\label{E:change across Frobenius} \scrC(\bbb'\circ (\id_{V_2}\otimes f_1\otimes f_2))\circ \bD\Gamma^*_{F^{-1}}\circ \scrC(\bbb)=  \scrC(\bbb')\circ \bD \Gamma^*_{F^{-1}}\circ \scrC((\sigma f_2\otimes\id_{V_2}\otimes f_1)\circ \bbb)\end{equation}
as elements in $\Hom_{\on{P}^{\on{Corr}}(\Sht^{\loc})}(S(\widetilde{V_1}),S(\widetilde{V_2}))$. Here, as above we omit subscripts involving $\loc(m_1,n_1)$ and $\loc(m_2,n_2)$
\end{rmk}


\begin{lemma}
\label{C:tilde Sat well defined}
The definition of $S$ on morphisms in \eqref{E:definition of tilde Sat on morphisms} is independent of auxiliary choices. More precisely, for $V_1, V_2 \in \Rep(\hat G)$ and $\bbc \in J(V_1^* \otimes V_2)$, if we write $\bbc = \Xi_W(\bba)$ for some representation $W$ and some element $\bba \in\Hom(\sigma W\otimes V_1\otimes W^*,V_2) $, then the cohomological correspondence $\scrC_W(\bba)    \in \mathrm{Mor}_{\rmP^\mathrm{Corr}(\Sht^\loc)}\big(S(\widetilde{V_1}), S(\widetilde{V_2})\big)$ defined in Construction~\ref{Cons:CW} depends only on $\bbc$ but not on the particular choice of $W$ and $\bba$.
\end{lemma}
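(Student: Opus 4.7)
The plan is to proceed in three stages: additivity of $\Xi_W$ and $\scrC_W$ in $W$, functoriality under $\hat G$-isomorphisms, and a Peter--Weyl reduction to the irreducible case where injectivity becomes automatic.

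First, I would show that for a direct sum decomposition $W = W_1 \oplus W_2$ of $\hat G$-representations, writing $\bba = (\bba_{ij})_{i,j=1,2}$ with $\bba_{ij} \in \Hom_{\hat G}(\sigma W_j \otimes V_1 \otimes W_i^*, V_2)$, both $\Xi_W(\bba)$ and $\scrC_W(\bba)$ depend only on the diagonal components $\bba_{ii}$, and equal $\sum_i \Xi_{W_i}(\bba_{ii})$ and $\sum_i \scrC_{W_i}(\bba_{ii})$ respectively. For $\Xi_W$ this is immediate from the block-diagonal nature of the $\hat G$-action on $W$: each $g \cdot e_i$ stays in the same summand as $e_i$, so only diagonal components of $\bba$ can be fed a matching pair. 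For $\scrC_W$, the unit morphism splits as $\delta_{\sigma W} = \delta_{\sigma W_1} + \delta_{\sigma W_2}$, landing in the diagonal direct summands of $\sigma W^* \otimes \sigma W$; the partial Frobenius correspondence $\DD\Gamma^*_{F^{-1}}$ of Lemma~\ref{L:partial Frobenius cohomological correspondence} respects this decomposition by functoriality in the representation, so only the diagonal components of $\bba$ survive in the final composition.

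Second, I would prove the functoriality: for a $\hat G$-equivariant isomorphism $f: W \xrightarrow{\sim} W'$ and $\bba' \in \Hom_{\hat G}(\sigma W' \otimes V_1 \otimes W'^*, V_2)$ related to $\bba$ by $\bba = \bba' \circ (\sigma f \otimes \id_{V_1} \otimes (f^{-1})^*)$, we have $\Xi_W(\bba) = \Xi_{W'}(\bba')$ (by direct change of basis) and $\scrC_W(\bba) = \scrC_{W'}(\bba')$. The latter identity, which is the main technical difficulty of the proof, follows from the transport relation \eqref{E:change across Frobenius} of Remark~\ref{R:alternative CW} applied to the intertwiners $\sigma f$ and $f^*$, together with Lemma~\ref{L: Sat corr comm with pFrob}; these allow one to move $f$-induced intertwiners across the partial Frobenius $\DD\Gamma^*_{F^{-1}}$ at the cost of a $\sigma$-twist that is exactly absorbed by $\sigma f$.

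Third, I would invoke Peter--Weyl. For irreducible $\pi$, the matrix coefficient map $\sigma\pi^* \otimes \pi \to \calO_{\hat G}$ is injective; since $\Xi_\pi$ factors through this map tensored with $\id_{V_1^* \otimes V_2}$ followed by taking $\hat G$-invariants, and both operations preserve injectivity, $\Xi_\pi$ is itself injective. Moreover, $\Xi_\pi$ lands in the $\pi$-isotypic component of $\bfJ(V_1^* \otimes V_2)$ under the Peter--Weyl decomposition of $\calO_{\hat G}$, and different isotypic components are disjoint. To conclude: given two realizations $\Xi_W(\bba) = \Xi_{W'}(\bba')$, I would extend both to $W \oplus W'$ by zero via Step 1, reducing to the case $W = W'$ where it suffices to show $\ker \Xi_W \subseteq \ker \scrC_W$. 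Decomposing $W = \bigoplus_\pi W_\pi$ into $\hat G$-isotypic components by Step 1 (and using disjointness of Peter--Weyl isotypic components) reduces to the case $W = \pi^n$. Further decomposing $\pi^n$ into copies and using Step 2 to identify them, both $\Xi_W(\bba)$ and $\scrC_W(\bba)$ become $\Xi_\pi$ (resp.\ $\scrC_\pi$) applied to a common element of $\Hom_{\hat G}(\sigma\pi \otimes V_1 \otimes \pi^*, V_2)$; the injectivity of $\Xi_\pi$ then forces this element, and hence $\scrC_W(\bba)$, to vanish.
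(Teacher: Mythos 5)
Your argument is correct, but it takes a genuinely different route from the paper's proof. The paper does not decompose $W$ at all: it rewrites $\scrC_W(\bba)$ once and for all in a canonical form, by viewing $\mO_{\hat G}$ (with the right-translation action) as an ind-object of $\Rep(\hat G)$, composing $\bba'\in\Hom_{\hat G}(V_1,\sigma W^*\otimes V_2\otimes W)$ with the matrix-coefficient map into $\sigma\mO_{\hat G}\otimes V_2\otimes\mO_{\hat G}$, and applying the transport relation \eqref{E:change across Frobenius} of Remark~\ref{R:alternative CW} exactly once to obtain $\scrC_W(\bba)=\scrC(\mathrm{ev}_{(1,1)})\circ\DD\Gamma^*_{F^{-1}}\circ\scrC(d_\sigma(\bbc'))$, an expression manifestly depending only on $\bbc$ (with a truncation of $\mO_{\hat G}$ to a finite-dimensional subrepresentation at the very end). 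Your route instead establishes additivity in $W$, invariance under isomorphisms of $W$, and then reduces via Peter--Weyl to the injectivity of $\Xi_\pi$ for irreducible $\pi$, which does hold since it factors through the injective matrix-coefficient map $\pi^*\otimes\pi\to\mO_{\hat G}$ tensored with $\id_{V_1^*\otimes V_2}$. Be aware, though, that the real content of your Steps 1 and 2 --- the off-diagonal vanishing $\scrC_W(\bba_{ij})=0$, the identification $\scrC_W(\bba_{ii})=\scrC_{W_i}(\bba_{ii})$, and isomorphism-invariance --- is precisely the ability to move the intertwiners $\sigma p_k^*$, $\sigma\iota_k$ (resp.\ $\sigma f$, $f^*$) across $\DD\Gamma^*_{F^{-1}}$ at the cost of a $\sigma$-untwist, i.e.\ Lemma~\ref{L: Sat corr comm with pFrob} and Remark~\ref{R:alternative CW}; these steps should be written out as explicit applications of that relation (for instance, write $\delta_{\sigma W}=\sum_k(\sigma p_k^*\otimes\sigma\iota_k)\circ\delta_{\sigma W_k}$, move $\sigma p_k^*$ across the Frobenius so it is absorbed into $\bba$ as the block $\bba_{kk}$, and use $e_W\circ(\iota_k\otimes p_l^*)=\delta_{kl}\,e_{W_k}$ together with Lemma~\ref{L:alternative CW}). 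So both proofs hinge on the same key transport lemma; the paper's version buys brevity and avoids any semisimplicity input, while yours buys the advantage of staying entirely within finite-dimensional representations (no ind-object $\mO_{\hat G}$, no truncation) and of isolating the purely representation-theoretic input (injectivity of $\Xi_\pi$) from the geometric one. A minor simplification: Step 2 is not strictly needed, since after fixing a decomposition of each isotypic piece into literal copies of $\pi$, Step 1 and linearity already give $\Xi_W(\bba)=\Xi_\pi\bigl(\sum_i\bba_{ii}\bigr)$ and $\scrC_W(\bba)=\scrC_\pi\bigl(\sum_i\bba_{ii}\bigr)$.
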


\begin{proof}
We consider $\mO_{\hat G}$ as an ind-object in $\on{Rep}(\hat G)$ via the \emph{right translation}. Then for every representation $U$ of $\hat G$, the action map $\hat G\times U\to U$ induces a $\hat G$-equivariant map 
\[
\on{act}_U: U\to  \mO_{\hat G}\otimes\underline{U}, \qquad u \mapsto \on{act}_U(u)(g): = gu,
\] where $\underline{U}$ means the underlying vector space of $U$ equipped with the \emph{trivial} $\hat G$-action.
We have a similar $\hat G$-equivariant map 
\[
m_U:\underline{U^*}\otimes U\to \mO_{\hat G}, \qquad (u^*,u)\mapsto m_U(u^*,u)(g): =u^*(gu).
\]
Combining the two, we have a natural homomorphism of $\hat G \times \hat G$-representations
\[
\sigma W^*\otimes V_2\otimes W\xrightarrow{\on{act}_{\sigma W^*}} \underline{W^*}\otimes \sigma\mO_{\hat G}\otimes V_2\otimes W\xrightarrow{m_W} \sigma \mO_{\hat G}\otimes V_2\otimes \mO_{\hat G}.
\]

Let $\bba'$ denote the image of $\bba$ under the isomorphism $\Hom(\sigma W\otimes V_1\otimes W^*, V_2)\cong \Hom (V_1, \sigma W^*\otimes V_2\otimes W)$.
The map $ \hat G \times \hat G \to \hat G\sigma$ sending $(g,h) \mapsto \sigma(g)^{-1} \sigma(h) \sigma$ induces a natural map $d_\sigma: \calO_{\hat G \sigma} \to \sigma\calO_{\hat G} \otimes \calO_{\hat G}$, intertwining the (twisted) conjugation action on $\calO_{\hat G \sigma}$ and the diagonal action on $\sigma \calO_{\hat G} \otimes \calO_{\hat G}$.
Let $d_\sigma(\bbc')$ denote the image of $\bbc$ under the following map
\[
\bfJ(V_1^*\otimes V_2) \cong (\calO_{\hat G \sigma} \otimes V_1^* \otimes V_2)^{\hat G} \xrightarrow{d_\sigma} (\sigma\calO_{\hat G} \otimes \calO_{\hat G} \otimes V_1^*\otimes V_2 \otimes )^{\hat G} \cong \Hom_{\hat G}(V_1, \sigma\calO_{\hat G} \otimes V_2 \otimes \calO_{\hat G}).
\]
It is straightforward  to check that the following two diagrams are  tautologically commutative.
\[
\xymatrix@C=50pt{
V_1 \ar@{=}[d] \ar[r]^-{\bba'} &
\sigma W^* \otimes V_2\otimes W  \ar[d]^{m_W \circ \on{act}_{\sigma W^*}} &
V_2 \otimes W \otimes W^* \ar[d]_{m_W \circ \on{act}_{W^*}} \ar[r]^-{\id_{V_2} \otimes e_W} & V_2 \ar@{=}[d]
\\
V_1  \ar[r]^-{d_\sigma(\bbc')} & \sigma \calO_{\hat G}\otimes V_2 \otimes \calO_{\hat G}  &  V_2 \otimes \calO_{\hat G} \otimes \calO_{\hat G} \ar[r]^-{\mathrm{ev}_{(1,1)}} & V_2,
}
\]
where $\mathrm{ev}_{(1,1)}$ is the evaluation map at $(1,1) \in \hat G \times \hat G$. Indeed, for $v_1 \in V_1$, its has the same image
\[
(g,h)\mapsto \sum_i  \big\langle \bba'(v_1), \sigma(g)^{-1} h^{-1} \cdot e_i \otimes e_i^*\big\rangle
\] under both $d_\sigma(\bbc')$ and $m_W \circ \on{act}_{\sigma W^*} \circ \bba'$, where $\{e_i\}_i$ is a basis of $W(=\sigma W)$ and $\{
e_i^*\}_i$ its dual basis.

Now apply Remark \ref{R:alternative CW} above to the case
\[
W_1 \otimes W_2 := W\otimes W^*, \  W'_1 \otimes W'_2: = \calO_{\hat G} \otimes \calO_{\hat G}, \  f_1\otimes f_2 : = m_W \circ \on{act}_{W^*}, \  \bbb := \bba', \  \textrm{and} \  \bbb': = \mathrm{ev}_{(1,1)}.
\]
We deduce that
\begin{align*}
\scrC_W(\bba)& \stackrel{\textrm{Lemma~\ref{L:alternative CW}}}= \scrC(\id_{V_2} \otimes e_W) \circ \DD \Gamma^*_{F^{-1}_{(V_2 \otimes W) \boxtimes W^*}} \circ \scrC(\bba')
\\
& \stackrel{\textrm{Remark~\ref{R:alternative CW}}}= \scrC(\mathrm{ev}_{(1,1)}) \circ \DD \Gamma^*_{F^{-1}_{(V_2 \otimes \calO_{\hat G}) \boxtimes \calO_{\hat G}}} \circ \scrC(d_\sigma(\bbc')).
\end{align*}
But the last expression depends only on $\bbc$ so the lemma follows.
Of course, the map $d_\sigma(\bbc'): V_1\to \sigma\mO_{\hat G}\otimes V_2\otimes \mO_{\hat G}$ only lands in some finite dimensional submodule $\sigma W_2\otimes V_2\otimes W_1$ so more precisely we replace $\sigma\mO_{\hat G}\times \mO_{\hat G}$ by $\sigma W_2\otimes W_1$ in the above expression and choose appropriate $(m_1,n_1,m_2,n_2)$ to define the cohomological correspondences.
\end{proof}

\begin{rmk}
Of course, one can just define $S(\bbc)$ via the last expression, which then is independent of any choices. But in practice, e.g. in \S \ref{Sec:S=T}, we sometimes need to express $\bbc$ as $\Xi_W(\bba)$ and realize $S(\bbc)$ as $\scrC_W(\bba)$.
\end{rmk}

\begin{lem}
\label{L:CW compatible with tensor}
For representations $V_1$, $V_2$, $V_3$, $W_1$, $W_2$ of $\hat G$, we have the following commutative diagram
\begin{small}
\begin{equation}
\label{E:composition commutes with CW}
\xymatrix@C=10pt{
\Hom_{\Coh_{fr}^{\hat G}(\hat G\sigma)}(\widetilde{V_1},\widetilde{V_2}) \otimes \Hom_{\Coh_{fr}^{\hat G}(\hat G\sigma)}(\widetilde{V_2},\widetilde{V_3}) \ar[r] & \Hom_{\Coh_{fr}^{\hat G}(\hat G\sigma)}(\widetilde{V_1},\widetilde{V_3})
\\
\ar[u]_{\Xi_{W_1} \otimes \Xi_{W_2}}
\Hom_{\hat G}(\sigma W_1\otimes V_1 \otimes W_1^* , V_2) \otimes \Hom_{\hat G}(\sigma W_2\otimes V_2 \otimes W_2^* , V_3) \ar[d]^{\scrC_{W_1} \otimes \scrC_{W_2}} \ar[r] &  
\ar[u]_{\Xi_{W_2 \otimes W_1}}
\Hom_{\hat G}(\sigma W_2 \otimes \sigma W_1 \otimes V_1\otimes W_1^* \otimes W_2^* , V_3)
\ar[d]^{\scrC_{W_2 \otimes W_1}}
\\
\mathrm{Mor}_{\rmP^\mathrm{Corr}(\Sht^\loc)}\big(S(\widetilde{V_1}), S(\widetilde{V_2})\big) \otimes \mathrm{Mor}_{\rmP^\mathrm{Corr}(\Sht^\loc)}\big(S(\widetilde{V_2}), S (\widetilde{V_3})\big) \ar[r] & \mathrm{Mor}_{\rmP^\mathrm{Corr}(\Sht^\loc)}\big(S(\widetilde{V_1}), S(\widetilde{V_3})\big),
}
\end{equation}
\end{small}
\begin{itemize}
\item
where the top horizontal arrow is given by the composition of morphisms in $\Coh^{\hat G}_{fr}(\hat G\sigma)$ or equivalently natural products of functions on $\hat G\sigma$,
\item
the bottom horizontal arrow is the composition of morphisms in $\rmP^\mathrm{Corr}(\Sht^\loc)$, and
\item
the middle horizontal arrow is given by sending $\bba_1 \otimes \bba_2$ to the homomorphism
\begin{equation}
\label{E:composition of representatives}
\sigma W_2 \otimes \sigma W_1\otimes V_1 \otimes W_1^* \otimes W_2^*  \xrightarrow{\id_{\sigma W_2} \otimes \bba_1 \otimes \id_{W_2^*} } \sigma W_2\otimes V_2 \otimes W_2^*\xrightarrow{\; \bba_2\;}V_3.
\end{equation}
\end{itemize}
In particular, the definition of $S$ on morphisms defined in \eqref{E:definition of tilde Sat on morphisms} is compatible with composition of morphisms, i.e. for $\bba_1 \in \bfJ(V_1^* \otimes V_2)$ and $\bba_2 \in \bfJ(V_2^* \otimes V_3)$, we have
\begin{equation}
\label{E:compatibility of widetilde Sat with composition}
S(\bba_2 \circ \bba_1) = S(\bba_2) \circ S(\bba_1)   \quad \mbox{ in} \quad \mathrm{Mor}_{\rmP^\mathrm{Corr}(\Sht^\loc)}\big(S(\widetilde {V_1}), S(\widetilde {V_3})\big).
\end{equation}
\end{lem}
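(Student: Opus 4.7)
\bigskip

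\noindent\textbf{Proof proposal for Lemma~\ref{L:CW compatible with tensor}.} The claim \eqref{E:compatibility of widetilde Sat with composition} will follow formally from the commutativity of \eqref{E:composition commutes with CW}, once we know (via Lemma~\ref{C:tilde Sat well defined}) that $S(\bbc)$ is well-defined, together with the fact that the image of $\Xi_{W}$, as $W$ ranges over $\Rep(\hat G)$, exhausts $\bfJ(V_1^*\otimes V_2)$ (Peter--Weyl). So the real content is the commutativity of the two squares in \eqref{E:composition commutes with CW}.

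The top square is purely representation-theoretic. Unpacking the formula \eqref{E:XiW2}, the product $\Xi_{W_1}(\bba_1)\cdot\Xi_{W_2}(\bba_2)$ sends $g\in\hat G$ to the map $V_1\to V_3$ given by
\[
v \mapsto \sum_{i,j} \bba_2\!\left(g\cdot f_j \otimes \bba_1(g\cdot e_i \otimes v \otimes e_i^*) \otimes f_j^*\right),
\]
where $\{e_i\}$, $\{f_j\}$ are bases of $W_1$, $W_2$ and $\{e_i^*\}$, $\{f_j^*\}$ the dual bases. A direct substitution into \eqref{E:XiW2} for $W=W_2\otimes W_1$ and the composition \eqref{E:composition of representatives} yields the same expression. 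So the top square commutes.

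For the bottom square, I will expand both $\scrC_{W_2}(\bba_2)\circ \scrC_{W_1}(\bba_1)$ and $\scrC_{W_2\otimes W_1}$ applied to \eqref{E:composition of representatives} as explicit compositions of cohomological correspondences in $\on{P}^{\on{Corr}}(\Sht^{\loc})$, and compare them step by step. Concretely, the composition $\scrC_{W_2}(\bba_2)\circ \scrC_{W_1}(\bba_1)$ is (reading left to right, and omitting $\loc(m,n)$ decorations)
\[
\scrC(\bba_2)\circ \DD\Gamma^*_{F^{-1}}\circ \scrC(\delta_{\sigma W_2}\otimes \id_{V_2})\circ \scrC(\bba_1)\circ \DD\Gamma^*_{F^{-1}}\circ \scrC(\delta_{\sigma W_1}\otimes \id_{V_1}).
\]
Using Lemma~\ref{L:Satake correspondence shtukas} (compatibility of $\scrC^{\loc(m,n)}$ with composition), the block $\scrC(\delta_{\sigma W_2}\otimes \id_{V_2})\circ \scrC(\bba_1) = \scrC((\delta_{\sigma W_2}\otimes \id_{V_2})\circ \bba_1)$, and further $\scrC((\delta_{\sigma W_2}\otimes \id_{V_2})\circ \bba_1) = \scrC(\id_{\sigma W_2^*\otimes \sigma W_2}\otimes \bba_1)\circ \scrC(\delta_{\sigma W_2}\otimes \id_{\sigma W_1 \otimes V_1\otimes W_1^*})$. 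Now apply Lemma~\ref{L: Sat corr comm with pFrob} to slide $\scrC(\id_{\sigma W_2^*\otimes \sigma W_2}\otimes \bba_1)$ past the first $\DD\Gamma^*_{F^{-1}}$ and $\scrC(\bba_2)$ past the combined $\DD\Gamma^*_{F^{-1}}$'s, thus collecting the two $\DD\Gamma^*_{F^{-1}}$ operators adjacent to each other. Then Lemma~\ref{L: double pFrob to single} collapses them into a single inverse partial Frobenius for $W_2\otimes W_1$, and a final application of Lemma~\ref{L:Satake correspondence shtukas} recombines the two resulting Satake correspondences on each side into the ones appearing in $\scrC_{W_2\otimes W_1}$ of \eqref{E:composition of representatives}.

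The main obstacle, and where most of the bookkeeping lies, is ensuring that each intermediate manipulation takes place at compatible restriction depths: one must choose quadruples $(m_i,n_i)$ so that each correspondence is defined and so that the commutations provided by Lemmas~\ref{L: Sat corr comm with pFrob} and~\ref{L: double pFrob to single} actually apply; the independence statement at the end of Construction~\ref{Cons:CW} (via the Cartesian diagram \eqref{E: change of m,n in Hk corr} and Lemma~\ref{AL:pullback compatible with composition}) lets us freely enlarge $(m,n)$ as needed. Once the compositions are shown equal, \eqref{E:compatibility of widetilde Sat with composition} follows by choosing, for given $\bbc_1,\bbc_2\in\bfJ$, preimages $\bba_1\in\Hom_{\hat G}(\sigma W_1\otimes V_1\otimes W_1^*,V_2)$ and $\bba_2\in\Hom_{\hat G}(\sigma W_2\otimes V_2\otimes W_2^*,V_3)$ under $\Xi_{W_1}$ and $\Xi_{W_2}$, and using the commutativity of \eqref{E:composition commutes with CW} together with Lemma~\ref{C:tilde Sat well defined}.
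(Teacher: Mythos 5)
Your verification of the top square is fine (the paper omits it as "much easier"), and your toolkit for the bottom square — combining Satake correspondences via Lemma~\ref{L:Satake correspondence shtukas}, commuting them with the inverse partial Frobenius via Lemma~\ref{L: Sat corr comm with pFrob}, and collapsing the two Frobenii via Lemma~\ref{L: double pFrob to single} — is indeed the skeleton of the paper's argument. But there is a genuine gap in how you assemble these moves. The composition $\scrC_{W_2}(\bba_2)\circ\scrC_{W_1}(\bba_1)$ in $\on{P}^{\on{Corr}}(\Sht^\loc)$ is, by the definition in \S\ref{S: cat PCorrSht}, the composition of the two cohomological correspondences (supported on $\Sht_{V_1|V_2}^{W_1,\loc(m_1,n_1)}\times_{\Sht_{V_2}^{\loc(m_2,n_2)}}\Sht_{V_2|V_3}^{W_2,\loc(m_2,n_2)}$) followed by the pushforward along the specific perfectly proper map $\on{Comp}^{\loc(m_1,n_1)}$ of \eqref{E:composition restricted}; the Hom-spaces are colimits whose transition maps are only restriction pullbacks and closed-embedding pushforwards, so you are not free to treat the six-term chain as re-associable morphisms "in the category" and rewrite at will. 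Each of your rewriting steps is an equality only after a pushforward along some proper map (the Satake composition maps in Lemma~\ref{L:Satake correspondence shtukas}, the convolution map in Lemma~\ref{L: double pFrob to single}), and the missing content is precisely that this particular sequence of pushforwards computes $(\on{Comp}^{\loc(m_1,n_1)})_!$ of the composed correspondence. This is what Proposition~\ref{P:pushforward is local} supplies: it factors $\on{Comp}^{\loc(m_1,n_1)}$ into exactly the isomorphisms and the two pushforward maps (1) and (2) that the manipulations use, and the paper's proof is organized as a diagram chase (the diagram \eqref{E:composition of S operator}) computing the pushforward step by step along that factorization. Your proposal never mentions $\on{Comp}^{\loc}$ or this proposition, and it also implicitly treats $\DD\Gamma^*_{F^{-1}}$ and the intermediate sheaves on iterated shtuka spaces as objects and morphisms of $\on{P}^{\on{Corr}}(\Sht^\loc)$ with $\scrC_{W_i}(\bba_i)$ equal to their categorical composite — a claim that is not established and whose proof would amount to the very computation being avoided; the restriction-depth bookkeeping you flag as "the main obstacle" is in fact the routine part.

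There is also a concrete step that fails as written: you cannot "slide $\scrC(\id_{\sigma W_2^*\otimes\sigma W_2}\otimes\bba_1)$ past the first $\DD\Gamma^*_{F^{-1}}$" using Lemma~\ref{L: Sat corr comm with pFrob}. That lemma only commutes a Satake correspondence with the inverse partial Frobenius when the correspondence is the identity on the factor being rotated; the first Frobenius $F^{-1}_{(\sigma W_1\otimes V_1)\boxtimes W_1^*}$ rotates $\sigma W_1^*\mapsto W_1^*$, and $\bba_1$ consumes exactly that factor $W_1^*$. The legitimate move is to slide this map across the \emph{second} Frobenius (where it is the identity on $\sigma W_2^*$), or equivalently to pass to the alternative presentation of $\scrC_{W_2}(\bba_2)$ of Lemma~\ref{L:alternative CW} and use \eqref{E:change across Frobenius}, which is what the paper does; only the $\delta_{\sigma W_2}$-insertion (identity on $W_1^*$) may be moved across the first Frobenius. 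So while your outline could likely be repaired along these lines, as stated it neither justifies the key slide nor ties the manipulations to the definition of composition in $\on{P}^{\on{Corr}}(\Sht^\loc)$, and both points must be fixed for the proof to stand.
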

\begin{proof}
Once we establish the commutativity of the diagram \eqref{E:composition commutes with CW}, the compatibility \eqref{E:compatibility of widetilde Sat with composition} of $\widetilde \Sat$ with composition of morphisms is clear, as $\scrC_{W_1} \otimes \scrC_{W_2}$ (resp. $\scrC_{W_2\otimes W_1}$) factors through $\Xi_{W_1} \otimes \Xi_{W_2}$ (resp. $\Xi_{W_2\otimes W_1}$).

We only establish the commutativity of the bottom square of \eqref{E:composition commutes with CW}, as the commutativity of the top square follows from the similar idea but is much easier. 
We choose $(m_1,n_1,m_2,n_2,m_3,n_3)$ such that
\begin{itemize}
\item $(m_1,n_1,m_2,n_2)$ is $(V_1\otimes W_1)\boxtimes W_1$-acceptable and $(V_2\otimes W_1)\boxtimes W_1$-acceptable;
\item $(m_2,n_2,m_3,n_3)$ is $(V_2\otimes W_2)\boxtimes W_2$-acceptable and $(V_3\otimes W_2)\boxtimes W_2$-acceptable.
\end{itemize}

For $i=1,2$, choose $\bba_i \in \Hom_{\hat G}(\sigma W_i \otimes V_i \otimes W_i^*, V_{i+1})$, 
and let
\[
\bba: = \bba_2 \circ (\id_{\sigma W_2} \otimes \bba_1 \otimes \id_{W_2^*}) \in \Hom_{\hat G}(\sigma W_2 \otimes \sigma W_1 \otimes V_1 \otimes W_1^* \otimes W_2^*, V_3)
\]
as in \eqref{E:composition of representatives}. We need to show that 
\begin{equation}
\label{E:equality of S operator}
\scrC_{W_2}^{\loc(m_2,n_2)}(\bba_2) \circ 
\scrC_{W_1}^{\loc(m_1,n_1)}(\bba_1) = \scrC_{W_2\otimes W_1}^{\loc(m_1,n_1)}(\bba)
\end{equation}
as a cohomological correspondence from $S(\widetilde{V_1})^{\loc(m_1,n_1)}$ to $S(\widetilde{V_3})^{\loc(m_3,n_3)}$.
By the definition of composition law in the category $\rmP^\mathrm{Corr}(\Sht_{\bar k}^\loc)$, the left hand side is
given by first forming the composition  of cohomological correspondences (Definition~\ref{AD:correspondences}) $\scrC_{W_2}^{\loc(m_2,n_2)}(\bba_2)$ and $ 
\scrC_{W_1}^{\loc(m,n_1)}(\bba_1)$
which is supported on $\Sht_{V_1|V_2}^{W_1,\loc(m_1,n_1)} \times_{\Sht_{V_2}^{\loc(m_2,n_2)}} \Sht_{V_2|V_3}^{W_2,\loc(m_2,n_2)}$, and then pushing it forward (\S\ref{ASS:pushforward correspondence}) along the perfectly proper composition map \eqref{E:composition restricted}
\[
\mathrm{Comp}^{\loc(m_1,n_1)} : \Sht_{V_1|V_2}^{W_1,\loc(m_1,n_1)} \times_{\Sht_{V_2}^{\loc(m_2,n_2)}} \Sht_{V_2|V_3}^{W_2,\loc(m_2,n_2)} \to \Sht_{V_1|V_3}^{W_2 \otimes W_1,\loc(m_1,n_1)}
.\]
We shall compute this pushforward following the list of isomorphisms and maps given in Proposition~\ref{P:pushforward is local}.

To simplify the notations, we omit the superscripts $\loc(m_i,n_i)$ everywhere in the following proof; this should not cause any confusion.
Our proof relies on the following diagram of cohomological correspondence.
\begin{tiny}
\begin{equation}
\label{E:composition of S operator}
\xymatrix{
\ar@{=}[ddd]\ar[dr]S(\widetilde{V_1}) \ar[r]^-{\scrC(\delta_{\sigma W_1} \otimes \id_{V_1})} 
& 
S\big(\widetilde{\sigma W_1^*} \boxtimes (\widetilde{\sigma W_1} \otimes \widetilde {V_1}) \big) \ar[r]^-{\DD \Gamma_{F^{-1}}^*} \ar@<-15pt>[d]^{\scrC( \id_{\sigma W_1^*} \otimes \delta_{\sigma W_2} \otimes \id_{\sigma W_1^* \otimes V_1 } )} 
& S\big((\widetilde{\sigma W_1} \otimes \widetilde{V_1}) \boxtimes \widetilde {W_1^*}\big) \ar@<15pt>[d]_{\scrC(\delta_{W_2}\otimes \id_{\sigma W_1 \otimes V_1 }\otimes \id_{ W_1^*})} \ar[rr]^{\scrC(\bba_1)}
\ar[drr]^{\scrC(\bba')}
&& S(\widetilde{V_2})
\ar[d]^{\scrC(\bba'_2)}
\\
& \ar[dr] \ar@<-15pt>[dd]S\big(\widetilde{\sigma W_1^*} \boxtimes \widetilde{\sigma W_2^*} \boxtimes( \widetilde{\sigma W_2} \otimes \widetilde{\sigma W_1} \otimes \widetilde{V_1})\big) \ar[r]^-{\DD \Gamma_{F^{-1}}^*} 
& S\big( \widetilde{\sigma W_2^*} \boxtimes( \widetilde{\sigma W_2} \otimes \widetilde{\sigma W_1} \otimes \widetilde{V_1})\boxtimes \widetilde{ W_1^*} \big) \ar[rr]_-{\scrC( \id_{\sigma W_2^*} \otimes \bba'')} \ar@<15pt>[d]^-{\DD \Gamma_{F^{-1}}^*}  \ar@{}[rru]|(0.35){\mathbf{(A)}}
&& S\big(\widetilde{\sigma W_2^*} \boxtimes (\widetilde {V_3} \otimes \widetilde{W_2})\big)
\ar[d]^-{\DD \Gamma_{F^{-1}}^*} 
\\
&& S\big((\widetilde{\sigma W_2} \otimes \widetilde{\sigma W_1} \otimes \widetilde{ V_1})\boxtimes \widetilde{ W_1^*}\boxtimes \widetilde{ W_2^*} \big) \ar[rr]^-{\scrC(\bba'' \otimes \id_{W_2^*})}
\ar@<15pt>[d]\ar[drr]
&& S\big(  (\widetilde {V_3} \otimes \widetilde{W_2})\boxtimes \widetilde{W_2^*}\big) \ar[d]_-{\scrC(\id_{V_3} \otimes e_{W_2})} 
\\
S(\widetilde{V_1})\ar[r]_-{\scrC(\delta_{\sigma W_1\otimes\sigma W_2} \otimes \id_{V_1})\qquad } & S\big((\widetilde{\sigma W_1^*} \otimes \widetilde{\sigma W_2^*}) \boxtimes( \widetilde{\sigma W_2} \otimes \widetilde{\sigma W_1} \otimes \widetilde{V_1})\big) \ar[r]_-{\DD \Gamma_{F^{-1}}^*} & S\big((\widetilde{\sigma W_2} \otimes \widetilde{\sigma W_1} \otimes \widetilde{ V_1})\boxtimes (\widetilde{ W_1^*}\otimes \widetilde{ W_2^*}) \big) \ar_-{\scrC(\bba)}[rr]&& S(\widetilde{V_3})}.
\end{equation}
\end{tiny}Here $\bba'$ (resp. $\bba''$) denotes the image of $\bba$ under the natural identification of $\Hom(\sigma W_2 \otimes \sigma W_1 \otimes V_1 \otimes W_1^* \otimes W_2^*, V_3)$ with $\Hom(\sigma W_1 \otimes V_1 \otimes W_1^*, \sigma W_2^* \otimes V_3 \otimes W_2)$ (resp. $\Hom(\sigma W_2^*\sigma W_1 \otimes V_1 \otimes W_1^*, V_3 \otimes W_2)$).

The composition of the first row is $\scrC_{W_1}(\bba_1)$ by Construction~\ref{Cons:CW} and the composition of the right column is $\scrC_{W_2}(\bba_2)$ by Lemma~\ref{L:alternative CW}. The bottom line of the diagram is $\scrC_{W_2\otimes W_1}(\bba)$. We will explain that when moving from the upper right of to lower left of the diagram, one exactly pushes the composition of $\scrC_{W_1}(\bba_1)$ and $\scrC_{W_2}(\bba_2)$ as cohomological correspondences along the morphism defined in Proposition~\ref{P:pushforward is local}). By definition, this is exactly
the composition $\scrC_{W_2}(\bba_2) \circ \scrC_{W_1}(\bba_1)$ in the category $\on{P}^{\Corr}(\Sht)$. It then follows that $\scrC_{W_2}(\bba_2) \circ \scrC_{W_1}(\bba_1)=\scrC_{W_2\otimes W_1}(\bba)$ in $\on{P}^{\Corr}(\Sht)$.

First, in the triangle in the upper right corner, since $\bba'  = \bba'_2 \circ \bba_1$, by Lemma~\ref{L:Satake correspondence shtukas}, the pushforward of the composition $\scrC(\bba'_2)$ and $\scrC(\bba_1)$ as cohomological correspondences along the map (1) in \eqref{E:composition restricted} is $\scrC(\bba')$. Next, the triangle $\mathbf{(A)}$ is commutative, by Lemma~\ref{L:technical lemma for comp loc restricted}(1) and Lemma \ref{L:Satake correspondence shtukas}. Here we use the convention as in Remark \ref{E: true support} so the support of $\scrC(\delta_{\sigma W_2} \otimes \id_{\sigma W_1 \otimes V_1} \otimes \id_{W_1^*})$ (resp. $\scrC(\id_{\sigma W^*_2} \otimes \bba'')$) is
$$\Sht^{0, \loc(m_2,n_2)}_{ ({\sigma W_1} \otimes {V_1})\mid  {\sigma W_2^*}\boxtimes({\sigma W_2} \otimes {\sigma W_1} \otimes {V_1}) ; {W_1^*}},\quad (\mbox{resp. } \Sht^{0,\loc(m_2,n_2)}_{{\sigma W_2^*}; ({\sigma W_2} \otimes {\sigma W_1} \otimes {V_1}, W_1^*)\mid ({V_3} \otimes {W_2})}).)$$
The two squares involving inverse partial Frobenii are commutative by Lemma~\ref{L: Sat corr comm with pFrob}. 

Now we have the cohomological correspondences along the diagonal of the diagram. We claim its pushforward along  the map (2) in Proposition~\ref{P:pushforward is local}) is exactly the cohomological correspondences along the bottom row. Indeed, the left trapezoid and right triangles are explained by Remark~\ref{R:tensor versus box tensor} and the middle pentagon follows from Lemma~\ref{L: double pFrob to single}. 
This completes the proof of the Lemma.
\end{proof}

\begin{lem}
\label{L:commutativity of periodic geometric Satake diagram}
For a morphism $\bba \in \Hom_{\hat G}(V_1, V_2)$ between two irreducible representations of $\hat G$, we have an equality of morphisms
\[
\Phi^\loc(\Sat(\bba)) = S(\widetilde \bba ) .
\]
In other words, the diagram \eqref{E:periodic geometric Satake} of functors is commutative for morphisms.
\end{lem}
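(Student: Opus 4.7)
The plan is to compute $S(\widetilde\bba)$ by choosing the auxiliary representation $W=\mathbf{1}$ in the construction of $\scrC_W$, in which case the construction degenerates to the pullback of the Satake cohomological correspondence.

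First, I would identify $\widetilde\bba$ with the image of $\bba$ under $\Xi_{\mathbf{1}}$. Namely, view $\bba\in\Hom_{\hat G}(V_1,V_2)$ as an element of $\Hom_{\hat G}(\sigma\mathbf{1}\otimes V_1\otimes \mathbf{1}^*,V_2)$ via the canonical identifications $\sigma\mathbf{1}\cong\mathbf{1}$ and $\mathbf{1}^*\cong\mathbf{1}$. Then by the explicit formula \eqref{E:XiW2}, $\Xi_{\mathbf{1}}(\bba)$ is the constant function $g\mapsto \bba$, which is precisely $\widetilde\bba$. By Lemma~\ref{C:tilde Sat well defined} and the definition \eqref{E:definition of tilde Sat on morphisms}, we have $S(\widetilde\bba)=\scrC_{\mathbf{1}}(\bba)$, and it suffices to prove $\scrC_{\mathbf{1}}(\bba)=\Phi^\loc(\Sat(\bba))$.

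Next, I would unwind Construction~\ref{Cons:CW} in the case $W=\mathbf{1}$. The three-step composition becomes:
\begin{itemize}
\item The unit $\delta_{\sigma\mathbf{1}}:\mathbf{1}\to \mathbf{1}^*\otimes\mathbf{1}$ is an isomorphism of the trivial representation with itself, and $\scrC^{\loc(m_1,n_1)}(\delta_{\sigma\mathbf{1}}\otimes\id_{V_1})$ is supported on $\Sht^{0,\loc(m_1,n_1)}_{V_1\mid \mathbf{1}\boxtimes V_1}$, which is canonically identified with $\Sht^{\loc(m_1,n_1)}_{V_1}$ through the convolution map $\Gr_{\mathbf{1}}\tilde\times\Gr_{V_1}\cong\Gr_{V_1}$. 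Under this identification it is the identity cohomological correspondence on $S(\widetilde{V_1})^{\loc(m_1,n_1)}$.
\item The inverse partial Frobenius $\DD\Gamma^*_{F^{-1}_{(V_1\otimes\mathbf{1})\boxtimes\mathbf{1}^*}}$ has, by Construction~\ref{Cons:inverse F restricted} applied with $W=\mathbf{1}$, underlying morphism equal to the restriction $\res^{m_1,n_1}_{m_2,n_2}:\Sht^{\loc(m_1,n_1)}_{V_1}\to\Sht^{\loc(m_2,n_2)}_{V_1}$; the Galois-equivariance isomorphism $\sigma^*\Sat(\sigma\mathbf{1})\cong\Sat(\mathbf{1})$ used in Lemma~\ref{L:partial Frobenius cohomological correspondence} is trivial, so the underlying cohomological correspondence is the tautological one given by the $\star$-pullback $\Res^{m_1,n_1}_{m_2,n_2}$ of Construction~\ref{Cons:restriction morphism}.
\item The third arrow $\scrC^{\loc(m_2,n_2)}(\bba)$ is, by Lemma~\ref{L:Satake correspondence shtukas}, the smooth pullback along $\varphi^{\loc(m_2,n_2)}$ of the cohomological correspondence $\scrC^{\loc(m_2)}(\bba)$ on the restricted local Hecke stack, which by Corollary~\ref{C: rep to cor} equals $\Sat(\bba)$ (restricted via \eqref{E:P of Gr = P of Hk fin}).
\end{itemize}

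Then I would combine these observations. The composition of the first two arrows simply produces, via Lemma~\ref{AL:pullback compatible with composition} and the base-change compatibility \eqref{E:shtukas restriction Cartesian with Satake correspondences}, the $\star$-pullback of the identity correspondence from level $(m_1,n_1)$ to level $(m_2,n_2)$. Composing with the third arrow and invoking the commutativity of restriction maps with the functor $\Phi^{\loc(m,n)}$ in \eqref{E:functor P(Hk) to P(S)fin} (via the Cartesian diagram \eqref{E:S and Hk Satake compatibility}), we get exactly $\Phi^{\loc(m_2,n_2)}(\Sat(\bba))$, which represents $\Phi^\loc(\Sat(\bba))$ in $\rmP^{\Corr}(\Sht^\loc)$. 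This gives the desired equality.

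The main subtle point is the verification that the inverse partial Frobenius morphism $F^{-1}_{(V_1\otimes\mathbf{1})\boxtimes\mathbf{1}^*}$ truly reduces to the restriction morphism $\res^{m_1,n_1}_{m_2,n_2}$ when $W=\mathbf{1}$. This requires carefully inspecting the seven-line construction in Construction~\ref{Cons:inverse F restricted}: when $W=\mathbf{1}$, the modification $\beta_1$ in that construction becomes trivial (an isomorphism) and no genuine Frobenius twist is introduced; what remains is precisely the truncation of the isomorphism $\psi$ from depth $n_1$ to depth $n_2$, i.e.\ the restriction map. Once this identification is established, the remainder of the argument is a formal chase through the definitions and the base-change compatibilities from \S\ref{Sec:cohomological correspondence}.
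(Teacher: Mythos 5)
Your proposal is correct and follows essentially the same route as the paper: realize $\widetilde\bba$ as $\Xi_{\mathbf{1}}(\bba)$, take $W=\mathbf{1}$ in Construction~\ref{Cons:CW}, observe that the unit and inverse-partial-Frobenius arrows degenerate, and conclude $S(\widetilde\bba)=\scrC^{\loc(m,n)}(\bba)=\Phi^\loc(\Sat(\bba))$. The only (harmless) difference is that the paper takes $(m_1,n_1)=(m_2,n_2)$ so the two right-hand maps are literally trivial, whereas you keep the levels general and identify the inverse partial Frobenius with the restriction morphism, which is then absorbed by the connecting functors defining morphisms in $\on{P}^{\on{Corr}}(\Sht^\loc)$.
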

\begin{proof}

For a morphism $\bba \in \Hom_{\hat G}(V_1, V_2)$ between two irreducible representations of $\hat G$, $\widetilde\bba \in \Hom_{\Coh^{\hat G}_{fr}(\hat G\sigma)}(\widetilde{V_1}, \widetilde{V_2})$ is represented by $\Xi_{\mathbf {1}}(\bba)$.
Since $W = \boldsymbol{1}$ in this case, we can take $n_1=n_2$ to be an arbitrary non-negative integer $n$ in Construction~\ref{Cons:CW}. Then $S(\widetilde \bba) = \scrC_{\boldsymbol{1}} (\bba)$ is defined to be the composite
\[
\scrC^{\loc(m,n)}( \bba) \circ \DD\Gamma_{F_{V_1\boxtimes\boldsymbol{1}}^{-1}} \circ \scrC^{\loc(m,n)}(\delta_{\boldsymbol{1}}\otimes \id_{V_1}).
\]
But $W = \boldsymbol{1}$, so the right two maps are trivial and therefore, $S(\widetilde \bba) = \scrC_{ \boldsymbol{1}}(\bba)$ is simply equal to the cohomological Satake correspondence $\mathscr{C}^{\loc(m,n)}
(\bba)$.
Clearly, if we go right and then down on the diagram \eqref{E:periodic geometric Satake}, the morphism $\Phi^{\loc}(\Sat(\bba))$ will be exactly the one represented by $\mathscr{C}^{\loc(m,n)}
(\bba)$. The lemma is proved.
\end{proof}

\subsection{Cohomology class of the correspondence}
\label{Sec:S=T}

In this subsection, we  describe the image of $S$ on morphisms explicitly in some special cases, by relating them with cycle class maps or the usual Hecke operators. This will in particular imply Theorem \ref{T:Spectral action} (2), which essentially is the $S=T$ theorem \cite[Proposition~0.16]{La} of V. Lafforgue.

Now let $\tau\in\xcoch(Z_G)$ be central and $\mu$ minuscule.\footnote{The assumption on $\mu$ is not necessary. But it simplifies the discussion and will be sufficient for our application.} For $\bba\in \Hom_{\hat G}(\sigma V_{\nu}\otimes V_\tau\otimes V_\nu^*, V_\mu)$, recall that we defined $\scrC_{V_\nu}(\bba)$ in Construction~\ref{Cons:CW}. Concretely, choose $(m_1,n_1,m_2,n_2)$ that is $(\nu+\tau,\nu)$-acceptable and $(\nu+\mu,\nu)$-acceptable, then $\scrC_{V_\nu}(\bba)$ is realized by a cohomological correspondence
$$\scrC_{V_\nu}^{\loc(m_1,n_1)}(\bba)\in \on{Corr}_{\Sht^{\nu,\loc(m_1,n_1)}_{\tau\mid \mu}}(S(\widetilde{V_\tau})^{\loc(m_1,n_1)}, S(\widetilde{V_\mu})^{\loc(m_2,n_2)}).$$

Recall that from Corollary \ref{C:expclit mStautau truncated}, 
$$\Sht^{\nu,\loc(m_1,n_1)}_{\tau\mid \mu}\cong [L^{m_1}_{n_1}G\backslash X_{\mu^*,\nu^*}(\tau^*)]=:Y,$$ which is of dimension $\leq \langle\rho,\mu-\tau\rangle=\langle\rho,\mu\rangle$.
Thus
\[\on{Corr}_{Y}(S(\widetilde{V_\tau})^{\loc(m_1,n_1)}, S(\widetilde{V_\mu})^{\loc(m_2,n_2)})\cong \on{H}^{-\langle2\rho,\mu\rangle}(Y,\omega_Y(-\langle\rho,\mu\rangle))\cong \on{H}^{\on{BM}}_{\langle2\rho,\mu\rangle}(X_{\mu^*,\nu^*}(\tau^*))^{G(\mO/\varpi^{n_1})},\]
where the first isomorphism follows from the same considerations as in \S \ref{cl vs cc}. Since $\dim X_{\mu^*,\nu^*}(\tau^*)\leq\langle\rho,\mu\rangle$, the space $\on{H}^{\on{BM}}_{\langle2\rho,\mu\rangle}(X_{\mu^*,\nu^*}(\tau^*))$ has a basis given by irreducible components of $X_{\mu^*,\nu^*}(\tau^*)$ of dimension $\langle\rho, \mu\rangle$. Therefore, we may regard $\scrC_{V_\nu}^{\loc(m_1,n_1)}(\bba)$ as an element in $\on{H}^{\on{BM}}_{\langle2\rho,\mu\rangle}(X_{\mu^*,\nu^*}(\tau^*))$. Note that this element  is independent of the choices of $(m_1,n_1,m_2,n_2)$ by \eqref{E: change of m,n in Hk corr}, as soon as the choice is $(\nu+\tau,\nu)$-acceptable and $(\nu+\mu,\nu)$-acceptable. We thus also write this element as $\scrC_{V_\nu}(\bba)$.
In this subsection, we make this element explicit in two special cases.

Now, we choose $n$ large enough and consider  the following commutative diagram of correspondences 
\begin{tiny}
\begin{equation}\label{E: cal coh corr to local}
\xymatrix@C=15pt{
\on{pt}&\ar[l] \Gr_{\sigma(\nu^*)}\ar^-{\id\times\varpi^{\tau^*}}[r]\ar@{}[dr]|{\mathrm{(A)}}& \Gr_{\sigma(\nu^*)}\times\Gr_{\tau^*+\sigma(\nu^*)}& \Gr_{\nu^*}\times\Gr_{\tau^*+\sigma(\nu^*)}\ar_-{\sigma\times\id}[l]\ar@{}[dr]|{\mathrm{(B)}}&\ar[l] \Gr^0_{(\nu^*,\mu^*)\mid\tau^*+\sigma(\nu^*)}\ar[r]& \on{pt}\\
\Gr_\tau^{(n)}\ar[d]\ar[u]&\ar[l] (\Gr_{\sigma(\nu^*)}^{(2n)}\times\Gr_{\tau+\sigma(\nu)}^{(n)})_\tau\ar[r]\ar[d]\ar[u]\ar@{}[dr]|{\mathrm{(C)}}& \Gr_{\sigma(\nu^*)}^{(2n)}\times\Gr_{\tau+\sigma(\nu)}^{(n)}\ar[r]\ar[d]\ar[u]\ar@{}[dr]|{\mathrm{(D)}}&\Gr_{\tau+\sigma(\nu)}^{(n)}\times\Gr_{\nu^*}\ar[d]\ar[u]\ar@{}[dr]|{\mathrm{(E)}}&\ar[l](\Gr_{\tau+\sigma(\nu)}^{(n)}\times\Gr_{\nu^*})_\mu\ar[r]\ar[d]\ar[u]&\Gr_\mu\ar[d]\ar[u]\\
\Sht_{\tau}^{\loc(3n,n)} & \Sht_{\tau|(\sigma(\nu^*), \tau+\sigma(\nu))}^{0, \loc(3n,n)}\ar[l]\ar[r]& \Sht_{\sigma(\nu^*), \tau+\sigma(\nu)}^{\loc(3n,n)}\ar^{F^{-1}}[r]&\Sht_{ \tau+\sigma(\nu),\nu^*}^{\loc(2n,0)}&\ar[l] \Sht_{ (\tau+\sigma(\nu),\nu^*)|\mu}^{0,\loc(2n,0)}\ar[r]& \Sht_{\mu}^{\loc(2n,0)}. 
}
\end{equation}
\end{tiny}

This diagram requires some explanation. 
The commutative square $(\mathrm{D})$ follows from Remark \ref{R: group const of pFrob}.
We define $(\Gr_{\sigma(\nu^*)}^{(2n)}\times\Gr_{\tau+\sigma(\nu)}^{(n)})_\tau$ and $(\Gr_{\nu^*}\times\Gr_{\tau+\sigma(\nu)}^{(n)})_\mu$ so that the commutative square $(\mathrm{C})$ and $(\mathrm{E})$ are Cartesian. Explicitly,
$$(\Gr_{\sigma(\nu^*)}^{(2n)}\times\Gr_{\tau+\sigma(\nu)}^{(n)})_\tau=\{(g_1L^+G^{(2n)},g_2L^+G^{(n)})\in \Gr_{\sigma(\nu^*)}^{(2n)}\times\Gr_{\tau+\sigma(\nu)}^{(n)} \mid g_1g_2\in \varpi^\tau L^+G^{(n)}\}\cong \Gr_{\sigma(\nu^*)}^{(2n)},$$ 
$$(\Gr_{\tau+\sigma(\nu)}^{(n)}\times\Gr_{\nu^*})_\mu=\{(g_1L^+G^{(n)},g_2L^+G)\in \Gr_{\tau+\sigma(\nu)}^{(n)}\times\Gr_{\nu^*}\mid g_1g_2\in L^+G\varpi^\mu L^+G\}.$$ It follows that the commutative square $(\mathrm{B})$ is also Cartesian.

Note that since $\mu$ is minuscule, the $\star$-pullback (in the sense of Notation \ref{N:star pullback}) of $\IC_{\nu^*}$ along the perfectly smooth morphism $\Gr_{\nu^*}\tilde\times\Gr_{\mu^*}\to \Gr_{\nu^*}$ is $\IC_{\nu^*}\tilde\boxtimes\IC_{\mu^*}$ on $\Gr_{\nu^*,\mu^*}$. Then
\begin{eqnarray*}
&&\Hom_{\hat G}(V_{\sigma(\nu)}\otimes V_{\tau}\otimes V_{\nu^*},V_{\mu})\\
&\cong&\Hom_{\hat G}(V_{\nu^*}\otimes V_{\mu^*},V_{\tau^*}\otimes V_{\sigma(\nu^*)})\\
&\stackrel{\Sat}{\cong}& \on{Corr}_{\Gr^0_{(\nu^*,\mu^*)\mid\tau^*+\sigma(\nu^*)}}\big((\Gr_{\nu^*,\mu^*},\IC_{\nu^*}\tilde\boxtimes\IC_{\mu^*}),(\Gr_{\tau^*+\sigma(\mu^*)},\IC_{\tau^*+\sigma(\nu^*)})\big)\\
&\cong& \on{Corr}_{\Gr^0_{(\nu^*,\mu^*)\mid\tau^*+\sigma(\nu^*)}}\big((\Gr_{\nu^*},\IC_{\nu^*}[-\langle 2\rho, \mu\rangle](-\langle \rho, \mu\rangle) ),(\Gr_{\tau^*+\sigma(\mu^*)},\IC_{\tau^*+\sigma(\nu^*)})\big)\\
&\cong &\on{Corr}_{\Gr^0_{(\nu^*,\mu^*)\mid\tau^*+\sigma(\nu^*)}}\big((\Gr_{\nu^*}\times \Gr_{\tau^*+\sigma(\nu^*)},\IC_{\nu^*}\boxtimes  \IC_{\tau^*+\sigma(\nu^*)}), (\on{pt},\Ql[-\langle2\rho,\mu\rangle](-\langle\rho,\mu\rangle))\big),
\end{eqnarray*}
where the last isomorphism follows from Lemma \ref{L:sharp correspondence}. We denote by $\Sat(\bba)^\sharp$ the image of $\bba\in \Hom_{\hat G}(V_{\sigma(\nu)}\otimes V_{\tau}\otimes V_{\nu^*},V_{\mu})$ under this series of isomorphisms.

Note that all vertical arrows in \eqref{E: cal coh corr to local} are cohomologically smooth by Lemma \ref{L: reversing modification}, and we can apply the formalism of pullback of cohomological correspondences as discussed in \S \ref{ASS:smooth pullback correspondence}.
\begin{itemize}
\item The $\langle 3n\dim G\rangle$-shift of the smooth pullback of $\scrC^{\loc(3n)}(\id_{\tau} \otimes \delta_{\sigma(\nu^*)})$ to the middle row is equal to the $\langle n\dim G\rangle$-shift of the smooth pullback of  
$$\bD((\Ga_{\varpi^{\tau^*}}^*)^\sharp): (\on{pt},\Ql)\to (\Gr_{\sigma(\nu^*)}\times\Gr_{\tau^*+\sigma(\nu^*)},\IC_{\sigma(\nu^*)}\boxtimes\IC_{\tau^*+\sigma(\nu^*)}),$$ 
where $\Gamma_{\varpi^{\tau^*}}^*$ is the cohomological correspondence associated to the canonical isomorphism $(\varpi^{\tau^*})^*\IC_{\tau^*+\sigma(\nu^*)}\cong \IC_{\sigma(\nu^*)}$ as from Example \ref{Ex:examples of correspondences} (2), $(\Ga_{\varpi^{\tau^*}}^*)^\sharp$ is obtained from $\Ga_{\varpi^{\tau^*}}^*$ by Lemma \ref{L:sharp correspondence}; 
\item The shifted smooth pullback of $\bD F^{-1}$ from the bottom to the middle row is equal to the shifted smooth pullback of $\Gamma_{\sigma\times\id}^*$ from the top to the middle row.
\item The shifted smooth pullback of $\scrC^{\loc(2n)}(\bba)$  from the bottom to the middle row is equal to the shifted smooth pullback of $\Sat(\bba)^\sharp$ from the top to the middle row.
\end{itemize}
It follows from Lemma \ref{AL:pullback compatible with composition} that the shifted smooth pullback of 
$$\scrC^{\loc(3n,n)}_{V_\nu}(\bba)= \scrC^{\loc(2n,n)}(\bba)\circ \bD F^{-1}_{} \circ \scrC^{\loc(3n,n)}(\id_{\tau}\otimes \delta_{\sigma(\nu^*)})$$ 
to a cohomological correspondence
\begin{equation}
\label{E: corr in middle row}
(\Gr_{\tau}^{(n)},\Ql\langle n\dim G\rangle)\to (\Gr_{\mu},\Ql[\langle 2\rho,\mu\rangle](\langle 2\rho,\mu\rangle))
\end{equation}
is equal to the shifted smooth pullback of
\[\scrC^\Gr_{V_\nu}(\bba):=\Sat(\bba)^\sharp\circ \Gamma_{\sigma\times\id}^*\circ \bD((\Ga_{\varpi^{\tau^*}}^*)^\sharp).\]
Let us denote the support of compositions of the correspondences in the upper row (resp. middle row, lower row) of the diagram by $C^u$ (resp. $C$, $C_l$).
Then
\[C:=\{(g_1L^+G^{(2n)},g_2L^+G^{(n)})\in \Gr_{\sigma(\nu^*)}^{(2n)}\times\Gr_{\tau+\sigma(\nu)}^{(n)} \mid g_1g_2\in \varpi^\tau L^+G^{(n)}, g_2\sigma^{-1}(g_1)\in L^+G\varpi^\mu L^+G\}\]
The map $C\to C^u$ is induced by $g_1L^+G^{(2n)}\mapsto \sigma^{-1}g_1L^+G$. This realizes $C$ as an $L^{2n}G$-torsor over $C^u\cong X_{\mu^*,\nu^*}(\tau^*)\subset \Gr_{\nu^*}$. 

On the other hand, by the proof of Corollary \ref{C:expclit mStautau truncated}, further composing this map with 
$$X_{\mu^*,\nu^*}(\tau^*)\to [ L^{3n}_n G \backslash X_{\mu^*,\nu^*}(\tau^*)]\cong C_l$$ 
gives the perfectly smooth morphism $C\to C_l$. 

Now, the cohomological correspondence \eqref{E: corr in middle row} is given by a class in $c\in\on{H}^{\on{BM}}_{\langle2\rho,\mu\rangle+4n\dim G}(C)$, and the cohomological correspondence $\scrC_{V_\nu}^\Gr(\bba)$ is given by a class in $\on{H}^{\on{BM}}_{\langle2\rho,\mu\rangle}(C^u)= \on{H}^{\on{BM}}_{\langle2\rho,\mu\rangle}(X_{\mu^*,\nu^*}(\tau^*))$, by the same considerations as in \S \ref{cl vs cc}. The above observations imply that under the canonical isomorphisms
\[\on{H}^{\on{BM}}_{\langle2\rho,\mu\rangle}(X_{\mu^*,\nu^*}(\tau^*))\cong\on{H}^{\on{BM}}_{\langle2\rho,\mu\rangle+4n\dim G}(C)\cong \on{H}^{\on{BM}}_{\langle2\rho,\mu\rangle}(X_{\mu^*,\nu^*}(\tau^*)),\] 
$\scrC_{V_\nu}(\bba)=c=\scrC^\Gr_{V_\nu}(\bba)$. We thus have proven

\begin{lem}
The class $\scrC_{V_\nu}(\bba)\in \on{H}^{\on{BM}}_{\langle2\rho,\mu\rangle}(X_{\mu^*,\nu^*}(\tau^*))$ is equal to the class $\scrC^\Gr_{V_\nu}(\bba)\in \on{H}^{\on{BM}}_{\langle2\rho,\mu\rangle}(X_{\mu^*,\nu^*}(\tau^*))$.
\end{lem}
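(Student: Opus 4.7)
The plan is to prove the two classes coincide by lifting each of them to the same cohomological correspondence on an intermediate ``middle row'' of correspondences, realized via cohomologically smooth pullbacks. First I would write down the three-row commutative diagram \eqref{E: cal coh corr to local}, whose top row consists of the correspondences over affine Grassmannians used to define $\scrC^\Gr_{V_\nu}(\bba)$, whose bottom row consists of the correspondences over moduli of restricted local shtukas used to define $\scrC_{V_\nu}(\bba)$, and whose middle row consists of appropriately truncated versions $\Gr_\bullet^{(n)}$ of Grassmannians. The key geometric input is Lemma \ref{L: reversing modification}, which guarantees that all vertical arrows in this diagram are cohomologically smooth, so that the formalism of \S\ref{ASS:smooth pullback correspondence} applies.

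Next, I would go through each of the three building blocks of the compositions separately. For the Satake correspondence $\scrC^{\loc(2n,n)}(\bba)$ at the bottom, the smooth pullback to the middle row equals the smooth pullback of $\Sat(\bba)^\sharp$ from the top; this is essentially by construction of $\scrC^{\loc(m,n)}$ in Lemma \ref{L:Satake correspondence shtukas} via pullback along $\varphi^{\loc(m,n)}$, combined with Lemma \ref{L:sharp correspondence}. For $\scrC^{\loc(3n,n)}(\id_\tau \otimes \delta_{\sigma(\nu^*)})$, the shifted pullback matches the shifted pullback of $\bD((\Gamma_{\varpi^{\tau^*}}^*)^\sharp)$ from the top, via the creation morphism discussed in Example \ref{Ex: creation and annilhilation} and the Cartesian square $(\mathrm{A})$. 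The most delicate piece is identifying the pullback of $\bD F^{-1}$ with the pullback of $\Gamma_{\sigma\times \id}^*$: this uses the explicit construction of $F^{-1}_{V_\bullet\boxtimes W}$ in Lemma \ref{L:partial Frobenius cohomological correspondence}, together with the Cartesian square $(\mathrm{D})$ (which itself relies on the group-theoretic description of the inverse partial Frobenius from Remark \ref{R: group const of pFrob}).

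Once these three identifications are in hand, Lemma \ref{AL:pullback compatible with composition} gives that the smooth pullback of the bottom composition $\scrC_{V_\nu}^{\loc(3n,n)}(\bba) = \scrC^{\loc(2n,n)}(\bba) \circ \bD F^{-1} \circ \scrC^{\loc(3n,n)}(\id_\tau \otimes \delta_{\sigma(\nu^*)})$ to the middle row agrees with the smooth pullback of the top composition $\scrC^\Gr_{V_\nu}(\bba) = \Sat(\bba)^\sharp \circ \Gamma_{\sigma\times\id}^* \circ \bD((\Gamma_{\varpi^{\tau^*}}^*)^\sharp)$. Both compositions are thus represented by the same Borel--Moore homology class on the middle support $C$, which is an $L^{2n}G$-torsor over $C^u \cong X_{\mu^*,\nu^*}(\tau^*)$. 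Via the canonical isomorphisms
\[
\on{H}^{\on{BM}}_{\langle 2\rho,\mu\rangle}(X_{\mu^*,\nu^*}(\tau^*)) \;\cong\; \on{H}^{\on{BM}}_{\langle 2\rho,\mu\rangle+4n\dim G}(C) \;\cong\; \on{H}^{\on{BM}}_{\langle 2\rho,\mu\rangle}(X_{\mu^*,\nu^*}(\tau^*))
\]
induced by the two cohomologically smooth torsors $C \to C^u$ and $C \to C_l \cong [L^{3n}_n G \backslash X_{\mu^*,\nu^*}(\tau^*)]$ (using Corollary \ref{C:expclit mStautau truncated} for the latter), this common middle class is identified with both $\scrC_{V_\nu}(\bba)$ and $\scrC^\Gr_{V_\nu}(\bba)$, which yields the desired equality.

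The main obstacle I expect is checking the compatibility of the inverse partial Frobenius with smooth pullback; one has to keep track of the Frobenius twist $\sigma$ precisely (it appears as the horizontal arrow $\sigma \times \id$ in the diagram rather than on the vertical), and then reconcile this with Lemma \ref{L:partial Frobenius cohomological correspondence}, where $F^{-1}$ is constructed as the smooth pullback of $\Gamma_{\sigma\times\id}^*$ in a slightly different diagram. The Cartesian squares $(\mathrm{B})$, $(\mathrm{C})$, $(\mathrm{D})$, $(\mathrm{E})$ must be set up so that all the relevant base-change compatibilities from Appendix \ref{Sec:cohomological correspondence} apply without modification.
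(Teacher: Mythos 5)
Your proposal is correct and follows essentially the same route as the paper: the three-row diagram \eqref{E: cal coh corr to local}, cohomological smoothness of the vertical maps via Lemma \ref{L: reversing modification}, the three building-block identifications assembled by Lemma \ref{AL:pullback compatible with composition}, and the final comparison of Borel--Moore classes through the two torsor maps $C\to C^u$ and $C\to C_l$ (the latter via Corollary \ref{C:expclit mStautau truncated}). This is exactly the argument the paper gives in the paragraphs preceding the lemma.
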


Assume that 
$$\bba\in\Hom_{\hat G}(\sigma V_{\nu}\otimes V_{\tau}\otimes V_{\nu^*}, V_{\mu})\cong \Hom(V_{\nu^*}\otimes V_{\mu^*}, V_{\sigma(\nu^*)}\otimes V_{\tau^*})$$ is given by the fundamental class of a Satake cycle $\Gr^{0,\bba}_{(\nu^*,\mu^*)\mid \sigma(\nu^*)+\tau^*}$ via Proposition \ref{SS:cycles of geometric Satake}, and assume that
$X_{\mu^*,\nu^*}^{\bba}(\tau)$ as defined via \eqref{E: possible irr comp}. Then it follows from definition that $\scrC_{V_{\nu}}(\bba)=\scrC^\Gr_{V_\nu}(\bba)$ in fact lies in the image of $\on{H}^{\on{BM}}_{\langle2\rho, \mu\rangle}(X^\bba_{\mu^*,\nu^*}(\tau^*))\to \on{H}^{\on{BM}}_{\langle2\rho, \mu\rangle}(X_{\mu^*,\nu^*}(\tau^*))$.

Now we specialize to the cases we need.

\begin{prop}
\label{P: corr vs cl for min a}
Assume that $\bbb=i^{\MV}(\bba)\in \MV_\mu(\tau+\sigma(\nu)-\nu)$ satisfies Lemma \ref{L: finding best tau}. Then
$\scrC_{V_{\nu}}(\bba)\in \on{H}^{\on{BM}}_{\langle2\rho, \mu\rangle}(X_{\mu^*,\nu^*}(\tau^*))$ is given by the fundamental class of $X_{\mu^*}^{\bbb,x_0}\subset X_{\mu^*,\nu^*}(\tau^*)$.
\end{prop}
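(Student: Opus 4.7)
My plan is to combine the preceding lemma with Theorem~\ref{C: unique comp} and then reduce the multiplicity computation to an open Schubert cell, where the geometric Satake description of $\bba$ becomes transparent via MV cycles.

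\textbf{Step 1 (Reduction and support.)} By the lemma preceding the proposition, it suffices to identify $\scrC^{\Gr}_{V_\nu}(\bba) = \Sat(\bba)^\sharp \circ \Gamma^*_{\sigma\times\id} \circ \bD((\Ga^*_{\varpi^{\tau^*}})^\sharp)$ with the fundamental class of $X^{\bbb,x_0}_{\mu^*}(\tau^*)$. Since $\bba$ is represented by the fundamental class of the Satake cycle $\Gr^{0,\bba}_{(\nu^*,\mu^*)\mid \tau^*+\sigma(\nu^*)}$ (via Proposition~\ref{SS:cycles of geometric Satake} and Corollary~\ref{C: rep to cor}), and since composition of cohomological correspondences is computed by fiber product of supports (\S\ref{ASS:pushforward correspondence}), the support of $\scrC^{\Gr}_{V_\nu}(\bba)$ is contained in $X^{\bba}_{\mu^*,\nu^*}(\tau^*)$ as defined by the Cartesian square \eqref{E: possible irr comp}. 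By Theorem~\ref{C: unique comp}, $X^{\bba}_{\mu^*,\nu^*}(\tau^*)$ has a unique irreducible component $X^{\bbb,x_0}_{\mu^*}(\tau^*)$ of the maximal dimension $\langle\rho,\mu-\tau\rangle = \langle\rho,\mu\rangle$; all other components have strictly smaller dimension. Consequently we may write $\scrC^{\Gr}_{V_\nu}(\bba) = c \cdot [X^{\bbb,x_0}_{\mu^*}(\tau^*)]$ in $\rmH^{\rm BM}_{\langle 2\rho,\mu\rangle}(X_{\mu^*,\nu^*}(\tau^*))$ for some $c \in \Ql$.

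\textbf{Step 2 (Restriction to the open Schubert cell.)} To pin down $c$, I will restrict the entire correspondence diagram to the open Schubert cell $\mathring{\Gr}_{\nu^*}$. By Theorem~\ref{C: unique comp} the intersection $\mathring{X}^{\bbb,x_0}_{\mu^*}(\tau^*) := X^{\bbb,x_0}_{\mu^*}(\tau^*) \cap \mathring{\Gr}_{\nu^*} = X^{\bba}_{\mu^*,\nu^*}(\tau^*) \cap \mathring{\Gr}_{\nu^*}$ is irreducible of the top dimension. Passing to $\mathring{\Gr}_{\nu^*}$ the projection $\mathring{\Gr}_{\nu^*} \tilde\times \Gr_{\tau^*+\sigma(\nu^*)} \to \mathring{\Gr}_{\nu^*}$ becomes a (perfectly smooth) fibration, and by Lemma~\ref{L:Sat vs MV} the restriction of the Satake cycle $\Gr^{0,\bba}_{(\nu^*,\mu^*)\mid \tau^*+\sigma(\nu^*)}$ to $(\mathring{\Gr}_{\nu^*}\cap S_{\nu^*}) \times \Gr_{\tau^*+\sigma(\nu^*)}$ equals $(\mathring{\Gr}_{\nu^*}\cap S_{\nu^*}) \tilde\times (S_{\lambda^*}\cap \Gr_{\mu^*})^{\bbb^*}$, where $\lambda = \tau+\sigma(\nu)-\nu$ and $\bbb^*$ is the MV cycle corresponding to $\bbb$ under the duality identification. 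Thus on the open cell, $\scrC^{\Gr}_{V_\nu}(\bba)|_{\mathring{\Gr}_{\nu^*}}$ is computed from the fiber product
\[
\xymatrix@C=15pt{
\mathring{X}^{\bbb,x_0}_{\mu^*}(\tau^*) \ar[r] \ar[d] & \mathring{\Gr}_{\nu^*}\tilde\times(S_{\lambda^*}\cap\Gr_{\mu^*})^{\bbb^*} \ar[d]
\\
\mathring{\Gr}_{\nu^*} \ar[r]^-{1\times \varpi^{\tau^*}\sigma} & \mathring{\Gr}_{\nu^*} \times \Gr_{\tau^*+\sigma(\nu^*)},
}
\]
which is nothing but the open part of the defining diagram \eqref{E: possible irr comp} of $X^{\bba}_{\mu^*,\nu^*}(\tau^*)$.

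\textbf{Step 3 (Computation of the coefficient.)} On the open cell, both horizontal maps of the square above are equidimensional and the formalism of smooth/proper pullback and pushforward of cohomological correspondences (\S\ref{ASS:smooth pullback correspondence}, \S\ref{ASS:pushforward correspondence}) produces the class determined by the fundamental class of the fiber product with coefficient one. More concretely: the fundamental class of the MV cycle $(S_{\lambda^*}\cap \Gr_{\mu^*})^{\bbb^*}$ equals $[\overline{(S_{\lambda^*}\cap \Gr_{\mu^*})^{\bbb^*}}]$ by Theorem~\ref{T:weight space interpretation} and the normalization of $\Sat(\bba)^\sharp$ through Proposition~\ref{SS:cycles of geometric Satake}(3); pulling back along the transverse intersection with the Frobenius-twisted graph $1\times \varpi^{\tau^*}\sigma$ then yields the fundamental class $[\mathring{X}^{\bbb,x_0}_{\mu^*}(\tau^*)]$ with coefficient one. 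Since $\mathring{X}^{\bbb,x_0}_{\mu^*}(\tau^*)$ is open and dense in $X^{\bbb,x_0}_{\mu^*}(\tau^*)$, restriction identifies $c\cdot[X^{\bbb,x_0}_{\mu^*}(\tau^*)]$ with $c\cdot[\mathring{X}^{\bbb,x_0}_{\mu^*}(\tau^*)]$, forcing $c=1$.

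The hard part will be Step~3: checking that all the shifts, Tate twists, and duality identifications implicit in $\Sat(\bba)^\sharp$, $\Gamma^*_{\sigma\times\id}$, and $\bD((\Ga^*_{\varpi^{\tau^*}})^\sharp)$ conspire to give coefficient precisely $1$ (as opposed to some nontrivial Galois-equivariant scalar). This is essentially a careful bookkeeping of the geometric Satake normalization together with Lemma~\ref{L:Sat vs MV}, and can be verified by tracing through the proof of Proposition~\ref{SS:cycles of geometric Satake}(3) in the open stratum, where the Satake class is literally the fundamental class of the MV cycle and the Frobenius graph contributes the identity automorphism.
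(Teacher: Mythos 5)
You follow the paper's own route in Steps 1--2: Theorem \ref{C: unique comp} pins the class down to a multiple $c\cdot[X^{\bbb,x_0}_{\mu^*}(\tau^*)]$, and the coefficient is computed after restricting to the open cell $\mathring{\Gr}_{\nu^*}$, using the lemma identifying $\scrC_{V_\nu}(\bba)$ with $\scrC^{\Gr}_{V_\nu}(\bba)$. (One small inaccuracy in Step 2: Lemma \ref{L:Sat vs MV} describes the intersection of the Satake cycle with the semi-infinite orbits $S_{\nu,\lambda}$, not its restriction to $\mathring{\Gr}_{\nu^*}\times\mathring{\Gr}_{\tau^*+\sigma(\nu^*)}$; the relevant open locus is the one defined in \eqref{E: open Satake cycle}, and your fiber-product diagram should be phrased with it.)

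The genuine gap is in Step 3, which is where the proof actually lives. The statement that the formalism ``produces the fundamental class with coefficient one'' is not formal: the fiber product of the open Satake cycle with the graph of $1\times\varpi^{\tau^*}\sigma$ could a priori be non-reduced or have excess intersection, in which case $c\neq 1$. You assert that the intersection with the Frobenius-twisted graph is transverse, but give no reason, and your closing remark locates the difficulty in ``bookkeeping of shifts and Tate twists,'' which misplaces the content. The paper's argument is a Lefschetz-trace argument: on the open cell the class is recognized as $\delta_{\Ql}\circ u^{\sharp}$ for the self-correspondence $c_1\times c_2:\mathring{\Gr}^{0,\bba}_{(\nu^*,\mu^*)\mid\tau^*+\sigma(\nu^*)}\to \mathring{\Gr}_{\sigma(\nu^*)}\times\mathring{\Gr}_{\sigma(\nu^*)}$, where $c_1$ is the first projection composed with Frobenius and $c_2$ is the second projection composed with translation by $\varpi^{\tau}$, with $u$ the fundamental class of the open Satake cycle; then Lemma \ref{L: Shifted trace smooth} gives exactly coefficient $1$, provided the correspondence meets the diagonal properly and smoothly. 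That hypothesis is verified by choosing a deperfection and using that the differential of Frobenius is zero, which forces the intersection with the diagonal to be automatically transversal. This ``$d(\mathrm{Frob})=0$'' input, together with the reformulation as a shifted trace so that Lemma \ref{L: Shifted trace smooth} applies, is the missing idea; without it your Step 3 does not rule out a nontrivial multiplicity.
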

\begin{proof}
By Theorem \ref{C: unique comp}, $X^\bba_{\mu^*,\nu^*}(\tau^*)$ has a unique irreducible component of dimension $\langle\rho,\mu\rangle$, namely $X_{\mu^*}^{\bbb,x_0}(\tau^*)$. So $\scrC_{V_{\nu}}(\bba)$ is some multiple of the fundamental class of $X_{\mu^*}^{\bbb,x_0}(\tau^*)$.
Since the intersection of this component with $\mathring{\Gr}_{\nu^*}$ is non-empty (so open dense in $X_{\mu^*}^{\bbb,x_0}(\tau^*)$). Then we can restrict every cohomological class to the smooth open locus. The smooth part of the top row of \eqref{E: cal coh corr to local} can be identified with intersection of the diagonal of $\mathring{\Gr}_{\sigma(\nu^*)}\times\mathring{\Gr}_{\sigma(\nu^*)}$ with the following correspondence: 
$$C=\mathring{\Gr}^{0,\bba}_{(\nu^*,\mu^*)\mid \tau^*+\sigma(\nu^*)}\xrightarrow{c_1\times c_2}\mathring{\Gr}_{\sigma(\nu^*)}\times \mathring{\Gr}_{\sigma(\nu^*)},$$ where $\mathring{\Gr}^{0,\bba}_{(\nu^*,\mu^*)\mid \tau^*+\sigma(\nu^*)}$ is as in \eqref{E: open Satake cycle}, and
$$c_1: \mathring{\Gr}^{0,\bba}_{(\nu^*,\mu^*)\mid \tau^*+\sigma(\nu^*)}\to \mathring{\Gr}_{\nu^*}\xrightarrow{\sigma} \mathring{\Gr}_{\sigma(\nu^*)},\ \ c_2: \mathring{\Gr}^{0,\bba}_{(\nu^*,\mu^*)\mid \tau^*+\sigma(\nu^*)}\to \mathring{\Gr}_{\sigma(\nu^*)+\tau^*}\xrightarrow{\varpi^{\tau}}\mathring{\Gr}_{\sigma(\nu^*)}.$$ The cohomological correspondence then is identified with $\delta_{\Ql}\circ u^\sharp$, where
$u: c_1^*\Ql[\langle2\rho,\mu\rangle](\langle\rho,\mu\rangle) \to c_2^!\Ql$ is given by the fundamental class of $\mathring{\Gr}^{0,\bba}_{(\nu^*,\mu^*)\mid \tau^*+\sigma(\nu^*)}$. Choose a deperfection and use the fact that the differential of Frobenius is zero, we see $C$ intersects with the diagonal properly smoothly. The proposition now follows from Lemma \ref{L: Shifted trace smooth}.
\end{proof}

In the second case, we assume that $\mu=\tau$, $\sigma(\nu)=\nu$. Let 
$\bba\in\Hom(V_{\nu}\otimes V_{\tau}\otimes V_{\nu^*},V_\tau)$ be the homomorphism induced by $\bbe_{\nu}: V_{\nu}\otimes V_{\nu^*}\to \mathbf{1}$.
Then $\Gr^{0,\bba}_{(\nu^*,\mu^*)\mid \sigma(\nu^*)+\tau^*}=\Gr^{0}_{(\nu^*,\mu^*)\mid \sigma(\nu^*)+\tau^*}$ is isomorphic to $\Gr_{\nu^*}$, and $$X_{\mu^*,\nu^*}(\tau^*)=\Gr_{\nu^*}(k)$$
is the set of rational points of $\Gr_{\nu^*}$.
The top row of \eqref{E: cal coh corr to local} can be identified with
\[\on{pt}\leftarrow \Gr_{\nu^*}\xrightarrow{\Delta}\Gr_{\nu^*}\times\Gr_{\nu^*}\xleftarrow{\sigma\times\id}\Gr_{\nu^*}\times\Gr_{\nu^*}\xleftarrow{\Delta}\Gr_{\nu^*}\to\on{pt},\]
and the cohomological correspondence can be identified with $\delta_{\IC_{\nu^*}} \circ \Ga^*_{\sigma\times\id}\circ e_{\IC_{\nu^*}}$, where $e_{\IC_{\nu^*}}$ and $\delta_{\IC_{\nu^*}}$ are as in Example \ref{Ex:examples of correspondences}(4).
Then Lemma \ref{P: GL trace formula}, together with Proposition \ref{P: classical Sat=geometric Sat}, implies the following result, which gives Theorem \ref{T:Spectral action} (2) (by Remark \ref{R: geom Sat, arith Frob v.s. geom Frob}).

\begin{prop}
\label{P: local S=T}
The class
$\scrC_{V_{\nu}}^{\loc(m_1,n_1)}(\bbe)\in \on{H}^{\on{BM}}_{0}(X^\bbe_{\mu^*,\nu^*}(\tau^*))=C(\Gr_{\nu^*}(k))$ is the function $\Sat^{cl}([V_{\nu^*}])$, i.e.
its value at $x\in \Gr_{\nu^*}(k)$ is
\[\tr(\phi_x\mid \Sat(V_{\nu^*})_{\bar x}).\]
\end{prop}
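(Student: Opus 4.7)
The first move will be to invoke the lemma stated immediately before the proposition, which identifies the class $\scrC_{V_\nu}^{\loc(m_1,n_1)}(\bbe) \in \on{H}^{\on{BM}}_{\langle 2\rho,\mu\rangle}(X_{\mu^*,\nu^*}(\tau^*))$ with the class $\scrC^{\Gr}_{V_\nu}(\bbe) = \Sat(\bbe)^\sharp \circ \Gamma^*_{\sigma\times\id} \circ \bD((\Ga_{\varpi^{\tau^*}}^*)^\sharp)$ computed purely on the affine Grassmannian. In the present specialization $\mu = \tau$, $\sigma(\nu)=\nu$, and $\bba = \bbe$, the discussion directly preceding the proposition already rewrites this composite as $\delta_{\IC_{\nu^*}} \circ \Ga^*_{\sigma\times\id} \circ e_{\IC_{\nu^*}}$, supported on $\Gr_{\nu^*}$. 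Since $\mu=\tau$ is central, $\langle 2\rho,\mu\rangle = 0$, and $X_{\mu^*,\nu^*}(\tau^*) = \Gr_{\nu^*}(k)$ is a discrete set, so $\on{H}^{\on{BM}}_0$ is simply the space $C(\Gr_{\nu^*}(k))$ of $\Ql$-valued functions; our task is therefore to compute, for each $x \in \Gr_{\nu^*}(k)$, the value of this function at $x$.

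The second step is a pointwise trace computation. Because each $x \in \Gr_{\nu^*}(k)$ is a $\sigma$-fixed point in $\Gr_{\nu^*}$, the composite $\delta_{\IC_{\nu^*}} \circ \Ga^*_{\sigma\times\id} \circ e_{\IC_{\nu^*}}$ reduces to a local Grothendieck--Lefschetz contribution at $x$. This is precisely the situation governed by Lemma \ref{P: GL trace formula}: the maps $e_{\IC_{\nu^*}}$ and $\delta_{\IC_{\nu^*}}$ are the canonical counit/unit witnessing the (appropriately shifted) self-duality of $\IC_{\nu^*}$, while $\Ga^*_{\sigma\times\id}$ inserts the Frobenius structure $\sigma^*\IC_{\nu^*}\cong \IC_{\nu^*}$ from \eqref{E: Gal equiv Sat}. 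The Lefschetz-type formula then identifies the local contribution at $x$ with the trace of the geometric Frobenius $\phi_x$ acting on the stalk $\IC_{\nu^*,\bar x} = \Sat(V_{\nu^*})_{\bar x}$. Thus as a function on $\Gr_{\nu^*}(k)$, the correspondence is $x \mapsto \tr(\phi_x \mid \Sat(V_{\nu^*})_{\bar x})$.

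The third step is to identify this function with $\Sat^{cl}([V_{\nu^*}])$. By the definition of Grothendieck's sheaf--function dictionary recalled in \S\ref{S: geom v.s. classic Sat}, the function $x \mapsto \tr(\phi_x \mid \Sat(V_{\nu^*})_{\bar x})$ is exactly $f_{\Sat(V_{\nu^*})} = \Sat^{\on{s-f}}([V_{\nu^*}])$, viewed inside $H_G\otimes \Ql$. Proposition \ref{P: classical Sat=geometric Sat} asserts that $\Sat^{\on{s-f}} = \Sat^{cl}$, so this function is $\Sat^{cl}([V_{\nu^*}])$, completing the proof.

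The main obstacle is the second step: one must verify that the local contribution of the cohomological correspondence $\delta_{\IC_{\nu^*}} \circ \Gamma^*_{\sigma\times\id} \circ e_{\IC_{\nu^*}}$ at a Frobenius fixed point $x$ is precisely $\tr(\phi_x \mid \Sat(V_{\nu^*})_{\bar x})$, as opposed to some twisted version. This is where Lemma \ref{P: GL trace formula} does the work, and it rests on the compatibility of the unit/counit of self-duality of a perverse sheaf with the Verdier trace pairing; granting that compatibility (which is standard), all the identifications above chain together formally. Once the local trace is correctly normalized, the remainder of the argument is a direct citation of the sheaf--function dictionary and of Proposition \ref{P: classical Sat=geometric Sat}.
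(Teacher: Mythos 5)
Your proposal is correct and follows essentially the same route as the paper: one first identifies $\scrC_{V_\nu}^{\loc(m_1,n_1)}(\bbe)$ with $\scrC^{\Gr}_{V_\nu}(\bbe)$ via the lemma preceding the proposition, specializes ($\mu=\tau$ central, $\sigma(\nu)=\nu$) to rewrite the class as $\delta_{\IC_{\nu^*}}\circ\Ga^*_{\sigma\times\id}\circ e_{\IC_{\nu^*}}$ on the discrete set $\Gr_{\nu^*}(k)$, and then applies the Braverman--Varshavsky trace formula (Lemma \ref{P: GL trace formula}) together with the sheaf--function dictionary and Proposition \ref{P: classical Sat=geometric Sat}. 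This is exactly how the paper deduces the proposition.
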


\section{Application to Shimura varieties}
In this section, we prove our main theorem.
\begin{convention}
Our convention for Hodge structure, weight maps, and Artin reciprocity map all follow \cite{Deligne}. But the reciprocity morphism of \cite[\S2.2.3]{Deligne} is changed to be the one \emph{without taking the inverse}, as pointed out by \cite{Milne}.
\end{convention}

\subsection{The canonical integral model of Shimura varieties of Hodge type}
In this subsection, we recall the canonical integral models of Shimura varieties of Hodge type and some related constructions, mostly following \cite{Ki1}.
\subsubsection{The canonical integral model}
Let $G$ be a connected reductive group over $\bQ$, and let $X$ be a $G(\bR)$-conjugacy class of homomorphisms $h:\bS\to G_\bR$. Recall that the pair $(G,X)$ is called a \emph{Shimura datum} if for some (and therefore every) $h\in X$,
\begin{enumerate}
\item[(SV1)] under $\Ad\circ h:\bS\to\GL(\fg)$, $\fg$ acquires a Hodge structure of type $(-1,1), (0,0), (1,-1)$,
\item[(SV2)] $\ad h(i)$ is a Cartan involution of $G^\ad_\bR$, and
\item[(SV3)] $G^\ad$ does not contain a $\bQ$-factor whose real points are compact.
\end{enumerate}
In this work, we will also consider groups that do not satisfy (SV3). A reductive group $G$ over $\bQ$ together with a set $X$ of a $G(\bR)$-conjugacy classes of $h:\bS\to G_{\bR}$ satisfying (SV1), (SV2) will be called a \emph{weak Shimura datum}.

The map $w_h:\bG_m\subset \bS\stackrel{h}{\to} G_\bR$ factors through the center of $G_{\bR}$, and is therefore independent of the choice of $h$. We denote it by $w$ and call it the \emph{weight homomorphism}.

For $G=\GSp(V,\psi)$, the similitude group of a symplectic $\bQ$-vector space, we use $\frakH^\pm$ (instead of $X$) to denote the Siegel (upper and lower) half spaces of complex structures on $V$ such that $\psi(\cdot, J\cdot)$ is (positive or negative) definite.\footnote{We used the notation $\psi$ earlier to in the definition of local $G$-shtukas, but there should be no confusion.} Recall that a (weak) Shimura datum $(G, X)$ is called of \emph{Hodge type} if there is an embedding $\rho:G\to \GSp(V,\psi)$ that induces an embedding $X\to \frakH^\pm$ given by $h\mapsto \rho\circ h$.  

Let us fix the isomorphism $\bS_\bC \cong \bG_m\times\bG_m$ as usual (see \eqref{E: Delignetorus}).
Let $\bG_m\to \bS_\bC$ denote the inclusion into the first factor.
The conjugacy class $\{h\}$ induces a conjugacy class $\mu_h: \bG_m\to \bS_\bC\to G_\bC$, which is the same as a conjugacy class of 1-parameter subgroups of $G$ over $\rat\subset\bC$. This conjugacy class is called the \emph{Shimura cocharacter}, and the field of its definition is called the \emph{reflex field}, denoted by $E=E(G,X)$. In the sequel, we write $\mu$ for $\mu_h$ to simplify notation. 

Let $K\subset G(\bA_f)$ be a (sufficiently small) open compact subgroup. Let $\mathbf{Sh}_K(G,X)$ denote the corresponding Shimura variety defined over $E$. We will fix a prime $p>2$, such that $K_p$ is a hyperspecial subgroup of $G(\bQ_p)$, i.e. $K_p=\underline{G}(\bZ_p)$ for some reductive group extension $\underline{G}$ of $G$ to $\bZ_{(p)}$. 
Let $v$ be a place of $E(G,X)$ over $p$. Let $\mO_{E,(v)}$ denote the ring of integers of $E$ localized at $v$. Recall the following theorem of Kisin \cite{Ki1} and Vasiu \cite{Vasiu}.

\begin{thm}
If $(G,X)$ is of Hodge type, there is a smooth integral canonical model $\mathscr S_K(G,X)$ of $\mathbf{Sh}_K(G,X)$, defined over $\mO_{E,(v)}$. 
\end{thm}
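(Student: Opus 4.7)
The plan is to follow the Kisin--Vasiu strategy, using the Siegel Shimura variety as a bridge. I would first handle the Siegel case: for $G^\sharp=\GSp(V,\psi)$ with its Siegel datum $(G^\sharp,\frakH^\pm)$ and a hyperspecial level $K^\sharp_p\subset G^\sharp(\bQ_p)$, the Shimura variety $\mathbf{Sh}_{K^\sharp}(G^\sharp,\frakH^\pm)$ has a classical PEL moduli interpretation as a moduli of prime-to-$p$ polarized abelian varieties with $K^{\sharp,p}$-level structure; this produces a smooth integral canonical model $\scrA_{K^\sharp}$ over $\bZ_{(p)}$, with smoothness coming from Serre--Tate deformation theory of ordinary reductions (or, more intrinsically, from the deformation theory of $p$-divisible groups).

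Next I would choose a Hodge embedding $\rho:G\hookrightarrow G^\sharp$ together with a $\ZZ_{(p)}$-lattice $V_{\ZZ_{(p)}}\subset V$ such that $\rho$ extends to a closed embedding of reductive group schemes $\underline{G}\hookrightarrow \GSp(V_{\ZZ_{(p)}},\psi)$; the existence of such a lattice uses that $K_p$ is hyperspecial (so $\underline{G}$ is a reductive $\ZZ_{(p)}$-model of $G$) together with Zarhin's trick to arrange the polarization. After picking $K^{\sharp,p}\supset \rho(K^p)$ small enough so that $\mathbf{Sh}_K(G,X)_E\hookrightarrow \mathbf{Sh}_{K^\sharp}(G^\sharp,\frakH^\pm)_E$ is a closed immersion, I would define $\scrS_K(G,X)$ as the normalization of the scheme-theoretic closure of the image of $\mathbf{Sh}_K(G,X)_E$ inside $\scrA_{K^\sharp}\otimes \mO_{E,(v)}$. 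Functoriality in the level and in the datum is then formal, and one can check the extension/canonicity property by using the analogous N\'eron-type property for the Siegel model, plus the fact that normalizations commute with strict henselization.

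The main obstacle is the smoothness of $\scrS_K(G,X)$. The approach is to cut out $\underline{G}\subset \GL(V_{\ZZ_{(p)}})$ by a finite collection of tensors $\{s_\alpha\}\subset V_{\ZZ_{(p)}}^{\otimes}$, and to show that for every point $x\in \scrS_K(G,X)(k)$ with $k$ a perfect field of characteristic $p$, the corresponding abelian variety $A_x$ carries Frobenius-invariant tensors $s_{\alpha,\cris}$ in the crystalline Dieudonn\'e module $\DD(A_x[p^\infty])$ that parallel-transport into the Hodge tensors of any lift. Establishing this is the hard technical core of \cite{Ki1}: it requires the integral comparison isomorphism of Faltings, a careful study of $G$-valued crystalline representations, and the fact that $\underline{G}$ being reductive at $p$ makes $\underline{G}$-torsors deform well.

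Granted the tensors $s_{\alpha,\cris}$, the local deformation theory at $x$ is controlled, via Grothendieck--Messing, by the tensor-preserving deformations of the $p$-divisible group $A_x[p^\infty]$. This deformation functor is representable by (a completion of) a local model built from the minuscule Schubert variety $\Gr_{\underline{G},\mu}$ attached to $(\underline{G},\mu)$, which is nothing but a $\underline{G}$-orbit on a partial flag variety and therefore perfectly smooth over $\mO_{E,v}$ of relative dimension $\langle 2\rho,\mu\rangle$. This identification then implies smoothness of $\scrS_K(G,X)$ of the expected dimension, and moreover canonically equips it with the crystalline $\underline{G}$-torsor on the mod-$p$ fiber that the rest of the paper relies on (providing the map $\loc_p:\Sh_\mu\to \Sht_\mu^{\loc}$). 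The hardest part will be, as indicated above, the construction and compatibility of the crystalline tensors and the verification that the resulting local model is the correct minuscule $\underline{G}$-Schubert variety.
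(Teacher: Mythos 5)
Your proposal is correct and follows essentially the same route as the paper, which does not reprove this theorem but cites Kisin \cite{Ki1} and Vasiu \cite{Vasiu} and recalls exactly the construction you describe: choose a Hodge embedding with a self-dual lattice adapted to the hyperspecial level, take the closure of $\mathbf{Sh}_K(G,X)$ in the Siegel moduli scheme $\mathscr A_{g,K'}$ over $\mO_{E,(v)}$, and normalize, with smoothness supplied by Kisin's crystalline tensors and the $G$-adapted deformation rings $R_{G_x}$ (equivalently the local model $\underline G/P_\mu$, as recalled in Lemma \ref{P-torsor} and Corollary \ref{local model}). The technical core you defer (construction of $s_{0,x}$ via Breuil--Kisin modules and the identification of the deformation space) is precisely the part of \cite{Ki1} the paper relies on, so your outline matches the intended argument.
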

\begin{rmk}
In fact, the above theorem is more precise.
Write $K=K_pK^p$, and consider the pro-scheme $\mathscr S_{K_p}(G,X)=\underleftarrow\lim_{K^p}\mathscr S_{K_pK^p}(G,X)$. 
It satisfies the extension property: for any regular, formally smooth $\mO_{E,(v)}$-scheme $T$, a map $S \otimes E \to \mathscr S_{K_p}(G,X)$ extends to a $T$-point of $\mathscr S_{K_p}(G,X)$. This property uniquely characterizes $\mathscr S_{K_p}(G,X)$.
\end{rmk}

The construction of $\mathscr S_K(G,X)$ (in the case of Hodge type) is as follows. First, if $(G',X')=(\GSp(V,\psi),\frakH^\pm)$ with $\dim V=2g$, then $\mathbf{Sh}_{K}(G',X')$ is the moduli space of principally polarized abelian varieties with a $K'$-level structure. It has a canonical extension to an integral model $\mathscr A_{g,K'}$ smooth over $\bZ_{(p)}$ as the moduli space of principally polarized abelian schemes, whose definition will be recalled in \S \ref{SS: moduli of abelian scheme} below.
In general, if $(G,X)$ is of Hodge type, one can chooses a faithful representation $i:G\to \GSp(V,\psi)$ as above such that $i(X)\subset \frakH^{\pm}$, that $\underline G(\bZ_p)=G(\bQ_p)\cap \GSp(V_{\bZ_p},\psi)$ for a self-dual $\ZZ_p$-lattice $V_{\bZ_p}$ in $V_{\bQ_p}$, and that $K^p=G(\bA_f^p)\cap {K'}^p$, where ${K'}^p\subseteq \GSp(V, \psi)(\AAA_f^p)$ is a prime-to-$p$ level structure for the Siegel moduli space (cf. \cite[Lemma~2.3.1]{Ki1}).  Then $i$ induces a closed embedding $i:\mathbf{Sh}_K(G,X)\to \mathbf{Sh}_{K'}(G',X')\otimes E$ defined over $E$. We write $\mathscr S^-_K(G,X)$ for the closure of $\mathbf{Sh}_K(G,X)$ in  $\mathscr A_{g,K'}$, and define $\mathscr S_K(G,X)$ to be its normalization. By \cite[Proposition~2.3.5]{Ki1}, the completion at each closed point of $\mathscr S^-_K(G,X)$ has \emph{formally smooth} irreducible components over $\calO_{E,v}$. So the morphism
\begin{equation}
\label{E:locally closed embedding}
i:\mathscr S_K(G,X) \to \mathscr S_K^-(G,X) \subseteq \mathscr A_{g, K'}
\end{equation}
induces a surjective map of the completed local rings $\widehat\mO_{\mathscr A_{g,{K'}},i(x)}\to \widehat\mO_{\mathscr S_K(G,X),x}$ at every point $x\in \mathscr S_K(G,X)$.
In what follows, we will use $\mathscr S_K$ to denote the base change of $\scrS_K(G, X)$ to the completion $\mO_{E,v}$. 

\subsubsection{Abelian schemes up to prime-to-$p$ isogeny}
\label{SS: moduli of abelian scheme}
We recall the moduli interpretation of $\mathscr A_{g, K'}$. 
First, recall that for every (quasi-compact) scheme $T$, there is the category $\on{AV}^p$ of abelian schemes on $T$ up to prime-to-$p$ isogenies: objects are abelian schemes over $T$, and morphisms are the homomorphisms of abelian schemes tensored with $\bZ_{(p)}$. Isomorphisms in $\on{AV}^p$ are called \emph{prime-to-$p$ quasi-isogenies}.
Given $A$, a \emph{polarization} $\la$ of $A$ is a morphism in $\Hom_{\on{AV}^p}(A,A^\vee)$, such that some of its multiple $n\la$ is induced by an ample line bundle. A  \emph{principal polarization} is a polarization that is an isomorphism in this category. Then we have the groupoid of principally polarized abelian scheme on $T$ up to prime-to-$p$ isogeny: objects are pairs $(A,\la)$, consisting of an abelian scheme over $T$ up to prime-to-$p$ isogeny and a principal polarization; morphisms from $(A,\la)$ to $(A',\la')$ are prime-to-$p$ quasi-isogenies $f:A\to A'$ such that $f^*\la'=c\la$ for some locally constant function $c$ on $T$ with values in $\bZ_{(p)}^\times$.

Let $(A,\la)$ be a principally polarized abelian scheme of dimension $g$ on $T$ up to prime-to-$p$ isogeny. Then the prime-to-$p$ rational Tate module 
$$\mV_{\on{et}}(A)^p:=\underleftarrow\lim_{p\neq n}A[n]\otimes \bQ$$ 
is a $\bA_f^p$-local system on $T$ equipped with an alternating pairing $\mV_{\on{et}}(A)^p\otimes\mV_{\on{et}}(A)^p\to \bA_f^p(1)$. Let $\on{Isom}(V\otimes \bA_f^p, \mV_{\on{et}}(A)^p)$ denote the $G'(\bA_f^p)$-torsor of isomorphisms between $V\otimes \bA_f^p$ and $\mV_{\on{et}}(A)^p$ that respect to the natural pairs up to a $(\bA_f^p)^\times$-scalar\footnote{See \S \ref{R: trivialize Fcrys} below for a more canonical formulation.}. Then a \emph{$K'^p$-level structure} of $(A,\la)$ is a section $\eta$ of the \'etale sheaf $\on{Isom}(V\otimes \bA_f^p, \mV_{\on{et}}(A)^p)/{K'^p}$.

Finally, $\mathscr A_{g,K'}$ is the moduli functor which assigns every $T$ the groupoid, whose objects are triples $(A,\la,\eta)$, where $(A,\la)$ is a principally polarized abelian scheme on $T$ up to prime-to-$p$ isogeny, and $\eta$ is a $K'^p$-level structure, and whose morphisms from $(A,\la,\eta)$ to $(A',\la',\eta')$ are morphisms $f:(A,\la)\to (A',\la')$ as defined above such that $\mV_\et(f)^p\circ \eta=\eta'$. If $K'^p$ is sufficiently small, this groupoid is discrete (i.e. there is no non-trivial automorphism).

\subsubsection{\'Etale, de Rham, and crystalline tensors}
By construction, there is an abelian scheme 
$$h:A\to \mathscr S_K(G,X)$$ 
which is the pullback of the universal one on $\mathscr{A}_{g,K'}$.
For our purpose, we need to recall the construction of a collection of tensors in various cohomology of $A$, following \cite{Ki1}. 

\begin{notation}
\label{N: rigid abelian tensor}
Let $\mC$ be a rigid tensor additive category equipped with an invertible object $\bT$ (in the sequel $\mC$ will be categories of ``local systems" in various settings, and
$\bT$ will be the Tate object in the corresponding category). That is, $ \bT\otimes\bT^\vee\to \mathbf{1}$ is an isomorphism where $\bT^\vee$ is the dual of $\bT$ and $\mathbf{1}$ is the unit object. We denote by $X(n)=X\otimes \bT^{\otimes n}$ if $n\geq 0$ and $X(n)=X\otimes(\bT^\vee)^{\otimes n}$ if $n<0$. If $X$ is an object in $\mC$, equipped with an alternating pairing $\psi: X\otimes X\to \bT$ which induces an isomorphism $X\cong X^\vee(1)$,
we denote 
$$X^\otimes=\bigoplus_{r\in\bN} X^{\otimes 2r}(-r)\cong \bigoplus_{r\in\bN} (X^\vee)^{\otimes 2r}(r),$$
as an algebra object in the ind-completion of $\mC$. Note that one can give a natural grading on $X^\otimes$ so that the direct summand $X^{\otimes 2r}(-r)\cong (X^\vee)^{\otimes 2r}(r)$ is of degree $r$.

Now assume that $\mC$ is $R$-linear, where $R$ is some commutative ring with unit, and let $\mT$ be a (not necessarily commutative) graded $R$-algebra. An object $X$ of $\mC$ is said to be \emph{equipped with a $\mT$-structure} if it is given a morphism of graded algebra (in the ind-completion of $\mC$)
\[s_X: \mT\otimes \mathbf{1}\to X^\otimes.\]
Given two objects $X$ and $Y$ in $\mC$, both equipped with a $\mT$-structure, we denote the subspace of 
$$\Hom((X,s_X),(Y,s_Y))\subset \Hom_\mC(X,Y), \quad \mbox{resp. } \on{Isom}((X,s_X),(Y,s_Y))\subset\on{Isom}_\mC(X,Y)$$ consisting of those morphisms (resp. isomorphisms) $f:X\to Y$ such that $s_Y:= f^\otimes\circ s_X$.
\end{notation}

Let $V_{\bZ_{(p)}}$ be a $\bZ_{(p)}$-lattice of $V$ whose $p$-adic completion is $V_{\bZ_p}\subset V_{\bQ_p}$ so $\psi$ induces a perfect pairing of $V_{\bZ_{(p)}}$. 
Let $\bZ_{(p)}(1)$ be the rank one $\bZ_{(p)}$-module on which $\GSp(V_{\bZ_{(p)}},\psi)$ acts through the similitudes $\GSp(V_{\bZ_{(p)}},\psi)\to \bG_m$. The symplectic form is regarded as a perfect pairing of $\on{GSp}(V_{\bZ_{(p)}},\psi)$-representations
\[\psi: V_{\bZ_{(p)}}\otimes V_{\bZ_{(p)}}\to \bZ_{(p)}(1).\]
We take $\mC$ to be the category of representations of $\GSp(V_{\bZ_{(p)}},\psi)$ on finite free $\bZ_{(p)}$-modules and $\bT=\bZ_{(p)}(1)$.
The following lemma slightly refines \cite[Proposition 1.3.2]{Ki1}. 
\begin{lem}
The group scheme  $\underline G\subseteq \GSp(V_{\bZ_{(p)}},\psi)$ is the schematic centralizer of a finitely graded $\bZ_{(p)}$-subalgebra $\mT\subseteq (V_{\bZ_{(p)}}^{\otimes})^{\underline G}$.
\end{lem}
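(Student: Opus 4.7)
The plan is to deduce this lemma directly from Kisin's original Proposition~1.3.2 of \cite{Ki1} by simply passing from the finite tensor collection produced there to the graded $\bZ_{(p)}$-subalgebra it generates. First I would invoke Kisin's result, which produces a finite collection of tensors $s_1,\ldots,s_n\in V_{\bZ_{(p)}}^{\otimes}$ whose schematic stabilizer in $\GL(V_{\bZ_{(p)}})$ equals $\underline G$. After replacing this collection by its refinement containing the symplectic form $\psi$ (regarded as an element of $V_{\bZ_{(p)}}^{\otimes 2}(-1)$), we may assume that $\{s_1,\ldots,s_n\}$ already cuts out $\underline G$ as a closed subgroup scheme of $\GSp(V_{\bZ_{(p)}},\psi)$. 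Each $s_i$ lies in a single graded piece $V_{\bZ_{(p)}}^{\otimes 2r_i}(-r_i)$ and is by construction $\underline G$-invariant, so belongs to $(V_{\bZ_{(p)}}^{\otimes})^{\underline G}$.

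Next I would let $\mT\subseteq V_{\bZ_{(p)}}^{\otimes}$ be the graded $\bZ_{(p)}$-subalgebra generated by $s_1,\ldots,s_n$. Since $\underline G$ acts on $V_{\bZ_{(p)}}^{\otimes}$ by graded algebra automorphisms, the invariants $(V_{\bZ_{(p)}}^{\otimes})^{\underline G}$ form a graded $\bZ_{(p)}$-subalgebra, and thus $\mT$ is contained in the invariants. By construction $\mT$ is finitely generated in each graded degree, so "finitely graded" in the sense intended.

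The final step is to verify that the schematic centralizer of $\mT$ inside $\GSp(V_{\bZ_{(p)}},\psi)$ is exactly $\underline G$. For any $\bZ_{(p)}$-algebra $R$, an element $g\in\GSp(V_{\bZ_{(p)}},\psi)(R)$ acts as a graded algebra automorphism of $V_{\bZ_{(p)}}^{\otimes}\otimes_{\bZ_{(p)}} R$; hence $g$ fixes every element of $\mT\otimes R$ if and only if it fixes the generating set $\{s_1,\ldots,s_n\}$. By the choice of these generators this is equivalent to $g\in\underline G(R)$, giving the desired equality of functors and hence of closed subgroup schemes.

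The argument is essentially formal once Kisin's theorem is granted; the only thing to watch is that the passage from a finite tensor collection to the graded subalgebra it generates does not enlarge the fixator, which is immediate from the fact that $\underline G\subseteq\GSp$ acts by algebra automorphisms on $V_{\bZ_{(p)}}^{\otimes}$. Consequently I do not expect any substantive obstacle; the content of the refinement over Kisin's original statement is purely organizational, packaging the defining tensors as a graded algebra so as to supply the $\mT$-structures of Notation~\ref{N: rigid abelian tensor} required by the later discussion.
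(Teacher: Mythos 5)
Your argument is correct and is exactly the route the paper intends: the paper gives no separate proof, only noting that the lemma ``slightly refines'' Kisin's Proposition~1.3.2, and the refinement is precisely your packaging of the finitely many defining tensors (together with $\psi$) into the graded subalgebra they generate, whose schematic centralizer is unchanged because $\GSp(V_{\bZ_{(p)}},\psi)$ acts on $V_{\bZ_{(p)}}^{\otimes}$ by graded algebra automorphisms. The only point worth spelling out is that Kisin's tensors a priori live in the full tensor algebra including duals, and one uses $\psi\colon V\cong V^\vee(1)$ together with invariance under the central $\GG_m$ (the weight cocharacter, which acts by scalars) to see that any nonzero $\underline G$-invariant tensor may be taken inside $\bigoplus_r V_{\bZ_{(p)}}^{\otimes 2r}(-r)$.
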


\begin{notation}
\label{N: tensor s}
In the sequel, we denote the natural inclusion $\mT\subset V_{\bZ_{(p)}}^\otimes$ by $s$. We assume that the element in $V_{\bZ_{(p)}}^\vee\otimes V_{\bZ_{(p)}}^\vee(1)$ induced by the alternating pairing $\psi$ belongs to $\mT$ and is denoted by $\psi$ as well.

If $R$ is a $\bZ_{(p)}$-algebra, we write $V_R:=V_{\bZ_{(p)}}\otimes_{\bZ_{(p)}}R$ and $R(1)=\bZ_{(p)}(1)\otimes_{\bZ_{(p)}}R$.
\end{notation}

In below, let 
\[\mH_{\on{B}}(A):= R^1h_{\bC,*}\bQ,\quad \mH_{\on{et}}(A):= R^1h_{\on{et},*}\bA_f,\quad \mH_{\dR}(A):= R^1h_{\dR,*}\Omega^\bullet\]
denote the first relative Betti, \'etale, and de Rham cohomology of $A$ (over $E$). The $\ell$-component of $\mH_{\on{et}}(A)$ is denoted by $\mH_{\on{et}}(A)_\ell$, and the prime-to-$p$ component is denoted by $\mH_{\on{et}}(A)^p$. Their duals, the first relative homology, are denoted by $\mV_{\on{B}}(A), \mV_{\on{et}}(A), \mV_{\dR}(A)$. We will apply Notation \ref{N: rigid abelian tensor} in Betti, \'etale, or de Rham setting. The Tate objects $\bQ_{\on{B}}(1), \bQ_{\on{et}}(1), \bQ_{\on{dR}}(1)$ in each setting are defined as in \cite[\S 1]{DMOS}. The $\ell$-component and the prime-to-$p$ component of $\bQ_{\on{et}}(1)$ are denoted by $\bQ_{\on{et}}(1)_\ell$ and $\bQ_{\on{et}}(1)^p$, respectively.

The representation $V$ of $G$ defines a Betti local system $\mV$ on $\Sh_K(G,X)(\bC)$
\[\mV= V\times^{G(\bQ)}(X\times G(\bA_f)/K),\]
which is canonically isomorphic to $\mV_{\on{B}}(A)$ of $A$ (because such canonical isomorphism exists over $\mathbf{Sh}_{K'}(\on{GSp}(V), \frakH^\pm)$). 
Since elements in $\mT$ are $G(\bQ)$-invariant, they define global sections of $\mV^\otimes$, and therefore induce
$$s_{\on{B}}: \mT\otimes\mathbf{1}\to \mH_{\on{B}}(A)^\otimes.$$ 
Note that, $s_{B,x}(\mT)$ is in the subalgebra of Hodge classes of $\on{H}^1(A_x,\bQ)^\otimes$, for every $x\in\Sh_K(G,X)(\bC)$.

By \cite[Lemma 2.2.1]{Ki1}, there is a map of \'etale local systems
$$s_{\on{et}}: \mT\otimes \mathbf{1}\to \mH_{\on{et}}(A)^\otimes,$$
which matches $s_B$ under the comparison isomorphism 
$ \mH_{\on{et}}(A)_\ell|_{\Sh_K(G,X)_\bC} \cong \mH_{\on{B}}(A)\otimes \bQ_\ell$.
In particular, for every extension $F/E$ in $\bC$, and an $F$-point $x$ of $\Sh_K(G,X)$, the algebra $s_{\on{et},x}(\mT)$ is in the subalgebra of Tate classes of $\on{H}^1_{\on{et}}(A_{x, \bar F},\bQ_\ell)^\otimes$.

The relative de Rham cohomology $\mH_\dR(A)$ is a vector bundle on $\Sh_K(G,X)$ equipped with a flat connection $\nabla$ and a decreasing filtration 
$$\mH_\dR(A) = \on{Fil}^0\mH_\dR(A)\supset \on{Fil}^1\mH_\dR(A)=(\Lie A/\Sh_K(G,X))^\vee\supset \on{Fil}^2\mH_\dR(A)=0.$$ 
By Deligne's theory of absolute Hodge cycles (cf. \cite{DMOS} and \cite[Corollary 2.2.2]{Ki1}), there is
\[s_\dR: \mT\otimes\mathbf{1}\to \mH_\dR(A)^\otimes,\]
that match $s_{\on{B}}$ under the comparison isomorphism $\mH_\dR(A)\otimes\bC\cong \mH_{\on{B}}(A)\otimes \bC$. In addition, $\on{Im}(s_\dR)\subset\on{Fil}^0\mH_\dR(A)^\otimes$.

Next, we move to the integral model.
If $\ell\neq p$, $\mH_{\on{et}}(A)_\ell$ exists as an \'etale local system on $\scrS_K(G,X)$, and $s_{\ell}: \mT\otimes\mathbf{1}\to \mH_{\on{et}}(A)_\ell$ extends to the whole $\scrS_K(G,X)$, still denoted by $s_{\ell}$. The de Rham cohomology $\mH_\dR(A)$ extends to a vector bundle with a flat connection and a decreasing filtration on $\scrS_K(G,X)$. It was proved in \cite[Corollary 2.3.9]{Ki1} that $s_\dR$ extends integrally as well, and denoted by the same notation. In addition, integrally $\on{Im}(s_\dR)\subset\on{Fil}^0\mH_\dR(A)^\otimes$.

Let $k_v$ be the residue field of $\mO_{E,v}$. We recall the definition of the rigid tensor category $F$-Crys$(T)$ of $F$-crystals on a (quasi-compact) $k_v$-scheme $T$. An object is a locally free crystal $\bD$ on the big crystalline site $(T/\mO_{E,v})_{\on{CRIS}}$, equipped with an quasi-isogeny  (the Frobenius map)
$$F: \sigma^*\bD\dashrightarrow \bD,$$
i.e. an element $F\in\Hom_{(T/\mO_{E,v})_{\on{CRIS}}}(\sigma^*\bD, \bD)\otimes\bQ$ such that there exists some element $V\in \Hom_{(T/\mO_{E,v})_{\on{CRIS}}}(\bD,\sigma^*\bD)\otimes\bQ$ (the Verschiebung) such that $VF=FV=p$\footnote{Of course, replacing $V$ by $p^{-1}V$, one can require $VF=FV=1$. We choose this convention so it is compatible with literatures on Dieudonn\'e crystals of $p$-divisible groups.}. Morphisms in $F$-Crys$(T)$ are morphisms of locally free crystals, compatible with $F$.

The unit object $\mathbf{1}$ is the structure sheaf of $(T/\mO_{E,v})_{\on{CRIS}}$, equipped with $F$ sending $1$ to $1$. The Tate object $\calO_{\on{crys}}(1)$ is the structure sheaf of $(T/\mO_{E,v})_{\on{CRIS}}$, equipped with $F$ sending $1$ to $p$.

Now, let $A_{k_v}$ be the mod $p$ fiber of the universal abelian variety, and $\bD(A_{k_v})$ its contravariant Dieudonn\'e crystal. This is an $F$-crystal.  In addition, $\bD(A_{k_v})(\scrS_{K,k_v})$ (the value of $\bD(A_{k_v})$ at the trivial PD thickening $\scrS_{K,k_v}\stackrel{\id}{\to}\scrS_{K,k_v}$) is canonically isomorphic to $\mH_\dR(A)|_{\scrS_{K,k_v}}$ and therefore is equipped with a decreasing filtration. It follows that $\bD(A_{k_v})^\otimes(\scrS_{K,k_v})$ is canonically isomorphic to $\mH_\dR(A)^\otimes|_{\scrS_{K,k_v}}$ and therefore is also equipped with a decreasing filtration.
Since $\scrS_K$ is smooth over $\mO_{E,v}$, It follows that
the horizontal map $s_\dR:\mT\to \mH_\dR(A)^\nabla$ induces
\[s_{0}: \mT\otimes\mathbf{1}\to \bD(A)^\otimes,\]
where $\bD(A)^\otimes$ is now considered as an object in (the ind-completion) of $F$-Isoc$(\scrS_{K, k_v})$.
In addition, $s_{0}(t)(\scrS_{K,k_v})\in \on{Fil}^0(\bD(A_{k_v})^\otimes(\scrS_{K,k_v}))$ for $t\in \mT$.

It follows that every characteristic $p$ point $x$ of $\scrS_{K}$, of residue field $\kappa$, is canonically equipped with a set of Tate classes $s_{\ell,x}(\mT)\subset \on{H}^1(A_{x,\bar \kappa},\bQ_\ell)^\otimes$, and a set of crystalline classes $s_{0,x}(\mT)\subset \bD(A_x)^\otimes$. 

\begin{rmk}
In fact, in \cite{Ki1}, Kisin first constructed $s_{0,x}$ for every characteristic $p$ point $x$ via the theory of Breuil-Kisin module in order to study the local structure of $\scrS_K$, and deduced as a corollary that $s_\dR$ extends integrally.
\end{rmk}

\subsubsection{The de Rham and crystalline $G$-torsors}
Since $\underline{G}\otimes \bZ_p$ is quasi-split, there is a representative of $\{\mu\}$ defined over $E_v$, and
up to conjugacy, we can assume that it extends to $\mu:\bG_m\to \underline{G}\otimes\mO_{E,v}$. Let $\on{Fil}^\bullet (V_{\bZ_p}^\vee\otimes\mO_{E,v})=(\on{Fil}^0\supset\on{Fil}^1\supset 0)$ be the decreasing filtration on $V_{\bZ_p}^\vee\otimes\mO_{E,v}$ induced by $\mu$ so that the subgroup
$P_\mu\subset \GL(V_{\bZ_p}^\vee)\times\GL(\bZ_p(1))$ that preserves $s$ and the filtration $\on{Fil}^\bullet$,
is a parabolic subgroup of $\underline{G}\otimes\mO_{E,v}$ determined by $\mu$. By abuse of notation, we still use $\underline G$ to denote the base change $\underline{G}\otimes\mO_{E,v}$. 

Let us also construct the local model diagram for $\scrS_K$. Note that local model diagram for Shimura varieties of abelian type with parahoric level structure has been established in \cite{KP} (under some mild restriction of $G_{\bQ_p}$).
In the case when $K_p$ is hyperspecial, this is quite easy so we include a proof for completeness. First, we have
\begin{lem}\label{P-torsor}
The (fppf) sheaf 
$$\mE_{\dR}:=\on{Isom}(((V_{\bZ_p}^\vee\oplus\bZ_p(1))\otimes \mO_{\mathscr S_K},s\otimes 1),(\mH_\dR(A)\oplus\bQ_{\dR}(1),s_{\dR})),$$
of isomorphisms of vector bundles $V^\vee_{\bZ_p}\otimes\mO_{\mathscr S_K}\simeq \mH_\dR(A)$ and $\bZ_p(1)\otimes \mO_{\mathscr S_K}\simeq \bQ_{\dR}(1)$ that send $s\otimes 1$ to $s_\dR$,
is a $\underline G$-torsor on $\scrS_K$, and the sheaf
$$\mP:=\on{Isom}(((V_{\bZ_p}^\vee\oplus \bZ_p(1))\otimes \mO_{\mathscr S_K}, s\otimes 1, \on{Fil}^\bullet ),(\mH_\dR(A)\oplus\bQ_{\dR}(1), s_{\dR}, \on{Fil}^\bullet ))$$ is a $P_\mu$-torsor on $\mathscr S_K$. There is a canonical isomorphism $\underline G\times^{P_\mu}\mP=\mE_{\dR}$. 
\end{lem}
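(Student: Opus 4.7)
The plan is to verify that $\mathscr{E}_\dR$ is a $\underline{G}$-torsor by checking representability and non-emptiness of geometric fibers, then identify $\mathscr{P}$ as a parabolic reduction, and finally observe that the identification $\underline G\times^{P_\mu}\mathscr{P}\cong \mathscr{E}_\dR$ is formal. First, the sheaf $\mathscr{E}_\dR$ sits inside the $\mathrm{GL}$-torsor of isomorphisms $V_{\bZ_p}^\vee\otimes\mO \oplus \bZ_p(1)\otimes\mO\xrightarrow{\sim} \mH_\dR(A)\oplus\bQ_\dR(1)$ as the closed subscheme cut out by the finitely many equations $s\mapsto s_\dR$ (recall $\mT$ is finitely generated); hence $\mathscr{E}_\dR$ is representable by an affine scheme over $\mathscr{S}_K$. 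Moreover, the $\underline{G}$-action by precomposition is free, because $\underline G$ is by construction the schematic stabilizer of $s$ in $\GL(V_{\bZ_p}^\vee)\times\GL(\bZ_p(1))$. Thus it suffices to show that the geometric fibers of $\mathscr{E}_\dR\to\mathscr{S}_K$ are non-empty.

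To check non-emptiness over the generic fiber, I will use the Betti realization. Over $\mathbf{Sh}_K(G,X)(\bC)$, the local system $\mV_B(A)$ is tautologically identified with $V\times^{G(\bQ)}(X\times G(\bA_f)/K)$, under which $s_B$ corresponds to the constant tensor $s$; hence the analogous sheaf $\mathscr{E}_B:=\on{Isom}((V\otimes\mathbf{1},s),(\mV_B(A),s_B))$ is a $\underline{G}$-torsor, and the comparison isomorphism $\mH_\dR(A)\otimes\bC\cong\mH_B(A)\otimes\bC$ matching $s_\dR$ with $s_B$ (Deligne's absolute Hodge cycles theorem) transfers this to $\mathscr{E}_\dR$ over $\bC$-points. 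To check non-emptiness at geometric points $x\in\mathscr{S}_K(\bar k_v)$ of the special fiber, I will invoke Kisin's construction via Breuil--Kisin modules (Corollary 1.4.3 and Proposition 1.3.4 of \cite{Ki1}), which produces an isomorphism of $F$-crystals with tensors $(\bD(A_x),s_{0,x})\cong (V^\vee\otimes W(\bar k_v),s)$; evaluating at the trivial PD thickening yields an $\bar k_v$-point of $\mathscr{E}_\dR$ over $x$. Because $\underline{G}$ is smooth and acts freely with non-empty geometric fibers on the finite type representable sheaf $\mathscr{E}_\dR\to\mathscr{S}_K$, the morphism is fppf-locally trivial, so $\mathscr{E}_\dR$ is a $\underline{G}$-torsor.

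For $\mathscr{P}$, the stabilizer of the filtration $\on{Fil}^\bullet(V_{\bZ_p}^\vee\otimes\mO)$ inside $\underline{G}$ is by definition the parabolic $P_\mu$, so $\mathscr{P}$ is a $P_\mu$-pseudo-torsor sitting inside $\mathscr{E}_\dR$. Non-emptiness of $\mathscr{P}$ at a $\bC$-point $h\in X$ follows because the Hodge filtration on $\mH_B(A_h)\otimes\bC$ is by construction $G(\bC)$-conjugate to the filtration determined by $\mu_h$, which lies in the conjugacy class $\{\mu\}$ of $\mu$; hence fppf-locally one can choose a trivialization of $\mathscr{E}_\dR$ that carries the standard filtration to the Hodge filtration. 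Non-emptiness at characteristic $p$ points follows from the analogous compatibility in Kisin's construction, namely that the Hodge filtration $\on{Fil}^1\bD(A_x)(\bar k_v)=\omega_{A_x^\vee/\bar k_v}$ matches the standard $\on{Fil}^1$ under a suitable trivialization (this is the content of the fact that the Shimura cocharacter $\mu$ describes the type of the universal $p$-divisible group). Since $P_\mu$ is smooth, the same freeness argument shows $\mathscr{P}$ is a $P_\mu$-torsor. The final identification $\underline{G}\times^{P_\mu}\mathscr{P}\cong \mathscr{E}_\dR$ is formal: the inclusion $\mathscr{P}\hookrightarrow\mathscr{E}_\dR$ is $P_\mu$-equivariant and induces a $\underline{G}$-equivariant morphism from the associated $\underline{G}$-torsor, which must be an isomorphism since both sides are $\underline{G}$-torsors over the same base.

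The main obstacle in this proof is the non-emptiness of $\mathscr{E}_\dR$ and $\mathscr{P}$ at characteristic $p$ points, which depends crucially on Kisin's construction of the crystalline tensors $s_0$ and on showing that they can be brought into the standard form by a local trivialization respecting the Hodge filtration. Everything else reduces to standard torsor-theoretic arguments together with the comparison between Betti, de Rham, and crystalline realizations.
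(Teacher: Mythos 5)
There is a genuine gap at the decisive step. You reduce everything to: (i) $\mE_{\dR}$ is a pseudo-torsor (the action map $\underline G\times_{\scrS_K}\mE_{\dR}\to \mE_{\dR}\times_{\scrS_K}\mE_{\dR}$ is an isomorphism), and (ii) every geometric fiber of $\mE_{\dR}\to\scrS_K$ is non-empty, and then conclude that "the morphism is fppf-locally trivial." That implication is false in general: a pseudo-torsor under a smooth group scheme whose geometric fibers are all (trivial) torsors need not be flat over the base, and without flatness of $\mE_{\dR}\to\scrS_K$ you cannot produce sections fppf-locally, hence cannot conclude it is a torsor. The whole arithmetic content of the lemma is precisely this flatness over the mixed-characteristic base: $\mE_{\dR}$ is cut out inside the $\GL(V_{\bZ_p}^\vee)\times\GL(\bZ_p(1))$-torsor by the integral tensor conditions $s\otimes 1\mapsto s_{\dR}$, and there is no general principle (the fibers being smooth of the right dimension does not give miracle flatness, since $\mE_{\dR}$ is not known to be Cohen--Macaulay, nor is it known to be flat over $\mO_{E,v}$) forcing this closed subscheme to be flat. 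Your use of Kisin's result $(\bD(A_x),s_{0,x})\cong(V_{\bZ_p}^\vee\otimes W(\bar k_v),s)$ only produces a point in each special fiber, i.e.\ controls the fibers one at a time; it says nothing about the family near $x$.

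The paper closes exactly this hole by arguing on formal neighborhoods rather than fibers: by \cite[Prop.~2.3.5]{Ki1} the completion $\hat U_x$ of $\scrS_K$ at a closed point $x$ of the special fiber is identified with the $G_x$-adapted deformation ring $R_{G_x}$, and under this identification $(\mH_\dR(A),s_{\dR},\nabla)|_{\hat U_x}$ becomes the \emph{constant-tensor} module $(\bD(A_x)\otimes_{W(\kappa)}R_{G_x},\,s_0\otimes 1,\nabla)$. Combined with the integral trivialization $\bD(A_x)\simeq V_{\bZ_p}^\vee\otimes W(\kappa)$ matching $s_0$ with $s$ (the same result of Kisin you quote, but now used over $\hat U_x$ and not just at the point), this shows the restriction of $\mE_{\dR}$ to $\hat U_x$ is a (trivial) $\underline G$-torsor, hence faithfully flat there; together with the torsor property over $\bC$ this gives faithful flatness of $\mE_{\dR}\to\scrS_K$, and only then does your torsor-theoretic formalism apply. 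So your outline is in the right spirit (pseudo-torsor inside the $\GL$-Isom scheme, Betti comparison in characteristic $0$, Kisin's standard form at $p$), but to repair it you must upgrade the pointwise statement to the deformation-theoretic one over $\hat U_x\cong\on{Spf}R_{G_x}$; the same remark applies verbatim to $\mP$ (where in addition the identification of the filtration over $R_{G_x}$ with the one induced by $\mu$ is what makes your "suitable trivialization respecting the Hodge filtration" precise).
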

\begin{proof}(1) We just consider $\mE_{\dR}$, and the statement for $\mP$ is similar (by taking account of the filtration).

We regard $\mE_{\dR}$ as a closed subscheme of the $\GL(V_{\bZ_p}^\vee)\times\GL(\bZ_p(1))$-torsor 
$$\on{Isom}((V_{\bZ_p}^\vee\oplus\bZ_p(1))\otimes\mO_{\scrS_K},\mH_\dR(A)\oplus\bQ_\dR(1)).$$ There is an action of $\underline G$ on $\mE_{\dR}$ and it is clear that the map
\[\underline G\times_{\mathscr S_K}\mE_{\dR}\to \mE_{\dR}\times_{\mathscr S_{K}}\mE_{\dR}\]
is an isomorphism. So it remains to show that $\mE_{\dR}$ is faithfully flat over $\mathscr S_K$. Note that when base change from $\mO_{E,v}$ to $\bC$, $\mE_{\dR}$ is clearly a $G$-torsor. So
we can pass to a formal neighbourhood $\hat{U}_x$ at a closed point $x$ in the special fiber of $\mathscr S_K$. Let $\kappa$ denote the residue field of $x$. Let $A_x$ denote the fiber of the above mentioned abelian variety at $x$.

Attached to $x$ there is a reductive group $G_x\subset \GL(\bD(A_x))\times \GL(\calO_{\on{crys}}(1))$ which is the schematically stabilizer of $s_{0}(\mT)$. It was shown in \cite[\S 1]{Ki1} that there exists an isomorphism $V_{\bZ_p}^\vee\otimes W(\kappa)\simeq \bD(A_x)$ intertwining $s: \mT\to L^\otimes$ and $s_0:\mT\to \bD(A_x)^\otimes$. Therefore, $G_x$ is a reductive group isomorphic to $\underline G\otimes_{\mO_{E,v}}W(\kappa)$.

Let $R_{G_x}$ denote the $G_x$-adapted deformation ring of $A_x[p^\infty]$ as in (\cite[\S 1.5]{Ki1} and \cite[\S 1.1]{Ki2}). Then by \cite[Proposition 2.3.5]{Ki1}, there is a natural isomorphism $\hat{U}_x\cong \on{Spf}R_{G_x}$ and the restriction of $(\mH_\dR(A),s_{\dR},\nabla)$ to $\hat{U}_x$ is identified with $(M=\bD(A_x)\otimes_{W(\kappa)} R_{G_x}, s_{0}\otimes 1,\nabla)$ on $\on{Spf} R_{G_x}$. It follows from the existence of an isomorphism $\bD(A_x)\simeq V_{\bZ_p}^\vee\otimes W(\kappa)$ as above that the restriction of $\mE_{\on{dR}}$ to $\hat{U}_x$ is a $\underline G$-torsor, so is faithfully flat over $\hat{U}_x$. 
\end{proof}

\begin{rmk}
In fact, $\mE_\dR$ is defined over $\mathscr S_K(G,X)$.
\end{rmk}

Now, let $\widetilde{\mathscr S_K}$ denote the scheme over $\mathscr S_K$ of the trivialization of the $\underline G$-torsor $\mE_\dR$. 
\begin{cor}\label{local model}
There is the following local model diagram
\[\mathscr S_K\stackrel{\pi}{\longleftarrow} \widetilde{\mathscr S_K}\stackrel{\tilde\varphi}{\longto} \underline{G}/P_\mu,\]
where $\pi$ is a $\underline G$-torsor and $\tilde\varphi$ is $\underline G$-equivariant smooth of relative dimension $\dim G$. Moreover, if $U\to \widetilde{\mathscr S_K}$ is a morphism such that the induced map $U\to \mathscr S_K$ is \'etale, the induced map $U\to \underline G/P_\mu$ is \'etale.  
\end{cor}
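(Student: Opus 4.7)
The idea is to build $\tilde\varphi$ from the $P_\mu$-torsor $\mP$ of Lemma~\ref{P-torsor}, verify $\underline G$-equivariance directly, and establish smoothness via the formal description of $\scrS_K$ at each closed point of the special fiber.

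First I would define $\tilde\varphi$ as follows. Recall that $\widetilde{\scrS_K}$ represents the functor sending $S\to\scrS_K$ to the set of trivializations $\iota\colon (V^\vee_{\bZ_p}\oplus \bZ_p(1))\otimes\mO_S \stackrel\sim\to (\mH_\dR(A)\oplus\bQ_\dR(1))|_S$ taking $s\otimes 1$ to $s_\dR$. Lemma~\ref{P-torsor} gives a natural identification $\mE_\dR\cong \underline G\times^{P_\mu}\mP$, so $\iota^{-1}(\mP|_S)$ is canonically a $P_\mu$-coset in $\underline G_S$, and I set $\tilde\varphi(\iota)$ to be this coset, viewed as an $S$-point of $\underline G/P_\mu$. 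The $\underline G$-equivariance of $\tilde\varphi$ (with $\underline G$ acting on $\widetilde{\scrS_K}$ by post-composition with $\iota$ and on $\underline G/P_\mu$ by left multiplication) is immediate, and that $\pi$ is a $\underline G$-torsor (hence smooth of relative dimension $\dim G$) is part of Lemma~\ref{P-torsor}.

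The main step is the smoothness of $\tilde\varphi$. By $\underline G$-equivariance and the transitivity of $\underline G$ on $\underline G/P_\mu$, it suffices to verify smoothness formally at one lift above each closed point $x$ of $\scrS_K$ with $\tilde\varphi(\tilde x)=eP_\mu$. For $x$ in the special fiber with residue field $\kappa$, the proof of Lemma~\ref{P-torsor} (following \cite[\S 1.5, Proposition 2.3.5]{Ki1}) produces an isomorphism $V^\vee_{\bZ_p}\otimes W(\kappa)\cong \bD(A_x)$ matching $s\otimes 1$ with $s_0$, which yields a canonical lift $\tilde x$ and a decomposition of formal completions $\widetilde{\scrS_K}^{\,\wedge}_{\tilde x}\cong \widehat\mO_{\scrS_K,x}\,\hat\times\, \widehat{\underline G}_{e}$. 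Under this decomposition, $\tilde\varphi$ restricts to $(u,g)\mapsto g\cdot p(u)$, where $p\colon \on{Spf}\widehat\mO_{\scrS_K,x}\to (\underline G/P_\mu)^{\wedge}_{eP_\mu}$ records the Hodge filtration on the universal deformation of $\bD(A_x)$ in the chosen frame. Kisin's description of the $G$-adapted deformation ring $R_{G_x}\cong \widehat\mO_{\scrS_K,x}$ via Grothendieck--Messing \cite[Proposition 1.5.4, Corollary 2.3.9]{Ki1} says precisely that $p$ is a formal isomorphism; hence $\tilde\varphi$ becomes, up to $p\,\hat\times\,\id$, the multiplication map $(\underline G/P_\mu)^{\wedge}_{eP_\mu}\,\hat\times\, \widehat{\underline G}_{e}\to \underline G/P_\mu$, which is smooth of relative dimension $\dim G$ as the formal germ of the transitive $\underline G$-action on $\underline G/P_\mu$.

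For the final assertion, both $U$ and $\underline G/P_\mu$ are smooth of the same relative dimension $\langle 2\rho,\mu\rangle=\dim X$ over $\mO_{E,v}$, so it suffices to prove the composite $U\to\underline G/P_\mu$ is smooth. Étale-locally on $\scrS_K$ I can trivialize the torsor $\pi$, writing $\widetilde{\scrS_K}\cong \scrS_K\times\underline G$; the hypothesis then presents $U\to\widetilde{\scrS_K}$ as the graph $u\mapsto (\pi_U(u),g(u))$ of some morphism $g\colon U\to\underline G$. Under this trivialization $\tilde\varphi$ becomes $(y,h)\mapsto h\cdot\tilde\varphi_0(y)$ for the restriction $\tilde\varphi_0:=\tilde\varphi|_{\scrS_K\times\{e\}}$, which by the formal description of the previous paragraph is formally étale at every closed point of the special fiber, hence étale. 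The composite $U\to\underline G/P_\mu$, $u\mapsto g(u)\cdot\tilde\varphi_0(\pi_U(u))$, is therefore étale, being a left-translation twist of a composition of étale maps. The hardest step, as anticipated, is invoking Kisin's identification of $R_{G_x}$ with the completion of $\underline G/P_\mu$; everything else then unwinds by straightforward $\underline G$-equivariance.
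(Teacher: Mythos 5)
Your construction of $\tilde\varphi$ from Lemma~\ref{P-torsor}, the equivariance, and your direct verification that $\tilde\varphi$ is smooth of relative dimension $\dim G$ are in substance the paper's argument: both hinge on Kisin's identification of $\widehat{\mO}_{\scrS_K,x}$ with the adapted deformation ring $R_{G_x}$ and of $R_{G_x}$ with the completed local ring of $\underline G/P_\mu$ compatibly with the Hodge filtration of $\bD(A_x)\otimes_{W(\kappa)}R_{G_x}$; that you prove smoothness of $\tilde\varphi$ first, whereas the paper deduces it from the \'etale statement, is harmless. The genuine gap is in your last paragraph. Having trivialized the torsor \'etale-locally, you write the composite as $u\mapsto g(u)\cdot\tilde\varphi_0(\pi_U(u))$ and declare it \'etale as ``a left-translation twist of a composition of \'etale maps''. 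Translation by a \emph{varying} section $g\colon U\to\underline G$ does not preserve \'etaleness of a map to $\underline G/P_\mu$: Zariski-locally one can choose $h\colon U\to\underline G$ lifting $\tilde\varphi_0\circ\pi_U$ along $\underline G\to\underline G/P_\mu$, and for $g=h^{-1}$ the composite $u\mapsto g(u)\cdot\tilde\varphi_0(\pi_U(u))$ is constant, hence nowhere \'etale. For the same reason the intermediate claim that $\tilde\varphi_0$, formed from an \emph{arbitrary} local trivialization of the torsor, is formally \'etale at closed points of the special fiber does not follow from your second paragraph: that paragraph controls the period map only in the Kisin-adapted frame, and an arbitrary trivialization differs from it by exactly the kind of varying $\underline G$-valued gauge that destroys the conclusion.

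This is not a presentational slip: once you reduce to an arbitrary trivialization, the comparison with the crystalline frame—which is where all the content lies—has been discarded, and no translation-invariance argument can recover it (the constant-map example above shows that a purely gauge-theoretic treatment of the section cannot work). The paper's proof of the ``Moreover'' clause proceeds differently: for a characteristic-$p$ point $y$ of $U$ over $x$, it uses $\widehat{\mO}_{U,y}\cong\widehat{\mO}_{\scrS_K,x}\cong R_{G_x}$ and identifies the completed local ring of $\underline G/P_\mu$ \emph{at the image point} $\bar x=\tilde\varphi(\tilde x)$ with $R_{G_x}$ in such a way that the tautological filtration matches the Hodge filtration of $\bD(A_x)\otimes R_{G_x}$; the induced map on completions is then read off from this matching, with no auxiliary gauge ever introduced. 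To repair your argument you would have to replace the final paragraph by such a pointwise comparison of completed local rings, i.e. verify that over $\widehat{\mO}_{U,y}$ the trivialization coming from the given section is compatible (up to the stabilizer of the filtration) with the identification $\mH_\dR(A)|_{\hat U_x}\cong\bD(A_x)\otimes R_{G_x}$ used in Lemma~\ref{P-torsor}—this compatibility is the delicate point that your reduction, and hence your proof, omits.
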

\begin{proof}
By the previous lemma, the trivial $\underline G$-torsor $\mE_\dR\times _{\mathscr S_K}\widetilde{\mathscr S_K}$ is induced by the $P_\mu$-torsor $\mP\times_{\mathscr S_K}\widetilde{\mathscr S_K}$. This defines the map  $\tilde\varphi: \widetilde{\mathscr S_K}\longto \underline{G}/P_\mu$. It follows from the construction that $\tilde\varphi$ is $\underline G$-equivariant. It remains to prove the last statement (which implies that $\tilde\varphi$ is smooth of relative dimension $\dim G$).

The last statement is clear for the characteristic zero points. Now let $y$ be a characteristic $p$ point of $U$ mapping to $\tilde x\in \widetilde{\mathscr S_K}$ and $x \in \mathscr S_K$. Let $\bar{x}$ denote the image of $\tilde x$ in $\underline{G}/P_\mu$.
Then the completed local ring $\hat{V}_{\bar x}$ at $\bar{x}$ can be identified with the deformation ring $R_{G_x}$  in such a way that the natural filtration on $L^\vee\otimes \hat{V}_{\bar x}$ is identified with the Hodge filtration of $\bD(A_x)\otimes_{W(\kappa)}R_{G_x}$.
Since $\hat{U}_x\simeq \on{Spf} R_{G_x}$, the \'etaleness of $U$ over $\scrS_K(G, X)$ at $y$ (or equivalently $\hat U_y \simeq \hat U_x$) implies that the induced map $U \to \underline G/P_\mu$ takes $\hat U_y$ isomorphically to $\on{Spf} R_{G_x} \simeq \on{Spf}\hat V_{\bar x}$. This verifies the last statement.
\end{proof}

Another application of Lemma~\ref{P-torsor} is as follows.
\begin{cor}\label{Fcrys on Sh}
The sheaf 
\[R\mapsto \on{Isom}\Big(\big((V_{R}^\vee\oplus R(1)),s\big), \big(\bD(A)(R)\oplus \calO_{\on{crys}}(1)(R),s_0\big)\Big)\]
 on $(\scrS_{K,k_v}/\mO_{E,v})_{\on{CRIS}}$, where the isomorphisms are isomorphisms of $R$-modules $V_R^\vee\simeq \bD(A)(R)$ and $R(1)\simeq \calO_{\on{crys}}(1)(R)$ sending $s$ to $s_0$, defines a $\underline G$-torsor $\mE_{\cris}$ on $(\scrS_{K,k_v}/\mO_{E,v})_{\on{CRIS}}$.
In addition, there is a map of $\underline G$-torsors $\mE_{\cris}\to {^\sigma}\mE_{\cris}$, whose relative position is $ \sigma(\mu)$.
\end{cor}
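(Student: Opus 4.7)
The plan is to proceed in parallel with the proof of Lemma~\ref{P-torsor}, adapting it from the Zariski site of $\scrS_K$ to the crystalline site of $\scrS_{K,k_v}$. The $\underline G$-action on $\mE_\cris$ comes from right translation on the source: given an isomorphism $(V_R^\vee \oplus R(1), s) \simeq (\bD(A)(R) \oplus \calO_\cris(1)(R), s_0(R))$ and an element $g \in \underline G(R)$, precomposition with $g$ gives another such isomorphism. This action is simply transitive whenever $\mE_\cris$ is non-empty: two such isomorphisms differ by an automorphism of $V_R^\vee \oplus R(1)$ fixing $s$, which by the very definition of $\underline G \subset \GSp(V_{\ZZ_{(p)}}, \psi)$ lies in $\underline G(R)$.

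The real content is to verify non-emptiness on a cover of the crystalline site. I would fix a PD thickening $(U \hookrightarrow T)$ with $U$ a small affine open of $\scrS_{K,k_v}$, and first handle the trivial thickening case $(U,U)$: the restriction of $\mE_\cris$ to the Zariski site of $\scrS_{K,k_v}$ coincides with the pullback of the $\underline G$-torsor $\mE_\dR$ from Lemma~\ref{P-torsor}, so trivializations exist étale locally by Lemma~\ref{P-torsor} (via either the local model diagram or the local chart $\hat U_x \cong \on{Spf} R_{G_x}$ from \cite[Proposition~2.3.5]{Ki1}). Given such a trivialization $\phi_U$ on $U$, I would lift it to $T$ by choosing an arbitrary isomorphism $\tilde\phi_T : V_{\calO_T}^\vee \oplus \calO_T(1) \simeq \bD(A)(T) \oplus \calO_\cris(1)(T)$ of locally free modules extending $\phi_U$ (possible after an étale cover of $T$). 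The elements $\tilde\phi_T^\otimes(s)$ and $s_0(T)$ of $\bD(A)(T)^\otimes$ agree on $U$ because $s_0$ is a morphism of crystals, so their difference lies in $I \cdot \bD(A)(T)^\otimes$, where $I = \ker(\calO_T \to \calO_U)$. Using the smoothness of $\underline G$ and the fact that the $\underline G(T)$-orbit of $s$ is formally étale at $s$, one adjusts $\tilde\phi_T$ by an element of $\underline G(T)$ congruent to the identity modulo $I$ to obtain the desired lift. This will establish the first assertion.

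For the second assertion, the $F$-crystal structure on $\bD(A)$ provides a Frobenius quasi-isogeny $F : \sigma^*\bD(A) \dashrightarrow \bD(A)$. Since $s_0$ is constructed from the horizontal tensors $s_\dR$ (and a fortiori from the $G$-invariant tensors $s$ on the Betti side), one checks that $F^\otimes \circ \sigma^* s_0 = s_0$ as morphisms of isocrystals, i.e.\ after inverting $p$. Consequently $F^{-1}$ induces a quasi-isomorphism of $\underline G$-torsors defining the modification $\mE_\cris \dashrightarrow {^\sigma}\mE_\cris$. To identify its relative position with $\sigma(\mu)$, I would work locally at a closed point $x$ of $\scrS_{K,k_v}$ of residue field $\kappa$ and use the trivialization $\bD(A_x) \simeq V^\vee \otimes W(\kappa)$ (matching $s_0$ with $s$) together with the fact that the Hodge filtration on $\bD(A_x)$ is of type $\mu$ (coming from the Shimura cocharacter via the Grothendieck--Messing theory). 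Since the Frobenius on the Dieudonné crystal has elementary divisors dictated by the Hodge cocharacter, and since passing through $\sigma^*$ introduces the Frobenius twist, $F$ is expressed in this trivialization as an element of $\underline G(W(\kappa)[1/p])$ whose $\underline G(W(\kappa))$-double coset corresponds to $\sigma(\mu)$.

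The principal obstacle is the deformation step in the non-emptiness argument: rigorously controlling the adjustment of $\tilde\phi_T$ by an element of $\underline G(T) \cap (1 + I \cdot \End)$ to make the tensors match. This uses the smoothness of $\underline G$ over $\ZZ_{(p)}$ and the compatibility of $s_0$ with the crystal structure, but writing it cleanly requires some care since one must keep track of tensors in the ind-completion of $F$-crystals. All the other ingredients (the local model diagram, the trivializations $\bD(A)|_{\hat U_x} \simeq V^\vee \otimes R_{G_x}$ from \cite[\S1]{Ki1}, the identification $\bD(A)(\scrS_{K,k_v}) = \mH_\dR(A)|_{\scrS_{K,k_v}}$, and the Hodge-type characterization of Frobenius via $\mu$) are already in place from the preceding discussion.
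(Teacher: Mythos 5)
There is a genuine gap in your torsor step. As written, the adjustment is vacuous: elements of $\underline G(T)$ fix the tensor $s$ by the very definition of $\underline G$ as the stabilizer of $\mT\subset V_{\bZ_{(p)}}^{\otimes}$, so precomposing $\tilde\phi_T$ with $g\in\underline G(T)$ congruent to the identity mod $I$ does not move $\tilde\phi_T^{\otimes}(s)$ at all, and the phrase ``the $\underline G(T)$-orbit of $s$'' denotes the single point $\{s\}$. Presumably you meant to adjust by an element of $\GL(V^\vee_{\calO_T}\oplus\calO_T(1))$; but then the argument still does not close, because formal smoothness of the $\GL$-orbit of $s$ only lets you lift points that are already known to lie in the orbit. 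What you actually have is a tensor $\tilde\phi_T^{\otimes,-1}(s_0(T))$ that is congruent to $s$ modulo the PD ideal $I$, and congruence mod $I$ does not by itself place it in the orbit of $s$ over $T$ — this is exactly the well-known subtlety about deforming tensors, and it is the point your sketch leaves unproved.

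The paper sidesteps this entirely: since $\scrS_K$ is smooth over $\mO_{E,v}$, for a PD thickening $\tilde R\to R$ of an $R$-point $x$ of $\scrS_{K,k_v}$ one can locally lift $x$ to an $\tilde R$-point $\tilde x$ of the integral model, and then $\mE_{\cris}(\tilde R)=\tilde x^*\mE_\dR$, independently of the choice of lift by the crystal property; the torsor statement is thus inherited directly from Lemma~\ref{P-torsor} with no deformation of tensors needed. If you want to keep your route, you must replace the adjustment step by an argument of this kind (or by an honest proof that the lifted tensor lies in the $\GL$-orbit). A secondary point: for the relative position, the paper works with the \emph{dual} of the contravariant Dieudonn\'e crystal, because $\underline G$ is a subgroup of $\GSp(V_{\bZ_p},\psi)$ with $V$ corresponding to homology, and computes the position of $x^*((\sigma^{-1})^*\mE_{\cris})\to x^*\mE_{\cris}$ via the Verschiebung; your computation with $F$ acting on $\bD(A)$ itself glosses over this dualization, which is precisely where the bookkeeping between $\mu$, $\mu^*$ and $\sigma(\mu)$ can silently go wrong.
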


Note that by definition there is a canonical isomorphism $\mE_{\cris}(\mathscr S_K\otimes\bF_p)\cong \mE_\dR\otimes \bF_p$ (cristalline-de Rham comparison).
\begin{proof}
If $x$ is an $R$-point of $\scrS_{K,k_v}$ and $\tilde R\to R$ a PD thickening, we (locally) lift $x$ to $\tilde x:\Spec \tilde R\to \scrS_{K_x}$. Then $\mE_{\cris}(\tilde R)=\tilde x^*\mE_\dR$, which is independent of the lifting.
The $F$-crystal structure $$\bD(A_x)^\vee\to \bD(A_x^{(p)})^\vee=\sigma^*\bD(A_x)^\vee$$ induces a quasi-isogeny $x^*\mE_{\cris}\to x^*({^\sigma}\mE_{\cris})$. Note that we use the dual of contravariant Dieudonn\'e modules to define the $F$-crystal with $\underline G$-structures, as by definition $\underline G$ is considered a subgroup of $\GSp(V_{\bZ_p},\psi)\otimes\mO_{E,v}$, where $V$ corresponds to homology (as opposed to the cohomology) of the universal abelian variety.

To prove the last statement,
we can work pointwise. So assume $x$ is an $\overline\bF_p$-point. The relative position of
\[x^*((\sigma^{-1})^*\mE_{\cris})\to x^*\mE_{\cris}\]
is given by $\mu$ since the morphism on the associated bundle 
is the Verschiebung
\[V: (\sigma^{-1})^*\bD(A_x)^\vee\to \bD(A_x)^\vee=\mE_x\times^{\underline G} V_{\bZ_p}.\qedhere
\]
\end{proof}
\begin{example}
We give an example to justify our computation of the relative position:
let $F$ be a cubic totally real field and $E$ a CM extension of $F$.
Let $p$ be a prime inert in $F$ and splits as $\frakp \bar \frakp$ in $E$.
Consider the unitary group $G = G(U(1,2) \times U(0,3)\times U(0,3))$.
Let $\tau_1, \tau_2, \tau_3$ be the $p$-adic/complex embeddings corresponding to $\frakp$.
Under the usual identification of $G_\CC$ with $\GL(3)^{\times 3} \times \GG_m$ (where the first three factors use associated complex embeddings $\tau_1, \tau_2, \tau_3$), $\mu(z)$ is $\diag\{z, 1, 1\}$, $\mathrm{id}$, and $\mathrm{id}$ on the first three factors.
Then $\dim \Lie(A)_{\tau_i}$ is equal to $1,0,0$ for $i = 1,2,3$, respectively.
It follows that the dimension of the cokernel of $V_{\tau_i}: \DD(A_x)^\vee_{\tau_i} \to (\sigma^*\DD(A_x)^\vee)_{\tau_i}$ has dimension $0, 1, 0$, respectively.
From this, we see that the associated $G$-torsor map $\calE_\cris \to {}^\sigma \calE_\cris$ has relative position $\sigma(\mu)$.
\end{example}

\subsubsection{Level structure} 
\label{SS: level structure}
Let $x\in \scrS_K(G,X)(T)$ be a point. Its image on $\mathscr A_{g,K'^p}$ is represented by a triple $(A_x,\la_x,\eta')$, as explained in \S \ref{SS: moduli of abelian scheme}. In particular, 
$$\eta'\in \Gamma(T, \on{Isom}((V\oplus\bQ(1))\otimes\bA_f^p, \mV_{\on{et}}(A)^p\oplus\bQ_{\on{et}}(1)^p)/K'^p).$$ But as explained in \cite[\S 3.4.2]{Ki1}, the level structure $\eta'$ comes from a canonical section
\[\eta_x\in \Gamma\big(T,\on{Isom}\big(((V\oplus\bQ(1))\otimes \bA_f^p,s\otimes 1), (\mV_{\on{et}}(A)^p\oplus\bQ_{\on{et}}(1)^p,s_{\on{et},x})\big)\big/K^p \big).\]
Here $\on{Isom}(((V\oplus\bQ(1))\otimes \bA_f^p,s\otimes 1), (\mV_{\on{et}}(A)^p\oplus\bQ_{\on{et}}(1)^p,s_{\on{et},x}))$ is the subsheaf of $\on{Isom}((V\oplus\bQ(1))\otimes \bA_f^p,\mV_{\on{et}}(A)^p\oplus\bQ_{\on{et}}(1))$, which intertwines $s: \mT\subset V^\otimes$ and $s_{\on{et},x}: \mT\otimes \mathbf{1}\to (\mH_{\on{et}}(A)^p)^\otimes=(\mV_{\on{et}}(A)^p)^\otimes$. Note that this is a $G(\bA_f^p)$-torsor so that the quotient by $K^p$ makes sense.

\subsection{The mod $p$ fiber of $\mathscr S_K(G,X)$}
We first relate the special fibers of the integral models of Shimura varieties recalled above to the moduli of local shtukas via Proposition \ref{smoothness}. Then we recollect some results about the Rapoport-Zink uniformization of the basic Newton stratum.

\subsubsection{Perfection of the characteristic $p$ fiber of $\mathscr S_K$}
\label{SS: universal G sht}
Now we concentrate on mod $p$ fiber. Let $k_v$ denote the residue field of $\calO_{E,v}$ and fix $\bar k_v$ an algebraic closure  of $k_v$. We denote by 
$$\Sh_{\mu,K}:=(\mathscr S_K\otimes k_v)^\pf$$ 
the perfection of the special fiber of $\mathscr S_K$, or just $\Sh_\mu$ if the level structure $K$ is clear from the context. We also write $\Sh_{\mu,K_p}$ for $\underleftarrow\lim_{K^p}\Sh_{\mu,K_pK^p}$.
As suggested by the proof of Proposition \ref{Fcrys on Sh}, it is convenient to twist the $F$-crystal $\mE_{\cris}$ to consider
\[\mE:= (\sigma^{-1})^*\mE_{\cris}.\]
Then we have the $F$-crystal of $\underline G$-torsors (i.e. a local $\underline G$-shtuka, see Remark \ref{R: var and gen of loc Sht})
\[\beta:\mE\dashrightarrow {^\sigma}\mE\]
over $\Sh_{\mu,K}$, of relative position $\mu$.

\begin{rmk}
\label{R: trivialize Fcrys}
Note that by our definition of $\mE$, it only exists over $\Sh_{\mu,K}$, but not over $\mathscr S_K$. On the other hand, 
from the theory of Breuil-Kisin module, it is also natural to define $\mE$ as $(\sigma^{-1})^*\mE_{\cris}$. Namely, 
Let $x$ be
a $\kappa$-point of $\Sh_{\mu,K}$ and choose a lifting $\tilde x$ of $x$ to a characteristic zero point. Let 
$$\frakM_{\tilde x}=\frakM(T_pA_{\tilde x}^\vee(-1))$$ 
be the Kisin module associated to the crystalline representation of the $(-1)$-twist of the $p$-adic Tate module of the dual abelian variety $A_{\tilde x}^\vee$ (which is isomorphic to $V_{\bZ_p}^\vee$). Then $s_{p,\tilde x}: \mT\to (T_pA^\vee_{\tilde x}(-1))^\otimes$ induces $s_{\on{BK}}: \mT\to \frakM_{\tilde x}^\otimes$. 
Recall that $\frakM_{\tilde x}$ is a free $\frakS=W(\kappa)[[u]]$-module and $\varphi^*(\frakM_{\tilde x}/u\frakM_{\tilde x})$ is identified with $\bD(A_x)$, where $\varphi:\frakS\to\frakS$ is the Frobenius. Therefore,
$x^*\mE$ is the $\underline G\otimes W(\kappa)$-torsor of isomorphisms $V_{\bZ_p}^\vee\otimes W(\kappa) \simeq \frakM_{\tilde x}/u\frakM_{\tilde x}$ that sends $s(\mT)$ to  the tensors $s_{\on{BK}}(\mT) \mod u$ up to a common scalar.
\end{rmk}

\subsubsection{The map $\loc_p$} By only remembering the local $\underline G$-shtuka $\psi:\mE\dashrightarrow {^\sigma}\mE$, we obtain a morphism of prestacks
\[\loc_p: \Sh_{\mu}\to \Sht_{\mu}^\loc.\]
In the Siegel case, $\loc_p$ is the perfection of the morphism sending an abelian variety to its underlying $p$-divisible group. According to Serre-Tate theory, this morphism (before perfection) is formally \'etale. However formal \'etaleness is not a good notion for morphisms between perfect schemes, so we will not make use of it. Instead, we will compose it with the morphism from $\Sht_{\mu}^\loc$ to the moduli of restricted local shtukas. Let $(m,n)$ be a pair of non-negative integers such that $m-n$ is $\mu$-large. We set
\[\loc_p(m,n):\Sh_{\mu}\xrightarrow{\loc_p} \Sht_{\mu}^\loc\xrightarrow{\res_{m,n}} \Sht_{\mu}^{\loc(m,n)}.\]

The following result establishes the relation between the special fibers of Shimura varieties and the moduli of local shtukas defined earlier.
Recall the definition of perfectly smooth morphisms in Appendix \ref{ASS:perfect AG}. 
\begin{prop}\label{smoothness}
The morphism $\loc_p(m,n)$ is perfectly smooth.
\end{prop}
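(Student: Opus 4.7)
The plan is to verify perfect smoothness by pulling back along the natural perfectly smooth cover $\Gr_\mu^{(n)}\to \Sht_\mu^{\loc(m,n)}$ from \eqref{E:framed local shtukas}--\eqref{E:quotient expression of Sloc}, which is an $L^m G$-torsor, and then factoring the resulting map through the local model diagram of Corollary~\ref{local model}. Set
\[
T_m := \Sh_\mu \times_{\Sht_\mu^{\loc(m,n)}} \Gr_\mu^{(n)}.
\]
Since $\mE=(\sigma^{-1})^*\mE_\cris$, the ``right'' torsor $\mE_\rightone$ in the universal local shtuka on $\Sh_\mu$ equals ${}^\sigma\mE=\mE_\cris$, so $T_m$ is naturally an $L^m G$-torsor over $\Sh_\mu$ classifying trivializations $\tau$ of $\mE_\cris|_{D_m}$. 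By perfectly smooth descent along the pair of $L^mG$-torsors $T_m \to \Sh_\mu$ and $\Gr_\mu^{(n)} \to \Sht_\mu^{\loc(m,n)}$, the map $\loc_p(m,n)$ is perfectly smooth if and only if the induced map $T_m\to \Gr_\mu^{(n)}$ is.

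The latter can be analyzed via the local model. Reducing $\tau$ modulo $\varpi$ yields a trivialization of $\mE_\dR\otimes k_v$, giving a canonical morphism $T_m\to \widetilde{\Sh_\mu}:=(\widetilde{\mathscr S_K}\otimes k_v)^\pf$ that exhibits $T_m$ as a torsor under the perfection of $L^{m-1}G^{(1)}=\ker(L^mG\to L^1G)$; since $L^{m-1}G^{(1)}$ is the perfection of an iterated extension of vector groups modelled on $\Lie G$, this morphism is perfectly smooth of relative dimension $(m-1)\dim G$. The local model map $\tilde\varphi:\widetilde{\Sh_\mu}\to (\underline G/P_\mu)^\pf$ of Corollary~\ref{local model} is perfectly smooth of relative dimension $\dim G$, and since $\mu$ is minuscule for any Hodge type Shimura datum, the projection \eqref{E: Hodge to fil} identifies $(\underline G/P_\mu)^\pf$ canonically with $\Gr_\mu=\mathring{\Gr}_\mu$ over $\bar k_v$. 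Composing, one obtains a perfectly smooth map $T_m\to\Gr_\mu$ of relative dimension $m\dim G$; note that the depth-$m$ datum $\tau$ is actually sufficient to determine a point of $\Gr_\mu$ by Lemma~\ref{L: m mu large}. Finally, the trivialization $\tau|_{D_n}$ twisted by $\sigma^{-1}$ provides a canonical lift $T_m\to\Gr_\mu^{(n)}$ of this composite, and arguing with the $L^nG$-torsor $\Gr_\mu^{(n)}\to\Gr_\mu$ (for instance, by interposing the $L^mG\times L^nG$-torsor of pairs of trivializations of $\mE_\cris|_{D_m}$ and $\mE|_{D_n}$, which projects smoothly onto both $T_m$ and $\Gr_\mu^{(n)}$) yields perfect smoothness of $T_m\to\Gr_\mu^{(n)}$ of relative dimension $(m-n)\dim G$, matching the expected relative dimension of $\loc_p(m,n)$.

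The main technical point is the compatibility between the two a priori different descriptions of the map $T_m\to\Gr_\mu$: the \emph{shtuka-side} description, which reads off the point of $\Gr_\mu$ from the relative-position-$\mu$ modification $\mE\dashrightarrow{}^\sigma\mE$ after trivializing the target via $\tau$ at depth $m$; and the \emph{local-model side}, which reads off the Hodge filtration on $\mE_\dR$ after reducing $\tau$ to depth $1$. This is a Grothendieck--Messing-type statement: modulo $\varpi$, the image of the Verschiebung on the reduction of $\mE_\cris$ coincides with the Hodge filtration on $\mH_\dR(A)$. In the Hodge type setting, this compatibility is essentially implicit in Kisin's construction of the integral canonical model, specifically in the identification $\hat U_x\cong\mathrm{Spf}\,R_{G_x}$ used in the proof of Lemma~\ref{P-torsor}, under which both the Hodge filtration (governing the local-model map) and the Dieudonn\'e crystal (governing the shtuka map) descend from the same deformation theory on the $G_x$-adapted deformation ring. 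Once this compatibility is unwound, the perfect smoothness of $\loc_p(m,n)$ of relative dimension $(m-n)\dim G$ follows from the perfectly smooth factorizations above.
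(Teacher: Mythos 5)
Your first reduction (pass to the framed object $T_m=\Sh_\mu\times_{\Sht_\mu^{\loc(m,n)}}\Gr_\mu^{(n)}$ and check that $T_m\to\Gr_\mu^{(n)}$ is perfectly smooth) and your use of the local model diagram are exactly how the paper's proof begins, including the compatibility that the level-$(1,0)$ framed localization map is the (perfection of the) local model map $\widetilde{\mathscr S_K}\otimes k_v\to \underline G/P_\mu$. But there is a genuine gap at the decisive step. Your factorization $T_m\to\widetilde{\Sh_\mu}\to(\underline G/P_\mu)^\pf\cong\Gr_\mu$ only proves that the \emph{composite} $T_m\to\Gr_\mu^{(n)}\to\Gr_\mu$ is perfectly smooth, i.e.\ the level-$(1,0)$ statement, which is essentially Corollary~\ref{local model}. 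The content of the proposition is that the canonical \emph{lift} $T_m\to\Gr_\mu^{(n)}$ is perfectly smooth, and this does not follow formally: a lift of a smooth morphism along a torsor $\Gr_\mu^{(n)}\to\Gr_\mu$ need not be smooth (a section of a torsor is a closed immersion). Your proposed fix, interposing the $L^mG\times L^nG$-torsor $\widetilde T$ of pairs of trivializations ``which projects smoothly onto both $T_m$ and $\Gr_\mu^{(n)}$'', is circular: smoothness of $\widetilde T\to\Gr_\mu^{(n)}$ differs from the assertion to be proved only by an $L^nG$-torsor on the source, and you give no independent reason for it (nor does the paper's framework supply descent of perfect smoothness along perfectly smooth surjections on the source, which you would also need).

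What is actually required at this point, and what the paper does, is a computation specific to the Frobenius-twisted situation: one passes to an \'etale chart $U$ of $\scrS_K$ on which the relevant torsors are trivial, writes the framed map explicitly as $(u,g)\mapsto \sigma(g)\,a^{(m,n)}_{\loc}(u)\,\pi_{m,n}(g)^{-1}$ on $U\times L^m\underline G$, deperfects, and computes tangent spaces: the differential of the Frobenius-twisted factor $\sigma(g)$ vanishes, so the infinitesimal $L^m\underline G$-action moves only along the fibers of $\Gr_\mu^{(n)}\to\Gr_\mu$ (surjectively, via $\pi_{m,n}$), while the $U$-direction hits the base isomorphically because $a^{(1,0)}_{\loc}$ is \'etale by Corollary~\ref{local model}; hence the map is smooth, and its perfection is perfectly smooth. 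Your write-up invokes Grothendieck--Messing/Kisin's deformation rings only for the level-$1$ compatibility of the two descriptions of $T_m\to\Gr_\mu$, not for smoothness at truncation level $n\geq 1$; without the tangent computation above (or a substitute such as Serre--Tate/Grothendieck--Messing at level $n$, or Lau's smoothness of the truncated display functor in \cite{lau}), the key step remains unproven.
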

\begin{proof}
Let $\Sh_{\mu}^{(m,n),\Box}$ denote the $L^{m}\underline G$-torsor over $\Sh_{\mu}$ that classifies, for each perfect ring $R$, an $R$-point $x$ of $\Sh_{\mu}$ and a trivialization of the $L^{m}\underline G$-torsor ${^\sigma}\mE_x|_{D_{m}}\simeq \mE^0|_{D_{m}}$ over $D_{m,R}$. Recall that we have defined its local version, namely the moduli of framed restricted local shtukas $ \Sht_\mu^{\loc(m,n),\Box}$ in \eqref{E:framed local shtukas}, which is canonically isomorphic to $\Gr_\mu^{(n)}$. We have the following canonical isomorphism
\begin{equation} \Sh_{\mu}^{(m,n),\Box}\cong\Sh_{\mu,K}\times_{\Sht_{\mu}^{\loc(m,n)}} \Sht_{\mu}^{\loc(m,n),\Box}.
\end{equation}
So to prove that $\loc_p(m,n)$ is perfectly smooth, it suffices to check that  the map
\[
\loc_p^\Box(m,n)\colon\Sh_{\mu}^{(m,n),\Box}\longto \Sht^{\loc(m,n),\Box}_{\mu}\]
is perfectly smooth.

Let $x^\Box$ be a $\bar k_v$-point of $\Sh_{\mu}^{(m,n),\Box}$, whose image in $\Sh_{\mu}$ is denoted by $x$. By  \cite[Corollary A.27]{Z}, we can choose a small \'etale neighborhood $a: U = \Spec R \to \overline{\scrS}_K$ of $x$ such that both the pullback of the $L^{m}G$-torsor ${}^\sigma \calE|_{D_{m}}$ to $U$ and the pullback of the $L^nG$-torsor $\calE|_{D_n}$ to $U$ are trivial.
We fix one such trivialization $\epsilon_{m}: {}^\sigma \calE|_{D_{m,R}} \cong \calE^0|_{D_{m,R}}$, which is equivalent to giving a lift $a^{(m)}: U \to \Sh_{\mu,K}^{(m,n),\Box}$ of $a$.
We put
\[
a^{(m,n)}_\loc:= \loc_p^\Box(m,n)\circ a^{(m)}: U \to \Sh_{\mu}^{(m,n),\Box} \to \Sht_\mu^{\loc(m,n),\Box} \cong \Gr_\mu^{(n)}.
\]
Then the natural map
\[
\xymatrix@R=0pt{
U \times L^{m}\underline G \ar[r]& \Sh_{\mu,K}^{(m,n),\Box}
\\
(u,g) \ar@{|->}[r] & (a^{(m)}(u), \sigma(g)\epsilon_{m})
}
\]
is \'etale and gives an \'etale chart near at the point $x^\Box$.
So it suffices to show that the composition
\begin{equation}
\label{E:local chart for locp(m,n)}
U \times L^{m}\underline G \longto \Sh_{\mu}^{(m,n),\Box}\xrightarrow{\loc_p^\Box(m,n)}  \Sht_\mu^{\loc(m,n),\Box} \cong  \Gr_\mu^{(n)}
\end{equation}
is perfectly smooth.
In explicit terms, this map is given by
\begin{equation}
\label{E:explicit aloc(m,n)Box}
(u, g) \mapsto \sigma(g)\cdot a_\loc^{(m,n)}(u)\cdot \pi_{m,n}(g)^{-1}.
\end{equation}
See \eqref{E:twisted action of LmG on framed local shtukas}. (Note that $g$ in \eqref{E:twisted action of LmG on framed local shtukas} is $\sigma(g)$ here.)

We note that the perfection of the local model diagram in Corollary~\ref{local model} mod $p$ is exactly
\[
\Sh_{\mu} \leftarrow \Sh_{\mu}^{(1,0),\Box} \to (\underline G /P_\mu \otimes k_v)^\pf \cong \Gr_\mu,
\]
where the second arrow may be identified with the map $\loc_p^\Box(1,0) :\Sh_{\mu}^{(1,0),\Box} \to \Sht_\mu^{\loc(1,0),\Box} \cong \Gr_\mu$.
The lift $a^{(m)}:U \to \Sh_{\mu}^{(m,n),\Box}$ naturally gives gives a map $a^{(1)}: U \to \Sh_{\mu}^{(1,0),\Box}$.
The natural compatibility of localization map gives the following commutative diagram
\[
\xymatrix{
U\times L^{m}\underline G \ar[r]\ar[d]_{\mathrm{proj}_1} & \Sh_{\mu,K}^{(m,n),\Box} \ar[rr]^-{\loc_p^\Box(m,n)}&& \Gr_\mu^{(n)} \ar[d]
\\
U \ar[r]^-{a^{(1)}} & \Sh_{\mu,K}^{(1,0),\Box} \ar[rr]^-{\loc_p^\Box(1,0)}& & \Gr_\mu.
}
\]
The composition of the bottom arrow, namely $a^{(1,0)}_\loc$, is \'etale by Corollary~\ref{local model}.

This now has become a purely local question: knowing an \'etale map $a_\loc^{(1,0)}: U \to \Gr_\mu$ lifts to $a_\loc^{(m,n)}: U \to \Gr_\mu^{(n)}$, we shall prove that the associated map $U \times L^{m} \underline G \to \Gr_\mu^{(n)}$ given by formula \eqref{E:explicit aloc(m,n)Box} is perfectly smooth.
This is equivalent to show that the induced map
\begin{equation}
\label{E:U L^mG to UGrn}
U \times L^{m}\underline G \longto U \times_{\Gr_\mu} \Gr_\mu^{(n)}
\end{equation}
is perfectly smooth.  To prove this, we need to use the imperfect version.

Now, let $L^m_p \underline G$ denote the usual Greenberg realization of $\underline G\otimes \bZ/p^m$ so that $L^m_p \underline G$ is a smooth algebraic group over $\bF_p$ whose perfection is $L^{m}\underline G$ (cf. \cite[\S 1]{Z}). 
The \'{e}tale map $a:U\to \Gr_{\mu}$ descends uniquely to an \'{e}tale morphism $a':U'\to \underline G/P_\mu\otimes k_v$.
If we write $\calE_1\dashrightarrow \calE_0 = \calE^0$ denote the universal modification over $\Gr_\mu$, the $L^n\underline G$-torsor $\calE_1|_{D_{n,R}}$ is trivial over $U$.
Thus, if we write $U'^{(n)}$ for the trivial $L^n_p \underline G$-torsor over $U'$, then
\[
(U'^{(n)})^\pf \cong U \times_{\Gr_\mu} \Gr_\mu^{(n)}.
\]
Now, the map \eqref{E:U L^mG to UGrn} is the perfection of the map 
\begin{equation}
\label{E:nonperfect U' to U'(n)}
\xymatrix@R=0pt{
U' \times L^{m}_p \underline G \ar[r] & U'^{(n)}
\\
(u,g) \ar@{|->}[r] & \sigma(g) a_\loc^{(m,n)}(u) \pi'_{m,n}(g)^{-1},
}
\end{equation} where $\pi'_{m,n}: L^{m}_p \underline G \to L^n_p \underline G$ is the natural projection.
On the level of tangent space, we can ignore the left multiplication by $\sigma(g)$. Hence, the infinitesimal action of $L^{m}_p \underline G$ on $U'^{(n)}$ is entirely along the fiber direction of $U'^{(n)} \to U'$.
So the map \eqref{E:nonperfect U' to U'(n)} is smooth, and hence \eqref{E:U L^mG to UGrn} is perfectly smooth.
This concludes the proof of the proposition.
\end{proof}
\begin{rmk}
\label{R:Sh to Gzip}
The readers can skip this remark.
Recall from Lemma \ref{Ex:Sht and GZip} that there is a natural perfectly smooth morphism $\Sht_\mu^{\loc(2,1)}\to G\on{-Zip}_\mu$. It is easy to see that the composition
\[\Sh_\mu\xrightarrow{\loc_p(2,1)} \Sht_\mu^{\loc(2,1)}\to G\on{-Zip}^\pf_\mu\]
coincides with the perfection of the map defined in \cite{ZhangChao} (which in the case of $G=\GSp(V,\psi)$ is the map sending an abelian variety $A$ to its $p$-torsion $A[p]$). The following is a corollary of Proposition \ref{smoothness}.
\begin{cor}
The morphism $\Sh_\mu\to G\on{-Zip}^\pf_\mu$ is perfectly smooth.
\end{cor}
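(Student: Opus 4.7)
The plan is to derive this corollary as an immediate consequence of the two ingredients discussed immediately before its statement, together with the fact that compositions of perfectly smooth morphisms are perfectly smooth. Since $(G,X)$ is of Hodge type, the Shimura cocharacter $\mu$ is minuscule, so the integer $m - n = 1$ is $\mu$-large (see Definition~\ref{D:acceptable pair}) and the pair $(m,n) = (2,1)$ is a legitimate choice in both Proposition~\ref{smoothness} and Lemma~\ref{Ex:Sht and GZip}.

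First I would invoke Proposition~\ref{smoothness} with $(m,n) = (2,1)$ to conclude that $\loc_p(2,1)\colon \Sh_\mu \to \Sht_\mu^{\loc(2,1)}$ is perfectly smooth. Next I would invoke Lemma~\ref{Ex:Sht and GZip} to obtain that the map $\Sht_\mu^{\loc(2,1)} \to G\text{-}\mathrm{Zip}^{\pf}_\mu$ is perfectly smooth (of relative dimension $-\dim P_\mu$). Finally, the composite
\[
\Sh_\mu \xrightarrow{\loc_p(2,1)} \Sht_\mu^{\loc(2,1)} \longto G\text{-}\mathrm{Zip}^{\pf}_\mu
\]
is therefore perfectly smooth, and by the remark preceding the corollary this composite coincides with the map of the corollary.

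Since all work is already encapsulated in Proposition~\ref{smoothness} and Lemma~\ref{Ex:Sht and GZip}, there is essentially no obstacle; the only point worth flagging is the minusculeness of $\mu$ (which is automatic in the Hodge type setting) to guarantee that Lemma~\ref{Ex:Sht and GZip} applies and that $(2,1)$ is an admissible pair. No further geometric input beyond these two results is needed.
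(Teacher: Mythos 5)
Your proof is correct and is essentially the paper's intended argument: compose the perfectly smooth map $\loc_p(2,1)$ from Proposition~\ref{smoothness} with the perfectly smooth map $\Sht_\mu^{\loc(2,1)}\to G\text{-}\mathrm{Zip}^{\pf}_\mu$ from Lemma~\ref{Ex:Sht and GZip}, noting that minusculeness of $\mu$ makes $(2,1)$ an admissible pair. Nothing further is needed.
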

In fact, by an argument similar to the proof of Proposition \ref{smoothness}, one can show that the map $\scrS_{K,k_v}\to G\on{-Zip}_\mu$ is already smooth before taking the perfection. See \cite{ZhangChao}.
\end{rmk}

\subsubsection{Newton stratification}
\label{SS: NS for Shimura}
Recall the definition of the Newton map \eqref{E: Nm for loc Sht} for the moduli of local shtukas. By composition, one obtains the Newton map for Shimura varieties
\[\mN: \Sh_{\mu,K}(\bar k_v)\stackrel{\loc_p}{\longto} \Sh_{\mu}^\loc(\bar k_v)\stackrel{\mN}{\longto} B(G_{\bQ_p},\mu^*).\]
For $[b]\in B(G_{\bQ_p},\mu^*)$, let $\mN_b = \mN_{[b]} := \mN^{-1}([b])$ denote the corresponding \emph{Newton stratum}. It is known (e.g. \cite[Section~5.2]{Wortmann}) that $\mN_b$ is a locally closed subset of $\Sh_{\mu,K}$ and its closure
$\overline\mN_b$ is contained in the union $ \bigcup_{b'\preceq b}\mN_{b'}$.
In particular, if $[b]\in B(G_{\bQ_p},\mu^*)$ is basic,  $\mN_b$ is a closed subset, denoted by $\Sh_{\mu,K,b}$ or just $\Sh_{\mu,b}$ if the level structure $K$ is clear from the context.

\begin{lem}
The basic Newton stratum $\Sh_{\mu,K,b}$ is non-empty.
\end{lem}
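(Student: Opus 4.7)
The claim is the non-emptiness of the basic Newton stratum for a Hodge-type Shimura variety at a hyperspecial prime, with no further assumptions on $(G,X,\mu)$. My plan is to produce a $\bar k_v$-point with basic reduction by constructing a special (CM) point of the Shimura variety whose associated local shtuka is, up to $\sigma$-conjugation, of the form $\varpi^\tau$ for a suitably chosen element of a maximal torus representing the basic class.

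First I would invoke Honda--Tate theory, as extended to Hodge-type Shimura varieties by Kisin in \emph{Mod $p$ points on Shimura varieties of abelian type}. Concretely, let $[b_0]\in B(G_{\bQ_p},\mu^*)$ be the basic element. Using the classification of unramified (or more generally, any) $\sigma$-conjugacy class in $B(G_{\bQ_p},\mu^*)$ via Kottwitz's $(\nu_{b_0},\kappa_G(b_0))$, one can find a maximal $\bQ$-torus $T\subset G$ which is anisotropic modulo center at $\infty$, together with an embedding $T_{\bQ_p}\hookrightarrow G_{\bQ_p}$ and a cocharacter $\mu_T\in\xcoch(T)$ lifting $\mu$, such that the image of $\mu_T$ in $B(T_{\bQ_p})\to B(G_{\bQ_p})$ hits $[b_0]$. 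The conjugacy class $X$ contains an $h\in X$ factoring through $T_\bR$, giving a special point $x\in\mathbf{Sh}_K(G,X)(\bar\bQ)$ (after possibly conjugating into the appropriate compact open).

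Next I would analyze the reduction of $x$ modulo $v$. The integral model $\mathscr{S}_K$ accepts the $\calO_{E,v}$-valued point extending $x$ (by the N\'eron--Ogg--Shafarevich criterion applied to the Mumford--Tate abelian variety, combined with the extension property of the canonical integral model), giving $\bar x\in\Sh_{\mu,K}(\bar k_v)$. By the theory of Shimura varieties at CM points (compatibility of $h$ with crystalline/$p$-adic realizations), the local shtuka $\loc_p(\bar x)$ is the one associated to $(T,\mu_T)$ via the cocharacter, and its Newton invariant is exactly $[b_0]$. Hence $\bar x\in \Sh_{\mu,K,b}$.

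The main obstacle is the construction of the CM lift with prescribed basic reduction: one must verify simultaneously (i) the existence of a rational $T$ with the prescribed local behavior at $p$ and at $\infty$, which is a Hasse principle/Galois cohomology calculation on $Z_G^\sigma$ controlled by Kottwitz's $\alpha_G$ map of Theorem~\ref{T: thm of Kott}, and (ii) that this $T$-CM datum actually produces a point of $\mathbf{Sh}_K$, not just of some adelic double coset. For Hodge-type this last point is nontrivial and is exactly what Kisin establishes. An alternative route, which avoids constructing CM points explicitly, is to cite the general non-emptiness theorem for Newton strata in Hodge-type Shimura varieties at hyperspecial level (Hamacher--Kim, Zhou, C.~Zhang, Dong Uk Lee), which asserts that $\mathcal N_{b'}\neq \emptyset$ for every $b'\in B(G_{\bQ_p},\mu^*)$; applied to the basic $[b]$ this gives the lemma immediately. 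Under the running hypothesis $V_{\mu^*}^{\Tate_p}\neq 0$ relevant to the main theorem, Proposition~\ref{unramified basic} moreover ensures that $[b]$ is unramified and represented by $\varpi^\tau$ with $\tau\in\xcoch(Z_G)$, which simplifies step~(i) considerably.
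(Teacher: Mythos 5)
Your overall strategy is the same as the paper's: produce a special point whose Mumford--Tate group lies in a $\bQ$-torus $T$ and argue that its reduction lands in the basic stratum. The difference is in how basicness is certified, and here your version is both harder and leaves a real gap. You propose to choose $(T,\mu_T)$ so that the image of $\mu_T$ under $\xcoch(T)\to B(T_{\bQ_p})\to B(G_{\bQ_p})$ is exactly the prescribed basic class $[b_0]$; the existence of such a \emph{global} torus with this prescribed $p$-adic behavior (your step (i)) is not a routine Hasse-principle computation on $Z_G$ --- it is essentially a special-point attainability statement of Langlands--Rapoport type, and you do not prove it. The paper sidesteps this entirely: by weak approximation one finds a strongly regular $\gamma\in G(\bQ)$ that is elliptic at both $p$ and $\infty$, and takes $T$ to be its centralizer, so that $T(\bQ_p)$ and $T(\bR)$ are anisotropic modulo center; since any $h'\in X$ factors through a maximal torus of $G_\bR$ anisotropic modulo center, one may conjugate to get $h\in X$ with $h(\bC^\times)\subset T(\bR)$. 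For the resulting reduction $x$, the Newton class $\mN(x)$ lies in the image of $B(T_{\bQ_p})\to B(G_{\bQ_p})$, and because $T_{\bQ_p}$ is \emph{elliptic} this image consists only of basic classes (Kottwitz, \cite[Prop.\ 5.3]{Koisocry}); since $B(G_{\bQ_p},\mu^*)$ contains a unique basic element, $x\in\Sh_{\mu,K,b}$. In other words, you never need to compute which class the CM point hits, so your step (i) can be discarded. Your fallback of citing the general non-emptiness theorems for all Newton strata (Lee, Kisin--Madapusi Pera--Shin, etc.) is logically acceptable --- the paper mentions these results --- but it replaces a half-page elementary argument with much heavier machinery, which is exactly what the paper's proof is designed to avoid. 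Finally, be a little careful with the extension step: for a special point the abelian variety has CM, hence potentially good reduction, and one extends the point using the extension property of the canonical integral model after adjusting the field of definition and the tame level; invoking N\'eron--Ogg--Shafarevich alone does not by itself produce an $\calO_{E,v}$-point of $\mathscr S_K$.
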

It has been proved by Lee \cite{Lee} that all Newton strata are non-empty (and we learned that Kisin-Madapusi Pera-Shin proved more general statement under weaker assumptions of $G_{\bQ_p}$). Since the case of basic Newton stratum is particularly simple, we include an argument here for the basic Newton stratum for readers' convenience.
\begin{proof}
We first claim that there is a maximal torus $T$ of $G$, defined over $\bQ$, such that both $T(\bR)$ and $T(\bQ_p)$ are anisotropic modulo the centers of $G(\bR)$ and $G(\bQ_p)$, respectively. Namely, we apply weak approximation to find a strongly regular semisimple element $\ga$ defined over $\bQ$ such that $\ga_\infty$ and $\ga_p$ are elliptic. Then $T$ is defined as the centralizer of $\ga$.

Next, we claim that there exists $h\in X$ such that $h(\bC^\times)\subset T(\bR)$. Indeed, let $h'$ be a point of $X$. Then $h'(\bC^\times)$ is contained in a maximal torus $T'\subset G_\bR$, anisotropic modulo center. Therefore, there is some $g\in G(\bR)$ such that $gT'g^{-1}=T_{\bR}$. So $h=gh'g^{-1}$ satisfies the required property.
In particular, the pair $(T,h)$ defines a special point of $X$. By choosing appropriate field of definition and tame level structure, we obtain a point $\tilde{x}$ in $\Sh_K(G,X)$, with a reduction to a point $x$ in $\Sh_\mu$, such that the Mumford-Tate group of the universal abelian variety $A_{\tilde x}$ at $\tilde x$ is contained in $T$. 
Then $\mN(x)$ is contained in the image of $B(T)\to B(G)$. Since $T_{\bQ_p}$ is elliptic, we know that this image must be basic in $B(G)$ (cf. \cite[Proposition 5.3]{Koisocry}).
\end{proof}

\subsubsection{The group $I_x$} \label{SS: the group I}
We assume that $x$ is a $\kappa$-point of $\Sh_{\mu,K}$, where $\kappa\supset k_v$ is a finite extension, $\sharp\kappa=q=p^r$, and
let $x$ be a $\kappa$-point of $\scrS_K$. We denote $W_x=W(\kappa)$ and $K_x=W_x[1/p]$. Let $\bar x$ be the induced $\bar \kappa$-point over $x$. We similarly have $W_{\bar x}$ and $K_{\bar x}$.
 Let $\pi_{A_x}$ denote the $q$-Frobenius endomorphism of $A_x$. Recall that by construction, we have a collection of tensors $s_{\on{et},x}: \mT\to \on{H}^1(A_{\bar x},\bQ_\ell)^\otimes$ and $s_{0,x}:\mT\to \bD(A_x)^\otimes$. Recall we assume that $\mT$ contains $\psi\in (V_{\bZ_{(p)}}^\vee\otimes V_{\bZ_{(p)}}^\vee(1))^{\underline G}$ corresponds to the alternating pairing.
 Then $(s_{\on{et},x}(\psi), s_{0,x}(\psi))$ is induced by a principal polarization $\la_x$ of $A_x$.


We recall the construction of some groups 
\[I_{\ell/\kappa },\ I_{\ell}, \ I_{p/\kappa }, \ I_{p},\ I_{/\kappa},\ I,\]
from \cite[\S 2.1.2]{Ki2}. First, 
let 
$$I_{\ell/\kappa}\subset \on{GL}(\on{H}^1(A_{\bar x},\bQ_\ell))\times \GL(\bQ_{\on{et}}(1)_\ell)$$ 
be the $\bQ_\ell$-subgroup, consisting of elements that fix $s_{\ell,x}$ and commute with the action of $\pi_{A_x}$. Replacing $\kappa$ by a degree $n$ extension $\kappa_n$ one obtains $I_{\ell/\kappa_n}$. Then sequence $\{I_{\ell/\kappa_n}\}$ is increasing and stabilizes to a $\bQ_\ell$-subgroup $I_{\ell}\subset\GL(\on{H}^1(A_{\bar x},\bQ_\ell))\times \GL(\bQ_{\on{et}}(1)_\ell)$. 

Next, $I_{p/\kappa}$ is the $\bQ_p$-group whose $R$-points consist of $W_x\otimes_{\bZ_p} R$-linear automorphisms in 
$$\GL(\bD(A_x)\otimes_{\bZ_p} R)\times \GL(\calO_{\on{crys}}(1)(W_x)\otimes R)$$ 
that fix $s_{0,x}\otimes 1$ and commute with the Frobenius map $F: \sigma^*\bD(A_x)\to \bD(A_x)$. Note that if $R$ is a $K_x$-algebra, $I_{p/\kappa}(R)\subset \GL(\bD(A)\otimes_{W_x}R)\times \GL_1(R)$ is the subgroup of elements that fix $s_0\otimes 1$ and commute with $\ga_p=F^r$.
By replacing $\kappa$ by $\kappa_n$, one obtains $I_{p/\kappa_n}$. Then $I_{p/\kappa_n}\otimes K_{\bar x}$  is the centralizer of $\ga_p^n$ in $\GL(\bD(A)\otimes_{W_x}K_{\bar x})$. Therefore,  $I_{p/\kappa_n}$ stabilizes to a $\bQ_p$-group $I_p$. Note that if we choose an isomorphism $\bD(A_x)\simeq V_{\bZ_p}^\vee\otimes W_x$ and $\calO_{\on{crys}}(1)(W_x)\simeq \bZ_p(1)\otimes W_x$ that sends $s_{0,x}$ to $s\otimes 1$ (i.e. a trivialization of $x^*\mE$), then $F$ is represented by $b_x\sigma$ for an element $b_x\in G(W_x)\mu^*(p)G(W_x)$. Then 
\[I_{p/\kappa}(R)=\{g\in G(W_x\otimes R)\mid g^{-1}b_x\sigma(g)=b_x\}\subset J_{b_x}(R),\]
where $J_{b_x}$ is the twisted centralizer of $b_x$ as defined in \eqref{E: twisted centralizer J}. In particular, $I_p\subset J_{b_x}$.

Finally, we define
$I_{/\kappa}$ to be the group whose $R$-points are elements of $(\End(A_x)\otimes R)^\times \times R^\times$ that preserve the tensors $s_{\ell,x}$  for ${ \ell\neq p}$ and $s_{0,x}$. 
Define $I$ similarly by replacing $(A_x, (s_{\on{et},x},s_{0,x}))$ by $(A_{\bar x}, (s_{\on{et},\bar x},s_{0,\bar x}))$. Note that if we denote by $f\mapsto f^*$ denote the Rosati involution on $\End(A_{\bar x})\otimes \bQ$ induced by the polarization on $A_x$, then $I(R)\subset \{g\in (\End(A_{\bar x})\otimes R)^\times\mid gg^*\in R^\times\}$. In particular, there is a central $\bG_m\subset I$ given by the scalar multiplication, and $(I/\bG_m)(\bR)$ is compact. Also note that the Frobenius endomorphism  $\pi_{A_x}$ of $A_x$ is an element in $I_{/\kappa}(\bQ)$.

Note that $I$ only depends on $\bar x$. Conversely, we can attach to each $\bar k_v$-point of $\Sh_{\mu,K}$ the group $I$, since every such point is indeed defined over some finite field, 
To emphasize the dependence of $I$ on the base point $\bar x$, we also denote it by $I_{\bar x}$.

Note that by definition, for every $\ell$ (including $\ell=p$), there are natural inclusions 
\[I_{/\kappa}\otimes\bQ_\ell\to I_{\ell/\kappa},\quad I\otimes\bQ_\ell\to I_{\ell},\]
which turn out to be isomorphisms by \cite[Corollary 2.3.2]{Ki2}. In addition, it was proved in \cite[Corollary~2.3.5]{Ki2} that $I$ is the inner form of a Levi subgroup $I_0$ of $G$. More precisely, by choosing an isomorphism $\on{H}^1(A_{\bar x},\bQ_\ell)\oplus \bQ_{\on{et}}(1)_\ell\simeq (V^\vee\oplus \bQ(1))\otimes\bQ_\ell$ compatible with $s_{\on{et},x}$ and $s$, and an isomorphism $\bD(A_x)\oplus \calO_{\on{crys}}(1)(W_x)\simeq (V_{\bZ_p}^\vee\oplus\bZ_p(1))\otimes W_x$ compatible with $s_{0,x}$ and $s\otimes 1$, we may regard $\pi_A$ as an element $\ga_\ell\in G(\bQ_\ell)$  (including $\ell=p$), whose conjugacy class (twisted conjugacy when $\ell = p$) is independent of the choice. Then by \cite[Corollary 2.3.1]{Ki2}, there is an element $\ga_0\in G(\bQ)$ which is 
\begin{itemize}
\item conjugate to $\ga_\ell$ in $G(\bQ_\ell)$ for $\ell\neq p$;
\item is stably conjugate to $\ga_p$ in $G(\overline{\bQ}_p)$;
\item elliptic in $G(\bR)$.
\end{itemize} 
Let $I_0\subset G$ be the centralizer of the neutral connected component of the closed subgroup generated by $\ga_0$. Then it was proved in \cite[Corollary 2.3.5]{Ki2} that $I$ is an inner form of $I_0$.

\subsubsection{Isogeny classes}
\label{SS:base point}
We say two $\bar k_v$-points $x$ and $y$ of $\Sh_{\mu,K}$ are in the same isogeny class if and only if there is a quasi-isogeny $f:A_x\to A_{y}$ that takes $(s_{\on{et},x},s_{0,x})$  to $(s_{\on{et},y},s_{0,y})$ up to a common scalar multiple $c\in \bQ^\times$. Note that this in particular implies that $f^*\la_y=c\la_x$. We denote the isogeny class containing $x$ by $\mathscr I_x$. It is clear that such a map $f: A_x\to A_y$ induces an isomorphism $I_x\simeq I_y$.

Let $x \in \Sh_{\mu,K}$ be a $\kappa$-point, and put $W_x = W(\kappa)$. Let $K_x=W_x[1/p]$.
If we fix an isomorphism $\bD(A_x)\simeq V_{\bZ_p}^\vee\otimes W_x$ sending $\{s_{0,x}\}$ to $\{s\otimes 1\}$, or equivalently fix a trivialization of $x^*\mE$ (see Remark \ref{R: trivialize Fcrys}), we get an honest element $b_x\in [b_x]\cap G(W_x) \mu^*(p) G(W_x)\subset G(K_x)$. Note that $b_x$ is well-defined up to $\sigma$-conjugacy by an element in $G(W_x)$. Let $X_{\mu^*}(b_x)$ be the corresponding ADLV.
We shall construct a natural morphism 
\[X_{\mu^*}(b_x)\to \Sh_{\mu,K}\otimes \bar \kappa\]
whose image are in an isogeny class.
The map on the level of closed points was constructed in \cite[Proposition 1.4.4]{Ki2}. 
Recall that the space $X_{\mu^*}(b_x)$ can be viewed as a reduced version of the Rapoport-Zink space. Indeed, it was proved in \cite[\S 3.2]{Z} that $X_{\mu^*}(b_x)$ is the perfection of the Rapoport-Zink space constructed in \cite{Kim} and \cite{HP2}. Then one can deduce the above desired morphism from \cite{Kim} and \cite{HP2}. However, we prefer to give a direct construction, which does not rely on the existence of the Rapoport-Zink space as a formal scheme. It is based on the following lemma, whose proof is inspired by an argument in \cite{HP2}.

\begin{lem}
\label{L: from point to family}
Let $R$ be a reduced $k_v$-algebra of finite type, and let $f: \Spec R\to \mathscr A_{g,K'}$ be a morphism given by a principally polarized abelian scheme up to prime-to-$p$ isogeny $(B,\mu^*)$. 
Let 
\[\iota_{\on{et}}: \mT\otimes\mathbf{1}\to (\mH_{\on{et}}(B)^p)^{\otimes}, \qquad \iota_{0}: \mT\otimes\mathbf{1}\to \bD(B)^{\otimes}\]
be two graded algebra homomorphisms. Assume that for every closed point $x\in\Spec R$, there is a point $y\in \scrS_K$ over $f(x)$ such that $(B_x, \iota_{\on{et},x},\iota_{0,x})$ is isomorphic to $(A_y, s_{\on{et},y},s_{0,y})$ in the sense that there is a prime-to-$p$ quasi-isogeny $f:B_x\to A_y$ sending $(s_{\on{et},y},s_{0,y})$ to $(\iota_{\on{et},x},\iota_{0,x})$ (by \cite[Corollary 1.3.11]{Ki2}, such $y$ if exists is unique). Then there is a unique map $\Spec R\to \scrS_K$ such that $(B,\iota_{\on{et}},\iota_{0})$ is the pullback of $(A,s_{\on{et}},s_0)$ on $\scrS_K$.
\end{lem}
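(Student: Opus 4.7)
The plan is to reduce the construction of the lift $\Spec R \to \scrS_K$ to two separate steps: first, showing that $f: \Spec R \to \mathscr A_{g,K'}$ factors through the closure $\scrS_K^- \subseteq \mathscr A_{g,K'}$, and then lifting this factorization through the normalization map $i: \scrS_K \to \scrS_K^-$ of \eqref{E:locally closed embedding}. The uniqueness will follow from \cite[Corollary 1.3.11]{Ki2}, since any two lifts would give the same point over every closed point of $\Spec R$, and then by reducedness of $R$ they would agree.

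For the factorization through $\scrS_K^-$, I would argue that since the hypothesis provides, for every closed point $x \in \Spec R$, a point $y_x \in \scrS_K$ mapping to $f(x) \in \mathscr A_{g,K'}$, we have $f(x) \in i(\scrS_K) \subseteq \scrS_K^-$ for every closed point $x$.  Because $R$ is a reduced finitely generated $k_v$-algebra, its closed points are dense, and $\scrS_K^- \subseteq \mathscr A_{g,K'}$ is closed by construction; hence $f$ factors uniquely through $\scrS_K^-$.

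The more delicate step is lifting $f: \Spec R \to \scrS_K^-$ to a morphism $\widetilde f: \Spec R \to \scrS_K$.  I would form the fiber product $T := \Spec R \times_{\scrS_K^-} \scrS_K$, which is finite over $\Spec R$ since $i$ is the normalization of a scheme of finite type over $\calO_{E,v}$ and hence finite.  The pointwise data $x \mapsto y_x$ determines a set-theoretic section $\Spec R \to T$, and my plan is to upgrade it to a morphism of schemes by working in a formal neighborhood of each closed point: the surjection $\widehat\mO_{\mathscr A_{g,K'}, i(y_x)} \twoheadrightarrow \widehat\mO_{\scrS_K, y_x}$ from \eqref{E:locally closed embedding} is compatible with the tensors $(s_{\et}, s_0)$ on $\scrS_K$, and by pullback with $(\iota_{\et}, \iota_0)$ on $\Spec R$; so the map $\widehat\mO_{\mathscr A_{g,K'}, i(y_x)} \to \widehat\mO_{\Spec R, x}$ defined by $f$ descends to a map $\widehat\mO_{\scrS_K, y_x} \to \widehat\mO_{\Spec R, x}$, providing a formal lift.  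These formal lifts assemble:  the section $\Spec R \to T$ on closed points is locally formally representable, so the corresponding closed subscheme of $T$ (the scheme-theoretic image of all these formal sections) is finite over $\Spec R$, surjects onto closed points, and the induced map from $\Spec R$ to this image is an isomorphism by reducedness of $R$.  This yields the desired $\widetilde f$.

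The main obstacle will be the second step: since $R$ is merely reduced, the universal property of normalization does not directly apply, and one has to use the extra information carried by the tensors $(\iota_{\et}, \iota_0)$ to single out the correct lift among the finitely many points of $T$ above each point of $\Spec R$.  Once this is accomplished, the compatibility $\widetilde f^*(A, s_{\et}, s_0) = (B, \iota_{\et}, \iota_0)$ is automatic:  it holds by construction at every closed point, and both sides are determined by their values on closed points of the reduced scheme $\Spec R$.
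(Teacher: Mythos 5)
Your overall skeleton (form the fiber product of $\Spec R$ with $\scrS_K$ over the Siegel space, then use finiteness, pointwise bijectivity and reducedness of $R$) is the same as the paper's, but the step on which everything hinges is not justified and, as stated, does not work. You claim that the map $\widehat\mO_{\mathscr A_{g,K'}, i(y_x)} \to \widehat\mO_{\Spec R, x}$ ``descends'' through the surjection $\widehat\mO_{\mathscr A_{g,K'}, i(y_x)} \twoheadrightarrow \widehat\mO_{\scrS_K, y_x}$ of \eqref{E:locally closed embedding} because it is ``compatible with the tensors.'' This is precisely the hard point: you would need to prove that a deformation of $A_{y_x}$ over the complete local characteristic-$p$ ring $\widehat\mO_{\Spec R,x}$ which carries crystalline tensors extending $s_{0,y_x}$ necessarily factors through the $G$-adapted deformation ring $\widehat\mO_{\scrS_K,y_x}$, i.e.\ that the kernel of the surjection dies in $\widehat\mO_{\Spec R,x}$. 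No such universal property of Kisin's $R_{G_x}$ is available off the shelf (over a non-Artinian characteristic-$p$ base the Dieudonn\'e crystal does not control deformations in any naive way), and the hypothesis of the lemma only gives you information at closed points, not at the formal level. So the ``formal lift'' you want to glue is never constructed.

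The paper's proof avoids this entirely and goes in the opposite direction: it sets $Y=\Spec R\times_{\mathscr A_{g,K'}}\scrS_K$, observes that $Y$ carries the two crystalline tensor families $j_Y^*(s_0)$ and $\pr_Y^*(\iota_0)$, and invokes \cite[Lemma~5.10]{Keerthi} (rigidity: parallel crystalline tensors on a connected base that agree at one point agree everywhere) to cut out the union $Y^\circ$ of connected components on which the two families coincide. By the hypothesis plus the uniqueness from \cite[Corollary~1.3.11]{Ki2}, $Y^\circ\to\Spec R$ is bijective on $\bar k_v$-points; the surjectivity of completed local rings is used not to descend anything but to show $Y^\circ\to\Spec R$ is a closed immersion after completion, whence (finite, pointwise bijective, target reduced) an isomorphism. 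Note also that your closing claim that the compatibility $\widetilde f^*(s_\et,s_0)=(\iota_\et,\iota_0)$ is ``automatic'' because $R$ is reduced is not right as stated: sections of a crystal are not determined by their fibers at closed points by mere reducedness — this is again exactly what the rigidity lemma of \cite{Keerthi} provides for $s_0$, and for the \'etale tensors the paper needs a Chebotarev density argument. Either import those two inputs explicitly and restructure the argument along the lines above, or supply a genuine proof of the formal descent you assert; without one of these, the proposal has a gap at its central step.
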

\label{L: from pt to family}
\begin{proof}
We define a scheme $Y$ by the following Cartesian diagram
\[\begin{CD}
Y@>j_Y>> \scrS_K\\
@V\pr_Y VV@VVV\\
\Spec R@>>> \mathscr A_{g,K'}
\end{CD}\]
Then $Y$ is a scheme finite over $\Spec R$, and therefore is affine of finite type over $k_v$.  
By definition, there is a well-defined principally polarized abelian scheme $(A_Y,\la_Y)$ on $Y$, equipped with two crystalline tensors
\[j_Y^*(s_{0}): \mT\otimes\mathbf{1}\to \bD(A_Y)^\otimes, \quad \pr_Y^*(\iota_{0}): \mT\otimes\mathbf{1}\to \bD(A_Y)^\otimes.\]
Let $Y^\circ$ denote the union of connected components of $Y$ on which there is a $\bar k_v$-point $y$ such that $s_{0,j_Y(y)}=\iota_{0,\pr_Y(y)}$. Then by \cite[Lemma~5.10]{Keerthi}, 
\[j_Y^*(s_{0})|_{Y^\circ}=  \pr_Y^*(\iota_{0})|_{Y^\circ}.\]
In other words,  $Y^\circ(\bar k_v)$ is the set of  points $y \in Y(\bar k_v)$ such that $s_{0,j_Y(y)} = \iota_{0,\pr_Y(y)}$.

Note that by assumption, $Y^\circ\to \Spec R$ is bijective on $\bar k_v$-points.
On the other hand, recall that by definition, the map $\scrS_K\to \mathscr A_{g,K'}$ is finite, and induces surjective maps between completed local rings at every point. Then $Y^\circ\to \Spec R$ is a finite morphism of affine schemes of finite type over $k_v$, and induces bijection on closed points and closed immersion when completed at every points. In addition, $R$ is reduced. Therefore, it is an isomorphism.

Therefore, we obtain the desired map $\Spec R\cong Y^\circ \to \scrS_K$. It follows from the construction that $\iota_0$ is the pullback of $s_0$. The Chebotarev density implies that the pullback of $s_{\on{et}}$ is $\iota_{\on{et}}$.
\end{proof}

Now recall that  $X_{\mu^*}(b_x)$ classifies local $\underline G$-Shtukas $\mE'\dashrightarrow {^\sigma}\mE'$ of relative position $\preceq \mu$ and a modification $\beta:\mE'\dashrightarrow x^*\mE$ of local $\underline G$-Shtukas. Then 
\[\bD^\vee:= (\mE'\times^{\underline G}V_{\bZ_p}^\vee)^\vee\]
admits a structure of an $F$-crystal with $p\bD^\vee\subset {^\sigma}\bD^\vee\subset\bD^\vee$. 
Since the Dieudonn\'e functor is an equivalence of categories over perfect schemes, $\DD^\vee$ corresponds to a  $p$-divisible group $\mathscr G$ over $X_{\mu^*}(b_x)$ and the modification $\beta: \calE' \dashrightarrow x^*\calE$ corresponds to a quasi-isogeny
\begin{equation}
\label{E:isogeny of p-div group} \mathscr G \to
A_x[p^\infty] \times_\kappa X_{\mu^*}(b_x).
\end{equation}
From this, we get an abelian variety $A'$ over $X_{\mu^*}(b_x)$ together with a $p$-power quasi-isogeny 
\[A' \to
A_x \times_\kappa X_{\mu^*}(b_x),
\]
whose induced map on the $p$-divisible groups is \eqref{E:isogeny of p-div group}. It follows from the construction that the map
\[\mT\otimes \mathbf{1}\xrightarrow{s_{0,x}\otimes \id} (\bD(A_x)^\otimes\otimes \mathbf{1})[1/p]\cong \bD(A')^\otimes[1/p]\]
factors as $\mT\otimes \mathbf{1}\to \bD(A')^\otimes$, denoted by $s'_0$.

The abelian variety $A'$ comes equipped with a prime-to-$p$ principal polarization as well as a $K^p$-level structure, inherited from $A_x$.  Using the universal property of Siegel moduli space, we have a morphism
\[X_{\mu^*}(b_x) \longto \mathscr A_{g,{K'}^p}\otimes \bar \kappa.\]
\begin{lem}
\label{L: map of ADLV to Shimura}
The above map lifts to a unique morphism
\[
\iota_x: 
X_{\mu^*}(b_x)\longto \Sh_{\mu,K}\otimes \bar \kappa,\]
such that for each closed point $y \in X_{\mu^*}(b_x)$, under the identification $\DD(A'_{y}) \cong \DD(A_{\iota_x(y)})$, we have
\[
s'_{0, y} = s_{0, \iota_x(y)}.
\]
\end{lem}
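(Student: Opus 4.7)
The strategy is to invoke Lemma~\ref{L: from point to family}, after a suitable reduction to the finite-type setting. The data on $X_{\mu^*}(b_x)$ that we want to match against the universal data on $\scrS_K$ are: the principally polarized abelian variety $A'$ up to prime-to-$p$ isogeny together with the $K^p$-level structure, the crystalline tensors $s'_0 : \mT\otimes\mathbf{1} \to \bD(A')^{\otimes}$ constructed above, and the \'etale tensors $\iota_{\on{et}} : \mT\otimes\mathbf{1} \to (\mH_{\on{et}}(A')^p)^{\otimes}$ obtained by transporting $s_{\on{et},x}$ via the prime-to-$p$ quasi-isogeny $A' \to A_x\times_{\kappa} X_{\mu^*}(b_x)$. (The $\ell$-adic tensors at $\ell\neq p$ are parallel along any quasi-isogeny, and prime-to-$p$ isogenies induce canonical isomorphisms on the relevant Tate modules, so $\iota_{\on{et}}$ is well-defined and locally constant.) Uniqueness of $\iota_x$ is automatic from Lemma~\ref{L: from point to family} once existence is established, since $\scrS_K \to \mathscr A_{g,K'}$ is injective on points and induces surjections on completed local rings.

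To produce $\iota_x$, first observe that at every closed point $y \in X_{\mu^*}(b_x)$, the triple $(A'_y, \iota_{\on{et},y}, \iota_{0,y})$ is isomorphic to $(A_z, s_{\on{et},z}, s_{0,z})$ for a unique $\bar k_v$-point $z\in \scrS_K$. Indeed, this is exactly the content of Kisin's construction of the pointwise map $X_{\mu^*}(b_x)(\bar k_v) \to \Sh_{\mu,K}(\bar k_v)$ in \cite[Proposition 1.4.4]{Ki2}, which is built from the same datum of a modification of local $\underline G$-shtukas that we used to define $A'$. Thus the pointwise hypothesis of Lemma~\ref{L: from point to family} is satisfied.

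To apply the lemma itself, which is stated for reduced $k_v$-algebras of finite type, I will work locally. The affine Deligne--Lusztig variety $X_{\mu^*}(b_x)$ is a perfect scheme locally perfectly of finite type: each intersection $X_{\mu^*}(b_x)\cap \Gr_{\nu}$ for a dominant $\nu$ is a perfection of a reduced quasi-projective $\bar\kappa$-scheme $Y_\nu$ of finite type, and $X_{\mu^*}(b_x) = \bigcup_\nu (Y_\nu)^\pf$. The morphism $X_{\mu^*}(b_x) \to \mathscr A_{g,K'}\otimes\bar\kappa$, having target locally of finite type over $\bar\kappa$, descends on each $(Y_\nu)^\pf$ to a morphism $Y_\nu^{(r)}\to \mathscr A_{g,K'}\otimes\bar\kappa$ for some finite Frobenius twist $Y_\nu^{(r)}$ of $Y_\nu$, classifying an abelian scheme $\tilde A'$ up to prime-to-$p$ isogeny on $Y_\nu^{(r)}$ whose pullback to $(Y_\nu)^\pf$ is $A'|_{(Y_\nu)^\pf}$. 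The tensors $\iota_{\on{et}}$ and $\iota_0$ are likewise determined at a finite level (the \'etale tensor by continuity and the pointwise definition of the transport, the crystalline tensor by the compatibility of Dieudonn\'e theory with perfection). On this finite-type model, Lemma~\ref{L: from point to family} applies and produces a morphism $Y_\nu^{(r)} \to \scrS_K$, hence, by perfection, a morphism $(Y_\nu)^\pf \to \Sh_{\mu,K}\otimes\bar\kappa$ carrying $s_0$ to $\iota_0$. The uniqueness statement in Lemma~\ref{L: from point to family} guarantees that the maps obtained from different $\nu$ and from different finite-type descents agree on overlaps, so they glue to the desired $\iota_x : X_{\mu^*}(b_x)\to \Sh_{\mu,K}\otimes\bar\kappa$.

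The main technical point, and the only step that is not a formality, will be verifying that the \'etale tensor $\iota_{\on{et}}$ and crystalline tensor $\iota_0$ really do descend to finite-type pieces $Y_\nu^{(r)}$ so that the cited lemma is directly applicable. The crystalline tensor $\iota_0$ is the more delicate of the two, since it is defined through Dieudonn\'e theory on a perfect base; however, because the abelian scheme $\tilde A'$ already lives on the finite-type model $Y_\nu^{(r)}$, its Dieudonn\'e crystal and the tensor $\iota_0$ are equally defined there and pull back to the perfection compatibly. Granted this, everything else is a direct application of Lemma~\ref{L: from point to family} and the pointwise input from \cite[Proposition 1.4.4]{Ki2}.
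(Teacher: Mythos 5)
Your proposal is correct and follows essentially the same route as the paper: descend locally to a reduced finite-type model carrying the abelian scheme, the quasi-isogeny, and the tensors, verify the pointwise hypothesis via \cite[Proposition 1.4.4]{Ki2}, and conclude by applying Lemma~\ref{L: from point to family}, gluing via its uniqueness clause. The only cosmetic difference is that the paper justifies uniqueness directly by \cite[Corollary 1.3.11]{Ki2} (two maps of perfect schemes agreeing at all points coincide, the point-level lift being pinned down by the crystalline tensor condition) rather than by any injectivity of $\scrS_K\to\mathscr A_{g,K'}$ on points, which need not hold for the normalization map.
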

\begin{proof}
The uniqueness will follow from \cite[Corollary 1.3.11]{Ki2}, as two maps between perfect schemes coincide if they coincide at the level of points. Therefore, to prove the existence, we can work locally around each point $y$ in $X_{\mu^*}(b_x)$. Let $U$ be an open neighborhood of $y$, which is assumed to be perfectly of finite presentation, and $U\to (\mathscr A_{g, K'^p}\otimes \bar \kappa)^\pf$ be the morphism. Then  we may assume that $U$ is the perfection of a reduced affine scheme $U_0$ of finite type over $\kappa$, and assume that the abelian scheme $A'$ on $U$ with the quasi-isogeny $A'\to A_x\times U$ is the pullback of a quasi-isogeny of abelian schemes $A_0\to A_x\times U_0$ on $U_0$. 
By \cite[Proposition 1.4.4]{Ki2}, the assumptions of Lemma \ref{L: from point to family} are satisfied so we can apply it to conclude.
\end{proof}

Note that the map from Lemma \ref{L: map of ADLV to Shimura} is compatible for changing the prime-to-$p$ level and for the tame Hecke algebra action so we can pass to the inverse limit we obtain a $G(\bA_f^p)$-equivariant morphism
\begin{equation}
\label{E:over uniformization}
\iota_x: X_{\mu^*}(b_x)\times G(\bA_f^p)\longto \Sh_{\mu,K_p}\otimes \bar \kappa.
\end{equation} 
Taking quotient by $K^p$ gives
\begin{equation}
\label{E:over uniformization2}
\iota_x: X_{\mu^*}(b_x)\times G(\bA_f^p)/K^p\longto \Sh_{\mu,K}\otimes \bar \kappa.
\end{equation}
It follows from the construction that the image of \eqref{E:over uniformization2} is the isogeny class $\mathscr I_x$ that $x$ belongs to.

\subsubsection{Rapoport-Zink uniformization}
Now let $x$ be a $\bar k_v$-point of $\Sh_{\mu,K_p}$. Then there is a canonical isomorphism $\on{H}^1(A_{\bar x},\bQ_\ell)\oplus \bQ_{\ell,\on{et}}(1)\simeq (V^\vee\oplus \bQ(1))\otimes\bQ_\ell$ for $\ell\neq p$, and therefore there is a canonical injection $I_x(\bQ)\subset G(\bA_f^p)$. In addition, a choice of the trivialization of $x^*\mE$ gives $b_x$, and an embedding $I_x(\bQ)\subset J_{b_x}(\bQ_p)$.
It is clear that the map \eqref{E:over uniformization2} induces
\begin{equation}
\label{E:RZ uniformization special}
I_x(\bQ)\backslash X_{\mu^*}(b_x)\times G(\bA_f^p)/K^p\to \Sh_{\mu,K}\otimes \bar k_v.\end{equation}
By \cite[Proposition 2.1.3]{Ki2}, this map is injective at the level of $\bar k_v$-points. Then by an argument as in \cite[Chap. 6]{RZ} (see also \cite{Kim}), $I_x(\bQ)\to J_{b_x}(\bQ_p)\times\prod G(\bA_f^p)$ is a discrete subgroup, and the quotient in \eqref{E:RZ uniformization special} on left hand side is representable (if $K^p$ is sufficiently small). In addition, it is independent of the choice of the trivialization of $x^*\mE$ up to a canonical isomorphism. We call this map the Rapoport-Zink uniformization of an isogeny class.

Recall that in general $I_x\otimes \bQ_p\subset J_{b_x}$, but the inclusion can be strict\footnote{We thank Rong Zhou to point this out to us.}. However,
\begin{lem}
\label{L: Ix for basic}
If $[b_x]\in B(G_{\bQ_p},\mu^*)$ is basic, then $I_{x}$ is an inner form of $G$. In particular, $I_x\otimes \bQ_p\cong J_{b_x}$ is an inner form of $G_{\bQ_p}$.
\end{lem}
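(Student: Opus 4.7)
The argument reduces to a dimension count combined with two identifications. Recall from \S\ref{SS: the group I} that $I_x$ is an inner form of a Levi subgroup $I_0 \subseteq G$, that $I_x \otimes \bQ_p \cong I_p$, and that $I_p$ is a $\bQ_p$-subgroup of $J_{b_x}$. Consequently, to show $I_x$ is an inner form of $G$, it is enough to show that $I_0 = G$, and for this it suffices to prove $\dim I_x = \dim G$. Granting this, the final claim $I_x \otimes \bQ_p \cong J_{b_x}$ will follow simultaneously by showing that the inclusion $I_p \hookrightarrow J_{b_x}$ is an equality.

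First I would invoke the standard description of $J_b$ as an inner form of the centralizer $M_b$ of the Newton cocharacter $\nu_b$ in $G_{\bQ_p}$, due to Kottwitz. When $[b_x]$ is basic, the Newton cocharacter $\nu_{b_x}$ factors through $Z_{G_{\bQ_p}}$, so $M_{b_x} = G_{\bQ_p}$ and hence $J_{b_x}$ is an inner form of $G_{\bQ_p}$; in particular $\dim J_{b_x} = \dim G$. This establishes the ambient dimension.

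Next I would verify that $I_p = J_{b_x}$ in the basic case. Unwinding the definition in \S\ref{SS: the group I}, after fixing a trivialization of $x^* \mE$ so that the Frobenius on $\bD(A_x)$ becomes $b_x \sigma$, the group $I_p$ over $K_{\bar x}$ is the eventual centralizer of the iterates $\ga_p^n = (b_x \sigma(b_x) \cdots \sigma^{rn-1}(b_x)) \sigma^{rn}$ as $n$ grows, intersected with the stabilizer of the tensors $s_{0,x}$. Because $\nu_{b_x}$ is central, for $n$ a sufficiently divisible multiple of the order of $\nu_{b_x}$ the product $b_x \sigma(b_x) \cdots \sigma^{rn-1}(b_x)$ equals $\nu_{b_x}(p^{rn/d})$ times a central element in $Z_G(L)$. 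Thus commuting with all large iterates of $\ga_p$ becomes equivalent to the $\sigma$-twisted centralizer condition $g^{-1} b_x \sigma(g) = b_x$, i.e.\ to membership in $J_{b_x}$. Combined with the tensor-fixing condition, which cuts out $J_{b_x}$ already (since the tensors $s$ cut out $\underline G$ inside $\GSp$), this gives $I_p = J_{b_x}$.

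Combining the three steps, $\dim I_0 = \dim I_x = \dim I_p = \dim J_{b_x} = \dim G$, forcing $I_0 = G$, so $I_x$ is an inner form of $G$ and $I_x \otimes \bQ_p \cong J_{b_x}$. The main technical obstacle is the third step: carefully justifying the promotion of the inclusion $I_p \subseteq J_{b_x}$ to an equality in the basic case, which requires tracking the centrality of iterated norms of $b_x$ and ensuring compatibility with the crystalline tensors $s_{0,x}$. The first two steps are essentially bookkeeping with Kottwitz's classification.
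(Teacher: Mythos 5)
The reduction to showing $I_0=G$, and the identification of $\dim J_{b_x}$ with $\dim G$ in the basic case, are fine; but the heart of your argument --- the claim that $I_p=J_{b_x}$ because the iterated norms $N_{rn}(b_x)=b_x\sigma(b_x)\cdots\sigma^{rn-1}(b_x)$ are central up to a power of $\nu_{b_x}(p)$ --- has a genuine gap, and in fact that claim is false as stated. Centrality of norms is not a property of the $\sigma$-conjugacy class $[b_x]$: if $b'=g^{-1}b\sigma(g)$ then $N_s(b')=g^{-1}N_s(b)\sigma^s(g)$, so even though some representative of a basic class has central norms (a decent or superspecial representative does), the particular $b_x$ obtained by trivializing $x^*\mE$ need not. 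Concretely, for $G=\GL_2$ a non-central unipotent $b\in\GL_2(\bZ_p)$ lies in the trivial (hence basic) class with $\nu_b=0$, yet $N_s(b)=b^s$ is never central. So no purely local manipulation with the Newton cocharacter can show that $\gamma_p=N_r(b_x)$, i.e.\ the crystalline Frobenius of $A_x$, has a central power; and since $I_p$ is (up to the tensors) the stabilized centralizer of the powers of $\gamma_p$, the equality $I_p=J_{b_x}$ you want is essentially equivalent to the assertion of the lemma and cannot be extracted from basicness of $[b_x]$ alone.

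The paper's proof supplies exactly the missing global input. Writing $T=Z_G^\circ\subset I_x$, one uses finiteness of $T(\bQ)\backslash T(\bA_f)/H'$ to pick $t\in T(\bQ)$ so that $(s\nu)(p)t$ is $p$-adically bounded, and considers $g=\pi_{A_x}^s t^r\in I_x(\bQ)$. This $g$ lies in a compact open subgroup of $I_x(\bA_f)$: at $\ell\neq p$ because Frobenius preserves the Tate module, and at $p$ because, the slope cocharacter being \emph{central}, the twist $(s\nu)(p^{-1})F^s$ preserves a lattice in $\bD(A_x)\otimes K_x$. Since $(I_x/\bG_m)(\bR)$ is compact (Rosati positivity), $I_x(\bQ)$ meets any compact open subgroup of $I_x(\bA_f)$ in a finite group, so $g$ has finite order; hence some power of $\gamma_{x,p}$, and therefore of $\gamma_0$, is central, which gives $I_0=G$ and then both assertions of the lemma via Kisin's results. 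The archimedean compactness and the integrality of Frobenius at all finite places are precisely what your local argument is missing.
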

If $(G,X)$ is of PEL type, this was contained in \cite[Theorem 6.30]{RZ}, which relies on Lemma 6.28 of \emph{loc. cit.}. Our proof, although looks different in disguise, is essentially the same.
\begin{proof}
We assume that $x$ is defined over $\kappa$ and $[\kappa:\bF_p]=r$. Recall we write $W_x=W(\kappa)$ and $K_x=W_x\otimes\bQ$. Set $\gamma_{x,p} = b_x\sigma_p(b_x) \cdots \sigma_p^{r-1}(b_x) \in G(K_x)$. We need to show that some power of $\gamma_{x,p}$ is in the center of $G$. It then follows that some power of $\ga_0$ from \S \ref{SS: the group I} is in the center of $G$. Therefore $I_0=G$ and $I_x$ is an inner form of $G$.

First, since $I_x$ is an inner form of a Levi subgroup of $G$, $Z_G$ canonically embeds into $I_x$ as a subgroup. Let $T$ denote its neutral connected component. 

Recall that we have a decomposition $\bD(A_x)\otimes K_x=\oplus_i N_i$, where $N_i$ is an isoclinic $F$-crystal, which is the union of $F$-stable $W_x$-lattices $M$ such that $F^{s_i}M=p^{r_i}M$. Replacing $s_i$ by a multiple, we may assume all $s_i$ are equal, denoted by $s$. The Newton cocharacter $\nu=\nu_{b_x}$ of $b_x$ is defined so that $(s\nu)(p)$ acts on $N_i$ as multiplication by $p^{r_i}$. 
Since $b_x$ is basic, $s\nu$ is a cocharacter of $T$. Then $(s\nu)(p^{-1})$ acts on $\bD(A_x)$, commutes with the action of $F$, and $(s\nu)(p^{-1})F^s$ preserves a lattice in $\bD(A_x)\otimes K_x$.

We regard $(s\nu)(p)\in T(\bA_f)$ whose $p$-component is $(s\nu)(p)$ and prime-to-$p$ component is $1$.  Since $T(\bQ)\backslash T(\bA_f)/H'$ is always finite for any open compact subgroup $H'$ of $T(\bA_f)$, we may replace $s$ by a multiple of it and find some $t\in T(\bQ)$ such that $t'=(s\nu)(p)t$ is contained in some open compact subgroup of $T(\bA_f)$.

Now we consider $g=\pi^s_{A_x}t^r\in I_x(\bQ)$. Note that $g$ is contained in some open compact subgroup of $H\subset I_x(\bA_f)$. Indeed, if $\ell\neq p$, $\pi_{A_x}$ is contained in some open compact subgroup of $I_x(\bQ_\ell)$ (as the Frobenius is an automorphism of the Tate module), therefore $g$ is also contained in some open compact subgroup $H_\ell\subset I_x(\bQ_\ell)$. 
On the other hand, after localized at $p$, $g=\ga_{x,p}^s (rs\nu)(p^{-1}) t'^r$ that acts on $\bD(A_x)\otimes K_x\simeq V^\vee_{\bZ_p}\otimes K_x$. As mentioned above, $\ga_{x,p}^s (rs\nu)(p^{-1})$ fixes a lattice. Therefore $g$ lies in some open compact $H_p\subset I_x(\bQ_p)$.  

Finally, note that $I_x(\bQ)\cap H\subset I_x(\bA_f)$  is a finite group since  $(I_x/\bG_m)(\bR)$ is compact. It follows that some power $g^{d}$ is the identity. Therefore, $\ga_{p,x}^{sd}=t^{-rd}$, which is central.
\end{proof}

\begin{cor}
Assume that $[b_x]\in B(G_{\bQ_p},\mu^*)$ is basic. Then there is a unique inner twist $\Psi_x: G\to I_x$, which is trivial at $\ell\neq p,\infty$, is given by $[(b_x)_{\ad}]\in B(G_{\ad,\bQ_p})_{\on{basic}}\cong H^1(\bQ_p, G_{\ad})$ at $p$, and such that $[(I_{\bR}, \Psi_\bR)]$ is the unique compact modulo center inner form of $G$ at $\infty$.
\end{cor}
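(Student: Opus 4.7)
The plan is to specify the inner twist $\Psi_x$ place by place, and then globalize via Lemma \ref{JL:inner form}. By Lemma \ref{L: Ix for basic}, $I_x$ is already known to be an inner form of $G$; what remains is to pin down one particular choice of inner twist with the stated local behavior. The needed local twists are as follows. At a finite place $\ell \neq p$, the canonical identification $I_x \otimes \QQ_\ell \cong I_{x,\ell}$ from \S\ref{SS: the group I}, combined with a choice of trivialization of the $\ell$-adic Tate module sending $s_{\ell,x}$ to $s$, realizes $I_x \otimes \QQ_\ell$ as the centralizer in $G_{\QQ_\ell}$ of an element $\gamma_\ell$; this yields an \emph{isomorphism} $G_{\QQ_\ell} \cong I_x \otimes \QQ_\ell$, hence the trivial inner twist $\Psi_{x,\ell}$. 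At $p$, a trivialization of $x^*\calE$ matching $s_{0,x}$ with $s\otimes 1$ identifies $I_x \otimes \QQ_p$ with $J_{b_x}$ by Lemma \ref{L: Ix for basic}, and $J_{b_x}$ carries its canonical inner twist from $G_{\QQ_p}$ whose class in $H^1(\QQ_p, G_\ad)$ is precisely $[(b_x)_\ad]$. At $\infty$, since $(I_x/\bG_m)(\RR)$ is compact (\S\ref{SS: the group I}), the group $I_{x,\RR}$ is the unique compact-modulo-center inner form of $G_\RR$; as explained before Lemma \ref{JL:inner form}, the fiber of $H^1(\RR, G_\ad) \to H^1(\RR, \mathrm{Aut}(G))$ over such a form is a singleton, yielding a canonical $\Psi_{x,\infty}$.

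The core step is then the verification of the Hasse-principle hypothesis of Lemma \ref{JL:inner form}, namely that $\alpha_{G_\ad,\infty}[(I_{x,\RR},\Psi_{x,\infty})]$ vanishes upon restriction to $Z(\hat G_\s)^{\Gamma_\QQ}$ --- equivalently, by Theorem \ref{T: thm of Kott}(ii), that the sum over all places $\sum_v \alpha_{G_\ad,v}[(I_{x,v},\Psi_{x,v})]$ is zero. Only the places $p$ and $\infty$ contribute. By Lemma \ref{L: Hodge cocharacter vs inner form}, the contribution at $\infty$ is $\bar\mu_\ad$. At $p$, Kottwitz's parameterization of basic $\sigma$-conjugacy classes identifies $\alpha_{G_\ad,p}[\Psi_{x,p}]$ with the image of $\kappa_G(b_x) = [\mu^*]$ under $\pi_1(G)_{\Gamma_p} \to \pi_1(G_\ad)_{\Gamma_p} \cong (Z(\hat G_\s)^{\Gamma_p})^D$, which restricts to $-\bar\mu_\ad$ on $Z(\hat G_\s)^{\Gamma_\QQ}$. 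The two contributions cancel, so the hypothesis is met and Lemma \ref{JL:inner form} supplies a unique inner twist $\Psi_x: G \to I_x$ with the prescribed local classes.

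The main obstacle will be the bookkeeping of signs and normalizations: the definition of $B(G_{\QQ_p},\mu^*)$ introduces a switch from $\mu$ to $\mu^*$ (compare Remark \ref{R: geom Sat, arith Frob v.s. geom Frob}), while Lemma \ref{L: Hodge cocharacter vs inner form} naturally outputs $\bar\mu_\ad$ with a definite sign via Kottwitz's map $\alpha_T$ \eqref{JL:TN} and the commutative diagram \eqref{JL:comp}. Once these conventions are aligned, the proof is essentially the combination of Kottwitz's exact sequence, Lemma \ref{L: Ix for basic}, and the two local computations already carried out in the paper.
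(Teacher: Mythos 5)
Your route is genuinely different from the paper's: the paper proves this corollary in one line, as the combination of Lemma~\ref{L: Ix for basic} with \cite[Cor.~2.3.5]{Ki2}, and the whole content of the local description (trivial at $\ell\neq p$, given by $b_x$ at $p$, compact modulo center at $\infty$) is imported from Kisin's result, which constructs the inner twist directly from the comparison of the $\ell$-adic, crystalline and archimedean realizations. You instead try to reconstruct the twist by a purely local–global cohomological argument, and this has a genuine gap. A minor point first: Lemma~\ref{JL:inner form} cannot be invoked as you do, since it only treats twists that are trivial at \emph{all} finite places, whereas here the class at $p$ is $[(b_x)_\ad]$, which is nontrivial in general; for the existence of a global class you would have to appeal to Theorem~\ref{T: thm of Kott}(ii) directly, and to the Hasse principle for adjoint groups for uniqueness. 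That slip is repairable.

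The serious problem is that your argument only produces \emph{some} class in $H^1(\bQ,G_\ad)$ with the prescribed localizations; it does not show that this class is realized by an inner twist onto the specific group $I_x$, equivalently that the global twist $\Psi_0\colon G\to I_x$ furnished by Lemma~\ref{L: Ix for basic} localizes to the prescribed classes. At $\infty$ this identification is automatic (the fiber of $H^1(\bR,G_\ad)\to H^1(\bR,\Aut(G))$ over a compact-modulo-center form is a singleton, and Lemma~\ref{L: Hodge cocharacter vs inner form} computes it), but at finite places the fiber of $H^1(\bQ_v,G_\ad)\to H^1(\bQ_v,\Aut(G))$ over the isomorphism class of $I_x\otimes\bQ_v$ need not be a singleton when $G_\ad$ has outer automorphisms (types $\mathsf{A}$, $\mathsf{D}$, $\mathsf{E}_6$ — precisely the relevant cases for Hodge-type data). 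So knowing $I_x\otimes\bQ_\ell\cong G_{\bQ_\ell}$ (which itself uses the centrality of a power of $\gamma_\ell$, i.e.\ basicness, a point your $\ell\neq p$ step leaves implicit) does not force the local class of a given twist to be trivial, and knowing $I_x\otimes\bQ_p\cong J_{b_x}$ does not force the class at $p$ to be $[(b_x)_\ad]$: the two local identifications could differ from $\Psi_{0,v}$ by an outer automorphism. Pinning down the local classes of one and the same global twist requires tracking how the étale and crystalline comparison isomorphisms (and $\gamma_0$) fit together, which is exactly what \cite[Cor.~2.3.5]{Ki2} supplies; your verification that $\bar\mu_\ad$ at $\infty$ and $-\bar\mu_\ad$ at $p$ sum to zero is a useful consistency check of that result, not a substitute for it.
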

\begin{proof}This is the combination of Lemma \ref{L: Ix for basic} and \cite[Corollary 2.3.5]{Ki2}.
\end{proof}

\begin{cor}
\label{C: RZ uniformization basic}
The basic locus $\Sh_{\mu,K_p,b}$ contains a unique isogeny class, and the map \eqref{E:RZ uniformization special} (for a $\bar k_v$-point $x \in \Sh_{\mu, K_p, b}$) factors through the basic locus, and gives the following isomorphism (over $\bar k_v$)
\[I_x(\bQ)\backslash X_{\mu^*}(b_x)\times G(\bA_f^p)/K^p\to \on{Sh}_{\mu,K,b}.\] 
\end{cor}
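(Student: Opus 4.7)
The plan is to prove three claims in sequence: (a) the map $\iota_x$ of \eqref{E:over uniformization2} factors through the basic stratum $\Sh_{\mu,K,b}$; (b) every $\bar k_v$-point of $\Sh_{\mu,K,b}$ lies in the image of $\iota_x$, so that the basic locus consists of a single isogeny class; and (c) after quotienting by $I_x(\QQ)$, the resulting morphism is an isomorphism of perfect schemes over $\bar k_v$. Injectivity modulo the left action of $I_x(\QQ)$, together with discreteness of $I_x(\QQ)$ in $J_{b_x}(\QQ_p)\times G(\AAA_f^p)$, was already noted in the discussion following \eqref{E:RZ uniformization special} via \cite[Proposition 2.1.3]{Ki2}.

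Claim (a) is immediate from construction: every point in the image of $\iota_x$ comes from an abelian variety prime-to-$p$ quasi-isogenous to $A_x$ with a compatible $p$-power quasi-isogeny on $p$-divisible groups respecting the tensors $s_0$, and the Newton polygon with $G$-structure is invariant under such isogenies. Claim (b) is the main content. For $y \in \Sh_{\mu,K,b}(\bar k_v)$, with representative $b_y \in G(L)$ of its $\sigma$-conjugacy class, I will produce a prime-to-$p$ quasi-isogeny $A_y \to A_x$ sending $(s_{\mathrm{et},y}, s_{0,y})$ to $(s_{\mathrm{et},x}, s_{0,x})$ up to a common scalar; the $p$-power piece then gives a point of $X_{\mu^*}(b_x)$ and the effect on prime-to-$p$ level structures an element of $G(\AAA_f^p)/K^p$. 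At $p$, since $[b_x]$ is the unique basic element of $B(G_{\QQ_p},\mu^*)$, the elements $b_x$ and $b_y$ are $\sigma$-conjugate in $G(L)$; Dieudonn\'e theory over perfect bases translates such a conjugating element into the required $G$-equivariant $p$-power quasi-isogeny of $p$-divisible groups. At each $\ell \neq p$, Lemma \ref{L: Ix for basic} gives that in the basic case $I_x$ is an inner form of $G$ trivial at $\ell$; the Frobenius elements $\gamma_{\ell,x}$ and $\gamma_{\ell,y}$ are accordingly $G(\QQ_\ell)$-conjugate, both being conjugate to a common rational element $\gamma_0 \in G(\QQ)$ supplied by \cite[Corollary 2.3.1]{Ki2}, so one obtains an $\ell$-adic Frobenius-equivariant isomorphism of Tate modules respecting $s_{\mathrm{et}}$. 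Tate's isogeny theorem for abelian varieties over a finite field assembles these local tensor-preserving isomorphisms into a global prime-to-$p$ quasi-isogeny, which after adjustment by an element of $I_x(\QQ)$ matches the $K^p$-level structure of $y$.

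For (c), the map is bijective on $\bar k_v$-points by (a), (b), and the noted injectivity. Perfect smoothness of $\loc_p$ (Proposition \ref{smoothness}) is the scheme-theoretic form of Serre--Tate, asserting that the \'etale-local structure of $\Sh_{\mu,K}$ at a point is controlled by its local shtuka at $p$ together with the prime-to-$p$ level structure; exactly the same data controls $I_x(\QQ)\backslash X_{\mu^*}(b_x)\times G(\AAA_f^p)/K^p$ at the matching point. This forces $\iota_x$ to be \'etale, and combined with bijectivity on geometric points it is an isomorphism. The main obstacle is step (b), specifically the global matching of the $K^p$-level structure: one has local compatibilities at every finite place $\ell\neq p$, but gluing them into a single global quasi-isogeny requires both Tate's theorem and the triviality of the inner twist $I_x\to G$ at every finite place $\ell\neq p$, which is exactly the content of Lemma \ref{L: Ix for basic} in the basic case.
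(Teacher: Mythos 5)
There is a genuine gap, and it sits exactly at the point you flag as "the main obstacle" in step (b): the assembly of the place-by-place compatibilities into a single global tensor-preserving quasi-isogeny $A_y\to A_x$ is not something Tate's isogeny theorem can deliver. Tate's theorem works one $\ell$ at a time: it identifies $\Hom(A_y,A_x)\otimes\bQ_\ell$ with Frobenius-equivariant maps of Tate modules, so from your local isomorphism at one $\ell$ you only get a $\bQ_\ell$-linear combination of quasi-isogenies, with no control over whether any actual quasi-isogeny carries $s_{\on{et},y}$ to $s_{\on{et},x}$ (the tensors are absolute Hodge/Tate classes, not known to be spanned by endomorphisms or divisor classes), let alone simultaneously at all $\ell\neq p$ and at $p$ on the crystalline side. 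What is really needed is that the $\bQ$-scheme of tensor-preserving quasi-isogenies $A_y\to A_x$, which (when nonempty over $\overline\bQ$) is a torsor under $I_x$, has a rational point; your local arguments at best give local points, and the passage from local to global is a Hasse-principle problem whose obstruction is precisely the group defined in \cite[(4.4.9)]{Ki2}. Establishing the parametrization of an isogeny class that makes this obstruction analysis possible is the content of \cite[Proposition 4.4.13]{Ki2}, which is the result the paper's proof actually invokes: since $I_x$ is an inner form of $G$ (Lemma \ref{L: Ix for basic} and the corollary following it), that obstruction group is trivial, and bijectivity of \eqref{E:RZ uniformization special} on $\bar k_v$-points follows. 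Two smaller points in the same step: \cite[Corollary 2.3.1]{Ki2} produces a rational element $\ga_0$ separately for $x$ and for $y$, and nothing in your argument shows the two can be taken conjugate in $G(\bQ)$; and even granting matching Frobenii, "an $\ell$-adic Frobenius-equivariant isomorphism of Tate modules respecting $s_{\on{et}}$" at each $\ell$ is weaker than the global statement you need.

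Step (c) is also not argued correctly as written. Perfect smoothness of $\loc_p$ (Proposition \ref{smoothness}) does not by itself "force $\iota_x$ to be \'etale," and in the perfect setting one should not argue via infinitesimal criteria at all. The paper upgrades point-bijectivity to an isomorphism by rerunning the argument in the proof of Lemma \ref{L: map of ADLV to Shimura}, i.e.\ via Lemma \ref{L: from point to family}: one works with a finite morphism to a reduced base which is bijective on closed points and surjective on completed local rings, and concludes it is an isomorphism; the quotient by $I_x(\bQ)$ is then handled as in the discussion following \eqref{E:RZ uniformization special}. So the correct repair of your outline is: keep (a) and the injectivity input from \cite[Proposition 2.1.3]{Ki2}, but replace the Tate-theorem gluing in (b) by the citation of \cite[Proposition 4.4.13]{Ki2} together with the vanishing of the group \cite[(4.4.9)]{Ki2} (which is where Lemma \ref{L: Ix for basic} genuinely enters), and replace the \'etaleness claim in (c) by the Lemma \ref{L: from point to family} argument.
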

As explained in \cite{RZ} (and \cite{Kim} in the Hodge type case), the isomorphism is in fact defined over some finite field. We omit the general discussion here. The special case when $b$ is unramified will be essentially treated in Proposition \ref{exotic Hecke}.
\begin{proof}This follows from the previous corollary and \cite[Proposition 4.4.13]{Ki2}. Namely, since $I_x$ is an inner form of $G$, the group defined in \cite[(4.4.9)]{Ki2} is trivial. Then this map is bijective at the level of $\bar k_v$-points, and the proof of Lemma \ref{L: map of ADLV to Shimura} implies that it is an isomorphism.
\end{proof}

We will be particularly interested in a special case. Recall the notion of unramified elements in $B(G_{\bQ_p})$ as discussed in \S \ref{S: unram element}.

\begin{cor}
\label{L:Ix is G'}
If the basic element of $B(G_{\bQ_p},\mu^*)$ is unramified, then the inner twist $G\to I_x$ (for a $\bar k_v$-point $x \in \Sh_{\mu, K_p, b}$) is trivial at all finite places, and $I_{x,\bR}$ is the unique compact modulo center inner form of $G_\bR$.
\end{cor}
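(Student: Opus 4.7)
The plan is to bootstrap Corollary~\ref{L:Ix is G'} directly from the preceding corollary, which already produces a unique inner twist $\Psi_x: G\to I_x$ that is trivial at every finite place $\ell\neq p$ and realizes $I_{x,\bR}$ as the unique compact-modulo-center inner form of $G_\bR$. Both the behavior at the archimedean place and at finite $\ell\neq p$ are therefore already in hand; the only new content of this corollary is the triviality of $\Psi_x$ at $p$.

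To establish triviality at $p$, I would appeal to Proposition~\ref{unramified basic}. By hypothesis, the basic element $[b_x]\in B(G_{\bQ_p},\mu^*)$ is unramified, so by the equivalence of conditions (1) and (3) in that proposition, $\operatorname{def}_G(b_x)=0$. As noted in the proof of loc.\ cit., this means $b_{x,\ad}$ is $\sigma$-conjugate to the identity in $G_\ad(L)$, so the class $[(b_x)_\ad]\in B(G_{\ad,\bQ_p})_{\mathrm{basic}}$ is trivial. Under the Kottwitz isomorphism $B(G_{\ad,\bQ_p})_{\mathrm{basic}}\cong H^1(\bQ_p,G_\ad)$ used in the statement of the preceding corollary, this is precisely the vanishing of the $p$-component of the cohomology class of $\Psi_x$.

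Putting the pieces together, $\Psi_x$ is trivial at every finite place, and the archimedean condition is unchanged, giving both assertions of the corollary. I expect no real obstacle here, since the nontrivial input is already packaged in Proposition~\ref{unramified basic} and the preceding corollary; the argument is a one-line concatenation using the dictionary between $\operatorname{def}_G$ and local Galois cohomology.
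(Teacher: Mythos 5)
Your argument is correct and is exactly the intended one: the paper leaves this corollary without a written proof precisely because it follows by combining the corollary after Lemma~\ref{L: Ix for basic} (which already handles the places $\ell\neq p$ and $\infty$ and identifies the $p$-component with $[(b_x)_\ad]\in B(G_{\ad,\bQ_p})_{\mathrm{basic}}\cong H^1(\bQ_p,G_\ad)$) with Proposition~\ref{unramified basic}, whose equivalence of (1) and (3) and the observation in its proof that $\on{def}_G(b)=0$ forces $b_{x,\ad}$ to be $\sigma$-conjugate to $1$ give the vanishing at $p$. No gaps.
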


\begin{rmk}
Recall that by Corollary \ref{inner mu} and Proposition \ref{unramified basic}, there exists a unique inner form $G'$ of $G$ such that $G'_{\bR}$ is compact modulo center and $G'(\bA_f)\simeq G(\bA_f)$. The above lemma gives another proof of this result in the special case when $(G,X)$ is a Shimura datum of Hodge type. 
\end{rmk}

\subsection{Cohomological correspondences between Shimura varieties}
In this subsection, we apply Theorem \ref{T:Spectral action} to construct cohomological correspondences between Shimura varieties. This in particular induces a spectral action on the cohomology of Shimura varieties by the ring of regular functions of the stack of Satake parameters, which is the Shimura variety counterpart of V. Lafforgue's $S$-operators.

\subsubsection{The correspondence}
\label{SS: Corr Sh}

Let $(G_1,X_1)$ and $(G_2,X_2)$ be two Shimura data such that $G_{1,\bA_f}\simeq G_{2,\bA_f}$. 
As explained in the paragraph before Lemma \ref{JL:inner form}, $G_1$ and $G_2$ are inner forms.
Let $\{\mu_i\}$ denote the conjugacy class of Hodge cocharacters determined by $X_i$, regarded as a dominant character of $\hat T$. Let $E_i\subset \bC$ be the reflex field of $(G_i,X_i)$. 

As explained in Corollary \ref{C: position of two real}, the Shimura data defines a canonical inner twist $\Psi_\bR: G_1\to G_2$ over $\bC$, whose image under $\al_{G,\infty}$ is $\bar{\mu}_{1,\ad}-\bar{\mu}_{2,\ad}$. We assume 
$$\mu_{1,\ad}|_{Z(\hat G_\s)^{\Ga_{\bQ}}}=\mu_{2,\ad}|_{Z(\hat G_\s)^{\Ga_{\bQ}}}.$$ 
Then by Corollary \ref{inner mu}, this inner twist comes from a global inner twist $\Psi: G_1\to G_2$ over $\overline\bQ$, which is trivial at all finite places, i.e.
 \[\Psi=\on{Int}(h)\circ \theta\]
for some $\theta: G_{1,\bA_f}\simeq G_{2,\bA_f}$ and $h\in G_{2,\ad}(\overline\bA_f)$.  Recall that such global inner twist, if exists, is necessarily unique (see Corollary \ref{inner mu}).
Examples of such pairs $(G_i,X_i), i=1,2$ can be found in Example \ref{JL:ex}.

We fix $K_i\subset G_i(\bA_f)$ an open compact subgroup which is sufficiently small such that $\theta K_1=K_2$.
Let $p$ be a prime such that $K_{1,p}$ (and therefore $K_{2,p}$) is hyperspecial. 
Let $\underline G_i$ be the integral model of $G_{i,\bQ_p}$ over $\bZ_{p}$ determined by $K_{i,p}$. We have the isomorphism $\theta: \underline G_1\simeq \underline G_2$. 

Let $v$ be a place of $E=E_1E_2$ lying over $p$. 
Let $\Sh_{\mu_i,K_i}$ denote the mod $p$ fiber of the canonical integral model for $\Sh_{K_i}(G_i,X_i)$, base changed to $k_{v}$.

We need a hypothesis, which should always be true and can be proved unconditional in many cases (in particular the cases we are considering). The discussion in \S \ref{SS: coh corr Sh} will depend only on this hypothesis.
\begin{hypothesis}
\label{H: Hecke diagram}
\begin{enumerate}
\item 
The Shimura variety $\Sh_{K_i}(G_i, X_i)$ has a canonical integral model over $\mO_{E,(v)}$. Let $\Sh_{\mu_i,K_i}$ (or $\Sh_{\mu_i}$ for simplicity if the level structure is clear from the context) denote the perfection of the mod $p$ fiber of the model. Then there is a morphism $\loc_p:\Sh_{\mu_i,K_i}\to \Sht^\loc_{\mu_i}$ such that the composition 
$$\loc_p(m,n):=\res_{m,n}\circ \loc_p:\Sh_{\mu_i,K_i}\to \Sht^{\loc(m,n)}_{\mu_i}$$ 
is perfectly smooth for any pair $(m,n)$ such that $m-n$ is $\mu_i$-large.
\item 
Assume that 
\begin{equation}
\label{E: exotic p}
\mu_{1}|_{Z(\hat G)^{\Ga_{\bQ_p}}}=\mu_{2}|_{Z(\hat G)^{\Ga_{\bQ_p}}}.
\end{equation}
Then there exists a perfect ind-scheme $\Sh_{\mu_1|\mu_2}$ fitting into the following commutative diagram, with both squares Cartesian,
\begin{equation}
\label{E:global to local cartesian1}
\xymatrix{
\Sh_{\mu_1,K_1} \ar_{\loc_p}[d] & \ar_{\overleftarrow{h}_{\mu_1}}[l] \Sh_{\mu_1|\mu_2} \ar^{\overrightarrow{h}_{\mu_2}}[r] \ar[d] & \Sh_{\mu_2,K_2} \ar^{\loc_p}[d] \\
\Sht_{\mu_1}^\loc & \ar_{\overleftarrow{h}_{\mu_1}^\loc}[l] \Sht_{\mu_1|\mu_2}^{\loc} \ar^{\overrightarrow{h}_{\mu_2}^\loc}[r]& \Sht_{\mu_2}^\loc.
}
\end{equation}
\end{enumerate}
\end{hypothesis}
Note that Part (1) of the hypothesis holds for Shimura data of Hodge type, as discussed in the previous subsection. In addition, Part (1) also holds for weak Shimura data $(G,X)$ such that $G_\bR$ is compact modulo center. Namely, in this case $\Sh_K(G,X)=G(\bQ)\backslash G(\bA_f)/K$ is a set with an action of $\Gal(\overline\bQ/E)$ unramified at $p$, and the Shimura cocharacter $\mu\in\xcoch(Z_G)$ is central. Then the existence of the canonical integral model and the map $\loc_p$ is clear.

We concentrate on Part (2) of the hypothesis. It seems possible to construct the correspondence \eqref{E: pHecke corr} in a very general situation. However, to do so it seems best to work in the framework of Shimura varieties of abelian type.
To avoid some significant digression, in below we will establish it in some special cases, which suffices for our applications, and leave the general cases in a future work.
But let us first make a few remarks on this hypothesis. 
\begin{rmk}
\label{R: exotic Hecke}
(1) If $(G_1,X_1)=(G_2,X_2)=(G,X)$, $\Sh_{\mu\mid \mu}$ is the perfection of the mod $p$ fiber of a natural integral model of the Hecke correspondence
\begin{equation}
\label{E: pHecke corr}
\xymatrix@C=-20pt{
&G(\bQ)\backslash X\times G(\bA_f^p)/K^p\times G(\bQ_p)\times^{K_p}G(\bQ_p)/K_p\ar^{\overleftarrow{h}}[ld]\ar_{\overrightarrow{h}}[rd]&\\
G(\bQ)\backslash X\times G(\bA_f)/K && G(\bQ)\backslash X\times G(\bA_f)/K.}
\end{equation}
If $(G_1,X_1)\neq (G_2,X_2)$ but \eqref{E: exotic p} holds, then $\Sh_{\mu_1\mid \mu_2}$ can be regarded as ``exotic Hecke correspondences" between mod $p$ fibers of \emph{different} Shimura varieties. These correspondences cannot be lifted to characteristic zero, and give a large class of characteristic $p$ cycles on Shimura varieties.

(2) Condition \eqref{E: exotic p} is equivalent to 
\[\Hom_{\Coh^{\hat G}_{fr}(\hat G\sigma_p)}(\widetilde{V_{\mu_1}},\widetilde{V_{\mu_2}})\neq 0.\]
It is also the necessary and sufficient condition to ensure $\Sht^\loc_{\mu_1\mid \mu_2}\neq \emptyset$.
\end{rmk}

\subsubsection{Examples of exotic Hecke correspondences \eqref{E:global to local cartesian1}}
\label{SS:exotic Hecke G1 discrete}
We explain that \eqref{E:global to local cartesian1} exists in many cases.

If both $\mathbf{Sh}_{K_1}(G_1,X_1)$ and $\mathbf{Sh}_{K_2}(G_2,X_2)$ are Shimura sets, the top row of \eqref{E:global to local cartesian1} extends naturally to $\mO_{E, v}$. For this, we need to check the Shimura's reciprocity map for weak Shimura data. The geometric Frobenius $\phi_v$ acts on $\mathbf{Sh}_{K_{ip}}(G_i,X_i)$ by Hecke translation $p^{\mathrm{Nm}_{E_v/\QQ_p}(\mu_i)}$. The condition \eqref{E: exotic p} ensures that $\mathrm{Nm}_{E_v/\QQ_p}(\mu_1-\mu_2)$ is trivial on $Z(\hat G)$, which surjects onto the maximal abelian quotient of $\hat G$. This implies that $p^{\mathrm{Nm}_{E_v/\QQ_p}(\mu_1)} = p^{\mathrm{Nm}_{E_v/\QQ_p}(\mu_2)} $ as an element of $Z_G(\QQ_p)$. So \eqref{E:global to local cartesian1} exists in this case.

Next, we assume that $\mathbf{Sh}_{K_1}(G_1,X_1)=G_1(\bQ)\backslash G_1(\bA_f)/K_1$ is a discrete set and $\mathbf{Sh}_{K_2}(G_2,X_2)$ is of Hodge type. Then $G_1$ is compact modulo center at the infinite place and $\mu_1$ is central. Then \eqref{E: exotic p} exactly implies that the basic element in $B(G_{\bQ_p},\mu_2^*)$ can be represented by $p^{\mu^*_1}$. We may choose a $\bar k_v$-point $x\in\Sh_{\mu_2, K}$ and a trivialization of $x^*\mE$ such that $b_x=p^{\mu_1^*}$. 
Recall that in this case the group $I_x$ is equipped with an inner twist $\Psi: G\to I_x$, which is trivial at all finite places, and is compact modulo center at the infinite place. In particular, there is an isomorphism $G_1\simeq I_x$, compatible with the inner twist. We fix such inner twist.
We may reinterpret the Rapoport-Zink uniformization as follows.

\begin{prop}\label{exotic Hecke}
In the following diagram, both squares are Cartesian.
\begin{equation}
\label{E:exotic Hecke discrete to Hodge}
\xymatrix{
G_1(\QQ) \backslash G_1(\AAA_f) /K \ar[d] & \ar[l]_-{\overleftarrow{h}} \Sh_{\mu_1|\mu_2}: =  G_1(\bQ)\backslash G(\bQ_p)\times^{G(\bZ_p)} X_{\mu_2^*}(p^{\mu_1^*})\times G(\bA_f^p)/K^p \ar[r]^-{\overrightarrow{h}} \ \ar[d] &\Sh_{\mu_2,K}\ar[d]^-{\loc_p} \\
\Sht_{\mu_1}^\loc & \ar_-{\overleftarrow{h}^\loc}[l]\Sht_{\mu_1\mid\mu_2}^{\loc} \ar^-{\overrightarrow{h}^\loc}[r] & \Sht_{\mu_2}^\loc.
}\end{equation}
Moreover, there is a $k_v$-scheme structure on $G_1(\bQ)\backslash G(\bQ_p)\times^{G(\bZ_p)} X_{\mu_2^*}(p^{\mu_1^*})\times G(\bA_f^p)/K^p$ so that the top row of \eqref{E:exotic Hecke discrete to Hodge} is defined over $k_v$.
\end{prop}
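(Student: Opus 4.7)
The plan is to construct the diagram \eqref{E:exotic Hecke discrete to Hodge} by unfolding the Rapoport--Zink uniformization of the basic locus, enlarged at $p$ by a local Hecke factor. The key inputs are: Corollary~\ref{C: RZ uniformization basic}, which for the chosen $\bar k_v$-point $x\in \Sh_{\mu_2,K,b}$ gives an isomorphism $I_x(\bQ)\backslash X_{\mu_2^*}(b_x)\times G(\bA_f^p)/K^p\cong \Sh_{\mu_2,K,b}\otimes \bar k_v$; Corollaries~\ref{inner mu} and~\ref{L:Ix is G'}, which identify $I_x\cong G_1$ canonically (both being the unique inner twist of $G$ that is trivial at all finite places and compact modulo center at infinity), and in particular $J_{b_x}(\bQ_p)\cong I_x(\bQ_p)\cong G(\bQ_p)$ via $\theta$; and the choice of trivialization that makes $b_x=p^{\mu_1^*}$. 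On the local side, since $\mu_1$ is central Example~\ref{Ex:mStau mn} gives $\Sht^\loc_{\mu_1}\cong \bfB G(\bZ_p)$, and combining \eqref{rigid corr} with Corollary~\ref{C:expclit mStautau truncated} identifies $\Sht^\loc_{\mu_1\mid \mu_2}\cong [G(\bZ_p)\backslash X_{\mu_2^*}(p^{\mu_1^*})]$, with $G(\bZ_p)$ acting via the $\sigma^{-1}$-twist of the natural $J_{b_x}(\bQ_p)$-action.

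With these identifications, I will define $\overrightarrow{h}([g_p,x',g^p])=[g_p\cdot x',g^p]$, viewed as an element of $\Sh_{\mu_2,K,b}\subset \Sh_{\mu_2,K}$ via the Rapoport--Zink uniformization, where $g_p\in G(\bQ_p)\cong J_{b_x}(\bQ_p)$ acts on the ADLV on the left. The map $\overleftarrow{h}$ forgets the ADLV factor, sending $[g_p,x',g^p]$ to $[(g_p,g^p)]\in G_1(\bQ)\backslash G_1(\bA_f)/K_1$ via $\theta$, and the vertical map to $\Sht^\loc_{\mu_1\mid \mu_2}$ sends $[g_p,x',g^p]$ to the class of $g_p\cdot x'$ in $G(\bZ_p)\backslash X_{\mu_2^*}(p^{\mu_1^*})$. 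Both squares commute by construction. For the right square: the map $\overrightarrow h^\loc$ factors through the unique unramified basic element $[p^{\mu_1^*}]\in B(G_{\bQ_p},\mu_2^*)$ represented inside $\Sht^\loc_{\mu_2}$, so the composition $\loc_p\circ \overrightarrow h$ lands in $\Sh_{\mu_2,K,b}$; over the basic stratum, a $\bar k_v$-point of the fibre product is the data of a class $[x',g^p]\in G_1(\bQ)\backslash X_{\mu_2^*}(p^{\mu_1^*})\times G(\bA_f^p)/K^p$ together with a $G(\bZ_p)$-orbit of lifts $x''\in X_{\mu_2^*}(p^{\mu_1^*})$ projecting to the same class of $\Sht^\loc_{\mu_2}$ as $x'$, and by Corollary~\ref{C: irr comp up to J} (or directly from the $J_{b_x}(\bQ_p)$-action on ADLVs) such lifts are parametrised by writing $x''=g_p\cdot x'$ with $g_p\in G(\bQ_p)/G(\bZ_p)$, yielding exactly the balanced product $G(\bQ_p)\times^{G(\bZ_p)}X_{\mu_2^*}(p^{\mu_1^*})$ modulo the residual $G_1(\bQ)$. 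The left square is a direct discrete calculation: $\Sh_{\mu_1,K_1}\to \Sht^\loc_{\mu_1}\cong \bfB G(\bZ_p)$ is the $G(\bA_f^p)\times G(\bQ_p)/G(\bZ_p)$-twisted classifying morphism, and the pull-back along $\overrightarrow h^\loc$ reproduces the definition of $\Sh_{\mu_1\mid \mu_2}$.

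For the descent to a $k_v$-scheme, the Frobenius $\phi_v$ acts on the RZ uniformization of $\Sh_{\mu_2,K,b}$ by $[x',g^p]\mapsto[b_x^d\cdot \sigma^{-d}(x'),g^p]$ (up to the $G_1(\bQ)$-quotient), where $d=[k_v:\bF_p]$, following the standard analysis of the Rapoport--Zink tower in \cite{RZ,Kim,HP2}; since $b_x=p^{\mu_1^*}$ lies in the image of $G(\bQ_p)$, this twist can be absorbed into the left factor of $G(\bQ_p)\times^{G(\bZ_p)}X_{\mu_2^*}(p^{\mu_1^*})$ via $(g_p,x')\mapsto (g_p\cdot p^{-d\mu_1^*},\sigma^{-d}(x'))$, producing an honest $d$-fold arithmetic Frobenius descent datum on the extended space. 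Checking that this datum is preserved by the residual $G_1(\bQ)$-action and is compatible with both $\overrightarrow h$ (where the Frobenius on $\Sh_{\mu_2,K}$ is recovered via the RZ uniformization) and $\overleftarrow h$ (where it must match the Shimura reciprocity action of $\phi_v$ on $\Sh_{\mu_1,K_1}$, namely right multiplication by $p^{d\mu_1^*}\in Z_{G_1}(\bQ_p)$) produces the sought $k_v$-structure. The main obstacle will be pinning down this Frobenius descent precisely and matching it with Shimura reciprocity on the left; the rest of the argument is a careful unfolding of definitions built on the Rapoport--Zink uniformization and the explicit description of $\Sht^\loc_{\mu_1\mid\mu_2}$.
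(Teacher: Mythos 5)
Your treatment of the two Cartesian squares is essentially the paper's own argument: the paper likewise reduces to the basic locus, applies Corollary~\ref{C: RZ uniformization basic}, identifies the fibre of $\overrightarrow{h}^\loc$ over $\loc_p(x)$ with $X_{\mu_1^*}(p^{\mu_1^*})=G(\bQ_p)/G(\bZ_p)$ via \eqref{E: fiber of hleft}, and gets the left square from the Cartesian square in \eqref{rigid corr}. That part is fine.

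The gap is in the $k_v$-structure, which is the substantive half of the statement. You propose to import the Frobenius/Weil descent datum on the uniformized basic locus ``following the standard analysis of the Rapoport--Zink tower'' and then absorb the twist into the $G(\bQ_p)$-factor, and you yourself flag that ``pinning down this Frobenius descent precisely and matching it with Shimura reciprocity on the left'' is the main remaining obstacle. But that compatibility is exactly the content to be proved: within this paper the uniformization of Corollary~\ref{C: RZ uniformization basic} is only constructed over $\bar k_v$ (via Dieudonn\'e theory and Lemma~\ref{L: from point to family}), and the remark following it states explicitly that the rationality in the unramified basic case is what Proposition~\ref{exotic Hecke} is meant to supply; so appealing to the literature here is close to circular in the paper's own logical structure, and in any case the descent data of \cite{Kim,HP2} only govern the $p$-divisible group side, whereas one must also track how the prime-to-$p$ level structure $\eta_x g^p$ and the tensors transform under $\sigma_v$, and check agreement with the reciprocity action on the Shimura set via $\overleftarrow{h}$. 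The paper does this by an explicit moduli computation: it defines $\sigma_v(g_p,y_0,g^p)=(p^{\NN_v(\mu_1^*)}g_p,\sigma_v(y_0),g^p)$ with $\NN_v(\mu_1^*)=\sum_{i=1}^{[k_v:\FF_p]}\sigma^i(\mu_1^*)$, uses that ${}^\sigma\calE\to\calE$ is multiplication by $p^{\mu_1^*}$ under the chosen trivialization to see that $\overrightarrow{h}$ intertwines this with the geometric Frobenius on $(A'_y,\eta'_y,\ldots)$, and checks the level-structure compatibility by hand. Two further imprecisions in your sketch: the twist must be the $\sigma$-twisted norm $p^{\NN_v(\mu_1^*)}$ rather than $b_x^d=p^{d\mu_1^*}$, and likewise the reciprocity action on $\Sh_{\mu_1,K}$ is translation by the norm of $\mu_1$, not by $p^{d\mu_1^*}$; these coincide only when $\mu_1$ is defined over $\QQ_p$, which is not assumed.
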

\begin{proof}
Note that by \eqref{E: fiber of hleft}, $(\overrightarrow{h}^\loc)^{-1}(\loc_p(x))=X_{\mu_1^*}(b_x)=X_{\mu_1^*}(p^{\mu_1^*})=G(\bQ_p)/G(\bZ_p)$. Then by Corollary \ref{C: RZ uniformization basic},
\begin{align*}
\Sht_{\mu_1|\mu_2}^{\loc}\times_{\Sht_{\mu_2}^{\loc}}\Sh_{\mu_2,K} &=\Sht_{\mu_1|\mu_2}^{\loc}\times_{\Sht_{\mu_2}^{\loc}}\Sh_{\mu_2,K,b} \\
& = 
\Sht_{\mu_1|\mu_2}^{\loc}\times_{\Sht_{\mu_2}^{\loc}}\big( I_x(\QQ) \backslash X_{\mu_2^*}(b_x)\times G(\bA_f^p)/K^p\big)
\\
& \cong G_1(\bQ)\backslash G(\bQ_p)\times^{G(\bZ_p)} X_{\mu_2^*}(p^{\mu_1^*})\times G(\bA_f^p)/K^p = \Sh_{\mu_1|\mu_2}.
\end{align*}
Therefore, we have the right Cartesian square.  The left square is Cartesian because the left square of \eqref{rigid corr} is Cartesian.

Now we show that the top row of \eqref{E:exotic Hecke discrete to Hodge} is defined over $k_v$ in an appropriate sense.
We go into the proof of Corollary~\ref{C: RZ uniformization basic} when $b_x = p^{\mu_1^*}$.
Recall that the point $x \in \Sh_{\mu_2, K_p}$ gives rise to a tuple $(A_x,\eta_x, s_{\et, x}, s_{0,x})$ such that $b_x = p^{\mu_1^*}$ under some trivialization of $x^*\calE$. A point $y = (g_p, y_0, g^p)$ in the space $\Sh_{\mu_1|\mu_2}$ gives rise to a modification $\beta_y: \calE'_y \xrightarrow{\beta_{y_0}} x^*\calE \xrightarrow{g_p}x^*\calE$ of local $\underline G$-shtukas, which further gives rise to $p$-power quasi-isogenies $\beta_y: A'_y \to A_x$, and a tame level structure $\eta'_y$ on $A'_y$ given by the image of the section $\eta_x g^p$ under the isomorphism
\begin{align*}
\on{Isom}\big((V\otimes \AAA_f^p \oplus \AAA_f^p,& s \otimes 1), (\calV_\et(A)^p \oplus \AAA_f^p(1), s_{\et,x})\big) / K^p
\\ &\xrightarrow{\beta_y^{-1}}
\on{Isom}\big((V\otimes \AAA_f^p \oplus \AAA_f^p(1),s \otimes 1), (\calV_\et(A'_y)^p \oplus \AAA_f^p, \beta_y^{-1}(s_{\et,x}))\big)  / K^p.
\end{align*}
Then the image of this point $y$ under $\overrightarrow{h}$ in $\Sh_{\mu_2,K}$ is represented by $(A'_y, \eta'_y, \beta_y^{-1}(s_{\et, x}), \beta_y^{-1}(s_{0,x}))$.

We define an action of Frobenius $\sigma_v$ on $ \Sh_{\mu_1|\mu_2}$ by sending such a point $(g_p, y_0, g^p)$ to
\[
\big(p^{\NN_v(\mu_1^*)}g_p, \sigma_v(x), g^p \big), \qquad \textrm{for\quad}\NN_v(\mu_1^*): = \sum_{i = 1}^{[k_v:\FF_p]} \sigma^i(\mu_1^*).
\]
This is compatible via $\overleftarrow{h}$ with the Frobenius $\sigma_v$ action on $\Sh_{\mu_1}= G_1(\QQ) \backslash G_1(\AAA_f)/K$ determined by the Shimura reciprocity law.
On the other hand, the image of $z: = (p^{\NN_v(\mu_1^*)}g_p, \phi_v(x), g^p)$ in $\Sh_{\mu_2,K}$ under $\overrightarrow{h}$ is represented by the abelian variety $ A'_z$, equipped with the $p$-power isogeny $\beta_{z}: A'_z \to A_x$ corresponding to the modification
\begin{equation}
\label{E:Shimura reciprocity on A}
\beta_{z}:
(1\times \sigma_v)^*(\calE'_y)  \xrightarrow{\sigma_v(\beta_y)}
(1\times \sigma_v)^*(x^*\calE) \xrightarrow{p^{\NN_v(\mu_1^*)}} x^*\calE
\end{equation}
of local $\underline G$-shtukas. In particular, we have a natural isomorphism $A'_{z} \cong\sigma_v^*(A'_y)  $. The last map in \eqref{E:Shimura reciprocity on A} is given by multiplication by $p^{\NN_v(\mu_1^*)}$ because the natural map ${}^\sigma \calE\to \calE$ is the multiplication by $p^{\mu_1^*}$ (under the chosen trivialization).
The tame level structure $\eta'_{z}$ on $A'_{z}$ is given by the image of $\eta_x g^p$ under the isomorphism induced by $\beta_{z}^{-1}$, which is the same as the image under the following sequence of isomorphisms
\[
\xymatrix{
\eta_x g^p \ar@{}[r]|-\in \ar@{|->}[d] & \on{Isom}\big((V\otimes \AAA_f^p \oplus \AAA_f^p,s \otimes 1), (\calV_\et(A)^p \oplus \AAA_f^p, s_{\et,x})\big) / K^p
\ar[d]_{\sigma_v}
\\ \sigma_v(\eta_x g^p) \ar@{}[r]|-\in \ar@{|->}[d] &
\on{Isom}\big((V\otimes \AAA_f^p \oplus \AAA_f^p,s \otimes 1), (\calV_\et(\sigma_v^*(A_x))^p \oplus \AAA_f^p, \sigma_v(s_{\et,x}))\big)  / K^p \ar[d]^{\sigma_v(\beta_y^{-1})}\!\!\!\!\!\!\!\!\!\!\!\!
\\ \sigma_v(\beta_y^{-1})(\sigma_v(\eta_x g^p)) \ar@{}[r]|-\in &
\on{Isom}\big((V\otimes \AAA_f^p \oplus \AAA_f^p,s \otimes 1), (\calV_\et(\sigma_v^*(A'_y))^p \oplus \AAA_f^p, \sigma_v(\beta_y^{-1}(s_{\et,x})))\big)  / K^p. \!\!\!\!\!\!\!\!\!\!\!\!\!\!
}\quad
\]
This exactly says that
$$\overrightarrow{h}(p^{\NN_v(\mu_1^*)}g_p, \phi_v(x), g^p) = \sigma_v\big(\overrightarrow{h}(g_p, \phi_v(x), g^p)\big)$$
for the natural action $\sigma_v$ on $\Sh_{\mu_2,K}$. This means that the top row of \eqref{E:exotic Hecke discrete to Hodge} can be defined over $k_v$, using the given Frobenius structure.
\end{proof}

Finally, we assume that both $(G_1,X_1)$ and $(G_2,X_2)$ are Shimura data of Hodge type. We discuss the case $(G_1,X_1)=(G_2,X_2)=(G,X)$.
\begin{prop}
\label{P:Shmu mu}
There is an ind-scheme $\Sh_{\mu\mid\mu}$ and morphisms $\overleftarrow{h},\overrightarrow{h}:\Sh_{\mu\mid\mu}\to \Sh_\mu$ that fit into the diagram \eqref{E:global to local cartesian1} with both squares Cartesian.
\end{prop}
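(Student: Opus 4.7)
The plan is to construct $\Sh_{\mu\mid\mu}$ as the perfection of the mod-$p$ fiber of a natural integral model $\mathscr{S}_{K\mid K}$ of the characteristic-zero Hecke correspondence \eqref{E: pHecke corr}, by a recipe parallel to Kisin's construction of $\mathscr{S}_K$ itself. Fix the Hodge embedding $G \hookrightarrow G' = \GSp(V,\psi)$ with $K_p$ hyperspecial and a compatible prime-to-$p$ Siegel level $K'^p \supseteq K^p$. Let $\mathscr{H}_{K'}$ denote the Siegel integral Hecke correspondence over $\mO_{E,(v)}$, the ind-scheme parametrizing triples $(A_1, A_2, f)$ in which each $(A_i, \lambda_i, \eta'_i)$ is a principally polarized abelian scheme up to prime-to-$p$ isogeny with $K'^p$-level structure, and $f:A_1 \to A_2$ is a $p$-power quasi-isogeny preserving the polarizations (up to a locally constant $\ZZ_{(p)}^\times$-scalar) and the level structures. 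Decomposing over $K'_p$-double cosets in $G'(\QQ_p)$, it is a disjoint union of perfectly proper covers of $\mathscr{A}_{g,K'}$ via either projection. Let $\mathscr{S}_{K\mid K}^-$ be the closure inside $\mathscr{H}_{K'}$ of the characteristic-zero Hecke correspondence of $\mathbf{Sh}_K(G,X)$, and let $\mathscr{S}_{K\mid K}$ denote its normalization. Set $\Sh_{\mu\mid\mu}$ equal to the perfection of its mod-$p$ fiber.

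The two projections $\overleftarrow h,\overrightarrow h:\Sh_{\mu\mid\mu}\to \Sh_\mu$ are inherited from $\mathscr{H}_{K'} \rightrightarrows \mathscr{A}_{g,K'}$, while the vertical map $\Sh_{\mu\mid\mu}\to \Sht^\loc_{\mu\mid\mu}$ comes from the observation that over $\Sh_{\mu\mid\mu}$ the universal quasi-isogeny $f:A_1\to A_2$ induces a modification of the associated local $\underline G$-shtukas $\mE_2\dashrightarrow \mE_1$, automatically compatible with the crystalline tensors $s_{0,i}$ by the uniqueness of these tensors (\cite[Cor.~2.3.9]{Ki1}) and of relative position bounded by $\mu$ at each geometric point. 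This makes the diagram \eqref{E:global to local cartesian1} commute.

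The main obstacle is proving that both squares in \eqref{E:global to local cartesian1} are Cartesian; by symmetry it suffices to treat the left square. Given a perfect $k_v$-algebra $R$ together with a map $\Spec R \to \Sh_\mu \times_{\Sht^\loc_\mu}\Sht^\loc_{\mu\mid\mu}$, i.e.\ two maps $x_1,x_2:\Spec R \to \Sh_\mu$ together with a modification $\gamma:x_2^*\mE\dashrightarrow x_1^*\mE$ of relative position $\preceq \mu$ compatible with the crystalline tensors, one needs to produce a unique lift $\Spec R \to \Sh_{\mu\mid\mu}$. Following the strategy of the proof of Lemma~\ref{L: map of ADLV to Shimura}, I would first reduce to the case that $R$ is the perfection of a reduced $k_v$-algebra of finite type, and then invoke Lemma~\ref{L: from point to family} to reduce to producing compatible lifts at each $\overline{k_v}$-point. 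At such a point the modification $\gamma$ specializes to a $p$-power quasi-isogeny of $p$-divisible groups with $G$-structure, which by \cite[Prop.~1.4.4]{Ki2} (the same input used to construct the isogeny class map in Lemma~\ref{L: map of ADLV to Shimura}) is realized by a unique $p$-power quasi-isogeny $A_{x_1}\to A_{x_2}$ of abelian varieties that matches the tensors $s_0$ and the polarizations. Together with the tame level structure carried from $A_{x_1}$ via this quasi-isogeny (which agrees with the one on $A_{x_2}$ because $\gamma$ is the identity away from $p$, so by Chebotarev as in the proof of Lemma~\ref{L: from point to family}), this gives a $\overline{k_v}$-point of $\mathscr{S}_{K\mid K}^-$, and \cite[Cor.~1.3.11]{Ki2} supplies the required uniqueness. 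The most delicate technical point will be checking that the pointwise triples produced this way indeed lie in $\mathscr{S}_{K\mid K}^-$ rather than just in $\mathscr{H}_{K'}$, which can be verified via a Serre–Tate density argument: characteristic-zero Hecke correspondence points are Zariski-dense in $\mathscr{S}_{K\mid K}^-$ because the Hecke correspondence is finite étale in the generic fiber and each mod-$p$ quasi-isogeny of fixed type deforms to characteristic zero.
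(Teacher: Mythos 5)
Your route differs from the paper's in a way that creates a real gap. The paper does \emph{not} construct an integral model of the Hecke correspondence at all: it defines $\overleftarrow{\Sh}_{\mu\mid\mu}:=\Sh_\mu\times_{\Sht^\loc_\mu,\overleftarrow{h}^\loc}\Sht^\loc_{\mu\mid\mu}$ and $\overrightarrow{\Sh}_{\mu\mid\mu}:=\Sht^\loc_{\mu\mid\mu}\times_{\overrightarrow{h}^\loc,\Sht^\loc_\mu}\Sh_\mu$, so that each square is Cartesian by definition, and then shows these two fiber products are canonically isomorphic: a point of $\overleftarrow{\Sh}_{\mu\mid\mu}$ is a point $x$ of $\Sh_\mu$ plus a modification $\mE'\dashrightarrow x^*\mE$, from which one builds an abelian scheme $A'$ with tensors, polarization and level structure, maps to $\mathscr A_{g,K'}$, lifts pointwise by \cite[Prop.~1.4.4]{Ki2} and then in families by Lemma~\ref{L: from point to family} to a map into $\Sh_\mu$ --- crucially, the target of Lemma~\ref{L: from point to family} is $\scrS_K$ itself, whose local structure (surjectivity on completed local rings over the Siegel space, via Kisin's deformation theory) is known. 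In your approach the analogous step must land in $\mathscr S_{K\mid K}$, the normalization of the closure of the generic-fiber Hecke correspondence, about whose special fiber and local structure nothing is known a priori; indeed the paper's remark right after the proof explicitly defers exactly this integral-model interpretation ``to another occasion.''

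Concretely, the gap is your final paragraph. Given a family over $\Spec R$ consisting of a Shimura point and a bounded modification of its local shtuka (note: the fiber product does \emph{not} come with a second Shimura point $x_2$; producing it is the content of the argument, via \cite[Prop.~1.4.4]{Ki2}), you must show the resulting datum defines an $R$-point of $\mathscr S_{K\mid K}\otimes k_v$. This requires (i) that the pointwise/familywise quasi-isogeny datum lies in the \emph{closure} $\mathscr S^-_{K\mid K}$ of the characteristic-zero correspondence, i.e.\ that every such mod-$p$ datum lifts to characteristic zero compatibly with the $G$-structure, and (ii) that it lifts further through the normalization. Step (i) is asserted via ``each mod-$p$ quasi-isogeny of fixed type deforms to characteristic zero,'' but this is precisely the nontrivial flatness/local-model statement for the deeper level structure at $p$ carried by the Hecke correspondence; it is not a consequence of Serre--Tate theory as stated (lifting a $p$-power quasi-isogeny together with both abelian varieties and the Hodge tensors is exactly the point at issue), and no analogue of Kisin's \cite[Prop.~2.3.5]{Ki1} is available for $\mathscr S^-_{K\mid K}$ to run the Lemma~\ref{L: from point to family}-type argument relative to it. Unless you supply this lifting/flatness input, your construction does not yield the Cartesian property; the paper's fiber-product definition avoids the problem entirely.
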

\begin{proof}
Let $\overleftarrow{\Sh}_{\mu\mid\mu}:=\Sh_\mu\times_{\Sh^\loc_\mu,\overleftarrow{h}^\loc}\Sh^\loc_{\mu\mid\mu}$. The natural projection $\overleftarrow{\Sh}_{\mu\mid\mu}\to \Sh_\mu$ is denoted by $\overleftarrow{h}$. Similarly we have $\overrightarrow{h}:\overrightarrow{\Sh}_{\mu\mid\mu}: = \Sh^\loc_{\mu\mid\mu} \times_{\overrightarrow{h}^\loc, \Sh_\mu^\loc} \Sh_\mu \to\Sh_\mu$.  We claim that there is another map 
$$\overrightarrow{h}: \overleftarrow{\Sh}_{\mu\mid\mu}\to \Sh_\mu$$ that induces a map $\overleftarrow{\Sh}_{\mu\mid\mu}\to\overrightarrow{\Sh}_{\mu\mid\mu}$. Indeed, $\overleftarrow{\Sh}_{\mu\mid\mu}$ classifies a point $x$ of $\Sh_\mu$ and a modification of local $\underline G$-Shtukas $\mE'\dashrightarrow x^*\mE$. As argued in the paragraphs before Lemma \ref{L: map of ADLV to Shimura}, this provides an abelian scheme $A'$ over $\overleftarrow{\Sh}_{\mu\mid\mu}$ with a $p$-power quasi-isogeny $\overleftarrow{h}^*A_x\to A'$ such that the crystalline tensors $s_{0,x}:\mT\otimes\mathbf{1}\to \bD(A_x)^\otimes$ induces $s'_0:\mT\otimes\mathbf{1}\to \bD(A')^\otimes$. In addition, $A'$ comes equipped with a prime-to-$p$ principal polarization as well as a $K^p$-level structure, inherited from $A_x$. 
Then $A'$ defines a morphism $\overrightarrow{h}':\overleftarrow{\Sh}_{\mu\mid\mu}\to \mathscr A_{g,K'^p}$. Note that if $y$ is a closed point $\Sh_\mu$, the fiber $\overleftarrow{h}^{-1}(y)\cong X_{\mu^*}(b_y)$. Therefore, pointwise, $\overrightarrow{h}'$ lifts to $\Sh_\mu$ matching the $s'_0$ and $s_0$ by \cite[Proposition 1.4.4]{Ki2}. Now by the same argument as Lemma \ref{L: map of ADLV to Shimura} (using Lemma \ref{L: from point to family}), we see $\overrightarrow{h}'$ lifts to $\overrightarrow{h}: \overleftarrow{\Sh}_{\mu\mid\mu}\to \Sh_\mu$ which induces $\overleftarrow{\Sh}_{\mu\mid\mu}\to \overrightarrow{\Sh}_{\mu\mid\mu}$.

By the same argument, we obtain $\overleftarrow{h}:\overrightarrow{\Sh}_{\mu\mid\mu}\to\Sh_\mu$ that induces $\overrightarrow{\Sh}_{\mu\mid\mu}\to \overleftarrow{\Sh}_{\mu\mid\mu}$. Clearly, these two maps are inverse to each other. The proposition follows.
\end{proof}

\begin{rmk}
For our purpose, the existence of such a self-correspondence between the mod $p$ fiber of the Shimura variety is enough. But it is not difficult to show that $\Sh_{\mu\mid\mu}$ is the perfection of the mod $p$ fiber of a natural integral model of the $p$-power Hecke correspondence \eqref{E:global to local cartesian1}. This will be discussed in details in another occasion.
\end{rmk}

\subsubsection{Exotic Hecke correspondence for PEL type Shimura varieties}
\label{SS:exotic Hecke PEL}
There is another case where \eqref{E:global to local cartesian1} can be constructed relatively easily.

Let $(B,*, \calO_B, V_i, (-,-)_i, \Lambda_i )$ (for $i =1,2$) be two integral PEL type Shimura data as in 
\cite{Kopoints} that are unramified at $p$. Assume that $V_1(\AAA_f) \simeq V_2(\AAA_f)$ as skew-Hermitian $B$-modules.
Let $G_i$ denote the group of automorphisms of the skew-Hermitian $B$-modules $V_i$ (with similitudes), which is assumed to be connected (in particular the type (D) case is excluded).
Assume moreover that the group $\ker^1(\QQ, G_i)$ is trivial so that the Shimura variety $\scrS_K(G_i, X_i)$ is exactly the moduli space of tuples $(A, \lambda, i,  \eta)$ satisfying the Kottwitz condition as in \cite[\S 5]{Kopoints}.

\begin{prop}
\label{P:exotic Hecke PEL}
The correspondence \eqref{E:global to local cartesian1} exists between $\Sh_{\mu_1, K}$ and $\Sh_{\mu_2, K}$.
\end{prop}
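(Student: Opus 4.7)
The plan is to adapt the strategy of Proposition~\ref{P:Shmu mu} to the PEL setting. Define
\[
\overleftarrow{\Sh}_{\mu_1 \mid \mu_2} := \Sh_{\mu_1,K} \times_{\Sht^\loc_{\mu_1},\, \overleftarrow{h}^\loc_{\mu_1}} \Sht^\loc_{\mu_1 \mid \mu_2},
\]
equipped with the natural projection $\overleftarrow{h} \colon \overleftarrow{\Sh}_{\mu_1 \mid \mu_2} \to \Sh_{\mu_1,K}$. By construction this is Cartesian over $\Sht^\loc_{\mu_1}$, so to obtain \eqref{E:global to local cartesian1} it suffices to produce a natural map $\overrightarrow{h} \colon \overleftarrow{\Sh}_{\mu_1 \mid \mu_2} \to \Sh_{\mu_2,K}$ lying over $\overrightarrow{h}^\loc_{\mu_2} \colon \Sht^\loc_{\mu_1\mid\mu_2} \to \Sht^\loc_{\mu_2}$. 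An $S$-point of $\overleftarrow{\Sh}_{\mu_1\mid\mu_2}$ consists of a point $x \in \Sh_{\mu_1,K}(S)$, together with a modification $\calE' \dashrightarrow x^*\calE$ of local $\underline G$-shtukas whose composition with the Frobenius map produces a second local shtuka of relative position $\preceq \mu_2^*$.

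First I would build from this data a new PEL abelian scheme over $S$. The modification $\calE' \dashrightarrow x^*\calE$ corresponds to a $p$-power quasi-isogeny of $p$-divisible groups $\beta\colon \calG' \to A_x[p^\infty]$, which by the perfectness of $S$ and the equivalence of categories between $p$-divisible groups and their Dieudonn\'e crystals (combined with Serre--Tate) uniquely produces a $p$-power quasi-isogeny $\beta\colon A' \to A_x$ of abelian schemes. Pulling back structures along $\beta^{-1}$, the $\calO_B$-action on $A_x$ transfers to $A'$ (using that $\calO_B$ is a maximal order stable under the map, and that endomorphisms extend uniquely through quasi-isogenies on abelian schemes), and similarly the polarization $\lambda_x$ transfers to a polarization $\lambda'$ on $A'$, which is principal outside $p$. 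Using the fixed identification $V_1(\AAA_f^p) \simeq V_2(\AAA_f^p)$ of skew-Hermitian $B$-modules, together with the induced isomorphism $\beta_*\colon \calV_\et(A')^p \simeq \calV_\et(A_x)^p$, the $K^p$-level structure on $A_x$ transports to a $K^p$-level structure $\eta'$ on $A'$.

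The critical step is to verify that the tuple $(A', \lambda', i', \eta')$ satisfies Kottwitz's determinant condition for the second PEL datum $V_2$, so that it defines a point of $\Sh_{\mu_2,K}(S)$. This condition is a statement about the $\calO_B$-module structure of $\Lie(A')$, which on the level of the associated local $\underline G$-shtuka $\calE'' \dashrightarrow {}^\sigma\calE''$ is precisely the condition that the Hodge filtration is of type $\mu_2$. By construction of $\Sht^\loc_{\mu_1\mid\mu_2}$ via \eqref{qi for diff type} and Lemma~\ref{decomp1}, the new shtuka has relative position bounded by $\mu_2^*$ at every geometric point, and this translates precisely into the Kottwitz determinant condition once one matches the $\calO_B$-module types of $V_{2,\CC}$ with the conjugacy class $\mu_2$ (this is the standard PEL dictionary). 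This yields the map $\overrightarrow{h}$, and the tautological compatibility with $\overrightarrow{h}^\loc_{\mu_2}$ makes the right square of \eqref{E:global to local cartesian1} Cartesian. A symmetric construction starting from $\Sh_{\mu_2,K}$ (swapping the roles of $\mu_1$ and $\mu_2$ in $\Sht^\loc_{\mu_1\mid\mu_2} \cong \Sht^\loc_{\mu_2\mid\mu_1}$) produces a map in the opposite direction, and the two constructions are inverse to each other on $S$-points via the moduli interpretation, so both squares are Cartesian.

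The main obstacle will be verifying the Kottwitz determinant condition for $A'$. In the equi-PEL case of Proposition~\ref{P:Shmu mu} this issue does not arise because the source and target Shimura data coincide, whereas here the two determinant conditions for $V_1$ and $V_2$ genuinely differ; the assumption \eqref{E: exotic p} plus the constraint on relative position of the modification must be used to rigidly identify how $\mu_2$ appears in the Hodge type of $\Lie(A')$. A secondary technical point is to argue that the transferred structures $(\lambda', i', \eta')$ glue into a morphism of stacks in families (not merely at geometric points); this will follow from Lemma~\ref{L: from point to family} as in the Hodge type case, once the pointwise construction is in place.
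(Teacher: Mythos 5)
Your proposal is correct and follows essentially the same route as the paper: form the fiber product $\Sh_{\mu_1,K}\times_{\Sht^\loc_{\mu_1}}\Sht^\loc_{\mu_1\mid\mu_2}$, use Dieudonn\'e theory over the perfect base to turn the modification of local shtukas into a $p$-power quasi-isogeny $A'\to A_x$ carrying the polarization, $\mO_B$-action and tame level structure, and observe that the bound $\mu_2$ on the relative position of the new shtuka is exactly the Kottwitz determinant condition for the second PEL datum, with the symmetric construction giving the inverse. The paper's proof is precisely this (stated more briefly, with the details left to the reader); one minor simplification available to you is that in the PEL case, thanks to the moduli description and the vanishing of $\ker^1$, the Kottwitz condition is checked directly in families, so the detour through Lemma~\ref{L: from point to family} needed in the Hodge-type case is not required.
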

\begin{proof}
The proof is similarly that of Proposition~\ref{P:Shmu mu}, except using the PEL type moduli interpretation of Shimura varieties $\Sh_{\mu_1, K}$ and $\Sh_{\mu_2,K}$. Namely, a point of $\Sh_{\mu_1,K}\times_{\Sht^\loc_{\mu_1}}\Sht^\loc_{\mu_1\mid \mu_2}$ corresponds to $(A,\lambda,i, \eta)$ and a modification  $\mE'\dashrightarrow x^*\mE$ of the local Shtuka, where $x^*\mE$ arises from $(A[p^\infty],\la,i)$ (via the Dieudonn\'e theory). As in Proposition~\ref{P:Shmu mu}, this modification allows one to produce another abelian scheme $A'$ equipped with a polarization $\la'$, an endomorphism by $\mO_B$ and a level structure $\eta'$ inherited from $A$. Since the singularity of $\mE'$ is of type $\mu_2$,tThe Kottwitz condition on $A'$ is the one imposed for the Shimura variety $\Sh_{\mu_2}$.
This defines a map $\Sh_{\mu_1,K}\times_{\Sht^\loc_{\mu_1}}\Sht^\loc_{\mu_1\mid \mu_2}\to \Sh_{\mu_2}$. One can define the map in the other direction by the same argument. Then as in the proof of Proposition~\ref{P:Shmu mu}, we obtain  \eqref{E:global to local cartesian1}.
We leave the details  as an exercise for readers. 
\end{proof}

\begin{remark}
In the proposition above, it is possible that $\Sh_{\mu_1|\mu_2}$ is empty (when $\Sh_{\mu_1|\mu_2}^\loc$ is). For example, if $B=E$ is a CM field with $*$ being the complex conjugation, $V_i$ is a Hermitian space with signature $(p_{i, \tau}, q_{i, \tau})$ at $\tau : F=E^{*=1} \to \RR$ (where $p_{i,\tau}+q_{i,\tau}=n$). Then at an inert prime $p$ of $E$, $\Sh_{\mu_1|\mu_2}^\loc$ is non-empty if and only if $(-1)^{\sum_{\tau}p_{1,\tau}q_{1, \tau}} = (-1)^{\sum_{\tau}p_{2,\tau}q_{2, \tau}}$, which is equivalent to \eqref{E: exotic p}. (Also compare with Example \ref{JL:ex} (ii)).
\end{remark}

\subsubsection{The cohomological correspondence}
\label{SS: coh corr Sh}

Let $(G_i,X_i), i=1,2,3$ be three Shimura data, equipped with isomorphisms $G_{1,\bA_f}\simeq G_{2,\bA_f}\simeq G_{3,\bA_f}$, and a level structure $K$. Let $p$ be an unramified prime, such that \eqref{E: exotic p} holds for each pair of them. 
We assume Hypothesis \ref{H: Hecke diagram} holds for each pair of them. Let $\mH^p$ be the prime-to-$p$ Hecke algebra.
We also fix a square root $\sqrt{p}$ in $\Ql$, i.e. a half Tate twist $\Ql(1/2)$. To simplify notations, for a smooth variety $X$ of pure dimension $d$ over $\bar k_v$, we write $\langle d\rangle=[d](d/2)$ as usual, and
$$\on{H}_c(X)=\on{H}_c^*(X, \Ql\langle d \rangle).$$

Recall that for every representation $V$ of $\hat G$, there is a vector bundle $\widetilde V$ on the stack $\hat G\sigma/\hat G$ of unramified Langlands parameters. Let $d_i=\langle2\rho,\mu_i\rangle=\dim \Sh_K(G_i,X_i)$.
Write $V_{i}=V_{\mu_i}$. Let $S(\widetilde{V_{i}})\in \on{P}^{\on{Corr}}(\Sht^{\loc})$, which can be represented as a perverse sheaf $S(\widetilde{V_i})^{\loc(m,n)}$ on $\Sht_{\mu_i}^{\loc(m,n)}$ for some pair non-negative integers $(m,n)$ with $m$ being positive and $m-n$ being $\mu_i$-large. Note that since $\mu_i$ is minuscule, the pullback $\loc_p(m,n)^\star S(\widetilde{V_{i}})$ to $\Sh_{\mu_i}$ is canonically isomorphic to $\Ql\langle d_i\rangle$.

We first construct a map
\begin{equation}
\label{E: exotic Hecke on coh}
\Hom_{\on{P}^{\on{Corr}}(\Sht^\loc)}\big(S(\widetilde{V_{1}}),S(\widetilde{V_{2}}) \big)\to \Hom_{\mH^p}\big(\on{H}_c(\Sh_{\mu_1, \bar k_v}), \on{H}_c(\Sh_{\mu_2, \bar k_v})\big)
\end{equation}
as follows. Choose a dominant coweight $\nu$ and a quadruple $(m_1, n_1, m_2, n_2)$ that is $(\mu_1+\nu,\nu)$-acceptable and $(\mu_2+\nu,\nu)$-acceptable. Then we have
\begin{equation}
\label{E:global to local cartesian2}
\xymatrix{
\Sh_{\mu_1} \ar[d] & \ar[l] \Sh^\nu_{\mu_1|\mu_2} \ar[r] \ar[d]_{\loc_p^\nu} & \Sh_{\mu_2} \ar[d] \\
\Sht_{\mu_1}^\loc\ar[d] & \ar[l]\ar[d] \Sht_{\mu_1|\mu_2}^{\nu,\loc} \ar[r]\ar[d] \ar@{}[dr]|{\mathrm X}  & \Sht_{\mu_2}^\loc\ar[d]\\
\Sht_{\mu_1}^{\loc(m_1,n_1)}  &\ar[l] \Sht_{\mu_1|\mu_2}^{\nu,\loc(m_1,n_1)} \ar[r]& \Sht_{\mu_2}^{\loc(m_2,n_2)}
}
\end{equation}
Note that all squares except the one marked with $``\mathrm{X}"$ are Cartesian: that the upper two squares being Cartesian follows from Hypothesis \ref{H: Hecke diagram}(2) and that the lower left square is Cartesian follows from Lemma \ref{corr trun2}.
The map $\loc_p(m_i,n_i): \Sh_{\mu_i}\to \Sht^{\loc(m_i,n_i)}_{\mu_i}$ are perfectly smooth by Proposition \ref{smoothness}. Therefore, the composition of the vertical maps in the middle column, denoted by $\loc^\nu_p(m_1,n_1)$, is also perfectly smooth.
Also recall that the map $\Sh_{\mu_1\mid \mu_2}^\nu\to \Sh_{\mu_1}$ is perfectly proper.

By the formalism of pullback cohomological correspondences along perfectly smooth morphisms (\S \ref{ASS:smooth pullback correspondence}), and proper pushforward of cohomological correspondences (\S \ref{ASS:pushforward correspondence}), we thus obtain a map
\begin{align}
\label{E: exotic Hecke on coh constr}
&\ \on{Corr}_{\Sht_{\mu_1\mid\mu_2}^{\nu,\loc(m_1,n_1)}}\big(S(\widetilde{V_1})^{\loc(m_1,n_1)},  S(\widetilde{V_2})^{\loc(m_2,n_2)}\big)\\
\nonumber\xrightarrow{\loc^\nu_p(m_1,n_1)^\star}&\ \on{Corr}_{\Sh_{\mu_1\mid\mu_2}^{\nu}}\big((\Sh_{\mu_1},\Ql\langle d_1\rangle),(\Sh_{\mu_2}, \Ql\langle d_2\rangle)\big)  \\
\nonumber\stackrel{\on{H}_c}{\longto} & \ \Hom_{\mH^p}\big(\on{H}_c(\Sh_{\mu_1, \bar k_v}), \on{H}_c(\Sh_{\mu_2, \bar k_v})\big).
\end{align}
Note that  it follows from the diagram \eqref{E: change of m,n in Hk corr} that $\loc^\nu_p(m_1,n_1)^\star$ is compatible with changing $(m_1,n_1,m_2,n_2)$ in a natural way. If $\nu\preceq \nu'$ and if $(m_1, n_1, m_2, n_2)$ is $(\mu_1+\nu', \nu')$-acceptable and is $(\mu_2+\nu', \nu')$-acceptable, then proper smooth base change implies the commutativity of the following diagram
\begin{small}
\[\xymatrix{
\on{Corr}_{\Sht_{\mu_1\mid\mu_2}^{\nu,\loc(m_1,n_1)}}\big(S(\widetilde{V_1})^{\loc(m_1,n_1)},  S(\widetilde{V_2})^{\loc(m_2,n_2)}\big)\ar[r] \ar[d] & \on{Corr}_{\Sht_{\mu_1\mid\mu_2}^{\nu',\loc(m_1,n_1)}}\big(S(\widetilde{V_1})^{\loc(m_1,n_1)},  S(\widetilde{V_2})^{\loc(m_2,n_2)}\big) \ar[d]\\
\on{Corr}_{\Sh_{\mu_1\mid\mu_2}^{\nu}}\big((\Sh_{\mu_1},\Ql\langle d_1\rangle),(\Sh_{\mu_2}, ,\Ql\langle d_2\rangle)\big)\ar[r] & \on{Corr}_{\Sh_{\mu_1\mid\mu_2}^{\nu'}}\big((\Sh_{\mu_1},\Ql\langle d_1\rangle),(\Sh_{\mu_2},\Ql\langle d_2\rangle)\big).
}\]
\end{small}Therefore, when composed with $\on{H}_c$,  by  Lemma \ref{L: comp of push coh corr}, the composition map \eqref{E: exotic Hecke on coh constr} factors through
$\Hom_{\on{P}^{\on{Corr}}(\Sht^\loc)}(S(\widetilde{V_{1}}),S(\widetilde{V_{2}}))$ and gives the desired \eqref{E: exotic Hecke on coh}.

Now assume we have three Shimura data of Hodge type $(G_i,X_i)$, and we assume that Hypothesis \ref{H: Hecke diagram} holds for each pair out of three.
\begin{lem}
The following diagram is commutative.
\begin{footnotesize}
\[\xymatrix{
\Hom_{\on{P}^{\on{Corr}}(\Sht^\loc)}
\big(S(\widetilde{V_{1}}),S(\widetilde{V_{2}}) \big)\otimes \Hom_{\on{P}^{\on{Corr}}(\Sht^\loc)}\big(S(\widetilde{V_{2}}),S(\widetilde{V_{3}}) \big) \ar[r]  \ar[d]& \Hom_{\on{P}^{\on{Corr}}(\Sht^\loc)}\big(S(\widetilde{V_{1}}),S(\widetilde{V_{3}})\big)\ar[d]
\\
\Hom_{\mH^p}\big(\on{H}_c^{*}(\Sh_{\mu_1, \bar k_v}), \on{H}_c^{*}(\Sh_{\mu_2, \bar k_v})\big)\otimes \Hom_{\mH^p}\big(\on{H}_c^{*}(\Sh_{\mu_2, \bar k_v}), \on{H}_c^{*}(\Sh_{\mu_3, \bar k_v})\big)\ar[r] & \Hom_{\mH^p}\big(\on{H}_c^{*}(\Sh_{\mu_1, \bar k_v}), \on{H}_c^{*}(\Sh_{\mu_3, \bar k_v})\big).
}\]
\end{footnotesize}
\end{lem}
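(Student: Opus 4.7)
The plan is to unwind both sides of the square in terms of explicit cohomological correspondences on restricted local Hecke stacks, and then apply the base-change machinery of Appendix \ref{Sec:cohomological correspondence} to reduce commutativity to a single Cartesian diagram relating global and local composition morphisms.

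Starting with representatives $c_i^\loc$ of the morphisms $c_i \in \Hom_{\on{P}^{\on{Corr}}(\Sht^\loc)}(S(\widetilde{V_i}), S(\widetilde{V_{i+1}}))$ supported on $\Sht_{\mu_i|\mu_{i+1}}^{\nu_i,\loc(m_i,n_i)}$ for a sextuple $(m_i, n_i)_{i=1,2,3}$ satisfying the acceptability conditions of Proposition \ref{P:pushforward is local}, the image of $c_2 \circ c_1$ under \eqref{E: exotic Hecke on coh} unfolds to
\[
\on{H}_c\big(\loc_p^{\nu_1+\nu_2}(m_1,n_1)^\star \, \on{Comp}^{\loc(m_1,n_1)}_!(c_2^\loc \circ c_1^\loc)\big),
\]
whereas the composition of the images of $c_1$ and $c_2$ unfolds to
\[
\on{H}_c\big(\on{Comp}_{\mathrm{Sh},!}\big[(\loc_p^{\nu_2}(m_2,n_2)^\star c_2^\loc) \circ (\loc_p^{\nu_1}(m_1,n_1)^\star c_1^\loc)\big]\big),
\]
where $\on{Comp}_{\mathrm{Sh}}: \Sh_{\mu_1|\mu_2}^{\nu_1} \times_{\Sh_{\mu_2}} \Sh_{\mu_2|\mu_3}^{\nu_2} \to \Sh_{\mu_1|\mu_3}^{\nu_1+\nu_2}$ denotes the natural composition of Hecke correspondences on Shimura varieties, induced by $\on{Comp}^{\loc(m_1,n_1)}$ on the middle local factor (whose existence uses Hypothesis \ref{H: Hecke diagram}(2) for each of the three pairs).

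The key geometric input is that the square
\[
\xymatrix@C=10pt{
\Sh_{\mu_1|\mu_2}^{\nu_1} \times_{\Sh_{\mu_2}} \Sh_{\mu_2|\mu_3}^{\nu_2} \ar[rr]^-{\on{Comp}_{\mathrm{Sh}}} \ar[d]_\alpha & & \Sh_{\mu_1|\mu_3}^{\nu_1+\nu_2} \ar[d]^\beta \\
\Sht_{\mu_1|\mu_2}^{\nu_1,\loc(m_1,n_1)} \times_{\Sht_{\mu_2}^{\loc(m_2,n_2)}} \Sht_{\mu_2|\mu_3}^{\nu_2,\loc(m_2,n_2)} \ar[rr]^-{\on{Comp}^{\loc(m_1,n_1)}} & & \Sht_{\mu_1|\mu_3}^{\nu_1+\nu_2,\loc(m_1,n_1)}
}
\]
is Cartesian, with $\alpha$ and $\beta$ perfectly smooth. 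This follows by expanding each $\Sh_{\mu_i|\mu_{i+1}}^{\nu_i}$ and $\Sh_{\mu_1|\mu_3}^{\nu_1+\nu_2}$ as iterated fiber products of the form $\Sh_{\mu_i} \times_{\Sht_{\mu_i}^{\loc(m_i,n_i)}} \Sht_{\mu_i|\mu_{i+1}}^{\nu_i,\loc(m_i,n_i)} \times_{\Sht_{\mu_{i+1}}^{\loc(m_{i+1},n_{i+1})}} \Sh_{\mu_{i+1}}$ using Hypothesis \ref{H: Hecke diagram}(2) together with Lemma \ref{corr trun2}, and comparing the towers. Granting this Cartesian diagram, the compatibility of proper pushforward with smooth pullback (Lemma \ref{AL:pushforward pullback compatibility}) yields $\beta^\star \on{Comp}^{\loc(m_1,n_1)}_! = \on{Comp}_{\mathrm{Sh},!} \, \alpha^\star$; and the compatibility of smooth pullback with composition of cohomological correspondences (Lemma \ref{AL:pullback compatible with composition}) identifies $\alpha^\star(c_2^\loc \circ c_1^\loc)$ with $(\loc_p^{\nu_2}(m_2,n_2)^\star c_2^\loc) \circ (\loc_p^{\nu_1}(m_1,n_1)^\star c_1^\loc)$. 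Combining these two identities and applying $\on{H}_c$ yields the equality of the two expressions above.

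The main obstacle will be the bookkeeping needed to assemble the Cartesian diagram from the many nested fiber products hidden in Definition \ref{D:mS lambda mu truncated}, Lemma \ref{corr trun2}, and Hypothesis \ref{H: Hecke diagram}(2), as well as tracking the various acceptability conditions on the level indices. Once this geometric Cartesian diagram is secured, the proof is a formal consequence of the base-change machinery of the appendix, with no new geometric input beyond what is already available.
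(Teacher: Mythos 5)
Your proposal is correct and follows essentially the same route as the paper: both rest on the existence of $\on{Comp}_{\mathrm{Sh}}$ compatible with the local composition, the Cartesianness of the square comparing it with $\on{Comp}^{\loc(m_1,n_1)}$, and the base-change lemmas of the appendix (Lemmas \ref{AL:pushforward pullback compatibility}, \ref{AL:pullback compatible with composition}, \ref{AL:pushforward compatible with composition}). The only (cosmetic) difference is that you assert the Cartesian square directly at the restricted level via the fiber-product expansions from Hypothesis \ref{H: Hecke diagram}(2) and Lemma \ref{corr trun2}, whereas the paper first records the unrestricted diagram \eqref{E:projectionComp Sh to Sht} and then combines its Cartesian trapezoid with that of Proposition \ref{P:pushforward is local}.
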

\begin{proof}
First, it is clear that from the hypothesis,
there exists $\on{Comp}: \Sh_{\mu_1\mid \mu_2}^{\nu}\times_{\Sh_{\mu_2}}\Sh_{\mu_2\mid\mu_3}^{\nu'}\to \Sh_{\mu_1\mid\mu_3}^{\nu+\nu'}$ making
the following diagram commutative.
\begin{equation}
\label{E:projectionComp Sh to Sht}
\xymatrix@C=20pt{
&&\Sh_{\mu_1|\mu_3}^{\nu+\nu'} 
\ar[ddd]\ar[dll]\ar[dr]
\\
\Sh_{\mu_1} \ar[d]^{\loc_p} &\ar[l]\ar[d] \Sh_{\mu_1\mid\mu_2}^{\nu} \times_{\Sh_{\mu_2}} \Sh_{\mu_2\mid\mu_3}^{\nu'} \ar[l] \ar[rr] \ar[ur]_-{\mathrm{Comp}}&& \Sh_{\mu_3} \ar[d]_{\loc_p}
\\
\Sht_{\mu_1}^{\loc}
&\ar[l]\Sht_{\mu_1|\mu_2}^{\nu,\loc} \times_{\Sht_{\mu_2}^{\loc}} \Sht_{\mu_2|\mu_3}^{\nu',\loc} \ar[rd]^-{\mathrm{Comp}^{\loc}} \ar[rr]
&&\Sht_{\mu_3}^{\loc}
\\
&& \Sht_{\mu_1|\mu_3}^{\nu+\nu', \loc}\ar[ull]\ar[ru].
}
\end{equation}
In addition, the middle trapezoid is a Cartesian.

Combining the above diagram with the diagram in Proposition \ref{P:pushforward is local}, we obtain a diagram similar to \eqref{E:projectionComp Sh to Sht}, with the bottom two rows replaced by moduli of restricted shtukas. Then lemma follows from the compatibility of pullback and pushforward of cohomological correspondence as in 
Lemma \ref{AL:pushforward pullback compatibility}, and the definition of compositions of morphisms in $\on{P}^{\on{Corr}}(\Sht^\loc)$ and Lemma \ref{AL:pushforward compatible with composition}.
\end{proof}

Now combining the above considerations and Theorem \ref{T:Spectral action}, we see that there is a canonical map
\begin{equation}
\label{E: spetral action1}
\on{Spc}:\Hom_{\Coh^{\hat G}_{fr}(\hat G\sigma_p)}(\widetilde{V_i},\widetilde{V_j})\to \Hom_{\mH^p}\big(\on{H}_c(\Sh_{\mu_i,\bar k_v}),\on{H}_c(\Sh_{\mu_j,\bar k_v})\big)
\end{equation}
compatible with compositions. In particular, the ring 
\[\bfJ:= \Gamma([\hat G\sigma_p/\hat G],\mO)=\Ql[\hat G]^{c_\sigma(\hat G)}\]
of regular functions on the stack acts on every $\on{H}_c(\Sh_{\mu_i,\bar k_v})$, and the above map is upgraded to a $\bfJ$-module homomorphism
\[\on{Spc}: \Hom_{\Coh^{\hat G}_{fr}(\hat G\sigma_p)}(\widetilde{V_i},\widetilde{V_j})\to \Hom_{\mH^p\otimes \bfJ}\big(\on{H}_c(\Sh_{\mu_i,\bar k_v}),\on{H}_c(\Sh_{\mu_j,\bar k_v})\big),\]
compatible with compositions.

This action is the Shimura variety counterpart of V. Lafforgue's $S$-operator. Recall that the classical Satake isomorphism is a canonical isomorphism
\[\bfJ\cong \bfJ_{\hat T}\cong \Ql[\xcoch(T)^\sigma]^{W_0}\cong C_c(G(\bZ_p)\backslash G(\bQ_p)/G(\bZ_p)).\]
On the other hand, 
the Hecke algebra $C_c(G(\bZ_p)\backslash G(\bQ_p)/G(\bZ_p))$ naturally acts on $\on{H}_c(\Sh_{\mu,\bar k_v})$: the existence of smooth toroidal compactification (\cite{Lan13} \cite{Keerthi15}) implies that after choosing a specialization map $\bar k_v\to \bar E_v$, there is an isomorphism
\[\on{H}_c(\Sh_{\mu,\bar k_v})\simeq \on{H}_c(\Sh_K(G,X)_{\bar E_v}),\] 
and since the action of the Hecke algebra $C_c(G(\bZ_p)\backslash G(\bQ_p)/G(\bZ_p))$ on the right hand side commutes with the action of the Galois group, such action translates to an action of the left hand side.

\begin{conjecture}
\label{Conj: S=T}
The action of $\bfJ$ on $\on{H}_c(\Sh_{\mu,\bar k_v})$ constructed above coincides with the usual Hecke algebra action, via the Satake isomorphism.
\end{conjecture}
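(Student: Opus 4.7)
The plan is to reduce Conjecture~\ref{Conj: S=T} to the local identification established in Proposition~\ref{P: local S=T}, following in spirit Varshavsky's simplification of V.~Lafforgue's $S=T$ theorem in equal characteristic. By Peter-Weyl applied to $\mO_{\hat G}$, the algebra $\bfJ = \Gamma([\hat G\sigma_p/\hat G],\mO)$ is linearly spanned by elements of the form $f_\nu = \Xi_{V_\nu}(\bba_\nu)$ (see \eqref{E:XiW2}), where $V_\nu$ runs over irreducible representations of $\hat G$ and $\bba_\nu = \id_{V_\mu}\otimes \bbe \in \Hom_{\hat G}(\sigma V_\nu\otimes V_\mu\otimes V_\nu^*, V_\mu)$ is built from the counit $\bbe: \sigma V_\nu\otimes V_\nu^*\to\mathbf{1}$. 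Using the commutative diagram in Remark~\ref{R: geom Sat, arith Frob v.s. geom Frob}, these $f_\nu$ correspond under the classical Satake isomorphism \eqref{spectral Sat} to $\Sat^{cl'}([V_\nu]) \in H_G\otimes\Ql$. Thus it suffices to show, for every such $\nu$, that the operator produced by \eqref{E: spetral action1} applied to the cohomological correspondence $\scrC_{V_\nu}(\bba_\nu)$ of Construction~\ref{Cons:CW} coincides, as an endomorphism of $\on{H}_c^*(\Sh_{\mu, \bar k_v})$, with the classical Hecke operator attached to $\Sat^{cl'}([V_\nu])$.

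I would then use Proposition~\ref{P:Shmu mu} to identify $\Sh_{\mu|\mu}$ with the perfection of the mod $p$ fiber of the integral model of the $p$-power Hecke correspondence on $\Sh_K(G,X)$; this decomposes according to relative position as a disjoint union indexed by $\nu'\in\xcoch^+$, and the classical Hecke operator $\Sat^{cl'}([V_\nu])$ is a definite $\Ql$-linear combination of fundamental classes of the $\nu'$-pieces (for $\nu'\preceq\nu$) as an element of $\on{Corr}_{\Sh_{\mu|\mu}^\nu}((\Sh_\mu, \Ql\langle d\rangle), (\Sh_\mu, \Ql\langle d\rangle))$. On the other hand, by the formalism of \S\ref{SS: coh corr Sh}, $S(f_\nu)$ comes from the smooth pull-back of $\scrC_{V_\nu}(\bba_\nu)$ along $\loc_p^\nu(m_1,n_1): \Sh_{\mu|\mu}^\nu\to \Sht_{\mu|\mu}^{\nu,\loc(m_1,n_1)}$ in \eqref{E:global to local cartesian2}. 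To compare these two cycle classes I would stratify $\Sh_\mu$ by Newton strata; on each stratum, the Rapoport-Zink uniformization (Corollary~\ref{C: RZ uniformization basic}) identifies the fiber of $\overleftarrow h$ with the affine Deligne-Lusztig variety $X_{\mu^*,\nu^*}(b)$, and one reduces to computing the local restriction of $\scrC_{V_\nu}(\bba_\nu)$ on each such fiber, then summing up compatibly across strata using Corollary~\ref{L:Ix is G'} in the unramified basic case.

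The main obstacle is that Proposition~\ref{P: local S=T} handles only the case $\mu = \tau \in \xcoch(Z_G)$, where the ADLV $X_{\mu^*,\nu^*}(\tau^*) = \Gr_{\nu^*}(k)$ is zero-dimensional and the cohomological correspondence reduces to a function on a discrete set, evaluated directly via Grothendieck's sheaf-function dictionary (Proposition~\ref{P: classical Sat=geometric Sat}). For general $\mu$ and general Newton point $b$, the variety $X_{\mu^*,\nu^*}(b)$ is higher dimensional, its top-dimensional irreducible components are governed by the subtle combinatorics of \S\ref{Sec:affine DLV}, and $\scrC_{V_\nu}(\bba_\nu)$ becomes a nontrivial Borel-Moore homology class that must be pinned down. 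Carrying out this identification---perhaps by an induction reducing the minimal $\nu$'s generating the representation ring to quasi-minuscule ones and applying a suitable generalization of Proposition~\ref{P: corr vs cl for min a}, or alternatively by a Langlands-Kottwitz trace-formula comparison combined with a multiplicity-one style argument using Hecke-equivariance at all primes away from $p$---is the essential difficulty and is presumably why the authors only state this as a conjecture beyond the case of Shimura sets.
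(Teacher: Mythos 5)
There is a genuine gap here, and in fact it could not be otherwise: the statement you are addressing is stated in the paper as Conjecture~\ref{Conj: S=T}, and the paper itself offers no proof of it. The only result the authors establish in its direction is Proposition~\ref{P: S=T for Sh set}, which treats the case where $\Sh_K(G,X)$ is a Shimura set; there the correspondence $\Sh_{\mu\mid\mu}$ is zero-dimensional, the cohomological correspondence is literally a function on a discrete double coset space, and the identification with the classical Hecke operator follows from the sheaf--function dictionary via Proposition~\ref{P: local S=T}. Your first paragraph reproduces exactly this reduction (Peter--Weyl span of $\bfJ$ by $\Xi_{V_\nu}$ of counit elements, matching with $\Sat^{cl'}([V_\nu])$), so up to that point you are retracing the route the paper uses for the special case, not supplying anything beyond it.

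Beyond that, two steps in your sketch are not merely unfinished but problematic. First, you assert that the classical Hecke operator attached to $\Sat^{cl'}([V_\nu])$ is ``a definite $\Ql$-linear combination of fundamental classes of the $\nu'$-pieces'' of the mod~$p$ correspondence $\Sh_{\mu\mid\mu}$ of Proposition~\ref{P:Shmu mu}. But the ``usual'' Hecke action on $\on{H}_c(\Sh_{\mu,\bar k_v})$ is defined in the paper by transport along the specialization isomorphism with the generic fiber, so this assertion is itself a nontrivial comparison between a characteristic-zero operator and a characteristic-$p$ cycle class; it requires, at minimum, knowing that the special fiber of the Hecke correspondence is equidimensional of the expected dimension with controlled components, which is not established (and is part of what makes the conjecture hard). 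Second, the central computation --- pinning down $\scrC_{V_\nu}(\bba_\nu)$ as a Borel--Moore class on the higher-dimensional varieties $X_{\mu^*,\nu^*}(b)$ for general $\mu$ and general Newton point $b$, and then assembling these fiberwise classes across the Newton stratification into an identity of correspondences on all of $\Sh_{\mu\mid\mu}^{\nu}$ --- is exactly the open content of Conjecture~\ref{Conj: S=T}; Proposition~\ref{P: corr vs cl for min a} and Proposition~\ref{P: local S=T} only handle very special classes and the central case $\mu=\tau$, Corollary~\ref{C: RZ uniformization basic} uniformizes only the basic stratum, and no mechanism is given for gluing Borel--Moore classes across strata. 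You candidly acknowledge this, so the proposal should be read as a (reasonable) strategy outline for an open problem rather than a proof; as a proof of the statement it fails at precisely the point you name.
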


This conjecture would be the analogue of V. Lafforgue's $S=T$ theorem in the Shimura variety setting. In this direction, we have
\begin{prop}
\label{P: S=T for Sh set}
If $\Sh_K(G,X)$ is a Shimura set, then Conjecture \ref{Conj: S=T} holds.
\end{prop}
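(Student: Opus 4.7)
The plan is to reduce the global statement entirely to the local $S=T$ theorem (Proposition~\ref{P: local S=T}) by exploiting the fact that, for a Shimura set, the Shimura cocharacter $\tau \in \xcoch(Z_G)$ is central and the geometry of $\Sh_{\tau, K}$ is controlled by the local data at $p$ together with a prime-to-$p$ Hecke action. The map $\loc_p \colon \Sh_{\tau, K} \to \Sht^\loc_\tau$ factors through $[\on{pt}/G(\mO)]$, and by the identification $\Sh_{\tau,K} = G(\bQ) \backslash G(\bA_f)/K$, one has $\on{H}_c(\Sh_{\tau, \overline k_v}) \cong C_c(G(\bQ)\backslash G(\bA_f)/K, \Ql)$, on which the usual spherical Hecke algebra at $p$ acts by convolution.

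First, I would invoke Proposition~\ref{exotic Hecke} with $(G_1, X_1) = (G_2, X_2) = (G, X)$ to identify $\Sh_{\tau\mid\tau} \cong G(\bQ)\backslash G(\bQ_p) \times^{G(\bZ_p)} X_{\tau^*}(p^{\tau^*}) \times G(\bA_f^p)/K^p$. Since $\tau$ is central, $X_{\tau^*}(p^{\tau^*}) \cong G(\bQ_p)/G(\bZ_p)$, so $\Sh_{\tau\mid\tau} \cong G(\bQ)\backslash G(\bA_f)/K^p$ realizes the standard $p$-power Hecke correspondence on $\Sh_{\tau,K}$. More generally, for any dominant $\nu$, the refinement $\Sh_{\tau\mid\tau}^{\nu}$ corresponds to the sub-correspondence parametrized by the discrete set $\Gr_{\nu^*}(k)$, and the convolution action of a function $f \in C_c(G(\bZ_p)\backslash G(\bQ_p)/G(\bZ_p))$ supported on $G(\bZ_p) \varpi^{\nu^*} G(\bZ_p)$ on $C_c(G(\bQ)\backslash G(\bA_f)/K, \Ql)$ is computed precisely as the pushforward along $\overrightarrow{h}$ of multiplication by $f$ viewed as a function on $\Sh_{\tau\mid\tau}^\nu$.

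Next, given $\bbc \in \bfJ$, by the Peter–Weyl argument in the proof of Lemma~\ref{C:tilde Sat well defined} one may write $\bbc = \Xi_{V_\nu}(\bbe)$ for a suitable $\nu$ and the evaluation map $\bbe \in \Hom_{\hat G}(\sigma V_\nu \otimes V_\tau \otimes V_{\nu^*}, V_\tau)$ induced by $e_\nu \colon V_\nu \otimes V_{\nu^*} \to \mathbf{1}$. Proposition~\ref{P: local S=T} identifies the local cohomological correspondence $\scrC_{V_\nu}(\bbe) \in \on{H}^{\on{BM}}_0(X_{\tau^*,\nu^*}(\tau^*)) = C(\Gr_{\nu^*}(k))$ with the function $\Sat^{cl}([V_{\nu^*}])$, which by Proposition~\ref{P: classical Sat=geometric Sat} is exactly the image of $\bbc$ under the classical Satake isomorphism. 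The definition of $\on{Spc}$ in \S\ref{SS: coh corr Sh} is given by pulling back this local cohomological correspondence via the perfectly smooth map $\loc_p^\nu \colon \Sh_{\tau\mid\tau}^\nu \to \Sht_{\tau\mid\tau}^{\nu, \loc}$, and then taking cohomology.

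Thus the main content is to check that this pullback-then-cohomology operation on the global side is precisely the convolution action by $\Sat^{cl}(\bbc)$. Granting the identification of $\Sh_{\tau\mid\tau}^\nu$ above, this is a direct unwinding: the pullback of a function on $\Gr_{\nu^*}(k) = X_{\tau^*,\nu^*}(\tau^*)$ along $\loc_p^\nu$ is the same function viewed along the $\Gr_{\nu^*}(k)$-fibers of $\overleftarrow{h} \colon \Sh_{\tau\mid\tau}^\nu \to \Sh_{\tau,K}$, and taking the proper pushforward along $\overrightarrow{h}$ realizes precisely the convolution formula \eqref{E: convolution for functions}. The main obstacle — and really the only subtle point — is to verify this compatibility between the Satake-theoretic description of the local correspondence and the classical convolution formula on $C_c(G(\bQ)\backslash G(\bA_f)/K, \Ql)$; once one carefully matches the Frobenius twists and checks that the isomorphism $X_{\tau^*,\nu^*}(\tau^*) \cong \Gr_{\nu^*}(k)$ in Corollary~\ref{C:expclit mStautau truncated} is compatible with the parametrization of double cosets in the Hecke correspondence, the proposition follows.
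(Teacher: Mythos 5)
Your proposal is correct and takes essentially the same route as the paper's proof: for a Shimura set the correspondence $\on{Spc}(f)$ is a function on $\Sh_{\tau\mid\tau}$ pulled back from $\Sht^{\loc}_{\tau\mid\tau}=G(\bZ_p)\backslash G(\bQ_p)/G(\bZ_p)$, Proposition~\ref{P: local S=T} (via Proposition~\ref{P: classical Sat=geometric Sat}) identifies this function with $\Sat^{cl}([V_{\nu^*}])=\Sat^{cl'}([V_{\nu}])$, and pushing forward along the standard $p$-power Hecke correspondence is exactly convolution. One harmless slip: $\Sh_{\tau\mid\tau}$ is $G(\bQ)\backslash\big(G(\bA_f^p)/K^p\times G(\bQ_p)\times^{G(\bZ_p)}G(\bQ_p)/G(\bZ_p)\big)$, not $G(\bQ)\backslash G(\bA_f)/K^p$ as you wrote, but you use it correctly as the $p$-power Hecke correspondence so the argument is unaffected.
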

\begin{proof}
Let $f\in \bfJ$, given by the restriction of a character $\chi_V$ of a representation of ${^L}G$ to $\hat G\sigma$. As discussed in Example \ref{Ex:examples of correspondences}(5), in this case the cohomological correspondence $\on{Spc}(f)$ is given by a function on 
$$\Sh_{\mu\mid\mu}= G(\bQ)\backslash( G(\bA^p_f)/K^p\times G(\bQ_p)\times^{G(\bZ_p)}G(\bQ_p)/G(\bZ_p)),$$ 
which in turn is the pullback of a function on $\Sht^\loc_{\mu\mid\mu}= G(\bZ_p)\backslash G(\bQ_p)/G(\bZ_p)$ by construction. But from Proposition \ref{P: local S=T}, this function is 
$$\Sat^{cl'}([V])=\Sat^{cl}([V^*]).$$ 
The proposition follows.
\end{proof}

\subsection{Cycle classes of the basic Newton stratum}
\label{}
In this subsection, we  combine all the previous results to compute the determinant of the intersection matrix of cycles coming from the irreducible components of the basic locus of the special fiber of the Shimura variety, and prove the main Theorem~\ref{T:main theorem}.

We keep the notation as in the previous sections.
In addition, we  assume that $V_{\mu^*}^{\Tate_p} \neq 0$. Recall from Proposition \ref{unramified basic}, the basic element $b\in B(G_{\bQ_p},\mu^*)$ is unramified. Let $\la_b\in\xcoch(T)_{\sigma_p}$ be the element associated to $b$ via Lemma \ref{L: unramified elements parameterization}.
Let 
$$\MV_{\mu^*}^{\Tate_p}:= \bigsqcup_{\la, \la_{\sigma_p}=\la_b} \MV_{\mu^*}(\la).$$
By Corollary \ref{inner mu}  and Proposition \ref{unramified basic}, there is the inner form $G\to G'$, which is trivial at all finite places and such that $G'_\bR$ is compact modulo center.

\subsubsection{Irreducible components of basic locus}
Let $\Sh_{\mu,b}\subset \Sh_\mu$ denote the basic Newton stratum.

Let $x$ be a $\bar k_v$-point in the basic locus of $\Sh_{\mu,K_p}$ so we obtain $\loc_p(x)\in\Sht^\loc_{\mu}(\bar k_v)= A(G_{\bQ_p},\mu^*)$ as from \S \ref{SS: NS for Shimura}. Let $b_x\in G(L)$ be a representative of $\loc_p(x)$.
Recall that we have a canonical decomposition
\[X_{\mu^*}(b_x)= \bigcup_{\bbb\in \MV_{\mu^*}^\Tate} X_{\mu^*}^\bbb(b_x),\]
and $J_{b_x}(\bQ_p)\cong G(\bQ_p)$ acts on each $X_{\mu^*}^\bbb(b_x)$ (Theorem \ref{C: irr comp up to J}). 
Therefore, by the Rapoport-Zink uniformization (Corollary \ref{C: RZ uniformization basic}), we can decompose the basic Newton stratum $\Sh_{\mu,b}$ as the union of
\[\Sh_{\mu,b}^\bbb:=  I_x(\bQ)\backslash X_{\mu^*}^\bbb(b_x)\times G(\bA_f^p)/K^p,\]
for $\bbb\in \MV_{\mu^*}^\Tate$.
Note that this decomposition is independent of the choice of $x$ and the representative $b_x$.

Applying Theorem \ref{C: irr comp up to J}, we obtain
\begin{prop}
Every irreducible component of $\Sh_{\mu,K,b}\otimes \bar k_v$ is of dimension $d/2$. There is a bijection between the set of irreducible components of $\Sh_{\mu,K,b}\otimes \bar k_v$ and $G'(\bQ)\backslash G'(\bA_f)/K\times \MV_{\mu^*}^{\Tate_p}$.
\end{prop}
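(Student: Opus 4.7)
The plan is to combine three ingredients already established earlier in the paper: the Rapoport--Zink uniformization of the basic locus (Corollary \ref{C: RZ uniformization basic}), the identification of the twisted centralizer with the inner form $G'$ (Corollary \ref{L:Ix is G'}), and the parameterization of the irreducible components of the unramified affine Deligne--Lusztig variety (Theorem \ref{C: irr comp up to J}).

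First I would fix a $\bar k_v$-point $x$ in the basic locus and a representative $b_x\in G(L)$ of $\loc_p(x)$. Since $V_{\mu^*}^{\Tate_p}\neq 0$, Proposition \ref{unramified basic} tells us that the basic class $[b]\in B(G_{\QQ_p},\mu^*)$ is unramified, so Theorem \ref{C: irr comp up to J} applies to $X_{\mu^*}(b_x)$. Moreover the invariant $\la_b$ attached to the basic unramified class equals a central cocharacter $\tau\in\xcoch(Z_G)$, so the dimension formula $\dim X_{\mu^*}(b_x)=\langle\rho,\mu^*-\la_b\rangle$ simplifies to $\langle\rho,\mu^*\rangle = \langle\rho,\mu\rangle = d/2$. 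Via the RZ uniformization isomorphism
\[
I_x(\QQ)\backslash X_{\mu^*}(b_x)\times G(\bA_f^p)/K^p \;\stackrel{\sim}{\longrightarrow}\; \Sh_{\mu,K,b}\otimes\bar k_v,
\]
and the fact that $I_x(\QQ)$ acts discretely and properly, the dimension of every irreducible component of $\Sh_{\mu,K,b}\otimes\bar k_v$ is therefore $d/2$.

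Next I would count irreducible components orbit by orbit. By Corollary \ref{L:Ix is G'}, the inner twist $\Psi_x\colon G\to I_x$ is trivial at every finite place and is the unique compact-modulo-center form at $\infty$, so we may identify $I_x=G'$ and $J_{b_x}(\QQ_p)=G'(\QQ_p)$ in a manner compatible with the $G'(\bA_f)\simeq G(\bA_f)$ used to transport the level structure $K$. Theorem \ref{C: irr comp up to J}, combined with the standing assumption that $Z_G$ is connected (so that all hyperspecial subgroups of $J_{b_x}(\QQ_p)$ form a single conjugacy class), yields a $J_{b_x}(\QQ_p)$-equivariant decomposition
\[
\mathrm{Irr}\bigl(X_{\mu^*}(b_x)\bigr) \;=\; \bigsqcup_{\bbb\in \MV_{\mu^*}^{\Tate_p}} \mathrm{Irr}\bigl(X_{\mu^*}^{\bbb}(b_x)\bigr),
\]
in which each summand is a single $G'(\QQ_p)$-orbit, canonically in bijection with $G'(\QQ_p)/G'(\ZZ_p)$ via the choice of a hyperspecial stabilizer.

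Putting things together, the set of irreducible components of $X_{\mu^*}(b_x)\times G(\bA_f^p)/K^p$ is naturally identified with $\MV_{\mu^*}^{\Tate_p}\times G'(\QQ_p)/G'(\ZZ_p)\times G'(\bA_f^p)/K^p = \MV_{\mu^*}^{\Tate_p}\times G'(\bA_f)/K$, and since the $G'(\QQ)$-action preserves the $\MV_{\mu^*}^{\Tate_p}$-labeling (as this labeling is intrinsic to the underlying ADLV and $G'(\QQ)$ acts through $G'(\bA_f)$), passing to the $G'(\QQ)$-quotient yields the desired bijection with $\MV_{\mu^*}^{\Tate_p}\times G'(\QQ)\backslash G'(\bA_f)/K$. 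The main obstacle is essentially bookkeeping, namely keeping track that the $\MV$-labeling used in the decomposition of $\Sh_{\mu,b}$ preceding the proposition is the same one coming out of Theorem \ref{C: irr comp up to J}, and that it is intrinsic (hence preserved by any $G'(\QQ_p)$- and therefore $G'(\QQ)$-action); this follows from part (2) of that theorem, which asserts that the closed subset $X^{\bbb}_\mu(\tau)$ is $J_\tau(F)$-invariant.
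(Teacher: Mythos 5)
Your proposal is correct and follows essentially the same route as the paper, which likewise deduces the proposition from the Rapoport--Zink uniformization of the basic locus (Corollary \ref{C: RZ uniformization basic}), the identification $I_x\cong G'$ (Corollary \ref{L:Ix is G'}), and the parameterization of irreducible components of the unramified ADLV (Theorem \ref{C: irr comp up to J}), together with the decomposition $X_{\mu^*}(b_x)=\cup_{\bbb}X^{\bbb}_{\mu^*}(b_x)$. Your extra bookkeeping (hyperspecial stabilizers giving $G'(\QQ_p)/K_p$, discreteness of the $G'(\QQ)$-action, and $J$-invariance of the $\bbb$-labeling) is exactly the implicit content of the paper's one-line derivation.
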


\subsubsection{Cycle classes coming from irreducible components of basic locus}

The cycle class map induces (see \S \ref{cl vs cc})
\begin{equation}\label{E: basic cycle class}
\on{cl}(\bbb):=\cl(\Sh_{\mu,b}^\bbb): \rmH_d^{\mathrm{BM}}(\Sh^\bbb _{\mu,b,\bar k_v}) \cong C\big( G'(\QQ) \backslash  G'(\bA_f)/K\big)\to \rmH_c^d(\Sh_{\mu,\bar k_v}, \Ql(d/2)).
\end{equation}
Putting them together, we have
\begin{equation}
\label{E: all Tate cycle}
\cl(\Sh_{\mu,b,\bar k_v}): \bigoplus_{\bbb}\rmH_d^{\mathrm{BM}}(\Sh^\bbb _{\mu,b,\bar k_v}, \bQ_\ell)\to\rmH_c^d(\Sh_{\mu,\bar k_v}, \Ql(d/2)) 
\end{equation}

On the other hand, we may represent the basic element of $B(G_{\bQ_p},\mu^*)$ as $p^{\tau^*}$ for some $\tau^*\in\xcoch(Z_G)$ (under our assumption that $Z_G$ is connected). Note that then \eqref{E: exotic p} holds, and we have
\begin{equation}
\label{E: JL from basic}
\on{JL}_{\tau,\mu}: \on{H}_c(\Sh_{\tau,\bar k_v})\otimes_{\bfJ}\Hom(\widetilde{V_{\tau}},\widetilde{V_{\mu}})\to  \rmH_c^d(\Sh_{\mu,\bar k_v}, \Ql(d/2))
\end{equation}
from \eqref{E: spetral action1}. We relate these two maps.

For each $\bbb\in\MV_{\mu^*}^\Tate$,  recall that by Lemma \ref{L: finding best tau}, we can find $\tau^*_{\bbb}\in\xcoch(Z_G)$ and a dominant cocharacter $\nu^*_\bbb$ such that
\begin{itemize}
\item $\bbb$ belongs to the image of $i_{\nu_\bbb^*}^{\MV}:\bS_{(\nu^*_\bbb,\mu^*)\mid \tau^*_{\bbb}+\sigma(\nu_\bbb^*)}\to \MV_{\mu^*}(\tau^*_{\bbb}+\sigma(\nu_\bbb^*)-\nu_\bbb^*)$, and
\item $\nu^*_\bbb$ in minimal among all possible $\nu^*$'s such that the above condition holds.
\end{itemize}
Then $\bba:=(i_{\nu_\bba^*}^{\MV})^{-1}(\bbb)$ determines a Satake cycle $\Gr_{(\nu^*_\bbb,\mu^*)\mid \tau_{\bbb}^*+\sigma(\nu^*_\bbb)}^{0,\bba}$, which via Proposition \ref{SS:cycles of geometric Satake} and the geometric Satake, determines
\[\bba_{\on{in}}\in \Hom(V_{\nu^*_\bbb}\otimes V_{\mu^*},V_{\tau^*_\bbb}\otimes V_{\sigma(\nu^*_\bbb)})\cong \Hom(V_{\sigma(\nu_\bbb)}\otimes V_{\tau_\bbb}\otimes V_{\nu_\bbb^*},V_{\mu}),\]
and therefore an element $\Xi_{V_{\nu_\bbb}}(\bba_{\on{in}})\in \Hom(\widetilde{V_{\tau_\bbb}},\widetilde{V_{\mu}})$ (see \eqref{E:XiW2}). But by Proposition \ref{P: corr vs cl for min a}, we have the following commutative diagram
\[
\xymatrix{
\on{H}_c(\Sh_{\tau_\bbb,\bar k_v})\ar^\cong [rr]\ar_{\on{JL}_{\tau_\bbb,\mu}(\bba_{\on{in}})}[dr]&& \on{H}^{\on{BM}}_{d}(\Sh^\bbb_{\mu,b,\bar k_v})\ar^{\on{cl}(\bbb)}[dl]\\
&\on{H}_c^{d}(\Sh_{\mu,\bar k_v},\Ql(d/2))&.
}
\]
In particular
\[\on{Im}\on{cl}(\bbb)=\on{Im}\on{JL}_{\tau_\bbb,\mu}(\bba_{\on{in}}).\]

\begin{rmk}
\label{R: subtle depends on bbb}
Again we remark that the above chosen $\tau_\bbb\in\xcoch(Z_G)$ depends on $\bbb$.
\end{rmk}

Dually, there is the map
\begin{equation}
\label{E: Res map dual to cycle}
\rmH_c^d(\Sh_{\mu,\bar k_v}, \Ql(d/2))\to \on{H}^0(\Sh_{\tau_\bbb,\bar k_v}),
\end{equation}
dual to the cycle class map, see \S \ref{cl vs cc}. In addition, the Satake cycle $\Gr_{(\nu^*_\bbb,\mu^*)\mid \tau_{\bbb}+\sigma(\nu^*_\bbb)}^{0,\bba}$ also induces
\[\bba_{\on{out}}\in \Hom(V_{\tau^*_\bbb}\otimes V_{\sigma(\nu^*_\bbb)}, V_{\nu^*_\bbb}\otimes V_{\mu^*})\cong \Hom(V_{\sigma(\nu^*_\bbb)}\otimes V_{\mu}\otimes V_{\nu_\bbb},V_{\tau_\bbb})\]
Then \eqref{E: Res map dual to cycle} coincides with $\on{JL}_{\mu,\tau_\bbb}(\bba_{\on{out}})$.
We have proven the following lemma.
\begin{lem}Let $\{\bbb_1,\ldots,\bbb_r\}$ be the elements in $\MV_{\mu^*}^{\Tate_p}$. 
The intersection matrix of the cycle classes from \eqref{E: all Tate cycle} is given by a matrix with $(i,j)$-entry
\begin{equation}
\label{E: intersection matrix G}
\on{JL}_{\mu,\tau_{\bbb_j}}(\bba_{j,\on{out}})\circ \on{JL}_{\tau_{\bbb_i},\mu}(\bba_{i,\on{in}})
\end{equation}
\end{lem}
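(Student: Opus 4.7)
The plan is to read the lemma as a direct corollary of the two identifications displayed right before its statement, together with the functoriality of the spectral action $\on{Spc}$ from \eqref{E: spetral action1}. First I would recall that the $(i,j)$-entry of the intersection matrix is by definition the composition
\[
\on{H}^{\on{BM}}_d(\Sh^{\bbb_i}_{\mu,b,\bar k_v})\xrightarrow{\on{cl}(\bbb_i)} \on{H}^d_c(\Sh_{\mu,\bar k_v},\Ql(d/2))\xrightarrow{\on{cl}(\bbb_j)^\vee} \on{H}^{\on{BM}}_0(\Sh^{\bbb_j}_{\mu,b,\bar k_v}),
\]
where the second arrow is the map dual to the cycle class (i.e.\ the restriction/Gysin map, reinterpreted via Poincar\'e duality on each irreducible component). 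Under the canonical identifications $\on{H}^{\on{BM}}_d(\Sh^{\bbb_i}_{\mu,b,\bar k_v})\cong C(G'(\QQ)\backslash G'(\AAA_f)/K)\cong \on{H}_c(\Sh_{\tau_{\bbb_i},\bar k_v})$ coming from the Rapoport-Zink uniformization and the fact that $\tau_{\bbb_i}$ is central, this composition should be computed inside the category of $\mH^p$-equivariant maps between cohomology groups of Shimura sets and of $\Sh_{\mu}$.

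Next I would invoke Proposition \ref{P: corr vs cl for min a} in two different ways. On the ``in'' side, the Proposition gives the commutative triangle displayed right before the lemma, which identifies $\on{cl}(\bbb_i)$ with $\on{JL}_{\tau_{\bbb_i},\mu}(\bba_{i,\on{in}})$. On the ``out'' side, an entirely analogous argument (reversing the roles of source and target, and using the dual Satake cycle $\bba_{j,\on{out}}$ attached to $\Gr^{0,\bba_j}_{(\nu_{\bbb_j}^*,\mu^*)\mid \tau_{\bbb_j}^*+\sigma(\nu_{\bbb_j}^*)}$) identifies the restriction map \eqref{E: Res map dual to cycle} with $\on{JL}_{\mu,\tau_{\bbb_j}}(\bba_{j,\on{out}})$. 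Concretely, this dual identification is obtained by tracing the same diagram \eqref{E: cal coh corr to local} with the roles of the two outer columns swapped, and using that Verdier duality on $X_{\mu^*}^{\bbb_j, x_0}(\tau_{\bbb_j}^*)$ matches the pairing between MV cycles and their duals under the geometric Satake equivalence.

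Finally, I would assemble these two identifications. Since $\on{Spc}$ is a ring homomorphism (i.e.\ compatible with compositions in both $\mathrm{P}^{\mathrm{Corr}}(\Sht^\loc)$ and in $\End_{\mH^p}$ of cohomology, by the lemma proved via the diagram \eqref{E:projectionComp Sh to Sht}), the composition of the two $\on{JL}$ maps is itself the image under $\on{Spc}$ of the composition of the corresponding morphisms in $\on{P}^{\on{Corr}}(\Sht^\loc)$. Plugging everything in yields the announced formula
\[
\on{cl}(\bbb_j)^\vee \circ \on{cl}(\bbb_i) \;=\; \on{JL}_{\mu,\tau_{\bbb_j}}(\bba_{j,\on{out}})\circ \on{JL}_{\tau_{\bbb_i},\mu}(\bba_{i,\on{in}}).
\]

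The main obstacle I anticipate is the ``dual'' identification of the restriction map \eqref{E: Res map dual to cycle} with $\on{JL}_{\mu,\tau_{\bbb_j}}(\bba_{j,\on{out}})$: unlike the cycle class map, it is not directly addressed by Proposition \ref{P: corr vs cl for min a}, so one must set up the parallel computation and in particular check that the choices of $(\tau_{\bbb_j},\nu_{\bbb_j})$ made via Lemma \ref{L: finding best tau} produce the correct Satake parameter $\bba_{j,\on{out}}$ under Verdier duality. Once this verification is done, the remainder of the argument is formal and relies only on the functoriality already established in \S\ref{SS: coh corr Sh}.
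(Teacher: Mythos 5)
Your proposal is correct and follows essentially the same route as the paper: the lemma is obtained by combining the identification $\on{cl}(\bbb_i)=\on{JL}_{\tau_{\bbb_i},\mu}(\bba_{i,\on{in}})$ from Proposition \ref{P: corr vs cl for min a} with the dual identification of the restriction map \eqref{E: Res map dual to cycle} with $\on{JL}_{\mu,\tau_{\bbb_j}}(\bba_{j,\on{out}})$, using that the intersection matrix is by definition the composition of the Gysin and restriction maps as in \S\ref{cl vs cc}. The ``obstacle'' you flag --- the dual identification on the ``out'' side --- is precisely the step the paper itself handles by duality (the dual cohomological correspondence attached to the same Satake cycle) without further elaboration, so your argument matches the paper's both in structure and in level of detail.
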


As the matrix of an endomorphism of the vector bundle $\oplus_i \widetilde{V_{\tau_{\bbb_i}}}$ on $[\hat G\sigma_p/\hat G]$, its determinant $\mD$ is a regular function on the stack $[\hat G\sigma_p/\hat G]$. Let $\hat G'=\hat G_\der\times Z(\hat G)^\circ$, and $\Delta= \hat G_\der\cap Z(\hat G)^\circ$. Note that $\sigma_p$ acts on $\hat G'$ and $\Delta$.
The exact sequence 
$1\to \Delta\to \hat G'\to \hat G\to 1$ induces an isomorphism
$$\Gamma([\hat G\sigma_p/\hat G],\mO)\to \Gamma([\hat G'\sigma_p/\hat G'],\mO)^{\Delta_{\sigma_p}},$$ 
where $\Delta_{\sigma_p}=\Delta/(1-\sigma_p)\Delta$ is the group of coinvariants. It is enough to calculate $\mD$ as a function on $[\hat G'\sigma_p/\hat G']$. 
Let $\tau\in Z(\hat G)^\circ$ be the central character of $V_{\mu}$. Restriction of $\la=\tau_{\bbb_i}+(\sigma-1)\nu_{\bbb_i}$ to $Z(\hat G)^\circ$ gives
$\tau=\tau_{\bbb_i}+\sigma(\eta_i)-\eta_i$, where $\eta_i=\nu_{\bbb_i}|_{Z(\hat G)^\circ}$, which can be regarded as a character of $\hat G'$.
It follows that for $G'$, we can use a single $\tau$ for all $\bbb\in \MV^\Tate_{\mu^*}$ in Lemma \ref{L: finding best tau}. In addition, over $\hat G'\sigma_p/\hat G'$, we have the isomorphism
$$\widetilde{V_{\tau}}\simeq \widetilde{V_{\tau_{\bbb_i}}}, \quad f\mapsto fe^{\eta_i}.$$ 
Denote this map by $\bbc_i$. 
The collection $\{\bba_{i,\on{in}}\circ \bbc_i : \widetilde{V_\tau}\to\widetilde{V_{\mu}}\}$ form a basis of $\Hom_{[\hat G'\sigma_p/\hat G']}(\widetilde{V_\tau},\widetilde{V_{\mu}})$, and a similarly dual statement holds. Therefore, the determinant of the intersection matrix \eqref{E: intersection matrix G} is identified with the determinant of the map
\[\Hom_{[\hat G'\sigma_p/\hat G']}(\widetilde{V_\tau},\widetilde{V_{\mu}})\otimes_{\bfJ_{\hat G'}} \Hom_{[\hat G'\sigma_p/\hat G']}(\widetilde{V_\mu},\widetilde{V_{\tau}})\to \bfJ_{\hat G'},\]
which can be rewritten as the pairing of $\bfJ_{\hat G'}$-modules
\begin{equation}
\label{E: final form, intersection matrix}
\bfJ_{\hat G'}(V_\mu\otimes V_{\tau}^*)\otimes_{\bfJ_{\hat G'}} \bfJ_{\hat G'}(V_\tau\otimes V_\mu^*)\to \bfJ_{\hat G'}.
\end{equation}

Finally, we (re)state and prove the main theorem of the paper.
\begin{thm}
\label{T:main theorem again}
Assume that $(G,X)$ is of Hodge type and the center $Z_G$ is connected. Let $K\subset G(\bA_f)$ be a (small enough) open compact subgroup.
Let $p>2$ be a prime, such that $K_p$ is hyperspecial and $V_{\mu^*}^{\Tate_p}\neq 0$. 
Then:
\begin{enumerate}
\item There is a unique inner form $G'$ of $G$, that is trivial at all finite places, and is compact modulo center at the infinite place.

\item $\Sh_{\mu,b}$ is pure of dimension $\frac{d}{2}$. In particular, $d$ is always an even integer. 
There is an $\mH_K$-equivariant isomorphism
$${\rm H}^{\rm BM}_{d}(\Sh_{\mu, b, \overline \FF_v})\cong C(G'(\bQ)\backslash G'(\bA_f)/K,\Ql)\otimes_{\bfJ} V_{\mu^*}^{\Tate_p}.$$
Here via the isomorphism $G'(\bA)\simeq G(\bA)$, we regard $K$ as an open compact subgroup of $G'(\bA_f)$, and we use $C(G'(\bQ)\backslash G'(\bA_f)/K,\Ql)$ to denote the space of $\Ql$-valued functions on the finite set $G'(\bQ)\backslash G'(\bA_f)/K$. 

\item 
Let $\pi_f$ be an irreducible module of $\calH_K$, and let 
$${\rm H}^{\rm BM}_{d}(\Sh_{\mu,b, \overline \FF_v})[\pi_f]=\Hom_{\mH_K}(\pi_f,{\rm H}^{\rm BM}_{d}(\Sh_{\mu,b, \overline \FF_v}) \otimes \Ql)\otimes \pi_f$$ be the $\pi_f$-isotypical component. Then the cycle class map
\[\on{cl}:{\rm H}^{\rm BM}_{d}(\Sh_{\mu, b, \overline \FF_v})\otimes\Ql\to {\rm H}^{d}_{\et,c}\big(\Sh_{\mu,\overline \FF_v},\overline\bQ_\ell(d/2)\big)\]
restricted to ${\rm H}^{\rm BM}_{d}(\Sh_{\mu, b, \overline \FF_v})[\pi_f]$ is injective if the Satake parameter of $\pi_{f,p}$ (the component of $\pi_f$ at $p$) is general with respect to $V_\mu$.

\item Assume that $\mathbf{Sh}_K(G,X)$ is a Kottwitz arithmetic variety (i.e. those considered in \cite{Kolambda}), or assume that $G_\der$ is simply-connected and anisotropic, and there is a place $p\neq p'$ such that $\pi_{f,p'}$ is an unramified twist of Steinberg representation. Then the $\pi^p_f$-isotypical component of the cycle class map $\on{cl}$ is surjective to 
$$\sum_{\pi_{p}}T^d(\pi_p\pi_f^p,\Ql)\otimes\pi_{p}\pi^p_f$$ 
if the Satake parameters of $\{\pi_{p}\}$ are all strongly general with respect to $V_\mu$. 
In particular, the Tate conjecture holds for these $\pi^p_f$.
\end{enumerate}
\end{thm}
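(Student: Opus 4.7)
The plan is to establish the four parts in sequence, combining Rapoport--Zink uniformization, the parametrization of ADLV irreducible components, the spectral-action formalism of \S\ref{SS: coh corr Sh}, the generalized Chevalley restriction of Theorem \ref{T:intro Chevalley restriction}, and the trace formula comparisons of \S\ref{Sec:auto-repn}. Part (1) is immediate: the hypothesis $V_{\mu^*}^{\Tate_p} \neq 0$ forces the basic element in $B(G_{\bQ_p},\mu^*)$ to be unramified by Proposition \ref{unramified basic}, equivalently $\mu_\ad|_{Z(\hat G_\s)^{\Ga_{\bQ_p}}}=1$, and combined with the triviality of $\mu_{\ad}|_{Z(\hat G_\s)^{\Ga_\bR}}$ coming from the inner form $G_c$ at infinity, Corollary \ref{inner mu} produces the required unique inner form $G'$.

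For Part (2), since $Z_G$ is connected I can represent the basic element by $\varpi^{\tau^*}$ with $\tau^* \in \xcoch(Z_G)$, so Theorem \ref{C: irr comp up to J} gives a $J_b(F)\cong G(\bQ_p)$-equivariant bijection between the set of irreducible components of $X_{\mu^*}(\tau^*)$ and $\MV_{\mu^*}^{\Tate_p} \times \bH\bS_b$, together with the dimension formula $\dim X_{\mu^*}(\tau^*) = \langle \rho, \mu - \tau\rangle = d/2$. Combined with the Rapoport--Zink uniformization (Corollary \ref{C: RZ uniformization basic}) and the identification $I_x \cong G'$ from Corollary \ref{L:Ix is G'}, passing to top Borel--Moore homology produces the desired isomorphism, once I note that $G(\bQ_p)$ acts transitively on $\bH\bS_b$ with hyperspecial stabilizer $G(\bZ_p)$. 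The $\calH_K$-equivariance in the prime-to-$p$ direction is formal; the spherical Hecke action at $p$ on both sides is the $\bfJ$-action through the Satake isomorphism, and the two actions are identified by Proposition \ref{P: S=T for Sh set} applied to the Shimura set $G'(\bQ)\backslash G'(\bA_f)/K$.

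Part (3) reduces, via the lemma just preceding Theorem \ref{T:main theorem again}, to the analysis of the pairing \eqref{E: final form, intersection matrix} on the dual-group side, where I have passed from $\hat G$ to $\hat G' = \hat G_\der \times Z(\hat G)^\circ$ using the twists $e^{\eta_i}$ to use a uniform $\tau$. The entries of the intersection matrix are computed from the spectral correspondence of \S\ref{SS: coh corr Sh} applied to the diagram \eqref{E:exotic Hecke discrete to Hodge}, together with Proposition \ref{P: corr vs cl for min a} which identifies the local class $\scrC_{V_{\nu_\bbb}}(\bba_{\on{in}})$ with the fundamental class of $X_{\mu^*}^{\bbb,x_0}(\tau^*)$. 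Theorem \ref{T:intro Chevalley restriction} gives the determinant of \eqref{E: final form, intersection matrix} explicitly as a product over relative coroots, vanishing precisely on the divisor that defines failure of $V_\mu$-generality (Definition \ref{D: general parameter}). Thus the cycle class map restricted to the $\pi_f$-isotypical component is injective whenever $\on{rec}(\pi_{f,p})$ is $V_\mu$-general.

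Part (4) is expected to be the main obstacle. The strategy is a dimension count on the $\pi_f^p$-isotypical subspace: by Part (2) and Part (3), the image of $\on{cl}$ has dimension
\[\sum_{\pi_p}\, m_{G'}(\pi_p\pi_f^p \otimes \xi_\CC) \cdot \dim V_{\mu^*}^{\Tate_p},\]
where $\pi_p$ runs over unramified representations whose Satake parameter appears, assuming strong generality. On the target side, by Lemma \ref{L:strongly general} the strong generality hypothesis gives $T^{d/2}(\pi_p\pi_f^p,\Ql) = a_G(\pi_p\pi_f^p, \xi)\cdot V_{\mu^*}^{\Tate_p}$ once the naive Langlands formula \eqref{E: Lang Conj} for $[W(\pi_f)]$ holds. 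In the Kottwitz arithmetic variety case the formula holds because the triviality of $\frakK(I_\gamma/\bQ)$ eliminates all nontrivial endoscopic contributions in \eqref{E: Lef trace for Sh}, and Theorem \ref{JL:mult} then identifies $a_G(\pi_p\pi_f^p,\xi)=m_{G'}(\pi_p\pi_f^p\otimes \xi_\CC)$, matching dimensions. In the Steinberg case Corollary \ref{C: comparison, stable case} plays the same role, since the Steinberg local factor kills all endoscopic terms. The subtle point will be verifying that the cycle classes actually land in (and hence, by dimension matching, fill out) the Tate subspace; this uses that cycles on $\Sh_{\mu,b,\overline\FF_v}$ are Galois-invariant up to Tate twist by construction, together with the $\bfJ$-equivariance established in Part (2) and the Satake interpretation of the geometric Frobenius action, which together pin down the Galois action on the image as the evaluation of $\chi_{V_{\mu^*}}$ at the local parameter.
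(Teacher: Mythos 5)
Your proposal is correct and follows essentially the same route as the paper: Part (1) via Proposition \ref{unramified basic} and Corollary \ref{inner mu}, Part (2) via Corollary \ref{C: RZ uniformization basic} together with Theorem \ref{C: irr comp up to J}, Part (3) via the reduction of the intersection matrix to the pairing \eqref{E: final form, intersection matrix} (using Proposition \ref{P: corr vs cl for min a} and the spectral action) and then Theorem \ref{T:intro Chevalley restriction}, and Part (4) by the dimension count combining Parts (2)--(3) with Theorem \ref{JL:mult}, Corollary \ref{C: comparison, stable case}, and Lemma \ref{L:strongly general again}. The extra detail you supply (transitivity on $\bH\bS_b$, the identification of the Hecke action at $p$ via Proposition \ref{P: S=T for Sh set}, and the observation that the cycle classes are Tate classes by construction) is consistent with, and only elaborates on, the paper's argument.
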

\begin{proof}
Part (1) follows from the combination of Corollary \ref{inner mu} and Proposition \ref{unramified basic}. Part (2) follows from Theorem \ref{C: RZ uniformization basic} and Theorem \ref{C: irr comp up to J}. Since we can write the intersection matrix of the cycle classes as \eqref{E: final form, intersection matrix},
Part (3) follows Theorem \ref{T:intro Chevalley restriction}. Finally, Part (4) follows from Part (3), Theorem \ref{JL:mult} in the first situation and Corollary \ref{C: comparison, stable case} in the second situation, and the following lemma.
\begin{lem}
\label{L:strongly general again}
Let $\ga\phi_p\in \hat T\rtimes \Gal(\FF_{p^m}/\FF_p) \subset \hat G\rtimes \Gal(\FF_{p^m}/\FF_p)$ be an element whose image in $\widehat{Z_G^\circ} \rtimes \Gal(\FF_{p^m}/\FF_p)$ has finite image. Then 
\[
V_{\mu^*}^{\Tate_p}\subseteq \bigcup_{j\geq 1} (V_{\mu^*})^{r_{\mu^*}((\ga\phi_p)^{j[v:p]})},
\]
and if $\ga$ is strongly general in the sense of Definition~\ref{D: general parameter}, the above inclusion is an isomorphism.
\end{lem}
\begin{proof}
We can rewrite $V^{\Tate_p}$ alternatively as
\[
V^{\Tate_p} =  \bigoplus_{\lambda \in \XX^\bullet(\widehat{Z_{ G}^\circ}^{\sigma})} V|_{\hat G^\sigma}(\lambda).
\]
For $\lambda \in \XX^\bullet(\widehat{Z_G^\circ}^\sigma)$, $r_{\mu^*}((\gamma\phi_p)^{jm})$ acts trivially on $V|_{\hat G^\sigma}(\lambda)$ because  $\gamma\phi_p$ has finite image in $\widehat{Z_G^\circ}$. This implies the inclusion \eqref{E:VTate inclusion}. On the other hand, if $\lambda \in \XX^\bullet (\hat T^\sigma)$ and it does not  factor through $\widehat{Z_G^\circ}^\sigma$, $V|_{\hat G^\sigma}(\lambda)$ has no invariants under $r_{\mu^*}((\gamma\phi_p)^{jm})$ for any $j >1$ when $\gamma \sigma$ is strongly general with respect to $V$. In this case, the inclusion \eqref{E:VTate inclusion} is an equality.
\end{proof}
\end{proof}

\appendix

\section{Preliminaries from algebraic geometry.}
\subsection{(Pre)stacks in $\Aff^\pf$.}
\label{ASS:perfect AG}
Let $k$ be a field. Recall that in the usual formalism of algebraic geometry, one considers the category of $k$-algebras $\Aff_k$ as the test objects. The category of $\Aff_k$ can be endowed with various topology (Zariski, \'etale, fpqc, etc.).
Then one can define the category of schemes (resp. algebraic spaces, resp. algebraic stacks) as certain full subcategory of the category of sheaves (resp. stacks) on $\Aff_k$. We can mimic this approach to do algebraic geometry over perfect rings.

Let $k$ be a perfect field of characteristic $p>0$.
We denote by $\Aff^\pf_k$ the category of perfect $k$-algebras. 

\begin{dfn}A \emph{presheaf} is a covariant functor from $\Aff^\pf_k$ to the category of sets. A \emph{prestack} is a covariant $2$-functor from $\Aff^\pf_k$ to the $2$-category of groupoids. By regarding a set as a discrete groupoid, we regard a presheaf as a prestack.
\end{dfn}
We remark that we considered (pre)stacks in the paper, like $\Hk_\mu^\loc$ and $\Sht_\mu^\loc$. But we only used $\ell$-adic formalism for algebraic stacks, which we define in the sequel.

We can equip $\Aff_k^\pf$ with Zariski, \'etale or fpqc topology.\footnote{Note that however we avoid fppf topology since perfect rings over $k$ are almost never of finite type.} Then we have the notion of sheaves (resp. stacks) in $\Aff_k^\pf$. 
A sheaf in fpqc topology is sometimes also called a space. Note that all the mentioned topology are subcanonical. 

In \cite{BS}, Bhatt-Scholze also defined the $v$-topology on $\Aff_k^\pf$, which is finer that any of the above mentioned topology but is still subcanonical. We will not make use of this topology in the paper.

\begin{dfn}
An affine scheme $\Spec R$ in $\Aff_k^\pf$ is a presheaf on $\Aff_k^\pf$ of the form $\Hom_{\Aff_k^\pf}(R,-)$. 
A scheme in $\Aff_k^\pf$ is Zariski sheaf that admits a Zariski cover by affine schemes.
\end{dfn}
As mentioned above, affine schemes are Zariski sheaves and therefore are schemes. The category of schemes in $\Aff_k^\pf$ admits finite products. One can define the usual Zariski, \'etale, fpqc topology on a scheme. 
In addition, it makes sense to define schematic morphisms of presheaves in $\Aff_k^\pf$. 
\begin{dfn}
An \emph{algebraic space} $X$ in $\Aff^\pf_k$ is an \'etale sheaf such that the diagonal is schematic, and there is an \'etale surjection from a scheme $U\to X$.
\end{dfn}

The product of two algebraic spaces in $\Aff_k^\pf$ exists as an algebraic space in $\Aff_k^\pf$.
As usual, it makes sense to define representable morphisms (by algebraic spaces) of presheaves in $\Aff_k^\pf$. For a representable morphism of presheaves, the usual topological notions for algebraic spaces make sense, such as quasi-compact, quasi-separated, separated, connected, irreducible, etc.

\begin{rmk}
The category of schemes in $\Aff^\pf_k$ is a full subcategory of the category of algebraic spaces in $\Aff_k^\pf$.  The latter is a full subcategory of the category of fpqc sheaves on $\Aff_k^\pf$ (\cite[Tag03W8]{stack}). In fact, algebraic spaces in $\Aff_k^\pf$ are even $v$-sheaves in the sense of \cite{BS}.
\end{rmk}

\begin{dfn}
Let $\mF$ be a presheaf on $\Aff_k$. Its \emph{perfection}, denoted by $\mF^\pf$, is its restriction to $\Aff_k^\pf\subset \Aff_k$.
\end{dfn}
It is easy to see that the perfection of a scheme (resp. algebraic space) in $\Aff_k$ is represented by a scheme (resp. algebraic space) in $\Aff_k^\pf$.

On the other hand, recall in usual algebraic geometry, there is another definition of perfection of a scheme (resp. algebraic space) in $\Aff_k$:
a scheme $X$ (resp. an algebraic space) in $\Aff_k$ is called perfect if its Frobenius endomorphism $\sigma_X$ is an isomorphism. Given a scheme (resp. algebraic space) $X$, the perfection of $X$ is defined as 
$$X^\pf:=\underleftarrow\lim_{\on{\sigma}_X}X.$$ The functor $X\mapsto X^\pf$ is right adjoint to the forgetful functor from the category of perfect schemes (resp. algebraic spaces) to the category of all schemes (resp. algebraic spaces).
The following lemma is essentially \cite[Lemma A.10]{Z}, which explains that the two notions of perfections are essentially equivalent.
\begin{lem}The functor from the category of perfect schemes (resp. perfect algebraic spaces) in $\Aff_k$ to the category of schemes (resp. algebraic spaces) in $\Aff^\pf_k$ given by restriction is an equivalence.
\end{lem}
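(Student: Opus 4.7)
The plan is to establish the equivalence by exhibiting a quasi-inverse functor and checking full faithfulness and essential surjectivity, with the affine case being the foundational step from which everything else is bootstrapped via gluing.

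The key observation, which I would establish first, is that for any perfect scheme $X$ in $\Aff_k$ (i.e.\ one whose Frobenius $\sigma_X$ is an isomorphism) and any $k$-algebra $R$, the natural map $X(R) \to X(R^{\pf})$ is a bijection. Indeed, by the universal property of perfection, $R^{\pf} = \varinjlim (R \xrightarrow{\sigma_R} R \xrightarrow{\sigma_R} \cdots)$, so $X(R^{\pf}) = \varprojlim X(R)$ where the transition maps are given by precomposition with $\sigma_R$. Since postcomposition with $\sigma_X$ equals precomposition with $\sigma_R$ on $X(R)$, and $\sigma_X$ is by hypothesis an isomorphism, every transition map is a bijection; thus $X(R^{\pf}) = X(R)$. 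From this, full faithfulness of the restriction functor is immediate: for perfect schemes $X, Y$ in $\Aff_k$, a morphism of the restricted functors $X|_{\Aff_k^{\pf}} \to Y|_{\Aff_k^{\pf}}$ is a compatible collection of maps $X(R) \to Y(R)$ for $R$ perfect, but via the identification $X(R) = X(R^{\pf})$ and $Y(R) = Y(R^{\pf})$ for general $R$, this extends uniquely to a morphism $X \to Y$ in $\Aff_k$.

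For essential surjectivity in the affine case, if $X = \Spec R$ is an affine scheme in $\Aff_k^{\pf}$ for $R$ a perfect $k$-algebra, then $\Spec R$ regarded in $\Aff_k$ is a perfect affine scheme whose restriction is tautologically $X$. For a general scheme $X$ in $\Aff_k^{\pf}$, I would choose a Zariski open cover $\{U_i = \Spec R_i\}$ by affine schemes with $R_i$ perfect, and lift each $U_i$ to the perfect affine scheme $\tilde U_i = \Spec R_i$ in $\Aff_k$. The intersections $U_{ij} = U_i \cap U_j$ are again perfect (being open subschemes of perfect affine schemes in $\Aff_k^{\pf}$, hence determined by perfect localizations), and by full faithfulness the cocycle data in $\Aff_k^{\pf}$ lifts uniquely to cocycle data in $\Aff_k$. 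Gluing produces a perfect scheme $\tilde X$ in $\Aff_k$, which by construction restricts to $X$.

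For the case of algebraic spaces, the argument is essentially the same but with étale covers replacing Zariski covers: given an algebraic space $X$ in $\Aff_k^{\pf}$, one chooses an étale surjection $U \to X$ from a (perfect) scheme $U$ and lifts $U$ and its self-fiber-product $U \times_X U$ (another perfect scheme, being an étale equivalence relation) to $\Aff_k$ using the scheme case, then forms the quotient. The main technical point here, and the one step I would expect to require the most care, is verifying that the étale topology behaves well under restriction — specifically, that an étale morphism between perfect schemes in $\Aff_k$ restricts to an étale morphism in $\Aff_k^{\pf}$ and conversely, so that the étale cover structure is preserved. This comes down to the fact that étaleness for a morphism of perfect schemes can be tested on $R$-points for all perfect $k$-algebras $R$ (and the formal criterion for étaleness only involves perfect test rings once both source and target are perfect), so that the notion of étale cover matches on the two sides. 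With that matching in hand, the gluing argument produces the required perfect algebraic space in $\Aff_k$ and completes the proof.
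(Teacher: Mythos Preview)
Your overall strategy (full faithfulness plus essential surjectivity, bootstrapping from the affine case by gluing) is the right shape, but the key claim underpinning your full faithfulness argument is false. You assert that for a perfect scheme $X$ in $\Aff_k$ and any $k$-algebra $R$, the map $X(R)\to X(R^{\pf})$ is a bijection. Take $X=(\bA^1)^{\pf}=\Spec k[s^{1/p^\infty}]$ and $R=k[t]$: a $k$-algebra map $k[s^{1/p^\infty}]\to k[t]$ must send $s$ to an element admitting all $p$-power roots in $k[t]$, which forces $s\mapsto c\in k$, so $X(R)\cong k$; by contrast $X(R^{\pf})\cong R^{\pf}$ since any image of $s$ has all $p$-power roots there. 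The error in your justification is that $X$ is covariant in $R$, so $X(\varinjlim R)$ receives a map from $\varinjlim X(R)$ (not $\varprojlim$), and even that comparison map is only an isomorphism when $X$ is locally of finite presentation, which is not assumed here.

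The correct argument for full faithfulness (which is also what makes your gluing step go through cleanly) uses instead that a perfect scheme $X$ admits a Zariski cover by $\Spec A_i$ with each $A_i$ perfect, since open subschemes of perfect schemes are perfect. Then $\Hom(X,Y)$ is computed as an equalizer of products of sets $Y(A_i)$, $Y(A_{ij})$ with all the rings perfect, hence depends only on $Y|_{\Aff_k^{\pf}}$; and the same equalizer computes $\Hom(X|,Y|)$ in the category of schemes in $\Aff_k^{\pf}$. Your essential surjectivity argument is essentially fine once full faithfulness is established this way. The paper itself does not spell out a proof but simply refers to \cite[Lemma~A.10]{Z}, where the argument runs along these Zariski-local lines.
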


From now on, by abuse of language, we also call a scheme (resp. algebraic spaces) in $\Aff^\pf_k$ a perfect scheme (resp. perfect algebraic space).

\begin{dfn}
A perfect algebraic space $X$ is \emph{perfectly locally of finite type} if there exists an \'etale affine cover $\{U_i\}$ of $X$ such that each $U_i$ is the perfection of an affine scheme of finite type over $k$. It is called \emph{perfectly of finite type} if it is perfectly locally of finite type and quasi-compact. It is called \emph{perfectly of finite presentation} (\emph{pfp} for short) if it is perfectly of finite type and quasi-separated (i.e. diagonal is quasi-compact). 

A morphism $f: X\to Y$ between pfp perfect algebraic spaces is called \emph{perfectly proper} if it is separated and universally closed.
\end{dfn}

Recall the following results (\cite[\S A]{Z}).
\begin{prop}
\label{P: deperfection}
\begin{enumerate}
\item A perfect algebraic space is pfp if and only if it is the perfection of an algebraic space of finite presentation over $k$.
\item Any morphism $f:X\to Y$ between two pfp perfect algebraic spaces is the perfection of a morphism $f':X'\to Y'$ between two algebraic spaces (in $\Aff_k$) of finite presentation.
\item A perfectly proper morphism between pfp perfect algebraic spaces is the perfection of a proper morphism between to algebraic spaces of finite presentation.
\end{enumerate}
\end{prop}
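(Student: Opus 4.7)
The plan is to prove each of the three parts in turn, with Part (1) carrying the bulk of the work and Parts (2)--(3) following formally from it together with the fact that perfection is a universal homeomorphism. Since the proposition is attributed to \cite[\S A]{Z}, the argument follows the strategy developed there, which I recall below.

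For Part (1), the ``if'' direction is essentially formal: starting from an algebraic space $X'$ of finite presentation over $k$, I would choose an \'etale cover by affine schemes $\{U'_i \to X'\}$ with each $U'_i$ of finite presentation. Perfection preserves \'etaleness and affineness for rings, so the induced maps $\{(U'_i)^\pf \to X\}$ form an \'etale cover of $X$ by perfections of finite-presentation affine schemes. Perfection also commutes with finite fiber products of affine schemes, which gives quasi-separatedness of $X$, and quasi-compactness passes through directly; hence $X$ is pfp. For the harder converse, I would begin with a pfp perfect algebraic space $X$, choose an \'etale affine cover $U \to X$ with $U = U_0^\pf$ for some affine $U_0$ of finite type over $k$, and form $R = U \times_X U$, which is again pfp by quasi-separatedness of $X$. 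Writing $R = R_0^\pf$ for some finite-type affine $R_0$, the two projections $s,t : R \rightrightarrows U$ produce, via Part (2), morphisms $s_0, t_0 : R_0^{(p^N)} \to U_0$ for some large $N$; replacing $R_0$ by that Frobenius twist, one obtains an honest pair $s_0, t_0 : R_0 \to U_0$. The equivalence relation axioms and \'etaleness can then be descended by the same mechanism (isomorphisms between perfections coming from isomorphisms between deperfections at finite level, plus a spreading-out argument for \'etaleness). Defining $X_0 := [R_0 \rightrightarrows U_0]$ as an algebraic-space quotient then yields a finite-presentation algebraic space with $X_0^\pf \cong X$.

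For Part (2), I would reduce to the affine case by choosing compatible \'etale covers of $X$ and $Y$. In the affine case, the statement amounts to the adjunction-type formula
\[
\Hom_{\Aff^\pf_k}(R^\pf, S^\pf) = \varinjlim_n \Hom_{\Aff_k}\!\bigl(R^{(p^n)}, S\bigr),
\]
which reflects the universal property of perfection: any $k$-algebra map from a finite-type $k$-algebra into a perfect ring factors, after precomposition with a sufficiently high power of Frobenius on the source, through a finite-type target. Globalizing this to algebraic spaces requires checking that the local deperfections glue, which follows from the same formula applied to the \'etale equivalence relations used in Part (1).

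For Part (3), once Part (2) writes a perfectly proper morphism $f : X \to Y$ as $f_0^\pf$ for some morphism $f_0 : X_0 \to Y_0$ between algebraic spaces of finite presentation, I would argue that perfection is a universal homeomorphism on underlying topological spaces, so universal closedness and separatedness of $f_0$ are equivalent to those of $f$. Hence $f_0$ is proper. The main obstacle throughout is the descent step in Part (1): verifying that the \'etale equivalence relation $(s,t) : R \rightrightarrows U$ on perfect schemes can be simultaneously deperfected so that \emph{both} the source-target morphisms \emph{and} the \'etaleness property descend to a finite level. Handling this cleanly is what occupies \cite[\S A]{Z} and is the technical heart of the proposition.
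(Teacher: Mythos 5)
The paper itself contains no proof of this proposition: it is stated with the words ``Recall the following results'' and a citation to \cite[\S A]{Z}, so strictly speaking there is nothing internal to compare against, and your sketch follows the same strategy as that cited appendix (deperfect an affine \'etale chart and its \'etale equivalence relation, deperfect morphisms between perfections of finite-type affines via the colimit formula $\Hom(R^\pf,S^\pf)=\varinjlim_n\Hom(R^{(p^n)},S)$, spread out \'etaleness to a finite Frobenius level, and observe that separatedness and universal closedness are topological notions preserved under the universal homeomorphisms $X'^\pf\to X'$). Two points in your write-up deserve tightening. First, $R=U\times_X U$ is in general only a quasi-compact, quasi-separated perfect \emph{scheme} (the paper's definition only requires the diagonal of $X$ to be schematic and quasi-compact), so writing $R=R_0^\pf$ with $R_0$ \emph{affine} of finite type is not justified as stated; the cleaner route, and the one taken in the cited appendix, is to use topological invariance of the \'etale site along the universal homeomorphism $U=U_0^\pf\to U_0$, which canonically descends the \'etale map $\mathrm{pr}_1\colon R\to U$ to an \'etale finite-presentation $R_0\to U_0$ with $R\cong R_0^\pf$ (using that a scheme \'etale over a perfect scheme is perfect), leaving only the second projection to be deperfected by the colimit formula. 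Second, your use of ``Part (2)'' inside the proof of Part (1) is harmless only because what you actually need is the affine-level colimit formula, which must be established first and independently of Part (1); as you have organized the argument (``Part (1) carrying the bulk, Parts (2)--(3) following from it'') this reads as circular, so the affine case of (2) should be isolated as a preliminary lemma. With these adjustments your outline, including the topological argument for Part (3), is correct and matches the approach of the reference the paper relies on.
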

For a pfp algebraic space $X$, an algebraic space $X'$ of finite presentation such that $X={X'}^\pf$ is called a \emph{deperfection} of $X$.

\begin{dfn}
Let $f:X\to Y$ be a map between two algebraic spaces. We say that $f$ is \emph{perfectly smooth} at $x\in X$ if there exists an \'{e}tale atlas $U\to X$ at $x$ and an \'etale atlas $V\to Y$ at $f(x)$, such that the map $U\to Y$ factors as $U\stackrel{h}{\to} V\to Y$ and $h$ factors as $h=\pr \circ h'$, where $h':U\to V\times(\bA^n)^\pf$ is \'{e}tale and $\pr:V\times (\bA^n)^\pf\to V$ is the projection. We say that $f$ is \emph{perfectly smooth} if it is perfectly smooth at every point of $X$. We say that $X$ is perfectly smooth (at $x$) if $X\to \Spec k$ is perfectly smooth (at $x$).
\end{dfn}

Now we define algebraic stacks. First, it makes sense to define the notion of prestacks in $\Aff_k^\pf$, and stacks with respect to \'etale or fpqc topology of $\Aff_k^\pf$.

\begin{dfn}
An \emph{algebraic stack} $X$ in $\Aff^\pf_k$ is a stack with respect to the fpqc topology, such that (i) the diagonal is represented by an algebraic space in $\Aff^\pf_k$; (ii) there exists a perfectly smooth surjective map $U\to X$ from an algebraic space. Such a $U$ is called a \emph{smooth atlas}.
\end{dfn}

\begin{dfn}
An algebraic stack $X$ in $\Aff^\pf_k$ is called \emph{quasi-compact} if there exists a smooth atlas $U\to X$ with $U$ quasi-compact.

An algebraic stack $X$ in $\Aff^\pf_k$ is called \emph{locally of finite type} (resp. \emph{perfectly of finite type}, resp. \emph{pfp})
if there exists one smooth atlas $U\to X$, where $U$ is perfectly locally of finite type (resp. perfectly of finite type, resp. pfp).

A morphism of algebraic stack $f:X\to Y$ in $\Aff^\pf_k$ is called 
\begin{itemize}
\item \emph{separated} if the diagonal map $X\to X\times_YX$ (which is always representable by an algebraic space) is perfectly proper;
\item \emph{quasi-separated} if the diagonal is quasi-compact and quasi-separated;
\item \emph{DM} if the diagonal is \'etale.
\end{itemize}
\end{dfn}

\begin{dfn}
A morphism $f: X\to Y$ of pfp algebraic stacks in $\Aff_k^\pf$ is called \emph{perfectly proper} if $f$ is separated, perfectly of finite type, and universally closed.
\end{dfn}

\begin{dfn} A morphism $f: X\to Y$ of pfp algebraic stacks in $\Aff_k^\pf$ is called \emph{perfectly smooth} if there is a perfectly smooth surjective morphism $U\to X$ from an algebraic space $U$ such that the composition $U\to X\to Y$ is perfectly smooth.
\end{dfn}

\begin{ex}
Let $G$ be a perfect algebraic group (i.e. the perfection of an algebraic group) over $k$. Then $\bfB G\to \Spec k$ is perfectly smooth.
\end{ex}

As before, for a prestack $\mF$ in $\Aff_k$, its restriction to $\Aff_k^\pf\subset \Aff_k$ is called the \emph{perfection of $\mF$}. It is clear that the perfection of algebraic stacks in $\Aff_k$ are algebraic stacks in $\Aff_k^\pf$.
Note however, for a usual algebraic stack $X$ in $\Aff_k$, the Frobenius endomorphism $\sigma_X$ may not be representable and therefore the inverse limit $\underleftarrow\lim_{\sigma_X}X$ may not exist as an algebraic stack in $\Aff_k$.

\subsubsection{The $\ell$-adic formalism}
Let $\ell\neq p$. As explained in \cite[\S A.3]{Z}, for a pfp algebraic space $X$ in $\Aff_k^\pf$,  the $\ell$-adic derived category $D(X, \Ql)$ is well-defined, 
and there exists the six operation formalism. Thanks to Proposition \ref{P: deperfection}, no additional effort is needed to construct this theory.
In addition, the proper base change isomorphism (for perfectly proper morphisms) and the smooth base change isomorphism (for perfectly smooth morphisms) hold. In particular, the category of perverse sheaves $\on{P}(X)$ is well-defined, and is the heart of a t-structure of $D(X,\Ql)$.

We can endow the category of pfp algebraic spaces with smooth topology (the covering maps being perfectly smooth surjective maps). Then as usual, perverse sheaves form a stack of abelian categories with respect to this topology. Therefore, given a pfp algebraic stack $X$ in $\Aff_k^\pf$, the category of perverse sheaves $\on{P}(X)$ on $X$ is well-defined.

Note that as explained in \cite{LZ1, LZ2}, the six operation formalism and the base change isomorphism (and their enhancements to the $\infty$-categorical level) can be expressed as certain functor from a simplicial set (built out of separated perfectly of finite type schemes over $k$) to the category of presentable $\infty$-categories sending $X$ to $D(X,\Ql)$.

Now the same techniques as developed in \cite{LO1, LO2, LZ1, LZ2} allows one to define the  $\ell$-adic derived category $D(X, \Ql)$ for pfp algebraic stacks in $\Aff_k^\pf$. For a smooth atlas $U\to X$ with $U$ a pfp algebraic space, let $U^{\bullet/X}\to X$ denote its \v Cech nerve. Then $D(X,\Ql)$ is the limit (as $\infty$-categories) of $D(U^{\bullet/X},\Ql)$. As usual, $\on{P}(X)$ forms the heart of a t-structure of $D(X,\Ql)$.
In fact, we just need quotient stack in the paper. If $X=[Y/G]$, then we can define $D(X,\overline\bQ_\ell)$ as the equivariant derived category $D_G(Y,\overline\bQ_\ell)$ and $\on{P}(X)$ as $\on{P}_G(Y)$ (up to a shift).

\begin{notation}
\label{N: BM homology}
For a perfect pfp algebraic space $X$ over $k$, we denote by 
$$\on{H}^{\on{BM}}_i(X_{\bar k}):= \on{H}^{-i}(X_{\bar k},\omega_X(-i/2))$$ the $i$th \emph{Borel-Moore homology} of $X_{\bar k}$. Note that our definition is different from the standard one by a Tate twist. The reason is with this normalization, if $\dim X=d$, then $\on{H}^{\on{BM}}_{2d}(X_{\bar k})$ is the $\Ql$-vector space with a basis given by its irreducible components of dimension $d$.
\end{notation}

\subsubsection{The trace map}

Recall that as explained in \cite[\S A.3]{Z}, if $X$ is a pfp algebraic space of dimension $d$, then the fundamental class 
$$[X]\in \on{H}^{\on{BM}}_{2d}(X_{\bar k})\cong \on{H}_c^{2d}(X_{\bar k},\Ql(d))^*$$ 
of $X$ (which is the sum of the fundamental classes of the irreducible components of $X_{\bar k}$ of dimension $d$) defines the trace map
$\Tr:\on{H}_c^{2d}(X_{\bar k}, \Ql(d))\to \Ql$.
As in \cite{LZ1,LZ2}, this trace map extends to pfp algebraic stacks. In addition, if $f:X\to Y$ a equidimensional perfectly smooth morphism of pfp algebraic stacks, of relative dimension $d$, then the fiberwise trace maps can be organized into a family, i.e. there is the trace map
\[\Tr_f: R^{2d}f_!\Ql(d)\to \Ql,\]
which induces the fiberwise trace map by restriction. By the projection formula, it induces a natural map $f^*[2d](d)\to f^!$ which is an isomorphism. In practice, it is convenient to introduce the following condition on morphisms, which is weaker than perfectly smooth but share the same cohomological properties.
\begin{dfn}
\label{D: coh smooth}
An equidimensional morphism $f: X\to Y$ of pfp algebraic stacks in $\Aff_k^\pf$, of relative dimension $d$, is called \emph{cohomologically smooth} if there is a trace map
\[\Tr_f: R^{2d}f_!\Ql(d)\to \Ql,\]
which restricts to the usual trace map along each geometric fiber $f_y: X_{\bar y}\to {\bar y}$ for $y\in Y$, and which induces an isomorphism $f_y^*[2d](d)\cong f_y^!$.
\end{dfn}
Note that trace map, if exists, is unique. In addition, cohomologically smooth morphisms are stable under base change. Part (1) of the following statement justifies the terminology.
\begin{prop}
(1) Let $f:X\to Y$ be cohomologically smooth. Then the trace map induces $f^*[2d](d)\to f^!$, which is an isomorphism.
(2) The composition of cohomologically smooth maps are cohomologically smooth.
\end{prop}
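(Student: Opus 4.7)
The plan is to establish Part (1) first by combining proper base change with the adjunction $(f_!, f^!)$, and then deduce Part (2) as a formal consequence via the associativity of exceptional pullback.

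For Part (1), the given datum is only the trace map $\Tr_f : R^{2d}f_!\Ql(d)\to \Ql$ on the top cohomology sheaf; I first need to lift it to a morphism $Rf_!\Ql(d)[2d]\to\Ql$ in the derived category. To do so I would check the vanishing $R^if_!\Ql=0$ for $i>2d$: by proper base change (valid in the pfp stacky setting recalled in \cite[\S A.3]{Z} and \cite{LZ1,LZ2}), the stalk of $R^if_!\Ql$ at a geometric point $y\in Y$ computes $H^i_c(X_{\bar y},\Ql)$, which vanishes above $2d$ by the fiberwise dimension bound built into the definition of cohomological smoothness. Hence $\Tr_f$ lifts uniquely to $Rf_!\Ql(d)[2d]\to\Ql$, and by $(f_!,f^!)$-adjunction this corresponds to a morphism $\eta_f:\Ql(d)[2d]\to f^!\Ql$. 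Tensoring with $f^*\calF$ (and using the standard identification $f^!\calF\cong f^*\calF\otimes f^!\Ql$, which here amounts to the projection formula once $f^!\Ql$ is shown invertible) produces the natural transformation $\alpha_f: f^*[2d](d)\to f^!$.

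To prove $\alpha_f$ is an isomorphism, I would check on geometric fibers: for $y\in Y$ a geometric point, both $f^*$ and $f^!$ commute with the base change $X_{\bar y}\to X$, and the trace map $\Tr_f$ restricts along this base change to the trace map for the fiber $f_{\bar y}:X_{\bar y}\to\bar y$ (this is the standard compatibility of $\Tr$ with proper base change). Consequently the restriction of $\alpha_f$ to $X_{\bar y}$ equals $\alpha_{f_{\bar y}}$, which is an isomorphism by the defining property of cohomological smoothness. Since the cone of $\alpha_f$ is a complex on $X$ whose restriction to every geometric fiber of $f$ vanishes, and since $f$ is surjective on points up to the smooth atlas, this cone vanishes, proving $\alpha_f$ is an isomorphism.

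For Part (2), let $f:X\to Y$ and $g:Y\to Z$ be cohomologically smooth of relative dimensions $d_f$ and $d_g$; then $g\circ f$ is equidimensional of relative dimension $d_f+d_g$ since fiberwise dimension is additive. Applying Part (1) to both $f$ and $g$ and composing gives
\[
(gf)^!\Ql = f^!g^!\Ql \cong f^*g^*\Ql[2(d_f+d_g)](d_f+d_g) = (gf)^*\Ql[2(d_f+d_g)](d_f+d_g),
\]
so $(gf)^!\Ql\cong\Ql[2(d_f+d_g)](d_f+d_g)$ canonically. Via the $((gf)_!,(gf)^!)$-adjunction this isomorphism corresponds to a morphism $(gf)_!\Ql[2(d_f+d_g)](d_f+d_g)\to\Ql$, and taking top cohomology defines the candidate trace map $\Tr_{gf}$. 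Its compatibility with the classical fiberwise trace map and the fact that it induces an isomorphism $(gf)_{\bar z}^*[2(d_f+d_g)](d_f+d_g)\to (gf)_{\bar z}^!$ on each geometric fiber follow directly from the construction together with the base-change compatibility used in Part (1).

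The main technical obstacle is the first paragraph above: upgrading $\Tr_f$ from a single-degree map of sheaves to a derived-category morphism, and justifying the projection-formula-type identification used to extend $\eta_f:\Ql[2d](d)\to f^!\Ql$ to a natural transformation on all of $D(Y,\Ql)$. Both require that the $\ell$-adic six-functor formalism on pfp algebraic stacks in $\Aff_k^\pf$ satisfies proper and smooth base change and the projection formula; this has been set up in \cite[\S A.3]{Z} at the level of spaces and extended to stacks by the methods of \cite{LZ1,LZ2}, so the arguments above are formal once that machinery is invoked.
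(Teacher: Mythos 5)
Your construction of the transformation $\alpha_f\colon f^*[2d](d)\to f^!$ is the natural one (lift $\Tr_f$ to a map $f_!\Ql[2d](d)\to\Ql$ using the vanishing of $R^if_!\Ql$ for $i>2d$, then apply the projection formula and $(f_!,f^!)$-adjunction), and your Part (2) is exactly what the paper means by ``follows from Part (1) directly''. The problem is your verification that $\alpha_f$ is an isomorphism, which is precisely the point needing an argument (the paper only says ``Part (1) is clear''). You assert that ``$f^!$ commutes with the base change $X_{\bar y}\to X$'', i.e. that $b^*f^!\cong f_{\bar y}^!a^*$ for the Cartesian square over a geometric point $\bar y\to Y$. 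Proper base change gives $a^*f_!\cong (f_{\bar y})_!b^*$ and $b^*f^*\cong f_{\bar y}^*a^*$, and hence the compatibility of $\Tr_f$ with the fiberwise traces; but for $f^!$ it only produces the natural map $b^*f^!\to f_{\bar y}^!a^*$ of \eqref{E: base change shrek}, which is not an isomorphism for a general morphism (for a closed immersion of smooth varieties, $b^*f^!\Ql$ at a point of the source is a shifted Tate twist while $f_{\bar y}^!a^*\Ql$ is $\Ql$). In this paper that map is shown to be an isomorphism for cohomologically smooth $f$ exactly \emph{because} of Part (1), via \eqref{E: base change shrek adjunction}. So restricting the cone of $\alpha_f$ to geometric fibers and invoking the fiberwise hypothesis assumes the conclusion: the argument is circular at this step.

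A related misstatement: you invoke ``the standard identification $f^!\calF\cong f^*\calF\otimes f^!\Ql$, which amounts to the projection formula once $f^!\Ql$ is shown invertible''. Only the natural map $f^*\calF\otimes f^!\Ql\to f^!\calF$ is formal (adjoint to the projection formula); its being an isomorphism is not implied by invertibility of $f^!\Ql$ (closed immersions of smooth varieties again give counterexamples, with $\calF$ the extension by zero from the open complement) and is essentially equivalent to Part (1) itself. This is harmless for \emph{constructing} $\alpha_f$, since the formal direction suffices, but it cannot be quoted in the proof that $\alpha_f$ is an isomorphism. Note also that the obvious repairs run into the same wall: a d\'evissage works for skyscrapers at closed points, where $f^!i_{y*}\cong i_{X_{\bar y}*}f_{\bar y}^!$ is formal (pass to right adjoints in proper base change), but for $j_!$-extensions from locally closed strata one needs $f^!j_!\cong j'_!f_S^!$, which is again the smoothness assertion. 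So an honest proof of Part (1) requires an input beyond the fiberwise condition in the definition, for instance the extra structure available in the situations where cohomological smoothness is actually verified in the paper (Lemma \ref{L: criterion coh smooth}); checking on fibers as you propose does not suffice.
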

\begin{proof}
Part (1) is clear and Part (2) follows from Part (1) directly.
\end{proof}

Here is an easy but useful criterion for a morphism to be cohomologically smooth. 
\begin{lem}
\label{L: criterion coh smooth}
\begin{enumerate}
\item Let $f: X\to Y$ be a morphism of perfectly smooth pfp schemes, and assume that each fiber of $f$ is perfectly smooth, equidimensional of dimension $d$. Then $f$ is cohomologically smooth.
\item Let $f:X\to Y$ be a morphism of pfp schemes. Assume that there is a perfectly proper morphism $g: \tilde Y\to Y$ with connected geometric fibers such that the base change $\tilde X:=X\times_Y\tilde Y\to\tilde Y$ is cohomologically smooth, then $f$ is cohomologically smooth.
\end{enumerate}
\end{lem}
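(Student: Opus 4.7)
For part (1), I plan to reduce to the classical smooth case via deperfection. Working locally at a point, the perfect smoothness of $X$ and $Y$ lets me choose étale atlases realizing each as the perfection of a smooth finite-type $k$-scheme; by Proposition~\ref{P: deperfection}(2), $f$ is then the perfection of a morphism $\tilde f: \tilde X \to \tilde Y$ between smooth $k$-schemes. The hypothesis that the geometric fibers of $f$ are perfectly smooth and equidimensional of dimension $d$ translates, via the same deperfection argument applied fiberwise, into the statement that the geometric fibers of $\tilde f$ are smooth and equidimensional of dimension $d$. Since $\tilde X$ and $\tilde Y$ are smooth (hence regular and Cohen-Macaulay) and the fibers have the expected dimension $d = \dim \tilde X - \dim \tilde Y$, miracle flatness (EGA IV, 15.4.2) shows $\tilde f$ is flat; combined with smoothness of fibers, $\tilde f$ is smooth. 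The classical trace map $\Tr_{\tilde f}: R^{2d}\tilde f_!\Ql(d) \to \Ql$ then exists with the required properties. Since the perfection map is a universal homeomorphism, it induces an equivalence of étale sites, and hence an equivalence $D(X,\Ql) \cong D(\tilde X,\Ql)$ under which $\Tr_{\tilde f}$ transports to the desired $\Tr_f$.

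For part (2), the plan is to descend the trace map for $\tilde f$ along the perfectly proper surjection $g: \tilde Y \to Y$ with geometrically connected fibers. Writing $\tilde g: \tilde X \to X$ for the base change, proper base change for the perfectly proper morphism $g$ provides a canonical isomorphism
\[
g^* R^{2d} f_!\Ql(d) \;\cong\; R^{2d}\tilde f_!\Ql(d).
\]
Composing the inverse of this isomorphism with $\Tr_{\tilde f}$ and applying the $(g^*, Rg_*)$-adjunction yields a morphism $R^{2d} f_!\Ql(d) \to g_*\Ql$ on $Y$. Because $g$ is perfectly proper with geometrically connected fibers, proper base change computes the stalks of $g_*\Ql$ as $\on{H}^0$ of connected spaces, so the canonical map $\Ql \to g_*\Ql$ is an isomorphism; inverting it produces the candidate $\Tr_f: R^{2d} f_!\Ql(d) \to \Ql$.

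It remains to verify the two defining conditions of Definition~\ref{D: coh smooth}. For a geometric point $y \in Y$, the surjectivity of $g$ provides a lift $\tilde y \in \tilde Y$, and the base change $\tilde X \to X$ identifies $\tilde X_{\tilde y} \cong X_y$ together with $\tilde f_{\tilde y} \cong f_y$. Under this identification, the restriction of $\Tr_f$ to the geometric fiber over $y$ recovers the restriction of $\Tr_{\tilde f}$ to the geometric fiber over $\tilde y$, which is the classical trace by hypothesis; likewise the induced comparison $f_y^*[2d](d) \to f_y^!$ coincides with $\tilde f_{\tilde y}^*[2d](d) \cong \tilde f_{\tilde y}^!$, an isomorphism by the assumed cohomological smoothness of $\tilde f$. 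The main subtlety I anticipate lies in checking that these identifications are mutually compatible and that the descended map is independent of the chosen lift $\tilde y$; this should follow formally from the naturality of proper base change for the square $(f, g, \tilde f, \tilde g)$ together with the fact that the trace is characterized by its restriction to geometric fibers.
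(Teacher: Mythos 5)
Your part (1) takes a genuinely different route from the paper (deperfection, miracle flatness, classical smooth trace), and it contains a real gap: the scheme-theoretic fibers of a deperfection $\tilde f$ need \emph{not} be smooth even when $X$, $Y$ are perfectly smooth and the fibers of $f$ are perfectly smooth. The basic counterexample is the relative Frobenius $\tilde f:\bA^1\to\bA^1$, $x\mapsto x^p$: its perfection is an isomorphism, so every fiber of $f$ is a point (perfectly smooth of dimension $0$), yet every fiber of $\tilde f$ is $\Spec k[x]/(x^p)$, not smooth and not even reduced. Hence the step ``fibers of $\tilde f$ are smooth, so miracle flatness makes $\tilde f$ smooth'' fails; the most you can extract is flatness of $\tilde f$ (fiber dimension is topological, so miracle flatness does apply), after which you would have to invoke the trace map for flat compactifiable morphisms of relative dimension $d$ (SGA~4 XVIII) rather than the smooth trace, and you would also need to justify that $f$ deperfects to a morphism between your chosen smooth models --- Proposition~\ref{P: deperfection}(2) gives some deperfection, and matching it with the smooth atlases is only possible up to composing with powers of Frobenius. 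The paper sidesteps all of this: perfect smoothness of $X$ and $Y$ gives $\omega_X\cong\Ql[2d_X](d_X)$ and $\omega_Y\cong\Ql[2d_Y](d_Y)$, hence $f^!\Ql\cong\Ql[2d](d)$; the counit $f_!f^!\to\id$ then produces $f_!\Ql[2d](d)\to\Ql$, which factors through $R^{2d}f_!\Ql(d)$ because $R^if_!\Ql=0$ for $i>2d$, and both conditions of Definition~\ref{D: coh smooth} follow from this construction together with the perfect smoothness of the fibers of $f$.

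Your part (2) is correct and is essentially the paper's argument: base change for $f_!$ along $g$, compose with $\Tr_{\tilde f}$, and use adjunction plus surjectivity and geometric connectedness of the fibers of $g$ to land in $R^0g_*\Ql\cong\Ql$. Two small touch-ups: the isomorphism $g^*R^{2d}f_!\Ql(d)\cong R^{2d}\tilde f_!\Ql(d)$ is base change for compactly supported pushforward along $f$ (valid for arbitrary base change), not proper base change along $g$; and $\Ql\to g_*\Ql$ is an isomorphism only onto the degree-zero part $R^0g_*\Ql$ (higher direct images need not vanish), but since the source $R^{2d}f_!\Ql(d)$ is a sheaf the map factors through this truncation, which is exactly the paper's degree argument.
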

Note that Part (2) of the lemma essentially says that cohomological smoothness can be checked locally in the $v$-topology introduced in \cite{BS}. 
\begin{proof}
(1) Since $X$ and $Y$ are perfectly smooth, 
$$f^!\Ql[2d](d)\cong f^!\omega_Y[2d-2d_Y](d-d_Y)\cong \omega_X[-2d_X](-d_X)\cong \Ql,$$ where $d_X=\dim X$, $d_Y=\dim Y$, and $\omega_X$ and $\omega_Y$ are the dualizing sheaves. 
This isomorphism induces $f_!\Ql[2d](d)\to \Ql$ which in turn induces $\Tr_f: R^{2d}f_!\Ql(d)\to \Ql$ since $R^{i}f_!\Ql(d)=0$ for $i>2d$. It follows from the construction that fiberwise, $\Tr_f$ restricts to the usual trace map. Since fibers are perfectly smooth, $f_y^*[2d](d)\cong f_y^!$.

(2) Let use denote $\tilde X\to X$ by $g_{X}$ and $\tilde Y\to Y$ by $g_{Y}$. Let $\tilde f: \tilde X\to \tilde Y$ be the base change of $f$. Then we have
\[g_Y^*f_!\Ql[2d](d)\cong \tilde f_!\Ql[2d](d)\xrightarrow{\Tr_{\tilde f}} \Ql,\]
which induces $f_!\Ql[2d](d)\to (g_Y)_*\Ql$. Since $f_!\Ql[2d](d)$ lives in cohomological degree $\leq 0$ and $(g_Y)_*\Ql$ in cohomological degree $\geq 0$. This gives 
$$\Tr_f:R^{2d}f_!\Ql(d)\to R^0(g_Y)_*\Ql=\Ql$$ 
since the fibers of $g_Y$ are geometrically connected.  It is clear that $\Tr_f$ restricts to the fiberwise trace map. The isomorphism $f_y^*[2d](d)\cong f_y^!$ is clear.
\end{proof}

We would like to have a base change homomorphism (but not necessarily an isomorphism) in non-Cartesian case.
\begin{dfn}
\label{D: basechangeable}
A commutative square of perfect pfp algebraic stacks 
\begin{equation}
\label{E: basechangeable}
\begin{CD}
D@>p>>C\\
@VbVV@VVaV\\
Y@>f>>X
\end{CD}
\end{equation}
is called \emph{base changeable} if the induced morphism $h:D\to C\times_XY$ is representable by perfectly proper algebraic spaces.
\end{dfn}

The following lemma is straightforward.
\begin{lem}
\label{L: criterion basechangeable}
\begin{enumerate}
\item A commutative diagram as \eqref{E: basechangeable} is base changeable if
\begin{itemize}
\item either $p$ is representable by perfectly proper algebraic spaces and $f$ is separated;
\item or $b$ is representable by perfectly proper algebraic spaces and $a$ is separated.
\end{itemize}
\item In the commutative diagram
\begin{equation}
\label{E: two base changeable}
\begin{CD}
E@>q>>D@>p>>C\\
@VcVV@VbVV@VVaV\\
Z@>g>>Y@>f>>X
\end{CD}
\end{equation}
if both inner squares are base changeable, so is the outer rectangle.
\end{enumerate}
\end{lem}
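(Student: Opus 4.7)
The plan is to verify both parts by reducing them to standard facts about morphisms representable by perfectly proper algebraic spaces (closure under base change, composition, and the graph trick), applied to appropriate factorizations of the comparison morphism $h \colon D \to C \times_X Y$.

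For Part (1), I will treat the first bullet; the second is symmetric with the roles of $C$ and $Y$ swapped. The projection $\mathrm{pr}_C \colon C \times_X Y \to C$ is the base change of $f$ along $a$, hence is separated. Note that $\mathrm{pr}_C \circ h = p$, which is representable by perfectly proper algebraic spaces by hypothesis. I would then factor $h$ as
\[
h \colon D \xrightarrow{\;\Gamma_h\;} D \times_C (C \times_X Y) \xrightarrow{\;\mathrm{pr}_2\;} C \times_X Y,
\]
where $\Gamma_h$ is the graph morphism of $h$ viewed as a morphism over $C$. Separatedness of $\mathrm{pr}_C$ implies $\Gamma_h$ is a closed immersion (it is the base change of the diagonal of $\mathrm{pr}_C$), hence representable by perfectly proper algebraic spaces; and $\mathrm{pr}_2$ is the base change of $p$ along $C \times_X Y \to C$, hence inherits the same property. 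The composition $h$ therefore is representable by perfectly proper algebraic spaces.

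For Part (2), I will use the canonical identification $(C \times_X Y) \times_Y Z \cong C \times_X Z$ to write the comparison morphism of the outer rectangle as the composition
\[
E \xrightarrow{\;h_R\;} D \times_Y Z \xrightarrow{\;h_L \times_Y \mathrm{id}_Z\;} (C \times_X Y) \times_Y Z \cong C \times_X Z,
\]
where $h_L \colon D \to C \times_X Y$ and $h_R \colon E \to D \times_Y Z$ are the comparison morphisms of the two inner base changeable squares. By hypothesis $h_R$ is representable by perfectly proper algebraic spaces, and $h_L \times_Y \mathrm{id}_Z$ is the base change of $h_L$ along $Z \to Y$, so it also has this property. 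The composition then does as well.

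The only slightly subtle point is justifying that $\Gamma_h$ in Part (1) is a closed immersion, which in the stack-theoretic setting amounts to observing that separatedness of the morphism of stacks $\mathrm{pr}_C$ (inherited from $f$ by base change) forces its diagonal, and hence every graph of a section of it, to be a closed immersion; this is routine and should not pose any real obstacle.
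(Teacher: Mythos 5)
Your proof is correct. The paper itself gives no argument here (it just declares the lemma straightforward), and your factorizations --- the graph trick $h=\mathrm{pr}_2\circ\Gamma_h$ for Part (1) and the identification $(C\times_XY)\times_YZ\cong C\times_XZ$ with $h=(h_L\times_Y\mathrm{id}_Z)\circ h_R$ for Part (2), together with stability of ``representable by perfectly proper algebraic spaces'' under base change and composition --- are exactly the intended routine verification. One small imprecision: with the paper's definition of separatedness for stacks (the diagonal, which is representable by algebraic spaces but need not be a monomorphism, is required to be perfectly proper), the graph $\Gamma_h$ is only guaranteed to be representable by perfectly proper algebraic spaces rather than a closed immersion; since that weaker property is all your argument uses, the proof goes through unchanged, but you should phrase the conclusion about $\Gamma_h$ accordingly.
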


This name is justified by the following fact.

\begin{lem}
\label{L:proper base change}
Assume we have a base changeable commutative diagram of perfect pfp algebraic stacks
as \eqref{E: basechangeable}.
Then there is a natural base change homomorphism
\begin{equation}
\label{E: base change isom}
\on{BC}^*_!: a^*f_!\to p_!b^*,
\end{equation}
which is an isomorphism if \eqref{E: basechangeable} is Cartesian.
If $p$ and $f$ are separated, it fits into the commutative diagram 
\[\begin{CD}
a^*f_!@>>> p_!b^*\\
@VVV@VVV\\
a^*f_*@>>>p_*b^*,
\end{CD}\]
where the bottom arrow is the natural adjunction.

In addition, given \eqref{E: two base changeable} with both squares base changeable, then the base change homomorphism $a^*(fg)_!\to (pq)_!c^*$ is equal to the composition 
$$a^*(fg)_!=a^*f_!g_!\xrightarrow{\on{BC}_!^*} f_!b^*g_!\xrightarrow{\on{BC}_!^*} f_!g_!c^*=(fg)_!c^*.$$ 
Similarly, $(fg)^*a_!\to c_!(pq)^*$ is equal to the composition
$$(fg)^*a_!=g^*f^*a_!\xrightarrow{\on{BC}_!^*} g^*b_!f^*\xrightarrow{\on{BC}_!^*} c_!g^*f^*=c_!(fg)^*.$$
\end{lem}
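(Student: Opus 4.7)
The plan is to reduce everything to the standard proper base change isomorphism by factoring the base changeable square through its associated Cartesian square. Let $q: C\times_X Y \to C$ and $b': C\times_X Y \to Y$ be the projections, and recall by hypothesis we have a perfectly proper representable map $h: D \to C\times_X Y$ with $p = q\circ h$ and $b = b'\circ h$. The standard proper base change (valid in the $\ell$-adic six-functor formalism for pfp algebraic stacks in $\Aff_k^{\pf}$, as discussed in \S\ref{ASS:perfect AG}) gives a canonical isomorphism $\on{BC}_!^{*,\on{cart}}: a^* f_! \xrightarrow{\sim} q_! {b'}^*$. Since $h$ is perfectly proper, the natural transformation $h_! \to h_*$ is an isomorphism, hence we have a unit map $\eta_h: \id \to h_* h^* = h_! h^*$ coming from the adjunction $h^* \dashv h_*$. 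Composing gives the desired
\[
\on{BC}_!^*: a^* f_! \xrightarrow{\on{BC}_!^{*,\on{cart}}} q_! {b'}^* \xrightarrow{q_!(\eta_h * {b'}^*)} q_! h_! h^* {b'}^* = p_! b^*.
\]
When \eqref{E: basechangeable} is Cartesian, $h$ is an isomorphism, so $\eta_h$ is the identity and $\on{BC}_!^*$ reduces to the standard proper base change isomorphism.

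For the compatibility with $*$-pushforwards (when $p$ and $f$ are separated), I would first construct the bottom map $a^* f_* \to p_* b^*$ by the analogous procedure using the $*$-base change in the Cartesian case followed by the unit for $h^* \dashv h_*$. The commutativity of the displayed square then follows from the naturality of the transformation $f_! \to f_*$ (and $p_! \to p_*$) with respect to the adjunction units and the base change isomorphism in the Cartesian case; this is formal and only uses that both $\on{BC}_!^*$ and its $*$-analogue are constructed from the same unit $\eta_h$.

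For the associativity under composition, given \eqref{E: two base changeable} with both squares base changeable, let $h_1: D \to C\times_X Y$ and $h_2: E \to D\times_Y Z$ denote the perfectly proper comparison maps, and let $h: E \to C\times_X Z$ be the induced map for the outer rectangle. The associativity is a formal 2-categorical statement: one verifies that $h$ factors (up to canonical isomorphism) as the composite of $h_2$ and a base change of $h_1$, and that the corresponding unit $\eta_h$ coincides with the composition of units built from $\eta_{h_1}$ and $\eta_{h_2}$. Combined with the associativity of the standard proper base change isomorphism in the Cartesian case, this gives the desired equality of composites. The analogous statement for $(fg)^* a_! \to c_! (pq)^*$ follows by an entirely parallel argument with the roles of horizontal and vertical arrows exchanged.

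The genuinely subtle part, I expect, is not any single one of these steps, but rather organizing the data so that the compatibilities can be checked without grinding through large diagrams. In practice one encodes the base change transformations as 2-morphisms in an appropriate $(\infty,2)$-category of correspondences (as in \cite{LZ1, LZ2}), where both the Cartesian proper base change and the unit $\eta_h$ arise as instances of the same general Beck-Chevalley formalism; the verifications then reduce to the coherence of adjunction units under composition, which is standard. The main technical point to be careful about is that the construction of $\on{BC}_!^*$ factors naturally through the fiber product, and is genuinely natural in both the diagram and the sheaf, so that it can be composed horizontally and vertically without ambiguity.
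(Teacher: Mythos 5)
Your proposal is correct and follows essentially the same route as the paper: the paper's proof is exactly the composition of the Cartesian proper base change isomorphism with the unit $\id\to h_*h^*\cong h_!h^*$ for the perfectly proper comparison map $h:D\to C\times_XY$, with the remaining compatibilities checked formally. Your additional elaboration of the square with $*$-pushforwards and of the associativity under composition is consistent with what the paper leaves implicit.
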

\begin{proof} This follows from the previously mentioned properties of the base change isomorphism, together with the natural adjunction $\id\to h_*h^*\cong h_!h^*$ where $h: D\to C\times_XY$.
\end{proof}

\begin{rmk}
\label{R: another adjunction}
Given a diagram \eqref{E: basechangeable}, base changeable or not, there is always a homomorphism
\begin{equation}
\label{E: shrek adjunction}
 p_!b^!\to a^!f_!
 \end{equation}
by adjunction. In addition, given \eqref{E: two base changeable}, the map $(pq)_!c^!\to a^!(fg)_!$ is equal to
\[(pq)_!c^!=p_!q_!c^!\to p_!b^!g_!\to a^!f_!g_!=a^!(fg)_!,\]
and the map  $c_!(pq)^!\to (fg)^!a_!$ is equal to
\[c_!(pq)^!=c_!q^!p^!\to g^!b_!p^!\to g^!f^!a_!=(fg)^!a_!.\]
\end{rmk}

Now assume that \eqref{E: basechangeable} is Cartesian. Then the base change  isomorphism $\on{BC}^*_!$ also induces (by adjunction)
\begin{equation}
\label{E: base change shrek}
\on{BC}^{*!}: b^*f^!\to p^!a^*.
\end{equation}
If $f$ is  equidimensionally cohomologically smooth of relative dimension $d$, since the trace map is compatible with the base change, \eqref{E: base change shrek} coincides with
\begin{equation}
\label{E: base change shrek adjunction}
b^*f^!\cong b^*f^*[2d](d) = p^*a^*[2d](d) \cong p^!a^*
\end{equation}

\subsection{Review of cohomological correspondence}
\label{Sec:cohomological correspondence}
We recall the formalism of cohomological correspondences, partially following \cite{Vas}.
We will assume that $k$ is a perfect field of characteristic $p>0$. Below, stacks are algebraic stacks, perfectly of finite presentation in $\bf{Aff}_k^\pf$.\footnote{Of course, the same discussions will carry through when we work with algebraic stacks in $\Aff_k$. } We write $\mathrm{pt}$ for $\Spec k$.   We fix $\ell\neq p$, and fix a half Tate twist $\Ql(1/2)$. As usual, we write $\langle d\rangle$ for $[d](d/2)$. 

\begin{notation}
\label{N:star pullback}
For an equidimensional  cohomologically smooth morphism $f:X \to Y$ between algebraic stacks of relative dimension $d$, we write $f^\star: = f^*\langle d\rangle$; it takes perverse sheaves to perverse sheaves.
\end{notation}

If $X$ is a stack, let $D^*_c(X)\subset D(X,\Ql) (*=+,-,b,\emptyset)$ denote its constructible derived categories of $\ell$-adic sheaves. Let $\omega_X\in D^b_c(X)$ denote the dualizing sheaf on $X$. For $\mF\in D_c^b(X)$, let $\bD\mF=R\underline{\Hom}(\mF,\omega_X)$ denote the Verdier dual. By definition, there are canonical (unit and counit) maps 
\begin{equation}
\label{E: VD counit}
e: \mF\otimes \bD\mF\to \omega_X,\quad \delta: \Ql\to \mF\otimes^! \bD\mF:= \Delta^!(\mF\boxtimes\bD\mF)
\end{equation}

\begin{definition}
\label{AD:correspondences}
Let $(X_i,\mF_i)$ for $i=1,2$ be two pairs, where $X_i$ are stacks, and $\mF_i\in D(X_i,\Ql)$. A \emph{cohomological correspondence} $(C,u):(X_1,\mF_1)\to (X_2,\mF_2)$ is a stack $C\xrightarrow{c_1\times c_2} X_1\times  X_2$, together with a morphism $u:c_1^*\mF_1\to c_2^!\mF_2$. We call $C$ the \emph{support} of $(C,u)$. For simplicity, $(C,u)$ is sometimes denoted by $C$ or by $u$. There is an obvious notion of (iso)morphisms between two cohomological correspondences $(C,u)$ and $(C',u')$ from $(X_1,\mF_1)$ to $(X_2,\mF_2)$. The set of isomorphism classes of cohomological correspondences from $(X_1,\mF_1)$ to $(X_2,\mF_2)$ supported on $C$ is denoted by $\on{Corr}_C((X_1,\mF_1),(X_2,\mF_2))$ and by definition
\[\on{Corr}_C((X_1,\mF_1),(X_2,\mF_2))\cong \Hom_C(c_1^*\mF_1,c_2^!\mF_2).\]
When $c_1$ is proper, a cohomological correspondence $u$ will naturally induce a homomorphism $\on{H}_c(u): \on{H}^*_c(X_1\otimes\bar k, \calF_1) \to \on{H}^*_c(X_2\otimes \bar k, \calF_2)$,
 given as the composition of the following maps
\[
\on{H}^*_c(X_1\otimes \bar k, \calF_1) \to \on{H}^*_c(C\otimes \bar k, c_1^*\calF_1) \xrightarrow{\on{H}^*_c(u)} \on{H}^*_c(C\otimes \bar k, c_2^!\calF_2) \to \on{H}^*_c(X_2\otimes \bar k, \calF_2).
\]
In fact, this is a special case of the push-forward cohomological correspondences to be reviewed below.

If $(C,u):(X_1,\mF_1)\to (X_2,\mF_2)$ and $(D,v):(X_2,\mF_2)\to (X_3,\mF_3)$ are two cohomological correspondences, we define their composition to be $(C\times_{X_2}D,v\circ u):(X_1,\mF_1)\to (X_3,\mF_3)$, where $v\circ u$ is the following composition
\[p^*c_1^*\mF_1\xrightarrow{p^*u} p^*c_2^!\mF_2\xrightarrow{\on{BC}^{*!}} q^!d_1^*\mF_2\xrightarrow{q^!v} q^!d_2^!\mF_3,\]
$\on{BC}^{*!}$ is the base change isomorphism as in \eqref{E: base change shrek}, and $p, q$ are the projections from $C\times_{X_2}D$ to $C$ and to $D$ respectively.
For the induced homomorphism on the cohomology groups, we have $\on{H}_c(v\circ u) = \on{H}_c(v) \circ \on{H}_c(u)$, as maps $\on{H}^*_c(X_1\otimes\bar k, \calF_1) \to \on{H}^*_c(X_3\otimes \bar k, \calF_3)$, see Lemma \ref{AL:pushforward compatible with composition} below.
\end{definition}

Here are examples we use in the paper.

\begin{ex}
\label{Ex:examples of correspondences}
\begin{enumerate}

\item For a morphism $f: X\to Y$ and $\mF \in D(X,\Ql)$, there is a cohomological correspondence $(X\xleftarrow{\id}X\xrightarrow{f} Y, u:\mF\to f^!f_!\mF)$ from $(X,\mF)$ to $(Y,f_!\mF)$, called the \emph{pushforward correspondence}, and denoted by $(\Ga_f)_!$ for simplicity.

\item For a morphism $f: X\to Y$ and $\mF \in D(Y,\Ql)$, there is a natural cohomological correspondence ($Y\xleftarrow{f} X\xrightarrow{\id} X, u:= \mathrm{id}:f^*\mF\to f^*\mF)$ from $(Y,\mF)$ to $(X,f^*\mF)$, called the \emph{pullback correspondence}, and denoted by $\Ga_f^*$ for simplicity.

\item If $(C,u): (X_1,\mF_1)\to (X_2,\mF_2)$ is a cohomological correspondence, with $\mF_i\in D_c^b(X_i)$, then $(C,\bD u): (X_2, \bD \mF_2)\to (X_1,\bD \mF_1)$ is a cohomological correspondence, called the \emph{dual correspondence}.

\item Let $(X,\mF)$ be a pair with $\mF\in D_c^b(X)$. Then \eqref{E: VD counit} gives a cohomological correspondence and its dual
$$e_\mF\in \on{Corr}_{X}((X\times X, \mF\boxtimes\bD\mF),(\on{pt}, \Ql)),\quad \delta_\mF\in \on{Corr}_{X}((\on{pt}, \Ql), (X\times X, \mF\boxtimes\bD\mF)).$$

\item Assume that $k=\bar k$. Let $X_1, X_2$ be perfectly smooth algebraic spaces, of pure dimension, and let $X_1\leftarrow C\to X_2$ be a correspondence. Then 
\begin{equation}
\label{E:corr and cycle}
\begin{split}
\on{Corr}_C((X_1,\Ql\langle d_1 \rangle), (X_2, \Ql\langle d_2\rangle))&=\Hom_{D^b_c(C)}(\Ql\langle d_1\rangle,\omega\langle d_2-2\dim X_2\rangle)\\
&=\on{H}^{\on{BM}}_{2\dim X_2+d_1-d_2}(C).
\end{split}
\end{equation}
So if $2\dim C=2\dim X_2+d_1-d_2$, $\on{Corr}_C((X_1,\Ql\langle d_1 \rangle), (X_2, \Ql\langle d_2\rangle))$ can be regarded as space of functions on the set of irreducible components of $C$ of maximal dimension. In particular,
If $X_1,X_2,C$ are finite sets, regarded as varieties over $k$,  $\on{Corr}_C((X_1,\Ql),(X_2,\Ql))$ can be identified with the space of $\Ql$-valued functions on $C$. 
\end{enumerate}
\end{ex}

We will also make use of the following construction.
\begin{lem}
\label{L:sharp correspondence}
Assume that $\mF_i\in D_c^b(X_i)$.
The set of cohomological correspondences from $(X_1,\mF_1)$ to $(X_2,\mF_2)$ is canonically bijective to the set of cohomological correspondences from $(X_1\times X_2,\mF_1\boxtimes\bD\mF_2)$ to $(\mathrm{pt}, \overline \QQ_\ell)$. 
We denote the this bijection as $(C, u) \leftrightarrow(C, u)^\sharp$ or simply $u\leftrightarrow u^\sharp$, called the \emph{sharp correspondence}.
Moreover, $(\DD u)^\sharp$ is the same as $u^\sharp$ if we naturally identify $X_1 \times X_2$ with $X_2 \times X_1$.

\end{lem}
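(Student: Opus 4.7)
The plan is to unpack both sides of the claimed bijection as concrete $\Hom$-groups on the support $C$ and then exhibit the natural isomorphism between them using only tensor-hom adjunction and Verdier biduality. Writing $c_1:C\to X_1$ and $c_2:C\to X_2$ for the structure maps, the left hand side is by definition
\[
\Hom_{D^b_c(C)}\bigl(c_1^*\calF_1,\; c_2^!\calF_2\bigr).
\]
On the other hand, a cohomological correspondence from $(X_1\times X_2,\calF_1\boxtimes \DD\calF_2)$ to $(\mathrm{pt},\overline\QQ_\ell)$ with support $C$ is a map $(c_1\times c_2)^*(\calF_1\boxtimes \DD\calF_2)\to c^!\overline\QQ_\ell=\omega_C$, where $c:C\to\mathrm{pt}$. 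Since $(c_1\times c_2)^*(\calF_1\boxtimes\DD\calF_2)\cong c_1^*\calF_1\otimes c_2^*\DD\calF_2$, the right hand side equals
\[
\Hom_{D^b_c(C)}\bigl(c_1^*\calF_1\otimes c_2^*\DD\calF_2,\; \omega_C\bigr).
\]

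First, I would apply the tensor-hom adjunction to rewrite this last group as $\Hom(c_1^*\calF_1,\,R\underline{\Hom}(c_2^*\DD\calF_2,\omega_C))=\Hom(c_1^*\calF_1,\,\DD(c_2^*\DD\calF_2))$. Next, using the standard identity $\DD\circ f^*\cong f^!\circ\DD$ for any morphism $f$ together with Verdier biduality $\DD\DD\calF_2\cong\calF_2$ (which uses the assumption $\calF_i\in D^b_c(X_i)$), I obtain $\DD(c_2^*\DD\calF_2)\cong c_2^!\DD\DD\calF_2\cong c_2^!\calF_2$. Composing these natural isomorphisms produces the desired canonical bijection $u\leftrightarrow u^\sharp$. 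Concretely, $u^\sharp$ is the composition
\[
c_1^*\calF_1\otimes c_2^*\DD\calF_2 \xrightarrow{\,u\otimes \mathrm{id}\,} c_2^!\calF_2\otimes c_2^*\DD\calF_2 \longrightarrow c_2^!(\calF_2\otimes \DD\calF_2)\xrightarrow{c_2^!(e_{\calF_2})} c_2^!\omega_{X_2}=\omega_C,
\]
with the middle arrow being the projection formula map.

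For the compatibility statement, I would trace through the construction with the dual correspondence $\DD u\colon c_2^*\DD\calF_2\to c_1^!\DD\calF_1$ (on the same support $C$, but now viewed as a correspondence from $(X_2,\DD\calF_2)$ to $(X_1,\DD\calF_1)$). Applying $\sharp$ with the roles of $X_1,X_2$ swapped gives a morphism $c_2^*\DD\calF_2\otimes c_1^*\DD\DD\calF_1\to\omega_C$; using biduality $\DD\DD\calF_1\cong\calF_1$ and the symmetry of $\otimes$, one checks the resulting map coincides with $u^\sharp$. This is a purely formal verification at the level of the explicit formula above, amounting to the fact that $e_{\calF_2}$ and $e_{\DD\calF_1}$ correspond under biduality.

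The only subtlety, and hence the main point to be careful about, is the canonicity of the natural isomorphism $\DD\circ c_2^*\cong c_2^!\circ\DD$ (and its compatibility with the projection formula) in the framework of perfect algebraic stacks; but this has already been set up in the six-functor formalism recalled in Appendix~\ref{ASS:perfect AG}, so no new input is required. Apart from this bookkeeping, the whole argument is a diagram chase.
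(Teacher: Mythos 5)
Your proposal is correct and follows essentially the same route as the paper: the paper also defines $u^\sharp$ by the explicit composite $c_1^*\calF_1\otimes c_2^*\DD\calF_2\xrightarrow{u\otimes\id}c_2^!\calF_2\otimes\DD c_2^!\calF_2\to\omega_C$, with the bijection implicitly coming from the same tensor-hom adjunction, $\DD\circ c_2^*\cong c_2^!\circ\DD$, and biduality that you spell out. Your treatment of the duality compatibility is likewise the same formal check the paper dismisses as clear.
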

\begin{proof}
Explicitly, given $(C,u)$, $u^\sharp$ can be constructed as
\begin{equation}
\label{E:sharp correspondence explicit}
(c_1 \times c_2)^*\calF_1 \boxtimes \DD \calF_2 \cong c_1^*\calF_1 \otimes c_2^*\DD\calF_2 \xrightarrow{u \otimes \id} c_2^! \calF_2 \otimes \DD c_2^! \calF_2 \xrightarrow{ \eqref{E: VD counit}}
\omega_C.
\end{equation}
It is clear that taking the dual on $u$ does not change the sharp correspondence (after swapping the two factors).
\end{proof}

\begin{ex}
\label{Ex:sharp correspondence}
(1) Under the sharp correspondence, the identity cohomological correspondence $\id$ becomes $\id^\sharp=e_\mF\in \on{Corr}_X((X\times X, \mF\boxtimes\bD\mF),(\on{pt},\Ql))$ from Example \ref{Ex:examples of correspondences}(4).

(2) In case when $X_1 =X_2 = \mathrm{pt}$ and $\calF_1 = \calF_2 = \overline \QQ_l$, $u$ and $u^\sharp$ both viewed as homomorphisms $\overline \QQ_{\ell, C} \to \omega_C$ are the same.
\end{ex}

\subsubsection{Pushforward of cohomological correspondences}
\label{ASS:pushforward correspondence}

Now assume that we have the following commutative diagram
\[\begin{CD}
X_1@<c_1<< C@>c_2>> X_2\\
@Vf_1VV@VVfV@VVf_2V\\
Y_1@<d_1<<D@>d_2>>Y_2
\end{CD}\]
and let $(C, u:c_1^*\mF_1\to c_2^!\mF_2)$ be a cohomological correspondence from $(X_1,\mF_1)$ to $(X_2,\mF_2)$. 
Assume that the left commutative square is base changeable (Definition \ref{D: basechangeable}), then we have 
\[d_1^*(f_1)_!\mF_1\xrightarrow{\on{BC}^*_!} f_!c_1^*\mF_1\stackrel{f_!u}{\to} f_!c_2^!\mF_2\xrightarrow{\eqref{E: shrek adjunction}} d_2^!(f_2)_!\mF_2.\]
By abuse of notation, we denote the above map still by $f_!(u)$. Then $(D,f_!(u))$ is a cohomological correspondence from $(Y_1,(f_1)_!\mF_1)$ to $(Y_2,(f_2)_!\mF_2)$. We call this the \emph{pushforward of the cohomological correspondence}.

\begin{rmk}
If $f_1$ is separated and $f$ is representable by perfectly proper algebraic spaces (so the left square is base changeable), then $f_!(u)$ factors as the composition of the natural map $(f_1)_!\mF_1\to (f_1)_*\mF_1$ and
\[d_1^*(f_1)_*\mF_1\to f_*c_1^*\mF_1\to f_!c_2^!\mF_2\to d_2^!(f_2)_!\mF_2.\]
The latter map is the pushforward of cohomological correspondences considered in \cite{Illusie}. However, in most places of this paper, we are in the situation where $d_1$ is separated and $c_1$ is representable by perfectly proper algebraic spaces.
\end{rmk}

In particular, if $(Y_1\xleftarrow{d_1} D\xrightarrow{d_2} Y_2)$ is $(S=S=S)$ for some base scheme $S$, and $c_1$ is proper (so the left square is base changeable), we get a homomorphism of $\ell$-adic sheaves
\[f_!(u): (f_1)_!\mF_1\to (f_2)_!\mF_2.\]
More specifically, if $S=\Spec k$ is algebraically closed, and $c_1$ is proper, we obtain the previously mentioned
$\on{H}_c(u): \on{H}_c^*(X_1,\mF_1)\to \on{H}_c^*(X_2,\mF_2)$.

The pushforward of cohomological correspondences behaves well under compositions.

\begin{lem}
\label{L: comp of push coh corr}
Let
\[\begin{CD}
X_1@<c_1<< C@>c_2>> X_2\\
@Vf_1VV@VVfV@VVf_2V\\
Y_1@<d_1<<D@>d_2>>Y_2\\
@Vg_1VV@VVgV@VVg_2V\\
Z_1@<e_1<<E@>e_2>>Z_2
\end{CD}\]
be a commutative diagram, such that the left two squares are base changeable. Let $u:c_1^*\mF_1\to c_2^!\mF_2$ be a cohomological correspondence from $(X_1,\mF_1)$ to $(X_2,\mF_2)$. Then the left outer rectangle is base changeable and
$(E, (gf)_!(u))$ is equal to $(E,g_!(f_!(u)))$ as cohomological correspondences from $(Z_1,(g_1f_1)_!\mF_1\cong(g_1)_!(f_1)_!\mF_1)$ to $(Z_2,(g_2f_2)_!\mF_2\cong(g_2)_!(f_2)_!\mF_2)$.
\end{lem}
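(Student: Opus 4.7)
My plan is to reduce the statement to two separate compatibility statements that have already been recorded in the excerpt.

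First, the outer left rectangle is base changeable by Lemma~\ref{L: criterion basechangeable}(2). So both composites $(gf)_!(u)$ and $g_!(f_!(u))$ are well-defined as cohomological correspondences from $(Z_1, (g_1f_1)_!\mF_1)$ to $(Z_2, (g_2f_2)_!\mF_2)$, and the task is to compare them after unwinding the definitions. Recall that by definition, $(gf)_!(u)$ is the composition
\[
e_1^*(g_1f_1)_!\mF_1 \xrightarrow{\on{BC}^*_!} (gf)_!c_1^*\mF_1 \xrightarrow{(gf)_!u} (gf)_!c_2^!\mF_2 \xrightarrow{\eqref{E: shrek adjunction}} e_2^!(g_2f_2)_!\mF_2,
\]
while $g_!(f_!(u))$ is obtained by first applying the $(f,f_1,f_2)$-level construction and then the $(g, g_1, g_2)$-level construction, yielding
\[
e_1^*(g_1)_!(f_1)_!\mF_1 \xrightarrow{\on{BC}^*_!} g_!d_1^*(f_1)_!\mF_1 \xrightarrow{g_!\on{BC}^*_!} g_!f_!c_1^*\mF_1 \xrightarrow{g_!f_!u} g_!f_!c_2^!\mF_2 \xrightarrow{g_!\eqref{E: shrek adjunction}} g_!d_2^!(f_2)_!\mF_2 \xrightarrow{\eqref{E: shrek adjunction}} e_2^!(g_2)_!(f_2)_!\mF_2.
\]
Under the canonical isomorphisms $(g_1f_1)_!\cong (g_1)_!(f_1)_!$, $(gf)_!\cong g_!f_!$ and $(g_2f_2)_!\cong (g_2)_!(f_2)_!$, both sides become homomorphisms between the same source and target.

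The middle portions of the two composites, involving $u$, agree trivially: after identifying $(gf)_! = g_!f_!$ the arrow $(gf)_!u$ becomes exactly $g_!f_!u$. Hence the comparison reduces to the two outer pieces. On the source side, the assertion is that the base change homomorphism $e_1^*(g_1f_1)_! \to (gf)_! c_1^*$ associated with the outer rectangle coincides with the composite of base change homomorphisms associated with the two inner squares; this is precisely the statement in the last paragraph of Lemma~\ref{L:proper base change}. On the target side, the assertion is that the adjunction map $(gf)_! c_2^! \to e_2^!(g_2f_2)_!$ associated with the outer rectangle factors through the two inner-square adjunction maps of \eqref{E: shrek adjunction}; this is exactly the first compatibility recorded in Remark~\ref{R: another adjunction}.

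Combining these three compatibilities gives the equality $(gf)_!(u) = g_!(f_!(u))$, as desired. The main obstacle, such as it is, is purely bookkeeping: one must be careful to distinguish the base-change arrow $\on{BC}^*_!$ (which is defined via the unit $\id\to h_*h^*\cong h_!h^*$ for the representable proper map $h$ into the fiber product) from the $(!,!)$-adjunction giving \eqref{E: shrek adjunction}, and to verify that the two compatibility statements invoked above are applied to the correct squares. Since the excerpt formulates both compatibilities explicitly in the required generality (i.e.\ allowing non-Cartesian but base-changeable squares), no further genuine computation is needed.
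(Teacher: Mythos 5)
Your proof is correct and follows essentially the same route as the paper, whose proof consists precisely of the observation that the claim follows from Lemma~\ref{L:proper base change} and Remark~\ref{R: another adjunction}; you have simply carried out the (correct) unwinding, matching the source-side piece to the composition statement at the end of Lemma~\ref{L:proper base change} and the target-side piece to the first compatibility in Remark~\ref{R: another adjunction}, with Lemma~\ref{L: criterion basechangeable}(2) giving base changeability of the outer rectangle.
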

\begin{proof}Using Lemma \ref{L:proper base change} and Remark \ref{R: another adjunction}, this is straightforward.
\end{proof}

Before proceeding, let us state is usual lemma, which will be used several times below.
\begin{lem}
\label{L: cartesian}
In a cubic commutative diagram as below, 
\[
\xymatrix@R=10pt@C=5pt{
& A' \ar[rr]\ar'[d][dd]\ar[dl]
& & B' \ar[dd]\ar[dl]
\\
A \ar[rr]\ar[dd]
& & B \ar[dd]
\\
& C' \ar'[r][rr]\ar[dl]
& & D'\ar[dl]
\\
C \ar[rr]
& & D 
}
\]
if the front, the back, and the bottom faces are Cartesian, so is the top face.
\end{lem}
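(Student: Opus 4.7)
The plan is to verify the Cartesian property for the top face by iteratively applying the standard pasting lemma for Cartesian squares, using the front, bottom, and back faces in turn. The argument is purely formal and relies only on universal properties of fibre products, so no deeper geometric input beyond what is already encoded in the three hypotheses is required.

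First I would unwind the three Cartesian hypotheses as pullback presentations: the front face gives $A \cong B \times_D C$, the bottom face gives $C' \cong D' \times_D C$, and the back face gives $A' \cong B' \times_{D'} C'$. The goal is then to show that the canonical map $A' \to B' \times_B A$ induced by the commutativity of the cube is an isomorphism, where the structural maps $A' \to B'$ and $A' \to A$ come from the back and left faces of the cube, respectively.

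The main step is the following chain of canonical identifications:
\[
B' \times_B A \;\cong\; B' \times_B (B \times_D C) \;\cong\; B' \times_D C \;\cong\; B' \times_{D'} (D' \times_D C) \;\cong\; B' \times_{D'} C' \;\cong\; A'.
\]
Here the first isomorphism substitutes the front-face hypothesis, the second is the pasting lemma, the third uses only that the map $B' \to D$ factors through $D'$ by the commutativity of the cube together with the pasting lemma applied in reverse, the fourth substitutes the bottom-face hypothesis, and the last substitutes the back-face hypothesis.

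The only (minor) obstacle is the bookkeeping: one must confirm that the composite isomorphism $B' \times_B A \cong A'$ so produced agrees with the canonical arrow coming from the universal property of the top face, rather than differing from it by some nontrivial automorphism. This, however, is immediate from the uniqueness clauses in each pullback used along the way, since every intermediate isomorphism is compatible with the projections into the relevant vertices of the cube. Hence no ingredient beyond the standard pasting lemma is needed.
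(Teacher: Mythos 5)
Your proposal is correct, and it is exactly the definitional chase the paper has in mind — its own proof is just the one-line "Go through the definitions," which your chain $B'\times_B A\cong B'\times_B(B\times_D C)\cong B'\times_D C\cong B'\times_{D'}C'\cong A'$ (plus the check that the resulting isomorphism has the right projections, using that a map into $A=B\times_D C$ is determined by its components) carries out explicitly. Nothing is missing.
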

\begin{proof}
Go through the definitions.
\end{proof}

The pushforward cohomological correspondence is compatible with composition of cohomological correspondences.
\begin{lem}
\label{AL:pushforward compatible with composition}
Suppose we have a commutative diagram
\[
\begin{CD}
X_1@<c_1<< C@>c_2>> X_2@<c'_1<< C'@>c'_2>> X_3\\
@Vf_1VV@VVfV@VVf_2V@VVf'V@VVf_3V\\
Y_1@<d_1<<D@>d_2>>Y_2@<d'_1<<D'@>d'_2>>Y_3
\end{CD}
\]
with the first and the third square base changeable, $c_1,  c'_1$ proper and $d_1,d'_1$ separated, and let $u: c_1^*\calF_1 \to c_2^!\calF_2$ and $v: c'^*_1\calF_2 \to c'^!_2 \calF_3$ be cohomological correspondences from $(X_1, \calF_1)$ to $(X_2, \calF_2)$ and  from $(X_2, \calF_2)$ to $(X_3, \calF_3)$, respectively.
Set $ \widetilde C := C \times_{X_2} C'$ and $\widetilde D: = D \times_{Y_2} D'$ and let $\tilde f: \widetilde C \to \widetilde D$ denote the naturally induced morphism.
Then the following statements hold.
\begin{enumerate}
\item The right square in the diagram 
\[\begin{CD}
X_1@<c_1<<C@<p_C<<\tilde C \\
@Vf_1VV@VVfV@VV\tilde fV\\
Y_1@<d_1<<D@<P_D<<\tilde D
\end{CD}\]
is base changeable. Note that this ensures that $\tilde f_!(v \circ u)$ is well defined by Lemma \ref{L: criterion basechangeable} (1).
\item We have an equality of cohomological correspondences from $(Y_1, (f_1)_!\calF_1)$ to $(Y_3, (f_3)_!\calF_3)$ supported on $\widetilde D$:
\[
\tilde f_!(v\circ u) = f'_!(v) \circ f_!(u).
\]
\end{enumerate}
\end{lem}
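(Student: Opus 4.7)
The plan is to first handle the base changeability in (1) by a fiber-product calculation, and then verify (2) by expressing both sides as the same composite of base-change and adjunction morphisms applied to $u$ and $v$, using the functoriality properties recorded in Lemma~\ref{L:proper base change} and Remark~\ref{R: another adjunction}.

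For (1), I would compute the target of the comparison map
\[
\tilde f\times (p_C\times c_1):\widetilde C \longto \widetilde D\times_D C
\]
explicitly. Using $\widetilde D=D\times_{Y_2}D'$ and the fact that $C\to D$ factors through $C\to X_2\to Y_2$, one identifies $\widetilde D\times_D C \cong C\times_{Y_2}D'$, while $\widetilde C=C\times_{X_2}C'$. Under these identifications the comparison map factors as a base change of the canonical map $C'\to D'\times_{Y_2}X_2$, which is representable by perfectly proper algebraic spaces by the hypothesis that the third square of the original diagram is base changeable. Hence the new square is base changeable, so $\tilde f_!(v\circ u)$ is defined by Lemma~\ref{L: criterion basechangeable}(1).

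For (2), the strategy is to unwind each side into a sequence of elementary arrows and then match them. Writing $p_C,p_{C'}$ (resp.\ $P_D,P_{D'}$) for the projections from $\widetilde C$ (resp.\ $\widetilde D$), the composition $v\circ u$ is by definition
\[
p_C^*c_1^*\mF_1\xrightarrow{p_C^*u} p_C^*c_2^!\mF_2\xrightarrow{\on{BC}^{*!}} p_{C'}^!c'^*_1\mF_2\xrightarrow{p_{C'}^!v} p_{C'}^!c'^!_2\mF_3,
\]
and $\tilde f_!$ applied to it produces $\tilde f_!(v\circ u)$ by the usual base change $\on{BC}^*_!$ along the outer left rectangle and the adjunction $\tilde f_!(p_{C'}c'_2)^!\to (P_{D'}d'_2)^!(f_3)_!$. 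On the other hand, the composition $f'_!(v)\circ f_!(u)$ is, by definition, obtained by applying $\on{BC}^{*!}$ across $\widetilde D$ to the pushforwards $f_!(u)$ and $f'_!(v)$ separately, each of which is itself assembled from $\on{BC}^*_!$ and the $f_!h^!\to g^!f_!$ adjunction for its own square.

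The equality then reduces to two elementary compatibilities of the six-functor formalism: (i) the horizontal transitivity of base change morphisms $\on{BC}^*_!$ along composites of Cartesian/base-changeable squares, as recorded in Lemma~\ref{L:proper base change}; and (ii) the analogous transitivity for the $f_!b^!\to a^!f_!$ adjunction recorded in Remark~\ref{R: another adjunction}, together with the compatibility of $\on{BC}^*_!$ and $\on{BC}^{*!}$ obtained by adjunction. The hard part is purely bookkeeping: one has to factor the outer rectangle containing $\widetilde C,\widetilde D,C,D'$ in two different ways (first through $C$ then across the middle, versus first across the middle then through $C'$) and verify that the two factorizations of $\on{BC}^{*!}$ match. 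Once this diagram is written out, the equality $\tilde f_!(v\circ u)=f'_!(v)\circ f_!(u)$ follows by a direct identification of the resulting composites term by term. I expect no geometric obstacle beyond this diagram chase; the main effort is organizing the eight or so commutative rectangles involved so as to avoid sign or twist errors.
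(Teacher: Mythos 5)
Your proposal is correct and follows essentially the same route as the paper: part (1) is exactly the paper's identification $C\times_D\widetilde D\cong C\times_{X_2}(X_2\times_{Y_2}D')$, reducing base changeability of the new square to that of the third square of the original diagram, and part (2) is the paper's decomposition of the required identity into transitivity of $\on{BC}^*_!$ (Lemma~\ref{L:proper base change}), transitivity of the adjunction maps $f_!b^!\to a^!f_!$ (Remark~\ref{R: another adjunction}), and a middle compatibility across the Cartesian squares $\widetilde C=C\times_{X_2}C'$, $\widetilde D=D\times_{Y_2}D'$. Just note that this last compatibility (the paper's middle hexagon, proved by applying the composite form of Lemma~\ref{L:proper base change} to the cube whose top and bottom faces are Cartesian and whose side faces are base changeable, and then unwinding the adjunctions $(q_{D,!},q_D^!)$ and $(c_{2,!},c_2^!)$) is the one step requiring genuine care rather than mere bookkeeping.
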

\begin{proof}
(1) Using Lemma \ref{L: cartesian}, one sees that $(C\times_D\tilde D)\simeq C\times_{X_2}(X_2\times_{Y_2}D')$. Since $C'\to X_2\times_{Y_2}D'$ is perfectly proper by definition, $\tilde C\to C\times_D\tilde D$ is also perfectly proper.

(2) Let $\tilde c_1= c_1p_C$ and $\tilde d_1=d_1 p_D$. 
We write $q_C: \widetilde C \to C'$, $\tilde c_2= c_2q_C$, $q_D: \widetilde D \to D'$, and $\tilde d_2=d_2 q_D$.  Proving the lemma amounts to proving the commutativity of the following diagram.
\begin{tiny}
\[
\xymatrix@C=15pt{
\tilde d_1^* f_{1,!}\calF_1 \ar@{=}[d] \ar^{\on{BC}_!^*}[r] &
\tilde f_!\tilde c_1^*\calF_1 \ar@{=}[r] &
\tilde f_! p_C^*c_1^*\calF_1 \ar[r]^{u} & \tilde f_! p_C^* c_2^!\calF_2 \ar[r] & \tilde f_! q_C^! c'^*_1 \calF_2
\ar[r]^{v} \ar[dr] & \tilde f_! q_C^! c'^!_2 \calF_3 \ar[dr]
\ar@{=}[r] & \tilde f_!\tilde c^!_2 \calF_3 \ar[r] & \tilde d_2^!f_{3,!}\calF_3 \ar@{=}[d]
\\
p_D^*d_1^*f_{1,!}\calF_1 \ar^{\on{BC}_!^*}[r] & p_D^*f_! c_1^*\calF_1 \ar[r]^-u \ar^{\on{BC}_!^*}[ur] & p_D^*f_! c_2^!\calF_2 \ar[r] \ar[ru] & p_D^*d_2^!f_{2,!}\calF_2 \ar[r] & q_D^! d'^*_1 f_{2,!}\calF_2 \ar[r] & q_D^! f'_! c'^*_1\calF_2 \ar[r]^-v & 
q_D^! f'_!c'^!_2\calF_3 \ar[r] & 
q_D^! d'^!_2 f_{3,!}\calF_3.
}
\]
\end{tiny}

The commutativity of the left pentagon follows from Lemma \ref{L:proper base change}, and the commutativity of the right pentagon follows from Remark \ref{R: another adjunction}.
We remain to explain the commutativity of the middle hexagon. First notice that in the following cubic commutative diagram, the top and the bottom faces are Cartesian, and the left and the right faces are base changeable.
\[
\xymatrix@R=10pt@C=5pt{
& \tilde C\ar^{q_C}[rr]\ar'[d][dd]\ar_{p_C}[dl]
& & C' \ar^{f'}[dd]\ar[dl]
\\
C \ar[rr]\ar_f[dd]
& & X_2 \ar[dd]
\\
& \tilde D \ar'[r][rr]\ar[dl]
& & D'\ar^{d'_1}[dl]
\\
D \ar_{d_2}[rr]
& & Y_2 
}
\]
Applying Lemma \ref{L:proper base change} to the composition $\tilde C\to D'$ and $C\to Y_2$, we obtain
\[
\xymatrix@R=10pt{
& q_{D,!}\tilde f_!p_C^* \ar@{=}[r] &f'_!q_{C,!}p_C^*\ar^{\on{BC}_!^*}_\cong[dr] 
\\
q_{D,!}p_D^*f_! \ar^{\on{BC}_!^*}[ur] \ar^\cong_{\on{BC}_!^*}[dr] &&& f'_!c'^*_1c_{2,!}.
\\
&d'^*_1d_{2,!}f_! \ar@{=}[r]& d'^*_1f_{2,!}c_{2,!}\ar_{\on{BC}_!^*}[ur] 
}
\]
By adjunction of pairs $(q_{D,!}, q_D^!)$ and $(c_{2,!}, c_2^!)$, we obtain
\[
\xymatrix@R=10pt{
& \tilde f_!p_C^*c_2^! \ar[r] &q_D^!f'_!q_{C,!}p_C^*c_2^!\ar[dr] 
\\
p_D^*f_!c_2^! \ar[ur] \ar[dr] &&& q_D^!f'_!c'^*_1.
\\
&q_D^!d'^*_1d_{2,!}f_!c_2^! \ar[r]& q_D^!d'^*_1f_{2,!}\ar[ur] 
}
\]
It remains to notice that both diagrams
\[\xymatrix{
\tilde f_!p_C^*c_2^!\ar[r]\ar_{\on{BC}^{*!}}[d] & q_D^!f'_!q_{C,!}p_C^*c_2^!\ar[d] &  p_D^*f_!c_2^!\ar[r]\ar_{\eqref{E: shrek adjunction}}[d]& q_D^!d'^*_1d_{2,!}f_!c_2^!\ar[d]  \\
\tilde f_!q_C^!c'^*_1\ar^{\eqref{E: shrek adjunction}}[r] & q_D^!f'_!c'^*_1 &  p_D^*d_2^!f_{2,!}\ar^{\on{BC}^{*!}}[r] & q_D^!d'^*_1f_{2,!}
}\]
are commutative.
\end{proof}

\subsubsection{Pullback of cohomological correspondences}
\label{ASS:smooth pullback correspondence} 
Assume that we have the following commutative diagram
\[\begin{CD}
Y_1@<d_1<< D@>d_2>> Y_2\\
@Vf_1VV@VVfV@VVf_2V\\
X_1@<c_1<<C@>c_2>>X_2
\end{CD}\]
and assume that $f$ is cohomologically smooth, equidimensional, of relative dimension $d_f$. Given $u:c_1^*\mF_1\to c_2^!\mF_2$ so that $(C,u)$ is a cohomological correspondence from $(X_1,\mF_1)$ to $(X_2,\mF_2)$, then we have a cohomological correspondence $(D, f^*(u))$ from $(Y_1, f_1^*\calF_1)$ to $(Y_2, f_2^!\calF_2\langle-2d_f\rangle)$ defined as follows:
\[
f^*(u)\colon d_1^*f_1^*\mF_1\cong f^*c_1^*\mF_1\xrightarrow{f^*u} f^*c_2^!\mF_2\cong f^!c_2^!\mF_2\langle -2d_f\rangle \cong d_2^!f_2^!\mF_2\langle -2d_f\rangle.\]
We call this the \emph{smooth pullback of the cohomological correspondence}. 

\begin{notation} 
\label{N: shifted pull back coh cor}
If $f$, $f_1$, and $f_2$ are all smooth and equidimensional of dimension $d = d_f = d_{f_1}=d_{f_2}$, then we shift $f^*\langle d\rangle$ to get the cohomological correspondence from $f_1^\star \mF_1$ to $f_2^\star \mF_2$, denoted by $f^\star(u)$, and called the \emph{shifted} smooth pullback.
\end{notation}

\begin{rmk}
There are another two situations one can pullback of cohomological correspondences. Namely, if the right inner square of the above diagram is Cartesian, then for a $(C,u)$ is a cohomological correspondence from $(X_1,\mF_1)$ to $(X_2,\mF_2)$, we can define
\[f^*(u)\colon d_1^*f_1^*\mF_1= f^*c_1^*\mF_1\xrightarrow{f^*u} f^*c_2^!\mF_2\xrightarrow{\on{BC}^{*!}} d_2^! f_2^*\mF_2.\]
which, by \eqref{E: base change shrek adjunction}, coincides with the above defined smooth pullback of the cohomological correspondence if in addition $f$ is cohomologically smooth.
Similarly, if the left inner square of the above diagram is Cartesian, then we have
\[f^!(u)\colon d_1^*f_1^!\mF_1\xrightarrow{\on{BC}^{*!}} f^!c_1^*\mF_1\xrightarrow{f^!u}f^!c_2^!\mF_2= d_2^!f_2^!\mF_2,\]
which coincides with the above defined smooth pullback of the cohomological correspondence if in addition $f$ is cohomologically smooth.
The following Lemma \ref{L: comp of pull coh corr} and Lemma \ref{AL:pullback compatible with composition} have similar counterparts in these situations. But we will not need them in this paper.
\end{rmk}

\begin{lem}
\label{L: comp of pull coh corr} 
Let
\[\begin{CD}
Z_1@<e_1<< E@>e_2>> Z_2\\
@Vf_1VV@VVfV@VVf_2V\\
Y_1@<d_1<<D@>d_2>>Y_2\\
@Vg_1VV@VVgV@VVg_2V\\
X_1@<c_1<<C@>c_2>>X_2
\end{CD}\]
be a commutative diagram, such that $f$ (resp. $g$) are cohomologically smooth, equidimensional of dimension $d_f$ (resp. $d_g$), and let $u:c_1^*\mF_1\to c_2^!\mF_2$ be a cohomological correspondence from $(X_1,\mF_1)$ to $(X_2,\mF_2)$. Then $$(gf)^*(u)= f^*(g^*(u))$$ as cohomological correspondences from $(g_1f_1)^*\mF_1\cong f_1^*(g_1^*\mF_1)$ to $(g_2f_2)^!\mF_2\langle -2d_{gf}\rangle\cong f_2^!(g_2^!\mF_2\langle -2d_g\rangle)\langle -2d_f\rangle$.
\end{lem}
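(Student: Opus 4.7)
The plan is to unwind both sides of the claimed equality $(gf)^*(u) = f^*(g^*(u))$ using the definition of smooth pullback of cohomological correspondences, and then reduce the required equality to the compatibility of the trace-map isomorphism $h^\star \cong h^![-2d_h](-d_h)$ under compositions of cohomologically smooth morphisms. Explicitly, both sides are cohomological correspondences from $((gf)_1^*\calF_1, Z_1)$ to $((gf)_2^!\calF_2\langle -2d_{gf}\rangle, Z_2)$ supported on $E$, and unwinding the definitions, each is built as a composition of three pieces: (i) the canonical isomorphism exchanging the order of $*$-pullbacks on the source, (ii) the pullback $h^* u$ of $u$ along a suitable composition $h$, and (iii) the canonical isomorphism identifying a $*$-pullback with a shifted $!$-pullback on the target.

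First I would match the source sides. For the left side, $(gf)^*(u)$ begins with the canonical isomorphism $e_1^*(g_1f_1)^*\calF_1 \cong (gf)^*c_1^*\calF_1$ coming from functoriality $(g_1f_1)^* \cong f_1^* g_1^*$ and the commutativity of the upper two squares. For the right side, $f^*(g^*(u))$ first applies the isomorphism $d_1^*g_1^*\calF_1 \cong g^*c_1^*\calF_1$ and then pulls back along $f$ using $e_1^*f_1^* \cong f^*d_1^*$; the composite is exactly the same canonical $(gf)^*c_1^* \cong e_1^*(g_1f_1)^*$. Second, in the middle, both sides apply $(gf)^*u$ (or equivalently $f^*(g^*u)$), and these agree tautologically since $*$-pullback is functorial.

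The only non-trivial step is matching the target sides, i.e. showing that the isomorphism
\[
(gf)^*c_2^!\calF_2 \cong e_2^!(g_2f_2)^!\calF_2\langle -2d_{gf}\rangle
\]
used in the definition of $(gf)^*(u)$ factors as the two-step isomorphism
\[
(gf)^*c_2^!\calF_2 = f^*g^*c_2^!\calF_2 \cong f^*d_2^!g_2^!\calF_2\langle -2d_g\rangle \cong e_2^!f_2^!g_2^!\calF_2\langle -2d_g\rangle\langle -2d_f\rangle
\]
used in $f^*(g^*(u))$. Both isomorphisms combine two ingredients: functoriality of $!$-pullback, namely $(g_2f_2)^! \cong f_2^!g_2^!$ (which matches because $d_{gf}=d_f+d_g$), and the canonical trace-map isomorphism $h^\star\cong h^!$ for $h$ cohomologically smooth. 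The hard part, and the core of the proof, is to verify that the trace map for the composite $gf$ is the composition of the trace maps for $f$ and for $g$, i.e. the diagram
\[
\xymatrix{
(gf)^*[2d_{gf}](d_{gf}) \ar[r]\ar@{=}[d] & (gf)^! \ar@{=}[d]\\
f^*g^*[2d_f+2d_g](d_f+d_g) \ar[r] & f^!g^!
}
\]
commutes, where the horizontal arrows are the trace-map identifications. This is exactly the second part of the proposition stated after Definition~\ref{D: coh smooth}, which says that cohomologically smooth morphisms are closed under composition and the trace factors compatibly; applied fibrewise, it gives precisely the commutativity above. Combining the three steps yields $(gf)^*(u) = f^*(g^*(u))$, completing the proof.
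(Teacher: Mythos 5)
Your proof is correct and matches the paper's intent: the paper's own proof is simply ``Straightforward,'' and your argument is precisely the expected unwinding of the definition of smooth pullback, with the only nontrivial point being the compatibility of the trace-map identifications $h^*\langle 2d_h\rangle \cong h^!$ under composition, which follows from the uniqueness of the trace map together with the proposition following Definition~\ref{D: coh smooth}.
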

\begin{proof}
Straightforward.
\end{proof}
Similar to pushforward cohomological correspondences, smooth pullback of cohomological correspondence is also compatible with composition of cohomological correspondences in certain situations.
\begin{lem}
\label{AL:pullback compatible with composition}
Suppose we have a commutative diagram
\[
\begin{CD}
Y_1@<d_1<<D@>d_2>>Y_2@<d'_1<<D'@>d'_2>>Y_3\\
@Vf_1VV@VVfV@VVf_2V@VVf'V@VVf_3V\\
X_1@<c_1<< C@>c_2>> X_2@<c'_1<< C'@>c'_2>> X_3
\end{CD}
\]
with $f$, $f_2$ cohomologically smooth equidimensional, of relative dimension $d_f$, $d_{f_2}$. Assume that the third inner square is Cartesian.
Set $ \widetilde C := C \times_{X_2} C'$ and $\widetilde D: = D \times_{Y_2} D'$, and let $\tilde f:\tilde D\to\tilde C$ be the natural map.
Let $u: c_1^*\calF_1 \to c_2^!\calF_2$ and $v: c'^*_1\calF_2 \to c'^!_2 \calF_3$ be cohomological correspondences from $(X_1, \calF_1)$ to $(X_2, \calF_2)$ and  from $(X_2, \calF_2)$ to $(X_3, \calF_3)$, respectively. Then cohomological correspondence $\tilde f^*(v\circ u): (Y_1, f_1^*\calF_1) \to (Y_3, f_3^!\calF_3\langle 2d_f\rangle )$ is equal to
\[
(Y_1, f_1^*\calF_1) \xrightarrow{f^*(u)} \big(Y_2, f_2^!\calF_2\langle 2d_f\rangle \cong f_2^*\calF_2\langle 2d_f-2d_{f_2}\rangle \big) \xrightarrow{f'^*(v)} \big(Y_3, f_3^!\calF_3\langle 2d_f\rangle \big).
\]
\end{lem}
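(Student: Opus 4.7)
The plan is to unfold both sides of the claimed equality into explicit compositions of natural maps and then verify a sequence of commutative diagrams, each of which expresses the naturality of base-change isomorphisms under cohomologically smooth pullback. To set up, observe that since the rightmost inner square of the hypothesis is Cartesian, we have $\tilde D = D \times_{Y_2} D' \cong D \times_{X_2} C'$, and the two squares
\[
\begin{CD}
\tilde D @>\tilde p>> D \\ @V\tilde f VV @VVfV \\ \tilde C @>p>> C
\end{CD}
\qquad\qquad
\begin{CD}
\tilde D @>\tilde q>> D' \\ @V\tilde f VV @VVf'V \\ \tilde C @>q>> C'
\end{CD}
\]
are both Cartesian, where $p,q$ (resp.\ $\tilde p,\tilde q$) are the projections from $\tilde C$ (resp.\ $\tilde D$) to $C,C'$ (resp.\ $D,D'$). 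Unwinding the definition from \S\ref{ASS:smooth pullback correspondence}, the cohomological correspondence $\tilde f^*(v\circ u)$ is obtained by applying $\tilde f^*$ to the composition $p^*u$, $\on{BC}^{*!}_{(c_2,c'_1)}$, and $q^!v$ that defines $v\circ u$.

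On the other side, $f'^*(v)\circ f^*(u)$ is by definition the composition
\[
\tilde p^* d_1^* f_1^*\calF_1 \xrightarrow{\tilde p^* f^*(u)} \tilde p^* d_2^! f_2^!\calF_2\langle -2d_f\rangle \xrightarrow{\on{BC}^{*!}_{(d_2,d'_1)}} \tilde q^! d'^*_1 f_2^!\calF_2 \langle -2d_f\rangle \xrightarrow{\tilde q^! f'^*(v)} \tilde q^! d'^!_2 f_3^!\calF_3\langle -2d_f\rangle,
\]
where we have used the Cartesian square $(d_2,d'_1)/(d'_1,d_2)$ above $Y_2$ for base change. Thus the two sides agree provided one can commute $\tilde f^*$ past the base change isomorphism $\on{BC}^{*!}_{(c_2,c'_1)}$ attached to the diagram $C\to X_2\leftarrow C'$ to obtain the base change isomorphism $\on{BC}^{*!}_{(d_2,d'_1)}$ attached to $D\to Y_2\leftarrow D'$, up to the identifications $f_2^!\calF_2\cong f_2^*\calF_2\langle 2d_{f_2}\rangle$ and $f^!(-)\cong f^*(-)\langle 2d_f\rangle$ coming from cohomological smoothness.

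The key step, which is the main technical point, is therefore the following compatibility: in the cube
\[
\begin{CD}
\tilde D @>\tilde q>> D' @. \qquad\qquad\qquad @. \tilde C @>q>> C' \\
@V\tilde p VV @VVd'_1 V @. @V p VV @VV c'_1V \\
D @>d_2>> Y_2 @. \qquad\qquad\qquad @. C @>c_2>> X_2
\end{CD}
\]
connected by the cohomologically smooth maps $\tilde f,f,f',f_2$, the base change isomorphisms $\on{BC}^{*!}$ on the $D$-level and the $C$-level are intertwined by the $*$-pullbacks along the vertical maps. This is a purely formal consequence of Lemma~\ref{L:proper base change} together with \eqref{E: base change shrek adjunction}: under cohomological smoothness, $\on{BC}^{*!}$ is the adjoint of $\on{BC}^{*}_!$ shifted and twisted by the relative dimension, and $\on{BC}^{*}_!$ is manifestly compatible with composing Cartesian squares by Lemma~\ref{L:proper base change}. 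I would prove it by decomposing the cube into two composed Cartesian squares (first comparing $D'\to Y_2 \to X_2$ with $C'\to X_2$, then passing to $D\to Y_2$) and applying the composition compatibility of $\on{BC}^{*}_!$ from Lemma~\ref{L:proper base change} in each step.

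Once this cube-compatibility is in hand, the remaining verification is a straightforward diagram chase: the maps $\tilde f^* p^* u$ and $\tilde p^* f^*(u)$ coincide tautologically (both equal $\tilde f^* p^* u$ after identifying $\tilde f^*p^*=\tilde p^* f^*$), and similarly for $\tilde f^* q^! v$ versus $\tilde q^! f'^*(v)$ upon commuting $\tilde f^*$ through $q^!$ via smooth base change. Assembling these three commutativities gives the claimed equality of cohomological correspondences. The main obstacle is therefore bookkeeping in the cube above, but no new input beyond the smooth base change isomorphism and Lemma~\ref{L:proper base change} is needed.
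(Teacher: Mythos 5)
Your strategy is essentially the paper's: reduce everything to a compatibility, in the cube built from the two Cartesian squares defining $\widetilde C$ and $\widetilde D$ together with the vertical maps $\tilde f,f,f',f_2$, between the two base change maps $\on{BC}^{*!}$ and the purity isomorphisms, and deduce it from \eqref{E: base change shrek adjunction} plus the fact that base change maps compose. But your setup contains a concrete error: the second square you display, with horizontal maps $\widetilde D\to D'$, $\widetilde C\to C'$ and verticals $\tilde f,f'$, is \emph{not} Cartesian under the stated hypotheses. Indeed $\widetilde C\times_{C'}D'\cong C\times_{X_2}D'\cong (C\times_{X_2}Y_2)\times_{X_2}C'$, while $\widetilde D\cong D\times_{X_2}C'$, so this back face is Cartesian only when $D\to C\times_{X_2}Y_2$ is an isomorphism, i.e.\ when the \emph{second} inner square of the lemma is Cartesian --- which is not assumed, and fails in the intended applications (the squares involving the restricted partial Frobenius, cf.\ \eqref{E: pFrob v.s. res. pFrob}, are only commutative). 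Only your first square, $\widetilde D\cong D\times_C\widetilde C$, is Cartesian; it is what makes $\tilde f$ cohomologically smooth of relative dimension $d_f$, so that $\tilde f^*(v\circ u)$ is even defined.

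The error is load-bearing where you ``commute $\tilde f^*$ through $q^!$ via smooth base change'': since the back face is not Cartesian, there is no base change map $\tilde f^*q^!\to\tilde q^!f'^*$ to invoke, so this step, and with it the claim that the remainder is a tautological chase, does not go through as written. The repair (and the paper's argument) replaces it by purity: $\tilde f$ and $f'$ are cohomologically smooth, being base changes of $f$ and $f_2$ along the genuinely Cartesian left face and third square, so $\tilde f^*q^!\cong\tilde f^!q^!\langle-2d_f\rangle=\tilde q^!f'^!\langle-2d_f\rangle\cong\tilde q^!f'^*\langle 2d_{f_2}-2d_f\rangle$; the real content is then that these purity identifications are compatible with the $\on{BC}^{*!}$ maps of the top and bottom faces --- this is not a formality separate from your ``key step'', it \emph{is} the key step. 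One proves it by using \eqref{E: base change shrek adjunction} to rewrite the purity isomorphisms attached to the (Cartesian) left and right faces as base change maps, and observing that the two routes from $\tilde p^*(c_2\circ f)^!$ to $(q\circ\tilde f)^!c_1'^*$ are compositions of base change maps along the two pastings left$\,\circ\,$bottom and top$\,\circ\,$right, both of which paste to the same big Cartesian square $\widetilde D\cong D\times_{X_2}C'$ over $D\to X_2\leftarrow C'$; they therefore coincide by the cocycle property of base change. With this correction (and noting that your sketch only spells out one of the two pastings), your argument becomes the paper's proof.
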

\begin{rmk}
Varshavsky (\cite{varS=T}) proved a stronger statement, where instead of requiring that the third square is Cartesian, it imposes the assumption that both $f'$ and $\tilde f$ are equidimensional smooth of relative dimension $d_{f'}$ and $d_{\tilde f}$, and that $d_{\tilde f}+d_{f_2}=d_f+d_{f'}$. For the applications in this paper, the weaker statement is enough.
\end{rmk}
\begin{proof}
Since the third square is Cartesian,  $\widetilde D = D \times_{Y_2}D' \cong D \times_{X_2} C' = D \times_C C \times_{X_2}C' = D \times_C \widetilde C$. So $\tilde f: \widetilde D \to \widetilde C$ is smooth of relative dimension $d_{f_2}$. 
We write $p_C: \widetilde C \to C$, $q_C: \widetilde C \to C'$, $\tilde c_1: \widetilde C \to X_1$, $\tilde c_2: \widetilde C \to X_3$, $p_D: \widetilde D \to D$, $q_D: \widetilde D \to D'$, $\tilde d_1 : \widetilde D \to Y_1$, and $\tilde d_2: \widetilde D \to Y_3$ for the natural maps.
The decomposition of the cohomological correspondences is equivalently to the commutativity of the following diagram
\begin{tiny}
\[
\xymatrix@C=15pt{
p_D^* d_1^*f_1^* \calF_1  \ar@{=}[r] \ar@{=}[d] & \tilde f^*p_C^*c_1^* \calF \ar[r]^u & \tilde f^* p_C^*c_2^!\calF_2 \ar^\cong[r] & \tilde f^!p_C^*c_2^!\mF_2\langle-2d_{f}\rangle\ar[r]& \tilde f^!q_C^!c'^*_1 \calF_2\langle-2d_{f}\rangle \ar[r]^v & \tilde f^!q_C^!c'^!_2\calF_3\langle-2d_{f}\rangle  \ar@{=}[r] & \tilde d_2^! f_3^!\calF_3\langle-2d_{f}\rangle\ar@{=}[d]
\\
p^*_Df^*c_1^*\calF_1 \ar[r]^u & p^*_D f^*c_2^!\calF_2 \ar^\cong[r] \ar@{=}[ru] &  p_D^*f^!c_2^!\mF_2\langle-2d_f\rangle\ar^\cong[ur]\ar@{=}[r]&p^*_Dd_2^!f_2^!\calF_2\langle-2d_f\rangle\ar[r] & q_D^!d'^*_1f_2^!\calF_2\langle-2d_f\rangle \ar^\cong[r] & q_D^!f'^!c'^*_1\calF_2\langle-2d_{ f}\rangle \ar[r]^v\ar@{=}[lu] & q_D^!f'^!c'^!_2\calF_2\langle-2d_{f}\rangle.
}
\]\end{tiny}Only the commutativity of the middle trapezoid needs explanation. Note that in the following cubic commutative diagram, the top and the bottom faces are Cartesian, and all vertical maps are perfectly smooth
\[
\xymatrix@R=10pt@C=5pt{
& \tilde D\ar^{q_D}[rr]\ar'[d][dd]\ar_{p_D}[dl]
& & D' \ar^{f'}[dd]\ar[dl]
\\
D \ar[rr]\ar_f[dd]
& & Y_2 \ar[dd]
\\
& \tilde C \ar'[r][rr]\ar[dl]
& & C'\ar[dl]
\\
C \ar_{c_2}[rr]
& & X_2 
}
\]
Let $\phi = c_2\circ f = f_2\circ d_2: D \to X_2$ and $\psi = f'\circ q_D = q_C \circ \tilde f$. Then we need to show that the two maps
\[
p_D^*\phi^!  \xrightarrow{\cong} \tilde f^!p_C^*c_2^! \xrightarrow{\on{BC}^{*!}} \psi^!c'^*_1, \quad p_D^*\phi^! \xrightarrow{\on{BC}^{*!}} q_D^!d'^*_1f_2^!\xrightarrow{\cong} \psi^! c'^*_1
\]
are the same. Since the right face of the cube (i.e. the third inner square of the diagram in the lemma) is Cartesian, the isomorphisms $p_D^*\phi^!  \cong \tilde f^!p_C^*c_2^!$ and $q_D^!d'^*_1f_2^!\cong \psi^! c'^*_1$ coincide with the base change map $\on{BC}^{!*}$ (see \eqref{E: base change shrek adjunction}). Then 
the claims follows from the fact that the composition of the base change maps is the base change map. 
\end{proof}

Finally, the smooth pullback of cohomological correspondence is compatible with pushforward correspondence in the Cartesian case (the base change for cohomological correspondences).
\begin{lem}
\label{AL:pushforward pullback compatibility}
Assume that we have the following commutative diagram
\[
\xymatrix{
& X'_1 \ar[dl]_{\pi_{X_1}} \ar'[d]^(0.6){f'_1}[dd] && C' \ar[rr]^{c'_2}\ar[ld]_(0.6){\pi_C}\ar[ll]_{c'_1}\ar'[d]^(0.6){f'}[dd]
& & X'_2 \ar[dl]_(0.6){\pi_{X_2}} \ar[dd]^{f'_2}
\\
X_1 \ar[dd]_{f_1} & & C \ar[rr]_(0.6){c_2}\ar[ll]^(0.35){c_1}\ar[dd]^(0.3)f
& & X_2 \ar[dd]^(0.3){f_2}
\\
& Y'_1 \ar[dl]^(0.4){\pi_{Y_1}} && D' \ar'[r]_(0.75){d'_2}[rr]\ar[ld]^(0.4){\pi_D}\ar'[l][ll]^(0.3){d'_1}
& & Y'_2 \ar[dl]^{\pi_{Y_2}}
\\
Y_1 && D \ar[rr]^{d_2}\ar[ll]_{d_1}
& & Y_2 ,
}
\]
in which $f_1,f'_1,f,f'$ are representable by perfectly proper algebraic spaces, the middle sectional square with vertices $C, D, C', D'$ is Cartesian, and $\pi_C$ and $\pi_D$ are smooth of dimension $d$.
Let $u: c_1^*\calF_1 \to c_2^!\calF_2$ be a cohomological correspondence from $(X_1, \calF_1)$ to $(X_2, \calF_2)$.
Then we have the following commutative diagram of cohomological correspondences
\[
\xymatrix{
(Y'_1, \pi_{Y_1}^*f_{1,!}\calF_1) \ar[d] \ar[rr]^-{\pi_D^*f_!(u)} & & \big(Y'_2, \pi_{Y_2}^!f_{2,!}\calF_2\langle-2d\rangle\big)
\\
(Y'_1,f'_{1,!} \pi_{X_1}^*\calF_1) \ar[rr]^-{f'_!\pi_C^*(u)} & & \big(Y'_2,f'_{2,!} \pi_{X_2}^!\calF_2\langle-2d\rangle\big). \ar[u]
}
\]
\end{lem}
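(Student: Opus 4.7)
The plan is to unwind both cohomological correspondences explicitly as morphisms $d'^*_1 \pi_{Y_1}^*f_{1,!}\mF_1 \to d'^!_2 \pi_{Y_2}^! f_{2,!}\mF_2\langle-2d\rangle$ in the derived category of $D'$, and then verify their equality by a diagram chase built out of compatibilities of base change with composition (Lemma~\ref{L:proper base change} and Remark~\ref{R: another adjunction}) together with the compatibility of the identification $\pi^!\cong \pi^*\langle 2d\rangle$ for cohomologically smooth $\pi$ with base change, as recorded in \eqref{E: base change shrek adjunction}.

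First I would write out $f_!(u)$ as the composite
\[d_1^*f_{1,!}\mF_1 \xrightarrow{\on{BC}^*_!} f_! c_1^*\mF_1 \xrightarrow{f_!u} f_!c_2^!\mF_2 \xrightarrow{\eqref{E: shrek adjunction}} d_2^! f_{2,!}\mF_2,\]
and then pull back along the cohomologically smooth $\pi_D$ of relative dimension $d$ to obtain the top arrow of the diagram in the statement. For the bottom path, the pullback $\pi_C^*(u)$ equals
\[c'^*_1 \pi_{X_1}^*\mF_1 \cong \pi_C^* c_1^*\mF_1 \xrightarrow{\pi_C^* u} \pi_C^*c_2^!\mF_2 \xrightarrow{\on{BC}^{*!}} c'^!_2 \pi_{X_2}^*\mF_2 \cong c'^!_2 \pi_{X_2}^!\mF_2\langle-2d\rangle,\]
where $\on{BC}^{*!}$ uses Cartesianness of the right face of the cube, and then pushing forward along $f'$ produces the composite of the left, bottom, and right arrows in the statement.

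The key step is to interchange the two base change operations (one for the cohomologically smooth $\pi$'s, one for the perfectly proper $f$'s). Here one uses that the middle sectional square with vertices $C,D,C',D'$ is Cartesian, so that each of the four adjacent faces of the cube is Cartesian as well; in particular the cube gives rise to Cartesian squares on which both types of base change are isomorphisms. Combining Lemma~\ref{L:proper base change} (compatibility of $\on{BC}^*_!$ with horizontal composition of Cartesian squares) and Remark~\ref{R: another adjunction} (the corresponding statement for the shriek adjunction map from \eqref{E: shrek adjunction}), one reduces the commutativity to a purely formal identity of natural transformations between $\pi_D^*d_1^*f_{1,!}\mF_1$ and $d'^!_2 f'_{2,!}\pi_{X_2}^!\mF_2\langle-2d\rangle$, with the morphism $u$ inserted in the middle.

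The main obstacle, such as it is, is purely bookkeeping: there is no new geometric input, but one must correctly track the identification $\pi^!\cong \pi^*\langle 2d\rangle$ through the diagram. This is handled uniformly by \eqref{E: base change shrek adjunction}, which guarantees that this identification commutes with the base change isomorphisms along the Cartesian faces of the cube. Once the $\pi$-type and $f$-type base change maps are arranged in the correct order, both paths are visibly equal to the same composite involving $u$ pulled back to $C'$ and then pushed forward to $D'$, completing the proof.
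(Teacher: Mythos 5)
Your overall strategy---unwinding both composites as maps $d'^*_1\pi_{Y_1}^*f_{1,!}\calF_1\to d'^!_2\pi_{Y_2}^!f_{2,!}\calF_2\langle-2d\rangle$ on $D'$ and comparing them by a diagram chase built on Lemma~\ref{L:proper base change}, Remark~\ref{R: another adjunction} and smooth--proper base change---is exactly the route the paper takes. However, there is a concrete error in how you feed the hypotheses into that chase: you assert that, since the middle sectional square $C,D,C',D'$ is Cartesian, ``each of the four adjacent faces of the cube is Cartesian as well,'' and you use this twice. The inference is false: nothing in the hypotheses makes the other faces Cartesian (e.g.\ the square with vertices $C',C,X'_2,X_2$, or the outer squares with vertices $X'_i,X_i,Y'_i,Y_i$), and in the intended applications they are not---if the outer squares were Cartesian, the two vertical arrows in the conclusion would already be isomorphisms and the lemma would lose its point. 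In particular your unwinding of $\pi_C^*(u)$, which passes through $\on{BC}^{*!}\colon \pi_C^*c_2^!\calF_2\to c'^!_2\pi_{X_2}^*\calF_2$ on the ``right face'' and then invokes $\pi_{X_2}^*\cong\pi_{X_2}^!\langle-2d\rangle$, is not available: it requires both Cartesianness of that face and cohomological smoothness of $\pi_{X_2}$, neither of which is assumed (only $\pi_C$ and $\pi_D$ are assumed smooth of dimension $d$).

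The repair is to use the actual definition of the smooth pullback, which needs only smoothness of $\pi_C$ and commutativity of the prism: $\pi_C^*c_2^!\calF_2\cong\pi_C^!c_2^!\calF_2\langle-2d\rangle=c'^!_2\pi_{X_2}^!\calF_2\langle-2d\rangle$. Likewise the outer squares should be treated as merely base changeable: the maps $\on{BC}^*_!$ and \eqref{E: shrek adjunction} attached to them are not isomorphisms, and all you need from them is compatibility with composition, which is precisely Lemma~\ref{L:proper base change} and Remark~\ref{R: another adjunction}. Cartesianness of the middle square enters in exactly one place: to get the smooth base change isomorphism $\pi_D^*f_!\cong f'_!\pi_C^*$ and the proper base change isomorphism $f'_!\pi_C^!\cong\pi_D^!f_!$, and to check that these are intertwined by the purity identifications $\pi_D^*\cong\pi_D^!\langle-2d\rangle$ and $\pi_C^*\cong\pi_C^!\langle-2d\rangle$, i.e.\ the smooth--proper base change compatibility recorded in \eqref{E: base change shrek adjunction}. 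With these corrections your chase coincides with the paper's proof.
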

\begin{proof}
For simplicity, we write $\widetilde \calF_2$ for $\calF_2\langle-2d\rangle$. The condition on properness implies that both $f$ and $f'$ are proper. The lemma amounts to the commutativity of the following diagram
\begin{small}
\[
\xymatrix@C=15pt{ \ar[d]
d'^*_1\pi_{Y_1}^*f_{1,!}\calF_1 \ar@{=}[r] &  \pi_D^*d_1^*f_{1,!}\calF_1 \ar[r]\ar[d]  & \pi_D^*f_! c_1^*\calF_1 \ar[d]^\cong \ar[r]^u & \pi_D^*f_!c_2^!\calF_2 \ar^\cong[r]\ar[d]^\cong & \pi_D^!f_!c_2^!\widetilde \calF_2 \ar[r]  & \pi_D^!d_2^!f_{2,!}\widetilde \calF_2 \ar@{=}[r] & d'^!_2 \pi_{Y_2}^!f_{2,!}\widetilde \calF_2
\\
d'^*_1f'_{1,!}\pi_{X_1}^*\calF_1 \ar[r] &  f'_{!}c'^*_1\pi_{X_1}^*\calF_1 \ar@{=}[r] & f'_!\pi_C^*c_1^*\calF_1 \ar[r]^u & f'_!\pi_C^*c_2^!\calF_2 \ar^\cong[r] & f'_!\pi_C^!c_2^!\widetilde \calF_2 \ar@{=}[r] \ar[u]   & f'_!c'^!_2\pi_{X_2}^!\widetilde \calF_2 \ar[r] \ar[u] & d'^!_2f'_{2,!}\pi_{X_2}^! \widetilde \calF_2. \ar[u]
}
\]
\end{small}
The left two squares are commutative by Lemma \ref{L:proper base change} and the right two squares commutative by Remark \ref{R: another adjunction}.
The third square is clearly commutative, and the the commutativity of the fourth square follows from the smooth proper base change. 
\end{proof}

\subsubsection{Cycle class maps and cohomological correspondences}\label{cl vs cc}
Assume $k=\bar k$.
Let $f:X\to Y$ be a morphism of pfp algebraic spaces with $Y$ smooth. Set $d_X=\dim X$, $d_Y=\dim Y$, and $d=\dim Y-\dim X$. 
If $X$ is proper over $k$, we have the usual cycle class map as
\[\cl_X: \rmH^{\on{BM}}_{2d_X}(X)\to \rmH^{2d}_c(Y,\bQ_\ell(d)). \]
On the other hand, let $S$ be the set of irreducible components of $X$ of dimension $d_X$, and let
$\tilde X$ denote the disjoint union of irreducible components of $X$ of dimension $d_X$. Then there is a correspondence $S\leftarrow\tilde X\to Y$. 
By \eqref{E:corr and cycle}
\[
\on{Corr}_{\tilde X}\big((S,\bQ_\ell), (Y,\bQ_\ell\langle 2d\rangle)\big)=\rmH^{\on{BM}}_{2d_X}(\tilde X) \cong \rmH^{\on{BM}}_{2d_X}( X).
\]
In particular, the fundamental class $[\tilde X]\in \rmH^{\on{BM}}_{2d_X}(\tilde X)$ (which is the sum of the fundamental classes of each irreducible component of $\tilde X$) defines a cohomological correspondence from $(S,\bQ_\ell)\to (Y,\bQ_\ell[2d](d))$, denoted by $c_{\tilde X}$. If $X$ is proper, it induces
$$\on{H}(c_{\tilde X}): \on{H}^0(S,\bQ_\ell)\to  \rmH^{2d}_c(Y,\bQ_\ell(d)).$$
Note the $\on{H}^0(S,\bQ_\ell)$ is just the space of $\bQ_\ell$-valued functions on the set $S$, and
we have the natural identification $\on{H}^0(S,\bQ_\ell)=\rmH^{\on{BM}}_{2d_X}(X),\ f\mapsto \sum_{s\in S}f(s)[X_s]$, where $X_s$ is the irreducible component labelled by $s$. It follows from definition that under this identification,
\begin{equation}
\label{E: corr vs cycle class}
\cl_X=\on{H}(c_{\tilde X}).
\end{equation}

We also have the dual correspondence 
$$\bD c_{\tilde X}: \big(Y,\bQ_\ell\langle2d_X\rangle \big)\to (S,\bQ_\ell).$$ 
If the map $X\to Y$ is proper, it induces
$$ \rmH(\bD c_{\tilde X}): \rmH^{2d_X}_c(Y,\bQ_\ell(d_X))\to \on{H}^0(S,\bQ_\ell).$$ 

Now if $X$ and $X'$ are perfectly proper algebraic spaces and $Y$ is a separated perfectly smooth algebraic space such that $\dim X+\dim X'=\dim Y$, and if we have morphisms $X\to Y$ and $X'\to Y$, then the composition gives a map
\[\on{H}(\bD c_{\tilde {X'}})\circ \on{H}(c_{\tilde{X}}): \on{H}^0(S)\to \on{H}^0(S').\]
which can be represented by a kernel function $I: S\times S'\to \Ql$ such that 
$$\on{H}(\bD c_{\tilde{X'}})\circ \on{H}(c_{\tilde{X}})(f)(s')=\sum_{s\in S} I(s,s')f(s').$$
we will call 
$I$ the \emph{intersection number} of $X$ and $X'$ for the following reasons.
If $Y$ is a perfectly proper scheme, and $X_s$ and $X'_{s'}$ are closed subschemes of $Y$, then $\rmH(\bD c_{\tilde X'})$ is dual to $\rmH(c_{\tilde X'})$ under the Poincar\'e duality $\rmH^{2d_{X'}}_c(Y,\bQ_\ell(d_{X'}))=\rmH^{2d_{X'}}(Y,\bQ_\ell(d_{X'}))\cong \rmH^{2d_X}(Y,\bQ_\ell(d_X))$. It follows that $I(s,s')$ is the intersection number of $X_s$ and $X'_{s'}$ (as algebraic cycles on $Y$), and therefore is an integer. 
If $Y$ is not necessarily proper, but admits a perfectly an open embedding into a perfectly proper and smooth scheme $\overline{Y}$, and $X, X'$ are proper over $k$. Then the following diagram is commutative
\[\xymatrix{
\on{H}^0(S,\bQ_\ell)\ar^{\on{H}(c_{\tilde X})}[r]\ar[dr] & \rmH^{2d}_c(Y,\bQ_\ell(d))\ar^{\on{H}(\bD c_{\tilde X'})}[dr]\ar[d]&\\
&\on{H}^{2d}(\overline{Y},\bQ_\ell(d))\ar[r]& \on{H}^0(S',\bQ_\ell)
}\]
It follows that $I(s,s')$ is the intersection number of $X_s$ and $X'_{s'}$ in $\overline{Y}$. (Note however that this number is independent of the choice of the compactification $\overline{Y}$.)

\subsubsection{Trace formula} 
We will consider the following situation. Assume $k=\bar k$.
Let $c:=(c_1,c_2):C\to X\times X$ be a pfp self-correspondence of the pfp algebraic space $X$, and let $\on{Fix}(c):= X\times_{\Delta, X\times X}C$ be the fixed point of the correspondence.
Let $\mF\in D^b_c(X)$. We construct a map
\begin{equation}
\label{E: shifted trace map}
\on{Corr}_C((X,\mF\langle d\rangle),(X,\mF))\to \on{H}^{\on{BM}}_{d}(\on{Fix}(c)).
\end{equation}
as follows. Let $u: c_1^*\mF\langle d\rangle \to c_2^!\mF$ be a cohomological correspondence, which by Lemma \ref{L:sharp correspondence} can be regarded as $u^\sharp: (X\times X, \mF\boxtimes \bD\mF)\to (\on{pt},\Ql\langle -d\rangle)$. 
 Then the support of the correspondence
$$ (\on{pt},\Ql)\xrightarrow{\delta_\mF} (X\times X,\mF\boxtimes \bD\mF)\xrightarrow{u^\sharp}(\on{pt},\Ql\langle-d\rangle)$$
is $\on{Fix}(c)$ and by \eqref{E:corr and cycle} is given by a class of $\on{H}^{\on{BM}}_{d}(\on{Fix}(c))$.  Note that if $d=0$, \eqref{E: shifted trace map} recovers the trace map defined by Varshavsky \cite[\S 1.2.2]{Vas}.

\begin{rmk}
If $f:X\to X'$ is representable by perfectly proper algebraic spaces, then the sharp correspondence $u\leftrightarrow u^\sharp$ is compatible with the pushforward of cohomological correspondence. Then it follows from Lemma \ref{AL:pushforward compatible with composition} that \eqref{E: shifted trace map} is compatible with the pushforward along the proper morphism, giving the Lefschetz trace formula for correspondences.
\end{rmk}

The following lemma follows directly from  the purity.
\begin{lem}
\label{L: Shifted trace smooth}
Assume that $X,C$ are irreducible perfectly smooth, and $d=\dim C-\dim X$.
Assume that  $\on{Fix}(c)\subset C$ is perfectly smooth of codimension $d$, i.e. $C$ intersects with the diagonal properly smoothly. Assume that $u: c_1^*\Ql\langle 2d\rangle \to c_2^!\Ql$ is given by the fundamental class $[C]$. Then the image of $u$ under \eqref{E: shifted trace map} is the fundamental class $[\on{Fix}(c)]$.
\end{lem}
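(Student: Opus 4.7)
I would unwind the definition of the map \eqref{E: shifted trace map} and reduce the problem to a local computation in a Zariski neighborhood of $\on{Fix}(c)$, where the transversality hypothesis makes everything visible. By construction, the image of $u$ is the composite cohomological correspondence
\[
(\on{pt},\Ql) \xrightarrow{\delta_{\Ql}} (X\times X,\Ql\boxtimes\omega_X) \xrightarrow{u^\sharp} (\on{pt},\Ql\langle -d\rangle),
\]
whose support is $\on{Fix}(c)=C\times_{X\times X}\Delta_X(X)$, and which under the identification \eqref{E:corr and cycle} corresponds to a class in $\on{H}^{\on{BM}}_{d}(\on{Fix}(c))$. So the task is to identify this class with $[\on{Fix}(c)]$.

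The first step is geometric interpretation of each ingredient. Since $X$ is perfectly smooth of pure dimension, the diagonal $\Delta_X:X\hookrightarrow X\times X$ is a regular closed embedding of codimension $\dim X$, and the unit map $\delta_{\Ql}$ is (after applying purity to identify $\omega_X$ with a shift-twist of $\Ql$) nothing but the Gysin cohomological correspondence associated to $\Delta_X$. Similarly, by the assumption that $u$ arises from the fundamental class $[C]$, using purity on the smooth scheme $C$ to identify $c_1^*\Ql\langle 2d\rangle$ with $c_2^!\Ql$, the sharp correspondence $u^\sharp$ of Lemma \ref{L:sharp correspondence} is identified with the Gysin cohomological correspondence associated to the morphism $c:C\to X\times X$.

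Next I localize. The composition is supported on $\on{Fix}(c)$, so it suffices to compute it after restricting to an open neighborhood $U\subset X\times X$ that contains $\on{Fix}(c)$; shrinking $U$, we may assume both $\Delta_X(X)\cap U$ and $c^{-1}(U)$ are smooth closed subschemes of $U$ meeting transversally along $\on{Fix}(c)$ (this is the content of the ``properly smoothly'' hypothesis). Using Proposition \ref{P: deperfection}, I deperfect to work in the category of finite-type schemes over $k$, where the classical purity/intersection theory is available, noting that Borel--Moore homology and cycle-class maps are compatible with perfection. Now the composite $u^\sharp\circ\delta_{\Ql}$ becomes the intersection product of the Gysin classes of $\Delta_X(X)$ and $C$ inside $X\times X$. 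The standard compatibility of Gysin maps with transverse intersections of smooth subvarieties (equivalently, the excess-intersection formula with vanishing excess bundle) then identifies this intersection product with the Gysin class of the smooth scheme-theoretic intersection $\Delta_X(X)\cap C = \on{Fix}(c)$. Under the identification \eqref{E:corr and cycle}, this Gysin class is precisely $[\on{Fix}(c)]\in \on{H}^{\on{BM}}_{d}(\on{Fix}(c))$, as desired.

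The main obstacle I anticipate is bookkeeping rather than conceptual: the sharp correspondence formalism of Lemma \ref{L:sharp correspondence}, the base change isomorphisms \eqref{E: base change isom} and \eqref{E: base change shrek}, and the classical definition of Gysin maps all produce classes in naturally isomorphic but differently normalized cohomology groups, and one must verify that the various shifts, Tate twists, and orientation isomorphisms match up consistently (in particular that the absolute purity isomorphism $c^!\Ql \cong \Ql\langle 2(\dim C-\dim X\times X)\rangle$ used to interpret $u$ as $[C]$ is the same as the one used in defining the intersection product). Once this identification is made cleanly on the transverse open neighborhood, the remainder of the argument is a routine invocation of smooth proper base change \eqref{E: base change isom} and the projection formula.
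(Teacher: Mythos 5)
Your proposal is correct and is essentially the argument the paper compresses into its one-line assertion that the lemma ``follows directly from the purity'': by purity on the perfectly smooth $X$ and $C$, both $\delta_{\Ql}$ and $u^\sharp$ become Gysin-type correspondences carrying fundamental classes, and the composite supported on $\on{Fix}(c)$ is the refined intersection of $[C]$ with the diagonal, which by the transversality (properly smooth intersection) hypothesis is $[\on{Fix}(c)]$. The only cosmetic caveat is that $c:C\to X\times X$ need not be an embedding, so rather than shrinking to a neighborhood where $C$ is a smooth closed subscheme you should phrase the last step as the refined Gysin pullback $\Delta^!$ along the regular embedding $\Delta_X$ applied to the map $c$, where transversality (zero excess bundle) gives $\Delta^![C]=[\on{Fix}(c)]$; the rest of your bookkeeping of shifts and twists goes through as you describe.
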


Another case we need is as follows, which is due to Braverman-Varshavsky \cite{BV}. 
\begin{lem}
\label{P: GL trace formula}
Let $(X,\mF)$ be a pair over $k=\bF_q$, with $\mF\in D_c^b(X)$. Let $u=\Ga_{\sigma_q}^*$ be the cohomological correspondence. Then $\delta_\mF\circ u^\sharp$ is supported on $X(\bF_q)$, given by the function on $X(\bF_q)$ (as in Example \ref{Ex:examples of correspondences} (5) or \eqref{E: shifted trace map}) defined as 
$$f_{\mF}(x)=\tr(\phi_x, \mF_{\bar x}).$$
\end{lem}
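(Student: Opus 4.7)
First, I would identify the support of the composition $\delta_\mF\circ u^\sharp$. By definition $u=\Ga_{\sigma_q}^*$ is the pullback correspondence supported on the graph $\Ga_{\sigma_q}\subset X\times X$ of the $q$-Frobenius $\sigma_q$, with cohomological datum the identity $\sigma_q^*\mF\to\sigma_q^*\mF$; here we use the canonical isomorphism $\sigma_q^*\mF\cong\mF$ coming from the $\bF_q$-structure on $\mF$ to regard it as a self-correspondence. The sharp correspondence $u^\sharp:(X\times X,\mF\boxtimes\bD\mF)\to(\on{pt},\Ql)$ is then supported on $\Ga_{\sigma_q}$, while $\delta_\mF:(\on{pt},\Ql)\to (X\times X,\mF\boxtimes\bD\mF)$ is supported on the diagonal $\Delta_X\subset X\times X$. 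Hence $\delta_\mF\circ u^\sharp$ is supported on the fiber product $\Ga_{\sigma_q}\times_{X\times X}\Delta_X$, the fixed-point scheme of $\sigma_q$. Since the differential of $\sigma_q$ vanishes on any smooth chart, this intersection is transverse and reduced, so it coincides with the discrete finite set $X(\bF_q)$.

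Next, I would evaluate the resulting element of $\on{H}^{\on{BM}}_0(X(\bF_q))$, i.e.\ the function on $X(\bF_q)$. The key point is that the pair $(\delta_\mF,u^\sharp)$ is exactly the categorical recipe for extracting a trace from an endomorphism in the rigid tensor setting of constructible sheaves. More precisely, at an $\bF_q$-point $x$, the map $u^\sharp$ restricted to the fiber at $(x,x)$ factors as the composition of (i) the identification $\sigma_q^*\mF|_{\bar x}\cong\mF_{\bar x}$, which by the very definition of the geometric Frobenius on a stalk is the endomorphism $\phi_x$ of $\mF_{\bar x}$, and (ii) the counit of Verdier duality $\mF_{\bar x}\otimes\bD\mF_{\bar x}\to\Ql$ (shifted appropriately). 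Composing with $\delta_\mF$, which at $x$ is the unit $\Ql\to\mF_{\bar x}\otimes^!\bD\mF_{\bar x}$, yields precisely the rigid-duality trace formula $e_V\circ(\phi\otimes\id)\circ\delta_V=\tr(\phi,V)$ applied to $V=\mF_{\bar x}$ and $\phi=\phi_x$, giving the value $\tr(\phi_x,\mF_{\bar x})$ at $x$.

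The main obstacle is a careful, coordinate-free treatment of the identifications underlying the sharp correspondence and of the two unit/counit maps, in particular verifying that the chosen isomorphism $\sigma_q^*\mF\cong\mF$ restricts at $\bar x$ to the geometric Frobenius $\phi_x$ with the correct normalization, and that the transversality of $\Ga_{\sigma_q}$ with $\Delta_X$ is compatible with the perverse/shift conventions in $\delta_\mF$ and $u^\sharp$. These are conceptually routine but notationally heavy. The cleanest way to finish the proof is to invoke the general Lefschetz--Verdier trace formula of Braverman--Varshavsky \cite{BV}, which establishes, for an arbitrary contracting correspondence, both the support statement and the identification of local terms at isolated reduced fixed points with traces on stalks; the lemma is then the specialization of their theorem to the Frobenius self-correspondence.
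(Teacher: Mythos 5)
Your proposal matches the paper's treatment: the paper gives no argument for this lemma at all but simply attributes it to Braverman--Varshavsky \cite{BV}, and your proof likewise reduces everything to that citation, so you end up in exactly the same place. One caution: the stalkwise unit/counit computation in your middle paragraph only produces the \emph{naive} local term at a fixed point, and the identification of the genuine Lefschetz--Verdier local term of the composite of $\delta_\mF$ and $u^\sharp$ with this naive trace (valid here because the differential of $\sigma_q$ vanishes, a Deligne-conjecture-type statement proved by Pink/Fujiwara/Varshavsky and in the form needed by \cite{BV}) is precisely the nontrivial content of the cited theorem rather than a routine consequence of rigid duality, so your final appeal to \cite{BV} is essential and not merely a convenience.
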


\end{document}